\definecolor{dkblue}{RGB}{1,31,91} 
\newcommand{\newK}{\tilde{K}}
\newcommand{\eqdef }{\overset{\mbox{\tiny{def}}}{=}}
\newcommand{\pv}{p}
\newcommand{\pZ}{\pv^0}
\newcommand{\qv}{q}
\newcommand{\qZ}{\qv^0}
\newcommand{\utilde}[1]{\underaccent{\tilde}{#1}}
\newcommand{\rth}{{\mathbb{R}^3}}
\newcommand{\rfo}{{\mathbb{R}^4}}
\newcommand{\singA}{a}
\newcommand{\singB}{b}
\newcommand{\singS}{\rho}
\newcommand{\zetaL}{\zeta_L}
\newcommand{\zetaTL}{\tilde{\zeta}_L}
\newcommand{\zetaLTone}{\tilde{\zeta}_{L}^1}
\newcommand{\zetaLTtwo}{\tilde{\zeta}_{L}^2}
\newcommand{\zetaTone}{\tilde{\zeta}_1}
\newcommand{\zetaTZ}{\tilde{\zeta}_{0}}
\newcommand{\zetaZ}{\zeta_{0}}
\newcommand{\zetaTZm}{\tilde{\zeta}_{0,m}}
\newcommand{\zetaZm}{{\zeta}_{0,m}}
\newcommand{\zetaTLm}{\tilde{\zeta}_{L,m}}
\newcommand{\zetaLm}{{\zeta}_{L,m}}
\newcommand{\secref}[1]{\S\ref{#1}}
\newcommand{\dbar}[1]{\bar{\bar{#1}}}
\newcommand{\qlep}{{|q|\le \frac{1}{2}|p|^{1/m}}}
\newcommand{\qgep}{{|q|\ge \frac{1}{2}|p|^{1/m}}}
\def\R{\mathbb R}
\def\seq#1{\left<#1\right>}
\def\sep#1{\left(#1\right)}
\theoremstyle{definition}
\newtheorem{theorem}{Theorem}
\newtheorem{lemma}[theorem]{Lemma}
\newtheorem{proposition}[theorem]{Proposition}
\newtheorem{remark}[theorem]{Remark}
\newtheorem{definition}[theorem]{Definition}
\numberwithin{equation}{section}
\numberwithin{theorem}{section}
\begin{document}

\keywords{Boltzmann Equation, Special Relativity, Non Angular Cut-off, Collisional Kinetic Theory.}
\subjclass[2010]{Primary 35Q20, 35R11, 76P05, 83A05, 82C40, 35B65, 26A33. }

\title[Relativistic Boltzmann Equation without Cut-off]{Asymptotic Stability of the Relativistic Boltzmann Equation without Angular Cut-off}

\author[J. W. Jang]{Jin Woo Jang$^\dagger$}
\address{$^\dagger$Department of Mathematics, Pohang University of Science and Technology (POSTECH), Pohang, Republic of Korea 37673. \href{mailto:jangjw@postech.ac.kr}{jangjw@postech.ac.kr} (\href{https://orcid.org/0000-0002-3846-1983}{https://orcid.org/0000-0002-3846-1983})}
\thanks{$^\dagger$Supported by the German DFG grant CRC 1060 and the Korean Basic Science Research Institute Fund NRF-2021R1A6A1A10042944, previously supported by the Korean IBS grant IBS-R003-D1 and partially by the NSF grant DMS-1500916 of the USA}

\author[R. M. Strain]{Robert M. Strain$^{\ddagger}$}
\address{$^\ddagger$Department of Mathematics, University of Pennsylvania, Philadelphia, PA 19104, USA. \href{mailto:strain@math.upenn.edu}{strain@math.upenn.edu} (\href{https://orcid.org/0000-0002-1107-8570}{https://orcid.org/0000-0002-1107-8570})}
\thanks{$^\ddagger$Partially supported by the NSF grants DMS-1764177 and DMS-2055271 of the USA}

\begin{abstract}
This paper is concerned with the relativistic Boltzmann equation without angular cutoff.  We establish the global-in-time existence, uniqueness and asymptotic stability for solutions nearby the relativistic Maxwellian.
We work in the case of a spatially periodic box. We assume the generic hard-interaction and  soft-interaction conditions on the collision kernel that were derived by Dudy\'nski and Ekiel-Je$\dot{\text{z}}$ewska (Comm. Math. Phys. \textbf{115}(4):607--629, 1985) in \cite{MR933458}, and our assumptions include the case of Israel particles (J. Math. Phys. \textbf{4}:1163--1181, 1963) in \cite{MR165921}.   In this physical situation, the angular function in the collision kernel is not locally integrable, and the collision operator behaves like a fractional diffusion operator. The coercivity estimates that are needed rely crucially on the sharp asymptotics for the frequency multiplier that has not been previously established.   We further derive the relativistic analogue of the Carleman dual representation for the Boltzmann collision operator.  This resolves the open question of perturbative global existence and uniqueness without the Grad's angular cut-off assumption.
\end{abstract}

\thispagestyle{empty}

\maketitle
\tableofcontents

\section{Introduction}\label{sec:introduction}

In 1872, Ludwig Boltzmann \cite{MR0158708} derived a fundamental equation which mathematically models the dynamics of a gas represented as a collection of molecules. 
This is a model for the collisional dynamics between non-relativistic particles.  For the collisional dynamics between special relativistic particles whose speed is comparable to the speed of light, 
Lichnerowicz and Marrot \cite{MR0004796} have derived the relativistic Boltzmann equation in 1940, which is a fundamental model for fast moving particles.
The relativistic Boltzmann equation is written as
\begin{equation}
\label{RBE}
p^\mu\partial_\mu F={p^0}\partial_t F+cp\cdot\nabla_x F= C(F,F),
\end{equation}
where $c>0$ is the speed of light.  For this equation the unknown is $F=F(t,x,p)$ where the time variable is $t\ge 0$, the spatial variable is $x\in \mathbb{T}^3$ and the momentum satisfies $p\in \R^3$.  Then the collision operator $C(F,F)$ is given by
\begin{equation}
\label{Colop}
C(F,G)=\int_{\mathbb{R}^3}\frac{dq}{{q^0}}\int_{\mathbb{R}^3}\frac{dq'\ }{{q'^0}}\int_{\mathbb{R}^3}\frac{dp'\ }{{p'^0}} W(p,q|p',q')[F(q')G(p')-F(q)G(p)].
\end{equation}
Here, the transition rate $W(p,q|p',q')$ is 
\begin{equation}\label{FREQ:transition.rate.RBE}
W(p,q|p',q')=\frac{c}{2}s\sigma(g,\theta)\delta^{(4)}(p^\mu +q^\mu -p'^\mu -q'^\mu),
\end{equation}
where $\sigma(g,\theta)$ is the scattering kernel measuring the interactions between particles and the Dirac $\delta$-function expresses the conservation of energy and momentum.  The notation for $p^\mu$, $q^\mu$, $p'^\mu$ and $q'^\mu$ will be defined in \secref{Lor} and \eqref{conservation}.  Here $s$, $g$ and $\theta$ will be defined in \eqref{s.c}, \eqref{g.c} and \eqref{cost} respectively.

This equation is a relativistic generalization of the Newtonian Boltzmann equation:
\begin{equation}\label{Boltz.Newtonian}
    \partial_t F +v\cdot\nabla_x F=\int_{\rth}dv_*\int_{\mathbb{S}^2}d\omega ~B(v-v_*,\omega)[F(v'_*)F(v')-F(v_*)F(v)],
\end{equation}
where 
\begin{equation}\notag
    v'=\frac{v+v_*}{2}+\frac{|v-v_*|}{2}\omega, \quad v'_*=\frac{v+v_*}{2}-\frac{|v-v_*|}{2}\omega,
\end{equation}
and the collision kernel $B$ depends only on the relative velocity $|v-v_*|$ and the scattering angle $\omega$. The mathematical analysis of the Boltzmann equation such as the well-posedness of the equation or the regularity of the solution crucially depends on the assumptions on the scattering kernel $B(v-v_*,\omega)$. 
The kernel $B$ is in general assumed to be in the form of a product in its arguments as 
$$B(v-v_*,\omega)=\Psi(|v-v_*|)b_0(\omega),$$ where both $\Psi$ and $b_0$ are assumed to be non-negative. This assumption is general and it includes the varied kinds of  collision kernels such as the hard-sphere collision kernel $\Psi(|v-v_*|)\approx |v-v_*|$, the collision kernel for Maxwellian molecules $\Psi(|v-v_*|)\approx 1$, the collision kernel for the  inverse-power law potential $\psi(r)=\frac{1}{r^{p-1}}$ with $B\approx |v-v_*|^{\gamma} \theta^{-\gamma'} b'_0(\theta)$ where $\gamma=\frac{p-5}{p-1}$, $\gamma'=\frac{2p}{p-1}$, $b'_0$ is bounded, and $\cos\theta =\frac{v-v_*}{|v-v_*|}\cdot \omega$,  and the assumption also includes many other kernels.

Both equations model the evolution of a large number of particles interacting via collisions.  The classical non-relativistic Boltzmann equation has been widely studied in many aspects. However, the relativistic Boltzmann equation has received relatively less attention perhaps because of its complicated structure and the computational difficulty on dealing with relativistic post-collisional momenta.  The relativistic Boltzmann equation is a correction to the Newtonian equation which will do a better job of describing fast-moving particles whose speeds may be closer to the speed of light. Understanding the behavior of fast-moving special relativistic particles is crucial in describing many astrophysical and cosmological processes \cite{MR3189734}.   Especially, the description of the dynamics of quark plasma formed in heavy ion collisions will have to involve a satisfactory understanding in a certain relativistic hydrodynamical equation \cite{Dudynski2}, and the relativistic Boltzmann equation is a good candidate for describing those relativistic collisional hydrodynamical models.  Further fast moving particles are precisely the situation where non-cutoff effects can become important.  References on the relativistic Boltzmann equation include \cite{MR1898707,MR635279,MR1379589,Stewart,MR0088362}.

For short range interactions we have collision kernels such as $\sigma(g,\theta)=\text{constant}$ or $s\sigma(g,\theta)=\text{constant}$, and these are the relativistic analogue of the classical hard-sphere model (although there is no relativistic hard-sphere). However, once we consider the long-range interactions when particles are fast moving, then $\sigma(g,\theta)$ can be very singular  and non-integrable near $\theta=0$. This occurs especially for long-range interactions and grazing collisions.
 
The difficulty with the angular singularity can be removed with Grad's ``cut-off'' assumption \cite{MR0156656} that $\sigma\in L^1_{loc}(\rth\times\mathbb{S}^2)$.
In this case, we say that the Boltzmann equation is in the ``cutoff" regime. Otherwise, we call it the Boltzmann equation without angular cutoff; sometimes this is called the ``non-cutoff'' regime.  This cut-off assumption is indeed very powerful in the mathematical analysis, as it removes the singularity from the angular kernel and allows one to split the gain and the loss terms of the Boltzmann operator. 

However, it has been well-known that the regularity of a solution to the Boltzmann equation depends crucially on the assumption. For the angular kernel with the Grad cutoff, it has been known to propagate singularities \cite{MR1798557, MR2435186}. On the other hand, it has been known that the Boltzmann equation without angular cutoff has smoothing effects \cite{Li94,MR2679369,1909.12729,MR4195746}. In the case without angular cutoff, one has to make use of the cancellation between the gain and the loss terms to estimate the angular singularity.  Without angular cutoff the Boltzmann operator behaves as the fractional Laplacian on a lifted paraboloid of the energy-momentum four-vector \cite{MR2807092}.

 Unfortunately, to the best of our knowledge, the relativistic Boltzmann equation has not been studied  without the ``cut-off'' hypothesis though the case when the collisions tend to be grazing is very important. In this paper we study the relativistic Boltzmann equation without assuming the angular cut-off hypothesis which would give a better understanding on the long-range interactions of relativistic particles.

\subsection{A brief history of previous results on the relativistic Boltzmann equation}
The special relativistic Boltzmann equation was first derived in the paper by Lichnerowicz and Marrot \cite{MR0004796} in 1940. In 1967, 
Bichteler \cite{MR0213137} showed the local existence of the solutions to the relativistic Boltzmann equation. In 1989, Dudynski and Ekiel-Jezewska \cite{MR933458} showed that 
there exist unique $L^2$ solutions to the linearized equation. Afterwards, Dudynski \cite{MR1031410} studied the long time and small-mean-free-path limits of these solutions. 
Regarding large data global in time weak solutions, Dudynski and Ekiel-Jezewska \cite{MR1151987} in 1992 extended DiPerma-Lions renormalized solutions \cite{MR1014927} 
to the relativistic Boltzmann equation using their causality results from 1985 \cite{MR818441}.
Recently, Wang \cite{MR3868725} proved the global well-posedness of the relativistic Boltzmann equation with perturbative large amplitude initial data. 

In 1996, Andreasson \cite{MR1402446} studied the regularity of the gain term and the strong $L^1$ convergence of 
the solutions to the J{\"u}ttner equilibrium which were generalizations of Lions' results \cite{MR1284432,MR1295942} in the non-relativistic case. He showed that the gain term is regularizing. 
In 1997, Wennberg \cite{MR1480243} showed the regularity of the gain term in both non-relativistic and relativistic cases. 

Regarding the Newtonian limit for the Boltzmann equation, there is a local result by Calogero \cite{MR2098116} and a global result by Strain \cite{MR2679588}.
Also, Andreasson, Calogero and Illner \cite{MR2102321} proved that there is a blow-up if only with
gain-term in 2004. Then, in 2009, Ha, Lee, Yang, and Yun \cite{MR2543323} provided uniform $L^2$-stability estimates for the relativistic Boltzmann equation. 
In 2011, Speck and Strain \cite{MR2793935} connected the relativistic Boltzmann equation to the relativistic Euler equation via the Hilbert expansions. The gain of regularity for the gain operator was proved in \cite{MR3880739} for hard- and soft-interactions. The propagations of $L^1$, $L^\infty$, and $L^p$ estimates were proved in \cite{StrainYun2014s}, \cite{1907.05784}, and \cite{MR4156121}, respectively. 

Regarding problems with the initial data nearby the global Maxwellian equilibrium \eqref{jutter.equilibrium} that we consider in this paper, Glassey and Strauss \cite{MR1211782} first proved there exist unique global 
smooth solutions to the equation on the torus $\mathbb{T}^3$ for the hard-interactions in 1993. Also, in the same paper they have shown that the convergence rate to the relativistic Maxwellian
is exponential. Their assumptions on the differential
cross-section covered the case of cut-off hard-interactions. In 1995 \cite{MR1321370}, they extended their results to the whole space and have shown that the convergence rate to the equilibrium solution
is polynomial. 
Under reduced restrictions on the cross-sections, Hsiao and Yu \cite{MR2249574} gave results on the asymptotic stability of Boltzmann equation using energy methods in 2006.  In 2010, Yang and Yu \cite{MR2593052} proved time decay rates in the whole space for the relativistic
Boltzmann equation with hard-interactions and for the relativistic Landau equation.  In 2010, Strain \cite{MR2728733} showed that unique global-in-time solutions to the relativistic Boltzmann equation exist for 
the soft-interactions with cut-off. Recently, Duan and Yu \cite{MR3635814} have shown the global wellposedness for the relativistic Boltzmann equation for soft-interactions in the weighted $L^\infty$ perturbation framework. We also mention a recent result \cite{MR4264953} on the global wellposedness for the relativistic quantum Boltzmann equation for both Bosons and Fermions near equilibrium.  In addition, we would like to mention that Glassey and Strauss \cite{MR1105532} in 1991 computed the Jacobian determinant of the relativistic collision map.

\subsubsection{On the Newtonian Boltzmann equation without angular cut-off}
Regarding non-relativistic results for the spatially homogeneous Boltzmann equation without angular cutoff, we refer to results on moment propagation \cite{MR4014786, MR3810839, MR3759871} and the results on instantaneous smoothing effect \cite{MR3665667}. For the existence of measure-valued solutions, we have \cite{MR3572500}.

Regarding non-relativistic results with non-cutoff assumptions, we would like to mention \cite{ADVW} for the entropy dissipation and regularizing effect and \cite{MR1750040} for the instantaneous smoothing effect. For the existence theory, we have the work by Alexandre and Villani \cite{MR1857879} on renormalized weak solutions with non-negative defect measure. Also, we would like to record the work of Gressman and Strain \cite{GressmanStrain2010g,MR2784329} on the global existence of unique solutions close to the Maxwellian equilibrium. The large time decay in the whole space for these solutions was shown in \cite{MR3213301,MR2972454}. We also mention that Alexandre, Morimoto, Ukai, Xu, and Yang \cite{MR2795331,MR2679369,MR2863853,MR2793203,MR2847536} obtained the proof of the global existence of unique solutions with non-cutoff assumptions, using different methods. We would like to mention the work by the same group \cite{MR3177640} from 2013 on the local existence with mild regularity for the non-cutoff Boltzmann equation where they work with an improved initial condition and do not assume that the initial data is close to a global equilibrium.  Local existence with large polynomially decaying initial data  has been recently proven by Henderson, Snelson and Tarfulea in \cite{MR4112183}.  Morimoto and Sakamoto \cite{MR3532066} have recently proven the existence of unique global solutions close to equilibrium in a critical Chemin-Lerner space.   We mention also the works in \cite{MR3950012,2010.10065,MR4107942,2017arXiv171000315H}.   Stability of the vacuum state has been recently established in \cite{MR4270852}, building upon \cite{MR3948345}. Also a new regularization mechanism was developed in \cite{MR3551261}, a weak Harnack-type inequality for the Boltzmann equation has been proved in \cite{MR4049224}, and the $C^\infty$ regularization estimates are proven in \cite{1909.12729}.   Recently, the construction of unique global solutions with low regularity using the Wiener algebra $L^1_k$ in the $x$ variables was introduced in \cite{1904.12086}.

\section{Statement of the main results and strategies}

In this section, we will introduce a reformulation of the equation \eqref{RBE} by the linearization around the relativistic Maxwellian equilibrium. Before we introduce the reformulated problem including stating our main hypothesis on the scattering kernel and our main theorems, we first introduce several notations that we will use throughout the paper.

\subsection{Notations}\label{Lor}Throughout the paper, we denote $A\lesssim B$ if there exists a uniform constant $C>0$ such that $A\le CB.$ If $A\lesssim B$ and $B\lesssim A$, then we denote $A\approx B.$   We define $B_{r}= B(0,r)$ to be the standard ball of center zero and radius $r>0$.

The relativistic momentum of a particle is denoted by a four-vector representation $p^\mu$ where $\mu=0,1,2,3$. Without loss of generality we normalize the mass of each particle $m=1$. 
We raise and lower the indices with the Minkowski metric $p_\mu=\eta_{\mu\nu}p^\nu$, where the metric is defined as $\eta_{\mu\nu}=\text{diag}(-1, 1, 1, 1)$ is a $4 \times 4$ matrix.
The signature of the metric throughout this paper is $(-+++)$.   The inverse of the Minkowski metric is denoted $\eta^{\mu\nu}=\text{diag}(-1, 1, 1, 1)$.  In general, Latin indices $i,j,$ etc., take values in $\{1,2,3\},$ while Greek indices $\kappa, \lambda, \mu, \nu,$ etc., take on the values $\{0,1,2,3\}$. With $p=(p^1, p^2, p^3)\in  \rth$, we write $p^\mu=({p^0},p)$ where ${p^0}$, which is the energy of a relativistic particle with momentum $p$, is defined as ${p^0}=\sqrt{c^2+|p|^2}$ where $|p|^2 = p \cdot p$. 
We use the standard Euclidean dot product: $p\cdot q \eqdef \sum_{i=1}^{3} p^i q^i$. We use the notation $p^\mu$ to both denote the component $\mu$ and also to denote the vector $({p^0},p)$ without ambiguity.  We furthermore use the Einstein convention of implicit summation over repeated indices with one up and one down.  The product between the four-vectors with raised and lowered indices is the Lorentz inner product which is then given by
$$
p^\mu q_\mu= p^\mu \eta_{\mu\nu} q^\mu= \sum_{\mu =0}^3\sum_{\nu =0}^3 p^\mu \eta_{\mu\nu} q^\mu= -{p^0}{q^0}+p\cdot q.
$$
Note that the momentum for each particle satisfies the mass shell condition $p^\mu p_\mu=-c^2$ with ${p^0}>0$. Also, the product $p^\mu q_\mu$ is Lorentz invariant as described in Definition \ref{rcop:LTdef}.

By expanding the relativistic Boltzmann equation \eqref{RBE} and dividing both sides by ${p^0}$ we write the relativistic Boltzmann equation as
\begin{equation}
    \partial_t F+\hat{p}\cdot \nabla_x F= Q(F,F), \quad 
F(t=0, x, p) = F_0(x,p),
\label{rBoltz.eqn}
\end{equation}
where $Q(F,F)=C(F,F)/{p^0}$ and the normalized velocity of a particle $\hat{p}$ is given by
$$
\hat{p}=c\frac{p}{{p^0}}=\frac{p}{\sqrt{1+|p|^2/c^2}}.
$$
We define the quantities $s$ and $g$ which respectively are the square of the energy and the relative momentum in the \textit{center-of-momentum} system, $p+q=0$, as
\begin{equation}
\label{s.c}
s=s(p^\mu,q^\mu)=-(p^\mu+q^\mu)(p_\mu+q_\mu)=2(-p^\mu q_\mu+c^2)\geq 0,
\end{equation}
and 
\begin{equation}
\label{g.c}
g=g(p^\mu,q^\mu)=\sqrt{(p^\mu-q^\mu)(p_\mu-q_\mu)}=\sqrt{2(-p^\mu q_\mu-c^2)}.
\end{equation}
Note that we have $s=g^2+4c^2$. We now rewrite the quantities $s$ and $g$ from \eqref{s.c} and \eqref{g.c} with $c=1$ as
\begin{equation}
\label{s}
s=s(p^\mu,q^\mu)=-(p^\mu+q^\mu)(p_\mu+q_\mu)=2(-p^\mu q_\mu+1)\geq 0,
\end{equation}
and 
\begin{equation}
\label{g}
g=g(p^\mu,q^\mu)=\sqrt{(p^\mu-q^\mu)(p_\mu-q_\mu)}=\sqrt{2(-p^\mu q_\mu-1)}.
\end{equation}
Now we have that $s=g^2+4$. 
Similarly we can define $\bar{g}$ as the relative momentum between $p'^\mu$ and $p^\mu$ in the \textit{center-of-momentum} system. It is defined as
\begin{equation}
\label{gbar}
\begin{split}
\bar{g}&\eqdef g(p'^\mu,p^\mu)=\sqrt{(p'^\mu-p^\mu)(p'_\mu-p_\mu)}=\sqrt{2(-p'^\mu p_\mu-1)}\\&=\sqrt{2(p'^0p^0-p'\cdot p -1)}=\sqrt{2\frac{|p-p'|^2+|p\times p'|^2}{ p^0  p'^0 +p\cdot p'+1}}.\\
\end{split}
\end{equation}
In the same manner, we define the relative momentum between $p'^\mu$ and $q^\mu$ as
\begin{equation}\begin{split}\label{gtilde}
\tilde{g} &\eqdef g(p'^\mu,q^\mu)=\sqrt{(p'^\mu-q^\mu)(p'_\mu-q_\mu)} =\sqrt{2(-p'^\mu q_\mu-1)}\\&=\sqrt{2(p'^0q^0-p'\cdot q -1)}=\sqrt{2\frac{|p'-q|^2+|p'\times q|^2}{ p'^0  q^0 +p'\cdot q+1}}.
\end{split}
\end{equation}
Again we have $\bar{s}=\bar{g}^2+4$ and $\tilde{s}=\tilde{g}^2+4$.  These important quantities will be used extensively in the proofs below.

The conservation of energy and momentum for elastic collisions is described as 
\begin{equation}
\label{conservation}
p^\mu+q^\mu=p'^\mu+q'^\mu.
\end{equation}
Then the scattering angle $\theta$ is defined by
\begin{equation}
\label{cost}
\cos\theta=\frac{(p^\mu-q^\mu)(p'_\mu-q'_\mu)}{g^2}.
\end{equation}
Together with the conservation of energy and momentum in \eqref{conservation}, it can be shown that the angle and $\cos\theta$ are well-defined \cite{MR1379589}. Note that the numerator of $\cos\theta$ can be further written as 
\begin{equation}
 \label{FREQ:cos}
 \begin{split}
(p^\mu-q^\mu)(p'_\mu-q'_\mu)=&(p^\mu-q^\mu)(p_\mu+q_\mu-2q'_\mu)\\
=& (p^\mu-q^\mu)(p_\mu-q_\mu)+2 (p^\mu-q^\mu)(q_\mu-q'_\mu)\\
=&g^2+2(p^\mu-q^\mu)(q_\mu-q'_\mu)\\
=& g^2 +2(p^\mu-p'^\mu+p'^\mu-q^\mu)(p'_\mu-p_\mu)\\
=&g^2 -2(p'^\mu-p^\mu)(p'_\mu-p_\mu)+2(p'^\mu-q^\mu)(p'_\mu-p_\mu)\\
=&g^2 -2\bar{g}^2+2(p'^\mu-q^\mu)(p'_\mu-p_\mu)
=g^2-2\bar{g}^2.
 \end{split}
 \end{equation} 
 As above, we note that it follows from the collision geometry \eqref{conservation} that 
 $$
 (p'^\mu-q^\mu)(p'_\mu-p_\mu)=0.
 $$
 Therefore, using \eqref{FREQ:cos} with \eqref{g} and \eqref{gbar} we can write
\begin{equation}\label{FREQ:cosine.angle.formula}
    1-2\sin^2\frac{\theta}{2} =  \cos\theta=1-2\frac{\bar{g}^2}{g^2},
\end{equation}
 and hence we obtain that $\theta\approx \frac{\bar{g}}{g}$. This estimate will be used frequently in \secref{sec:frequency}.

\begin{remark}\label{FREQ:angle.remark}
Since we are dealing with the non-cutoff relativistic Boltzmann equation then there will be an angular singularity when $\cos\theta=1$ as in \eqref{angassumption}.  The purpose of this remark is to explain the collisional geometry when $\cos\theta=1$.  
By \eqref{FREQ:cosine.angle.formula} when $\cos\theta=1$ we have $\frac{\bar{g}^2}{g^2} =0$ which means 
$$
0=\bar{g}^2=(p^\mu-p'^\mu)(p_\mu-p'_\mu).
$$
Equivalently, this means that 
$$
({p'^0}-{p^0})^2=|p'-p|^2.
$$
And this implies that ${p^0}={p'^0}$ and $p=p'$ because otherwise
$$
|{p'^0}-{p^0}| = \left|\frac{|p'|^2-|p|^2}{{p'^0}+{p^0}}\right|< |p'-p|.
$$
Therefore, if $\cos\theta=1$, we have $p'^\mu=p^\mu$ and also $q'^\mu=q^\mu$ by \eqref{conservation}. 
\end{remark}

Here we would like to introduce the relativistic Maxwellian which models the equilibrium solutions, also known as J{\"u}ttner solutions. 
These are characterized as a particle distribution which maximizes the entropy subject to constant mass, momentum, and energy. They are given by
$$
J(p)=\frac{e^{-\frac{c{p^0}}{k_BT}}}{4\pi ck_BTK_2(\frac{c^2}{k_BT})},
$$
where $k_B$ is Boltzmann constant, $T$ is the temperature, and $K_2$ stands for the Bessel function $K_2(z)=\frac{z^2}{2}\int_1^\infty dt\ e^{-zt}(t^2-1)^\frac{3}{2}.$
Throughout this paper, we normalize all physical constants to 1, including the speed of light $c=1$.  Then we observe that the relativistic Maxwellian is given by
\begin{equation}\label{jutter.equilibrium}
    J(p)=\frac{e^{-{p^0}}}{4\pi}.
\end{equation}

\subsection{Relativistic collision operator}
We now consider the \textit{center-of-momentum} expression for the relativistic collision operator. Note that this expression has appeared in the physics literature; see \cite{MR635279}. 
For other representations of the operator such as Glassey-Strauss coordinate expression, see \cite{MR1402446,MR1105532,MR1211782}. Also, see \cite{MR2679588,MR2765751} for the relationship between those two representations of the collision operator. 
As in \cite{MR635279}, one can reduce the collision operator (\ref{Colop}) using Lorentz transformations and get
\begin{equation}
\label{omegaint}
Q(f,h)=\int_\rth dq\int_{\mathbb{S}^2}d\omega\   v_{\text\o}\sigma(g,\theta)[f(q')h(p')-f(q)h(p)],
\end{equation}
where $v_{\text{\o}}=v_{\text{\o}}(p,q)$ is the M{\o}ller velocity given by
\begin{equation}\label{Moller.def}
v_{\text{\o}}(p,q)=\sqrt{\Big|\frac{p}{{p^0}}-\frac{q}{{q^0}}\Big|^2-\Big|\frac{p}{{p^0}}\times\frac{q}{{q^0}}\Big|^2}=\frac{g\sqrt{s}}{{p^0}{q^0}}.
\end{equation}
The post-collisional momenta in the \textit{center-of-momentum} expression are written as 
\begin{equation}
\label{p'}
p'=\frac{p+q}{2}+\frac{g}{2}\Big(\omega+(\xi-1)(p+q)\frac{(p+q)\cdot\omega}{|p+q|^2}\Big),
\end{equation}
and
\begin{equation}
\label{q'}
q'=\frac{p+q}{2}-\frac{g}{2}\Big(\omega+(\xi-1)(p+q)\frac{(p+q)\cdot\omega}{|p+q|^2}\Big),
\end{equation}
where $\xi\eqdef \frac{p^0+q^0}{\sqrt{s}}$.  

For $F,G$ smooth and vanishing sufficiently rapidly at infinity, it turns out \cite{MR1379589} that the collision operator satisfies
\begin{equation}\label{eq.colop.property}
\int Q(F,G)\ dp=\int pQ(F,G)\ dp=\int {p^0}Q(F,G)\ dp=0,
\end{equation}
and 
\begin{equation}
\label{entropy}
\int Q(F,F)(1+\log F)\ dp\ \leq 0.
\end{equation}
Note that \eqref{eq.colop.property} leads to the conservation laws of total mass, momentum, and energy as
\begin{equation}
    \notag
    \frac{d}{dt}\int_{\mathbb{T}^3}dx \int_\rth dp \begin{pmatrix}1\\p\\p^0\end{pmatrix} F(t,x,p)=0.
\end{equation}
Also, \eqref{entropy} leads to the Boltzmann H-theorem which states that the entropy of the system is a non-decreasing function of $t$; i.e., we have
\begin{equation}
    \notag
\frac{d}{dt}\int_{\mathbb{T}^3}dx \int_\rth dp \ (-F\log F)(t,x,p)\ge 0,
\end{equation}where the expression $-F\log F$ is 
called the \textit{entropy density}.

\subsection{Main hypothesis on the collision kernel $\sigma$}\label{hypo}
The relativistic Boltzmann collision kernel $\sigma(g,\theta)$ is a non-negative function which only depends on the relative velocity $g$ and the scattering angle 
$\theta$. We assume that $\sigma$ takes the form of the product in its arguments; i.e., 
\begin{equation}
    \label{define.kernel}
    \sigma(g,\theta)\eqdef \Phi(g)\sigma_0(\theta).
\end{equation}
In general, we suppose that both $\Phi$ and $\sigma_0$ are non-negative functions.

Without loss of generality, we may assume that the collision kernel $\sigma$ is supported only when $\cos\theta\geq 0$ throught this paper;
i.e., $0\leq \theta \leq \frac{\pi}{2}$. 
Otherwise, the following \textit{symmetrization} \cite{MR1379589} will reduce to this case:
 $$
\bar{\sigma}(g,\theta)=[\sigma(g,\theta)+\sigma(g,-\theta)]1_{\cos\theta\geq 0},
$$
where $1_A$ is the indicator function of the set $A$.

We suppose that the angular function $\theta \mapsto \sigma_0(\theta)$ is not locally integrable; for some $C>0$, it satisfies 
 \begin{equation}
 \label{angassumption}
\frac{1}{C\theta^{1+\gamma}} \leq \sin\theta\cdot\sigma_0(\theta) \leq \frac{C}{\theta^{1+\gamma}}, \hspace*{5mm}\gamma \in (0,1), \hspace*{5mm}\forall \theta \in \Big(0,\frac{\pi}{2}\Big].
\end{equation}
Notice that we do not assume any ``cut-off'' hypothesis on the angular function \cite{MR0156656} that $\sigma_0\in L^1_{loc}(\mathbb{S}^2)$.  We further assume the collision kernel satisfies the following hard-interaction assumption:
\begin{equation}
\label{hard}
 \Phi(g) =  C_{\Phi} g^{\singA},
 \quad - \gamma\leq {\singA} <2, \quad C_{\Phi}>0.
\end{equation}
In the soft-interaction case we assume that
\begin{equation}
\label{soft}
 \Phi(g) =  C_{\Phi} g^{-\singB},
  \quad  \gamma<b<\min\left\{\frac{3}{2}+\gamma,2\right\}, \quad C_{\Phi}>0.
\end{equation}
For these expressions we introduce the following unified notation
\begin{equation} \label{singS.defin}
\singS=
\left\{
\begin{array}{ccc}
 \singA  & \text{for the hard-interactions}   &  \eqref{hard},  \\
 - \singB  & \text{for the soft-interactions}   &  \eqref{soft}. 
\end{array}
\right.
\end{equation}
Then we generally have for both hard \eqref{hard} and soft \eqref{soft} interactions that
\begin{equation}
\notag
 \Phi(g) =  C_{\Phi} g^{\singS},
  \quad   \quad C_{\Phi}>0.
\end{equation}
We further remark that the conditions above imply that $0 \le \singA +\gamma < 2+ \gamma$ and $\max\{-\frac{3}{2},-2+\gamma\} <  -\singB +\gamma < 0$ so that in general 
\begin{equation*}
    \max\left\{-\frac{3}{2},-2+\gamma\right\} <  \singS +\gamma < 2+ \gamma.
\end{equation*}
These are the assumptions on the kernel that we will use throughout this paper.

These assumptions on our collision kernel have been motivated from important physical interactions. Conditions on our collision kernel are generic  in the sense of the collision kernel assumptions derived by Dudy\'nski and Ekiel-Je$\dot{\text{z}}$ewska in \cite{MR933458}.  In this work we do not study the high order singularities when $\gamma \in [1,2)$ for \eqref{angassumption}.  Our results can further cover the case of Israel particles from \cite{MR165921}.   Unfortunately, to the best of our knowledge, the relativistic Boltzmann equation has not been studied without the ``cut-off'' hypothesis.   This problem was also discussed in the appendix to \cite{MR3186493}.   In this paper we will study the relativistic Boltzmann equation without assuming the Grad's angular cut-off hypothesis in order to try to obtain a better understanding of relativistic gases.    We include several additional physical references that discuss the special relativistic Boltzmann collision kernels  \cite{MR1958975,Polak_1973,MR933458,Dudynski2,MR3186493,MR1898707,MR471665,MR635279,MR165921,MR1402248}  including those with an angular singularity such as in \eqref{angassumption}.   Some of these are also discussed in \cite[Appendix B]{MR2679588}.

\subsection{Linearization and reformulation of the Boltzmann equation}

We will consider the linearization of the collision operator and the perturbation around the relativistic J{\"u}ttner equilibrium state
\begin{equation}
\label{pert}
F(t,x,p)=J(p)+\sqrt{J(p)}f(t,x,p).
\end{equation}
Without loss of generality, we suppose that the mass, momentum, and energy conservation laws for the perturbation $f(t,x,p)$ hold for all $t\geq 0$ as
\begin{equation}
\label{zero}
\int_\rth dp\ \int_{\mathbb{T}^3} dx\  \left(\begin{array}{c}
1\\
p\\
{p^0}\end{array}\right) \sqrt{J(p)}f(t,x,p)=0.
\end{equation}
We will now linearize the relativistic Boltzmann equation \eqref{rBoltz.eqn} with \eqref{omegaint} around the relativistic Maxwellian equilibrium state \eqref{pert}. We obtain that
\begin{equation}
\label{Linearized B}
\partial_t f+\hat{p}\cdot\nabla_x f+L(f)=\Gamma(f,f), \hspace{10mm} f(0,x,v)=f_0(x,v),
\end{equation}
where the linearized relativistic Boltzmann operator $L$ is given by 
\begin{multline*}
L(f)\eqdef -J^{-1/2}Q(J,\sqrt{J}f)-J^{-1/2}Q(\sqrt{J}f,J)\\
= \int_{\rth}dq \int_{\mathbb{S}^2} d\omega\ v_{\text{\o}} \sigma(g,\omega)\Big(f(q)\sqrt{J(p)}\\
 +f(p)\sqrt{J(q)}-f(q')\sqrt{J(p')}-f(p')\sqrt{J(q')}\Big)\sqrt{J(q)}, 
\end{multline*}
and the bilinear operator $\Gamma$ is given by
\begin{equation}
\label{Gamma1}
\begin{split}
\Gamma(f,h)&\eqdef J^{-1/2}Q(\sqrt{J}f,\sqrt{J}h)\\
&=\int_{\rth}dq \int_{\mathbb{S}^2} d\omega\ v_{\text{\o}} \sigma(g,\theta)\sqrt{J(q)}(f(q')h(p')-f(q)h(p)).
\end{split}
\end{equation}
Then notice that we have 
 \begin{equation}\label{L.def}
L(f)=-\Gamma(f,\sqrt{J})-\Gamma(\sqrt{J},f).
\end{equation}
We will further decompose $L=\mathcal{N}+\mathcal{K}$. 

We call $\mathcal{N}$ as the norm part and $\mathcal{K}$ as the compact part. First, we define the weight function $\tilde{\zeta}$ such that
 \begin{multline}
 \label{25}
 \Gamma(\sqrt{J},f)=\left(\int_{\rth}dq\int_{\mathbb{S}^2}d\omega\ v_{\text{\o}} \sigma(g,\theta)(f(p')-f(p))\sqrt{J(q')}\sqrt{J(q)}\right)\\-\tilde{\zeta}(p)f(p),
 \end{multline}
where \begin{equation}\label{FREQ:tildezeta}
	\tilde{\zeta}(p) \eqdef\int_{\rth}dq\int_{\mathbb{S}^2}d\omega\ v_{\text{\o}} \sigma(g,\theta)(\sqrt{J(q)}-\sqrt{J(q')})\sqrt{J(q)}.
\end{equation}
We now call $\tilde{\zeta}(p)$ the \textit{frequency multiplier} of the linearized Boltzmann collision operator. 
It is crucial to obtain the sharp asymptotic behavior of $\tilde{\zeta}(p)$ for the proof of the coercivity estimates of the linearized relativistic Boltzmann operator without angular cutoff, which will be used crucially for the proof of the global well-posedness of the relativistic Boltzmann equation without angular cutoff nearby the Maxwellian equilibrium \eqref{jutter.equilibrium}.

The weight function $\tilde{\zeta}(p)$ can be split into the sum of two weight functions as 
$$\tilde{\zeta}=\zeta+\zeta_{\mathcal{K}}$$ 
where the weights satisfy the following asymptotics; for any $\varepsilon\in (0,\gamma/2)$, there exists a finite constant $C_\varepsilon>0$ such that under \eqref{singS.defin} we have
\begin{equation}
\label{Paos}
|\zeta_\mathcal{K}(p)|\lesssim C_\varepsilon {(p^0)}^{\frac{\singS}{2}+\varepsilon}
\hspace{5mm}\text{and}\hspace{5mm} 
\zeta(p)\approx {(p^0)}^{\frac{{\singS+\gamma}}{2}}.
\end{equation}
These asymptotics are proven in Theorem \ref{FREQ:main.thm}.  Further $\zeta$ and $\zeta_{\mathcal{K}}$ are defined precisely in \eqref{FREQ:def.zeta} and \eqref{FREQ:zetaK.def}.

This splitting motivates the following splitting of the linearized operator $L$: the compact part $\mathcal{K}$ of the linearized Boltzmann operator $L$ is defined by
\begin{multline}\label{defK}\mathcal{K}f =\zeta_\mathcal{K}(p)f-\Gamma(f,\sqrt{J})\\
 =\zeta_\mathcal{K}(p)f-\int_{\rth}dq\int_{\mathbb{S}^2}d\omega\ v_{\text{\o}} \sigma(g,\theta)\sqrt{J(q)}(f(q')\sqrt{J(p')}-f(q)\sqrt{J(p)}),
\end{multline}
and the sharp norm part is called $\mathcal{N}$ and it is defined by
\begin{multline}\label{defN}
\mathcal{N}f =-\Gamma(\sqrt{J},f)-\zeta_\mathcal{K}(p)f\\
 =\zeta(p)f-\int_{\rth}dq \int_{\mathbb{S}^2} d\omega\ v_{\text{\o}} \sigma(g,\omega)(f(p')-f(p))\sqrt{J(q')}\sqrt{J(q)}.
\end{multline}
Then, the norm part satisfies that
\begin{multline}\label{mainPartNorm.Nf}
\langle \mathcal{N}f,f\rangle =\frac{1}{2}\int_{\rth}dp \int_{\rth}dq \int_{\mathbb{S}^2} d\omega\ v_{\text{\o}} \sigma(g,\theta)(f(p')-f(p))^2\sqrt{J(q')}\sqrt{J(q)}\\
+\int_{\rth} dp \ \zeta(p)|f(p)|^2.
\end{multline}
This holds because a pre-post collisional change of variables $(p,q) \rightarrow (p',q')$ as in \eqref{prepost.change} provides
\begin{equation}\notag
\begin{split}
-&\int_\rth dp\int_{\rth}dq\int_{\mathbb{S}^2}d\omega\ v_{\text{\o}} \sigma(g,\theta)(f(p')-f(p))h(p)\sqrt{J(q')}\sqrt{J(q)}\\
=&-\frac{1}{2}\int_\rth dp\int_{\rth}dq\int_{\mathbb{S}^2}d\omega\ v_{\text{\o}} \sigma(g,\theta)(f(p')-f(p))h(p)\sqrt{J(q')}\sqrt{J(q)}\\
&-\frac{1}{2}\int_\rth dp\int_{\rth}dq\int_{\mathbb{S}^2}d\omega\ v_{\text{\o}} \sigma(g,\theta)(f(p)-f(p'))h(p')\sqrt{J(q)}\sqrt{J(q')}\\
=&\frac{1}{2}\int_\rth dp\int_{\rth}dq\int_{\mathbb{S}^2}d\omega\ v_{\text{\o}} \sigma(g,\theta)(f(p')-f(p))(h(p')-h(p))\sqrt{J(q')}\sqrt{J(q)}.
\end{split}
\end{equation}
With this computation in mind we define a fractional semi-norm as 
 $$
|f|^2_{\mathtt{B}}\eqdef \frac{1}{2}\int_\rth dp\int_\rth dq \int_{\mathbb{S}^2}d\omega ~ v_{\text{\o}}  \sigma(g,\theta)
(f(p')-f(p))^2\sqrt{J(q)J(q')}.
$$
For $l\in \mathbb{R}$, we also define the unified weight function
\begin{equation}\label{weight.function}
w^{l}(p)=(p^0)^{l}.
\end{equation}
Then we further define a non-local fractional weighted semi-norm
$|f|_{\mathtt{B}_l}$ as:
\begin{equation}
\label{weighted.non.local.fractional.norm}
|f|^2_{\mathtt{B}_l} \eqdef \frac{1}{2}\int_\rth dp\int_\rth dq\ \int_{\mathbb{S}^2}d\omega ~ v_{\text{\o}}  \sigma(g,\theta) w^{l}(p)(f(p')-f(p))^2\sqrt{J(q)J(q')}.
\end{equation}
These norms appear in the process of linearization of the collision operator.

\subsection{Spaces}\label{space} Before introducing our methods and strategies, we would like to define several function spaces that we use throughout this paper.

 We will use $\langle\cdot, \cdot\rangle$ to denote the standard $L^2(\mathbb{R}^3_p)$ inner product. Also, we will use $(\cdot, \cdot)$ to denote the $L^2(\mathbb{T}^3_x\times \mathbb{R}^3_p)$ inner product. As will be seen, the construction of our solutions depends on the following weighted fractional Sobolev space:
 $$
I^{\singS,\gamma}\eqdef \{f\in L^2(\mathbb{R}^3_p):|f|_{I^{\singS,\gamma}}<\infty\},
$$
where the norm is described as
 \begin{equation} \label{fractional}
 |f|^2_{I^{\singS,\gamma}}
 \eqdef 
 |f|^2_{L^2_{\frac{\singS+\gamma}{2}}}
 +
 \int_\rth dp\int_\rth dp'\ \frac{(f(p')-f(p))^2}{|p-p'|^{3+\gamma}}({p'^0}{p^0})^{\frac{{\singS+\gamma}}{4}}1_{|p-p'|\leq 1},
 \end{equation}
 and we use the notation \eqref{singS.defin} to define $\singS$ with \eqref{hard} and \eqref{soft}.  Here we further define the weighted $L^2$ norm $|\cdot|_{L^2_{l}}$ for $l\in \mathbb{R}$ as
$$
|f|_{L^2_{l}}\eqdef \int_{\rth}dp ~w^{l}(p)|f(p)|^2.
$$
This is a standard weighted isotropic $L^2$ based fractional derivative norm that is known to be finite for a large class of functions.   We remark that the norm is flat and not geometric, and this is one of the main differences from the non-relativistic case. We discuss this further in \eqref{Nsrho.norm}--\eqref{triple.norm}--\eqref{PDO.form}--\eqref{simple.calc2}.

The notation on the norm $|\cdot|$ refers to function space norms acting on $\mathbb{R}^3_p$ only. The analogous norm acting on $\mathbb{T}^3_x\times \mathbb{R}^3_p$ is denoted by $\|\cdot\|$. So that we have
$$
\|f\|^2_{I^{\singS,\gamma}}\eqdef \|\ |f|_{I^{\singS,\gamma}}\ \|^2_{L^2(\mathbb{T}^3)}.
$$
Given the weight \eqref{weight.function}, using $\singS$ from \eqref{singS.defin} we also define a general weighted fractional Sobolev norm as
\begin{multline}
\label{fractionalw}
|f|^2_{I^{\singS,\gamma}_l}
\\
\eqdef |w^lf|^2_{L^2_{\frac{\singS+\gamma}{2}}}+\int_\rth dp\int_\rth dp'\ w^{2l}(p)\frac{(f(p')-f(p))^2}{|p-p'|^{3+\gamma}}({p'^0}{p^0})^{\frac{{\singS+\gamma}}{4}}1_{|p-p'|\leq 1},
\end{multline}
where as usual $\singS = \singA$ under \eqref{hard}, and $\singS = -\singB$ under \eqref{soft}.  Then similarly
$$
\|f\|^2_{I^{\singS,\gamma}_l}\eqdef \|\ |f|_{I^{\singS,\gamma}_l}\ \|^2_{L^2(\mathbb{T}^3)}.
$$
The multi-indices $\alpha=(\alpha^1,\alpha^2,\alpha^3)$ 
will be used to record spatial derivatives.  We write 
$$
\partial^\alpha=\partial^{\alpha^1}_{x_1}\partial^{\alpha^2}_{x_2}\partial^{\alpha^3}_{x_3}.
$$ 
If each component of $\alpha$ is not greater than that of $\alpha'$, we write $\alpha\leq\alpha'$. Also, $\alpha<\alpha'$ means $\alpha\leq\alpha'$ and $|\alpha|<|\alpha'|$ where $|\alpha|=\alpha^1+\alpha^2+\alpha^3$.  
We further define the derivative space $I^{\singS,\gamma}_{N}(\mathbb{T}^3\times\rth)$ with integer $N\geq 0$ spatial derivatives by
$$
\|f\|^2_{I^{\singS,\gamma}_{N}}=\|f\|^2_{I^{\singS,\gamma}_{N}(\mathbb{T}^3\times\rth)}=\sum_{|\alpha|\leq N}\|\partial^\alpha f\|^2_{I^{\singS,\gamma}(\mathbb{T}^3\times\rth)}.
$$We also define the  weighted derivative space $I^{\singS, \gamma}_{l,N}(\mathbb{T}^3\times\rth)$ whose norm is given by
$$
\|f\|^2_{I^{\singS,\gamma}_{l,N}}=\|f\|^2_{I^{\singS,\gamma}_{l,N}(\mathbb{T}^3\times\rth)}=\sum_{|\alpha|\le N}\|\partial^\alpha f\|^2_{I^{\singS,\gamma}_{l}(\mathbb{T}^3\times\rth)}.
$$
We define the space $H^N=H^N(\mathbb{T}^3\times\rth)$ with integer $N\geq 0$ spatial derivatives as
 $$
\|f\|^2_{H^N}=\|f\|^2_{H^N(\mathbb{T}^3\times\rth)}=\sum_{|\alpha|\leq N}\|\partial^\alpha f\|^2_{L^2(\mathbb{T}^3\times\rth)}.
$$
We then define the space $H^N_l=H^N_l(\mathbb{T}^3\times\rth)$ by
$$
\|f\|^2_{H^N_l}=\|f\|^2_{H^N_l(\mathbb{T}^3\times\rth)}=\sum_{|\alpha|\le N}\|w^{l}\partial^\alpha f\|^2_{L^2(\mathbb{T}^3\times\rth)}.
$$
We sometimes denote the norm $\|f\|^2_{H^{N}_l}$ as $\|f\|^2_H$ for simplicity when there is no risk of ambiguity.  We remark that we always use $N$ to denote the number of derivatives, and we always use $l$ to denote the order of the weights so there is no ambiguity.

We will also consider the spatial derivative of $\Gamma$.  Recall that the linearization of the collision operator is given by (\ref{Gamma1}) and that the post-collisional variables $p'$ and $q'$ satisfy (\ref{p'}) and (\ref{q'}).
Then, we can define the spatial derivatives of the bilinear collision operator $\Gamma$ as
\begin{equation}\label{gamma.deriv}
    \partial^\alpha\Gamma(f,h)=\sum_{\alpha'\leq\alpha} C_{\alpha,\alpha'}\Gamma(\partial^{\alpha-\alpha'}f,\partial^{\alpha'}h),
\end{equation}
where $C_{\alpha,\alpha'}$ are non-negative constants.

Now, we state our main result as follows:
\begin{theorem}(Main Theorem)
	\label{MAIN}
	Fix $N\geq 2$, which represents the total number of spatial derivatives. Choose $f_0=f_0(x,p)\in H^N_{l+m}(\mathbb{T}^3\times\rth)$ in (\ref{pert})  which satisfies (\ref{zero}).  For the hard-interactions \eqref{hard} and the soft-interactions \eqref{soft} we can take any $m\ge 0$ and $l \ge 0$ for the existence and uniqueness.  
	
There is an $\eta_0>0$ such that if 
	$\|f_0\|_{H^N_{l+m}(\mathbb{T}^3\times\rth)} \leq \eta_0$, then there exists a unique global solution to the relativistic Boltzmann equation (\ref{RBE}), in the form (\ref{pert}), which satisfies
$$
f(t,x,p)\in L^\infty_t([0,\infty);H^N_l(\mathbb{T}^3\times\rth))\cap L^2_t((0,\infty);I^{\singS,\gamma}_{l,N}(\mathbb{T}^3\times\rth)),
$$
where we use the notation from \eqref{singS.defin}.  

For the hard-interactions \eqref{hard} we have exponential decay to equilibrium. For some fixed $\lambda >0$ and for any $l\ge 0$, we have the uniform estimate
	$$
	\|w^lf(t)\|_{H^N(\mathbb{T}^3\times\rth)}\lesssim e^{-\lambda t}\|w^lf_0\|_{H^N(\mathbb{T}^3\times\rth)}.
	$$ 
Furthermore, for the soft-interactions \eqref{soft}, fix any $m> 0$ and $l\geq |\singS+\gamma|/4>0$, then for $\|f_0\|_{H^N_{l+m}(\mathbb{T}^3\times\rth)}$ sufficiently small, we further have the polynomial decay  
$$
\|w^lf(t)\|_{H^N(\mathbb{T}^3\times\rth)}
\leq  \|w^{l+m}f_0\|_{H^N(\mathbb{T}^3\times\rth)}
\left(1+C_{l,m}
\frac{|\singS+\gamma|}{2m} t\right)^{-\frac{2m}{|\singS+\gamma|}},
$$
for some constant $C_{l,m}>0$.
\end{theorem}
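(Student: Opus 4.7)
\medskip

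\noindent\textbf{Proof proposal.} The plan is to run a standard near-equilibrium energy method, but carried out in the anisotropic/fractional norm $I^{\singS,\gamma}_{l,N}$ dictated by the non-cutoff linearized operator. Differentiating \eqref{Linearized B} by $\partial^\alpha$, pairing with $w^{2l}\partial^\alpha f$ in $L^2_{x,p}$, and summing over $|\alpha|\le N$, I would derive the formal energy identity
\begin{equation*}
\tfrac12\tfrac{d}{dt}\|f\|_{H^N_l}^2+\sum_{|\alpha|\le N}(w^{2l}\partial^\alpha Lf,\partial^\alpha f)=\sum_{|\alpha|\le N}(w^{2l}\partial^\alpha\Gamma(f,f),\partial^\alpha f).
\end{equation*}
The first main step is then to establish the weighted coercivity
\begin{equation*}
(w^{2l}\mathcal{N}f,f)\gtrsim |f|_{I^{\singS,\gamma}_l}^2,\qquad |(w^{2l}\mathcal{K}f,f)|\le \varepsilon|f|_{I^{\singS,\gamma}_l}^2+C_\varepsilon|f\mathbf{1}_{|p|\le R}|_{L^2}^2,
\end{equation*}
using the splitting \eqref{25}--\eqref{defK}--\eqref{defN}, the asymptotics \eqref{Paos}, and a careful analysis of the fractional integrand in \eqref{mainPartNorm.Nf}. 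This requires expanding $f(p')-f(p)$ in the relativistic geometry and relating the pre/post-collisional distance (which is where the identities for $\bar g,\tilde g$ in \eqref{gbar}--\eqref{gtilde} and the Carleman-type dual representation advertised in the abstract enter); the output is the comparison $|f|_{\mathtt B_l}\approx|f|_{I^{\singS,\gamma}_l}$ modulo a flat weighted $L^2$ remainder.

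The second step is the nonlinear trilinear estimate
\begin{equation*}
\bigl|\bigl(w^{2l}\partial^\alpha\Gamma(g,h),\partial^\alpha f\bigr)\bigr|\lesssim \|g\|_{H^N_l}\,\|h\|_{I^{\singS,\gamma}_{l,N}}\,\|f\|_{I^{\singS,\gamma}_{l,N}}\quad\text{(plus symmetric variants)},
\end{equation*}
proved by splitting the angular integration into $\theta\lesssim\langle p\rangle^{-1}$ cancellation and $\theta\gtrsim\langle p\rangle^{-1}$ Cauchy--Schwarz regimes, using \eqref{gamma.deriv} and the representation \eqref{Gamma1}, and moving $\sqrt{J(q)}$ weights across via $\sqrt{J(q)J(q')}$ bounds. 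Combined with the coercivity step and the macroscopic/positivity estimate recovering control of the hydrodynamic part of $f$ (via the conservation laws \eqref{zero} and a standard Poincar\'e-type argument on $\mathbb T^3$), I obtain the a priori inequality
\begin{equation*}
\tfrac{d}{dt}\|f\|_{H^N_l}^2+\lambda_0\|f\|_{I^{\singS,\gamma}_{l,N}}^2\le C\|f\|_{H^N_l}\,\|f\|_{I^{\singS,\gamma}_{l,N}}^2,
\end{equation*}
which closes by continuity for initial data of size $\eta_0$ small enough.

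Local existence and uniqueness are then obtained by the usual iteration scheme $\partial_t f^{n+1}+\hat p\cdot\nabla_x f^{n+1}+\mathcal N f^{n+1}=-\mathcal K f^n+\Gamma(f^n,f^n)$, for which the dissipative norm $|\cdot|_{I^{\singS,\gamma}_l}$ supplied by $\mathcal N$ gives a well-defined fractional parabolic solvability, and the a priori bound above propagates the smallness to all times, yielding the global solution in $L^\infty_tH^N_l\cap L^2_tI^{\singS,\gamma}_{l,N}$. For the large-time behavior, in the hard case \eqref{hard} one has $\singS+\gamma\ge 0$, so $\|f\|_{H^N_l}^2\lesssim \|f\|_{I^{\singS,\gamma}_{l,N}}^2$ and Gr\"onwall produces the exponential rate $e^{-\lambda t}$. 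In the soft case \eqref{soft} we have $\singS+\gamma<0$ and no spectral gap, so I would interpolate
\begin{equation*}
\|f\|_{H^N_l}^{2+\tfrac{2m}{|\singS+\gamma|/2}}\lesssim \|f\|_{I^{\singS,\gamma}_{l,N}}^{2}\cdot\|f\|_{H^N_{l+m}}^{\tfrac{2m}{|\singS+\gamma|/2}},
\end{equation*}
together with the propagated bound $\|f(t)\|_{H^N_{l+m}}\lesssim\|f_0\|_{H^N_{l+m}}$ from the same energy framework applied with the heavier weight, which gives a differential inequality of the form $\tfrac{d}{dt}y+c\,y^{1+2m/|\singS+\gamma|}\le 0$ and hence the algebraic rate $(1+C t)^{-2m/|\singS+\gamma|}$.

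The main obstacle I anticipate is the coercivity/compactness step: in the relativistic setting the natural Riemannian distance associated to the kernel is not geometric but flat (as the paper flags around \eqref{fractional}), so the $I^{\singS,\gamma}$ norm must be tied to the bilinear form $|\cdot|_{\mathtt B}$ using only the elementary $|p-p'|$ distance together with the identities \eqref{gbar}--\eqref{gtilde}; this in turn forces the Carleman-type dual representation to play the role that the non-isotropic metric plays in the non-relativistic Gressman--Strain analysis. Once that comparison is in hand, the remaining nonlinear and decay arguments follow the by-now-standard pattern, with the weighted interpolation driving the soft-potential polynomial rate.
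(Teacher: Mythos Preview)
Your high-level architecture is the same as the paper's: energy identity, coercivity of $\mathcal N$ plus smallness of $\mathcal K$, trilinear bound on $\Gamma$, macroscopic estimate on $Pf$, local existence by iteration, continuity argument, and weight-interpolation for the soft decay. Where you diverge from the paper is in \emph{which} technique handles \emph{which} step, and this matters because in the relativistic setting the two steps are not interchangeable.

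You place the Carleman-type dual representation in the coercivity step, to tie $|f|_{\mathtt B_l}$ to $|f|_{I^{\singS,\gamma}_l}$. The paper does \emph{not} use Carleman there: the coercive lower bound comes from a direct pointwise estimate on the kernel $K(p,p')$ (Proposition~\ref{coer}), which however only gives control on the restricted cone $\{|p'^0-p^0|\le \bar g\}$; the extra restriction is removed by the Fourier redistribution argument of Propositions~\ref{7.1}--\ref{prop.compare}. If you try to run a Carleman argument here you will not see where the obstruction $|p'^0-p^0|\le \bar g$ comes from or how to remove it.

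Conversely, for the trilinear estimate you propose the standard non-relativistic recipe: split $\theta$ at a threshold, use cancellation for small $\theta$, Cauchy--Schwarz for large $\theta$. This is exactly the step where the relativistic structure bites. The paper explains (and proves by an explicit counterexample) that the change of variables $q'\to q$ underlying the usual cancellation lemma is \emph{not} available here: the Jacobian is not integrable, and the relativistic analogue of \eqref{Newtonian.cancellation.lemma} is false. The paper's replacement is a dyadic decomposition in $\bar g\approx 2^{-k}$ paired with Littlewood--Paley decompositions of $h$ and $\eta$; the cancellation pieces are then handled via \emph{two} Carleman-type representations (Lemmas~\ref{lemma.carleman} and~\ref{transformation.Lemma.appendix}), the second of which is new and is needed precisely to get a cancellation estimate with the derivative landing on $h$ (Proposition~\ref{prop:cancellation2}). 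Your ``$\theta\lesssim\langle p\rangle^{-1}$'' split with only $\sqrt{J(q)J(q')}$ manipulations would not close.

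One arithmetic slip: your soft-potential ODE $y'+c\,y^{1+2m/|\singS+\gamma|}\le 0$ yields decay $t^{-|\singS+\gamma|/(2m)}$, not $t^{-2m/|\singS+\gamma|}$. The correct interpolation (as in \eqref{key soft}) gives the exponent $1+|\singS+\gamma|/(2m)$ in the ODE, which then produces the stated rate.
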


\subsection{Main estimates}\label{subsec.mainest}
The proof of Theorem \ref{MAIN} heavily depends on the establishment of a global in time energy inequality. For this, we needed to obtain sharp upper- and lower-bound estimates for the linearized operator $L$ and the nonlinear operator $\Gamma$. 
In this section, we would like to record our main upper and lower bound estimates of the inner products that involve the operators $\tilde{\zeta}$, $\Gamma$, $L$, $\mathcal{K}$, and $\mathcal{N}$ from \eqref{Gamma1}-\eqref{defN}. The proofs of these estimates are given in \secref{sec:frequency}, \secref{main upper} and \secref{main coercive estimates}. In general we will prove these estimates in the class of Schwartz functions.  However all of these estimates can be justified in general by standard approximation procedures.

In the theorem and the lemmas below we use $\singS = \singA$  in the hard-interaction case \eqref{hard} and $\singS = -\singB$ in the soft-interaction case \eqref{soft}, as in \eqref{singS.defin}.

\begin{theorem}\label{FREQ:main.thm}The \textit{frequency multiplier} $\tilde{\zeta}(p)$ from \eqref{FREQ:tildezeta} can be split into the sum of two frequency multiplier functions as 
$$\tilde{\zeta}=\zeta+\zeta_{\mathcal{K}},$$ 
which are defined in \eqref{FREQ:def.zeta} and \eqref{FREQ:zetaK.def} respectively. These multiplier functions satisfy the following asymptotics: 
\begin{equation}
\label{FREQ:Paos}
\begin{split}
&|\zeta_\mathcal{K}(p)|\leq C_\varepsilon {(p^0)}^{\frac{\singS}{2}+\varepsilon}
\hspace{5mm}\text{and}\hspace{5mm}
\zeta(p)\approx {(p^0)}^{\frac{{\singS+\gamma}}{2}}.
\end{split}
\end{equation}
Here, for any small $\varepsilon>0$ there exists a finite constant $C_\epsilon>0$ as above.
\end{theorem}

\begin{theorem}
	\label{thm1}
	We have the following uniform estimate
	 $$
	|\langle \Gamma(f,h),\eta\rangle| \lesssim |f|_{L^2}|h|_{I^{\singS,\gamma}}|\eta|_{I^{\singS,\gamma}}.
	$$
\end{theorem}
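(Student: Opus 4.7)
The plan is to prove this trilinear upper bound by exploiting the standard algebraic splitting of the collisional difference inside $\Gamma$ together with the pre-post change of variables, and then matching each resulting piece to one of the three norms on the right-hand side. Starting from the definition \eqref{Gamma1}, I would use the identity
$$f(q')h(p')-f(q)h(p)=f(q')\bigl(h(p')-h(p)\bigr)+h(p)\bigl(f(q')-f(q)\bigr),$$
so that $\langle\Gamma(f,h),\eta\rangle = T_1 + T_2$, where $T_1$ collects the $(h(p')-h(p))$ factor and $T_2$ collects the $(f(q')-f(q))$ factor. The point is that $T_1$ is tailor-made for the fractional seminorm on $h$, while $T_2$ can be recast, by symmetry in $q$, so that its $q'$-difference is absorbed into $\eta$ or into the Gaussian weight.

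For $T_1$, I would apply Cauchy--Schwarz in the joint measure $v_{\text{\o}}\sigma(g,\theta)\sqrt{J(q)J(q')}\,dp\,dq\,d\omega$, factoring the integrand as
$$\bigl(h(p')-h(p)\bigr)\sqrt{J(q')}^{\,1/2}\cdot f(q')\eta(p)\sqrt{J(q')}^{\,1/2},$$
so that one factor produces $|h|_{\mathtt B}^2$ and the other produces
$$\int v_{\text{\o}}\sigma(g,\theta)\sqrt{J(q)J(q')}\,f(q')^2\eta(p)^2\,dp\,dq\,d\omega.$$
Integrating out the part depending on $f(q')$ against the rapidly decaying $\sqrt{J(q')}$ yields a factor of $|f|_{L^2}^2$ (after a change of variables $q'\mapsto q'$ whose Jacobian is controlled via \cite{MR1105532}), and leaves behind a weighted expression in $\eta$ which, using the asymptotics \eqref{Paos} and the Gaussian in $q$, is bounded by $|\eta|_{I^{\singS,\gamma}}^2$ through a comparison of the non-local $\mathtt B$-seminorm with the flat norm defined in \eqref{fractional}. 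Finally, by a comparison lemma relating $|h|_{\mathtt B}$ to $|h|_{I^{\singS,\gamma}}$ (the relativistic analogue of the Gressman--Strain equivalence in \cite{GressmanStrain2010g}, using Carleman-type coordinates adapted to the center-of-momentum frame), this produces $|f|_{L^2}|h|_{I^{\singS,\gamma}}|\eta|_{I^{\singS,\gamma}}$.

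For $T_2$, the trick is to manufacture a pre-post symmetry in $q$: write
$$h(p)\bigl(f(q')-f(q)\bigr)\sqrt{J(q)}=h(p)\bigl(f(q')\sqrt{J(q')}-f(q)\sqrt{J(q)}\bigr)+h(p)f(q')\bigl(\sqrt{J(q)}-\sqrt{J(q')}\bigr).$$
For the first piece, the $(p,q)\to(p',q')$ change of variables \eqref{prepost.change} together with the symmetry of $\sigma$ transforms it into a term structurally identical to $T_1$ but with the roles of $h$ and $f$ swapped, which is then bounded by the same Cauchy--Schwarz scheme but using only $|f|_{L^2}$ in the place where the fractional norm of $h$ appeared (the $(h(p')-h(p))$ factor is gone after the change of variables, replaced by $h(p)$ which pairs with $\eta(p)$). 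The second, "remainder" piece benefits from the smoothness of $\sqrt{J}$: a first-order Taylor expansion gives $|\sqrt{J(q)}-\sqrt{J(q')}|\lesssim \theta\,(q^0+q'^0)^{k}e^{-\varepsilon q^0}$ for some $k,\varepsilon>0$, and the extra factor of $\theta$ renders the angular singularity \eqref{angassumption} integrable, so that this term is estimated by the cutoff-type bound $|f|_{L^2}|h|_{L^2_{(\singS+\gamma)/2}}|\eta|_{L^2_{(\singS+\gamma)/2}}$, which is controlled by the target right-hand side.

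The main obstacle I expect is the comparison between the intrinsic seminorm $|\cdot|_{\mathtt B}$ generated by the collisional difference and the flat Euclidean fractional norm $|\cdot|_{I^{\singS,\gamma}}$ from \eqref{fractional}. In the non-relativistic case this equivalence is handled by a geometric fractional norm that mirrors the collision geometry exactly; here the norm is chosen flat for convenience, and the weights $({p^0}{p'^0})^{(\singS+\gamma)/4}$ together with the indicator $1_{|p-p'|\le 1}$ must be calibrated against the genuine weights coming from $v_{\text{\o}}\sigma(g,\theta)\sqrt{J(q)J(q')}$ and the relativistic Jacobian. Establishing this comparison uniformly in the grazing regime $\theta\to 0$, in all of the regions $|p|,|q|$ small or large, and for both the hard \eqref{hard} and soft \eqref{soft} interaction cases, is the technical heart of the argument and is where the careful relativistic analogue of the Carleman dual representation mentioned in the abstract plays a decisive role.
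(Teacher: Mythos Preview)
Your approach has a fundamental gap at the very first step. In the Cauchy--Schwarz argument for $T_1$, the second factor
\[
\int_{\rth}\int_{\rth}\int_{\mathbb{S}^2} v_{\text{\o}}\,\sigma(g,\theta)\sqrt{J(q)J(q')}\,f(q')^2\,\eta(p)^2\,dp\,dq\,d\omega
\]
is infinite: for fixed $(p,q)$ the angular integral $\int_{\mathbb{S}^2}\sigma_0(\theta)\,d\omega$ diverges by \eqref{angassumption}, and there is no difference structure left to absorb the singularity once you have applied Cauchy--Schwarz. The same failure reappears in your treatment of the first piece of $T_2$: after the pre-post change you are left with a term of the form $h(p')\eta(p')\bigl(G(q)-G(q')\bigr)$ with $G=f\sqrt{J}$, and to close you would need a cancellation-lemma identity for $\int_{\mathbb{S}^2}\sigma\bigl(G(q)-G(q')\bigr)\,d\omega$. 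The paper shows explicitly (see the discussion around \eqref{Newtonian.cancellation.lemma} and the calculation $\tilde\zeta^B_2(p)=\infty$) that the relativistic analogue of this cancellation lemma is \emph{false}: the single-variable substitution $q'\to q$ with $p'$ fixed has a degenerate Jacobian and is not well defined. Your appeal to the Glassey--Strauss Jacobian does not rescue this, since \eqref{prepost.change} governs the full pre-post map $(p,q)\to(p',q')$, not the partial substitution you invoke.

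What the paper actually does is quite different and is engineered precisely to avoid these two obstructions. It dyadically localizes the angular singularity into pieces where $\bar g\approx 2^{-k}$, applies a Littlewood--Paley decomposition to both $h$ and $\eta$, and then matches scales. When the singularity scale $k$ is below the Littlewood--Paley frequency $j$ of the relevant function, the gain and loss terms are estimated separately by size (Propositions~\ref{T-proposition} and~\ref{T+proposition}); when $k>j$, a cancellation estimate is needed. Crucially, two distinct cancellation mechanisms are required: Proposition~\ref{Cancellationproposition} places the derivative on $\eta$ via the Carleman representation of Lemma~\ref{lemma.carleman}, while Proposition~\ref{prop:cancellation2} places it on $h$ via a new dual representation \eqref{dual4} of the trilinear form. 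This dual representation is the substitute for the failed relativistic cancellation lemma, and without it there is no way to bound the pieces of the sum in \eqref{trilinearsum} where $h$ carries the higher frequency. Your outline does not contain any analogue of this frequency-matched structure, and the single Cauchy--Schwarz split you propose cannot close because one of the resulting factors always carries the full non-integrable angular kernel with no compensating difference.
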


\begin{lemma}
	\label{Lemma1}
	Suppose that $|\alpha|\leq N$ with $N\geq 2$   and $l\geq 0$. Then we have the estimate
	$$
	|\left(w^{2l} \partial^\alpha\Gamma(f,h),\partial^\alpha\eta\right)| \lesssim \|f\|_{H^{N}_l}\|h\|_{I^{\singS,\gamma}_{l,N}}\|\partial^\alpha\eta\|_{I^{\singS,\gamma}_l}.
$$

\end{lemma}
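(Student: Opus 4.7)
The plan is to reduce Lemma \ref{Lemma1} to Theorem \ref{thm1} using three ingredients: a weighted pointwise-in-$x$ trilinear bound, the Leibniz identity \eqref{gamma.deriv}, and Sobolev embeddings on $\mathbb{T}^3$. As a preliminary step I would upgrade Theorem \ref{thm1} to the weighted trilinear inequality
$$
|\langle \Gamma(F,H),w^{2l}E\rangle| \lesssim |F|_{L^2_l}\,|H|_{I^{\singS,\gamma}_l}\,|E|_{I^{\singS,\gamma}_l}, \qquad l\geq 0,
$$
by distributing $w^{2l}(p)=w^l(p)\cdot w^l(p)$ into the two test functions. The pre-post collisional change of variables $(p,q)\leftrightarrow (p',q')$ combined with the splitting
$$
w^{2l}(p)=\tfrac12\bigl[w^{2l}(p)+w^{2l}(p')\bigr]-\tfrac12\bigl[w^{2l}(p')-w^{2l}(p)\bigr]
$$
reduces matters to a symmetrically weighted integral (to which Theorem \ref{thm1} applies after setting $\tilde H = w^l H$, $\tilde E = w^l E$ and verifying $|w^l G|_{I^{\singS,\gamma}}\lesssim |G|_{I^{\singS,\gamma}_l}$) plus a commutator involving $w^{2l}(p')-w^{2l}(p)$, which is controlled by the pointwise bound $|w^l(p')-w^l(p)|\lesssim |p-p'|({p^0}+{p'^0})^{l-1}$ under the localization $1_{|p-p'|\leq 1}$ present in \eqref{fractionalw}.

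With this weighted trilinear estimate in hand, I would use the Leibniz identity \eqref{gamma.deriv} to write
$$
\bigl(w^{2l}\partial^\alpha\Gamma(f,h),\partial^\alpha\eta\bigr)
=\sum_{\alpha'\leq\alpha}C_{\alpha,\alpha'}\bigl(w^{2l}\Gamma(\partial^{\alpha-\alpha'}f,\partial^{\alpha'}h),\partial^\alpha\eta\bigr),
$$
and apply the weighted trilinear bound pointwise in $x$ to each term with $F=\partial^{\alpha-\alpha'}f$, $H=\partial^{\alpha'}h$, $E=\partial^\alpha\eta$. Integrating over $x\in\mathbb{T}^3$ and using H\"older, I always place $|\partial^\alpha\eta|_{I^{\singS,\gamma}_l}$ into $L^2_x$. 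For the other two factors, since $|\alpha-\alpha'|+|\alpha'|=|\alpha|\leq N$ with $N\geq 2$, I split into cases: when $\min(|\alpha-\alpha'|,|\alpha'|)\leq N-2$, the corresponding factor is placed in $L^\infty_x$ via the Sobolev embedding $H^2(\mathbb{T}^3)\hookrightarrow L^\infty(\mathbb{T}^3)$; the only remaining case, possible only for $N=2$ with $|\alpha-\alpha'|=|\alpha'|=1$, is handled by $L^4_x\times L^4_x$ via $H^1(\mathbb{T}^3)\hookrightarrow L^4(\mathbb{T}^3)$. Summing over the finite set of $\alpha'\leq\alpha$ then produces the inequality claimed in the lemma.

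The principal obstacle is the weighted trilinear estimate itself. The difficulty stems from the asymmetric way $w^{2l}(p)$ interacts with $\Gamma(F,H)$, which sees values of $H$ at both $p$ and $p'$: the standard pre-post symmetrization underlying Theorem \ref{thm1} is not preserved by multiplication by $w^{2l}(p)$, so one is forced to control a weight commutator. This is where the flat (non-geometric) definition of $|\cdot|_{I^{\singS,\gamma}_l}$ in \eqref{fractionalw}, noted in the paragraph following \eqref{fractional}, becomes decisive: the cutoff $1_{|p-p'|\leq 1}$ lets weight differences be estimated by $({p^0}+{p'^0})^{l-1}|p-p'|$, and the resulting factor of $|p-p'|$ is absorbed by the fractional structure of the norm, with the relativistic identities for $s,g,\bar g,\tilde g$ in \eqref{s}--\eqref{gtilde} used to relate the collision-invariant quantities to pointwise momentum bounds.
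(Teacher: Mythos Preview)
Your Leibniz--Sobolev framework (steps 2--4 of your plan) is correct and is exactly what the paper does: apply \eqref{gamma.deriv}, use a pointwise-in-$x$ weighted trilinear bound, and close with the embedding $H^2(\mathbb{T}^3)\hookrightarrow L^\infty$. The paper is even terser here; it simply invokes \eqref{mainup} and \eqref{mainup2}, which together give
\[
|\langle w^{2l}\Gamma(F,H),E\rangle|\lesssim |w^{l}F|_{L^2}\,|H|_{I^{\singS,\gamma}_l}\,|E|_{I^{\singS,\gamma}_l}.
\]

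The genuine difference, and the gap in your proposal, is in how this weighted trilinear bound is obtained. The paper does \emph{not} reduce it to Theorem~\ref{thm1}. All of the dyadic building blocks in Section~\ref{main upper}---Propositions~\ref{T-proposition}, \ref{T+proposition}, \ref{Cancellationproposition}, \ref{prop:cancellation2}---are proved with the weight $w^{2l}$ carried through from the outset (the decomposed operators $T^{k,l}_\pm$ in \eqref{T+++} already contain $w^{2l}(p)$). Thus \eqref{mainup}--\eqref{mainup2} and Theorem~\ref{thm1} (the case $l=0$) emerge simultaneously from the same calculation; there is no bootstrap from the unweighted to the weighted estimate.

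Your proposed bootstrap has a concrete flaw. The splitting $w^{2l}(p)=\tfrac12[w^{2l}(p)+w^{2l}(p')]-\tfrac12[w^{2l}(p')-w^{2l}(p)]$ does not reduce the ``symmetric'' part to Theorem~\ref{thm1} via $\tilde H=w^lH$, $\tilde E=w^lE$, because $\tfrac12[w^{2l}(p)+w^{2l}(p')]\neq w^l(p)w^l(p')$. The decomposition that actually produces $\langle\Gamma(F,w^lH),w^lE\rangle$ as the main term is $w^{2l}(p)=w^l(p)w^l(p')+w^l(p)[w^l(p)-w^l(p')]$. Even with that fix, the commutator is a \emph{gain-only} integral
\[
\int dp\,dq\,d\omega\,v_{\text{\o}}\sigma\sqrt{J(q)}\,F(q')H(p')\,w^l(p)[w^l(p)-w^l(p')]\,E(p),
\]
with no loss counterpart to cancel against. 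The angular kernel is non-integrable, so you cannot simply quote the pointwise bound ``under the localization $1_{|p-p'|\le 1}$'': this integral is not localized. You must show directly that the factor $w^l(p)-w^l(p')$ supplies one power of $\bar g$ (via $|p-p'|\le\bar g\sqrt{q^0q'^0}$), absorb the resulting $\sqrt{q^0q'^0}$ growth using $\sqrt{J(q)}$ and Lemma~\ref{lemmaqq} near the singularity, and treat the region $\bar g\gtrsim 1$ by separate size estimates as in Proposition~\ref{T+proposition}. This is feasible---it is essentially the argument for part~III of \eqref{canceldecompp} and for the term $I'$ in the proof of Lemma~\ref{Nfcoercivitylemma}---but it is substantial work, not a consequence of Theorem~\ref{thm1}.
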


\begin{lemma}
	\label{Lemma2}
	We have the uniform inequality for $\mathcal{K}$ that
	 $$
	|\langle w^{2l}\mathcal{K}f,f\rangle | \leq \epsilon|  f|^2_{I^{\singS,\gamma}_l}+C_\epsilon|  f|^2_{L^2(B_{C_\epsilon})}
	$$
	where $\epsilon>0$ is any small number and $C_\epsilon>0$ is a finite constant.  
\end{lemma}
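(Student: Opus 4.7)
I will decompose $\mathcal{K}f = \zeta_{\mathcal{K}}(p) f - \Gamma(f, \sqrt{J})$ from \eqref{defK} and treat the two terms separately. The weight term is straightforward via the pointwise asymptotic \eqref{Paos}, while the bilinear term is the main technical obstacle.

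For the weight term $\int_{\mathbb{R}^3} w^{2l}(p) \zeta_{\mathcal{K}}(p) |f(p)|^2 \, dp$, I would apply $|\zeta_{\mathcal{K}}(p)| \lesssim (p^0)^{\singS/2 + \varepsilon}$ from \eqref{Paos} with a fixed $\varepsilon \in (0, \gamma/2)$ and split the momentum integral into $B_R$ and $\mathbb{R}^3 \setminus B_R$. The interior is dominated by $C_R |f|^2_{L^2(B_R)}$, and the exterior, using the strict inequality $\singS/2 + \varepsilon < (\singS+\gamma)/2$, is controlled by
\begin{equation*}
R^{-(\gamma/2 - \varepsilon)} \int_{\mathbb{R}^3} (p^0)^{2l + (\singS+\gamma)/2} |f(p)|^2 \, dp \le R^{-(\gamma/2-\varepsilon)} |f|^2_{I^{\singS,\gamma}_l},
\end{equation*}
by the definition \eqref{fractionalw}. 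Choosing $R = R(\epsilon)$ large enough that $R^{-(\gamma/2-\varepsilon)} < \epsilon$ completes this step.

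For the bilinear term $\langle w^{2l}\Gamma(f, \sqrt{J}), f\rangle$, I would first establish a weighted analog of Theorem \ref{thm1} of the form
\begin{equation*}
|\langle w^{2l}\Gamma(f, h), f\rangle| \lesssim |f|_{L^2_{2l}} \, |h|_{I^{\singS,\gamma}_l} \, |f|_{I^{\singS,\gamma}_l},
\end{equation*}
obtained by tracking the weight $w^{2l}(p)$ through the proof of Theorem \ref{thm1} (this is essentially the momentum-only version of Lemma \ref{Lemma1} with $\alpha = 0$). Specializing to $h = \sqrt{J}$, whose $I^{\singS,\gamma}_l$ norm is finite by the Gaussian decay, and applying Young's inequality yields $\epsilon |f|^2_{I^{\singS,\gamma}_l} + C_\epsilon |f|^2_{L^2_{2l}}$. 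To promote $|f|^2_{L^2_{2l}}$ to the compactly supported $|f|^2_{L^2(B_R)}$, I would use a Carleman-type kernel representation for $\Gamma(\cdot, \sqrt{J})$: after a pre-post collisional change of variables on the gain-type piece, the bilinear form takes the shape $\int\!\!\int K(p,q) f(p) f(q) \, dp\, dq$, with $K(p,q)$ inheriting a Gaussian factor from $\sqrt{J}$. This Gaussian decay forces the large-momentum region $\{|p| > R\} \cup \{|q| > R\}$ to contribute only $O(e^{-cR})$ times $|f|^2_{L^2}$, which is reabsorbed into $\epsilon |f|^2_{I^{\singS,\gamma}_l}$ for $R$ large, leaving only the local $C_\epsilon |f|^2_{L^2(B_R)}$ piece.

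The main obstacle will be carrying out the bilinear estimate uniformly in the weight and extracting the Gaussian decay from the kernel despite the non-integrable angular singularity $\sigma_0(\theta) \sim \theta^{-2-\gamma}$. One must (i) use the cancellation $f(q')\sqrt{J(p')} - f(q)\sqrt{J(p)}$ via Taylor expansion in the angular variable to make sense of $\int_{\mathbb{S}^2}\!d\omega \, \sigma$ near $\theta = 0$; (ii) handle the relativistic post-collisional geometry \eqref{p'}--\eqref{q'} so that the weight $w^{2l}(p)$ can be compared with $w^{2l}(p')$ under the pre-post change of variables; and (iii) control the M\o{}ller velocity $v_{\text{\o}}$ uniformly across the singular region. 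The relativistic Carleman dual representation developed in this paper should provide the natural framework for producing explicit kernel bounds that make the Gaussian decay visible.
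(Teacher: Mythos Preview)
Your treatment of the $\zeta_{\mathcal{K}}$ term is correct and matches the paper's reasoning. The gap is in your handling of $\langle w^{2l}\Gamma(f,\sqrt{J}),f\rangle$.

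Your first step---applying the general weighted trilinear estimate and Young's inequality---yields $\epsilon|f|^2_{I^{\singS,\gamma}_l}+C_\epsilon|w^lf|^2_{L^2}$. For the hard interactions \eqref{hard} this already suffices, since $(\singS+\gamma)/2\ge 0$ lets you absorb the exterior of $B_R$ into $|f|^2_{I^{\singS,\gamma}_l}$. For the soft interactions \eqref{soft}, however, $(\singS+\gamma)/2<0$, so $|w^lf|^2_{L^2}$ carries a \emph{heavier} weight than the $L^2$ part of $|f|^2_{I^{\singS,\gamma}_l}$, and the ball-splitting fails. Your proposed remedy---a Carleman representation giving a kernel $K(p,q)$ with Gaussian decay in both variables---does not hold. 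In the gain term the functions $f$ sit at $p$ and $q'$, while the Gaussians $\sqrt{J(q)}\sqrt{J(p')}$ sit at $q$ and $p'$. For collisions near $\theta=\pi/2$ one can take $p\approx q'$ large with $q\approx p'$ small (consistent with \eqref{conservation} and $\cos\theta\ge 0$), and then $\sqrt{J(q)}\sqrt{J(p')}=O(1)$ gives no decay at the $f$-variables. So the kernel is \emph{not} exponentially small for large $|p|$, and your $O(e^{-cR})|f|^2_{L^2}$ claim is false.

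The paper avoids this by exploiting the rapid decay of $\sqrt{J}$ in the \emph{middle} slot from the outset, rather than first passing through the generic trilinear bound. The key refinement is Proposition~\ref{prop:GradRap}, in particular \eqref{tminushRAPplus}: when $h=\phi$ satisfies \eqref{derivESTa}, the $T^{k,l}_+$ estimate improves to $|T^{k,l}_+(f,\phi,\eta)|\lesssim 2^{\gamma k}|w^lf|_{L^2_{(\singS+\gamma-1)/2}}|w^l\eta|_{L^2_{(\singS+\gamma)/2}}$, gaining the extra decay $(p^0)^{-1/2}$ on the first argument via the Carleman reduction \eqref{claimc2}. Summing the dyadic pieces yields \eqref{C2}, and combining with \eqref{Paos} gives \eqref{compactest}: $|\langle w^{2l}\mathcal{K}f,f\rangle|\lesssim |w^lf|_{L^2_{\singS/2}}|f|_{I^{\singS,\gamma}_l}$. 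Now Young's inequality produces $C_\epsilon|w^lf|^2_{L^2_{\singS/2}}$, and since $\singS/2<(\singS+\gamma)/2$ strictly (because $\gamma>0$), the exterior of $B_R$ is bounded by $C_\epsilon R^{-\gamma/2}|f|^2_{I^{\singS,\gamma}_l}$, which is small for $R$ large in \emph{both} the hard and soft cases. The point is not Gaussian decay of a kernel, but a uniform gain of half a power in the weight on $f$.
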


\begin{lemma}
	\label{Nfupperboundlemma}
	We have the uniform inequality for $\mathcal{N}$ that
	 $$
	|\langle w^{2l} \mathcal{N}f,f\rangle | \lesssim |f|^2_{I^{\singS,\gamma}_l}.
	$$
	
\end{lemma}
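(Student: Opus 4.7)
The plan is to substitute the definition \eqref{defN} of $\mathcal{N}$ into the inner product and then symmetrize the nonlocal piece so that the sharp quadratic form $|f|^2_{\mathtt{B}_{2l}}$ emerges explicitly, plus a lower-order remainder. Starting from
\begin{equation*}
\langle w^{2l}\mathcal{N}f,f\rangle = \int_{\rth} w^{2l}(p)\zeta(p)|f(p)|^2\,dp \,-\, \int\!\!\int\!\!\int v_{\text{\o}}\sigma(g,\theta)\, w^{2l}(p)\, f(p)(f(p')-f(p))\sqrt{J(q)J(q')},
\end{equation*}
I would use the elementary identity $-f(p)(f(p')-f(p)) = \tfrac12(f(p')-f(p))^2 + \tfrac12(f(p)^2-f(p')^2)$, and then on the piece carrying $f(p')^2$ apply the pre-post collisional change of variables $(p,q)\leftrightarrow(p',q')$ (under which $v_{\text{\o}}$, $\sigma(g,\theta)$, and $\sqrt{J(q)J(q')}$ are invariant, as used just above \eqref{mainPartNorm.Nf}). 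This yields the three-term decomposition
\begin{equation*}
\langle w^{2l}\mathcal{N}f,f\rangle \;=\; \int w^{2l}\zeta|f|^2\,dp \;+\; |f|^2_{\mathtt{B}_{2l}} \;+\; \tfrac12 \int\!\!\int\!\!\int v_{\text{\o}}\sigma(g,\theta)\,(w^{2l}(p)-w^{2l}(p'))\,f(p)^2\,\sqrt{J(q)J(q')}.
\end{equation*}

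The first two terms are nonnegative and will be bounded directly by $|f|^2_{I^{\singS,\gamma}_l}$. For the $\zeta$-weighted piece, the asymptotic $\zeta(p)\approx(p^0)^{(\singS+\gamma)/2}$ from \eqref{Paos} gives $\int w^{2l}\zeta|f|^2\,dp \lesssim |w^l f|^2_{L^2_{(\singS+\gamma)/2}}$, which is exactly the local contribution in $|f|^2_{I^{\singS,\gamma}_l}$ from \eqref{fractionalw}. For the weighted $\mathtt{B}$-semi-norm $|f|^2_{\mathtt{B}_{2l}}$, I would compare directly against the nonlocal fractional part of $|f|^2_{I^{\singS,\gamma}_l}$: plugging in $v_{\text{\o}}\sigma(g,\theta)=\frac{g\sqrt{s}}{p^0 q^0}\Phi(g)\sigma_0(\theta)$ from \eqref{Moller.def}, using the bound $|p-p'|\lesssim g\sin(\theta/2)$ that follows from \eqref{p'}, together with the singular behavior $\sigma_0(\theta)\sin\theta\lesssim\theta^{-1-\gamma}$ from \eqref{angassumption}, one reparametrizes the angular integral by exchanging $(\omega,q)\leftrightarrow(p',q')$ via the Jacobian of \eqref{p'}-\eqref{q'} and matches the weight $({p^0}{p'^0})^{(\singS+\gamma)/4}$ and singular factor $|p-p'|^{-3-\gamma}$ appearing in \eqref{fractionalw}. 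The Gaussian factor $\sqrt{J(q)J(q')}$ supplies the decay in $q$ needed to absorb extra polynomial weights.

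The remaining and most delicate step is estimating the cancellation remainder $\mathcal{E}(f) \eqdef \tfrac12\int\!\!\int\!\!\int v_{\text{\o}}\sigma(g,\theta)\, f(p)^2\, (w^{2l}(p)-w^{2l}(p'))\sqrt{J(q)J(q')}$. The key observation is that for grazing collisions $|p-p'|$ is small, so a Taylor expansion $w^{2l}(p)-w^{2l}(p') = \nabla w^{2l}(\bar{p})\cdot(p-p')$ along some intermediate point $\bar p$, combined again with $|p-p'|\lesssim g\sin(\theta/2)$, produces an extra factor of $\sin(\theta/2)$ which, against the singular angular kernel $\sigma_0(\theta)\sin\theta\lesssim\theta^{-1-\gamma}$ with $\gamma\in(0,1)$, leaves the integrable factor $\theta^{-\gamma}$. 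After integrating in $\theta$ the surviving integrand is a weighted $L^2$ density in $p$, and the Gaussian decay of $\sqrt{J(q)J(q')}$ absorbs the leftover polynomial weights in $p, p', q, q'$, ultimately giving $|\mathcal{E}(f)| \lesssim |w^l f|^2_{L^2_{(\singS+\gamma)/2}}$, which again sits inside $|f|^2_{I^{\singS,\gamma}_l}$. The main obstacle is doing the weight-tracking cleanly across both the hard-interaction case \eqref{hard} and the soft-interaction case \eqref{soft}, since $\singS$ changes sign and the growth of $\Phi(g)=C_\Phi g^{\singS}$ interacts differently with the factor $g\sin(\theta/2)$ obtained from Taylor expansion; one has to split the integration into regimes based on the sizes of $|p|$, $|q|$ and $\theta$, relying on the Gaussian to handle the non-grazing and large-$q$ regions.
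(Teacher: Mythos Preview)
Your three-term decomposition is exactly what the paper uses for the \emph{lower} bound in Lemma~\ref{Nfcoercivitylemma} (it is the identity producing $|f|^2_{\mathtt{B}_{2l}}+I'$ there). For the upper bound the paper takes a completely different route: since $\mathcal N f=-\Gamma(\sqrt{J},f)-\zeta_{\mathcal K}f$, it simply invokes the trilinear estimate \eqref{C1} with $\phi=\sqrt J$, together with the $\zeta_{\mathcal K}$ asymptotic \eqref{Paos}. That one-line reduction conceals the entire machinery of Section~\ref{main upper} --- the dyadic decomposition of the singularity, the size and cancellation estimates for $T^{k,l}_\pm$, the Carleman and dual representations, and the Littlewood--Paley summation of Section~\ref{LP decomp}.

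The genuine gap in your approach is step (b), the claim $|f|^2_{\mathtt{B}_{2l}}\lesssim|f|^2_{I^{\singS,\gamma}_l}$ by direct kernel comparison. First, the pointwise bound ``$|p-p'|\lesssim g\sin(\theta/2)$'' is false in the relativistic case: from \eqref{bargoverg} and \eqref{g.lower.est} one only has $|p-p'|\le \bar g\sqrt{q^0 q'^0}=g\sin(\theta/2)\sqrt{q^0 q'^0}$. In step (c) the extra weight is absorbed by $\sqrt{J(q)J(q')}$, so your treatment of $\mathcal E(f)$ is salvageable and resembles the paper's $I'$ estimate. But in step (b) the mismatch is structural: after reduction (cf.\ \eqref{BK}) the $\mathtt B$-norm carries the singularity $\bar g^{-3-\gamma}$, while the target norm carries $|p-p'|^{-3-\gamma}$, and $\bar g$ can be smaller than $|p-p'|$ by a factor $\sqrt{p^0 p'^0}$. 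A crude upper bound on the kernel $K(p,p')$ (carrying the pointwise estimates of Lemma~\ref{pointwise.lemma} through the reduction of Proposition~\ref{coer}) leaves uncontrolled polynomial growth in $p^0$, and the region $|p^0-p'^0|\gg\bar g$ flagged at \eqref{extra.assumption} has no Gaussian in $p$ to absorb it. Your phrase ``reparametrize and match the weight'' does not establish the needed sharp upper bound on $K(p,p')$; the paper's trilinear/Littlewood--Paley apparatus is designed precisely to avoid confronting this estimate directly.
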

\begin{lemma}
	\label{Nfcoercivitylemma}
	We have the uniform coercive lower bound estimate:
	 $$
	\langle w^{2l} \mathcal{N}f,f\rangle \gtrsim |f|^2_{I^{\singS,\gamma}_l}-C|f|^2_{L^2(B_C)}
	$$
	for some $C\geq0$ and for any $l\in\mathbb{R}$. If $l=0$, then we can take $C=0$.
\end{lemma}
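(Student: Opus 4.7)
The plan is a two-step reduction: first prove the case $l=0$ directly from the identity \eqref{mainPartNorm.Nf}, and then bootstrap to general $l\in\mathbb{R}$ via a commutator computation with the weight $w^l$.

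For $l=0$, \eqref{mainPartNorm.Nf} together with the asymptotic $\zeta(p)\approx ({p^0})^{(\singS+\gamma)/2}$ from \eqref{Paos} gives
$$
\langle \mathcal{N}f,f\rangle = \frac{1}{2}|f|^2_{\mathtt{B}} + |f|^2_{L^2_{(\singS+\gamma)/2}},
$$
and the second term exactly matches the multiplicative part of $|f|^2_{I^{\singS,\gamma}}$ in \eqref{fractional}. Hence the task reduces to showing
$$
|f|^2_{\mathtt{B}} \gtrsim \int_\rth dp\int_\rth dp'\,\frac{(f(p')-f(p))^2}{|p-p'|^{3+\gamma}}({p'^0}{p^0})^{(\singS+\gamma)/4}1_{|p-p'|\le 1}.
$$
I would prove this by restricting the $(q,\omega)$ integration in $|f|^2_{\mathtt{B}}$ to a region where $q$ lies in a fixed bounded set (so that $\sqrt{J(q)J(q')}$ is bounded below) and where the scattering is grazing, $\theta\in(0,\theta_0]$. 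Under the conservation formulas \eqref{p'}--\eqref{q'}, the change of variables $(q,\omega)\mapsto p'$ has a Jacobian that can be computed explicitly; combined with the M{\o}ller factor \eqref{Moller.def}, the angular singularity $\sigma_0(\theta)\sin\theta\approx \theta^{-1-\gamma}$ from \eqref{angassumption}, and the grazing-regime identity $|p-p'|\approx g\sin(\theta/2)$ coming from \eqref{p'}, this produces precisely the kernel $|p-p'|^{-3-\gamma}({p'^0}{p^0})^{(\singS+\gamma)/4}$ after integrating out the transverse variables. Since this lower bound has no compact remainder, the sharp assertion $C=0$ at $l=0$ follows.

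For general $l\in\mathbb{R}$ the key identity is
$$
w^l(p')f(p')-w^l(p)f(p) = w^l(p)\bigl(f(p')-f(p)\bigr) + f(p')\bigl(w^l(p')-w^l(p)\bigr).
$$
Squaring and applying a pre-post collisional change of variables as in the derivation of \eqref{mainPartNorm.Nf}, one rewrites
$$
\langle w^{2l}\mathcal{N}f,f\rangle = \frac{1}{2}|w^l f|^2_{\mathtt{B}} + |w^l f|^2_{L^2_{(\singS+\gamma)/2}} + \mathcal{E}(f),
$$
where $\mathcal{E}(f)$ is a cross-term involving the weight commutator $w^l(p')-w^l(p)$. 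The first two terms are bounded below by $|w^l f|^2_{I^{\singS,\gamma}}$ via the $l=0$ step applied to $w^l f$, and the elementary inequality $(a+b)^2\ge \frac{1}{2}a^2-b^2$ applied to the identity above yields $|w^l f|^2_{I^{\singS,\gamma}}\gtrsim |f|^2_{I^{\singS,\gamma}_l}-C|f|^2_{L^2(B_C)}$, the remainder absorbing the discrepancy between the symmetric weight $w^l(p)w^l(p')$ appearing inside $|w^l f|^2_{I^{\singS,\gamma}}$ and the one-sided weight $w^{2l}(p)$ in \eqref{fractionalw}.

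The main obstacle will be controlling the commutator error $\mathcal{E}(f)$. Using the smoothness bound $|w^l(p')-w^l(p)|\lesssim |p-p'|({p^0}+{p'^0})^{l-1}$ on $|p-p'|\le 1$ together with an $\epsilon$-Young inequality, one factor of $f(p')-f(p)$ is absorbed into $\epsilon|f|^2_{I^{\singS,\gamma}_l}$, while the complementary factor degrades into a strictly lower-order weighted $L^2$ term (the gain $|p-p'|^{2}$ in the numerator restores integrability of $|p-p'|^{-1-\gamma}$ since $\gamma<2$). A standard compact interpolation between weighted $L^2$ and $L^2(B_C)$ then bounds this residual by $C_\epsilon|f|^2_{L^2(B_C)}$. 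The complementary range $|p-p'|\ge 1$ is handled directly by Cauchy--Schwarz against the exponential decay of $\sqrt{J(q)J(q')}$, contributing only an $L^2(B_C)$ term.
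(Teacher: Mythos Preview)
Your $l=0$ argument has a genuine gap. The direct change of variables you sketch does \emph{not} produce the unrestricted kernel $|p-p'|^{-3-\gamma}(p^0p'^0)^{(\singS+\gamma)/4}$ on all of $\{|p-p'|\le 1\}$. What one actually obtains---and this is computed carefully in the paper via the kernel
$$K(p,p')=\int_{\rth}\frac{dq}{q^0}\int_{\rth}\frac{dq'}{q'^0}\,\delta^{(4)}(p'^\mu+q'^\mu-p^\mu-q^\mu)\, s\,g^{\singS+\gamma+2}\,\bar g\, e^{-\frac{q^0+q'^0}{2}}$$
(Proposition~\ref{coer})---is a pointwise lower bound of the form
$$K(p,p')\gtrsim 1_{\bar g\le 1}\,1_{|p'^0-p^0|\le \bar g}\,(p^0p'^0)^{\frac{\singS+\gamma}{4}+1},$$
which carries the additional angular restriction $|p'^0-p^0|\le \frac{1}{\sqrt{2}}|p-p'|$. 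The paper explains why this restriction is unavoidable at the pointwise level: in the reduction of $K(p,p')$ to a radial integral, the effective domain is $\{r^2\ge \bar s\,|p'^0-p^0|^2/\bar g^2\}$, and when $p'-p$ is nearly parallel to $p$ (so $|p'^0-p^0|/\bar g\to\infty$) the exponential $e^{-\sqrt{r^2+\bar s}/2}$ kills the integral. Restricting $q$ to a bounded set, as you propose, is exactly the situation captured by this calculation and does not evade the cone restriction.

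Removing that angular cutoff is the real content of the $l=0$ case. The paper does it by the \emph{Fourier redistribution} argument (Propositions~\ref{7.1} and~\ref{prop.compare}), comparing the two kernels $K_1(u)=|u|^{-3-\gamma}1_{|u|\le 1}$ and $K_2(u)=K_1(u)\,1_{|u^0|\le\epsilon|u|}$ on the Fourier side after localizing with a partition of unity of small support. This step, together with the $L^2_{(\singS+\gamma)/2}$ piece absorbing the residual lower-order term, is what yields $C=0$ at $l=0$. Your proposal contains no mechanism for this; the claim that ``this lower bound has no compact remainder'' is precisely where the argument fails.

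Your treatment of $l\ne 0$ is close in spirit to the paper's, though the decomposition you write is not quite correct: the pre-post change of variables on $\langle w^{2l}\mathcal Nf,f\rangle$ produces $|f|^2_{\mathtt{B}_{2l}}$ (weight $w^{2l}(p)$ on the difference), not $\tfrac12|w^lf|^2_{\mathtt{B}}$. The paper then reruns the Fourier redistribution with $\frac{\singS+\gamma}{2}\mapsto \frac{\singS+\gamma}{2}+2l$ rather than applying the $l=0$ result to $w^lf$. The commutator error $I'$ is controlled, as you suggest, via $|w^{2l}(p')-w^{2l}(p)|\lesssim |p'-p|\,w^{2l}(p)(p^0)^{-\delta}$ and Cauchy--Schwarz against $|f|_{\mathtt{B}_{2l}}$, landing in a strictly lower-weight $L^2$ norm that is then split into a small multiple of the main term plus $L^2(B_C)$.
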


Lemma \ref{Nfupperboundlemma} and Lemma \ref{Nfcoercivitylemma}  together imply that the norm piece is comparable to the fractional Sobolev norm $I^{\singS,\gamma}$ as
\begin{equation}\label{NL2equiv}
\langle \mathcal{N}f,f\rangle \approx |f|^2_{I^{\singS,\gamma}}.
\end{equation}
We lastly have the coercive inequality for the linearized Boltzmann operator:
\begin{lemma}
	\label{2.10}
	For some $C>0$, we have the uniform lower bound 
	 $$
	\langle w^{2l} Lf,f\rangle \gtrsim |f|^2_{I^{\singS,\gamma}_l}-C|f|^2_{L^2(B_C)}. 
	$$
\end{lemma}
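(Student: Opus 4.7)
\medskip

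\noindent\textbf{Proof plan for Lemma \ref{2.10}.} The strategy is to exploit the splitting $L=\mathcal{N}+\mathcal{K}$ introduced in \eqref{defK}--\eqref{defN}, so that
\begin{equation*}
\langle w^{2l} Lf,f\rangle = \langle w^{2l} \mathcal{N}f,f\rangle + \langle w^{2l} \mathcal{K}f,f\rangle.
\end{equation*}
The norm part $\mathcal{N}$ is, by its construction, designed to be the coercive piece controlling the fractional seminorm $|\cdot|_{\mathtt{B}_l}$, while the compact part $\mathcal{K}$ is only lower order and can be absorbed up to an $L^2$ remainder on a ball. This is precisely the content of the two preceding lemmas, so the proof will be essentially a combination argument.

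\medskip

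\noindent First, I will apply Lemma \ref{Nfcoercivitylemma} to the first term to obtain a constant $c_0>0$ (depending on the implicit constants in the previous estimates) and a radius $C_0\ge 0$ such that
\begin{equation*}
\langle w^{2l}\mathcal{N}f,f\rangle \;\ge\; c_0\,|f|^2_{I^{\singS,\gamma}_l} \;-\; C_0\,|f|^2_{L^2(B_{C_0})}.
\end{equation*}
Next, I will apply Lemma \ref{Lemma2} to the $\mathcal{K}$ term with a free parameter $\epsilon>0$ to be chosen, giving a radius $C_\epsilon$ and the bound
\begin{equation*}
\bigl|\langle w^{2l}\mathcal{K}f,f\rangle\bigr| \;\le\; \epsilon\,|f|^2_{I^{\singS,\gamma}_l} \;+\; C_\epsilon\,|f|^2_{L^2(B_{C_\epsilon})}.
\end{equation*}

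\medskip

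\noindent Combining the two inequalities yields
\begin{equation*}
\langle w^{2l} Lf,f\rangle \;\ge\; (c_0-\epsilon)\,|f|^2_{I^{\singS,\gamma}_l} \;-\; C_0\,|f|^2_{L^2(B_{C_0})} \;-\; C_\epsilon\,|f|^2_{L^2(B_{C_\epsilon})}.
\end{equation*}
Choosing $\epsilon = c_0/2$ fixes a particular $C_\epsilon$, and setting $C \eqdef \max\{C_0, C_\epsilon, 2C_0/c_0, 2C_\epsilon/c_0\}$ (so that a single ball $B_C$ majorizes both $L^2$ remainders) gives the desired bound
\begin{equation*}
\langle w^{2l} Lf,f\rangle \;\gtrsim\; |f|^2_{I^{\singS,\gamma}_l} \;-\; C\,|f|^2_{L^2(B_C)}.
\end{equation*}

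\medskip

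\noindent The proof is essentially a one-line combination once Lemmas \ref{Lemma2} and \ref{Nfcoercivitylemma} are in hand, and no step is conceptually difficult here. The only care needed is in the order of quantifiers: the constants $c_0$, $C_0$ come first from Lemma \ref{Nfcoercivitylemma}, and then $\epsilon$ is chosen in terms of $c_0$ before invoking Lemma \ref{Lemma2}, so that $C_\epsilon$ is a fixed absolute constant at the end. The substance of the work has already been done in establishing the sharp upper bound for $\mathcal{K}$ (where the smallness parameter $\epsilon$ in front of the fractional seminorm is essential to the absorption) and the coercivity of $\mathcal{N}$; thus Lemma \ref{2.10} should be regarded as a corollary of those two results.
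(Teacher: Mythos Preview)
Your proposal is correct and matches the paper's approach exactly: the paper states that Lemma \ref{2.10} is a direct consequence of Lemmas \ref{Lemma2} and \ref{Nfcoercivitylemma} via the splitting $L=\mathcal{K}+\mathcal{N}$, which is precisely the combination argument you give. Your explicit tracking of the constants and the choice of $\epsilon$ merely spells out what the paper leaves implicit.
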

Note that Lemma \ref{2.10} is a direct consequence of Lemmas \ref{Lemma2} and  \ref{Nfcoercivitylemma} simply because $L=\mathcal{K}+\mathcal{N}$ from \eqref{defK} and \eqref{defN}.

\subsection{Main difficulties and our strategy} In this section, we will explain  the main difficulties that we have experienced and how we resolved those issues. And we will further explain several new ideas and techniques that we developed in the course of the proof.  We will begin with the following discussion regarding the sharp linearized norm for the Newtonian Boltzmann equation \eqref{Boltz.Newtonian} in comparison to the relativistic Boltzmann equation \eqref{rBoltz.eqn}.

We mention that the unique global solutions to the Newtonian non-cutoff Boltzmann equation constructed in \cite{MR2784329} depend on the non-isotropic geometric fractional Sobolev space $N^{s,\singS}$ with the following norm: 
\begin{equation}\label{Nsrho.norm}
 |f|^2_{N^{s,\singS}}
 \eqdef 
 |f|^2_{L^2_{\singS+2s}}
 +
 \int_\rth dv\int_\rth dv'\ \frac{(f(v')-f(v))^2}{d(v,v')^{3+2s}}(\seq{v'}\seq{v})^{\frac{{\singS+2s}}{2}}1_{d(v,v')\leq 1}.
 \end{equation} 
 Above the parameters satisfy $s\in (0,1)$ and $\rho > -3$, and $\seq{v} = \sqrt{1+|v|^2}$.  Further $d(v,v')$ is an anisotropic metric on the ``lifted" paraboloid:
 $$d(v,v')\eqdef \sqrt{|v-v'|^2+\left(\frac{|v|^2}{2}-\frac{|v'|^2}{2}\right)^2}.$$ 
 Note that the inclusion of the quadratic difference in the metric is essential and it is not a lower-order term.  Further \cite{MR2807092} shows that the sharp diffusive behavior of the full nonlinear Newtonian Boltzmann collision operator \eqref{Boltz.Newtonian} is the same as the linearized norm in \eqref{Nsrho.norm}.

Alexandre, Morimoto, Ukai, Xu, and Yang also proved the global existence of unique solutions to the non-cutoff Boltzmann equation \cite{MR2795331,MR2863853,MR2793203,MR2847536}, and they used the triple norm which is given by
\begin{multline}\label{triple.norm}
    ||| f ||| 
=
\int_ {\mathbb{R}^3}dv \int_ {\mathbb{R}^3} dv_*\int_{\mathbb{S}^2}d\sigma B(v-v_*,\sigma)
\\
\times
\left[\mu(v_*)(f(v')-f(v))^2+f^2(\sqrt{\mu(v')}-\sqrt{\mu(v)})^2\right].
\end{multline}
This norm $    ||| f ||| $ can be shown to be equivalent to $ |f|^2_{N^{s,\singS}}$ in  \eqref{Nsrho.norm} as in \cite{MR2795331,MR2863853,MR2793203,MR2847536,MR2784329}.

Further Alexandre, H{\'e}rau and Li \cite{MR3950012} have derived the sharp linearized diffusive behavior using pseudo-differential operators as follows:
\begin{multline}\label{PDO.form}
-\langle \mathcal{L} f, f\rangle + 
|f|_{L^2_{l}}
\\
\approx
\int_ {\mathbb{R}^3}dv 
\sep{  \seq{v}^{\rho } |\seq{D_v}^{s}f(v)|^2
 + \seq{v}^{\rho} |\seq{v \wedge D_v}^{s}f(v)|^2
 + \seq{v}^{\rho + 2s}|f(v)|^2 
 }.
\end{multline}
Here $\mathcal{L}$ is the linearized Newtonian Boltzmann collision operator \cite{MR3950012}.
This expression holds for Schwartz functions $f$ and for any  $l \in \R$, where the implicit constant will depend upon $l$.  Since these three  expressions \eqref{Nsrho.norm}, \eqref{triple.norm} and \eqref{PDO.form} all in some sense sharply characterize the diffusion of the linearized Dirichlet form $\langle \mathcal{L} f, f\rangle$ for the Newtonian Boltzmann equation \eqref{Boltz.Newtonian}, then they equivalently show the non-isotropic behavior of the fractional diffusion in the Newtonian case.

 On the other hand, when it comes to the relativistic situation, due to the collisional geometry as in \eqref{conservation} the analogous metric $d(p^\mu, p'^\mu)$ to \eqref{Nsrho.norm} is the metric on the ``lifted" hyperboloid between two energy-momentum 4-vectors:
 $$d(p^\mu, p'^\mu)\eqdef \sqrt{|p-p'|^2+|p^0-p'^0|^2}.$$ 
  Note that, different from the non-relativistic analogue \mbox{$|\cdot|_{N^{s, \rho}}$} in \eqref{Nsrho.norm}, our semi-norm for the relativistic case \eqref{fractional} behaves like a weighted fractional Sobolev norm, as we will observe that the Euclidean distance $d(p^\mu,p'^\mu)$ between energy-momentum 4-vectors $p^\mu$ and $p'^\mu$ on a ``lifted'' hyperboloid in $\rfo$  is indeed equivalent to the standard 3-dimensional Euclidean distance $|p-p'|$. This is because we have 
$$
|p-p'|^2\leq (p^0-p'^0)^2+|p-p'|^2\eqdef d(p^\mu,p'^\mu)^2,
$$
and that when $p\neq p'$ we have
\begin{equation}\notag
|p^0-p'^0|=\frac{||p|^2-|p'|^2|}{p^0+p'^0}= \frac{||p|-|p'||(|p|+|p'|)}{p^0+p'^0}< ||p|-|p'||\leq |p-p'|.\end{equation} 
These two expressions together result in
\begin{equation}
\label{simple.calc2}|p-p'|\leq d(p^\mu,p'^\mu)\leq \sqrt{2}|p-p'|.
\end{equation}
This discussion shows that in the special relativistic situation the bounded momentum, $p/\pZ$, and the linearly growing collisional energy conservation, $p'^0+q'^0= p^0 + q^0$, cause the non-cutoff diffusion of the relativistic Boltzmann collision operator to be isotropic as in \eqref{fractional}.  This contrasts with the Newtonian case \eqref{Boltz.Newtonian} where the momentum, $v$, grows linearly and the collisional energy conservation, $|v'|^2+|v_*'|^2=|v|^2+|v_*|^2$, is quadratic and the fractional diffusion is non-isotropic as in \eqref{Nsrho.norm}--\eqref{triple.norm}--\eqref{PDO.form}.  The isotropic diffusion in the special relativistic case allows us to use standard Littlewood-Paley operators when we prove our main estimates and avoids the complexity of non-isotropic diffusion.

Even with the isotropic diffusion, there are major difficulties in the special relativistic case in merely establishing the required cancellation estimates as we will now explain.    Indeed in 1991 Glassey and Strauss \cite[Proof of Theorem 2]{MR1105532} calculated a sharp estimate which showed that the $p$-derivatives of $p'$ and $q'$ exhibit some momentum growth in the Glassey-Strauss coordinates \cite[Equation (1.18)]{MR2891870}.  This momentum growth of the first derivatives of $p'$ and $q'$, which does not occur in the Newtonian case, can be highly problematic as seen in \cite{MR2891870}.

Further, to establish the cancellation estimates of the non-local diffusion the standard approach is to do a change of variables of the form $p' \to p$ or $q' \to q$.  This was the foundation of the ``cancellation lemma'' \cite{ADVW} for the Newtonian Boltzmann equation \eqref{Boltz.Newtonian}.  This Newtonian cancellation lemma can be stated as 
\begin{equation}\label{Newtonian.cancellation.lemma}
\int_\rth dv
\int_{\mathbb{S}^2}d\sigma ~
B(v-v_*,\sigma)(F(v')-F(v))
=
(F* S)(v_*),
\end{equation}
where 
\begin{equation}\notag
    S(z)
=
C_3|z|^\rho, \quad 0<C_3<\infty.
\end{equation}
This holds for a general class of functions $F$ for a Newtonian collision kernel such as 
$B(v-v_*,\sigma) \approx |v-v_*|^\rho \theta^{-2-2s}$ with $\rho > -3$ and $s \in (0,1)$.  The main tool in proving this cancellation lemma is the change of variables $v' \to v$ with Jacobian determinant
\begin{equation}\notag
\left|\frac{dv'}{dv} \right| = \frac{1}{4}(\cos(\theta/2))^2
\ge \frac{1}{8}>0, \quad 0 \le \theta \le \frac{\pi}{2}.
\end{equation}
This Newtonian cancellation lemma \eqref{Newtonian.cancellation.lemma} and the associated change of variables $v' \to v$, and generalizations, have been a foundation for proving cancellation estimates for the Newtonian Boltzmann equation without angular cutoff.

However the analogous relativistic Jacobian determinant has been shown to be highly problematic.   Indeed, it has been numerically calculated using high precision arithmetic recently in \cite{ChapmanJangStrain2020} that the Jacobian determinant $\left|\frac{\partial p'}{\partial p}\right|$ has a huge number of distinct points at which it is essentially zero.  This motivated us to look for  a counter-example.

Now in \secref{sec:frequency} we will introduce a counter-example to a relativistic  cancellation lemma, such as \eqref{Newtonian.cancellation.lemma},  in the following sense.  We formally write down the following relativistic quantity:
    \begin{equation}\notag
    	\tilde{\zeta}^B(p) \eqdef
	\frac{1}{2}
\int_{\rth}dq\int_{\mathbb{S}^2}d\omega\  v_{\text{\o}} \sigma(g,\theta)(J(q)-J(q')) = \tilde{\zeta}^B_1(p) - \tilde{\zeta}^B_2(p).
    \end{equation} 
Recall the relativistic Maxwellian, $J(q)$, (a Schwartz function) is given by \eqref{jutter.equilibrium}.  Then in \secref{sec:frequency} we have shown for a fixed constant $c'>0$ that
\begin{multline}\notag
\tilde{\zeta}^B_1(p)=\frac{c'}{p^0}e^{\frac{p^0}{2}}\int_{\rth}\frac{dq}{q^0}\frac{e^{-\frac{1}{2}q^0}}{g} \int_{0}^{\infty} \frac{rdr}{\sqrt{r^2+s}}s_\lambda\sigma(g_\lambda,\theta_\lambda)\\\times \exp\left(-\frac{p^0+q^0}{2\sqrt{s}}\sqrt{r^2+s}\right)I_0\left(\frac{|p\times q|}{g\sqrt{s}}r\right).
\end{multline}
We take this opportunity to record the following modified Bessel functions:
\begin{equation}\label{bessel0}
    I_0(y)\eqdef \frac{1}{2\pi}\int_0^{2\pi} d\phi \exp(y\cos\phi),
\quad 
I_1(y)=\frac{1}{2\pi} \int_0^{2\pi} d\phi \ \cos\phi
\exp(y\cos\phi).
\end{equation}
We also define the notations
\begin{equation}\notag  
g^2_\lambda = g^2+\frac{1}{2}\sqrt{s}(\sqrt{r^2+s}-\sqrt{s}), \quad s_\lambda = g^2_\lambda + 4,
\end{equation}
and 
\begin{equation}\notag 
\cos\theta_\lambda =\frac{2g^2}{g^2_\lambda}-1
=\frac{g^2-\frac{1}{2}\sqrt{s}(\sqrt{r^2+s}-\sqrt{s})}{g^2+\frac{1}{2}\sqrt{s}(\sqrt{r^2+s}-\sqrt{s})}.
\end{equation}
Then with all the terms defined we observe that $\tilde{\zeta}^B_1(p)$ represents  a finite integral because it contains exponential decay in both the $q$ and the $r$ variables.   However, if we consider the term in $\tilde{\zeta}^B_2(p)$ with $J(q')$ only, then in \secref{sec:frequency} we derive that
\begin{equation}\notag
\tilde{\zeta}^B_2(p)=\frac{c'}{p^0}\int_{\rth}\frac{dq}{q^0}\frac{e^{-q^0}}{g} \int_{0}^{\infty} \frac{rdr}{\sqrt{r^2+s}}s_\lambda\sigma(g_\lambda,\theta_\lambda).
\end{equation}
Here we can assume that the angular kernel $\sigma_0$ from \eqref{angassumption} is for example pointwise bounded.   Then we see that the $dr$ integration becomes infinite in $\tilde{\zeta}^B_2(p)$, since this term no longer contains sufficient decay in the $r$ variable.\footnote{Although notice that one could make the $dr$ integration finite by artificially assuming rapid decay in \eqref{soft}.  But this is outside the range of the physical assumptions.}  Therefore $\tilde{\zeta}^B_2(p) = \infty$, and we can rigorously justify this argument using standard approximation procedures.  This shows that the relativistic analog of the Newtonian cancellation lemma in \eqref{Newtonian.cancellation.lemma} is false.

\begin{remark}\label{FREQ:cancel.problem}
This illustrates that the crucial change of variable in cancellation lemma, $q' \to q$ (or $(q', \omega) \to (q, k)$ for some $|k|=1$) as stated in \cite{ADVW} for the Newtonian Boltzmann equation, is not well defined in the special relativistic case.  This statement is further independent of our choice of coordinate representations of $(p', q')$, such as for example \eqref{p'}.\footnote{We refer to \cite{MR2765751} for a discussion of a variety of coordinate representations of the relativistic Boltzmann collision operator.} Indeed, if the change of variables $q'\to q$ held with an integrable Jacobian then $\tilde{\zeta}^B_2(p)$ would be finite. 
\end{remark}

Since we do not have a cancellation lemma, or the crucial change of variables from  $q' \to q$ or  $p' \to p$, instead we introduce the following novel series of changes of variables in order to estimate the cancellation of the fractional derivatives in the relativistic Boltzmann equation.  If we consider the norm term from \eqref{defN}, and we take the $L^2(\mathbb{R}^3_p)$ inner product with $\eta(p)$ then we will have to estimate a term like 
\begin{equation}\label{typical.term.1}
    \frac{1}{2}\int_\rth dp ~\eta(p)\int_\rth dq\int_{\mathbb{S}^2} d\omega\ v_{\text{\o}}\sigma(g,\theta)(f(p')-f(p))\sqrt{J(q')J(q)}.
\end{equation}
For a term such as this one, we may use the reduction of the delta function of the conservation laws \eqref{conservation} in the center-of-momentum frame as in \cite{MR635279}, \cite[(5.39)]{MR2707256} and \cite[Theorem 1.2]{MR2765751} to obtain the following representation of an operator:

\begin{lemma}[Center-of-momentum reduction, Theorem 1.2 of \cite{MR2765751}]\label{lemma.reduction} 
   For an integrable function $G:\rfo\times\rfo\times\rfo\times \rfo \to \mathbb{R}$, we have
\begin{multline*}\int_\rth \frac{dp'}{p'^0}\int_\rth \frac{dq'}{q'^0}s\sigma(g,\theta)\delta^{(4)}(p^\mu+q^\mu-p'^\mu-q'^\mu)G(p^\mu,q^\mu,p'^\mu,q'^\mu)\\
= \frac{1}{2}\int_{\mathbb{S}^2}d\omega~ g\sqrt{s}~\sigma(g,\theta) G(p^\mu,q^\mu,p'^\mu,q'^\mu).\end{multline*} 
Above we assume that the function $G(p^\mu,q^\mu,p'^\mu,q'^\mu)$ has sufficient vanishing conditions so that the integrals above are well-defined.  We also use \eqref{p'} and \eqref{q'} to define $p'^\mu$ and $q'^\mu$ in the second integral.
\end{lemma}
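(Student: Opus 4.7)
My plan is to exploit the Lorentz invariance of every object in the integrand to reduce the identity to an elementary calculation in the center-of-momentum (CoM) frame, and then invoke the invariance once more to descend back to the original frame. The three crucial invariance facts are: (i) the measure $dp'/p'^0$ is boost-invariant, via the standard identity $d^3p'/p'^0 = 2\delta(p'^\mu p'_\mu + 1) H(p'^0) \, d^4p'$ with $H$ the Heaviside function, which is manifestly invariant; (ii) the four-dimensional Dirac delta $\delta^{(4)}(p^\mu+q^\mu-p'^\mu-q'^\mu)$ is Lorentz invariant because its argument is a four-vector; and (iii) $s$, $g$, $\theta$, and $\sigma(g,\theta)$ are all Lorentz scalars. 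Together these allow me to perform a global boost taking the total spatial momentum $p+q$ to rest without altering the left-hand side.

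In the CoM frame $\tilde p = -\tilde q$, $\tilde p^0 = \tilde q^0 = \sqrt s/2$, and $|\tilde p| = g/2$. The spatial part of the four-delta eliminates the $\tilde q'$-integration, forcing $\tilde q' = -\tilde p'$ and hence $\tilde q'^0 = \tilde p'^0$. Passing to spherical coordinates $\tilde p' = r\omega$ and using the substitution $r\,dr = \tilde p'^0 \, d\tilde p'^0$, the remaining scalar energy delta pins the radius to $r = g/2$. The Jacobian bookkeeping produces an overall factor of $|\tilde p|/(2\tilde p^0) = g/(2\sqrt s)$, and multiplying by the prefactor $s\sigma(g,\theta)$ yields precisely $\tfrac{1}{2}\, g\sqrt s\,\sigma(g,\theta)$ integrated against $d\omega$.

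To descend back to the lab frame I observe that $g$, $\sqrt s$, and $\sigma(g,\theta)$ are scalars, and that the variable $\omega \in \mathbb{S}^2$ is exactly the CoM direction that parametrizes the post-collisional momenta via \eqref{p'}--\eqref{q'}: evaluating those expressions when $p+q=0$ (so that $\xi = 1$) gives $\tilde p' = (g/2)\omega$ and $\tilde q' = -(g/2)\omega$, while the prefactor $\xi = (p^0+q^0)/\sqrt s$ implements precisely the inverse boost that takes these CoM expressions into their lab-frame counterparts. Hence the identity proved in the CoM frame lifts directly to the stated identity in an arbitrary frame, evaluating $G$ at the boosted-back arguments.

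The main obstacle is the bookkeeping for the Lorentz boost, in particular the careful identification of the spherical variable $\omega$ in the CoM frame with the $\omega$ appearing in \eqref{p'}--\eqref{q'}, and the rigorous justification of the delta-function manipulations. The latter is handled by standard approximation of $\delta^{(4)}$ by a sequence of smooth mollifiers tested against the integrable class of $G$, passing to the limit using the vanishing hypotheses at infinity; once this identification is made, the remainder of the argument reduces to the one-line Jacobian computation sketched above and requires no further functional-analytic input.
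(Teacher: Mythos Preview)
The paper does not supply its own proof of this lemma: it is stated as Theorem~1.2 of \cite{MR2765751} and invoked without argument. Your proposal is therefore being compared against the standard derivation in that reference (and in the physics literature, e.g.\ \cite{MR635279}), which is precisely the center-of-momentum reduction you describe: boost the $dp'\,dq'$ integration into the frame where $p+q=0$, collapse the four-dimensional delta to fix $\tilde p'=(g/2)\omega$, $\tilde q'=-(g/2)\omega$, and read off the Jacobian factor $g/(2\sqrt s)$.

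Your computation of that Jacobian is correct, and your identification of the boost-back with the parametrization \eqref{p'}--\eqref{q'} is the right closing step. One point worth making explicit: the boost $\Lambda$ depends on the \emph{external} parameters $p,q$ and not on the integration variables $p',q'$, so it acts as a genuine linear change of variables on the $(p',q')$ integral; $G$ is then evaluated at $(p^\mu,q^\mu,\Lambda^{-1}\tilde p'^\mu,\Lambda^{-1}\tilde q'^\mu)$, and the verification that $\Lambda^{-1}(\sqrt s/2,(g/2)\omega)$ reproduces \eqref{p'} is the concrete content of your ``descend back'' paragraph. You sketch this correctly; writing out the standard boost formula with $\gamma=\xi=(p^0+q^0)/\sqrt s$ and velocity $(p+q)/(p^0+q^0)$ and checking it term-by-term against \eqref{p'} would make the argument airtight. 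Otherwise the proof is sound and matches the approach cited by the paper.
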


Then we can use Lemma \ref{lemma.reduction}  to write \eqref{typical.term.1} as follows
\begin{multline}\label{typical.term.2}
\int_\rth \frac{dp}{p^0}\int_\rth \frac{dq}{q^0}
\int_\rth \frac{dp'}{p'^0}\int_\rth \frac{dq'}{q'^0}
\delta^{(4)}(p^\mu+q^\mu-p'^\mu-q'^\mu) s \sigma(g,\theta)
\\
\times \eta(p) (f(p')-f(p))\sqrt{J(q')J(q)}.
\end{multline}
To better understand this term, we now introduce the Carleman representation of the collision operator as follows:

\begin{lemma}[Carleman representation]\label{lemma.carleman}
We have the following equality:
\begin{multline}\notag
\int_{\mathbb{R}^3}\frac{dp}{{p^0}}
\int_{\mathbb{R}^3}\frac{dq}{{q^0}}
\int_{\mathbb{R}^3}\frac{dp'\ }{{p'^0}}
\int_{\mathbb{R}^3}\frac{dq'\ }{{q'^0}}
s\sigma(g,\theta)\delta^{(4)}(p'^\mu+q'^\mu-p^\mu-q^\mu)G(p,q,p')\\
=\int_{\mathbb{R}^3}\frac{dp}{{p^0}}\int_{\mathbb{R}^3}
\frac{dp'\ }{{p'^0}}\int_{E^{q}_{p'-p}} \frac{d\pi_{q}}{8\bar{g}{q^0}}s\sigma(g,\theta)G(p,q,p'),
\end{multline}
where we assume that $G$ has a sufficient vanishing condition so the integral is well-defined.  Here $E^q_{p'-p}$ is the two-dimensional hypersurface for relativistic collisions which is defined as $$E^q_{p'-p}=\{q\in\mathbb{R}^3:(p'^\mu-p^\mu)(p_\mu+q_\mu)=0\}.$$ 
Further $q'^\mu$ is defined by \eqref{conservation}, and the measure can be represented as
\begin{equation}\label{def.pi.meas}
    d\pi_{q}=dq\ u({p^0}+{q^0}-{p'^0})\delta\left(\frac{\bar{g}}{2}+\frac{q^\mu(p_\mu-p'_\mu)}{\bar{g}}\right),
\end{equation}
where above we use the function $u$ which is defined by 
\begin{equation}\label{def.u}u(x)=0 \text{ if }x<0\text{, and }u(x)=1\text{ if }x \ge 0.\end{equation} 
We also recall \eqref{gbar}.
\end{lemma}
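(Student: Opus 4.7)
The plan is to recognize the left-hand side as a Lorentz-invariant integral over the product of two mass hyperboloids, then use the full four-dimensional conservation delta to collapse the $q'^\mu$ integration entirely, leaving a residual mass-shell condition on $q$ that turns out to be equivalent to the linear hyperplane constraint defining $E^{q}_{p'-p}$.

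Concretely, I would first rewrite the $q'$ measure using the standard Lorentz-invariant identity
\begin{equation*}
\int_{\mathbb{R}^3}\frac{dq'}{q'^0}\,F(q')
= 2\int_{\mathbb{R}^4} dq'^\mu\ \delta(q'^\mu q'_\mu + 1)\,\theta(q'^0)\,F(q'),
\end{equation*}
which is an immediate consequence of $\delta((q'^0)^2 - 1 - |q'|^2)\theta(q'^0) = \frac{1}{2q'^0}\delta(q'^0 - \sqrt{1+|q'|^2})$. After substitution, the conservation delta $\delta^{(4)}(p^\mu + q^\mu - p'^\mu - q'^\mu)$ collapses the four-dimensional $dq'^\mu$ integration by forcing $q'^\mu = p^\mu + q^\mu - p'^\mu$. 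The factor $\theta(q'^0)$ transforms into $u(p^0+q^0-p'^0)$, and the on-shell delta $\delta(q'^\mu q'_\mu + 1)$ becomes $\delta(\Psi(q))$, with
\begin{equation*}
\Psi(q):=(p^\mu+q^\mu-p'^\mu)(p_\mu+q_\mu-p'_\mu)+1.
\end{equation*}

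The crucial algebraic step is then to expand $\Psi(q)$ using the three mass-shell identities $p^\mu p_\mu = q^\mu q_\mu = p'^\mu p'_\mu = -1$ together with the definition $\bar{g}^2 = -2(p^\mu p'_\mu + 1)$ from \eqref{gbar}. A direct computation produces
\begin{equation*}
\Psi(q) = \bar{g}^2 + 2 q^\mu(p_\mu - p'_\mu) = 2\bar{g}\Big(\tfrac{\bar{g}}{2} + \tfrac{q^\mu(p_\mu - p'_\mu)}{\bar{g}}\Big),
\end{equation*}
so that $\Psi(q)=2\bar{g}\,\Phi(q)$ where $\Phi$ is precisely the argument of the delta in \eqref{def.pi.meas}. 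Applying the scaling rule $\delta(2\bar{g}\Phi) = \frac{1}{2\bar{g}}\delta(\Phi)$, repackaging $u(p^0+q^0-p'^0)\delta(\Phi)\,dq$ as $d\pi_q$, and collecting all the remaining multiplicative factors then delivers the claimed formula. The main obstacle is precisely the coefficient bookkeeping — keeping the factors of $2$ from the mass-hyperboloid measure, from the rescaling of the delta, and from the normalization of $d\pi_q$ all consistent with the stated constant in front — together with the standard justification that the distributional manipulations above are legitimate for the class of $G$ under consideration; in practice this is handled by first assuming that $G$ is Schwartz so that every intermediate integral converges absolutely and then extending by density to the more general vanishing hypothesis.
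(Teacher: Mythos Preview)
Your approach is correct and more direct than the paper's. In Appendix~\ref{CarlemanAppendix.Carleman} the paper lifts \emph{both} $q$ and $q'$ off shell via Lemma~\ref{7.5ofCMP}, passes to the difference variable $\bar q^\mu=q'^\mu-q^\mu$, evaluates the conservation $\delta^{(4)}$ in $d\bar q^\mu$, and then reduces the remaining four-dimensional $dq^\mu$ integral using two separate deltas (one restoring the $q$ mass shell, one producing the hyperplane constraint). You skip the unnecessary lifting of $q$: applying the mass-hyperboloid identity only to $q'$ and eliminating $q'^\mu$ directly with $\delta^{(4)}$, the residual on-shell condition for $q'$ immediately yields $\Psi(q)=\bar g^2+2q^\mu(p_\mu-p'_\mu)=2\bar g\,\Phi(q)$. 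Both routes hinge on this same algebraic identity; yours reaches it in one step rather than three. One remark on the constant you flag as the main obstacle: a careful tally of your method gives $2\cdot\tfrac{1}{2\bar g}=\tfrac{1}{\bar g}$, not the $\tfrac{1}{8\bar g}$ stated in the lemma. The paper's own derivation appears to invoke Lemma~\ref{7.5ofCMP} without its $\tfrac{1}{16}$ prefactor and with a lifting measure $d\Theta(q^\mu,q'^\mu)$ that does not literally match that lemma, so the factor-of-eight discrepancy seems to originate there; it is in any case harmless, since the Carleman representation is only ever used inside $\lesssim$ estimates throughout the paper.
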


The proof of Lemma \ref{lemma.carleman} is given in \secref{CarlemanAppendix.Carleman}.   Then using this Carleman representation from  Lemma \ref{lemma.carleman} we can represent \eqref{typical.term.2} as
\begin{equation}\label{typical.term.3}
\frac{1}{8}\int_{\mathbb{R}^3}\frac{dp}{{p^0}}\eta(p)  \int_{\mathbb{R}^3}\frac{dp'\ }{{p'^0}} (f(p')-f(p)) 
\frac{s}{\bar{g}}\frac{\sqrt{J(p)}}{\sqrt{J(p')}}\int_{E^{q}_{p'-p}} \frac{d\pi_{q}}{{q^0}}\sigma(g,\theta)
J(q).
\end{equation}
Typically we can estimate integrals such as the one above in $d\pi_{q}$.  And a term such as $\frac{\sqrt{J(p)}}{\sqrt{J(p')}}$ is bounded when we are close to the singularity and $|p'-p|$ is small.   Then we are left with linear dependence on $p$ and $p'$ in the remaining integrals of $dp~ dp'$.  These are the main ideas in one method that we developed to prove the required cancellation estimates.  Recalling the  bilinear operator \eqref{Gamma1}, this change of variables procedure allows us to estimate terms such as $\langle \Gamma(f,h),\eta \rangle$ when we estimate the cancellation using the function $\eta$.

However proving the upper bound estimates by performing summation of the Littlewood-Paley decomposed pieces of the trilinear estimates in \secref{subsec.upperbd} also requires an alternative cancellation estimate (Proposition \ref{prop:cancellation2}) of the difference of trilinear forms $(T^{k,l}_+-T^{k,l}_-)(f,h,\eta)$ as in \eqref{T+++} while estimating the cancellation using the function $h$.  Since the standard integration by parts does not work for the integral which contains fractional derivatives, we had to derive a second representations of the trilinear inner product of \eqref{Gamma1} as $\langle \Gamma(f,h), \eta\rangle$.  To achieve this, we derived the dual representation $T_f^*h$ of the abstract operator $T_f\eta$ for each fixed $f$ such that $$\langle \Gamma(f,h) ,\eta\rangle =\langle T_f\eta,h\rangle=\langle \eta, T_f^*h\rangle. $$ 
In the relativistic case the expression involving $T_f^*h$ is rather complicated, whose integrands involve special functions and Lorentz transformations.  This expression is written precisely in \eqref{dual4} and in decomposed form in \eqref{T+++.dual}, and more generally in Lemma \ref{transformation.Lemma.appendix}.   To the best of our knowledge, this ``dual representation'' has not been previously derived for the relativistic Boltzmann operator.    Now we note that the methods that we used in \secref{sec:frequency} for the dual-type representation including the splitting of the region of the $q$ integration into $\qlep$, and  $\qgep$ for some large $m\ge 1$ does not work for the sharp upper-bound estimates for similar types of operators to \eqref{T+++.dual}, since the factor $\sqrt{J(q)}$ in the operators from \eqref{T+++.dual} does not provide sufficient decay in the $|q|$ variable to control the integral if we implement the methods used in \secref{sec:frequency}.  Instead the breakthrough idea in our estimates involved the spliting of the upper-bound estimate for each operator as in \eqref{estimate.key.term} into two terms $D_1$ and $D_2$.  Then
  $D_1$ can be reverted back to the original representation and estimated using a strategy analogous to \eqref{typical.term.1}-\eqref{typical.term.2}-\eqref{typical.term.3}.  And  $D_2$ stays in the dual representation. In this way one can push all the polynomial growth in the $q$ variable into the original representation in $D_1$ which has some leftover exponential decay such that one can intentionally create the otherwise missing polynomial decay in $D_2$.    This idea was very effective for the control of Part $II$ in \eqref{define.Tkl2} and Part $III$ in \eqref{define.Tkl3} in \secref{sec.newcancel} even though they still need much more delicate sophistication on the choices in the splitting such that all the estimates work for the full range  of the soft interaction $0>\singS + \gamma > -3/2$ in \eqref{soft} and the hard interaction $0\le \singS+\gamma<2+\gamma$ in \eqref{hard}.  For example, we include the artificial terms $1=\left(\frac{\tilde{s}}{p'^0q^0}\right)^{7/4}\left(\frac{\tilde{s}}{p'^0q^0}\right)^{-7/4}$ for the Part $II$ estimates and $\left(\frac{\tilde{g}^2}{p'^0q^0}\right)^{3/4}\left(\frac{\tilde{g}^2}{p'^0q^0}\right)^{-3/4}$  for the Part $III$ estimates, and these choices are sharp in the sense that these powers $\frac{7}{4}$ and $\frac{3}{4}$ and the choices $\frac{\tilde{s}}{p'^0q^0}$ and $\frac{\tilde{g}^2}{p'^0q^0}$ are the only possible choices that can control each decomposed piece in our estimates.  For the  full details we refer to \secref{sec.newcancel}.

In addition to the previously discussed techniques for establishing the cancellation estimates, the proofs of our main upper-bound estimates in Theorem \ref{thm1} and Lemma \ref{Lemma1} use a dyadic decomposition of the linearized operator $\Gamma$ and its kernel $\sigma(g,\theta)$ nearby the angular singularity $\theta=0$ since the angular kernel $\sigma_0(\theta)$ is not integrable by itself. In \secref{main estimates}, we start by showing that $\theta\approx \frac{\bar{g}}{g}$ and consider the dyadic decomposition around $\bar{g}$.  Then for $\bar{g}\approx 2^{-k}$, we estimate the upper-bounds of the trilinear forms in \eqref{T+++} for the gain term $T^{k,l}_+$ and the loss term $T^{k,l}_-$ of the linearized operator $\Gamma$ separately.  If we consider the region nearby the singularity where $\bar{g}\approx 2^{-k}$ for $k>0$, then we cannot simply separate the gain and loss terms.  Thus in \secref{Cancellation} we rewrite the difference of trilinear forms $(T^{k,l}_+-T^{k,l}_-)(f,h,\eta)$ in terms of the difference $\eta(p')-\eta(p)$ or $J(q')-J(q)$ of the pre- and the post-collisional momenta and work to obtain extra decaying factors of $|p'-p|$ or $|q'-q|$ to reduce the order of angular singularity.  Then in \secref{sec.newcancel}, we prove the cancellation estimates using the dual representation in \eqref{T+++.dual}.  Then in \secref{subsec.additional}, we prove further estimates, including cancellation estimates, for the compact operator $\mathcal{K}$ from \eqref{defK}.    In order to manage the dyadic sum of the momentum derivatives, we further explain the isotropic Littlewood-Paley decomposition inequalities that we will use in \secref{LP decomp}. This decomposition allows us to bound the sum of those decomposed pieces containing momentum-derivatives above by the terms in our weighted fractional derivative norm $|\eta|_{I^{\singS,\gamma}}$ and $|h|_{I^{\singS,\gamma}}$.  Then in \secref{subsec.upperbd}, we use triple sum estimates together with all the previous estimates in this section in order to prove the main upper-bound estimates of Theorem \ref{thm1} and Lemma \ref{Lemma1}.

Additional difficulties regarding proving upper-bound estimates occur because the lower bound of the relativistic version of the relative momentum $g=g(p^\mu,q^\mu)$ from \eqref{g} depends on the weights of $p$ and $q$. More precisely, one has the following sharp inequality \cite{MR1211782}:
\begin{equation}\label{g.ineq.sharp}
\frac{\sqrt{|p-q|^2+|p\times q|^2}}{\sqrt{p^0q^0}}\leq \sqrt{2\frac{|p-q|^2+|p\times q|^2}{ \pZ  \qZ +p\cdot q+1}} =  g   \leq |p-q|.
\end{equation}
The proof of \eqref{g.ineq.sharp} directly follows from \eqref{g} and the Cauchy-Schwartz inequality: $|1+p\cdot q|\le \pZ \qZ$.
This affects the lower-bound estimates of the hard-interaction case \eqref{hard} for $g^{\singA+\gamma}$ and the upper-bound estimates in the soft-interaction case \eqref{soft} for $g^{-\singB+\gamma}$ because $ \singA+\gamma\ge 0$ and $-\singB+\gamma<0$. The emergence of the extra weight $(p^0q^0)^{-\frac{1}{2}}$ in the lower bound of each relative momentum $g$ causes the crucial difference in the order of $p$, and we resolved this issue using another inequality of $g\le \sqrt{s}\lesssim \sqrt{p^0q^0}$ instead of using the standard $g\le |p-q|$ in many estimates. This is one of the major differences from the non-relativistic case where each relative momentum $|p-q|$ creates one power of $p^0$, whereas each relative momentum $g(p^\mu,q^\mu)$ corresponds to only a half-power of $p^0$ growth.

Also, the appearance of extra momentum weights, $w^l(p)$, in our norms introduce additional difficulties.   These weights are necessary because the $L^2$ energy functional $\mathcal{E}(t)$ and the dissipation functional $\mathcal{D}(t)$ do not satisfy $\mathcal{E}(t)\lesssim \mathcal{D}(t)$ in the soft-interaction \eqref{soft} case when ${-\singB+\gamma}<0$ and this results in $| f|_{L^2_{{-\singB+\gamma}}}<|f|_{L^2}$. In order to overcome the difficulty of not having $|f|_{L^2_{{-\singB+\gamma}}}\geq |f|_{L^2}$, we put the extra weights $w^l$ into our norms and interpolate with stronger norms as in (\ref{key soft}) to obtain the $L^2$ decay-in-time estimate with the polynomial rates.  This interpolation technique had been developed in \cite{SG-CPDE}.  However this is extremely complicated and delicate in the relativistic situation because of the difficult algebraic structure that is present in the equation.

We hope that these results will be useful to study many further mathematical problems in the relativistic Kinetic theory such as relativistic fluid limit problems, the Newtonian limit, the relativistic Boltzmann equation coupled with relativistic matter models \cite{MR356790} to name a few.

\subsection{Outline of the article}  In this section we will outline the rest of this article.  

In \secref{sec:frequency} we prove the sharp asymptotics for the \textit{frequency multiplier} to obtain coercivity estimates for the linearized collision operator. Namely, we prove Theorem \ref{FREQ:main.thm}. 
To prove Theorem \ref{FREQ:main.thm}, we will use the two different representations that we have given in \secref{FREQ:sec:main.decomp} and these will be derived in \secref{FREQ:sec:derivation}.  We will further follow the proof strategy that will be outlined in \secref{FREQ:sec:proof.outline}.  We  prove that $\zeta$ from \eqref{FREQ:def.zeta} has a leading order positive lower bound in Proposition \ref{FREQ:prop.coercive} in \secref{FREQ:sec:leadingorder lower bound zeta}.  Then we will prove that $\zeta_0$  from \eqref{FREQ:zeta0By} has the leading order upper bound in Proposition \ref{FREQ:prop.zeta0.asymptotic} in \secref{FREQ:sec:fullsharpupper zeta0}.    In \secref{FREQ:sec:low order zeta1}, we prove in Proposition \ref{FREQ:prop.zetaL.asymptotic}  that $\zetaL(p)$ from \eqref{FREQ:zetaLBy} has a lower order upper bound and we further prove in Proposition \ref{FREQ:prop.zetaL.asymptoticnew} that $\zetaTL(p)$ from \eqref{FREQ:eq:tildezetanew1L} has a lower order upper bound.  

In \secref{main upper} we prove the main upper bound estimates on the linearized \eqref{L.def} and non-linear collision operator \eqref{Gamma1} that are stated in \secref{subsec.mainest}.  In particular in \secref{main estimates}, we will start by introducing the dyadic decomposition of the angular singularity in the non-linear collision operator. In the rest of \secref{main estimates}, we will make upper-bound estimates on the decomposed pieces of gain and loss terms away from the angular singularity.  Then in \secref{Cancellation}, we prove the cancellation estimates. Namely, we use the cancellation of $f(p)-f(p')$ in the region $p\simeq p'$ to cancel the angular singularity and to obtain the upper bound estimates for the decomposed pieces nearby the angular singularity. The proof heavily depends on the use of certain Lorentz transformations and the relativistic Carleman-type dual representation from \secref{CarlemanAppendix}.  In \secref{sec.newcancel}, we perform upper bound estimates that incorporate cancellation on the dual expression from \eqref{T+++.dual}.   In \secref{subsec.additional}, we prove additional estimates for the compact operator $\mathcal{K}$.   Those final upper-bounds contain momentum derivatives of the functions and the sum of those upper-bounds will further be bounded above in terms of our weighted fractional derivative norms via Littlewood-Paley type arguments.   In \secref{LP decomp}, we explain the main Littlewood-Paley inequalities that we will use to prove our main estimates.   Then in \secref{subsec.upperbd}, we use triple sum estimates to establish the final main upper-bound estimates on the linearized collision operators $\mathcal{K}$, $\mathcal{N}$, and the nonlinear operator $\Gamma$ using the upper-bound estimates on the decomposed pieces from the previous sections.

In \secref{main coercive estimates}, we prove the coercive lower bound of the norm part $\langle \mathcal{N}f,f\rangle$. We also show that the norm part $\langle \mathcal{N}f,f\rangle$ is comparable to the weighted fractional Sobolev norm $|\cdot|_{I^{\singS,\gamma}}$.  In this section, we use the Fourier redistribution argument from \cite{MR2784329}.

In \secref{globalExist}, we finally use the standard iteration method and the uniform energy estimate for the iterated sequence of approximate solutions to prove the local existence.  Our proof of the global well-posedness in \secref{globalExist} uses the nonlinear energy method introduced in \cite{MR2000470}.  In particular we derive the relativistic system of macroscopic equations and local conservation laws.   And we use these to prove that the local solutions are global by the standard continuity argument and the energy estimates. We also show that the $L^2$ functional of solutions decays exponentially in time for the hard-interactions \eqref{hard} and decays polynomially in time to zero for the soft-interactions \eqref{soft}.

In \secref{CarlemanAppendix}, we derive the relativistic Carleman-type dual representation for the gain and loss terms  and obtain the dual formulation of the trilinear form,
which are used in many places in the previous sections.

In \secref{FREQ:sec:derivation} we provide full derivations of the two different representations that we have given in \secref{FREQ:sec:main.decomp} for the proofs of the sharp asymptotics for the \textit{frequency multiplier} and the coercivity estimates. 

In \secref{FREQ:sec:pointwiseProof} we provide the proofs of the pointwise estimates: Lemma \ref{FREQ:lem:useful.ests}, Lemma \ref{FREQ:lem:integral.ests}, and Lemma \ref{pointwise.lemma2}.

\subsection{A brief description of Lorentz transformations}
In this section we define several notations and conventions involving Lorentz transformations which will be used in several key places throughout the article.

Let $\Lambda$ be a $4\times 4$ matrix (of real numbers) denoted by
$$
\Lambda=(\Lambda_{\ \nu}^{\mu})_{0\le \mu,\nu\le 3}.
$$
For the basics of Lorentz transformations, we refer to \cite{MR1898707} and \cite{Weinberg1972}.  We use the convention that the top index $\mu$ denotes the row of the matrix, and the bottom index $\nu$ denotes the column of the matrix.  We will also use the vector notation 
$$\Lambda^\mu\eqdef (\Lambda^\mu{}_0,\Lambda^\mu{}_1,\Lambda^\mu{}_2,\Lambda^\mu{}_3), \text{ for }\mu=0,1,2,3,$$
to express the $\mu$-th row of $\Lambda$.  
We will further use the notation $\Lambda$ to exclusively denote a Lorentz transformation.

\begin{definition}\label{rcop:LTdef}
$\Lambda$ is a (proper) Lorentz transformation if $\mbox{det}(\Lambda) = 1$ and 
$$
\Lambda_{\ \mu}^{\kappa}  \eta_{\kappa \lambda}  \Lambda_{\ \nu}^{\lambda} = \eta_{\mu \nu}, \qquad (\mu, \nu = 0,1,2,3).
$$
In matrix notation this can also be written $\Lambda^\top D \Lambda = D$, for $D=\text{diag}(-1,1,1,1).$ 
This condition implies the invariance:
$
p^\mu q_\mu = p^\mu \eta_{\mu\nu } q^{\nu} 
= 
(\Lambda^{\kappa}_{~\mu}  p^{\mu})\eta_{\kappa\lambda } (\Lambda^{\lambda}_{~\nu}  q^{\nu} ).
$
\end{definition}

In this paper we will use several times a specific Lorentz transformation $\Lambda=(\Lambda^\mu{}_\nu)_{0\le \mu,\nu\le 3}$ which maps into the ``center of momentum'' system as 
\begin{equation}\label{center.momentum.condition}
    A^\mu\eqdef \Lambda^\mu{}_\nu(p^\nu+q^\nu) =(\sqrt{s},0,0,0),\quad B^\mu\eqdef -\Lambda^\mu{}_\nu (p^\nu-q^\nu)=(0,0,0,g).
\end{equation}
The first condition in \eqref{center.momentum.condition} is the one that means that you are mapping the particle momentum to the center of momentum system where $p+q=0$. The second condition in \eqref{center.momentum.condition} is extremely useful for the changes of variables that we will use in the rest of this paper.

The explicit form of the matrix $\Lambda$ satisfying \eqref{center.momentum.condition} was derived in \cite[Section 5.3.1.3]{MR2707256}, it was also written in \cite[p. 593]{MR2728733}.
More precisely, we have
\begin{equation}\label{eq.LT}
\Lambda=(\Lambda^{\mu}{}_\nu)=\left(\begin{array}{cccc}\frac{p^0+q^0}{\sqrt{s}}& -\frac{p_1+q_1}{\sqrt{s}} &-\frac{p_2+q_2}{\sqrt{s}} &-\frac{p_3+q_3}{\sqrt{s}}\\ \Lambda^1{}_0 &\Lambda^1{}_1&\Lambda^1{}_2&\Lambda^1{}_3\\ 0 & \frac{(p\times q)_1}{|p\times q|} &\frac{(p\times q)_2}{|p\times q|}&\frac{(p\times q)_3}{|p\times q|}\\ \frac{p^0-q^0}{g} &-\frac{p_1-q_1}{g}&-\frac{p_2-q_2}{g}&-\frac{p_3-q_3}{g}\end{array}\right),
\end{equation}
with the second row given by
$$\Lambda^1{}_0=\Lambda^1{}_0(p,q)=\frac{2|p\times q|}{g\sqrt{s}},$$ 
and for $i=1,2,3$ we have 
$$\Lambda^1{}_i=\Lambda^1{}_i(p,q)=\frac{2\left(p_i\{p^0+q^0p^\mu q_\mu\}+q_i\{q^0+p^0p^\mu q_\mu\}\right)}{g\sqrt{s}|p\times q|}.$$
This Lorentz transformation satisfies \eqref{center.momentum.condition}.

Now any Lorentz transformation, $\Lambda$, is invertible and
the inverse matrix is denoted $\Lambda^{-1} = (\Lambda_{\mu}^{~\nu})_{0\le \mu,\nu\le 3}$ where we denote $(\Lambda^{-1})^{\nu}_{~\mu}  = \Lambda^{~\nu}_{\mu}$
so that 
$
\Lambda_{~\kappa}^{\nu}\Lambda_{\mu}^{~\kappa}= \delta_{~\mu}^{\nu},
$
where 
$
\delta_{~\mu}^{\nu}
$
is the standard Kronecker delta which is unity when the indices are equal and zero otherwise.
It follows from Definition \ref{rcop:LTdef} that
\begin{equation}\notag
(\Lambda^{-1})^{\nu}_{~\mu}
=
\Lambda_{\mu}^{~\nu}= \eta^{\nu \lambda }~  \Lambda_{~ \lambda }^{\kappa} ~  \eta_{\kappa \mu}.
\end{equation}
Definition \ref{rcop:LTdef} further implies that  $(\Lambda^{-1})^{\nu}_{~\mu}  = \Lambda^{~\nu}_{\mu}$ 
is a Lorentz transformation.  We can then directly calculate the inverse of \eqref{eq.LT} as 
\begin{equation}\notag
\Lambda^{-1}=(\Lambda_\nu{}^{\mu})
=
\left(\begin{array}{cccc}
\frac{p^0+q^0}{\sqrt{s}}& -\Lambda^1{}_0 & 0 & -\frac{p^0-q^0}{g}
\\
\frac{p_1+q_1}{\sqrt{s}} & \Lambda^1{}_1& \frac{(p\times q)_1}{|p\times q|} & -\frac{p_1-q_1}{g}
\\
\frac{p_2+q_2}{\sqrt{s}} & \Lambda^1{}_2& \frac{(p\times q)_2}{|p\times q|}&  -\frac{p_2-q_2}{g}
\\
\frac{p_3+q_3}{\sqrt{s}} & \Lambda^1{}_3& \frac{(p\times q)_3}{|p\times q|} &  -\frac{p_3-q_3}{g}
\end{array}\right).
\end{equation}
We will use the Lorentz transformation \eqref{eq.LT} and it's inverse in several of the proofs of the estimates below.

\subsection{Preliminary lemmas}
Here we introduce several useful pointwise estimates that will be used throughout the paper. The proofs of the following lemmas (Lemma \ref{FREQ:lem:useful.ests}, Lemma \ref{FREQ:lem:integral.ests}, and Lemma \ref{pointwise.lemma2}) will be given in \secref{FREQ:sec:pointwiseProof}.

\begin{lemma}\label{FREQ:lem:useful.ests}
With the notations \eqref{s} and \eqref{g} we have
\begin{equation}\label{FREQ:s.ge.g2}
    s=g^2+4 \ge \max\{g^2,4\},
\end{equation}
and
\begin{equation}\label{FREQ:s.le.pq}
    s\le 4p^0 q^0.
\end{equation}
We trivially conclude from \eqref{FREQ:s.le.pq}-\eqref{FREQ:s.ge.g2} that 
\begin{equation}\label{FREQ:g.le.sqrtpq}
    g\lesssim \sqrt{p^0 q^0}
\end{equation}
Recalling \eqref{g.ineq.sharp} we further have
\begin{equation}\label{FREQ:g.ge.lower}
\frac{|p-q|}{\sqrt{p^0q^0}}\leq g,
\end{equation}
and
\begin{equation}\label{FREQ:g.ge.2lower}
\frac{|p\times q|}{\sqrt{p^0q^0}}\leq g,
\end{equation}
and
\begin{equation}\label{FREQ:g.le.upper}
  g   \leq |p-q|.
\end{equation}
We also have
\begin{equation}\label{FREQ:p0q0.le.pq}
  |p^0 - q^0|   \leq |p-q|,
\end{equation}
and
\begin{equation}\label{FREQ:p0.plus.q0.le.p0q0}
  p^0 + q^0   \leq 2p^0 q^0.
\end{equation}

We now state a few pointwise estimates for \eqref{FREQ:lj}.  
We have the inequality:
\begin{equation}\label{FREQ:j.le.l}
    j\le l.
\end{equation}
Further
\begin{equation}\label{FREQ:l.upper.ineq}
j^2   \leq \frac{1}{2}p^0 q^0, \quad l   \leq \frac{1}{2}p^0 q^0.
\end{equation}
Also
\begin{equation}\label{FREQ:l2j2}
	l^2-j^2=\frac{(p^0+q^0)^2g^2-4|p\times q|^2}{16g^2}=\frac{s}{16g^2}|p-q|^2,
\end{equation} 
and
\begin{equation}\label{FREQ:l2j2size}
	\sqrt{l^2-j^2}=|p-q|\frac{\sqrt{g^2+4}}{4g}\geq \frac{1}{4}|p-q|.
\end{equation} 
Next for $g_\Lambda^2$ defined in \eqref{FREQ:g2y.variable} we have
\begin{equation}\label{FREQ:ineq.gL.here}
g^2 \max\{\sqrt{y^2+1}, \sqrt{2}\} g^2
\lesssim
 g_\Lambda^2 \lesssim   s_\Lambda \lesssim s \sqrt{y^2+1}, \quad \forall 0 \le y \le \infty.
\end{equation}

If the notations $l$, $j$, and $g_\Lambda$ are defined instead as 
\begin{equation}\label{lj}
l=l(p',q)\eqdef \frac{p'^0+q^0}{4},\ j=j(p',q)\eqdef \frac{|p'\times q|}{2\tilde{g}},\text{ and}
\end{equation}
\begin{equation}\label{g2y.variable}
    g^2_\Lambda = \tilde{g}^2+\frac{\tilde{s}}{2}(\sqrt{|z|^2+1}-1), \quad s_\Lambda = g^2_\Lambda + 4,\quad g_L^2 = \frac{1}{2}\tilde{s}(\sqrt{|z|^2+1}-1),
\end{equation}
 (cf. \eqref{g2}, \eqref{g2.eq.lambda} and \eqref{theta.eq.lambda} with $r=\sqrt{s}|z|$), then we have 
\begin{equation}\label{newcos.intro}
\cos\theta_\Lambda=\frac{2\tilde{g}^2}{g^2_\Lambda}-1
=
\frac{\tilde{g}^2-\frac{\tilde{s}}{2}(\sqrt{|z|^2+1}-1)}{\tilde{g}^2+\frac{\tilde{s}}{2}(\sqrt{|z|^2+1}-1)},
\end{equation}
\begin{equation}\label{j.le.l}
    j\le l,
\end{equation}
\begin{equation}\label{l.upper.ineq}
   j^2   \lesssim p'^0 q^0, \quad l   \lesssim p'^0 q^0,
\end{equation}
\begin{equation}\label{l2j2}
	l^2-j^2=\frac{(p'^0+q^0)^2\tilde{g}^2-4|p'\times q|^2}{16\tilde{g}^2}=\frac{\tilde{s}}{16\tilde{g}^2}|p'-q|^2,
\end{equation} 
\begin{equation}\label{l2j2size}
	\sqrt{l^2-j^2}=|p'-q|\frac{\sqrt{\tilde{g}^2+4}}{4\tilde{g}}\geq \frac{1}{4}|p'-q|,
\end{equation} and
\begin{equation}\label{ineq.gL.here}
\tilde{g}^2 \max\{\sqrt{|z|^2+1}, \sqrt{2}\} 
\lesssim
 g_\Lambda^2 \lesssim   s_\Lambda \lesssim \tilde{s} \sqrt{|z|^2+1}, \quad \forall 0 \le |z| \le \infty.
\end{equation}
\end{lemma}

Further, we will also need sharp estimates of the following integrals
\begin{equation}\label{FREQ:int.Kgamma}
    \bar{K}_\gamma(l,j)\eqdef \int_0^1dy\  y^{1-\gamma}  \exp(- l\sqrt{y^2+1})I_0(jy),
\end{equation}
where $\gamma \in (0,2)$
and
\begin{equation}\label{FREQ:J2.special}
J_2(l,j)\eqdef \int_{0}^\infty\ dy\ \frac{y\exp\left(-l\sqrt{y^2+1}\right) I_0(j y)}{\sqrt{1+y^2}}
\end{equation} 
Also define
\begin{equation}\label{FREQ:tildeK2def}
\tilde{K}_2(l,j)\eqdef \int_{0}^\infty\ dy\ y(y^2+1)^{1/2}\exp\left(-l\sqrt{y^2+1}\right) I_0(j y).
\end{equation} 

These integrals are known from \cite{Gradshteyn:1702455} and \cite{MR1211782}.  In particular a proof of Lemma \ref{FREQ:lem:integral.ests} below is given by combining the results from \cite[Lemma 3.5, Lemma 3.6 and Corollary 2]{MR1211782}.   We give the following lemma and proof for completeness.

\begin{lemma}\label{FREQ:lem:integral.ests}
For both \eqref{lj} and \eqref{FREQ:lj}, we have
\begin{equation}\label{FREQ:max.bound}\max_{0\leq x\leq 1}\exp(-l\sqrt{x^2+1}+jx) \lesssim \exp(-\sqrt{l^2-j^2}).\end{equation}
Then for \eqref{FREQ:int.Kgamma}, we have the uniform estimate
	\begin{equation}\label{FREQ:smally.lemma}
|\bar{K}_\gamma(l,j)|
	\lesssim
	 \exp\left(-\sqrt{l^2-j^2}\right).
	\end{equation}
For \eqref{FREQ:J2.special} we have the exact formula
\begin{equation}\label{FREQ:J2.lemma}
J_2(l,j)= (\sqrt{l^2-j^2})^{-1}\exp(-\sqrt{l^2-j^2}),
\end{equation} 
and then for \eqref{FREQ:tildeK2def} we have the formula
\begin{multline}\label{FREQ:k2lj.lemma}
\tilde{K}_2(l,j)= (\sqrt{l^2-j^2})^{-5}\exp(-\sqrt{l^2-j^2})\\\times \left((l^2-j^2+3\sqrt{l^2-j^2}+3)l^2-(l^2-j^2)-(\sqrt{l^2-j^2})^3\right).
\end{multline}	
\end{lemma}
    
    In addition, we have the following pointwise estimates:
    \begin{lemma}\label{pointwise.lemma2}
    If $k>-3$ then we have
        \begin{equation}\label{jutter.integral.est}
        \int_{\rth}dq\sqrt{J(q)}|p-q|^k\approx {(p^0)}^k.
    \end{equation}
    Also, for \eqref{Moller.def} we have
        \begin{equation}\label{moller.upper.est}
        v_{\text{\o}} \lesssim 1.
    \end{equation}
    In addition, if the collision kernel $\sigma(g,\theta)$ is supported only when $\cos\theta \ge 0,$ then we have 
    \begin{equation}
        \label{gtildeg.equiv}
        \tilde{g}\approx g.
    \end{equation}
    For \eqref{g2y.variable} we have \begin{equation}\label{frac sg.est}0\le \left(\frac{\tilde{s} \Phi(\tilde{g})\tilde{g}^4}{s_\Lambda \Phi(g_\Lambda)g^4_\Lambda}\right)\le 1,\end{equation} and
    \begin{equation}\label{colfre4.2}
    	\left|\frac{\tilde{s} \Phi(\tilde{g})\tilde{g}^4}{s_\Lambda \Phi(g_\Lambda)g^4_\Lambda} - 1\right|\lesssim \frac{\tilde{s}(\sqrt{|z|^2+1}-1)}{\tilde{g}^2_\Lambda},
\end{equation} for both hard and soft interactions \eqref{hard} and \eqref{soft}.  Also, using \eqref{conservation} we have \begin{equation}\label{pp'q'} p^0\leq p'^0+q'^0\leq 2p'^0q'^0\end{equation}
and 
\begin{equation}\label{eq.pqp'q'}
   p^\mu q_\mu=p'^\mu q'_\mu, \quad p'^\mu q_\mu=p^\mu q'_\mu, \quad p'^\mu p_\mu=q'^\mu q_\mu.
\end{equation}
\end{lemma}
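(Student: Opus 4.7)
The plan is to dispatch each of the seven estimates in turn, most of which reduce either to the conservation law \eqref{conservation} or to direct computation with the explicit formulas.

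First I would treat the algebraic identities \eqref{eq.pqp'q'} and \eqref{pp'q'}, and then \eqref{gtildeg.equiv}, since these are foundational. From \eqref{conservation} we have $(p+q)^\mu(p+q)_\mu = (p'+q')^\mu(p'+q')_\mu$; expanding both sides using $p^\mu p_\mu = q^\mu q_\mu = p'^\mu p'_\mu = q'^\mu q'_\mu = -1$ immediately gives $p^\mu q_\mu = p'^\mu q'_\mu$. Similarly, squaring $p-q'=p'-q$ (a consequence of $p+q=p'+q'$) gives $p^\mu q'_\mu = p'^\mu q_\mu$, and squaring $p-p'=q'-q$ gives $p'^\mu p_\mu = q'^\mu q_\mu$. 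For \eqref{pp'q'}, since $q^0\ge 1$ we have $p^0 \le p^0+q^0 = p'^0+q'^0$ by the zeroth component of \eqref{conservation}, and then $p'^0+q'^0\le 2p'^0q'^0$ follows from \eqref{p0.plus.q0.le.p0q0}. For \eqref{gtildeg.equiv}, the key observation is the identity $\bar g^2+\tilde g^2 = g^2$: using \eqref{gbar} and \eqref{gtilde} one computes $\bar g^2+\tilde g^2 = -2p'^\mu(p_\mu+q_\mu)-4 = -2p'^\mu(p'_\mu+q'_\mu)-4 = -2p'^\mu q'_\mu - 2$, which equals $g^2 = -2p^\mu q_\mu-2$ by the first identity in \eqref{eq.pqp'q'}. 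Next, expanding $(p-q)^\mu(p'-q')_\mu$ in \eqref{cost} and using \eqref{eq.pqp'q'} yields $\cos\theta\cdot g^2 = \tilde g^2-\bar g^2$. When the kernel is supported on $\cos\theta\ge 0$, this forces $\tilde g^2\ge \bar g^2$, hence $\tilde g^2\ge g^2/2$; combined with the trivial $\tilde g^2\le g^2$, this yields $\tilde g\approx g$.

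For \eqref{jutter.integral.est} I would split into the regimes $|p|\le 1$ and $|p|\ge 1$. For $|p|\le 1$ both sides are bounded constants by direct integration. For $|p|\ge 1$ I split $\mathbb{R}^3_q$ into $\{|q|\le |p|/2\}$, where $|p-q|\approx |p|\approx p^0$ and the exponential factor integrates to an order one constant, and its complement, where $\sqrt{J(q)}$ decays exponentially in $|q|\ge |p|/2$ and hence beats any polynomial power $|p-q|^k$ (the lower bound $k>-3$ ensures the integral is still finite near $q=p$). The lower bound is obtained by restricting to a small neighborhood of $q=0$ where $|p-q|\approx |p|$. Estimate \eqref{moller.upper.est} is immediate from $v_{\text\o}=g\sqrt s/(p^0q^0)$ together with \eqref{g.upper.est} and \eqref{s.le.pq}, which give $g\sqrt s\lesssim p^0q^0$.

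The main work is in \eqref{frac sg.est} and \eqref{colfre4.2}. Set $\Delta\eqdef \tfrac{\tilde s}{2}(\sqrt{|z|^2+1}-1)\ge 0$, so by \eqref{g2y.variable} we have $g_\Lambda^2 = \tilde g^2+\Delta$ and $s_\Lambda = \tilde s+\Delta$. Writing $\Phi(g)=C_\Phi g^{\singS}$, the ratio becomes
\[
R\eqdef \frac{\tilde s\,\Phi(\tilde g)\tilde g^4}{s_\Lambda\Phi(g_\Lambda)g_\Lambda^4}=\frac{\tilde s}{s_\Lambda}\left(\frac{\tilde g^2}{g_\Lambda^2}\right)^{(\singS+4)/2}.
\]
I would verify that $\singS+4>0$ in both the hard \eqref{hard} and soft \eqref{soft} regimes (in fact $\singS+4>5/2$), so both factors lie in $[0,1]$, giving \eqref{frac sg.est}. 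For \eqref{colfre4.2}, write $R = (1-\Delta/s_\Lambda)(1-\Delta/g_\Lambda^2)^{(\singS+4)/2}$ and use $|1-R|\le \Delta/s_\Lambda+|1-(1-\Delta/g_\Lambda^2)^{(\singS+4)/2}|$. When $\Delta\le g_\Lambda^2/2$ I Taylor-expand using $|1-(1-x)^\alpha|\le C_\alpha x$ for $x\in[0,1/2]$ and any fixed $\alpha>0$ (bounding $(1-t)^{\alpha-1}$ uniformly on $[0,1/2]$ even when $\alpha<1$). When $\Delta\ge g_\Lambda^2/2$ the desired bound is trivial since $|1-R|\le 1 \lesssim \Delta/g_\Lambda^2$. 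Combining yields $|1-R|\lesssim \Delta/g_\Lambda^2$, which is \eqref{colfre4.2}.

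The subtlest point is \eqref{colfre4.2}, specifically handling the exponent $\alpha=(\singS+4)/2$ across its full admissible range, which can be less than $1$ in the soft case; the Taylor argument has to be split at $x=1/2$ so the integral remainder does not blow up. All other estimates are algebraic or elementary integral manipulations.
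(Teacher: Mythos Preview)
Your proof is correct and follows essentially the same route as the paper. The paper actually defers several pieces to other references: it cites \eqref{triangle.id} (the identity $g^2=\bar g^2+\tilde g^2$) and \eqref{bargoverg} from outside sources for \eqref{gtildeg.equiv}, and refers the proof of \eqref{colfre4.2} entirely to a companion paper; you instead derive the triangle identity directly from \eqref{eq.pqp'q'} and give a self-contained Taylor/splitting argument for \eqref{colfre4.2}, which is cleaner for the reader. One small inaccuracy: your parenthetical ``in fact $\singS+4>5/2$'' is not quite right in the soft regime \eqref{soft}, where $\singS$ can approach $-3/2-\gamma$ with $\gamma$ close to $1$, giving only $\singS+4>3/2$; but this is irrelevant since your argument only uses $\singS+4>0$.
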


We remark that \eqref{eq.pqp'q'} directly implies that $$
(p'^\mu-q^\mu)(p'_\mu-p_\mu)=0.$$

These pointwise estimates above will be used crucially for main upper-bound and lower-bound estimates in the rest of this paper.  We now also introduce a lifting of the 6-fold integral below into a 8-fold integral:

\begin{lemma}[Claim (7.5) of \cite{MR2728733}]\label{7.5ofCMP} 
    Let $\underline{g}=g(p'^\mu,q'^\mu)$ and $\underline{s}=s(p'^\mu,q'^\mu).$   Recall \eqref{def.u}.
Then we have for a function $G=G(p^\mu,q^\mu,p'^\mu,q'^\mu)$ that 
$$\int_\rth \frac{dp'}{p'^0}\int_\rth \frac{dq'}{q'^0}G(p^\mu,q^\mu,p'^\mu,q'^\mu)=\frac{1}{16} \int_{\mathbb{R}^4\times\mathbb{R}^4}d\Theta(p'^\mu,q'^\mu)\ G(p^\mu,q^\mu,p'^\mu,q'^\mu),$$ where $$d\Theta(p'^\mu,q'^\mu)\eqdef dp'^\mu dq'^\mu u(p'^0+q'^0)u(\underline{s}-4)\delta(\underline{s}-\underline{g}^2-4)\delta\left((p'^\mu+q'^\mu)(p'_\mu-q'_\mu)\right).$$ Note that $\underline{g}=g$ under \eqref{conservation}. 
\end{lemma}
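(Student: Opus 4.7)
The plan is to integrate out the energy variables $p'^0$ and $q'^0$ in the 8-dimensional integral on the right-hand side, using the two delta functions in $d\Theta$, to recover the standard 6-dimensional Lorentz-invariant measure on the product of forward mass shells. I would proceed in three steps.

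Step one is to rewrite the delta-function arguments in terms of $p'^\mu p'_\mu$ and $q'^\mu q'_\mu$. Direct expansion using $\eta_{\mu\nu}=\mathrm{diag}(-1,1,1,1)$ yields
\begin{equation*}
\underline{s}-\underline{g}^2-4 = -2(p'^\mu p'_\mu + q'^\mu q'_\mu + 2), \qquad (p'^\mu+q'^\mu)(p'_\mu-q'_\mu) = p'^\mu p'_\mu - q'^\mu q'_\mu.
\end{equation*}
Setting $X=p'^\mu p'_\mu+1$ and $Y=q'^\mu q'_\mu+1$, the pair of delta functions in $d\Theta$ equals $\delta(-2(X+Y))\delta(X-Y)$, whose joint support is exactly the product of the two mass shells $p'^\mu p'_\mu = q'^\mu q'_\mu = -1$.

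Step two is to select the forward branch via the two indicator functions. The mass-shell constraints give $p'^0=\pm\sqrt{|p'|^2+1}$ and $q'^0=\pm\sqrt{|q'|^2+1}$. On the mass shell one has the off-shell identity $\underline{s} = 2 + 2 p'^0 q'^0 - 2 p'\cdot q'$, and a Cauchy--Schwarz type inequality shows that in the two mixed-sign cases $\underline{s}\le 0 < 4$, so $u(\underline{s}-4)=0$ excludes them; the $(-,-)$ case is excluded by $u(p'^0+q'^0)=0$. Hence only the forward branch $p'^0, q'^0>0$ contributes and on it both indicator factors equal one.

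Step three is the Jacobian computation. At fixed $(p', q')$, the Jacobian matrix of the map $(p'^0, q'^0) \mapsto (\underline{s}-\underline{g}^2-4,\ (p'^\mu+q'^\mu)(p'_\mu-q'_\mu))$ is
\begin{equation*}
\begin{pmatrix} 4p'^0 & 4q'^0 \\ -2p'^0 & 2q'^0 \end{pmatrix},
\end{equation*}
whose determinant on the positive-energy root equals $16\, p'^0 q'^0$. The standard distributional composition rule for delta functions therefore reduces the inner $dp'^0 dq'^0$-integration to a single factor at the on-shell point, which I would then match against the well-known representation of the Lorentz-invariant mass-shell measure $\frac{dp'}{p'^0} = 2\, u(p'^0)\,\delta(p'^\mu p'_\mu+1)\, dp'^\mu$ (and the analogous identity for $q'$) to deduce the stated equality with the asserted constant prefactor.

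The main obstacle is the careful bookkeeping of constants and branches: one must convert $\delta(-2(X+Y))\delta(X-Y)$ into a factorized delta $\delta(X)\delta(Y)$ via a linear change of variables (picking up a numerical constant), track the Jacobian $16\, p'^0 q'^0$ from the quadratic energy dependence, and combine these with the factor of $2$ per mass shell appearing in the invariant measure, verifying that they recombine as claimed. The concluding remark $\underline{g}=g$ under \eqref{conservation} is then immediate: 4-momentum conservation gives $\underline{s}=s$ directly from the definitions, and on the support of the measure the relations $s=g^2+4$ and $\underline{s}=\underline{g}^2+4$ then force $\underline{g}^2=g^2$.
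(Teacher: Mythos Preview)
The paper does not supply its own proof of this lemma; it is imported verbatim as Claim~(7.5) of \cite{MR2728733} and used as a black box. Your three-step argument (rewrite the two delta constraints as the pair $X=p'^\mu p'_\mu+1=0$, $Y=q'^\mu q'_\mu+1=0$; use $u(p'^0+q'^0)\,u(\underline{s}-4)$ to isolate the forward branch; compute the $(p'^0,q'^0)$-Jacobian $16\,p'^0 q'^0$) is exactly the standard derivation and is correct.

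One remark on the step you flag as ``the main obstacle'': if you actually carry out the constant bookkeeping you outline, you obtain $\delta(\underline{s}-\underline{g}^2-4)\,\delta((p'^\mu+q'^\mu)(p'_\mu-q'_\mu))=\tfrac14\,\delta(X)\,\delta(Y)$ and hence $\int d\Theta\,G=\tfrac{1}{16}\int \tfrac{dp'}{p'^0}\tfrac{dq'}{q'^0}\,G$, i.e.\ the prefactor comes out as $16$ rather than $\tfrac{1}{16}$ relative to the displayed identity. The paper itself is not internally consistent on this point (compare the lemma statement with its use at the start of \S\ref{CarlemanAppendix.Carleman}, where the factor is omitted), and since every application in the paper is an inequality up to uniform constants ($\lesssim$ or $\approx$), this discrepancy is immaterial to the results. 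Your argument and all of its substantive steps are sound.
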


  We will also need to use the following alternative integral formula's in our estimates.  We define the following integral
\begin{equation}\label{original.eq.Ig}
        I_G \eqdef 
\int_\rth dp \int_\rth dq \int_{\mathbb{S}^2}d\omega \ v_{\text{\o}} \sigma(g,\theta) G(q,p',p,q'^0). 
\end{equation}
Here we use the variables \eqref{p'} and \eqref{q'}, and we assume that $G(q,p',p,q'^0)$ is a Schwartz function for which the integral above is well defined.  

\begin{lemma}\label{transformation.Lemma.appendix}
For $I_G$ from \eqref{original.eq.Ig} we can alternatively represent the integral as
    \begin{multline}\label{second.eq.Ig}
    I_G \\ = 
\frac{c'}{2}\int_\rth\frac{dp'}{p'^0}\int_{\rth}\frac{dq}{q^0} \frac{\sqrt{\tilde{s}}}{\tilde{g}}
\int_{\mathbb{R}^2} \frac{dz}{\sqrt{|z|^2+1}}s_\Lambda\sigma(g_\Lambda,\theta_\Lambda)
  G(q,p',p'+A,q^0+A^0), 
\end{multline}
where we assume that $G=G(q,p',p'+A,q^0+A^0)$ is a Schwartz function for which the integrals \eqref{original.eq.Ig} and \eqref{second.eq.Ig} are well defined.  The constant satisfies $c'>0$.  This is the case if the function $G$ has suitable cancellation so that the integral in \eqref{second.eq.Ig} is finite.

On the other hand, more generally (for $c'>0$) we have that $I_G$ from \eqref{original.eq.Ig} can be alternatively expressed as 
    \begin{multline}\label{third.eq.Ig}
    I_G  = 
\frac{c'}{2}\int_\rth\frac{dp'}{p'^0}\int_{\rth}\frac{dq}{q^0} \frac{\sqrt{\tilde{s}}}{\tilde{g}}
\int_{\mathbb{R}^2} \frac{dz}{\sqrt{|z|^2+1}}s_\Lambda\sigma(g_\Lambda,\theta_\Lambda)
  \bigg[
  G(q,p',p'+A,q^0+A^0)
  \\
  - \frac{\tilde{s} \Phi(\tilde{g})\tilde{g}^4}{s_\Lambda \Phi(g_\Lambda)g^4_\Lambda}
  G(q,p',p',q^0)\bigg].
\end{multline}
The formula above may be used in the case that $G$ may not have enough cancellation by itself to make the integral above well defined.

 The transformation from \eqref{original.eq.Ig} to \eqref{second.eq.Ig} or \eqref{third.eq.Ig} has the following mapping properties for the variables $p$, $q$, $p'$, and $q'$  in \eqref{original.eq.Ig} from \eqref{p'} and \eqref{q'} etc:  It sends $q^\mu\to q^\mu$, $p'^\mu \to p'^\mu$, $p\to p'+A$, $q'^0\to q^0 + A^0$, $g \to g_\Lambda$, $s \to s_\Lambda$, $\theta \to \theta_\Lambda$, $\bar{g} \to g_L$,  where we use the definitions  \eqref{g2y.variable} or \eqref{g2.eq.lambda}, \eqref{newcos.intro} or \eqref{theta.eq.lambda},  \eqref{A.def.lambda} and \eqref{Azero.def.lambda}.
\end{lemma}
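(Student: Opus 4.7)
The plan is to perform two successive reductions of the six-fold integral in \eqref{original.eq.Ig}: first, a Carleman-type lifting that exchanges the roles of the pairs $(p,q')$ and $(p',q)$, and second, a Lorentz boost into the center-of-momentum frame of $(p',q)$ followed by a stereographic parameterization of the residual 2-sphere by $z\in\mathbb{R}^2$. Throughout, I rely on Lemma \ref{lemma.reduction}, Lemma \ref{lemma.carleman}, Lemma \ref{7.5ofCMP} and the pointwise identities \eqref{eq.pqp'q'}.

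First I would use Lemma \ref{lemma.reduction} in reverse to write
\begin{equation*}
I_G \;=\; 2\int\frac{dp}{p^0}\frac{dq}{q^0}\frac{dp'}{p'^0}\frac{dq'}{q'^0}\, s\,\sigma(g,\theta)\,\delta^{(4)}\!\bigl(p^\mu+q^\mu-p'^\mu-q'^\mu\bigr)\,G(q,p',p,q'^0),
\end{equation*}
and then invoke Lemma \ref{lemma.carleman} in a ``swapped'' configuration: regard $(p',q)$ as the free outgoing pair and integrate out $q'$ against the four delta components. This produces an intermediate expression $\int\tfrac{dp'}{p'^0}\tfrac{dq}{q^0}\int_{E} d\pi\, s\sigma(g,\theta)\,G$ with the appropriate two-surface measure. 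At this point \eqref{eq.pqp'q'} replaces the collisional invariants naturally by their $(p',q)$-counterparts, and the relative momentum $\tilde g$ of \eqref{gtilde} emerges in place of $g$.

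Second, for each fixed $(p',q)$ I would apply the Lorentz transformation $\Lambda=\Lambda(p',q)$ given by \eqref{eq.LT} with $(p,q)$ replaced by $(p',q)$, so that $\Lambda^\mu{}_\nu(p'^\nu+q^\nu)=(\sqrt{\tilde s},0,0,0)$ and $\Lambda^\mu{}_\nu(p'^\nu-q^\nu)=(0,0,0,-\tilde g)$. In this frame the on-shell hyperboloid that $p$ may occupy, compatible with conservation \eqref{conservation}, is a 2-sphere of radius $\tilde g/2$; I would parameterize it by a stereographic-type chart $z\in\mathbb{R}^2$ whose Jacobian produces the factor $1/\sqrt{|z|^2+1}$ in \eqref{second.eq.Ig}. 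Computing the pullback of $p$ and $q'^0$ under $\Lambda^{-1}(p',q)$ gives $p\mapsto p'+A(z)$ and $q'^0\mapsto q^0+A^0(z)$ for the vectors $A^\mu$, $A^0$ of \eqref{A.def.lambda}--\eqref{Azero.def.lambda}, while tracking the Mandelstam quantities and the scattering angle yields $g\mapsto g_\Lambda$, $s\mapsto s_\Lambda$, $\theta\mapsto\theta_\Lambda$ and $\bar g\mapsto g_L$ with the formulas in \eqref{g2y.variable}--\eqref{newcos.intro}. Combining the prefactor $\tfrac{\sqrt{\tilde s}}{\tilde g\,p'^0 q^0}$ from the Carleman reduction with the stereographic Jacobian produces exactly \eqref{second.eq.Ig}.

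Finally, \eqref{third.eq.Ig} is obtained by the observation that at $z=0$ one has $A^\mu=0$, $g_\Lambda=\tilde g$, $s_\Lambda=\tilde s$ and $\theta_\Lambda=0$, so the coefficient $\tfrac{\tilde s\Phi(\tilde g)\tilde g^4}{s_\Lambda\Phi(g_\Lambda)g_\Lambda^4}$ equals $1$ there by \eqref{frac sg.est} (it is the exact ratio of the transported kernel to its value at $z=0$, cf.\ \eqref{colfre4.2}). Subtracting $\tfrac{\tilde s\Phi(\tilde g)\tilde g^4}{s_\Lambda\Phi(g_\Lambda)g_\Lambda^4}\,G(q,p',p',q^0)$ inside the bracket therefore produces a difference that vanishes at $z=0$, which precisely tames the nonintegrable angular singularity $\sigma_0(\theta_\Lambda)\sim\theta_\Lambda^{-1-\gamma}$ described in \eqref{angassumption}. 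For $G$ possessing the cancellation needed for \eqref{second.eq.Ig} the two pieces are each separately meaningful and their difference is zero; for a general $G$ the identity \eqref{third.eq.Ig} is justified by approximating $G$ by a Schwartz cutoff, applying \eqref{second.eq.Ig} where both sides converge absolutely, and passing to the limit using \eqref{frac sg.est}. The main obstacle is the explicit matrix-level computation of Step 2: tracking how the entries of $\Lambda^{-1}(p',q)$ and the stereographic chart combine to deliver the exact prefactors and the precise formulas for $A^\mu$, $g_\Lambda$, $\theta_\Lambda$, which requires careful algebra with the Lorentz-boost parameters derived from $(p',q)$, and is the step where all the delicate cancellations between Jacobians, the M{\o}ller factor, and the surface measure must line up.
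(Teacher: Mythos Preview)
Your high-level outline (lift to the 12-fold representation, swap the roles of $(p,q')$ and $(p',q)$, boost to the center-of-momentum frame of $(p',q)$, reduce to a two-dimensional integral) matches the paper's strategy, but the geometric content of your second step is wrong and would not produce the stated formula.

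The paper does not use Lemma~\ref{lemma.carleman} here, and the $\mathbb{R}^2$ variable $z$ is \emph{not} a stereographic coordinate on any 2-sphere. After the swap $p\leftrightarrow p'$ the paper lifts via Lemma~\ref{7.5ofCMP}, passes to total and relative variables $\bar p^\mu=p'^\mu+q'^\mu$, $\bar q^\mu=p'^\mu-q'^\mu$, and carries out the four-dimensional delta function to fix $\bar q=p-q$. What remains is a three-dimensional integral over $\bar p\in\mathbb{R}^3$ with the Lorentz-invariant weight $d\bar p/\bar p^0$, $\bar p^0=\sqrt{|\bar p|^2+s}$, and a single scalar constraint $\delta(\bar p^\mu(p_\mu-q_\mu))$. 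After applying the boost $\Lambda(p',q)$ of \eqref{eq.LT}, this delta is $\delta(gr\cos\psi)$ in spherical coordinates for $\bar p$; it forces $\psi=\pi/2$ and leaves a genuine \emph{planar} integration $x=(r\cos\phi,r\sin\phi)\in\mathbb{R}^2$ with $r\in[0,\infty)$. The factor $1/\sqrt{|z|^2+1}$ is then nothing more exotic than $\sqrt{s}/\bar p^0$ after the rescaling $z=x/\sqrt{s}$; it is the residual Lorentz-invariant density, not a stereographic Jacobian. A stereographic chart on a compact sphere would give a Jacobian of the form $(1+|z|^2)^{-2}$ and a bounded range for $\theta_\Lambda$, neither of which matches \eqref{second.eq.Ig} or \eqref{g2y.variable}--\eqref{newcos.intro}.

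For \eqref{third.eq.Ig}, your description of the cancellation at $z=0$ is the right effect, but the paper's justification is sharper: once the angular variable is $\cos\theta_\Lambda$ with $d(\cos\theta_\Lambda)/dr=-g^2\sqrt{s}\,r\,g_\Lambda^{-4}(r^2+s)^{-1/2}$, the hypothesis $\int_{\mathbb{S}^2}d\omega\,\sigma_0=0$ becomes exactly $\int_0^\infty dr\,\tfrac{g^2\sqrt{s}\,r}{g_\Lambda^4\sqrt{r^2+s}}\,\sigma_0(\cos\theta_\Lambda)=0$, and this is what makes the subtracted term a bona fide zero integral rather than merely a Taylor remainder. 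The approximation argument to drop the zero-mean hypothesis then goes through as you say.
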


Lemma \ref{transformation.Lemma.appendix} is proven in  \secref{app.dual.rep}.      In the next subsection, we introduce additional conventions of notations that we use throughout the paper.

\subsection{Further notations}
We call the change of pre- and post-collisional momentum variables $(p,q)\mapsto (p',q')$ as the pre-post change of variables. It is known from \cite{MR1105532} that the Jacobian for this change of variables is equal to 
\begin{equation}\label{prepost.change}
    \left|\frac{\partial (p',q')}{\partial (p,q)}\right|=\frac{p'^0q'^0}{p^0q^0}.
\end{equation}
We note that this Jacobian is calculated in \cite{MR1105532} in the Glassey-Strauss coordinates as written in \cite[Page 98, Equation (3.359)]{MR1379589}.  Then it is explained in \cite[Equation (23)]{MR2765751} how to also use this change of variable in the center-of-momentum coordinates \eqref{p'} and \eqref{q'} that are used in this paper.

We further introduce the following notation. 
Given $h_1=h_1(p,q)$, we define the function $h=h(p)$ as an integral on $\rth$ as 
$$h(p)=\int_\rth h_1(p,q)dq.$$ In this case, for $A>0$ we split the integral into 
	$$\int_\rth = \int_{|q|\ge A} +\int_{|q|< A},$$ and abuse the notations to denote each term as
	\begin{equation}\label{convention}
	\begin{split}[h]_{|q|\ge A} &= |h|_{|q|\ge A} \eqdef \int_{|q|\ge A} h_1(p,q)dq,
	\text{ and }\\
	[h]_{|q|< A} &= |h|_{|q|< A} \eqdef  \int_{|q|< A} h_1(p,q)dq.\end{split}
	\end{equation}
This convention of the notations will be used in a few convenient places in the rest of this paper.

\section{Frequency multiplier estimates}
\label{sec:frequency}

The existence theory for the Boltzmann equation without angular cutoff was developed in the class of weak solutions via the method of renormalization \cite{MR1857879}.  Further the existence and uniqueness theory was developed using the energy method via linearization nearby Maxwellian equilibrium in \cite{MR2784329, MR2795331, MR2793203, MR2847536}.  This current paper is mainly concerned with the energy method nearby equilibrium.  One of the most crucial parts in the proof via the energy method is to create a positive dissipation term in the energy inequality. It turns out that the coercivity estimates for the dissipation term crucially depends on the asymptotics of the \textit{frequency multiplier} \eqref{FREQ:tildezeta} whose explicit form will be introduced later on \eqref{FREQ:def.zeta} and \eqref{FREQ:zetaK.def}.

Regarding the Newtonian Boltzmann equation \eqref{Boltz.Newtonian}, the estimates on the asymptotics of the \textit{frequency multiplier} have been proved by Pao \cite{Pao} using the symmetry of the linearized operator and using the sharp pointwise estimates of certain special functions.  This can also be proven using the procedure outlined in \secref{FREQ:sec:main.difficult}. These asymptotics have been crucially used in the coercivity estimates and the spectral theory for the linearized Boltzmann operator in \cite{MR2322149}.  These coercivity estimates, and in addition the Newtonian cancellation lemma from \cite{ADVW} has been crucially used for the proof of the global in time wellposedness in \cite{MR2784329,MR2795331,MR2793203} nearby equilibrium.

In this section, we are interested in proving analogous results for the relativistic Boltzmann equation \eqref{RBE}. Namely, we would like to establish the estimates on the asymptotics of the relativistic frequency multipliers for the linearized Boltzmann operator.  However, in the relativistic case, it turns out  that the collisional structure is substantially different \cite{ChapmanJangStrain2020}, and the crucial change of variable $p'\to p$ in the non-relativistic cancellation lemma does not hold in the relativistic case (which is explained in Remark \ref{FREQ:cancel.problem}).  This also shows the major difficulty in the relativistic case versus  the non-relativistic case (in regards to the lack of the change of variables from $p' \to p$) and in regards to the inability to use the standard proof of the behavior of the frequency multiplier term $\zeta$ from the non-relativstic case.

We believe that this estimate can be useful to study many problems in the relativistic Kinetic theory.  The sharp asymptotic leading order estimate in Theorem \ref{FREQ:main.thm} should be useful in mathematical studies of relativistic fluid limit problems, the Newtonian limit, the relativistic Boltzmann equation coupled with relativistic matter models to name a few.

\begin{remark}\label{FREQ:leading.order.remark}
Throughout this section, based upon \eqref{FREQ:Paos}, we call a term $A(p)$ a \textit{leading order} term, if $A(p)\approx (p^0)^{\frac{\singS+\gamma}{2}}.$  In addition, we call a term $B(p)$ a \textit{lower order} term if  for some positive constant $\epsilon>0$ there exists a finite constant $C_\epsilon>0$ such that $|B(p)|\leq C_\epsilon (p^0)^{\frac{\singS+\gamma}{2}-\epsilon}$. 
\end{remark}

\subsection{Comparison to the Newtonian case}\label{FREQ:sec:main.difficult}
In contrast to \cite{Pao}, one can prove the asymptotic behavior of the non-relativistic frequency multiplier in the following simple way.

In the non-relativisitic case \eqref{Boltz.Newtonian} the analog of the collision frequency multiplier \eqref{FREQ:tildezeta} is given  \cite[Page 11]{MR2784329} by
\begin{equation}\label{Boltz.Newtonian.nu}
\tilde{\nu}(v)=\int_{\rth}dv_*\int_{\mathbb{S}^2}d\omega\ B(v-v_*,\omega)
\left(\sqrt{\mu(v_*)}-\sqrt{\mu(v'_*)}\right)\sqrt{\mu(v_*)},
\end{equation}
where the Newtonian Maxwellian equilibrium is given by $$\mu(v)\eqdef (2\pi)^{-3/2} \exp\left(-|v|^2/2\right).$$  Note the similarity to  $\tilde{\zeta}$ in \eqref{FREQ:tildezeta}.  In the non-relativistic case, due to symmetry the following decomposition of the frequency multiplier is very useful:
$$
\left(\sqrt{\mu(v_*)}-\sqrt{\mu(v'_*)}\right)\sqrt{\mu(v_*)}
=    
\frac{1}{2}\left(\sqrt{\mu(v_*)}-\sqrt{\mu(v'_*)}\right)^2 
+ \frac{1}{2} \left(\mu(v_*)-\mu(v'_*)\right).
$$ 
This decomposition allows the splitting 
$$
\tilde{\nu}(v) = \nu(v) + \nu_{\mathcal{K}}(v),
$$
where 
\begin{equation}\notag
\nu(v)\eqdef
\frac{1}{2}
\int_{\rth}dv_*\int_{\mathbb{S}^2}d\omega\  B(v-v_*,\omega)
\left(\sqrt{\mu(v_*)}-\sqrt{\mu(v'_*)}\right)^2.
\end{equation}
Now $\nu(v)$ is clearly non-negative and it can be quickly shown that $\nu(v)$ has the expected leading order asymptotic behavior as $|v|\to\infty$.  

On the other hand from \eqref{Boltz.Newtonian.nu} we have
\begin{equation}\notag
\nu_{\mathcal{K}}(v) = 
\frac{1}{2}
\int_{\rth}dv_*\int_{\mathbb{S}^2}d\omega\  B(v-v_*,\omega)
\left(\mu(v_*)-\mu(v'_*)\right).
\end{equation}
Now one can  use the Newtonian cancellation lemma \cite{ADVW}, the change of variable from $v_*' \to v_*$, to show for some $C'>0$ that
\begin{equation}\notag
\nu_{\mathcal{K}}(v) = 
C'
\int_{\rth}dv_*\ \mu(v_*) \Psi(|v-v_*| ).
\end{equation}
This expression directly implies that $\nu_{\mathcal{K}}(v)$ has lower order asymptotic behavior as $|v|\to\infty$.  This decomposition is crucial to designing a norm that captures the sharp behavior of the linearized collision operator and to further prove the global in time existence of solutions nearby equilibrium.

Unfortunately in the relativistic case this approach fails as we now explain.  We recall that the main difference in the integrand of $\tilde{\zeta}$ in \eqref{FREQ:tildezeta} is $$ \sqrt{J(q)}(\sqrt{J(q)}-\sqrt{J(q')}).$$
The analogous splitting in the relativistic case is
\begin{equation}\label{FREQ:rel.difference}
    (\sqrt{J(q)}-\sqrt{J(q')})  \sqrt{J(q)}
=    
\frac{1}{2}\left(\sqrt{J(q)}-\sqrt{J(q')}\right)^2 
+ \frac{1}{2} \left(J(q)-J(q')\right).
\end{equation}
However, this decomposition does not help in the relativistic case and it is also closely related to the fact that the crucial change of variables $q'\to q$ in the cancellation lemma \cite{ADVW} is problematic in the relativistic case \cite{ChapmanJangStrain2020} even in the case with an angular cutoff.  We now provide the sketch of the argument.

We ignore the positive square term, $\left(\sqrt{J(q)}-\sqrt{J(q')}\right)^2$, and focus on the second term on the right side in \eqref{FREQ:rel.difference}. We can write this term from \eqref{FREQ:tildezeta} as
\begin{equation}\notag
	\tilde{\zeta}^B(p) \eqdef
	\frac{1}{2}\int_{\rth}dq\int_{\mathbb{S}^2}d\omega\  v_{\text{\o}} \sigma(g,\theta)(J(q)-J(q')) = \tilde{\zeta}^B_1(p) - \tilde{\zeta}^B_2(p).
\end{equation} 
We can further assume that the angular kernel $\sigma_0$ from \eqref{define.kernel} is just pointwise bounded (we do not need to assume that it is mean-zero) with an angular cutoff.  Then the term on the right side of \eqref{FREQ:rel.difference} containing $J(q)$ in \eqref{FREQ:tildezeta} corresponds to \eqref{FREQ:Ilossrepresentation} in  \secref{FREQ:sec:alternative.deriv}.  Then \eqref{FREQ:Ilossrepresentation} is transformed into \eqref{FREQ:final loss} so that
\begin{multline}\notag
\tilde{\zeta}^B_1(p)=\frac{c'}{p^0}e^{\frac{p^0}{2}}\int_{\rth}\frac{dq}{q^0}\frac{e^{-\frac{1}{2}q^0}}{g} \int_{0}^{\infty} \frac{rdr}{\sqrt{r^2+s}}s_\Lambda\sigma(g_\Lambda,\theta_\Lambda)\\\times \exp\left(-\frac{p^0+q^0}{2\sqrt{s}}\sqrt{r^2+s}\right)I_0\left(\frac{|p\times q|}{g\sqrt{s}}r\right).
\end{multline}
Above $c'>0$, and we further use the notations \eqref{FREQ:g2y.variable} below with $r=y\sqrt{s}$ in addition to the Bessel function \eqref{bessel0}.  We point out that the above is a finite integral since it contains exponential decay in both the $q$ and the $r$ variables in \eqref{FREQ:final loss}.  However, if we look at the new loss term with $J(q')$ only, then if we follow the same derivation, then we obtain \eqref{FREQ:final loss} without the exponential term and without the Bessel function $I_0$.  Indeed following the transformation procedure in  \secref{FREQ:sec:alternative.deriv} we obtain
\begin{equation}\notag
\tilde{\zeta}^B_2(p)=\frac{c'}{p^0}\int_{\rth}\frac{dq}{q^0}\frac{e^{-q^0}}{g} \int_{0}^{\infty} \frac{rdr}{\sqrt{r^2+s}}s_\Lambda\sigma(g_\Lambda,\theta_\Lambda).
\end{equation}
Thus we see following this procedure that the $dr$ integration becomes infinite in $\tilde{\zeta}^B_2(p)$, since this term no longer contains sufficient decay in the $r$ variable.  Therefore, the whole integral becomes infinity unless we artificially assume that the kernel $\sigma$ decays very rapidly for large $r$.  (We mention that this can be directly justified using standard approximation procedures.)

If the cancellation lemma were true then the integrand in $\tilde{\zeta}^B_2(p)$ would be integrable and $\tilde{\zeta}^B_2(p)$ would be finite.  Indeed, the expression $\tilde{\zeta}^B_2(p)$ integrates to infinity by this argument.  The factor $J(q')$ does not provide sufficient decay to control the integral. That is why such a decomposition, which was very effective in the Newtonian case, does not help in the relativistic case. It is adding and subtracting a term which is infinity.

\subsection{Main decomposition}\label{FREQ:sec:main.decomp}
Instead we perform in  \secref{FREQ:sec:derivation} the following transformation of \eqref{FREQ:tildezeta} as $\tilde{\zeta}=\zeta_0+\zetaL$  where for $c'>0$ we have
\begin{multline}\label{FREQ:zeta0By}
\zeta_0 \eqdef \frac{c'}{p^0}\int_{\rth}\frac{dq}{q^0}\frac{e^{-q^0}\sqrt{s}}{g}\int_{0}^\infty \frac{ydy}{\sqrt{y^2+1}}s_\Lambda\sigma(g_\Lambda,\theta_\Lambda) \frac{s\Phi(g)g^4}{s_\Lambda  \Phi(g_\Lambda)g^4_\Lambda}
\\
\times
\Big[1 - \exp\left(l(1-\sqrt{y^2+1})\right)I_0\left(jy\right) \Big],
\end{multline}and
\begin{multline}\label{FREQ:zetaLBy}
\zetaL \eqdef \frac{c'}{p^0}\int_{\rth}\frac{dq}{q^0}\frac{e^{-q^0}\sqrt{s}}{g}\int_{0}^\infty \frac{ydy}{\sqrt{y^2+1}}s_\Lambda\sigma(g_\Lambda,\theta_\Lambda)   \\
\times
\exp\left(l(1-\sqrt{y^2+1})\right) I_0\left(jy\right)
\left( \frac{s\Phi(g)g^4}{s_\Lambda  \Phi(g_\Lambda)g^4_\Lambda} -1\right).
\end{multline}
We recall the modified Bessel function from \eqref{bessel0}.
These expressions arise by applying the change of variables $r\mapsto y=\frac{r}{\sqrt{s}}$, to the expressions \eqref{FREQ:zeta0B.appendix} and \eqref{FREQ:zetaLB}.  Above we use the notations $l$ and $j$ that are defined as
\begin{equation}\label{FREQ:lj}
l=l(p,q)\eqdef \frac{p^0+q^0}{4},\ \text{and}\ j=j(p,q)\eqdef \frac{|p\times q|}{2g}.
\end{equation}
We also further define the notations
\begin{equation}\label{FREQ:g2y.variable}
    g^2_\Lambda = g^2+\frac{s}{2}(\sqrt{y^2+1}-1), \quad s_\Lambda = g^2_\Lambda + 4,
\end{equation}
and from \eqref{FREQ:newcos} we have 
\begin{equation}\notag 
\cos\theta_\Lambda=\frac{2g^2}{g^2_\Lambda}-1
=
\frac{g^2-\frac{s}{2}(\sqrt{y^2+1}-1)}{g^2+\frac{s}{2}(\sqrt{y^2+1}-1)}.
\end{equation}
It is shown in \secref{FREQ:sec:low order zeta1} that $\zetaL(p)$ in \eqref{FREQ:zetaLBy} has lower order asymptotic behavior, and in \secref{FREQ:sec:fullsharpupper zeta0} we see that the main part of $\zeta_0(p)$ in \eqref{FREQ:zeta0By} has a leading order upper bound.

We remark that the dynamics of each decomposed piece of $\tilde{\zeta}$ in \eqref{FREQ:zeta0By} and \eqref{FREQ:zetaLBy}  are essentially depending on the integral domain for the $q$ and $y$ variables, and the Bessel function $I_0(jy)$ from \eqref{bessel0}, which makes them extremely complex.  It turns out that the major difficulty involves the difference inside the inside the integrand in \eqref{FREQ:zeta0By}:
\begin{equation}\notag
    \Big[1 - \exp\left(l(1-\sqrt{y^2+1})+jy \cos\phi\right)\Big].
\end{equation}
This expression is zero at $y=0$ and converges to one as $y\to\infty$.  However this expression also 
has it's negative minimum at 
\begin{equation}\label{FREQ:y.min.difference}
y_{m} = \frac{j|\cos\phi|}{\sqrt{l^2 - j^2 \cos^2 \phi}},
\end{equation}
and the difference above remains negative, for say $\cos\phi>0$, until
\begin{equation}\label{FREQ:y.zero.change}
y_{s} = \frac{2l j \cos\phi}{l^2 - j^2 \cos^2 \phi},
\end{equation}
where $0 \le y_{m} \le y_{s} \to 0$ as $l \to \infty$.  Thus the whole integral in \eqref{FREQ:zeta0By} is negative in a large region nearby the minimum of the difference and nearby the singularity of the kernel $\sigma(g_\Lambda, \theta_\Lambda)$ at $y=0$.     This region where the integrand is negative and close to it's minimum makes it extremely problematic to prove the required leading order asymptotic positive lower bound.  Therefore, it is unclear from this point of view whether or not \eqref{FREQ:tildezeta} or  \eqref{FREQ:zeta0By} is positive or a leading order term.  It is essential to have a positive leading order term for the collision frequency multiplier in order to obtain the sharp behavior of the linearized collision operator.

For this reason we needed to derive another representation of $\tilde{\zeta}$ from \eqref{FREQ:tildezeta}.   As in \eqref{FREQ:eq:tildezetanew1.appendix}, we can alternatively write $\tilde{\zeta}$ as
\begin{multline}\notag
\tilde{\zeta}(p)=\frac{c'}{\pi p^0}\int_{\rth}\frac{dq}{q^0}\frac{e^{-q^0}\sqrt{s}}{g}\int_{0}^\infty \frac{ydy}{\sqrt{y^2+1}}s_\Lambda\sigma(g_\Lambda,\theta_\Lambda)\int_0^{\pi} d\phi\\\times\Big[\exp (2l-2l\sqrt{y^2+1}+2jy\cos\phi)-\exp(l-l\sqrt{y^2+1} +jy\cos\phi)\Big],
\end{multline}
where we use the notations \eqref{FREQ:lj} and \eqref{FREQ:g2y.variable} and $c'>0$.
Then $\tilde{\zeta}$ can be decomposed  into two terms $\tilde{\zeta}=\tilde{\zeta}_0+\tilde{\zeta}_L$ as 
\begin{multline}\label{FREQ:eq:tildezetanew1}
\tilde{\zeta}_0(p)=\frac{c'}{\pi p^0}\int_{\rth}\frac{dq}{q^0}\frac{e^{-q^0}\sqrt{s}}{g}\int_{0}^\infty \frac{ydy}{\sqrt{y^2+1}}s_\Lambda\sigma(g_\Lambda,\theta_\Lambda)\int_0^{\pi} d\phi\frac{s\Phi(g)g^4}{s_\Lambda  \Phi(g_\Lambda)g^4_\Lambda}  \\\times\Big[\exp (2l-2l\sqrt{y^2+1}+2jy\cos\phi)-\exp(l-l\sqrt{y^2+1} +jy\cos\phi)\Big],
\end{multline}
and
\begin{multline}\label{FREQ:eq:tildezetanew1L}
\tilde{\zeta}_L(p)
=\frac{c'}{ p^0}\int_{\rth}\frac{dq}{q^0}\frac{e^{-q^0}\sqrt{s}}{g}\int_{0}^\infty \frac{ydy}{\sqrt{y^2+1}}s_\Lambda\sigma(g_\Lambda,\theta_\Lambda) \left(1-\frac{s\Phi(g)g^4}{s_\Lambda  \Phi(g_\Lambda)g^4_\Lambda}  \right) \\\times\Big[\exp (2l-2l\sqrt{y^2+1})I_0(2jy)-\exp(l-l\sqrt{y^2+1})I_0(jy)\Big].
\end{multline}
The expression \eqref{FREQ:eq:tildezetanew1} looks like a better candidate for the leading order term because the difference 
$$
\Big[\exp (2l-2l\sqrt{y^2+1}+2jy\cos\phi)-\exp(l-l\sqrt{y^2+1} +jy\cos\phi)\Big]
$$
now has it's maximum, $y_m$, at \eqref{FREQ:y.min.difference} and it is positive on $0 \le y \le y_s$ from \eqref{FREQ:y.zero.change}.  However this difference is negative for $y \ge y_s$ and still remains large and negative in a significantly sized region after $y$ passes $y_s$ which causes further extreme difficulties in proving a leading order positive lower bound.

Instead the key point will be that when we add $\zeta_0$ and $\tilde{\zeta}_0$ then the resulting term is clearly positive and leading order.  And we will later show that $\tilde{\zeta}_L$ from \eqref{FREQ:eq:tildezetanew1L} and $\zeta_L$ from \eqref{FREQ:zetaLBy} are lower order terms.  In particular we can take the summation of $\frac{1}{2}\zetaZ(p)1_{|p|\ge 1}$ from \eqref{FREQ:zeta0By} and $\frac{1}{2}\zetaTZ(p)1_{|p|\ge 1}$ from \eqref{FREQ:eq:tildezetanew1}. By adding an additional term $\langle p \rangle^{(\rho+\gamma)/2} 1_{|p|\le 1}$,
we obtain that
\begin{multline}\label{FREQ:def.zeta.mod}
\zeta'(p) \eqdef \left[\frac{\zetaZ(p)+\zetaTZ(p)}{2}\right]1_{|p|\ge 1}
+\langle p \rangle^{(\rho+\gamma)/2} 1_{|p|\le 1}\\
=
\frac{c'1_{|p|\ge 1}}{2\pi p^0}\int_{\rth}\frac{dq}{q^0}\frac{e^{-q^0}\sqrt{s}}{g}\int_{0}^\infty \frac{ydy}{\sqrt{y^2+1}}s_\Lambda\sigma(g_\Lambda,\theta_\Lambda)\frac{s\Phi(g)g^4}{s_\Lambda  \Phi(g_\Lambda)g^4_\Lambda}\int_0^{\pi} d\phi\\\times\Big[\exp (2l-2l\sqrt{y^2+1}+2jy\cos\phi)-2\exp(l-l\sqrt{y^2+1} +jy\cos\phi)+1\Big]\\+\langle p \rangle^{(\rho+\gamma)/2} 1_{|p|\le 1}\\
=
\frac{c'1_{|p|\ge 1}}{2\pi p^0}\int_{\rth}\frac{dq}{q^0}\frac{e^{-q^0}\sqrt{s}}{g}\int_{0}^\infty \frac{ydy}{\sqrt{y^2+1}}s_\Lambda\sigma(g_\Lambda,\theta_\Lambda)\frac{s\Phi(g)g^4}{s_\Lambda  \Phi(g_\Lambda)g^4_\Lambda}
\\
\times \int_0^{\pi} d\phi \Big[\exp(l-l\sqrt{y^2+1} +jy\cos\phi)-1\Big]^2+\langle p \rangle^{(\rho+\gamma)/2} 1_{|p|\le 1}.
\end{multline}
Note that $\zeta'(p)$ is then automatically positive for all $p$.  The term $\langle p \rangle^{(\rho+\gamma)/2} 1_{|p|\le 1}$ guarantees that $\zeta(p)>0$ near $p=0$ (if necessary).

However, unfortunately it turns out that there is an additional severe difficulty to obtain the sharp pointwise asymptotic upper bound of the expression \eqref{FREQ:def.zeta.mod} for $\zeta(p)$.  The problem is that the following term 
\begin{equation}\notag
    \exp({-q^0})\exp (2l-2l\sqrt{y^2+1}+2jy\cos\phi)
\end{equation}
does not in general have uniform decay in the $q^0$ variable when we are close to the singularity in the $dy$ integral at $y=0$.  Here we recall the definitions \eqref{FREQ:lj}.  Therefore while the expression $\zeta'(p)$ in \eqref{FREQ:def.zeta.mod} appears good for obtaining a positive asymptotic lower bound, it is extremely difficult to obtain the required sharp upper bound.  There is the same difficulty for $\zetaTL(p)$ in \eqref{FREQ:eq:tildezetanew1L}.  To overcome this situation we split the $dq$ integral into the two regions $\qgep$ and $\qlep$.  

\begin{remark}
 The derivations of the alternative formula's in \eqref{FREQ:zeta0By}, \eqref{FREQ:zetaLBy}, \eqref{FREQ:eq:tildezetanew1} and \eqref{FREQ:eq:tildezetanew1L} of \eqref{FREQ:tildezeta} in  \secref{FREQ:sec:derivation} still hold under the restrictions to the region such as $\left[\tilde{\zeta}\right]_{|q|\ge \frac{1}{2}|p|^{1/m}}$ and $\left[\tilde{\zeta}\right]_{|q|\le \frac{1}{2}|p|^{1/m}}$ using the convention \eqref{convention}.  This is straightforward from the proofs in   \secref{FREQ:sec:derivation}.
 \end{remark}

With these splittings, we are then able to obtain the sharp asymptotic upper bound estimates of $\zeta(p)$ and $\zetaTL(p)$ on the region $\qlep$.   And on $\qgep$ from \eqref{FREQ:tildezeta} using \eqref{convention} we further define
 \begin{equation}\label{FREQ:def.zetatilde1}
     \zetaTone(p) \eqdef \left[\tilde{\zeta}(p)\right]_{|q|\ge \frac{1}{2}|p|^{1/m}}1_{|p|\ge 1}
 \end{equation}
We will prove that $\zetaTone(p)$ has low order asymptotic behavior in Proposition \ref{FREQ:prop.tildezeta.upper.qgep}.  Then these estimates together will be enough to prove our main theorem.

To this end, from \eqref{FREQ:zeta0By} and \eqref{FREQ:zetaLBy}, we also define the following two terms
\begin{equation}\label{FREQ:def.zetaMdef}
    \zeta_{0,m}(p) \eqdef [\zeta_0(p)]_{|q| \le \frac{1}{2} |p|^{1/m}} 1_{|p|\ge 1},
    \quad 
   \zeta_{L,m}(p) \eqdef [\zetaL(p)]_{|q| \le \frac{1}{2} |p|^{1/m}} 1_{|p|\ge 1},   
\end{equation}
In \eqref{FREQ:def.zetaMdef} above we use the notation convention from \eqref{convention}.  For example
\begin{multline}\label{FREQ:def.convention.zeta0By}
\zeta_{0,m}(p) = \frac{c'}{p^0}\int_{|q| \le \frac{1}{2} |p|^{1/m}}\frac{dq}{q^0}\frac{e^{-q^0}\sqrt{s}}{g}\int_{0}^\infty \frac{ydy}{\sqrt{y^2+1}}s_\Lambda\sigma(g_\Lambda,\theta_\Lambda) \frac{s\Phi(g)g^4}{s_\Lambda  \Phi(g_\Lambda)g^4_\Lambda}
\\
\times
\Big[1 - \exp\left(l(1-\sqrt{y^2+1})\right)I_0\left(jy\right) \Big],
\end{multline}
Similarly, from \eqref{FREQ:eq:tildezetanew1} and \eqref{FREQ:eq:tildezetanew1L}, we further define 
\begin{equation}\label{FREQ:def.TzetaMdef}
    \zetaTZm(p) \eqdef [\tilde{\zeta}_0(p)]_{|q| \le \frac{1}{2} |p|^{1/m}} 1_{|p|\ge 1},
    \quad 
   \zetaTLm(p) \eqdef [\tilde{\zeta}_L(p)]_{|q| \le \frac{1}{2} |p|^{1/m}} 1_{|p|\ge 1},   
\end{equation}
With these definitions instead of \eqref{FREQ:def.zeta.mod} we define the modified frequency multiplier
\begin{multline}\label{FREQ:def.zeta}
\zeta(p) \eqdef \frac{1}{2} \left[\zeta_{0,m}(p)+\zetaTZm(p) \right]
+\langle p \rangle^{(\rho+\gamma)/2} 1_{|p|\le 1}\\
=
\frac{c'1_{|p|\ge 1}}{2\pi p^0}\int_{\qlep}\frac{dq}{q^0}\frac{e^{-q^0}\sqrt{s}}{g}\int_{0}^\infty \frac{ydy}{\sqrt{y^2+1}}s_\Lambda\sigma(g_\Lambda,\theta_\Lambda)\frac{s\Phi(g)g^4}{s_\Lambda  \Phi(g_\Lambda)g^4_\Lambda}
\\
\times \int_0^{\pi} d\phi \Big[\exp(l-l\sqrt{y^2+1} +jy\cos\phi)-1\Big]^2+\langle p \rangle^{(\rho+\gamma)/2} 1_{|p|\le 1}.
\end{multline}
Again this is automatically positive.

As such, we introduce the construction of a positive leading order term and a lower order term in the frequency multiplier $\tilde{\zeta}(p)$ from \eqref{FREQ:tildezeta}, which is highly non-trivial.   We suggest the following novel decomposition of $\tilde{\zeta}=\zeta+\zeta_{\mathcal{K}}$ as
$$
\tilde{\zeta}(p)=\zeta(p)+\zeta_{\mathcal{K}}(p)
$$ 
where $\zeta$ is given by \eqref{FREQ:def.zeta}, using also \eqref{FREQ:tildezeta}, \eqref{FREQ:zetaLBy}, \eqref{FREQ:eq:tildezetanew1L} and \eqref{FREQ:def.zetatilde1} we have
\begin{equation}\label{FREQ:zetaK.def}
    \zeta_{\mathcal{K}}\eqdef \tilde{\zeta}(p)1_{|p|\le 1}
    +
     \zetaTone(p)
     +
     \frac{1}{2}\left(\zetaLm+\zetaTLm\right) 
     - \langle p \rangle^{(\rho+\gamma)/2} 1_{|p|\le 1},
\end{equation}
 Then we will show that $\zeta$ and $\zeta_{\mathcal{K}}$ satisfy the asymptotics from \eqref{FREQ:Paos}.   In particular the main positive term is \eqref{FREQ:def.zeta}.

Lastly, we also introduce two additional representations of $\tilde{\zeta}(p)$ from \eqref{FREQ:tildezeta}.  In particular it is shown in  \secref{FREQ:sec:derivation} that we have the following splitting of $\tilde{\zeta}=\zeta_0+\zetaL$ in \eqref{FREQ:zeta0B.appendix} with $c'>0$ as
\begin{multline}\label{FREQ:zeta0B}
\zeta_0 \eqdef \frac{c'}{p^0}e^{\frac{p^0}{4}}\int_{\rth}\frac{dq}{q^0}\frac{e^{-\frac{3}{4}q^0}}{g}\int_{0}^\infty \frac{rdr}{\sqrt{r^2+s}}s_\Lambda\sigma(g_\Lambda,\theta_\Lambda) \frac{s\Phi(g)g^4}{s_\Lambda  \Phi(g_\Lambda)g^4_\Lambda}
\\
\times
\Big[\exp\left(-\frac{p^0+q^0}{4}\right) - \exp\left(-\frac{p^0+q^0}{4\sqrt{s}}\sqrt{r^2+s}\right)I_0\left(\frac{|p\times q|}{2g\sqrt{s}}r\right) \Big],
\end{multline}
and $\zetaL$ is further given in \eqref{FREQ:zetaLB}.  Here $g_\Lambda$ is given by \eqref{FREQ:g2}, $s_\Lambda$ by \eqref{FREQ:slambda.def}, and $\theta_\Lambda$ by \eqref{FREQ:coslam}.  Note that \eqref{FREQ:zeta0B} and \eqref{FREQ:zetaLB} can alternatively be obtained by  applying the change of variables $y \mapsto r = \sqrt{s} y$ to the expressions \eqref{FREQ:zeta0By} and \eqref{FREQ:zetaLBy}.   We will use the formula \eqref{FREQ:zeta0B} in the proof of Proposition \ref{FREQ:prop.zeta0.asymptotic}.

We can also write \eqref{FREQ:zeta0B} in a further alternative form with other variables by using the following change of variables \begin{equation}\label{FREQ:changeofv}r\mapsto k\eqdef \frac{1}{2}\sqrt{s}(\sqrt{r^2+s}-\sqrt{s}).\end{equation} 
Then this gives $$dk=\frac{1}{2}\sqrt{s}\frac{rdr}{\sqrt{r^2+s}}.$$
Also, we have 
$$ 
\frac{\sqrt{r^2+s}}{\sqrt{s}}=1+\frac{2k}{s},
$$ 
and 
$$
r=\frac{2\sqrt{k^2+ks}}{\sqrt{s}}.
$$
Here $g_\Lambda$ from \eqref{FREQ:g2} and $\theta_\Lambda$ from \eqref{FREQ:coslam} now take the form
\begin{equation}\label{FREQ:gl}
g^2_\Lambda=g^2+k,
\end{equation}
and
$$\cos\theta_\Lambda =\frac{g^2-k}{g^2+k}=1-2\frac{k}{g^2+k}.$$
Therefore, we have 
\begin{equation}\label{FREQ:tl}\frac{\theta_\Lambda}{2}\approx \sin\frac{\theta_\Lambda}{2}=\sqrt{\frac{k}{g^2+k}}.
\end{equation}
With respect to the new variable $k$, then \eqref{FREQ:zeta0B} can be re-written as
\begin{multline}\notag
\zeta_0 \eqdef \frac{c'}{p^0}e^{\frac{p^0}{4}}\int_{\rth}\frac{dq}{q^0}\frac{e^{-\frac{3}{4}q^0}}{g}\int_{0}^\infty \frac{2dk}{\sqrt{s}}s_\Lambda\sigma(g_\Lambda,\theta_\Lambda)\frac{s\Phi(g)g^4}{s_\Lambda  \Phi(g_\Lambda)g^4_\Lambda}  
\\
\times
\left[\exp\left(-\frac{p^0+q^0}{4}\right) - \exp\left(-\frac{p^0+q^0}{4}\left(1+\frac{2k}{s}\right)\right)I_0\left(\frac{|p\times q|}{gs}\sqrt{k^2+ks}\right)  \right]\\
=\frac{c'}{p^0}\int_{\rth}\frac{dq}{q^0}\frac{e^{-q^0}}{g}\int_{0}^\infty \frac{2dk}{\sqrt{s}}s_\Lambda\sigma(g_\Lambda,\theta_\Lambda)\frac{s\Phi(g)g^4}{s_\Lambda  \Phi(g_\Lambda)g^4_\Lambda} \\
\times
\left[1- \exp\left(-\frac{(p^0+q^0)k}{2s}\right) I_0\left(\frac{|p\times q|}{gs}\sqrt{k^2+ks}\right) \right],
\end{multline}
where $s_\Lambda=g^2_\Lambda+4$ with \eqref{FREQ:gl}.  This representation of $\zeta_0$ in the $k$ variables above will be used in the proof of Lemma \ref{FREQ:lemma.zeta1}, which is one part of the leading order upper bound estimate of $\zeta_0$.

\subsection{Outline of the proof of Theorem \ref{FREQ:main.thm}}\label{FREQ:sec:proof.outline}
 Specifically, in the rest of \secref{sec:frequency}, in order to prove Theorem \ref{FREQ:main.thm} we will make upper- and lower-bound estimates for  $\zeta$ in \eqref{FREQ:def.zeta} and will conclude that it is a leading order term. In addition, we will show that $\zeta_{\mathcal{K}}$ in \eqref{FREQ:zetaK.def} is a lower-order term.

 We will first prove that $\zeta(p)$ from \eqref{FREQ:def.zeta} has a leading order positive lower bound in Proposition \ref{FREQ:prop.coercive}.  Then we will prove that $\zeta_0$  from \eqref{FREQ:zeta0By} has the leading order upper bound in Proposition \ref{FREQ:prop.zeta0.asymptotic}.  Then in Proposition \ref{FREQ:prop.zetaL.asymptotic} we prove that $\zetaL(p)$ from \eqref{FREQ:zetaLBy} has a lower order upper bound.  We further prove in Proposition \ref{FREQ:prop.zetaL.asymptoticnew} that $\zetaTLm(p)$ from \eqref{FREQ:def.TzetaMdef} with \eqref{FREQ:eq:tildezetanew1L} has a lower order upper bound. We then prove in Proposition \ref{FREQ:prop.tildezeta.upper.qgep} that $\zetaTone(p)$ from \eqref{FREQ:def.zetatilde1} has a lower order upper bound. Note that both $\tilde{\zeta}(p)1_{|p|\le 1}$ and $\langle p \rangle^{(\rho+\gamma)/2} 1_{|p|\le 1}$ have lower order upper bounds since  $1_{|p|\le 1} $ trivially makes $p^0\lesssim 1.$   All of these estimates combine to prove that $\zeta_{\mathcal{K}}$ has a lower order upper bound, and that $\tilde{\zeta}(p)$ from \eqref{FREQ:tildezeta} has a leading order asymptotic upper bound.

We remark that we have not estimated the asymptotic upper bound of $\zetaTZ(p)$ from \eqref{FREQ:eq:tildezetanew1} or more accurately we have not estimated $\zetaTZm(p)$ from \eqref{FREQ:def.TzetaMdef} and this is not necessary because from the splittings above we have
 \begin{equation}\notag
     \zetaTZm(p) = [\tilde{\zeta}(p)]_{\qlep}1_{|p|\ge 1} - \zetaTLm(p)
    = \zetaZm(p) + \zetaLm(p)- \zetaTLm(p).
 \end{equation}
Therefore using the estimates discussed in the previous paragraph we obtain that $\zetaTZm(p)$ and $\zeta(p)$
 both have the leading order asymptotic upper bound.  All of the estimates discussed in this sub-section together give the proof of Theorem \ref{FREQ:main.thm}.

\subsection{Leading order lower bound estimate}\label{FREQ:sec:leadingorder lower bound zeta}

The main result in this section is the following leading order lower bound.

\begin{proposition}\label{FREQ:prop.coercive}	Suppose $\gamma \in (0,2)$ in \eqref{angassumption}. Then for both hard \eqref{hard} and soft \eqref{soft} interactions, using the notation \eqref{singS.defin}, for \eqref{FREQ:def.zeta}, we have
	$$\zeta(p) \gtrsim (p^0)^{\frac{\singS+\gamma}{2}}.$$
This uniform lower bound also holds for \eqref{FREQ:def.zeta.mod}.
\end{proposition}

\begin{proof}[Proof of Proposition \ref{FREQ:prop.coercive}]In order to obtain the lower-bound estimate for $\zeta(p)$, we first study the lower bound of the perfect square term $$\Big[\exp(l-l\sqrt{y^2+1} +jy\cos\phi)-1\Big]^2$$ in \eqref{FREQ:def.zeta}. We first observe that, if $y\in [0,y^*]$ with
\begin{equation}\label{FREQ:def.y.star}
y^*\eqdef \frac{2lj\cos\phi}{l^2-j^2\cos^2\phi},
\end{equation}
then we have$$ l-l\sqrt{y^2+1} +jy\cos\phi\ge 0.$$
Notice that we also have
\begin{equation}\notag
    l-l\sqrt{y^2+1}+jy\cos\phi \ge \frac{1}{2} jy\cos\phi, \quad
\end{equation}
if  $0 \le y \le y_1,$ where 
\begin{equation}\notag
y_1 \eqdef \frac{2l j \cos\phi }{4l^2-j^2 \cos^2\phi}.
\end{equation}
Also $\frac{1}{2} \le \cos\phi \le \frac{\sqrt{2}}{2}$ for $\phi\in [\pi/4,\pi/3]$.
Recalling \eqref{FREQ:def.y.star}, we remark that $y^*\le 3$ because
$$
y^* = \frac{2lj\cos\phi}{l^2-j^2\cos^2\phi}\le \frac{
\sqrt{2}lj}{l^2-\frac{j^2}{2}}\le \frac{
\sqrt{2}l^2}{\frac{l^2}{2}}\le 2 \sqrt{2},
$$ as $j\le l$ and $\frac{
\sqrt{2}lj}{l^2-\frac{j^2}{2}}$ is an increasing function in $j$.
Recalling again \eqref{FREQ:def.y.star}, then $0\le y_1\le \frac{y^*}{4}\le \frac{\sqrt{2}}{2}$.
Since in particular with $\phi\in [\pi/4,\pi/3]$ we have
\begin{equation}\notag
    l-l\sqrt{y^2+1}+jy\cos\phi \ge 0, \quad 0 \le y \le y_1.
\end{equation}
Then by the Taylor expansion
\begin{equation}\notag
    \Big[\exp(l-l\sqrt{y^2+1} +jy\cos\phi)-1\Big]^2
    \ge (l-l\sqrt{y^2+1}+jy\cos\phi)^2.
\end{equation}
We will use this lower bound in the following developments.

Now we start by proving the stated lower bound for \eqref{FREQ:def.zeta.mod}.
Now we split each integral representation of the decomposed pieces based on a restriction of the $y$ and $\phi$ domains.  We will now define the term 
$$
\zeta_* \eqdef [\zeta']_{0\le y\le y_1\text{ and }\phi\in [\pi/4,\pi/3]},
$$
where $\zeta_*$ is $\zeta'$ when the integrals inside \eqref{FREQ:def.zeta.mod} are only on the restricted domains $0\le y\le y_1\text{ and }\phi\in [\pi/4,\pi/3]$. 
This notation is similar to \eqref{convention}.   Note that of course $\zeta'(p) \ge \zeta_*(p)$.  We will show that $\zeta_*(p)$ has a high-order lower bound. Note that inside this integration region, $0\le y\le y_1\text{ and }\phi\in [\pi/4,\pi/3]$, the integral is still non-negative.

First of all, we note from \eqref{FREQ:def.zeta.mod} that
\begin{multline*}
\zeta_*(p)
\ge \frac{c'}{2\pi p^0}\int_{\rth}\frac{dq}{q^0}\frac{e^{-q^0}\sqrt{s}}{g}\int_{\pi/4}^{\pi/3} d\phi \int^{y_1}_{0}\frac{ydy}{\sqrt{y^2+1}}s_\Lambda\sigma(g_\Lambda,\theta_\Lambda)
\\
\times (l-l\sqrt{y^2+1}+jy\cos\phi)^2\frac{s\Phi(g)g^4}{s_\Lambda  \Phi(g_\Lambda)g^4_\Lambda}+\langle p \rangle^{(\rho+\gamma)/2} 1_{|p|\le 1}
\\
\ge
\frac{c'}{2\pi p^0}\int_{\rth}\frac{dq}{q^0}\frac{e^{-q^0}\sqrt{s}}{g}\int_{\pi/4}^{\pi/3} d\phi \int^{y_1}_{0}\frac{ydy}{\sqrt{y^2+1}}s_\Lambda\sigma(g_\Lambda,\theta_\Lambda)\\\times\frac{1}{4}\left( jy\cos\phi \right)^2\frac{s\Phi(g)g^4}{s_\Lambda  \Phi(g_\Lambda)g^4_\Lambda}+\langle p \rangle^{(\rho+\gamma)/2} 1_{|p|\le 1}
\\
\ge \frac{c'}{2\pi p^0}\int_{\rth}\frac{dq}{q^0}\frac{e^{-q^0}\sqrt{s}}{g}\int_{\pi/4}^{\pi/3} d\phi \int^{y_1}_{0}\frac{ydy}{\sqrt{y^2+1}}s_\Lambda\sigma(g_\Lambda,\theta_\Lambda)\\\times
\frac{1}{4}\left(\frac{jy}{2}\right)^2\frac{s\Phi(g)g^4}{s_\Lambda  \Phi(g_\Lambda)g^4_\Lambda}+\langle p \rangle^{(\rho+\gamma)/2} 1_{|p|\le 1},
\end{multline*} where we used that $\cos\phi\ge \frac{1}{2}$ when $\phi\in[\pi/4,\pi/3].$

 Now we will estimate the kernel $\sigma(g_\Lambda,\theta_\Lambda)$ from \eqref{define.kernel}.  Here, by \eqref{angassumption} with \eqref{FREQ:g2y.variable}, \eqref{FREQ:changeofv}, \eqref{FREQ:gl} and \eqref{FREQ:tl} we have
\begin{equation}\label{FREQ:ang.equiv}
\sigma_0(\theta_\Lambda)\approx  \left(\frac{sy^2}{sy^2+2g^2(\sqrt{y^2+1}+1)}\right)^{-1-\gamma/2}.
\end{equation}
Next using \eqref{FREQ:g2y.variable} we have that 
\begin{multline}\label{FREQ:glambda.calc}
g^2_\Lambda=g^2+\frac{s}{2}(\sqrt{y^2+1}-1)=
g^2+\frac{sy^2}{2(\sqrt{y^2+1}+1)}\\
=\frac{sy^2+2g^2(\sqrt{y^2+1}+1)}{2(\sqrt{y^2+1}+1)}.
\end{multline}
 Thus, also recalling \eqref{singS.defin} and \eqref{FREQ:glambda.calc}, we have
\begin{multline}\notag
s_\Lambda\sigma(g_\Lambda,\theta_\Lambda)\frac{s\Phi(g)g^4}{s_\Lambda  \Phi(g_\Lambda)g^4_\Lambda}=s\Phi(g)	\sigma_0(\theta_\Lambda)\frac{g^4}{g^4_\Lambda}\\
\approx s\Phi(g)g^4 \left(\frac{sy^2}{sy^2+2g^2(\sqrt{y^2+1}+1)}\right)^{-1-\gamma/2}\frac{1}{g^4_\Lambda}.
\end{multline}
Thus
\begin{multline}\notag
s_\Lambda\sigma(g_\Lambda,\theta_\Lambda)\frac{s\Phi(g)g^4}{s_\Lambda  \Phi(g_\Lambda)g^4_\Lambda}
\approx sg^{\singS+4} \left(\frac{sy^2}{sy^2+2g^2(\sqrt{y^2+1}+1)}\right)^{-1-\gamma/2}
\\
\times\left(\frac{sy^2+2g^2(\sqrt{y^2+1}+1)}{2(\sqrt{y^2+1}+1)}\right)^{-2}.
\end{multline}
We conclude that
\begin{multline}\label{FREQ:kernel.equiv}
s_\Lambda\sigma(g_\Lambda,\theta_\Lambda)\frac{s\Phi(g)g^4}{s_\Lambda  \Phi(g_\Lambda)g^4_\Lambda}=s\Phi(g)	\sigma_0(\theta_\Lambda)\frac{g^4}{g^4_\Lambda}\\
\approx s^{-\gamma/2}g^{\singS+4}y^{-2-\gamma}(2(\sqrt{y^2+1}+1))^{2}\left(sy^2+2g^2(\sqrt{y^2+1}+1)\right)^{-1+\gamma/2}.
\end{multline}
Thus, since $\gamma \in (0,2)$, we have 
\begin{equation*}
 s_\Lambda\sigma(g_\Lambda,\theta_\Lambda)\frac{s\Phi(g)g^4}{s_\Lambda  \Phi(g_\Lambda)g^4_\Lambda}   \gtrsim s^{-1}y^{-2-\gamma} g^{\rho+4},
\end{equation*}
where we used $\gamma/2-1<0$ and $y\le y_1\le  \frac{\sqrt{2}}{2}$. 
Therefore,
\begin{multline*}
\zeta_*(p)
\gtrsim  \frac{c'}{ p^0}\int_{\rth}\frac{dq}{q^0}\frac{e^{-q^0}\sqrt{s}}{g} \int_{\pi/4}^{\pi/3} d\phi\\\times  \int^{y_1}_{0}\frac{ydy}{\sqrt{y^2+1}}j^2s^{-1}y^{-\gamma} g^{4+\rho}+\langle p \rangle^{(\rho+\gamma)/2} 1_{|p|\le 1}.
\end{multline*} 
Then we have
\begin{multline*}
\zeta_*(p)
\gtrsim  \frac{c'}{ p^0}\int_{\rth}\frac{dq}{q^0}\frac{e^{-q^0}\sqrt{s}}{g} 
j^2s^{-1} g^{4+\rho}
\int^{y_1}_{0} y^{1-\gamma}dy+\langle p \rangle^{(\rho+\gamma)/2} 1_{|p|\le 1}
\\
\gtrsim  \frac{c'}{ p^0}\int_{\rth}\frac{dq}{q^0}\frac{e^{-q^0}\sqrt{s}}{g} 
j^2s^{-1} g^{4+\rho}
 y_1^{2-\gamma}+\langle p \rangle^{(\rho+\gamma)/2} 1_{|p|\le 1}.
\end{multline*} 
We further have on $\phi \in [\pi/4, \pi/3]$, using also $j \le l$, that
\begin{equation}\notag
y_1^{2-\gamma}
= \left(\frac{2lj\cos\phi}{4l^2-j^2\cos^2\phi}\right)^{2-\gamma}
\geq \left(\frac{lj}{4l^2-j^2/4}\right)^{2-\gamma}
\gtrsim
\left(\frac{j}{l}\right)^{2-\gamma},
\end{equation} 
as $\cos\phi \ge \frac{1}{2}$.  Altogether, we have
\begin{multline*}
\zeta_*(p)
\gtrsim  \frac{c'}{ p^0}\int_{\rth}\frac{dq}{q^0}\frac{e^{-q^0}\sqrt{s}}{g}  j^2s^{-1} g^{4+\rho} \left(\frac{j}{l}\right)^{2-\gamma}+\langle p \rangle^{(\rho+\gamma)/2} 1_{|p|\le 1}
\\
\gtrsim  \frac{c'}{ p^0}\int_{\rth}\frac{dq}{q^0}\frac{e^{-q^0}\sqrt{s}}{g}  s^{-1} g^{4+\rho} j^{4-\gamma}l^{-2+\gamma}+\langle p \rangle^{(\rho+\gamma)/2} 1_{|p|\le 1}. 
\end{multline*}
Now we recall \eqref{FREQ:lj}, \eqref{FREQ:s.ge.g2}, \eqref{FREQ:g.ge.lower} and note that $\gamma\in(0,2)$. 
Then we obtain
\begin{multline}\label{FREQ:eq.sameuntilhere}
\zeta_*(p)
\gtrsim   \frac{1}{ p^0}\int_{\rth}\frac{dq}{q^0}e^{-q^0}  g^{-1+\gamma+\rho} |p\times q|^{4-\gamma}s^{-1/2}(p^0+q^0)^{-2+\gamma} +\langle p \rangle^{(\rho+\gamma)/2} 1_{|p|\le 1}\\
\gtrsim   \frac{1}{ p^0}\int_{\rth}\frac{dq}{q^0}e^{-q^0}  s^{-1/2}g^{\gamma+\rho-1} |p\times q|^{4-\gamma}(p^0q^0)^{-2+\gamma}+\langle p \rangle^{(\rho+\gamma)/2} 1_{|p|\le 1},
\end{multline}
above we also used \eqref{FREQ:p0.plus.q0.le.p0q0}.
Further, since \eqref{FREQ:s.le.pq}, we have 
$$s^{-1/2}\gtrsim (p^0q^0)^{-1/2}.$$
If $\gamma+\rho -1\ge 0$, 
then from \eqref{FREQ:g.ge.lower} and \eqref{FREQ:p0q0.le.pq} we have
$$g^{\gamma+\rho-1}\ge \left(\frac{|p-q|}{\sqrt{p^0q^0}}\right)^{\gamma+\rho-1}\ge \left(\frac{|p^0-q^0|}{\sqrt{p^0q^0}}\right)^{\gamma+\rho-1}.$$ Otherwise, when $\gamma+\rho -1< 0$, using \eqref{FREQ:s.le.pq} with \eqref{FREQ:s.ge.g2} we have
$$g^{\gamma+\rho-1}\ge (p^0q^0)^{\gamma/2+\rho/2-1/2}.$$
Finally, we use the spherical-coordinate representation of $q\mapsto (r,\theta_q,\phi_q)$. We let the $z$-axis be parallel to the direction of $p$ such that $\phi_q$ is the angle between $p$ and $q$. Then we have
\begin{multline}\label{FREQ:eq.q.last}
\zeta_*(p)
\gtrsim \frac{1}{p^0}\int_0^{\infty}dr \frac{r^2e^{-\sqrt{1+r^2}}}{\sqrt{r^2+1}} \int_0^\pi d\phi_q \ \sin\phi_q   \\\times (p^0q^0)^{-1/2}\min\left\{\left(\frac{|p^0-q^0|}{\sqrt{p^0q^0}}\right)^{\gamma+\rho-1}, (p^0q^0)^{\gamma/2+\rho/2-1/2}\right\}\\\times |p|^{4-\gamma}r^{4-\gamma} \sin^{4-\gamma}\phi_q(p^0q^0)^{-2+\gamma}+\langle p \rangle^{(\rho+\gamma)/2} 1_{|p|\le 1}\\
\approx |p|^{4-\gamma} (p^0)^{-1-1/2+\gamma/2+\rho/2-1/2-2+\gamma}+\langle p \rangle^{(\rho+\gamma)/2} 1_{|p|\le 1}\\
\approx |p|^{4-\gamma}(p^0)^{-4+3\gamma/2+\rho/2} +\langle p \rangle^{(\rho+\gamma)/2} 1_{|p|\le 1}.
\end{multline}
Now we remark that if $|p|\ge 1$ then we have $|p|\approx p^0$. We conclude
$$
\zeta'(p) 
\ge 
\zeta_*(p)\gtrsim (p^0)^{\frac{\rho}{2}+\frac{\gamma}{2}}.$$
This completes the proof for the high-order lower bound of $\zeta'(p)$.

Similarly, we can obtain the high-order lower bound of $\zeta(p)$ from \eqref{FREQ:def.zeta}. Note that the only difference between $\zeta(p)$ and $\zeta'(p)$ from \eqref{FREQ:def.zeta} and \eqref{FREQ:def.zeta.mod} is that the domain $\rth$ with respect to $q$ variable in \eqref{FREQ:def.zeta.mod} is now restricted to $|q|\le \frac{1}{2}|p|^{1/m}$ in \eqref{FREQ:def.zeta}. Then we note that the proof for the high-order lower bound of $\zeta(p)$ is exactly the same as $\zeta'(p)$ until 
\eqref{FREQ:eq.sameuntilhere} above except for the change from $\int_\rth dq$ into $\int_{|q|\le\frac{1}{2}|p|^{1/m}} dq $. 
Then in the spherical-coordinate representation of $q\mapsto (r,\theta_q,\phi_q)$ for \eqref{FREQ:eq.q.last}, we change the integral domain $\int_0^\infty dr$ in \eqref{FREQ:eq.q.last} to $\int_0^{\frac{1}{2}|p|^{1/m}} dr$. Then analogous to \eqref{FREQ:eq.q.last} we have
\begin{multline}\notag
\zeta(p)
\gtrsim 
\frac{1}{p^0}\int_0^{\frac{1}{2}|p|^{1/m}} dr \frac{r^2e^{-\sqrt{1+r^2}}}{\sqrt{r^2+1}} \int_0^\pi d\phi_q \ \sin\phi_q   \\\times (p^0q^0)^{-1/2}\min\left\{\left(\frac{|p^0-q^0|}{\sqrt{p^0q^0}}\right)^{\gamma+\rho-1}, (p^0q^0)^{\gamma/2+\rho/2-1/2}\right\}\\\times |p|^{4-\gamma}r^{4-\gamma} \sin^{4-\gamma}\phi_q(p^0q^0)^{-2+\gamma}+\langle p \rangle^{(\rho+\gamma)/2} 1_{|p|\le 1}.
\end{multline}
Now in the region ${|q|\le\frac{1}{2}|p|^{1/m}}$  with $|p| \ge 1$ and $m\ge 1$ sufficiently large inside \eqref{FREQ:eq.q.last}  we have 
\begin{multline*}
    \min\left\{\left(\frac{|p^0-q^0|}{\sqrt{p^0q^0}}\right)^{\gamma+\rho-1}, (p^0q^0)^{\gamma/2+\rho/2-1/2}\right\}
    \\
    \gtrsim
    (p^0)^{\gamma/2+\rho/2-1/2}
        \min\left\{(q^0)^{-\gamma/2-\rho/2+1/2}, (q^0)^{\gamma/2+\rho/2-1/2}\right\}
\end{multline*}
We further have on $|p| \ge 1$ with $q^0 = \sqrt{1+r^2}$ that 
\begin{multline}\notag
\int_0^{\frac{1}{2}|p|^{1/m}} dr  \frac{r^2e^{-\sqrt{1+r^2}}}{\sqrt{r^2+1}} 
r^{4-\gamma}
\min\left\{(q^0)^{-\gamma/2-\rho/2}, (q^0)^{\gamma/2+\rho/2-1}\right\} (q^0)^{-2+\gamma}
\\
\gtrsim
\int_0^{\frac{1}{2}} dr  \frac{r^2e^{-\sqrt{1+r^2}}}{\sqrt{r^2+1}} 
r^{4-\gamma}
\min\left\{(q^0)^{-\gamma/2-\rho/2}, (q^0)^{\gamma/2+\rho/2-1}\right\} (q^0)^{-2+\gamma}
\gtrsim
c_{\frac{1}{2}},
\end{multline}
for some constant $c_{\frac{1}{2}}>0$ if $|p|\ge 1.$ 
Therefore, the same proof with the modifications above works for the leading-order lower bound of $\zeta(p)$ from \eqref{FREQ:def.zeta}.  In particular the estimate \eqref{FREQ:eq.q.last} continues to hold,  and this completes the leading-order lower-bound estimates.   
\end{proof}

This completes the leading order lower bound estimates of $\zeta(p)$. In the next two sections, we will use the decomposition $\tilde{\zeta}(p)=\zeta_0(p)+\zeta_L(p)$ from \eqref{FREQ:zeta0By} and \eqref{FREQ:zetaLBy} to obtain the leading order upper bound of $\tilde{\zeta}(p)$, and the lower order upper bounds of $\zetaL(p)$ and $\zetaTL(p)$ from \eqref{FREQ:eq:tildezetanew1L}.

\subsection{Leading order upper bound estimates}\label{FREQ:sec:fullsharpupper zeta0}

We now prove the following leading order upper bound estimate for $\zetaZ$ from \eqref{FREQ:zeta0By} using the alternative representation \eqref{FREQ:zeta0B}:

\begin{proposition}\label{FREQ:prop.zeta0.asymptotic}
	Suppose $\gamma \in (0,2)$  in \eqref{angassumption}. Then for both hard \eqref{hard} and soft \eqref{soft} interactions, for \eqref{FREQ:zeta0B} when $|p|\ge 1$, we have
	$$|\zetaZ(p)|\lesssim (p^0)^{\frac{\singS+\gamma}{2}}.$$
This consequently implies the same uniform bound for $\zetaZm(p)$ from \eqref{FREQ:def.convention.zeta0By}.	
\end{proposition}

For the proof, 
we decompose $\zetaZ$ from \eqref{FREQ:zeta0B} as $\zetaZ=\zeta_1+\zeta_2$ where
\begin{multline}\label{FREQ:zeta1B}
\zeta_1 \eqdef \frac{c'}{p^0}e^{\frac{p^0}{4}}\int_{\rth}\frac{dq}{q^0}\frac{e^{-\frac{3}{4}q^0}}{g}\int_{0}^\infty \frac{rdr}{\sqrt{r^2+s}}s_\Lambda\sigma(g_\Lambda,\theta_\Lambda)  \frac{s\Phi(g)g^4}{s_\Lambda  \Phi(g_\Lambda)g^4_\Lambda}
\\
\times
\left[\exp\left(-\frac{p^0+q^0}{4}\right) - \exp\left(-\frac{p^0+q^0}{4\sqrt{s}}\sqrt{r^2+s}\right) \right],
\end{multline}
\begin{multline}\label{FREQ:zeta2B}
\zeta_2 \eqdef \frac{c'}{p^0}e^{\frac{p^0}{4}}\int_{\rth}\frac{dq}{q^0}\frac{e^{-\frac{3}{4}q^0}}{g}\int_{0}^\infty \frac{rdr}{\sqrt{r^2+s}}s_\Lambda\sigma(g_\Lambda,\theta_\Lambda)  \frac{s\Phi(g)g^4}{s_\Lambda  \Phi(g_\Lambda)g^4_\Lambda}
\\
\times
\exp\left(-\frac{p^0+q^0}{4\sqrt{s}}\sqrt{r^2+s}\right)\left[1-I_0\left(\frac{|p\times q|}{2g\sqrt{s}}r\right) \right].
\end{multline}
Clearly, $\zeta_1$ is positive.   We estimate $\zeta_1$ in Lemma \ref{FREQ:lemma.zeta1} and then we will estimate $\zeta_2$ in Lemma \ref{FREQ:lemma.zeta2};  Proposition \ref{FREQ:prop.zeta0.asymptotic} then follows directly.  First, we have

\begin{lemma}\label{FREQ:lemma.zeta1}
	Assuming either \eqref{hard} or \eqref{soft} with \eqref{angassumption}, then we have the following uniform asymptotic bound for $\zeta_1$ from \eqref{FREQ:zeta1B}:
	$$\zeta_1(p)\lesssim (p^0)^{\frac{\singS+\gamma}{2}}.$$
\end{lemma}

\begin{proof}The change of variables \eqref{FREQ:changeofv} on the representation \eqref{FREQ:zeta1B} yields that
\begin{multline*}\zeta_1 \eqdef \frac{c'}{p^0}e^{\frac{p^0}{4}}\int_{\rth}\frac{dq}{q^0}\frac{e^{-\frac{3}{4}q^0}}{g}\int_{0}^\infty \frac{2dk}{\sqrt{s}}s_\Lambda\sigma(g_\Lambda,\theta_\Lambda)  \frac{s\Phi(g)g^4}{s_\Lambda  \Phi(g_\Lambda)g^4_\Lambda}
\\
\times
\Big[\exp\left(-\frac{p^0+q^0}{4}\right) - \exp\left(-\frac{p^0+q^0}{4}\left(1+\frac{2k}{s}\right)\right) \Big]\\
=\frac{c'}{p^0}\int_{\rth}\frac{dq}{q^0}\frac{e^{-q^0}}{g}\int_{0}^\infty \frac{2dk}{\sqrt{s}}s_\Lambda\sigma(g_\Lambda,\theta_\Lambda)  \frac{s\Phi(g)g^4}{s_\Lambda  \Phi(g_\Lambda)g^4_\Lambda}
\Big[1- \exp\left(-\frac{(p^0+q^0)k}{2s}\right) \Big]\\
=\frac{c_4}{p^0}\int_{\rth}\frac{dq}{q^0}e^{-q^0}\sqrt{s}\Phi(g)g^3\int_{0}^\infty dk \frac{\sigma_0(\cos\theta_{\Lambda})}{ g^4_\Lambda}
\Big[1- \exp\left(-\frac{(p^0+q^0)k}{2s}\right) \Big].
\end{multline*}
Here $c_4 = 2c'$.
	We start by showing the upper-bound estimates of $\zeta_1$. By the fundamental theorem of calculus, we have
	\begin{multline}\label{FREQ:zetanew}
	\zeta_1=\frac{c_4}{p^0}\int_{\rth}\frac{dq}{q^0}e^{-q^0}\sqrt{s}\Phi(g)g^3\int_{0}^\infty dk ~\frac{\sigma_0(\cos\theta_{\Lambda})}{ g^4_\Lambda}
	\Big[1- \exp\left(-\frac{(p^0+q^0)k}{2s}\right) \Big]\\
	=\frac{c_4}{p^0}\int_{\rth}\frac{dq}{q^0}e^{-q^0}\sqrt{s}\Phi(g)g^3\int_{0}^\infty dk ~\frac{\sigma_0(\cos\theta_{\Lambda})}{ g^4_\Lambda}
	\\\times	\int_0^1d\vartheta\ \exp\left(-\frac{(p^0+q^0)k}{2s}\vartheta\right)\frac{(p^0+q^0)k}{2s}.
	\end{multline}
	Note that using \eqref{angassumption}, \eqref{hard}, \eqref{soft},  \eqref{singS.defin}, \eqref{FREQ:gl}, and \eqref{FREQ:tl}, we have
\begin{equation} \label{FREQ:new.ang.est.zeta.doit}
\Phi(g)\approx  g^{\singS}
\ \text{and} \
\sigma_0(\cos\theta_{\Lambda})\approx\left(\frac{k}{k+g^2}\right)^{-1-\gamma/2}\approx g_\Lambda^{2+\gamma} k^{-1-\gamma/2}.
\end{equation}
We will use this equivalence in the following developments.

We split into two cases: $k\leq 4$ and $k>4$.  First consider $k\leq 4$.
We use $\exp\left(-\frac{(p^0+q^0)k}{2s}\vartheta\right)\leq 1$ and $g\leq g_\Lambda = (g^2 + k)^{1/2}$ from \eqref{FREQ:gl}, then when $k\leq 4$ we have
	\begin{multline}\label{FREQ:zeta.use.hard.too}
	\zeta_1(p)\lesssim \frac{1}{p^0}\int_{\rth}\frac{dq}{q^0}e^{-q^0}\sqrt{s}g^{3}\Phi(g)\int_{0}^{4} dk \frac{g_\Lambda^{2+\gamma}}{ g^4_\Lambda} k^{-1-\frac{\gamma}{2}}
	\frac{(p^0+q^0)k}{2s}
	\\
	\lesssim \int_{\rth}dq\ e^{-q^0}\Phi(g)s^{\frac{\gamma}{2}}\int_{0}^{4} dk \ k^{-\frac{\gamma}{2}}.
	\end{multline}
	Here we used $g_\Lambda \le \sqrt{s}$ when $k\leq 4$.
	Since $\gamma\in(0,2)$, the integral converges.
	
	Now we use $g\lesssim \sqrt{p^0q^0}$ from \eqref{FREQ:s.ge.g2} and \eqref{FREQ:s.le.pq} in the hard interaction \eqref{hard} case.  Alternatively we will use 
	$g\geq \frac{|p-q|}{\sqrt{p^0q^0}}$ from \eqref{FREQ:g.ge.lower} in the soft interaction \eqref{soft} case, and $s\lesssim p^0q^0$  from \eqref{FREQ:s.le.pq}.  Then on $k\le 4$ we further have
	\begin{equation}\notag
	\zeta_1(p)\lesssim \int_{\rth}dq\ e^{-q^0}(p^0q^0)^{\frac{\singA+\gamma}{2}}
	\lesssim
	(p^0)^{\frac{\singA+\gamma}{2}},
	\end{equation}for the hard interactions, and
	\begin{equation}\notag
	\zeta_1(p)\lesssim \int_{\rth}dq\ e^{-q^0}\left(\frac{|p-q|}{\sqrt{p^0q^0}}\right)^{-\singB}(p^0q^0)^{\frac{\gamma}{2}}
	\lesssim
	(p^0)^{\frac{-\singB+\gamma}{2}},
	\end{equation}
	for the soft interactions.

	On the other hand, when $k>4$, we still have \eqref{FREQ:new.ang.est.zeta.doit} and \eqref{FREQ:zetanew}.
Hence
\begin{multline}\label{FREQ:k.ge.four.est.zeta}
	\zeta_1(p)
	\lesssim
	\frac{1}{p^0}\int_{\rth}\frac{dq}{q^0}e^{-q^0}\sqrt{s}g^{3}\Phi(g)\int_{4}^\infty dk \frac{\Big[1- \exp\left(-\frac{(p^0+q^0)k}{2s}\right) \Big]}{ k^{1+\gamma/2}(k+g^2)^{1-\gamma/2}}
\\
	\lesssim
	\frac{1}{p^0}\int_{\rth}\frac{dq}{q^0}e^{-q^0}\sqrt{s}g^{1+\gamma}\Phi(g)\int_{4}^\infty dk ~ k^{-1-\gamma/2}  \Big[1- \exp\left(-\frac{(p^0+q^0)k}{2s}\right) \Big]
\\
\lesssim
	\int_{\rth} dq ~ e^{-q^0}g^{\gamma}\Phi(g)\int_{4}^\infty dk ~ k^{-1-\gamma/2}
	\Big[1- \exp\left(-\frac{(p^0+q^0)k}{2s}\right) \Big]
\\
\lesssim
	\int_{\rth} dq ~ e^{-q^0}g^{\gamma}\Phi(g)\int_{4}^\infty dk ~ k^{-1-\gamma/2}.
\end{multline}
Above we used $g \lesssim \sqrt{s} \lesssim \sqrt{\pZ \qZ}$ from \eqref{FREQ:s.ge.g2}-\eqref{FREQ:s.le.pq} and $\Big[1- \exp\left(-\frac{(p^0+q^0)k}{2s}\right) \Big] \lesssim 1$.

 Then, also using $g\lesssim \sqrt{p^0q^0}$ for hard interactions \eqref{hard} and \eqref{FREQ:g.ge.lower} for the soft interactions \eqref{soft}, when $k\geq 4$, we have
	\begin{multline*}
	\zeta_1(p)
	\lesssim
	\int_{\rth} dq ~ e^{-q^0}g^{\singA+\gamma}\int_{4}^\infty dk ~ k^{-1-\gamma/2}
		\lesssim
	\int_{\rth} dq ~ e^{-q^0}g^{\singA+\gamma}
\\
\lesssim
	\int_{\rth} dq ~ e^{-q^0}(p^0q^0)^{\frac{\singA+\gamma}{2}}
	\lesssim
	(p^0)^{\frac{\singA+\gamma}{2}},
	\end{multline*}in the hard interaction case, and
	\begin{multline*}
	\zeta_1(p)
	\lesssim
	\int_{\rth} dq ~ e^{-q^0}g^{-\singB+\gamma}\int_{4}^\infty dk ~ k^{-1-\gamma/2}
		\lesssim
	\int_{\rth} dq ~ e^{-q^0}g^{-\singB+\gamma}
\\
\lesssim
	\int_{\rth} dq ~ e^{-q^0}\left(\frac{|p-q|}{\sqrt{p^0q^0}}\right)^{-\singB+\gamma}
	\lesssim
	(p^0)^{\frac{-\singB+\gamma}{2}},
	\end{multline*}in the soft interaction case.
	This completes the upper-bound estimate of $\zeta_1$.  
\end{proof}

On the other hand, we have the following upper-bound estimate for $\zeta_2$:
\begin{lemma}\label{FREQ:lemma.zeta2}
	Suppose $\gamma \in (0,2)$ in \eqref{angassumption}.  Then for both hard \eqref{hard} and soft \eqref{soft} interactions with \eqref{singS.defin} we have the following uniform upper bound for \eqref{FREQ:zeta2B} when $|p|\ge 1$:
	$$|\zeta_2(p)|\lesssim (p^0)^{\frac{\singS+\gamma}{2}}.$$
	\end{lemma}

	\begin{proof}
	We use the change of variables $r\mapsto y\eqdef \frac{r}{\sqrt{s}}$ on \eqref{FREQ:zeta2B}. This yields
	\begin{multline}\notag
\zeta_2 \eqdef \frac{c'}{p^0}e^{\frac{p^0}{4}}\int_{\rth}\frac{dq}{q^0}\frac{e^{-\frac{3}{4}q^0}}{g}\int_{0}^\infty \frac{\sqrt{s}ydy}{\sqrt{y^2+1}}s_\Lambda\sigma(g_\Lambda,\theta_\Lambda)  \frac{s\Phi(g)g^4}{s_\Lambda  \Phi(g_\Lambda)g^4_\Lambda}
\\
\times
\exp\left(-\frac{p^0+q^0}{4}\sqrt{y^2+1}\right)\left[1-I_0\left(\frac{|p\times q|}{2g}y\right) \right].
\end{multline}
Recall \eqref{FREQ:lj}.   Note that $\sigma(g_\Lambda,\theta_\Lambda)=\Phi(g_\Lambda)\sigma_0(\theta_\Lambda)\ge 0.$ Then we have
 \begin{multline}\label{FREQ:zeta2New}
 \zeta_2 \eqdef \frac{c'}{p^0}e^{\frac{p^0}{4}}\int_{\rth}\frac{dq}{q^0}\frac{e^{-\frac{3}{4}q^0}}{g}\int_{0}^\infty \frac{\sqrt{s}ydy}{\sqrt{y^2+1}}s\Phi(g)\sigma_0(\theta_\Lambda)  \frac{g^4}{g^4_\Lambda}
 \\
 \times
 \exp\left(-l\sqrt{y^2+1}\right)\left[1-I_0(jy) \right].
 \end{multline} 
 Note that $I_0 \ge 1$ so that $\zeta_2 \le 0$.
 By \eqref{FREQ:ang.equiv}, using $g^2\leq s$ from \eqref{FREQ:s.ge.g2} we have
$$
	\sigma_0(\theta_\Lambda)
	\lesssim (y^2+\sqrt{y^2+1})^{1+\gamma/2}y^{-2-\gamma},
$$
Plugging this into \eqref{FREQ:zeta2New}, we have
	 \begin{multline}\label{FREQ:zeta2abs}
	|\zeta_2|\lesssim  \frac{1}{p^0}e^{\frac{p^0}{4}}\int_{\rth}\frac{dq}{q^0}\frac{e^{-\frac{3}{4}q^0}}{g}s\sqrt{s}\Phi(g)\int_{0}^\infty \frac{y^{-1-\gamma}dy}{\sqrt{y^2+1}}(y^2+\sqrt{y^2+1})^{1+\gamma/2}
	\\
	\times
	\exp\left(-l\sqrt{y^2+1}\right)\left[I_0(jy)-1 \right],
	\end{multline} where we also used $\frac{g^4}{g^4_\Lambda}\leq 1$ from \eqref{FREQ:g2y.variable}. Also note that
	\begin{multline*}	\exp\left(-l\sqrt{y^2+1}\right)=\exp(-l)	\exp\left(-l(\sqrt{y^2+1}-1)\right)\\=e^{\frac{-p^0-q^0}{4}}\exp\left(-l(\sqrt{y^2+1}-1)\right).\end{multline*} Plugging this into \eqref{FREQ:zeta2abs}, we have
		 \begin{equation}\label{FREQ:zeta2abs2}
	|\zeta_2|\lesssim  \frac{1}{p^0}\int_{\rth}\frac{dq}{q^0}\frac{e^{-q^0}}{g}s\sqrt{s}\Phi(g)Y(p,q),
		 \end{equation}
	where we define
	\begin{multline}\label{FREQ:eq.Y}Y(p,q)\eqdef \int_{0}^\infty \frac{y^{-1-\gamma}dy}{\sqrt{y^2+1}}(y^2+\sqrt{y^2+1})^{1+\gamma/2}
	\\\times\exp\left(-l(\sqrt{y^2+1}-1)\right)\left[I_0(jy)-1 \right].\end{multline}
	For the upper-bound estimate of $Y(p,q)$ we split the region $[0,\infty)$ into two: 
	\begin{equation}\notag
	Y(p,q) = \tilde{Y}_1(p,q) + \tilde{Y}_2(p,q),
	\end{equation}   
	where $\tilde{Y}_1(p,q)$ is the integral in \eqref{FREQ:eq.Y} restricted to the integration region $y\ge 1$ and then $\tilde{Y}_2(p,q)$ is the expression in \eqref{FREQ:eq.Y} on the integration region $0<y<1$.

First we consider the case $\tilde{Y}_1(p,q)$  that $y\geq 1$. When $y\ge 1$, we have
$$
	\frac{y^{-1-\gamma}}{\sqrt{y^2+1}}(y^2+\sqrt{y^2+1})^{1+\gamma/2}
		\lesssim
	\frac{y^{-1-\gamma}}{\sqrt{y^2+1}}(y^2)^{1+\gamma/2}
	\lesssim\frac{y}{\sqrt{y^2+1}}.
$$ 
Therefore, on the region $y\geq 1$, using \eqref{FREQ:J2.special} we have
$$
\tilde{Y}_1(p,q)\lesssim \exp(l) \int_1^\infty dy\  \frac{y}{\sqrt{y^2+1}}\exp(-l\sqrt{y^2+1})I_0(jy)
\lesssim \exp(l) J_2(l,j).
$$ 
By \eqref{FREQ:J2.lemma} we then have
$$
\tilde{Y}_1(p,q)
\lesssim \exp(l) \frac{\exp(-\sqrt{l^2-j^2})}{\sqrt{l^2-j^2}}.
$$
Since $p^0-q^0\leq |p-q|$ from \eqref{FREQ:p0q0.le.pq}, we have 
	\begin{equation}\label{FREQ:exponential.bound.1}
	    \exp\left(\frac{p^0-q^0-|p-q|}{4}\right)\leq 1.
	\end{equation}
Thus, using \eqref{FREQ:g.le.upper}, \eqref{FREQ:l2j2}, \eqref{FREQ:l2j2size} and \eqref{FREQ:exponential.bound.1} we have 
\begin{multline*}
\tilde{Y}_1(p,q)
\lesssim \exp\left(\frac{p^0+q^0}{4}-\frac{\sqrt{s}}{4g}|p-q|\right)\frac{4g}{\sqrt{s}|p-q|}\\
	\lesssim \exp\left(\frac{q^0}{2}\right)\exp\left(\frac{p^0-q^0}{4}-\frac{|p-q|}{4}\right)\frac{4}{\sqrt{s}}\lesssim \frac{\exp\left(\frac{q^0}{2}\right)}{\sqrt{s}}.
\end{multline*} 
Now we will use $\Phi(g)\approx g^{\singS}$ from \eqref{hard}-\eqref{singS.defin}.  In the hard interaction case \eqref{hard} we use \eqref{FREQ:g.ge.lower} and \eqref{FREQ:g.le.sqrtpq} in \eqref{FREQ:zeta2abs2} to conclude that
	\begin{equation}\label{FREQ:zeta2firstcase.hard}
	\left[\zeta_2\right]_{y \ge 1}\lesssim  \int_{\rth}dq\frac{\exp\left(\frac{-q^0}{2}\right)}{|p-q|}(p^0q^0)^{\frac{\singA+1}{2}}
	\lesssim(p^0)^{\frac{\singA}{2}-\frac{1}{2}}.
	\end{equation}
Then in the soft interaction case \eqref{soft}, using $b<2$, we use \eqref{FREQ:s.le.pq} and \eqref{FREQ:g.ge.lower} to obtain
	\begin{equation}\label{FREQ:zeta2firstcase.soft}
	\left[\zeta_2\right]_{y \ge 1}\lesssim \int_{\rth}dq\frac{\exp\left(\frac{-q^0}{2}\right)}{|p-q|^{1+\singB}}(p^0q^0)^{\frac{\singB+1}{2}}
	\lesssim(p^0)^{-\frac{\singB}{2}-\frac{1}{2}}.
	\end{equation}
	Here $\left[\zeta_2\right]_{y \ge 1}$ is $\zeta_2$ restricted to the integration region ${y \ge 1}$ using the convention \eqref{convention}.
	This completes the proof for the upper bound of $\zeta_2$ when  ${y \ge 1}$.

Alternatively, using \eqref{FREQ:eq.Y} we will show that $|\zeta_2|$ on $0<y<1$ is bounded uniformly from above by $(p^0)^{\frac{\singS}{2}+\frac{\gamma}{2}}.$ 
We prove this using the known Taylor expansion of the modified Bessel function of the first kind $I_0$ \cite{Gradshteyn:1702455}  as follows:
	$$I_0(jy)=\sum_{M=0}^{\infty}\frac{1}{(M!)^2}\left(\frac{jy}{2}\right)^{2M}.$$
	Now, since $y<1$, recalling \eqref{FREQ:eq.Y} we have
	$$\frac{y^{-1-\gamma}}{\sqrt{y^2+1}}(y^2+\sqrt{y^2+1})^{1+\gamma/2}\lesssim y^{-1-\gamma},$$ and
	$$\exp\left(-l(\sqrt{y^2+1}-1)\right)=\exp\left(-l\frac{y^2}{\sqrt{y^2+1}+1}\right)\leq \exp\left(-l\frac{y^2}{\sqrt{2}+1}\right).$$
	Therefore, by \eqref{FREQ:eq.Y}, we have
	\begin{equation*}
	\begin{split}
	\tilde{Y}_2(p,q) &\lesssim  \int_0^1 dy \ y^{-1-\gamma}
	\exp\left(-l\frac{y^2}{\sqrt{2}+1}\right)\sum_{M=1}^{\infty}\frac{1}{(M!)^2}\left(\frac{jy}{2}\right)^{2M}\\
	&\lesssim \sum_{M=1}^{\infty} \frac{1}{(M!)^2}(j/2)^{2M} \int_0^1 dy \ y^{-1-\gamma+2M}
	\exp\left(-cly^2\right),
	\end{split}
	\end{equation*} 
	where we define 
\begin{equation}
    	\label{FREQ:c.def}
	c\eqdef \frac{1}{1+\sqrt{2}}.
\end{equation}
	For $M\ge 1$, we further define 
	$$
	Y_M \eqdef \int_0^1dy \ y^{-1-\gamma+2M}
	\exp\left(-cly^2\right).
	$$
	Here we take a change of variables $y\mapsto z=ly^2$ with $dz=2lydy$ and obtain
	\begin{multline}\notag
	Y_M\leq  \frac{l^{\frac{\gamma}{2}-M}}{2} \int_0^l dz \ z^{-1-\gamma/2+M}\exp\left(-cz\right) \\\leq\frac{l^{\frac{\gamma}{2}-M}}{2} \int_0^\infty dz \ z^{-1-\gamma/2+M}    	\exp\left(-cz\right)\\
	\leq\frac{l^{\frac{\gamma}{2}-M}}{2}3^{M-1} \sup_{z\in[0,\infty)} \left\{\left(\frac{z}{3}\right)^{M-1}    	\exp\left(-\frac{z}{3}\right)\right\}\int_0^\infty dz \ z^{-\gamma/2}    	\exp\left(-(c-1/3)z\right)\\
		\leq C_1 3^{M}\sup_{z\in[0,\infty)} \left\{\left(\frac{z}{3}\right)^{M-1}    	\exp\left(-\frac{z}{3}\right)\right\}l^{\frac{\gamma}{2}-M}, \end{multline}
where the constant $C_1$ is uniformly bounded since $\gamma \in(0,2)$ as
\begin{equation} \notag
    C_1 \eqdef \frac{1}{6} \int_0^\infty dz \ z^{-\gamma/2}    	\exp\left(-(c-1/3)z\right) < \infty.
\end{equation}
This holds because $c> \frac{1}{3}$ from \eqref{FREQ:c.def}.  We use the Stirling formula error bounds to obtain 
	$$\sup_{z\in[0,\infty)} \left\{\left(\frac{z}{3}\right)^{M-1}    	\exp\left(-\frac{z}{3}\right)\right\}
	\leq
	\frac{1}{\sqrt{2\pi}}
	\frac{(M-1)!}{\sqrt{M-1}}
		\leq
	\frac{1}{\sqrt{4\pi}}
	\frac{M!}{\sqrt{M}},\text{ if }M\ge 2.
	$$
	Alternatively if $M=1$ we have the bound 
		$$\sup_{z\in[0,\infty)} \left\{    	\exp\left(-\frac{z}{3}\right)\right\}
\leq 1, \text{ if }M=1.$$
	Therefore we have the general bound
	\begin{equation}\label{FREQ:yM.bound}
	    Y_M \leq C_1 3^M \frac{M!}{\sqrt{M}} l^{\frac{\gamma}{2}-M}, \quad M\ge 1.
	\end{equation}
We will use this bound to estimate $\left[\zeta_2\right]_{0<y<1}$ using the convention \eqref{convention}.

First we notice that using \eqref{FREQ:lj} we have
	\begin{equation}\label{FREQ:YM2spc}
	j^{2M} l^{\frac{\gamma}{2}-M} \leq (q^0)^{M}l^{\frac{\gamma}{2}},\end{equation} 
	where to prove \eqref{FREQ:YM2spc} we used $j^2/l\leq q^0$ which follows from \eqref{FREQ:g.ge.2lower} as 
	\begin{equation}\label{FREQ:j2l}\frac{j^2}{l}=\frac{|p\times q|^2}{g^2(p^0+q^0)}\leq \frac{p^0q^0}{p^0+q^0}\leq q^0.
	\end{equation} 
	Now we plug \eqref{FREQ:yM.bound} and \eqref{FREQ:YM2spc} into \eqref{FREQ:zeta2abs2} with $\tilde{Y}_2(p,q)$, to obtain
	\begin{multline*}
	\left[\zeta_2\right]_{0<y<1}
	\lesssim  \frac{1}{p^0}\int_{\rth}\frac{dq}{q^0}\frac{e^{-q^0}}{g}s\sqrt{s}\Phi(g)l^{\frac{\gamma}{2}}\sum_{M=1}^\infty \frac{1}{M!\sqrt{M}}\left(\frac{3}{4}\right)^{M}(q^0)^M
	\\
\lesssim 	 \frac{1}{p^0}\int_{\rth}\frac{dq}{q^0}\frac{e^{-q^0}}{g}s\sqrt{s}\Phi(g)l^{\frac{\gamma}{2}}\exp\left({\frac{3}{4}q^0}\right).
\end{multline*} 
We use $\Phi(g)\approx g^{\singS}$ with $-2 <\rho$ from \eqref{singS.defin}.  
In the hard interaction case \eqref{hard}, we will use   \eqref{FREQ:l.upper.ineq}, \eqref{FREQ:g.ge.lower} and \eqref{FREQ:g.le.sqrtpq} to conclude that
	\begin{equation}\label{FREQ:zeta2secondcase.hard}
	\left[\zeta_2\right]_{0<y<1}\lesssim \int_{\rth}\frac{dq\ e^{-\frac{q^0}{4}}}{|p-q|}(p^0q^0)^{\frac{1}{2}+\frac{\gamma}{2} +\frac{1+\singA}{2}}\lesssim (p^0)^{\frac{\singA}{2}+\frac{\gamma}{2}}.
	\end{equation}
And in the soft interaction case \eqref{soft} we will use 	\eqref{FREQ:g.ge.lower} and \eqref{FREQ:s.le.pq} to obtain
	\begin{equation}\label{FREQ:zeta2secondcase.soft}
	\left[\zeta_2\right]_{0<y<1}\lesssim \int_{\rth}dq\frac{e^{-\frac{q^0}{4}}}{|p-q|^{1+\singB}}(p^0q^0)^{\frac{1}{2}+\frac{\gamma}{2} +\frac{1+\singB}{2}}\lesssim (p^0)^{-\frac{\singB}{2}+\frac{\gamma}{2}},
	\end{equation}
where we recall that $1+\singB<3.$	This proves that $\left[\zeta_2\right]_{0<y<1}$ has the leading order upper bound.
\end{proof}

Thus we obtain Proposition \ref{FREQ:prop.zeta0.asymptotic} by combining Lemmas \ref{FREQ:lemma.zeta1} and \ref{FREQ:lemma.zeta2}. In the next section, we will prove that the remainder terms  $\zetaL$ from \eqref{FREQ:zetaLBy}, and $\zetaTLm$ from \eqref{FREQ:def.zetaMdef}  have lower order upper bounds.  We will also prove that $\zetaTone(p)$ from \eqref{FREQ:def.zetatilde1} has a lower order upper bound in Proposition \ref{FREQ:prop.tildezeta.upper.qgep}.

\subsection{Lower order upper bound estimates}\label{FREQ:sec:low order zeta1}

In this section, we study the upper bound estimates of $\zetaL$ from \eqref{FREQ:zetaLBy}, $\zetaTLm$ from \eqref{FREQ:def.zetaMdef} and $\zetaTone(p)$ from \eqref{FREQ:def.zetatilde1}, which together form part of $\zeta_{\mathcal{K}}$ in \eqref{FREQ:zetaK.def}.  Our goal will be to prove that $|\zetaL(p)|$, $|\zetaTLm(p)|$, and $|\zetaTone(p)|$ have lower order upper bounds.

\subsubsection{Lower order upper bound for $\zetaL(p)$} For the proof of the lower order upper bound of $|\zetaL|$ we will use the representation in \eqref{FREQ:zetaLBy}. We have the following uniform asymptotic bound:

\begin{proposition}\label{FREQ:prop.zetaL.asymptotic}
	Suppose $\gamma \in (0,2)$  in \eqref{angassumption}. Then for both hard \eqref{hard} and soft \eqref{soft} interactions, for \eqref{FREQ:zetaLBy} when $|p|\ge 1$, we have
	$$|\zetaL(p)| \lesssim (p^0)^{\frac{\singS}{2}}.$$
This bound then automatically also holds for $|\zetaLm(p)|$ from \eqref{FREQ:def.zetaMdef}.
\end{proposition}

\begin{proof}   By \eqref{FREQ:zetaLBy} and the definition of $l$ and $j$ of \eqref{FREQ:lj} we have
	\begin{multline}\label{FREQ:zetaL.def.here}
	\zetaL \eqdef \frac{c'}{p^0}e^{\frac{p^0}{4}}\int_{\rth}\frac{dq}{q^0}\frac{e^{-\frac{3}{4}q^0}}{g}\sqrt{s}\\
	\times\int_{0}^\infty \frac{ydy}{\sqrt{y^2+1}}s_\Lambda\sigma(g_\Lambda,\theta_\Lambda)
	\exp\left(-l\sqrt{y^2+1}\right)I_0\left(jy\right) \left(\frac{s\Phi(g)g^4}{s_\Lambda  \Phi(g_\Lambda)g^4_\Lambda}-1 \right).
	\end{multline} 
	In the hard interaction case \eqref{hard}, we have $\frac{\Phi(g)}{\Phi(g_\Lambda)}=\left(\frac{g}{g_\Lambda}\right)^{a}$ with $a<2.$ Since $g\leq g_\Lambda$ from \eqref{FREQ:g2y.variable},  we have $\frac{\Phi(g)}{\Phi(g_\Lambda)}\geq \frac{g^2}{g^2_\Lambda}$. Then this  implies
	$$
	\left|\frac{s\Phi(g)g^4}{s_\Lambda  \Phi(g_\Lambda)g^4_\Lambda} - 1\right|=1-\frac{s\Phi(g)g^4}{s_\Lambda  \Phi(g_\Lambda)g^4_\Lambda} \leq 1-\frac{sg^6}{s_\Lambda  g^6_\Lambda}=\frac{s_\Lambda g^6_\Lambda-sg^6}{s_\Lambda g^6_\Lambda}.
	$$
	Further note that we have 
	\begin{multline*}
	s_\Lambda g^6_\Lambda-sg^6=(g^8_\Lambda-g^8)+4(g^6_\Lambda-g^6)\\= (g^2_\Lambda-g^2)\left((g_\Lambda^4+g^4)(g^2_\Lambda+g^2)+4g^4_\Lambda+4g^2_\Lambda g^2+4g^4\right)\\\lesssim \frac{s}{2}(\sqrt{y^2+1}-1)
	g^4_\Lambda s_\Lambda,
	\end{multline*}
	since $g^2_\Lambda-g^2=\frac{s}{2}(\sqrt{y^2+1}-1)$ from \eqref{FREQ:g2y.variable}, $s_\Lambda\eqdef g^2_\Lambda+4$ and again $g\leq g_\Lambda$.
	Therefore, we have 
\begin{equation}\label{FREQ:difference.estimate.here}
    	\left|\frac{s\Phi(g)g^4}{s_\Lambda  \Phi(g_\Lambda)g^4_\Lambda} - 1\right|\leq\frac{s_\Lambda g^6_\Lambda-sg^6}{s_\Lambda g^6_\Lambda}\lesssim \frac{\frac{s}{2}(\sqrt{y^2+1}-1)}{g^2_\Lambda}.
\end{equation}
	This is the main estimate  for this difference in the hard interaction case.
	
We now consider the same estimate in the soft interaction case \eqref{soft}.   Since $g\leq g_\Lambda$ and $\frac{\Phi(g)}{\Phi(g_\Lambda)}=\left(\frac{g}{g_\Lambda}\right)^{-b}$ with $b\in[\gamma,2),$ then we have $\frac{\Phi(g)}{\Phi(g_\Lambda)}\geq 1$. Then $b\in[\gamma,2)$ 
	further implies
	$$1-\frac{s\Phi(g)g^4}{s_\Lambda  \Phi(g_\Lambda)g^4_\Lambda} \leq 1-\frac{sg^4}{s_\Lambda  g^4_\Lambda}=\frac{s_\Lambda g^4_\Lambda-sg^4}{s_\Lambda g^4_\Lambda}.
	$$
In this case we also have
\begin{multline*}
	s_\Lambda g^4_\Lambda-sg^4=(g^6_\Lambda-g^6)+4(g^4_\Lambda-g^4)= (g^2_\Lambda-g^2)\left(g^4_\Lambda+g^2_\Lambda g^2+g^4+4g^2_\Lambda+4g^2\right)\\=(g^2_\Lambda-g^2)\left(g^4_\Lambda+g^2_\Lambda g^2+g^4+4g^2_\Lambda+4g^2\right)
	\leq \frac{s}{2}(\sqrt{y^2+1}-1)(3g^4_\Lambda+8g^2_\Lambda)\\
	\lesssim
	\frac{s}{2}(\sqrt{y^2+1}-1)g^2_\Lambda s_\Lambda,
\end{multline*}
because again $g^2_\Lambda-g^2=\frac{s}{2}(\sqrt{y^2+1}-1)$ from \eqref{FREQ:g2y.variable}.  Therefore, we have
	\begin{equation}\label{FREQ:upperbound.allcases.zeta1}
	\left|\frac{s\Phi(g)g^4}{s_\Lambda  \Phi(g_\Lambda)g^4_\Lambda} - 1\right|\leq\frac{s_\Lambda g^4_\Lambda-sg^4}{s_\Lambda g^4_\Lambda}\lesssim \frac{\frac{s}{2}(\sqrt{y^2+1}-1)}{g^2_\Lambda}.
	\end{equation}
Note that in both the hard interaction case and the soft interaction case the final upper bounds are the same in  \eqref{FREQ:difference.estimate.here} and \eqref{FREQ:upperbound.allcases.zeta1}.

	In both cases, then plugging \eqref{FREQ:difference.estimate.here} and \eqref{FREQ:upperbound.allcases.zeta1} into \eqref{FREQ:zetaL.def.here}  we have 
	\begin{equation}\label{FREQ:zetaLm.upper.first}
	|\zetaL|  \lesssim  \frac{1}{p^0}e^{\frac{p^0}{4}}\int_{\rth}\frac{dq}{q^0}\frac{e^{-\frac{3}{4}q^0}}{g}s^{1/2}
K_2(p,q),
	\end{equation}
where we define $K_2 = K_2(p,q)$ by
	\begin{equation}\label{FREQ:K2definition.appendix}
	K_2\eqdef  \int_{0}^\infty \frac{ydy}{\sqrt{y^2+1}}s_\Lambda\sigma(g_\Lambda,\theta_\Lambda)
	\exp\left(-l\sqrt{y^2+1}\right)I_0\left(jy\right) \frac{\frac{s}{2}(\sqrt{y^2+1}-1)}{g^2_\Lambda}.
	\end{equation}
	We will split into two cases:
	 $y\leq 1$ and $y>1$. We write $K_2=K_{2,\le 1}+K_{2,\ge 1}$ below where  $K_{2,\le 1}$ and $ K_{2,\ge 1}$ denote $K_2$ on $y\le 1$ and $y\ge 1$, respectively.

First let us generally estimate the kernel.  We will use the product form \eqref{define.kernel} with the estimates \eqref{hard}-\eqref{soft}-\eqref{singS.defin} to obtain
\begin{equation}\notag
    s_\Lambda\sigma(g_\Lambda,\theta_\Lambda) \frac{\frac{s}{2}(\sqrt{y^2+1}-1)}{g^2_\Lambda}
    \approx s_\Lambda g_\Lambda^{\rho-2} s \sigma_0(\theta_\Lambda) (\sqrt{y^2+1}-1) 
    \approx   \frac{s_\Lambda g_\Lambda^{\rho} s \sigma_0(\theta_\Lambda) y^2 }{g_\Lambda^{2}(\sqrt{y^2+1}+1)}.
\end{equation}
Additionally using \eqref{FREQ:ang.equiv} with \eqref{FREQ:glambda.calc} we have 
\begin{equation}\notag
\sigma_0(\theta_\Lambda)
\approx
\left(\frac{s}{ g_\Lambda^2} \frac{y^2}{2(\sqrt{y^2+1}+1)}\right)^{-1-\gamma/2}
\lesssim  y^{-2-\gamma}(\sqrt{1+y^2})^{1+\gamma/2} \left(\frac{ g_\Lambda^2}{s}\right)^{1+\gamma/2}
\end{equation}
We plug this into the previous estimate to obtain
\begin{multline}\notag
s_\Lambda\sigma(g_\Lambda,\theta_\Lambda) \frac{\frac{s}{2}(\sqrt{y^2+1}-1)}{g^2_\Lambda}
\lesssim  
y^{-\gamma}(1+y^2)^{\gamma/4}
\frac{s_\Lambda g_\Lambda^{\rho} s }{g_\Lambda^{2}}\left(\frac{ g_\Lambda^2}{s}\right)^{1+\gamma/2}
\\
\lesssim  
y^{-\gamma}(1+y^2)^{\gamma/4}
s_\Lambda g_\Lambda^{\rho} \left(\frac{ g_\Lambda^2}{s}\right)^{\gamma/2}.
\end{multline}
We conclude from \eqref{FREQ:ineq.gL.here} and the above that in general we have
\begin{equation}\notag
s_\Lambda\sigma(g_\Lambda,\theta_\Lambda) \frac{\frac{s}{2}(\sqrt{y^2+1}-1)}{g^2_\Lambda}
\lesssim  
y^{-\gamma}(1+y^2)^{\gamma/2}
s_\Lambda g_\Lambda^{\rho} 
\lesssim  
y^{-\gamma}(1+y^2)^{(1+\gamma)/2}
s g_\Lambda^{\rho}.
\end{equation}
In particular, recalling \eqref{hard}-\eqref{soft}-\eqref{singS.defin} and using \eqref{FREQ:difference.estimate.here}, \eqref{FREQ:upperbound.allcases.zeta1}, and \eqref{FREQ:ineq.gL.here}, then in general we have
\begin{equation}\label{FREQ:general.kernel.est.here}
s_\Lambda\sigma(g_\Lambda,\theta_\Lambda)
	\left|\frac{s\Phi(g)g^4}{s_\Lambda  \Phi(g_\Lambda)g^4_\Lambda} - 1\right|
\lesssim  \mathbb{S} ~ 
y^{-\gamma}(1+y^2)^{1+\gamma/2}, \quad \forall 0 \le y \le \infty.
\end{equation}
This holds in particular since $a<2$ in \eqref{hard}.  Here we define
	\begin{equation}\label{FREQ:S.hard.soft.def}
	\begin{split}\mathbb{S}&\eqdef s^{1+\frac{a}{2}} \text{  for hard interactions \eqref{hard},}\\
	&\eqdef sg^{-b}\text{  for soft interactions \eqref{soft}.}\end{split}\end{equation}
These are the specific estimates that we will use on the kernel of  \eqref{FREQ:K2definition.appendix}.

	Now we return to estimating \eqref{FREQ:K2definition.appendix} on the region when $y\leq 1$.  
	Then using the above calculations we have the following bound for $\left|K_{2,\le 1}\right|$:
	\begin{multline}\notag
	|K_{2,\le 1}|
	\lesssim
	\mathbb{S}\int_0^{1}dy\  y^{1-\gamma}
	(1+y^2)^{(1+\gamma)/2}
	\exp\left(-l\sqrt{y^2+1}\right) I_0(j y)
	\\
	\lesssim
\mathbb{S}\int_0^{1}dy\  y^{1-\gamma}
\exp\left(-l\sqrt{y^2+1}\right) I_0(j y)
	\lesssim
\mathbb{S}\bar{K}_\gamma(l,j),
	\end{multline}  
where we used $(1+y^2)^{(1+\gamma)/2}\lesssim 1$ as $y\in (0,1)$, and $\gamma>0$.  	Since $\gamma\in (0,2)$, this integral converges.  In the last upper bound we used \eqref{FREQ:int.Kgamma}. From \eqref{FREQ:smally.lemma} we conclude
	\begin{equation}\label{FREQ:smally}
|K_{2,\le 1}|
	\lesssim
	\mathbb{S} \exp\left(-\sqrt{l^2-j^2}\right).
	\end{equation}
	This completes our estimate on the region when $y \le 1$.

	On the other region when $y>1$, using \eqref{FREQ:general.kernel.est.here} and \eqref{FREQ:S.hard.soft.def},   for the integral defined in \eqref{FREQ:K2definition.appendix}  we have for both hard and soft interactions, that
$$
|K_{2,\ge 1}|
\lesssim
\mathbb{S}\int_{1}^\infty\ dy\ y(y^2+1)^{1/2}\exp\left(-l\sqrt{y^2+1}\right) I_0(j y)
\lesssim \mathbb{S} \tilde{K}_2(l,j).
$$
Here $\tilde{K}_2(l,j)$ is defined in \eqref{FREQ:tildeK2def}.  The formula for $\tilde{K}_2$ is \eqref{FREQ:k2lj.lemma} 
and we have
\begin{equation}\label{FREQ:k2lj}
\tilde{K}_2(l,j)
	\lesssim (\sqrt{l^2-j^2})^{-5}\exp(-\sqrt{l^2-j^2}) (l^2-j^2+1)l^2.
\end{equation}
By \eqref{FREQ:l2j2}, we have $l^2-j^2=\frac{s}{16g^2}|p-q|^2.$
Thus, we obtain
\begin{multline}\notag\tilde{K}_2(l,j)\lesssim \frac{l^2}{(\sqrt{l^2-j^2})^3} \left(1+\frac{1}{l^2-j^2}\right)\exp(-\sqrt{l^2-j^2}) \\
 	\lesssim
 	\frac{(p^0+q^0)^2}{16}\left(\frac{4g}{\sqrt{s}|p-q|}\right)^3\left(1+\left(\frac{4g}{\sqrt{s}|p-q|}\right)^2\right)\exp(-\sqrt{l^2-j^2}).\end{multline}
 	We point out that due to \eqref{FREQ:g.le.upper} the above is not singular when $|p-q|=0$.

 	Note that using \eqref{FREQ:s.ge.g2} and \eqref{FREQ:g.le.upper} we have
 \begin{equation}\label{FREQ:tildeK2size0} 1+\left(\frac{4g}{\sqrt{s}|p-q|}\right)^2\le 1+\frac{16}{s}\lesssim 1.  
\end{equation} 
Also using $g\le \sqrt{s}$, which follows from \eqref{FREQ:s.ge.g2}, we have
 \begin{multline}\label{FREQ:tildeK2size}\tilde{K}_2(l,j)\lesssim  \frac{(p^0+q^0)^2}{16}\left(\frac{4g}{\sqrt{s}|p-q|}\right)^3\exp(-\sqrt{l^2-j^2})\\
 	\lesssim  	\frac{(p^0+q^0)^2}{16}\left(\frac{4}{|p-q|}\right)^3\exp(-\sqrt{l^2-j^2})\\
\lesssim  	\frac{(p^0+q^0)^2}{|p-q|^3}\exp(-\sqrt{l^2-j^2}).
 \end{multline}
 We will use estimate \eqref{FREQ:tildeK2size} to control the size of $|K_{2,\ge 1}|$ below.

We will now to use the region $|q|\le \frac{1}{2}|p|^{1/m}$ and $|p| \ge 1$  to complete our estimate of $|K_{2,\ge 1}|$.  Then later we will do separate estimates on the complementary region: $|q|\ge \frac{1}{2}|p|^{1/m}$.  Now since $|q|\le \frac{1}{2}|p|^{1/m}$, $m>1$ and $|p| \ge 1$, 
then we have 
\begin{equation}\label{FREQ:pminusq bound}\frac{p^0}{4}\le \frac{|p|}{2}\le |p-q|\le \frac{3}{2}|p|,
\end{equation}
and
\begin{equation}\label{FREQ:q0 bound}1\le q^0 \le2 (p^0)^{1/m}.\end{equation}
Thus we have
$$\frac{(p^0+q^0)^2}{|p-q|^3}\lesssim \frac{1}{p^0}.$$
Hence we obtain from \eqref{FREQ:tildeK2size} that 
 \begin{equation}
     \label{FREQ:tildeK2size.ref2}
 \tilde{K}_2(l,j)
\lesssim  	\frac{1}{p^0}\exp(-\sqrt{l^2-j^2}).
 \end{equation}
We therefore conclude from \eqref{FREQ:tildeK2size.ref2} that 
\begin{equation}
	\label{FREQ:largey}
		|K_{2,\ge 1}|\lesssim \mathbb{S}\frac{1}{p^0}\exp\left(-\sqrt{l^2-j^2}\right).
	\end{equation}
	By \eqref{FREQ:S.hard.soft.def}, \eqref{FREQ:smally} and \eqref{FREQ:largey}, using \eqref{convention}  we finally obtain
	\begin{multline*}
	[|\zetaL(p)|]_{|q|\le \frac{1}{2}|p|^{1/m}} \lesssim \frac{1}{p^0}e^{\frac{p^0}{4}}\int_{\qlep} \frac{dq}{q^0}\frac{\sqrt{s}}{g}e^{-\frac{3}{4}q^0}\mathbb{S}\exp\left(-\sqrt{l^2-j^2}\right)\\
\lesssim \frac{1}{p^0}\int_{\qlep} \frac{dq}{q^0}e^{-\frac{q^0}{2}}\frac{\sqrt{s}}{g}\mathbb{S}\exp\left(\frac{p^0-q^0}{4}-\sqrt{l^2-j^2}\right).
	\end{multline*} 
	We will use the inequality above to obtain the final upper bounds.

	In the hard interaction case \eqref{hard}, we use \eqref{FREQ:s.le.pq}, \eqref{FREQ:g.ge.lower} and \eqref{FREQ:l2j2size} to obtain
	\begin{equation}\label{FREQ:zeta2final.hard}
	[|\zetaL(p)|]_{|q|\le \frac{1}{2}|p|^{1/m}}\lesssim \int_{\rth}dq\ \frac{(p^0q^0)^{1+\frac{\singA}{2}}}{|p-q|}e^{-\frac{q^0}{2}}\exp\left(\frac{p^0-q^0-|p-q|}{4}\right).
	\end{equation}
	Therefore, from \eqref{FREQ:exponential.bound.1} we have
$$	[|\zetaL(p)|]_{|q|\le \frac{1}{2}|p|^{1/m}}\lesssim(p^0)^{\frac{\singA}{2}}.$$
	In the soft interaction case \eqref{soft}, we use \eqref{FREQ:g.ge.lower} and \eqref{FREQ:l2j2size} to obtain
	\begin{multline}\label{FREQ:zeta2final.soft}
	[|\zetaL(p)|]_{|q|\le \frac{1}{2}|p|^{1/m}} \\ 
	\lesssim \int_{	{|q|\le \frac{1}{2}|p|^{1/m}}}dq\ \frac{(p^0q^0)^{\frac{\singB}{2}+1}}{|p-q|^{1+\singB}} e^{-\frac{q^0}{2}}\exp\left(\frac{p^0-q^0-|p-q|}{4}\right).
	\end{multline}
	By \eqref{FREQ:exponential.bound.1}, since $\singB<2$, we then have
	\begin{equation*}	[|\zetaL(p)|]_{|q|\le \frac{1}{2}|p|^{1/m}} \lesssim(p^0)^{\frac{\singB}{2}+1}\int_{\rth}dq\ |p-q|^{-1-\singB} e^{-\frac{q^0}{2}}(q^0)^{\frac{\singB}{2}+1}
	\lesssim (p^0)^{-\frac{\singB}{2}}.
	\end{equation*} 
This completes the desired estimates on the region $\qlep$

Next we perform the estimates on the complementary region where $|q|\ge \frac{1}{2}|p|^{1/m}$, $|p| \ge 1$ and $m>1$.   Therefore, in this region we have
\begin{equation}\notag
q^0 \ge |q|  \ge \frac{1}{2} \left(|p|^2 \right)^{\frac{1}{2m}}
\ge 
\frac{1}{2}\left(\frac{1}{2}\right)^{\frac{1}{2m}} \left(p^0 \right)^{\frac{1}{m}}.
\end{equation}
Then, for some $c=c_m>0$, we have additional exponential decay from
\begin{equation}\label{FREQ:exponential.more}
e^{-q^0} = e^{-q^0/2} e^{-q^0/2}\le e^{-q^0/2} e^{-c(p^0)^{1/m}},
\end{equation}
Then with \eqref{FREQ:exponential.more} we have exponential decay in $p^0$.

Now we need to replace the estimates on $\tilde{K}_2(l,j)$ above, which is defined in \eqref{FREQ:tildeK2def}.  Recalling the estimates \eqref{FREQ:k2lj} and \eqref{FREQ:tildeK2size0}, instead of \eqref{FREQ:tildeK2size} we use \eqref{FREQ:s.ge.g2} and \eqref{FREQ:g.le.upper} to obtain
 \begin{multline}\label{FREQ:tilde.k2.estimate.region}
 \tilde{K}_2(l,j)\lesssim  \frac{(p^0+q^0)^2}{16}\left(\frac{4g}{\sqrt{s}|p-q|}\right)^3\exp(-\sqrt{l^2-j^2})\\
 	\lesssim (p^0+q^0)^2
 	\exp(-\sqrt{l^2-j^2}).
 \end{multline}
 We conclude from the above estimate, recalling also \eqref{FREQ:S.hard.soft.def}, that
 \begin{equation}\notag
		|K_{2,\ge 1}|\lesssim \mathbb{S}(p^0+q^0)^2
 	\exp(-\sqrt{l^2-j^2}).
	\end{equation}
Then by \eqref{FREQ:zetaLm.upper.first}, \eqref{FREQ:K2definition.appendix}, \eqref{FREQ:S.hard.soft.def}, \eqref{FREQ:smally} and the above,  we further obtain
	\begin{multline}\notag
	[|\zetaL(p)|]_{\qgep} \lesssim \frac{1}{p^0}e^{\frac{p^0}{4}}\int_{\qgep}\frac{dq}{q^0}\frac{\sqrt{s}}{g}e^{-\frac{3}{4}q^0}\mathbb{S}(p^0+q^0)^2\exp\left(-\sqrt{l^2-j^2}\right)\\
\lesssim \frac{1}{p^0}\int_{\qgep}\frac{dq}{q^0}e^{-\frac{q^0}{2}}\frac{\sqrt{s}}{g}\mathbb{S}(p^0+q^0)^2\exp\left(\frac{p^0-q^0}{4}-\sqrt{l^2-j^2}\right)
\\
\lesssim p^0e^{-c(p^0)^{1/m}}\int_{\qgep } dq ~ q^0 e^{-\frac{q^0}{4}}\frac{\sqrt{s}}{g}\mathbb{S},
	\end{multline} 	
where in the last inequality we used \eqref{FREQ:p0.plus.q0.le.p0q0}, \eqref{FREQ:l2j2size}, 	\eqref{FREQ:exponential.bound.1} and \eqref{FREQ:exponential.more}. Then from the same procedures we used to prove \eqref{FREQ:zeta2final.hard} and \eqref{FREQ:zeta2final.soft}, using the exponential decay in $p^0$ above, we obtain for some uniform $c'>0$ that
	\begin{equation}\label{FREQ:zetaLqgep}
	[|\zetaL(p)|]_{\qgep} 
\lesssim e^{-c'(p^0)^{1/m}}.
	\end{equation} 	
Combining the previous estimates, this completes the proof.
\end{proof}

This completes the proof of the lower order upper bound estimates for $\zetaL$ from \eqref{FREQ:zetaLBy}. Next, we prove that $\zetaTLm$ from \eqref{FREQ:def.TzetaMdef} has a lower order upper bound.

\subsubsection{Lower order upper bound for $\zetaTLm$}
We now prove the following proposition:

\begin{proposition}\label{FREQ:prop.zetaL.asymptoticnew}
	Suppose $\gamma \in (0,2)$  in \eqref{angassumption}, and recall \eqref{singS.defin}.  For any given small $\varepsilon>0$, assume that $m$ is sufficiently large such that  
	$\frac{|\singS|+8}{2m}\le \varepsilon.$  Then for both hard \eqref{hard} and soft \eqref{soft} interactions there exists a finite constant $C_\varepsilon>0$ such that for \eqref{FREQ:def.TzetaMdef} we have the following uniform asymptotic estimate
	$$\left|\zetaTLm(p)\right| \leq C_\varepsilon (p^0)^{\frac{\singS}{2}+\varepsilon}.$$
\end{proposition}

We recall \eqref{FREQ:eq:tildezetanew1L} and \eqref{FREQ:def.TzetaMdef} with \eqref{convention}.  Then we use the following representation in this section (implicitly assuming $|p| \ge 1$): 
\begin{multline}\notag
	\zetaTLm(p)
	=\frac{c'}{ p^0}\int_{\qlep}\frac{dq}{q^0}\frac{e^{-q^0}\sqrt{s}}{g}\int_{0}^\infty \frac{ydy}{\sqrt{y^2+1}}s_\Lambda\sigma(g_\Lambda,\theta_\Lambda)\left(1-\frac{s\Phi(g)g^4}{s_\Lambda  \Phi(g_\Lambda)g^4_\Lambda} \right)\\\times \left( \exp(2  l -2 l\sqrt{y^2+1})I_0(2jy)
-\exp(  l - l\sqrt{y^2+1})I_0(jy)\right)
	\eqdef \zetaLTone-\zetaLTtwo.\end{multline}
The splitting of $\zetaTL$ into $\zetaLTone$ and $\zetaLTtwo$ allows us to realize $\zetaLTtwo=-\zetaLm$ from \eqref{FREQ:def.zetaMdef} with \eqref{FREQ:zetaLBy} and the lower order upper bound estimate for $\zetaL$ was already given in Proposition \ref{FREQ:prop.zetaL.asymptotic}.

\begin{proof}
Based on the above discussion, in this proof we only need to give the asymptotic upper bound for $\zetaLTone$.  We start with
\begin{multline}\notag
	\zetaLTone =\frac{c'}{ p^0}\int_{\qlep}\frac{dq}{q^0}\frac{e^{-q^0}\sqrt{s}}{g}\int_{0}^\infty \frac{ydy}{\sqrt{y^2+1}}s_\Lambda\sigma(g_\Lambda,\theta_\Lambda)\left(1-\frac{s\Phi(g)g^4}{s_\Lambda  \Phi(g_\Lambda)g^4_\Lambda} \right)\\\times  \exp(2  l -2 l\sqrt{y^2+1})I_0(2jy).
\end{multline} 
We recall that we have the kernel estimate 
 \eqref{FREQ:general.kernel.est.here} with the notation \eqref{FREQ:S.hard.soft.def}.

Thus when $y\le 1$, since $0<\gamma<2$, we have
\begin{equation}\notag
	[ \zetaLTone]_{ y\le1} \lesssim\frac{1}{ p^0}\int_{\qlep}\frac{dq}{q^0}\frac{e^{-q^0}s^{1/2}}{g}\exp(2l)\bar{K}_\gamma(2l,2j) \mathbb{S},
\end{equation}
where we defined $\bar{K}_\gamma(l,j)$ in \eqref{FREQ:int.Kgamma}.
In particular from \eqref{FREQ:smally.lemma} we have
\begin{equation}\notag
\bar{K}_\gamma(2l,2j) \lesssim \exp (-\sqrt{(2l)^2-(2j)^2}).\end{equation} 
Thus, when $y\le 1$, we have
\begin{equation}\label{FREQ:smally.zetaL}
	[ \zetaLTone]_{ y\le1} \lesssim\frac{1}{ p^0}\int_{\qlep}\frac{dq}{q^0}\frac{e^{-q^0}s^{1/2}}{g}\exp (2l-\sqrt{4l^2-4j^2})\mathbb{S}.
\end{equation}
Above we are using the convention from \eqref{convention}.

 On the other hand, if $y\ge 1$, we again use the kernel estimate 
 \eqref{FREQ:general.kernel.est.here} with \eqref{FREQ:S.hard.soft.def}.  Then, for both hard and soft interactions, we have
 \begin{multline}\notag
 	[ \zetaLTone]_{ y\ge1} \lesssim\frac{1}{ p^0}\int_{\qlep} \frac{dq}{q^0}\frac{e^{-q^0}s^{1/2}}{g}\exp(2l)\mathbb{S}\\\times\int_1^\infty dy\  y(y^2+1)^{1/2}  \exp(-2 l\sqrt{y^2+1})I_0(2jy).
 \end{multline}
 Then note that 
 $$\int_1^\infty dy\  y(y^2+1)^{1/2}  \exp( -2 l\sqrt{y^2+1})I_0(2jy) \le \tilde{K}_2(2l,2j),$$ where $\tilde{K_2}$ is defined in \eqref{FREQ:tildeK2def}. 
 Then by \eqref{FREQ:tildeK2size.ref2}, on the region $|q|\le \frac{1}{2}|p|^{1/m}$, we have
\begin{equation}\label{FREQ:kernel.region.estimate}
    \tilde{K}_2(2l,2j)\lesssim \frac{1}{p^0}\exp(-\sqrt{4l^2-4j^2}).
\end{equation}
Hence if $y\ge 1$, we have
 \begin{equation}\label{FREQ:largey.zetaL}
 	[ \zetaLTone ]_{ y\ge1} \lesssim\frac{1}{ p^0}\int_\qlep \frac{dq}{q^0}\frac{e^{-q^0}s^{1/2}}{gp^0}\mathbb{S}\exp(2l-\sqrt{4l^2-4j^2}).
 \end{equation}
 Thus, combining \eqref{FREQ:smally.zetaL} and \eqref{FREQ:largey.zetaL}, we obtain \begin{equation}\notag
 	 \zetaLTone \lesssim \frac{1}{p^0}\int_\qlep \frac{dq}{q^0}\ \frac{e^{-q^0}s^{1/2}}{g}\mathbb{S}\exp(2l-\sqrt{4l^2-4j^2}).
 \end{equation}
 We will now split this estimate into the hard \eqref{hard} and soft \eqref{soft} interaction cases.

 In the hard interaction case \eqref{hard}, we use \eqref{FREQ:g.ge.lower}, \eqref{FREQ:s.le.pq} and \eqref{FREQ:l2j2size} to obtain
 \begin{equation}\notag
 	\zetaLTone 
 	\lesssim \int_\qlep dq\ |p-q|^{-1}(p^0q^0)^{1+\frac{\singA}{2}}\exp\left(\frac{p^0-q^0-|p-q|}{2}\right).
 \end{equation}
 Then by \eqref{FREQ:pminusq bound}, \eqref{FREQ:q0 bound}, and \eqref{FREQ:exponential.bound.1}, we have
 \begin{multline}\label{FREQ:zetaLfinal.hard}
 	 \zetaLTone 
 	\lesssim \int_\qlep dq\ |p|^{-1}(p^0)^{\left(1+\frac{\singA}{2}\right)\left(1+\frac{1}{m}\right)}\\\lesssim |p|^{-1+\frac{3}{m}}(p^0)^{\left(1+\frac{\singA}{2}\right)\left(1+\frac{1}{m}\right)}\lesssim (p^0)^{\frac{\singA}{2}+\frac{\singA+8}{2m}},
 \end{multline}
since $|p|\ge 1.$ Then for any given small $\varepsilon>0$, we choose $m$ sufficiently large such that  
 $\frac{\singA+8}{2m}\le \varepsilon.$ This yields Proposition \ref{FREQ:prop.zetaL.asymptoticnew} in the  hard interaction case on the region $|q|\le \frac{1}{2}|p|^{1/m}$.

 In the soft interaction case \eqref{soft}, we use \eqref{FREQ:g.ge.lower} and \eqref{FREQ:l2j2size} to obtain
 \begin{equation}\notag
 	 \zetaLTone 
 	\lesssim \int_\qlep dq\ |p-q|^{-1-\singB} (p^0q^0)^{\frac{\singB}{2}+1}\exp\left(\frac{p^0-q^0-|p-q|}{2}\right).
 \end{equation} 
 Now we use \eqref{FREQ:pminusq bound}, \eqref{FREQ:q0 bound}, and \eqref{FREQ:exponential.bound.1} to obtain for $|p|\ge 1$ that
 \begin{multline}\label{FREQ:zetaLfinal.soft}
 	 \zetaLTone
	\lesssim \int_\qlep dq\ |p|^{-1-\singB}(p^0)^{\left(1+\frac{\singB}{2}\right)\left(1+\frac{1}{m}\right)}\\\lesssim |p|^{-1-\singB+\frac{3}{m}}(p^0)^{\left(1+\frac{\singB}{2}\right)\left(1+\frac{1}{m}\right)}\lesssim (p^0)^{-\frac{\singB}{2}+\frac{\singB+8}{2m}}.
\end{multline}
For any given small $\varepsilon>0$, we choose $m$ sufficiently large such that  
$\frac{\singB+8}{2m}\le \varepsilon.$ 
This yields Proposition \ref{FREQ:prop.zetaL.asymptoticnew} in the soft interaction case.  This completes the proof.
 \end{proof}

\subsubsection{Low-order upper-bound for $\zetaTone$} Lastly, we introduce the following proposition on the lower-order upper bound estimate for $|\zetaTone|$ from \eqref{FREQ:def.zetatilde1}.
\begin{proposition}\label{FREQ:prop.tildezeta.upper.qgep}
	Suppose $\gamma \in (0,2)$ and $m>0.$ Then for both hard \eqref{hard} and soft \eqref{soft} interactions, for some $c>0$, we have the uniform upper bound for \eqref{FREQ:def.zetatilde1}:
	$$\left|\zetaTone(p)\right| \lesssim e^{-c(p^0)^{1/m}},$$ 
\end{proposition}

\begin{proof}
Note that on $\qgep$, then we will prove that each decomposed piece $\zeta_0$ from \eqref{FREQ:zeta0B} and $\zeta_L$ from \eqref{FREQ:zetaLBy} of $\tilde{\zeta}$ is lower order as above.  

In \secref{FREQ:sec:fullsharpupper zeta0}, in both \eqref{FREQ:zeta.use.hard.too} and \eqref{FREQ:k.ge.four.est.zeta}, if we restrict the domain to the case $|q|\ge \frac{1}{2}|p|^{1/m}$, then we have the bound \eqref{FREQ:exponential.more}
for some uniform $c>0$.  Therefore, $\zeta_1$ in this subregion is lower order in $p^0$, as it has additional exponential decay $e^{-c(p^0)^{1/m}}$. Similarly, for $\zeta_2$, in \eqref{FREQ:zeta2firstcase.hard}, \eqref{FREQ:zeta2firstcase.soft}, \eqref{FREQ:zeta2secondcase.hard}, and \eqref{FREQ:zeta2secondcase.soft}, can again use \eqref{FREQ:exponential.more}
on the region $|q|\ge \frac{1}{2}|p|^{1/m}$. Thus, $\zeta_2$ is also lower order in this region, and hence $\zeta_0$ is lower order when $\qgep$.

On the other hand for $\zeta_L$ from \eqref{FREQ:zetaLBy}, we have \eqref{FREQ:zetaLqgep} in \secref{FREQ:sec:low order zeta1} which is exactly the desired estimate.  Thus, $\zeta_L$ is also lower order in $p^0$, and hence $\tilde{\zeta}$ is also lower order in this sub-region.  This completes the proof.\end{proof}

This concludes our discussion on the sharp asymptotics of the \textit{frequency multipliers}. In this next section, we provide upper-bound estimates on the nonlinear linearized Boltzmann operator $\Gamma.$

\section{Main upper bound estimates}
\label{main upper}
In this section we prove the main upper bound estimates on the linearized \eqref{L.def} and non-linear collision operator \eqref{Gamma1}. In particular in \secref{main estimates} we dyadically decompose the singularity and perform size estimates on the decomposed pieces of \eqref{Gamma1}.  In \secref{Cancellation}, we perform the upper bound estimates that incorporate the cancellation when we are nearby the singularity.  Then in \secref{sec.newcancel}, we perform upper bound estimates that incorporate cancellation on the dual expression from \eqref{T+++.dual}.   In \secref{subsec.additional} we give some additional estimates on the decomposed pieces that will be useful in proving in particular Lemma \ref{Lemma2}.  In \secref{LP decomp}, we explain the main Littlewood-Paley inequalities that we will use to prove our main estimate.  Then in \secref{subsec.upperbd}, we use triple sum estimates together with all the previous estimates in the section in order to prove the main estimates from \secref{subsec.mainest}.

\subsection{Estimates on the single decomposed pieces}
\label{main estimates}
In this section, we mainly discuss about the estimates on the decomposed pieces of the trilinear product $\langle \Gamma(f,h), \eta\rangle$.
For the usual 8-fold representation, we recall  \eqref{Gamma1} and obtain that
 \begin{align}\notag
 \langle w^{2l}\Gamma(f,h), \eta\rangle &=\int_\rth dp \int_\rth dq \int_{\mathbb{S}^2}d\omega \ v_{\text{\o}} \sigma(g,\theta)w^{2l}(p)\eta(p)\sqrt{J(q)}\\
 &\hspace{10mm}\times\left(f(q')h(p')-f(q)h(p)\right)
 \label{eqn.trilinear.form}\\
 &= T^l_+-T^l_-,
 \notag
 \end{align}
 where the gain term $T^l_+$ and the loss term $T^l_-$ are defined as
 \begin{equation}\notag
 \begin{split}
 T^l_+(f,h,\eta)&\eqdef \int_\rth dp \int_\rth dq \int_{\mathbb{S}^2}d\omega ~v_{\text{\o}} \sigma(g,\theta)w^{2l}(p)\eta(p)\sqrt{J(q)}f(q')h(p'),
 \\
 T^l_-(f,h,\eta)&\eqdef \int_\rth dp \int_\rth dq \int_{\mathbb{S}^2}d\omega ~v_{\text{\o}} \sigma(g,\theta)w^{2l}(p)\eta(p)\sqrt{J(q)}f(q)h(p).
\end{split}
 \end{equation}
 And when $l=0$ we denote $T^0_\pm = T_\pm$.

 In the following, we will use the dyadically decomposed pieces $T^l_+$ and $T^l_-$ around the angular singularity. We let $\{\chi_k\}^\infty_{k=-\infty}$ be a partition of unity on $(0,\infty)$ such that $|\chi_k|\leq 1$ and supp$(\chi_k) \subset [2^{-k-1},2^{-k}]$. Then, using \eqref{define.kernel} we define 
\begin{equation}
     \label{kernel.k.diadic.define}
 \sigma_k(g,\theta)\eqdef \sigma(g,\theta)\chi_k(\bar{g}),
\end{equation}
 where we recall  $\bar{g}\eqdef g(p^\mu,p'^\mu)$ defined in \eqref{gbar}. The reason that we dyadically decompose around $\bar{g}$ is that we have $\theta \approx \frac{\bar{g}}{g}$ for small $\theta$ using \eqref{bargoverg}.   We refer to Remark \ref{FREQ:angle.remark} and \eqref{FREQ:cosine.angle.formula} for further explanations of this cancellation.
 
 Then we write the decomposed pieces $T^{k,l}_+$ and $T^{k,l}_-$ as
 \begin{equation}
 \label{T+++}
 \begin{split}
 T^{k,l}_+(f,h,\eta)&\eqdef \int_\rth dp \int_\rth dq \int_{\mathbb{S}^2}d\omega \ v_{\text{\o}} \sigma_k(g,\theta)w^{2l}(p)\eta(p)\sqrt{J(q)}f(q')h(p')\\
 T^{k,l}_-(f,h,\eta)&\eqdef \int_\rth dp \int_\rth dq \int_{\mathbb{S}^2}d\omega \ v_{\text{\o}} \sigma_k(g,\theta)w^{2l}(p)\eta(p)\sqrt{J(q)}f(q)h(p).\\
 \end{split}
 \end{equation}
 Thus, for $f,h,\eta\in S(\rth)$, where $S(\rth)$ denotes the standard Schwartz space on $\rth$:
 $$
 \langle  w^{2l}\Gamma(f,h),\eta\rangle=\sum^\infty_{k=-\infty}\{T^{k,l}_+(f,h,\eta)-T^{k,l}_-(f,h,\eta)\}.
 $$ 
 We will also use the definitions 
 \begin{equation} \label{kernel.diadic.define}
 \tilde{\sigma}_k\eqdef \frac{s\sigma(g,\theta)}{\tilde{g}}\chi_k(\bar{g}),\hspace*{5mm} \bar{g}\eqdef g(p^\mu,p'^\mu), \hspace*{5mm}\tilde{g}\eqdef g(p'^\mu,q^\mu),
\end{equation}
 where we further recall \eqref{g}, \eqref{gbar} and \eqref{gtilde}.

Now, we start making some size estimates for the decomposed pieces $T^{k,l}_-$ and $T^{k,l}_+$ for Schwartz functions. Then the estimates can be justified in general by approximation.

\begin{proposition}\label{T-proposition}
	For any integer $k\in \mathbb{Z}$ and for any $l\geq 0$ and $m\geq 0$, we have the uniform estimate for both hard- and soft-interactions \eqref{hard} and \eqref{soft}:
	\begin{equation}
    \label{T-}
	\begin{split}
	|T^{k,l}_-(f,h,\eta)|\lesssim 2^{k\gamma}|f|_{L^2_{-m}}| w^l h|_{L^2_{\frac{\singS+\gamma}{2}}}|w^l  \eta|_{L^2_{\frac{\singS+\gamma}{2}}}.
	\end{split}
	\end{equation}
\end{proposition}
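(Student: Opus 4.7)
The plan is to integrate out the $\omega$ variable in closed form, reducing the triple integral to a bilinear form in $(p,q)$, and then to close the estimate by two applications of Cauchy--Schwarz, using the Maxwellian $\sqrt{J(q)}$ to absorb all polynomial weights in $q$. Inside $T^{k,l}_-$ every factor other than $\sigma_k(g,\theta)$ is independent of $\omega$, so only $\int_{\mathbb{S}^2}\sigma_k(g,\theta)\,d\omega$ needs to be computed. Using $\sin(\theta/2)=\bar{g}/g$ from \eqref{bargoverg} together with the fact that $\chi_k(\bar{g})$ is supported in $\bar{g}\in[2^{-k-1},2^{-k}]$, this integral is nonzero only when $g\gtrsim 2^{-k}$; the change of variable $u=g\sin(\theta/2)$ and the assumptions \eqref{angassumption}, \eqref{define.kernel} then yield
\[
\int_{\mathbb{S}^2}\sigma_k(g,\theta)\,d\omega \ \lesssim\ \Phi(g)\int_{2^{-k-1}}^{2^{-k}} (u/g)^{-1-\gamma}\frac{du}{g} \ \lesssim\ 2^{k\gamma}\,g^{\singS+\gamma}.
\]
Combined with $v_{\text{\o}}\lesssim 1$ from \eqref{moller.upper.est}, this gives $|T^{k,l}_-|\lesssim 2^{k\gamma}\int_{\rth} dp\,w^{2l}(p)|\eta(p)||h(p)|\,F(p)$, where $F(p)\eqdef\int_{\rth} dq\,g^{\singS+\gamma}\sqrt{J(q)}|f(q)|$.

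Next, by Cauchy--Schwarz in $q$, splitting $\sqrt{J(q)}|f(q)| = \bigl[(q^0)^{m/2}\sqrt{J(q)}\bigr]\cdot\bigl[(q^0)^{-m/2}|f(q)|\bigr]$,
\[
F(p)^2 \ \le\ |f|^2_{L^2_{-m}} \cdot \int_{\rth}dq\,g^{2(\singS+\gamma)}(q^0)^m\sqrt{J(q)},
\]
so the proof reduces to showing $\int dq\,g^{2(\singS+\gamma)}(q^0)^m\sqrt{J(q)} \lesssim (p^0)^{\singS+\gamma}$. For the hard case \eqref{hard}, where $\singS+\gamma\geq 0$, I will apply the upper bound \eqref{g.upper.est} to write $g^{2(\singS+\gamma)}\lesssim(p^0q^0)^{\singS+\gamma}$; the residual factor $(q^0)^{m+\singS+\gamma}\sqrt{J(q)}$ integrates to a finite constant, producing exactly $(p^0)^{\singS+\gamma}$. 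For the soft case \eqref{soft}, where $-3/2<\singS+\gamma<0$, the exponent is negative, so I will instead use the sharp lower bound \eqref{g.lower.est} to write $g^{2(\singS+\gamma)}\le|p-q|^{2(\singS+\gamma)}(p^0q^0)^{-(\singS+\gamma)}$; the extra positive power $(q^0)^{m-(\singS+\gamma)}$ is absorbed into the Maxwellian decay, and the remaining $\int|p-q|^{2(\singS+\gamma)}\sqrt{J(q)}\,dq\approx(p^0)^{2(\singS+\gamma)}$ is supplied by \eqref{jutter.integral.est}, whose hypothesis $2(\singS+\gamma)>-3$ is exactly the soft-interaction condition.

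Inserting the pointwise bound $F(p)\lesssim(p^0)^{(\singS+\gamma)/2}|f|_{L^2_{-m}}$ back into the first display, and splitting the weight $w^{2l}(p)(p^0)^{(\singS+\gamma)/2}$ symmetrically between $\eta$ and $h$, a final Cauchy--Schwarz in $p$ yields the claimed estimate. The only nontrivial step is the soft-interaction pointwise bound: because $g^{2(\singS+\gamma)}$ is singular as $g\to 0$, one cannot use the trivial upper bound $g\lesssim\sqrt{p^0q^0}$ and must instead invoke the sharp relativistic lower bound \eqref{g.lower.est}, accepting an extra weight $(p^0q^0)^{-(\singS+\gamma)}$ that is then exactly recovered by combining the Maxwellian decay with the integral identity \eqref{jutter.integral.est}.
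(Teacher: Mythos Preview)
Your proof is correct and follows essentially the same approach as the paper: integrate out $\omega$ via \eqref{Bk} to obtain $2^{k\gamma}g^{\singS+\gamma}$, use $v_{\text{\o}}\lesssim 1$, then split into hard and soft cases using respectively \eqref{g.upper.est} and \eqref{g.lower.est} together with \eqref{jutter.integral.est}, with the Maxwellian absorbing all $q$-weights. The only organizational difference is that you apply Cauchy--Schwarz iteratively (first in $q$ to bound $F(p)$, then in $p$), whereas the paper applies it once over the joint $(p,q)$ integral; the two are equivalent and use the same ingredients.
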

\begin{proof}
	The term $T^{k,l}_-$ is given as in \eqref{T+++}.
Then the condition $\bar{g}\approx 2^{-k}$ is equivalent to saying that the angle 
	$\theta$ is comparable to $2^{-k}g^{-1}$ by \eqref{bargoverg}. 
	Given the size estimates for $\sigma(g,\omega)\eqdef \Phi(g)\sigma_0(\cos\theta)$ with \eqref{angassumption} and the support of $\chi_k$, we obtain
	\begin{equation}
	\label{Bk}
	\begin{split}
	\int_{\mathbb{S}^2}d\omega\ \sigma_k(g,\omega)&\lesssim  \Phi(g)\int_{\mathbb{S}^2}d\omega\  \sigma_0(\cos \theta) \chi_k(\bar{g}) \\
	&\lesssim  \Phi(g)\int_{2^{-k-1}g^{-1}}^{2^{-k}g^{-1}}d\theta\ \sigma_0(\cos \theta) \sin\theta \\
	&\lesssim  \Phi(g)\int_{2^{-k-1}g^{-1}}^{2^{-k}g^{-1}}d\theta\ \frac{1}{\theta^{1+\gamma}} \\
	&\lesssim  \Phi(g) 2^{k\gamma} g^\gamma.
	\end{split}
	\end{equation}
	Thus, under kernel assumptions \eqref{hard} and \eqref{soft}, we have
$$
	|T^{k,l}_-(f,h,\eta)| \lesssim 2^{k\gamma}\int_{\rth}dp\int_{\rth}dq\ g^{\singS+\gamma}v_{\text{\o}} |f(q)||h(p)|\sqrt{J(q)}|\eta(p)|w^{2l}(p)\eqdef I.$$ 
	
	In the hard-interaction case \eqref{hard} with $\singS+\gamma\geq 0$, we use
	\eqref{FREQ:g.le.sqrtpq} and \eqref{moller.upper.est}.  Then, we obtain
	 $$
	I\lesssim 2^{k\gamma}\int_{\rth}dp\int_{\rth}dq\ ({p^0}{q^0})^{\frac{\singS+\gamma}{2}} |f(q)||h(p)|\sqrt{J(q)}|\eta(p)|w^{2l}(p).
$$
	By the Cauchy-Schwarz inequality, 
	\begin{equation}
	\label{T-esimate1}
	\begin{split}
	I\lesssim &\ 2^{k\gamma}\left(\int_{\rth}dp\int_{\rth}dq\ |f(q)|^2|w^lh(p)|^2\sqrt{J(q)}{(p^0)}^{\frac{\singS+\gamma}{2}}\right)^\frac{1}{2}\\
	& \times\left(\int_{\rth}dp\  |w^l\eta(p)|^2{(p^0)}^{\frac{\singS+\gamma}{2}}\int_{\rth}dq\sqrt{J(q)}{(q^0)}^{\singS+\gamma}\right)^\frac{1}{2}.
	\end{split}
	\end{equation}
	Since $\int_{\rth}dq\sqrt{J(q)}{(q^0)}^{\singS+\gamma}\approx 1$, we have
	\begin{equation}
	\label{T-estimate}
	\begin{split}
	I\lesssim&\ 2^{k\gamma}\left(\int_{\rth}dp\int_{\rth}dq\ |f(q)|^2|w^lh(p)|^2\sqrt{J(q)}
	{(p^0)}^{\frac{\singS+\gamma}{2}}\right)^\frac{1}{2}\\
    &\times\left(\int_{\rth}dp\  |w^l\eta(p)|^2{(p^0)}^{\frac{\singS+\gamma}{2}}\right)^\frac{1}{2}\\
	\lesssim&\ 2^{k\gamma}|f|_{L^2_{-m}}|  w^lh|_{L^2_{\frac{\singS+\gamma}{2}}}|  w^l\eta|_{L^2_{\frac{\singS+\gamma}{2}}} \hspace{5mm} \text{for any} \ m\geq 0.
	\end{split}
	\end{equation}

	On the other hand, in the soft-interaction case with \eqref{soft}, we have $-\frac{3}{2}<\singS+\gamma<0$.  
	Then we use \eqref{FREQ:g.ge.lower} to obtain 
	 $$
	I\lesssim 2^{k\gamma}\int_{\rth}dp\int_{\rth}dq\ |p-q|^{\singS+\gamma}({p^0}{q^0})^\frac{-\singS-\gamma}{2} |f(q)||h(p)|\sqrt{J(q)}|\eta(p)|w^{2l}(p).
	$$
	With the Cauchy-Schwarz inequality, we have
	\begin{multline*}
	I\lesssim  2^{k\gamma}\left(\int_{\rth}dp\int_{\rth}dq\ |f(q)|^2|w^lh(p)|^2\sqrt{J(q)}{(p^0)}^{\frac{1}{2}(\singS+\gamma)}{(q^0)}^{-\singS-\gamma}\right)^\frac{1}{2}\\
	  \times\left(\int_{\rth}dp\  |w^l\eta(p)|^2{(p^0)}^{-\frac{1}{2}(\singS+\gamma)}{(p^0)}^{-\singS-\gamma}\int_{\rth}dq\sqrt{J(q)}|p-q|^{2(\singS+\gamma)}\right)^\frac{1}{2}.
	\end{multline*}
	Now we use \eqref{jutter.integral.est} to obtain
	\begin{multline*}
	I\lesssim  2^{k\gamma}\left(\int_{\rth}dp\int_{\rth}dq\ |f(q)|^2|w^lh(p)|^2\sqrt{J(q)}{(p^0)}^{\frac{1}{2}(\singS+\gamma)}{(q^0)}^{-\singS-\gamma}\right)^\frac{1}{2}\\
	 \times\left(\int_{\rth}dp\  |w^l\eta(p)|^2{(p^0)}^{\frac{1}{2}(\singS+\gamma)}\right)^\frac{1}{2}\\
	\lesssim  2^{k\gamma}|f|_{L^2_{-m}}|  w^lh|_{L^2_{\frac{1}{2}(\singS+\gamma)}}|  w^l\eta|_{L^2_{\frac{1}{2}(\singS+\gamma)}} \hspace{5mm} \text{for any} \ m\geq 0.
	\end{multline*}
	This completes the proof.
\end{proof}

Before we do the size estimates for the $T^{k,l}_+$ term, we first prove a useful inequality as in the following proposition. 

\begin{proposition}Suppose $k$ is any integer. Then, we have
	\label{usefulineq}
	\begin{equation}\notag
\tilde{g}\sqrt{\tilde{s}}\int_\rth \frac{dp}{p^0}\int_\rth \frac{dq'}{q'^0}\chi_k(\bar{g})\bar{g}^{-2-\gamma}\delta^{(4)}(p^\mu+q^\mu-p'^\mu-q'^\mu)\lesssim 2^{k\gamma},
	\end{equation}where $\bar{g}=g(p'^\mu,p^\mu)$ from \eqref{gbar} and $\tilde{g}\eqdef g(p'^\mu,q^\mu)$ and $\tilde{s}\eqdef \tilde{g}^2+4$ from \eqref{gtilde}.
\end{proposition}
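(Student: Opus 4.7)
The plan is to exploit Lorentz invariance: both $dp/p^0$ and $dq'/q'^0$ are invariant measures on the forward mass hyperboloid, $\bar{g}=g(p^\mu,p'^\mu)$ is a Lorentz scalar, and $\delta^{(4)}$ is Lorentz invariant. Denoting the integral on the left by
\[
J\eqdef \int_{\rth}\frac{dp}{p^0}\int_{\rth}\frac{dq'}{q'^0}\chi_k(\bar{g})\bar{g}^{-2-\gamma}\delta^{(4)}(p^\mu+q^\mu-p'^\mu-q'^\mu),
\]
it follows that $J$ depends only on the Lorentz invariants built from the fixed pair $(p'^\mu,q^\mu)$, equivalently only on $\tilde{s}$, and may be computed in any frame. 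I would choose the center-of-momentum frame of the pair $(p',q)$, i.e.\ the frame in which $\vec{p'}+\vec{q}=0$; since both particles have mass one, this gives $p'^0=q^0=\sqrt{\tilde{s}}/2$ and $|\vec{p'}|=|\vec{q}|=\tilde{g}/2$, and after a rotation $\vec{p'}=(0,0,\tilde{g}/2)$, $\vec{q}=-\vec{p'}$.

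Next I would integrate out $\vec{q'}$ using the spatial component of $\delta^{(4)}$, which forces $\vec{q'}=\vec{p}-(0,0,\tilde{g})$, and then integrate out $p_3$ using the energy delta, which, because $q^0=p'^0$ in this frame, reduces to $\delta(p^0-q'^0)$ and enforces $p_3=\tilde{g}/2$, producing the Jacobian $\tilde{g}/p^0$. The outcome is the two-dimensional integral
\begin{equation*}
J=\frac{1}{\tilde{g}}\int_{\mathbb{R}^2}\frac{dp_\perp}{\sqrt{|p_\perp|^2+\tilde{s}/4}}\,\chi_k(\bar{g})\,\bar{g}^{-2-\gamma},
\end{equation*}
over $p_\perp=(p_1,p_2)\in\mathbb{R}^2$. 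A direct calculation from \eqref{gbar} with $\vec{p}=(p_\perp,\tilde{g}/2)$ and $\vec{p'}=(0,0,\tilde{g}/2)$ yields the closed form $\bar{g}^2=\sqrt{\tilde{s}}\bigl(\sqrt{|p_\perp|^2+\tilde{s}/4}-\sqrt{\tilde{s}}/2\bigr)$, so $\bar{g}$ is a strictly increasing bijection of $[0,\infty)$ onto itself in the radial variable $r=|p_\perp|$, and differentiating this identity gives $r\,dr/\sqrt{r^2+\tilde{s}/4}=(2\bar{g}/\sqrt{\tilde{s}})\,d\bar{g}$.

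Passing to polar coordinates and changing variables $r\mapsto \bar{g}$, the prefactors conspire cleanly to give
\begin{equation*}
\tilde{g}\sqrt{\tilde{s}}\,J=4\pi\int_0^\infty \chi_k(\bar{g})\,\bar{g}^{-1-\gamma}\,d\bar{g},
\end{equation*}
after which the bound $\lesssim 2^{k\gamma}$ follows from $\mathrm{supp}(\chi_k)\subset[2^{-k-1},2^{-k}]$ and $|\chi_k|\le 1$ by the elementary estimate $\int_{2^{-k-1}}^{2^{-k}}\bar{g}^{-1-\gamma}\,d\bar{g}=\frac{2^\gamma-1}{\gamma}2^{k\gamma}$. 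The main obstacle is that the standard center-of-momentum reduction of Lemma~\ref{lemma.reduction} does not apply directly here, since the integrated variables $(p,q')$ and the fixed variables $(p',q)$ are ``crossed'' with respect to the collision; consequently the delta reduction, the Jacobian $\tilde{g}/p^0$ from the energy constraint, and the closed form for $\bar{g}^2$ all have to be derived by hand in the chosen frame rather than by quoting a previous result.
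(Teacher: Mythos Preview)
Your proposal is correct. Both your argument and the paper's rest on the same core idea: reduce to the center-of-momentum frame of the fixed pair $(p',q)$, where the constraint imposed by $\delta^{(4)}$ together with the on-shell conditions collapses the integral to a one-dimensional radial integral in which $\bar{g}$ becomes the natural variable; the support of $\chi_k$ then gives $2^{k\gamma}$.

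The implementations differ. The paper first lifts the $dp\,dq'$ integral to an eight-dimensional integral over $\mathbb{R}^4\times\mathbb{R}^4$ via Lemma~\ref{7.5ofCMP}, passes to sum/difference variables $\bar{p}^\mu=p^\mu+q'^\mu$, $\bar{q}^\mu=p^\mu-q'^\mu$, carries out the four-dimensional delta, and then applies the explicit Lorentz matrix \eqref{eq.LT} (with $p\to p'$) to reach the CM frame; this yields the same one-dimensional integral (your variable $r=|p_\perp|$ corresponds to $y/2$ in the paper). Your route is more elementary: you invoke Lorentz invariance of $J$ up front and evaluate directly in the CM frame, so the three-dimensional delta kills $\vec{q'}$, the energy constraint becomes $\delta(p^0-q'^0)$ (because $p'^0=q^0$ there) and pins $p_3=\tilde g/2$ with Jacobian $\tilde g/p^0$, and the closed form $\bar{g}^2=\sqrt{\tilde s}\bigl(\sqrt{|p_\perp|^2+\tilde s/4}-\sqrt{\tilde s}/2\bigr)$ drops out from \eqref{gbar}. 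The payoff is that you bypass the 8D lifting and the explicit $4\times 4$ matrix entirely. The paper's route, on the other hand, is an instance of a general reduction scheme used repeatedly elsewhere in the paper (Proposition~\ref{coer}, Appendix~\ref{CarlemanAppendix}), so the extra machinery is amortized.
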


\begin{proof}
Define $$k_2(p',q)\eqdef \int_\rth \frac{dp}{p^0}\int_\rth \frac{dq'}{q'^0}\chi_k(\bar{g})\bar{g}^{-2-\gamma}\delta^{(4)}(p^\mu+q^\mu-p'^\mu-q'^\mu).$$
By Lemma \ref{7.5ofCMP}, 
we have
$$k_2(p',q)\eqdef\frac{1}{16} \int_{\mathbb{R}^4\times\mathbb{R}^4}d\Theta(p^\mu,q'^\mu)\chi_k(\bar{g})\bar{g}^{-2-\gamma}\delta^{(4)}(p^\mu+q^\mu-p'^\mu-q'^\mu),$$ where $$d\Theta(p^\mu,q'^\mu)\eqdef dp^\mu dq'^\mu u(p^0+q'^0)u(\utilde{s}-4)\delta(\utilde{s}-\utilde{g}^2-4)\delta\left((p^\mu+q'^\mu)(p_\mu-q'_\mu)\right),$$ with $\utilde{g}\eqdef g(p^\mu,q'^\mu)$ and $\utilde{s}\eqdef \utilde{g}^2+4.$ Notice that $\utilde{g}\eqdef g(p^\mu,q'^\mu) = g(p'^\mu,q^\mu) = \tilde{g}$ from \eqref{gtilde} using \eqref{conservation}. 
Here, $u(x)$ is defined in \eqref{def.u}. 
We make the change of variables from $(p^\mu,q'^\mu)$ to
$$\bar{p}^\mu=p^\mu+q'^\mu,\ \bar{q}^\mu=p^\mu-q'^\mu.$$ 
Then we have
$$k_2(p',q)=\frac{1}{256} \int_{\mathbb{R}^4\times\mathbb{R}^4}d\Theta(\bar{p}^\mu,\bar{q}^\mu)\chi_k(\bar{g})\bar{g}^{-2-\gamma}\delta^{(4)}(q^\mu-p'^\mu+\bar{q}^\mu),$$ where now $$d\Theta(\bar{p}^\mu,\bar{q}^\mu)\eqdef d\bar{p}^\mu d\bar{q}^\mu u(\bar{p}^0)u(-\bar{p}^\mu\bar{p}_\mu-4)\delta(-\bar{p}^\mu\bar{p}-\bar{q}^\mu\bar{q}_\mu-4)\delta\left(\bar{p}^\mu\bar{q}_\mu\right).$$ 
Here, $\bar{g}$ originally from \eqref{gbar} is now redefined as
$$\bar{g}^2= (p'^\mu-p^\mu)(p'_\mu-p_\mu)=\left(p'^\mu-\frac{\bar{p}^\mu+\bar{q}^\mu}{2}\right)\left(p'_\mu-\frac{\bar{p}_\mu+\bar{q}_\mu}{2}\right).$$
Then we carry out the delta function $\delta^{(4)}(q^\mu-p'^\mu+\bar{q}^\mu)$ and obtain
$$k_2(p',q)=\frac{1}{256}\int_{\mathbb{R}^4}d\Theta(\bar{p}^\mu)\chi_k(\bar{g})\bar{g}^{-2-\gamma},$$ where now 
$$d\Theta(\bar{p}^\mu)\eqdef d\bar{p}^\mu u(\bar{p}^0)u(-\bar{p}^\mu\bar{p}_\mu-4)\delta(-\bar{p}^\mu\bar{p}_\mu-\tilde{g}^2-4)\delta\left(\bar{p}^\mu(p'_\mu-q_\mu)\right),$$ where $\tilde{g}=g(p'^\mu,q^\mu).$ Above, $\bar{g}$ is now redefined as
$$\bar{g}^2= \left(p'^\mu-\frac{\bar{p}^\mu+\bar{q}^\mu}{2}\right)\left(p'_\mu-\frac{\bar{p}_\mu+\bar{q}_\mu}{2}\right)=\frac{1}{4}(p'^\mu+q^\mu-\bar{p}^\mu)(p'_\mu+q_\mu-\bar{p}_\mu).$$
Since $\tilde{s}=\tilde{g}^2+4$, we have
 \begin{multline}\label{tilde.calc.LT}
 u(\bar{p}^0)\delta(-\bar{p}^\mu \bar{p}_\mu-\tilde{g}^2-4)=u(\bar{p}^0)\delta(-\bar{p}^\mu \bar{p}_\mu-\tilde{s})\\
 =u(\bar{p}^0)\delta((\bar{p}^0)^2-|\bar{p}|^2-\tilde{s})
 =\frac{\delta(\bar{p}^0-\sqrt{|\bar{p}|^2+\tilde{s}})}{2\sqrt{|\bar{p}|^2+\tilde{s}}}.
 \end{multline}
Now note that $u(-\bar{p}^\mu\bar{p}_\mu-4)=1$ because $-\bar{p}^\mu\bar{p}_\mu-4=\tilde{g}^2\ge 0.$ By carrying out the delta function $\delta(\bar{p}^0 -\sqrt{|\bar{p}|^2 +\tilde{s}})$, we have
$$k_2(p',q)=\frac{1}{512}\int_\rth \frac{d\bar{p}}{\sqrt{|\bar{p}|^2 +\tilde{s}}}\delta(\bar{p}^\mu(p'_\mu-q_\mu))\bar{g}^{-2-\gamma}\chi_k(\bar{g}).$$
We will further change variables inside this integral to evaluate the delta function.
For the reduction we move to a new Lorentz frame as below.

We recall Definition \ref{rcop:LTdef} and then consider the Lorentz transform $\Lambda=\Lambda(p',q)$ of \eqref{eq.LT} where we exchange the role of $p$ in \eqref{eq.LT} with $p'$.  As in \eqref{center.momentum.condition}, with $p$ replaced by $p'$, this transformation maps into the \textit{center-of-momentum} frame as 
$$
A^\nu=\Lambda^\mu{}_\nu(p'_\mu+q_\mu)=(\sqrt{\tilde{s}},0,0,0), \quad B^\nu=-\Lambda^\mu{}_\nu(p'_\mu-q_\mu)=(0,0,0,\tilde{g}).
$$ 
After applying this change of variables, we have
$$k_2(p',q)=\frac{1}{512}\int_\rth \frac{d\bar{p}}{\sqrt{|\bar{p}|^2 +\tilde{s}}}\delta(\bar{p}^\mu B_\mu))\bar{g}^{-2-\gamma}\chi_k(\bar{g}),$$ where $\frac{d\bar{p}}{\sqrt{|\bar{p}|^2 +\tilde{s}}}$ is a Lorentz invariant measure. 
Here, $\bar{g}$ is now redefined as
$$\bar{g}^2=-\frac{\tilde{s}}{2}+\frac{\bar{p}^0\sqrt{\tilde{s}}}{2}=\frac{\sqrt{\tilde{s}}}{2}(\sqrt{|\bar{p}|^2 +\tilde{s}}-\sqrt{\tilde{s}}).$$
We write $\bar{p}$ in the polar coordinate system $(y,\phi,\psi)\in [0,\infty)\times [0,2\pi)\times [0,\pi)$ such that we have
$$k_2(p',q)=\frac{1}{512}\int_0^{2\pi}d\phi \int_0^\pi \sin\psi d\psi \int_0^\infty \frac{y^2dy}{\sqrt{y^2 +\tilde{s}}}\delta(y\tilde{g}\cos\psi)\bar{g}^{-2-\gamma}\chi_k(\bar{g}),$$ where $\bar{g}$ is now
$$\bar{g}^2=\frac{\sqrt{\tilde{s}}}{2}(\sqrt{y^2 +\tilde{s}}-\sqrt{\tilde{s}}).$$
We carry out the delta function at $\psi=\frac{\pi}{2}$ and obtain
$$k_2(p',q)=\frac{\pi}{256\tilde{g}} \int_0^\infty \frac{ydy}{\sqrt{y^2 +\tilde{s}}}\bar{g}^{-2-\gamma}\chi_k(\bar{g}).$$
Note that the support condition $\chi_k(\bar{g})$ implies that $\bar{g}\in [2^{-k-1},2^{-k}].$ Then this is equivalent to 
$$Y_1\eqdef \sqrt{\tilde{s}}+\frac{2^{-2k-1}}{\sqrt{\tilde{s}}}\leq \sqrt{y^2+\tilde{s}}\leq \sqrt{\tilde{s}}+\frac{2^{-2k+1}}{\sqrt{\tilde{s}}}\eqdef Y_2.$$
Then we consider the change of variables $y\mapsto y'=\sqrt{y^2+\tilde{s}}$ with $ydy=y'dy'$ and obtain
\begin{multline*}k_2(p',q)=\frac{\pi}{256\tilde{g}} \int_{Y_1}^{Y_2}\frac{y'dy'}{y'}\bar{g}^{-2-\gamma}\chi_k(\bar{g})
\approx 2^{k(\gamma+2)}\frac{\pi}{\tilde{g}} \int_{Y_1}^{Y_2} dy'\chi_k(\bar{g})\\\lesssim 2^{k(\gamma+2)}\frac{\pi}{\tilde{g}}(Y_2-Y_1)
\approx 2^{k(\gamma+2)}\frac{\pi}{\tilde{g}}\left(\frac{2^{-2k+1}}{\sqrt{\tilde{s}}}-\frac{2^{-2k-1}}{\sqrt{\tilde{s}}}\right)
\approx 2^{k\gamma}\frac{1}{\tilde{g}\sqrt{\tilde{s}}}.
\end{multline*}Therefore, we obtain
$$\tilde{g}\sqrt{\tilde{s}}k_2(p',q)\lesssim 2^{k\gamma},$$and this completes the proof for the proposition.
\end{proof}

We are now ready to estimate the operator $T^{k,l}_+$. This is more difficult and requires more refined techniques because it contains the post-collisional momenta.   By taking a pre-post change of variables $(p,q) \to (p',q')$ as in \eqref{prepost.change}, 
we obtain from (\ref{T+++}) that the term $T^{k,l}_+$ is equal to
	\begin{equation}
	\label{T++}
	\begin{split}
	T^{k,l}_+(f,h,\eta)=\int_{\rth}dp\int_{\rth}dq\int_{\mathbb{S}^2}d\omega\ \sigma_k(g,\omega)v_{\text{\o}} f(q)h(p)\sqrt{J(q')}\eta(p')w^{2l}(p'),
	\end{split}
	\end{equation}
	where $\sigma_k(g,\omega)$ is defined in \eqref{kernel.k.diadic.define} with \eqref{define.kernel}.  We note that no momentum weight is gained for the plus term $T^{k,l}_+$ in $| w^l f|_{L^2}$ in this proposition below.

\begin{proposition}\label{T+proposition}
	For any integer $k\in \mathbb{Z}$ and $l\geq 0$, we have the following uniform estimate for both hard and soft-interactions \eqref{hard} and \eqref{soft}:
	\begin{equation}
	\label{T+}
	\begin{split}
	|T^{k,l}_+(f,h,\eta)|&\lesssim 2^{k\gamma}| w^l f|_{L^2}|  w^lh|_{L^2_{\frac{\singS+\gamma}{2}}}|  w^l\eta|_{L^2_{\frac{\singS+\gamma}{2}}}.
	\end{split}
	\end{equation}
\end{proposition}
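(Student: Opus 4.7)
The plan is to apply a weighted Cauchy--Schwarz inequality to the integral representation \eqref{T++} of $T^{k,l}_+$, producing a product of two factors: one involving only $\eta^2$ that can be reduced via the pre-post change of variables \eqref{prepost.change} to the setting of Proposition \ref{T-proposition}, and one involving $f^2h^2$ that can be controlled via the 8-fold Dirac-delta representation from Lemma \ref{lemma.reduction} together with Proposition \ref{usefulineq}.

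Concretely, starting from \eqref{T++} I will write the integrand as the product $[f(q)h(p)\,J(q')^{1/4}w^l(p')]\cdot[\eta(p')\,J(q')^{1/4}w^l(p')]$, apply Cauchy--Schwarz in the measure $d\mu = dp\,dq\,d\omega\,\sigma_k v_{\text{\o}}$, and obtain $|T^{k,l}_+|^2 \leq I_1 I_2$ with
$$
I_1 = \int d\mu\,f^2(q)h^2(p)\sqrt{J(q')}w^{2l}(p'), \qquad I_2 = \int d\mu\,\eta^2(p')\sqrt{J(q')}w^{2l}(p').
$$
For $I_2$, the pre-post change of variables $(p,q)\leftrightarrow(p',q')$ preserves $\sigma_k v_{\text{\o}}\,dp\,dq\,d\omega$ and converts the post-collisional factors to pre-collisional ones, giving $I_2 = \int d\mu\,\eta^2(p)\sqrt{J(q)}w^{2l}(p)$. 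The angular bound \eqref{Bk} together with the pointwise estimates of Lemma \ref{pointwise.lemma}, applied exactly as in the proof of Proposition \ref{T-proposition}, then yields $I_2\lesssim 2^{k\gamma}|w^l\eta|^2_{L^2_{\frac{\singS+\gamma}{2}}}$, with the hard and soft cases handled separately.

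For $I_1$, I will pass to the 8-fold Dirac-delta representation using Lemma \ref{lemma.reduction} and invoke Proposition \ref{usefulineq} to integrate out $(p,q')$ while holding $(p',q)$ fixed. Using the kernel bound $s\sigma_k \lesssim \tilde{s}\Phi(\tilde{g})\tilde{g}^{2+\gamma}\bar{g}^{-2-\gamma}\chi_k(\bar{g})$ (which follows from $\sigma_0(\theta)\lesssim (g/\bar{g})^{2+\gamma}$ on the support of $\chi_k$ via \eqref{angassumption} and \eqref{bargoverg}, together with $\tilde{g}\approx g$ and $\tilde{s}\approx s$ from \eqref{gtildeg.equiv}), Proposition \ref{usefulineq} produces an inner factor $\lesssim 2^{k\gamma}\sqrt{\tilde{s}}\Phi(\tilde{g})\tilde{g}^{1+\gamma}$. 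The remaining integration over $(p',q)$ against $f^2(q)h^2(p)\sqrt{J(q')}w^{2l}(p')$ with the appropriate Jacobian factors will then, after exploiting the on-shell relation $q'^0 = p^0+q^0-p'^0$ to convert the Maxwellian decay in $q'$ into decay in $p'^0$, be bounded by $2^{k\gamma}|w^lf|^2_{L^2}|w^lh|^2_{L^2_{\frac{\singS+\gamma}{2}}}$, using the pointwise bounds from Lemma \ref{pointwise.lemma} to match the target weights.

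The most delicate step is the control of the $dp'$ integration in $I_1$: the factor $\sqrt{\tilde{s}}\Phi(\tilde{g})\tilde{g}^{1+\gamma}$ produced by Proposition \ref{usefulineq} depends on $p'$ through $\tilde{g} = g(p'^\mu,q^\mu)$ and offers no intrinsic decay, so the estimate relies on the Maxwellian factor $\sqrt{J(q')}$ (coupled to $p'^0$ via the delta constraint) both to supply integrability in $p'$ and to allocate the asymmetric weights required by the statement — specifically, the unweighted $|w^lf|_{L^2}$ norm on $f$ versus the $\frac{\singS+\gamma}{2}$-weighted norms on $h$ and $\eta$. The hard interaction case \eqref{hard} and the soft interaction case \eqref{soft} are handled separately using the sharp relative-momentum bounds $g\lesssim\sqrt{p^0q^0}$ (\eqref{g.upper.est}) and $g\geq|p-q|/\sqrt{p^0q^0}$ (\eqref{g.lower.est}) from Lemma \ref{pointwise.lemma}, together with the Maxwellian moment bound \eqref{jutter.integral.est}, in direct parallel to the proof of Proposition \ref{T-proposition}.
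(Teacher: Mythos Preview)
Your treatment of $I_2$ is essentially correct and matches the paper's soft-case $I_2$ argument: the pre-post change of variables \eqref{prepost.change} converts $\eta^2(p')\sqrt{J(q')}$ to $\eta^2(p)\sqrt{J(q)}$, after which the angular bound \eqref{Bk} and the pointwise estimates of Lemma \ref{pointwise.lemma} yield $I_2\lesssim 2^{k\gamma}|w^l\eta|^2_{L^2_{\frac{\singS+\gamma}{2}}}$.

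However, your plan for $I_1$ contains a genuine gap. You propose to invoke Proposition \ref{usefulineq} to integrate out $(p,q')$ while holding $(p',q)$ fixed, but your integrand $f^2(q)h^2(p)\sqrt{J(q')}w^{2l}(p')$ still contains $h^2(p)$ and $\sqrt{J(q')}$, which depend on precisely the variables being integrated. Proposition \ref{usefulineq} bounds only the pure kernel $\chi_k(\bar{g})\bar{g}^{-2-\gamma}$ and gives no control over such extra factors. Your proposed remedy --- using $q'^0=p^0+q^0-p'^0$ to convert $\sqrt{J(q')}$ into decay in $p'^0$ --- goes the wrong way: it produces $e^{+p'^0/2}e^{-p^0/2}e^{-q^0/2}$, i.e.\ exponential \emph{growth} in $p'^0$, and still leaves $h^2(p)$ trapped inside the $dp$-integration. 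Even if one abandons Proposition \ref{usefulineq} in favor of the pointwise bound $\sqrt{J(q')}w^{2l}(p')\lesssim w^{2l}(p)w^{2l}(q)$ (via a region-splitting on $p'^0$, as in the paper's soft-case $I_1$), after \eqref{Bk} one is left with $2^{k\gamma}\int g^{\singS+\gamma}v_{\text{\o}}|w^lf(q)|^2|w^lh(p)|^2\,dp\,dq$. The estimate $g\lesssim\sqrt{p^0q^0}$ then distributes the weight $(p^0q^0)^{\frac{\singS+\gamma}{2}}$ \emph{symmetrically}, yielding $|w^lf|^2_{L^2_{\frac{\singS+\gamma}{2}}}|w^lh|^2_{L^2_{\frac{\singS+\gamma}{2}}}$ rather than the asserted $|w^lf|^2_{L^2}|w^lh|^2_{L^2_{\frac{\singS+\gamma}{2}}}$. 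For the hard interactions \eqref{hard} this is strictly weaker than what the proposition claims.

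The paper resolves this by an \emph{asymmetric} Cauchy--Schwarz: it inserts factors $\tilde{g}^{\pm(\singS+\gamma)}$ and $(p^0)^{\pm\frac{\singS+\gamma}{2}}$ (respectively $g^{\pm(\singS+\gamma)}$ and $(p'^0)^{\pm\frac{\singS+\gamma}{2}}$ in the soft case). In the hard case the grouping is also different from yours: $I_1$ carries $|w^lf(q)|^2|w^l\eta(p')|^2$ and $I_2$ carries $|w^lh(p)|^2$ alone. This pairing is precisely what makes Proposition \ref{usefulineq} applicable to \emph{both} factors --- for $I_2$, a pre-post change first moves $h^2$ to $p'$ before the lift to the 12-fold representation; for $I_1$, $\eta^2$ already sits at $p'$ and $f^2$ at $q$, so no factor obstructs the $(p,q')$-integration. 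The asymmetric weights then steer all of the $(p^0)^{\frac{\singS+\gamma}{2}}$-growth onto $h$ and $\eta$, leaving $f$ with the unweighted $|w^lf|_{L^2}$ norm.
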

\begin{proof}
	Using \eqref{hard} and \eqref{soft}, we have
	\begin{multline}\label{T++I}
	 |T^{k,l}_+(f,h,\eta)|\\
	 \lesssim \int_{\rth}dp\int_{\rth}dq\int_{\mathbb{S}^2}d\omega\ g^{\singS}v_{\text{\o}}\sigma_0\chi_k(\bar{g}) |f(q)||h(p)|\sqrt{J(q')}|\eta(p')|w^{2l}(p')
	 \eqdef I.
	\end{multline}
	We will separately estimate the hard and soft interactions cases.
	
We start with the hard-interaction case \eqref{hard} with $\singS+\gamma\geq 0$.	We first note that 
\begin{equation}\notag w^{2l}(p')\lesssim w^l(p')w^l(p)w^l(q),\end{equation} as $l\geq 0$ and $p'^0\leq p^0+q^0.$ By the Cauchy-Schwarz inequality,
	\begin{multline}\label{CauchyT+hard}
 I \lesssim\left(\int_{\rth}dp\int_{\rth}dq\int_{\mathbb{S}^2}d\omega\ v_{\text{\o}}\frac{g^{\singS}\sigma_0\chi_k(\bar{g})}{\tilde{g}^{\singS+\gamma}}
	|w^lf(q)|^2|w^l\eta(p')|^2\sqrt{J(q')}({p^0})^{\frac{\singS+\gamma}{2}}\right)^{\frac{1}{2}}\\
	 \times \left(\int_{\rth}dp\int_{\rth}dq\int_{\mathbb{S}^2}d\omega\ v_{\text{\o}} g^{\singS}\sigma_0\chi_k(\bar{g})\tilde{g}^{\singS+\gamma}|w^lh(p)|^2\sqrt{J(q')}({p^0})^{\frac{-\singS-\gamma}{2}} \right)^{\frac{1}{2}}\\
	 \eqdef I_{1}\cdot I_{2},
	\end{multline}where $\tilde{g}=g(p'^\mu,q^\mu).$
    We estimate $I_2$ first. We can rewrite $I_2$ as follows: 
    \begin{multline*}I_2 = \left(\int_\rth dp\int_\rth dq \int_{\mathbb{S}^2}d\omega\  v_{\text{\o}} g^{\singS} \sigma_0\chi_k(\bar{g})\tilde{g}^{\singS+\gamma}|w^lh(p)|^2\sqrt{J(q')}({p^0})^{\frac{-\singS-\gamma}{2}}\right)^{\frac{1}{2}}\\
    \approx \bigg(\int_\rth \frac{dp}{p^0}\int_\rth \frac{dq}{q^0}\int_\rth \frac{dp'}{p'^0}\int_\rth \frac{dq'}{q'^0} s g^{\singS} \sigma_0\chi_k(\bar{g})\tilde{g}^{\singS+\gamma}|w^lh(p)|^2\sqrt{J(q')}({p^0})^{\frac{-\singS-\gamma}{2}}\\\times \delta^{(4)}(p'^\mu+q'^\mu-p^\mu-q^\mu)\bigg)^{\frac{1}{2}}.
    \end{multline*}
    Then, we take a pre-post change of variables $(p,q)\mapsto (p',q')$ and use $\tilde{g}=g(p'^\mu,q^\mu)=g(p^\mu,q'^\mu)$ to obtain
     \begin{multline}\label{T+I2hard}I_2 
    \approx \bigg(\int_\rth \frac{dp'}{p'^0}\int_\rth \frac{dq'}{q'^0}\int_\rth \frac{dp}{p^0}\int_\rth \frac{dq}{q^0} s g^{\singS} \sigma_0\chi_k(\bar{g})\tilde{g}^{\singS+\gamma}|w^lh(p')|^2\sqrt{J(q)}\\\times({p'^0})^{\frac{-\singS-\gamma}{2}} \delta^{(4)}(p^\mu+q^\mu-p'^\mu-q'^\mu)\bigg)^{\frac{1}{2}}\\
    \approx\bigg(\int_\rth \frac{dp'}{p'^0}\int_\rth \frac{dq}{q^0}      \tilde{s}\tilde{g}^{2\singS+2\gamma+2}|w^lh(p')|^2\sqrt{J(q)}({p'^0})^{\frac{-\singS-\gamma}{2}}\\\times  \int_\rth \frac{dp}{p^0}\int_\rth \frac{dq'}{q'^0}\chi_k(\bar{g})\bar{g}^{-2-\gamma}\delta^{(4)}(p^\mu+q^\mu-p'^\mu-q'^\mu)\bigg)^{\frac{1}{2}},
    \end{multline}
    where we used $g\approx \tilde{g}$, $s\approx \tilde{s}$ with $\tilde{s}\eqdef \tilde{g}^2+4$, and $\sigma_0(\theta)\approx \theta^{-2-\gamma}\approx (\frac{\bar{g}}{g})^{-2-\gamma}$ by \eqref{angassumption}, \eqref{gtildeg.equiv},  and \eqref{bargoverg}. Then we use Proposition \ref{usefulineq} and  obtain that
$$
I_2\lesssim 2^{\frac{k\gamma}{2}}\left(\int_{\rth}\frac{dp'}{p'^0}\int_{\rth}\frac{dq}{q^0}\ \sqrt{\tilde{s}} \tilde{g}^{2\singS+2\gamma+1}|w^lh(p')|^2\sqrt{J(q)}({p'^0})^{\frac{-\singS-\gamma}{2}}\right)^{\frac{1}{2}}.
$$
    We further use $\tilde{g}\lesssim \sqrt{p'^0q^0}$ and $\tilde{s}\lesssim p'^0q^0$ from \eqref{FREQ:g.le.sqrtpq} and \eqref{FREQ:s.le.pq} to conclude that
    \begin{multline*}
I_2\lesssim 2^{\frac{k\gamma}{2}} \left(\int_{\rth}dp'\ ({p'^0})^{\frac{\singS+\gamma}{2}}|w^lh(p')|^2\int_{\rth}dq\ (q^0)^{\singS+\gamma}\sqrt{J(q)} \right)^{\frac{1}{2}}
\approx 2^{\frac{k\gamma}{2}}|w^lh|_{L^2_{\frac{\singS+\gamma}{2}}}.
\end{multline*}This completes the estimate for $I_2$.

   Now we estimate $I_1$. We first observe that 
   $$(p^0)^{\frac{\singS+\gamma}{2}} \sqrt{J(q')}\lesssim    (p'^0q'^0)^{\frac{\singS+\gamma}{2}} \sqrt{J(q')}\lesssim (p'^0)^{\frac{\singS+\gamma}{2}} \sqrt{J^\alpha(q')}\lesssim (p'^0)^{\frac{\singS+\gamma}{2}} ,$$ for some $\alpha \in (0,1)$ by \eqref{pp'q'} and $\singS+\gamma\ge 0$.  
   Thus, we obtain that
   $$
	I_1\lesssim \left(\int_{\rth}dp\int_{\rth}dq\int_{\mathbb{S}^2}d\omega\ v_{\text{\o}}\frac{g^{\singS}\sigma_0\chi_k(\bar{g})}{\tilde{g}^{\singS+\gamma}} 
	|w^lf(q)|^2|w^l\eta(p')|^2({p'^0})^{\frac{\singS+\gamma}{2}}\right)^{\frac{1}{2}}.
	$$We raise the 8-fold integration into 12-fold integration like  \eqref{T+I2hard} without the pre-post change of variable this time. Then we obtain
	\begin{multline*}
	I_1\lesssim\bigg(\int_\rth \frac{dp'}{p'^0}\int_\rth \frac{dq}{q^0}  \tilde{s}\tilde{g}^2
	|w^lf(q)|^2|w^l\eta(p')|^2({p'^0})^{\frac{\singS+\gamma}{2}}  \\\times  \int_\rth \frac{dp}{p^0}\int_\rth \frac{dq'}{q'^0}\chi_k(\bar{g})\bar{g}^{-2-\gamma}\delta^{(4)}(p^\mu+q^\mu-p'^\mu-q'^\mu)\bigg)^{\frac{1}{2}},\end{multline*}   where we used $g\approx \tilde{g}$, $s\approx \tilde{s}$ with $\tilde{s}\eqdef \tilde{g}^2+4$, and $\sigma_0(\theta)\approx \theta^{-2-\gamma}\approx (\frac{\bar{g}}{g})^{-2-\gamma}$ by \eqref{angassumption} and \eqref{bargoverg}, as in $I_2$ case.
	By Proposition \ref{usefulineq}, we have$$
	I_1\lesssim2^{\frac{k\gamma}{2}}\bigg(\int_\rth \frac{dp'}{p'^0}\int_\rth \frac{dq}{q^0}  \sqrt{\tilde{s}}\tilde{g}
	|w^lf(q)|^2|w^l\eta(p')|^2({p'^0})^{\frac{\singS+\gamma}{2}}  \bigg)^{\frac{1}{2}}.$$
	Then, using \eqref{FREQ:s.le.pq} and \eqref{FREQ:g.le.sqrtpq} we obtain
	\begin{equation}
	\label{T12}
	\begin{split}
	I_1&\lesssim 2^\frac{k\gamma}{2}\left(\int_{\rth}\frac{dp'\ }{{p'^0}}\int_{\rth}\frac{dq}{{q^0}}|w^lf(q)|^2 |w^l\eta(p')|^2({p'^0})^{\frac{\singS+\gamma}{2}}p'^0q^0\right)^\frac{1}{2}\\
	&\lesssim 2^\frac{k\gamma}{2}\left(\int_{\rth}dp'\ ({p'^0})^{\frac{\singS+\gamma}{2}}|w^l\eta(p')|^2\int_{\rth}dq|w^lf(q)|^2  \right)^\frac{1}{2}\\
	&\lesssim 2^\frac{k\gamma}{2}|w^lf|_{L^2}|  w^l\eta|_{L^2_{\frac{\singS+\gamma}{2}}}
	\end{split}
	\end{equation}
    by the Cauchy-Schwarz inequality.
	Thus, from \eqref{CauchyT+hard} we have
	 $$
	I\lesssim 2^{k\gamma}|  w^lf|_{L^2}|  w^lh|_{L^2_{\frac{\singS+\gamma}{2}}}|  w^l\eta|_{L^2_{\frac{\singS+\gamma}{2}}}.
	$$ This completes the proof for the hard-interaction case \eqref{hard}.
	
	On the other hand, in the soft-interaction case \eqref{soft} when $0>\singS+\gamma>-\frac{3}{2}$, we have the following by the Cauchy-Schwarz inequality for $I$ defined in \eqref{T++I}:
	\begin{multline}\label{CauchyT+soft}
I \lesssim \left(\int_{\rth}dp\int_{\rth}dq\int_{\mathbb{S}^2}d\omega\ v_{\text{\o}}\frac{g^{\singS}\sigma_0\chi_k(\bar{g})}{g^{\singS+\gamma}}
	|f(q)|^2|h(p)|^2w^{2l}(p')\sqrt{J(q')}({p'^0})^{\frac{\singS+\gamma}{2}}\right)^{\frac{1}{2}}\\
	 \times \left(\int_{\rth}dp\int_{\rth}dq\int_{\mathbb{S}^2}d\omega\ v_{\text{\o}} g^{\singS}\sigma_0\chi_k(\bar{g})g^{\singS+\gamma}|w^l\eta(p')|^2\sqrt{J(q')}({p'^0})^{\frac{-\singS-\gamma}{2}}\right)^{\frac{1}{2}}\\
	 =I_{1}\cdot I_{2}.
	\end{multline}
	For $I_1$, we split the region of $p'$ into two: ${p'^0} \leq \frac{1}{2}({p^0}+{q^0})$ and ${p'^0}\geq \frac{1}{2}({p^0}+{q^0})$.
	If ${p'^0} \leq \frac{1}{2}({p^0}+{q^0})$, ${p^0}+{q^0}-{q'^0}\leq \frac{1}{2}({p^0}+{q^0})$ by the conservation laws \eqref{conservation}. Thus, $-{q'^0}\leq -\frac{1}{2}({p^0}+{q^0})$ and $J(q')\leq \sqrt{J(p)}\sqrt{J(q)}=\left(J(p)J(q)J(p')J(q')\right)^{1/4}$ by using the conservation laws \eqref{conservation}.  
	Since the exponential decay is faster than any polynomial decay, we have
	 $$
	w^{2l}(p')({p'^0})^{\frac{1}{2}(\singS+\gamma)}\sqrt{J(q')}\lesssim ({p^0})^{-m}({q^0})^{-m}
$$ for any fixed $m>0$.
	On the other region, we have ${p'^0} \geq \frac{1}{2}({p^0}+{q^0})$ and hence ${p'^0} \approx ({p^0}+{q^0})$ because ${p'^0} \leq ({p^0}+{q^0})$.	Then $w^{2l}(p')\lesssim w^{2l}(p)w^{2l}(q)$ as $l\geq 0$.
	Also, we have $({p'^0})^{\frac{1}{2}(\singS+\gamma)}\lesssim({p^0})^{\frac{1}{2}(\singS+\gamma)}$ because $\singS+\gamma<0$. Thus, we obtain
	 $$
	w^{2l}(p')({p'^0})^{\frac{1}{2}(\singS+\gamma)}\sqrt{J(q')}\lesssim  w^{2l}(p)w^{2l}(q) ({p^0})^{\frac{1}{2}(\singS+\gamma)}.
$$
After computing $d\omega$ integral as in (\ref{Bk}) in both cases above, 
we obtain 
	\begin{multline*}
	I_1 \lesssim\left(\int_{\rth}dp\int_{\rth}dq\frac{g^{\singS}2^{k\gamma}g^\gamma}{g^{\singS+\gamma}} |f(q)|^2|h(p)|^2 w^{2l}(p)w^{2l}(q) ({p^0})^{\frac{1}{2}(\singS+\gamma)}\right)^{\frac{1}{2}}\\
	 \approx \left(\int_{\rth}dp\int_{\rth}dq\ 2^{k\gamma} |w^lf(q)|^2|w^lh(p)|^2  ({p^0})^{\frac{1}{2}(\singS+\gamma)}\right)^{\frac{1}{2}}
	 \lesssim2^{\frac{k\gamma}{2}}| w^l f|_{L^2}|  w^lh|_{L^2_{\frac{\singS+\gamma}{2}}},
	\end{multline*}
	by the Cauchy-Schwarz inequality.

Next we estimate $I_2$ from \eqref{CauchyT+soft}. Note that $v_{\text{\o}}=\frac{g\sqrt{s}}{{p^0}{q^0}}$ by \eqref{Moller.def}. By a pre-post change of variables $(p',q')\mapsto (p,q)$ as in \eqref{prepost.change}, we have 
$$
I_2 =\left(\int_{\rth}dp\int_{\rth}dq\int_{\mathbb{S}^2}d\omega\ v_{\text{\o}} g^{\singS}\sigma_0\chi_k(\bar{g})g^{\singS+\gamma}|w^l\eta(p)|^2\sqrt{J(q)}({p^0})^{\frac{-\singS-\gamma}{2}}\right)^{\frac{1}{2}}.
$$ 
Then by \eqref{Bk}, we have
\begin{equation}\label{I2forcompact}
	I_2\lesssim \left(\int_{\rth}dp\ \int_{\rth}dq\ v_{\text{\o}} 2^{k\gamma}g^{2(\singS+\gamma)}|w^l\eta(p)|^2\sqrt{J(q)}({p^0})^{\frac{-\singS-\gamma}{2}}\right)^{\frac{1}{2}}.
\end{equation}
Using \eqref{FREQ:g.ge.lower} for $0>\singS+\gamma>-\frac{3}{2}$ and \eqref{moller.upper.est}, we have
	 $$
	I_2\lesssim \left(\int_{\rth}dp\ \int_{\rth}dq\ 2^{k\gamma}\frac{|p-q|^{2(\singS+\gamma)}}{({p^0}{q^0})^{\singS+\gamma}}|w^l\eta(p)|^2\sqrt{J(q)}({p^0})^{\frac{-\singS-\gamma}{2}}\right)^{\frac{1}{2}}.
	$$
Also $({q^0})^{-\singS-\gamma}\sqrt{J(q)}\lesssim \sqrt{J^\alpha(q)}$ for some $\alpha>0$. Then using \eqref{jutter.integral.est} we have
	\begin{multline}\label{I22}
	I_2 \lesssim \left(\int_{\rth}dp\ 2^{k\gamma}|w^l\eta(p)|^2({p^0})^{\frac{3}{2}(-\singS-\gamma)}\int_{\rth}dq\ \frac{\sqrt{J^\alpha(q)}}{|p-q|^{2(-\singS-\gamma)}}\right)^{\frac{1}{2}}\\
	 \lesssim \left(\int_{\rth}dp\ 2^{k\gamma}|w^l\eta(p)|^2({p^0})^{\frac{3}{2}(-\singS-\gamma)}({p^0})^{2(\singS+\gamma)}\right)^{\frac{1}{2}}
	 =2^\frac{k\gamma}{2}|  w^l\eta|_{L^2_{\frac{\singS+\gamma}{2}}}.
	\end{multline}
	Together, we obtain that 
	 $$
	I\lesssim 2^{k\gamma}|w^l  f|_{L^2}| w^l h|_{L^2_{\frac{\singS+\gamma}{2}}}|  w^l\eta|_{L^2_{\frac{\singS+\gamma}{2}}}.
	$$
	This completes the proof.
\end{proof}

\begin{remark}\label{rem:hardI2est}
For future use in the proof of Proposition \ref{prop:GradRap} we point out that estimate \eqref{I22} also holds for the hard potentials \eqref{hard} when $\rho+\gamma \ge 0$.  Indeed in this case we use \eqref{moller.upper.est} and  \eqref{FREQ:g.le.sqrtpq} in \eqref{I2forcompact} to obtain
	 $$
	I_2\lesssim \left(\int_{\rth}dp\ \int_{\rth}dq\ 2^{k\gamma}({p^0}{q^0})^{\singS+\gamma}|w^l\eta(p)|^2\sqrt{J(q)}({p^0})^{\frac{-\singS-\gamma}{2}}\right)^{\frac{1}{2}}.
	$$
Again $({q^0})^{\singS+\gamma}\sqrt{J(q)}\lesssim \sqrt{J^\alpha(q)}$ for some $\alpha>0$, and using \eqref{jutter.integral.est} we have
	\begin{equation}\label{I22.hard}
	I_2 \lesssim \left(\int_{\rth}dp\ 2^{k\gamma}|w^l\eta(p)|^2({p^0})^{\frac{1}{2}(\singS+\gamma)}\right)^{\frac{1}{2}}
	 \lesssim 2^\frac{k\gamma}{2}|  w^l\eta|_{L^2_{\frac{\singS+\gamma}{2}}}.
	\end{equation}
And this is the desired estimate.	
\end{remark}

This concludes our discussion of the upper-bound estimates for the collision operators away from the angular singularity. In
the next section we will make upper-bound estimates for the same operators nearby the angular singularity. The key point for that is to utilize the cancellation properties between the gain and the loss terms.

\subsection{Cancellation estimates}
\label{Cancellation}
In this section we will establish uniform upper bound estimates for the difference $T^{k,l}_+ - T^{k,l}_-$ in the case when $k>0$.   We will need our upper bound estimates to have a dependency on a negative power of $2^k$ so we have a good estimate after summation in $k>0$. 
Before we move onto the actual estimates, we will now introduce the following useful inequality:

\begin{lemma}\label{lemmaqq}
Suppose $\bar{g}<1$. Then,  we have $q'^0\approx q^0$ and $p'^0\approx p^0$. In particular if $\bar{g}\approx 2^{-k}$ for some $k>0$, we then have $p^0\leq \sqrt{5}p'^0$ and $p'^0\leq \sqrt{5}p^0$.
\end{lemma}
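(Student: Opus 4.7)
The plan is to reduce the problem to a scalar inequality of the form $(t-1)^2\le \bar g^2 t$ for $t=p^0/p'^0$ (or $q^0/q'^0$), and then solve.

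First I would bound $|p^0-p'^0|$ by $\bar g\sqrt{p^0 p'^0}$. From definition \eqref{gbar},
\[
\bar g^2=2\,\frac{|p-p'|^2+|p\times p'|^2}{p^0 p'^0+p\cdot p'+1}\ge \frac{|p-p'|^2}{p^0p'^0},
\]
where the last step uses $p\cdot p'\le |p||p'|=\sqrt{(p^0)^2-1}\sqrt{(p'^0)^2-1}\le p^0p'^0-1$, so $p^0p'^0+p\cdot p'+1\le 2p^0p'^0$. Combining this with \eqref{p0q0.le.pq} applied to $(p,p')$, namely $|p^0-p'^0|\le |p-p'|$, yields
\[
|p^0-p'^0|^2\le \bar g^2\, p^0 p'^0.
\]

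Next I would symmetrize and solve. By symmetry assume $p^0\ge p'^0$ and set $t=p^0/p'^0\ge 1$. The displayed inequality becomes $(t-1)^2\le \bar g^2 t$, equivalently $t^2-(2+\bar g^2)t+1\le 0$, which gives
\[
1\le t\le \frac{2+\bar g^2+\bar g\sqrt{\bar g^2+4}}{2}.
\]
If $\bar g<1$, the right-hand side is bounded (by $(3+\sqrt 5)/2$), so $p^0\approx p'^0$. If moreover $\bar g\approx 2^{-k}$ with $k>0$, then $\bar g\le 2^{-1}$, and plugging $\bar g^2\le 1/4$ into the quadratic yields $t\le (9+\sqrt{17})/8<\sqrt 5$. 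Swapping the roles of $p^0$ and $p'^0$ gives the two-sided bound $p^0\le \sqrt 5\, p'^0$ and $p'^0\le \sqrt 5\, p^0$.

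For the $q$-component I would use conservation of four-momentum \eqref{conservation} to write $q'^\mu-q^\mu=-(p'^\mu-p^\mu)$, hence $g(q^\mu,q'^\mu)=\bar g(p^\mu,p'^\mu)=\bar g$. The entire argument above then applies verbatim with $(p,p')$ replaced by $(q,q')$, producing $q^0\approx q'^0$ (respectively $q^0\le\sqrt 5\, q'^0$ and vice versa) under the same hypothesis on $\bar g$.

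There is no real obstacle; the whole lemma is a direct algebraic computation once one observes the sharp comparison $\bar g^2\ge |p-p'|^2/(p^0p'^0)$, which is the relativistic analogue of \eqref{g.lower.est} for the bar-variables. The only mild subtlety is choosing the constant $\sqrt 5$: the sharp quadratic bound is in fact $(9+\sqrt{17})/8\approx 1.64$, so $\sqrt 5\approx 2.24$ is comfortable slack when $\bar g\le 1/2$.
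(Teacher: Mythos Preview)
Your proof is correct and rests on the same key inequality as the paper's, namely $|p-p'|^2\le \bar g^2\,p^0p'^0$ (the $\bar g$-version of \eqref{g.lower.est}), together with the observation that $g(q^\mu,q'^\mu)=\bar g$ by \eqref{conservation}. The execution differs: the paper expands $(p'^0)^2=1+|p'|^2\le 1+2(|p'-p|^2+|p|^2)$, inserts $|p'-p|^2\le 2^{-2k}p^0p'^0$, and then applies Young's inequality to separate the cross term, arriving at $(p'^0)^2\le 4(1+2^{-4k})(p^0)^2\le 5(p^0)^2$. Your route is more direct and slightly sharper: passing to the ratio $t=p^0/p'^0$ and solving the quadratic $(t-1)^2\le \bar g^2 t$ gives the explicit bound $t\le (9+\sqrt{17})/8$ when $\bar g\le 1/2$, with $\sqrt 5$ appearing only as a convenient majorant. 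Both arguments are elementary; yours avoids the extra splitting and Young step at the cost of solving a quadratic.
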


This equivalence is one of the advantages that we take on this region $\bar{g}< 1$ nearby the angular singularity. We prove the case $p^0\approx p'^0$ and the case for $q^0\approx q'^0$ is the same because $\bar{g}=g(p'^\mu,p^\mu)=g(q'^\mu,q^\mu)<1$ in the same region. 
\begin{proof}
	Recall the inequality $\frac{|p-p'|}{\sqrt{p^0p'^0}}\leq \bar{g}$ from \eqref{FREQ:g.ge.lower}. If $\bar{g}\leq 2^{-k}$, we have 
\begin{multline}\label{qq'}
	(p'^0)^2 =1+|p'|^2 \leq 1+2(|p'-p|^2+|p|^2) 
	\\
	\leq 1+2(2^{-2k}p'^0p^0+|p|^2)\le 2((p^0)^2+2^{-2k}p'^0p^0).
\end{multline}
	By Young's inequality, we have $$2^{-2k}p^0p'^0 \leq 2^{-4k}(p^0)^2+\frac{1}{4}(p'^0)^2.$$ We put this into \eqref{qq'} and obtain that \begin{equation}\notag
	(p'^0)^2 \leq 2((p^0)^2+2^{-4k}(p^0)^2+\frac{1}{4}(p'^0)^2).
	\end{equation} Thus, 
\begin{equation}\label{qq'2}
	(p'^0)^2 \leq 4(1+2^{-4k})(p^0)^2\le \frac{17}{4}(p^0)^2,
	\end{equation} 
		as $k$ is a positive integer. 
	Therefore, $(p'^0)^2\leq 5(p^0)^2$ holds and hence $p'^0\leq \sqrt{5}p^0$. The same proof works if we interchange the roles of $p$ and $p'$. Therefore, $p^0\approx p'^0$.
\end{proof}

For the upper-bound estimates of the difference $T^{k,l}_+-T^{k,l}_-$, we also define paths from $p$ to $p'$ and from $q$ to $q'$. Fix any two $p,p'\in\rth$ and consider $\kappa:[0,1] \rightarrow \rth$  given by
\begin{equation}
    \kappa(\vartheta)\eqdef \vartheta p+(1-\vartheta)p'\text{ for }\vartheta\in [0,1].
\label{path.kappa}
\end{equation}
Similarly, we define the following for the path from $q'$ to $q$;
$$
\kappa_q(\vartheta)\eqdef \vartheta q+(1-\vartheta)q'\text{ for }\vartheta\in [0,1].
$$
Then we can easily notice that $\kappa(\vartheta)+\kappa_q(\vartheta)=p'+q'=p+q$.

We also define the length of the gradient as:
\begin{equation}
\label{nabla}
|\nabla|^iH(p)\eqdef \max_{0\leq j\leq i}\sup_{|\chi|\leq 1}\Big|\big(\chi\cdot\nabla\big)^jH(p)\Big|, \hspace{5mm} i=0,1,2,
\end{equation}
where $\chi\in\rth$ and $|\chi|$ is the usual Euclidean length. Note that we have $|\nabla|^0H=|H|$ and we will write $|\nabla|^1 H=|\nabla| H$ without ambiguity.

Now we start estimating the term $|T^{k,l}_+ - T^{k,l}_-|$ under the condition $\bar{g}< 1.$
We recall from  (\ref{T+++}) and  \eqref{T++} that $|(T^{k,l}_+-T^{k,l}_-)(f,h,\eta)|$ is defined as
\begin{multline}\label{eq.T+-T-}
 |(T^{k,l}_+-T^{k,l}_-)(f,h,\eta)|
= \bigg|\int_{\rth}dp\int_{\rth}dq\int_{\mathbb{S}^2}d\omega\ \sigma_k(g,\omega)v_{\text{\o}} f(q)h(p)\\\times (w^{2l}(p')\sqrt{J(q')}\eta(p')-w^{2l}(p)\sqrt{J(q)}\eta(p))\bigg|,
\end{multline}
The key part is to estimate $|w^{2l}(p')\sqrt{J(q')}\eta(p')-w^{2l}(p)\sqrt{J(q)}\eta(p)|$.

We have the following proposition for the cancellation estimate:
\begin{proposition}\label{Cancellationproposition}For any $k>0$ and for $0<\gamma<1$ and $m\geq 0$, we have the uniform estimate: 
\begin{equation}\label{can+-}
|(T^{k,l}_+-T^{k,l}_-)(f,h,\eta)|
\lesssim   \ 2^{(\gamma-1)k}|f|_{L^2_{-m}}|  w^lh|_{L^2_{\frac{\singS+\gamma}{2}}}  \left|w^l|\nabla| \eta \right|_{L^2_{\frac{\singS+\gamma}{2}}}.
\end{equation}
\end{proposition}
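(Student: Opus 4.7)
My plan is to exploit the cancellation in $T^{k,l}_+ - T^{k,l}_-$ as displayed in \eqref{eq.T+-T-} by interpolating the integrand difference $w^{2l}(p')\sqrt{J(q')}\eta(p')-w^{2l}(p)\sqrt{J(q)}\eta(p)$ along the segment parametrizations $\vartheta\mapsto(\kappa(\vartheta),\kappa_q(\vartheta))$ from \eqref{path.kappa}. The fundamental theorem of calculus together with the product and chain rules, the pointwise gradient bounds $|\nabla w^{2l}(x)|\lesssim w^{2l}(x)$ and $|\nabla\sqrt{J(y)}|\lesssim \sqrt{J(y)}$ (both direct computations using $w^{2l}(p)=(p^0)^{2l}$ and $\sqrt{J(y)}=(4\pi)^{-1/2}e^{-y^0/2}$), and the conservation identity $|q-q'|=|p-p'|$ extracted from \eqref{conservation}, jointly yield the pointwise estimate
\begin{equation*}
\bigl|w^{2l}(p')\sqrt{J(q')}\eta(p')-w^{2l}(p)\sqrt{J(q)}\eta(p)\bigr|\lesssim |p-p'|\int_0^1\!d\vartheta\ w^{2l}(\kappa(\vartheta))\sqrt{J(\kappa_q(\vartheta))}\,|\nabla|\eta(\kappa(\vartheta)),
\end{equation*}
where $|\nabla|\eta$ from \eqref{nabla} absorbs both $|\eta|$ and $|\nabla\eta|$.

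Because $k>0$ forces $\bar g\le 2^{-k}<1$, Lemma \ref{lemmaqq} provides $p^0\approx p'^0$ and $q^0\approx q'^0$. Expanding $|\vartheta q+(1-\vartheta)q'|^2 = \vartheta|q|^2+(1-\vartheta)|q'|^2 - \vartheta(1-\vartheta)|q-q'|^2$ and using $|q-q'|\le \bar g\sqrt{q^0q'^0}$ from \eqref{g.lower.est} shows that $\kappa(\vartheta)^0\lesssim p^0$ and $\kappa_q(\vartheta)^0\gtrsim q^0$ uniformly in $\vartheta\in[0,1]$, so the path weights reduce to their endpoint values: $w^{2l}(\kappa(\vartheta))\lesssim w^{2l}(p)$ and $\sqrt{J(\kappa_q(\vartheta))}\lesssim \sqrt{J^{c_0}(q)}$ for some $c_0\in(0,1)$. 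Combining this with the crucial estimate $|p-p'|\le \bar g\sqrt{p^0p'^0}\lesssim 2^{-k}p^0$ (another use of \eqref{g.lower.est}) and with the angular bound $\int_{\mathbb{S}^2}d\omega\,\sigma_k(g,\omega)\lesssim \Phi(g)2^{k\gamma}g^{\gamma}$ derived as in \eqref{Bk} extracts the desired gain factor $2^{k(\gamma-1)}$.

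After further using $v_{\text{\o}}\lesssim 1$ from \eqref{moller.upper.est}, the task reduces to bounding
\begin{equation*}
2^{k(\gamma-1)}\int_0^1\!\!d\vartheta\!\!\int_{\rth}\!\!dp\!\!\int_{\rth}\!\!dq\ g^{\singS+\gamma}p^0|f(q)||h(p)|w^{2l}(p)\sqrt{J^{c_0}(q)}\,|\nabla|\eta(\kappa(\vartheta))
\end{equation*}
by the right-hand side of \eqref{can+-}. I proceed as in Propositions \ref{T-proposition} and \ref{T+proposition}: for hard interactions \eqref{hard} I use $g^{\singS+\gamma}\lesssim(p^0q^0)^{(\singS+\gamma)/2}$ via \eqref{g.upper.est}, while for soft interactions \eqref{soft} I use $g^{\singS+\gamma}\lesssim|p-q|^{\singS+\gamma}(p^0q^0)^{-(\singS+\gamma)/2}$ via \eqref{g.lower.est} combined with \eqref{jutter.integral.est}. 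After Cauchy-Schwarz in $(p,q)$, the fast decay $\sqrt{J^{c_0}(q)}$ absorbs all polynomial weights in $q^0$ and allows $f$ to be measured in $L^2_{-m}$ for any $m\ge 0$.

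The main obstacle is replacing the path integral $\int_0^1|\nabla|\eta(\kappa(\vartheta))|d\vartheta$ by a single-variable $L^2$ quantity so that Cauchy-Schwarz produces exactly $|w^l|\nabla|\eta|_{L^2_{(\singS+\gamma)/2}}$. My plan is to apply Minkowski's integral inequality in $\vartheta$ after Cauchy-Schwarz, pulling the $\vartheta$ integration outside the $L^2$ norm in $p$, and then for each fixed $\vartheta\in[0,1]$ change variables from $p$ to $\kappa(\vartheta)=\vartheta p+(1-\vartheta)p'$ with $q,\omega$ held fixed. This change must be handled carefully because the Jacobian $\det(\vartheta I+(1-\vartheta)\partial p'/\partial p)$ is close to the identity only near $\vartheta=1$; near $\vartheta=0$ one first applies the pre-post change of variables \eqref{prepost.change} to interchange the roles of the endpoints before performing the analogous substitution. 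The comparability $\kappa(\vartheta)^0\approx p^0$ established in the second paragraph guarantees the weight $w^l$ transfers correctly in each case.
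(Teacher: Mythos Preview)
Your proposal has a genuine gap at the final step: the change of variables $p\mapsto\kappa(\vartheta)=\vartheta p+(1-\vartheta)p'$ with $(q,\omega)$ held fixed. Since $p'=p'(p,q,\omega)$ through \eqref{p'}, the Jacobian of this map is $\det\bigl(\vartheta I+(1-\vartheta)\tfrac{\partial p'}{\partial p}\bigr)$, and in the relativistic setting $\tfrac{\partial p'}{\partial p}$ is not uniformly controlled: its entries exhibit momentum growth (the Glassey--Strauss computation cited in the discussion preceding \eqref{Newtonian.cancellation.lemma}) and the determinant $\bigl|\tfrac{\partial p'}{\partial p}\bigr|$ has no uniform positive lower bound (see the remarks following \eqref{Newtonian.cancellation.lemma}). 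Your claim that the Jacobian is close to the identity near $\vartheta=1$ therefore cannot be made uniform in $(p,q,\omega)$, and the pre-post trick you invoke near $\vartheta=0$ leaves the range $\vartheta\in[\epsilon,1-\epsilon]$ uncovered for exactly the same reason. This is precisely the obstruction the paper isolates as the central new difficulty of the relativistic non-cutoff problem. Two secondary issues: after invoking \eqref{Bk} to integrate out $\omega$, your reduced integrand still contains $\kappa(\vartheta)$, which depends on $p'$ and hence on $\omega$; and bounding $|p-p'|\lesssim 2^{-k}p^0$ rather than $|p-p'|=|q-q'|\le\bar g\sqrt{q^0q'^0}$ leaves an unabsorbed factor of $p^0$ on the $h,\eta$ side that does not match the target weights.

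The paper's proof avoids $\tfrac{\partial p'}{\partial p}$ altogether. After the splitting \eqref{canceldecompp}, the piece carrying $\eta(p')-\eta(p)$ is treated by Cauchy--Schwarz in \eqref{page28I2}, isolating $\left||\nabla|\eta(\kappa(\vartheta))\right|^2$ in a factor $I_2$. That factor is lifted via Lemma \ref{lemma.reduction} to the form \eqref{I2.eta.int}, in which $p$ and $p'$ become \emph{independent} integration variables linked only through $\delta^{(4)}(p'^\mu+q'^\mu-p^\mu-q^\mu)$. The Carleman representation (Lemma \ref{lemma.carleman}) then collapses the $(q,q')$ integration onto the hyperplane $E^q_{p'-p}$, and that hyperplane integral is estimated directly by \eqref{dpiq1}. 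Only at this stage --- with $p$ and $p'$ now free variables --- does the paper perform the substitutions $u=p-p'$ and $v=p'+\vartheta u$, each with unit Jacobian. The remaining pieces II and III of \eqref{canceldecompp} involve no $\kappa(\vartheta)$ in the $\eta$ argument and reduce at once to the $T^{k,l}_-$-type bound \eqref{Bk}--\eqref{T-}.
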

\begin{proof} We want our kernel to have a good dependency on $2^{-k}$ so
	we end up with the negative power on 2 as $2^{(\gamma-1)k}.$  
	We first split the term $w^{2l}(p')\sqrt{J(q')}\eta(p')-w^{2l}(p)\sqrt{J(q)}\eta(p)$ into two parts as
	\begin{multline}
    \label{canceldecompp}
	w^{2l}(p')\sqrt{J(q')}\eta(p')-w^{2l}(p)\sqrt{J(q)}\eta(p)\\
	=w^{2l}(p')\sqrt{J(q')}\Big(\eta(p')-\eta(p)\Big)
	+w^{2l}(p')\Big(\sqrt{J(q')}-\sqrt{J(q)}\Big)\eta(p)\\
	+(w^{2l}(p')-w^{2l}(p))\sqrt{J(q)}\eta(p)
	\eqdef \text{I}+\text{II}+\text{III}.
	\end{multline}

		For part I, we define the whole integral with part I $(T^{k,l}_{+,\text{I}}-T^{k,l}_{-,\text{I}})$ as
		\begin{multline*}
		T^{k,l}_{+,\text{I}}-T^{k,l}_{-,\text{I}}
		\eqdef \int_\rth dp \int_\rth dq \int_{\mathbb{S}^2}d\omega \  v_{\text{\o}} \sigma_k(g,\omega) f(q)h(p)w^{2l}(p')\\\times\sqrt{J(q')}\Big(\eta(p')-\eta(p)\Big).
		\end{multline*} 
		For the cancellation terms, we obtain
		$$
		\eta(p')-\eta(p)=(p'-p)\cdot\int_{0}^{1}d\vartheta\ (\nabla\eta)(\kappa(\vartheta)).
		$$We first note that under $\bar{g}<1$ we have $w^{2l}(p')\approx w^{2l}(p)$ by Lemma \ref{lemmaqq}. 
		Also note that 
	$|p'-p|=|q'-q|\leq g(q^\mu,q'^\mu)\sqrt{q^0q'^0}=\bar{g}\sqrt{q^0q'^0}\approx 2^{-k}\sqrt{q^0q'^0}$. 
		In addition, we have that 
		$
		(q^0q'^0)^{\frac{1}{2}}\sqrt{J(q')}\lesssim (J(q)J(q'))^\epsilon
		$
		for a sufficiently small $\epsilon>0$. 	This is because Lemma \ref{lemmaqq} implies that $q^0+q'^0\approx q'^0$ if $\bar{g}\leq 2^{-k}$ for a positive integer $k$.
Thus, we obtain that 
		\begin{multline}\notag
		\left|T^{k,l}_{+,\text{I}}-T^{k,l}_{-,\text{I}}\right|
		\lesssim 2^{-k} \int_0^1d\vartheta \int_\rth dp \int_\rth dq \int_{\mathbb{S}^2}d\omega \  v_{\text{\o}} \sigma_k(g,\omega) |f(q)||h(p)|\\\times w^{2l}(p)(J(q)J(q'))^\epsilon|\nabla|\eta(\kappa(\vartheta)).
		\end{multline}
We thus conclude 
		\begin{multline}\label{page28I2}
		\left|T^{k,l}_{+,\text{I}}-T^{k,l}_{-,\text{I}}\right|
		\lesssim  2^{-k}\Bigg(\int_0^1d\vartheta \int_\rth dp \int_\rth dq \int_{\mathbb{S}^2}d\omega \  v_{\text{\o}} \sigma_k(g,\omega)|f(q)|^2|h(p)|^2\\\times w^{2l}(p)(J(q)J(q'))^\epsilon\Bigg)^{\frac{1}{2}}\\\times \Bigg(\int_0^1d\vartheta \int_\rth dp \int_\rth dq \int_{\mathbb{S}^2}d\omega \  v_{\text{\o}} \sigma_k(g,\omega)w^{2l}(p)\\\times (J(q)J(q'))^\epsilon\left||\nabla|\eta(\kappa(\vartheta))\right|^2\Bigg)^{\frac{1}{2}}\eqdef 2^{-k}I_1^{1/2}\times I_2^{1/2},
		\end{multline}
		where the second inequality is by the Cauchy-Schwarz inequality. For the first part $I_1$, we use $J(q')^\epsilon\lesssim 1$ and follow exactly the same argument as in the estimate for $|T^{k,l}_-|$ as in \eqref{Bk} and \eqref{T-} to obtain that $$I_1\lesssim 2^{k\gamma}|f|^2_{L^2_{-m}}|w^lh|^2_{L^2_{\frac{\singS+\gamma}{2}}},~\text{for any}~m\ge 0.$$ For the second part $I_2$, we recall that $\kappa(\vartheta)=\vartheta p+(1-\vartheta)p'$. Following Lemma \ref{lemma.reduction} we first recover the integrals with respect to the post-collisional momenta $p'$ and $q'$ and write $I_2$ as a 13-fold integral as follows:
		\begin{multline}\label{I2.eta.int}
			I_2=I_2(\eta)=\int_0^1d\vartheta \int_\rth \frac{dp}{p^0} \int_{\rth}\frac{dp'}{p'^0}  \int_\rth \frac{dq}{q^0}\int_\rth \frac{dq'}{q'^0}\   s\sigma_k(g,\omega)\delta^{(4)}(p'^\mu+q'^\mu-p^\mu-q^\mu)\\\times w^{2l}(p)(J(q)J(q'))^\epsilon\left||\nabla|\eta(\vartheta p+(1-\vartheta)p')\right|^2.
		\end{multline}
		We recall $\sigma_k(g,\omega)\lesssim g^{\singS}\sigma_0(\omega)\chi_k(\bar{g})\approx g^{\singS}\left(\frac{\bar{g}}{g}\right)^{-2-\gamma}\chi_k(\bar{g})$ as in \eqref{bargoverg} for both hard and soft-interactions \eqref{hard} and \eqref{soft}.
		Then, we have\begin{multline*}
		I_2\lesssim \int_0^1d\vartheta \int_\rth \frac{dp}{p^0} \int_{\rth}\frac{dp'}{p'^0}  \int_\rth \frac{dq}{q^0}\int_\rth \frac{dq'}{q'^0}\   sg^{\singS}\left(\frac{\bar{g}}{g}\right)^{-2-\gamma}\chi_k(\bar{g})\\\times \delta^{(4)}(p'^\mu+q'^\mu-p^\mu-q^\mu) w^{2l}(p)(J(q)J(q'))^\epsilon\left||\nabla|\eta(\vartheta p+(1-\vartheta)p')\right|^2.
		\end{multline*}	Here, we split $\bar{g}^{-2-\gamma}=\bar{g}^{-3-\gamma'}\times \bar{g}^{-\gamma+1+\gamma'}$ for some constant  $\gamma'>0$ to be chosen. Then note that 
		$-\gamma+1+\gamma' >0$ for any $\gamma' \ge 0.$  For the first component $\bar{g}^{-3-\gamma'}$, we use $$\bar{g}(p^\mu,p'^\mu)=\bar{g}(q^\mu,q'^\mu)\geq \frac{|q-q'|}{\sqrt{q^0q'^0}}=\frac{|p'-p|}{\sqrt{q^0q'^0}}$$ and obtain that $$\bar{g}^{-3-\gamma'}(J(q)J(q'))^\epsilon\lesssim |p-p'|^{-3-\gamma'}(J(q)J(q'))^{\epsilon'},$$ for some $\epsilon>\epsilon'>0$.   
		For the second component $\bar{g}^{1+\gamma'-\gamma}$, we use $\bar{g}\approx 2^{-k}$ from the support condition $\chi_k(\bar{g})$ to obtain that $\bar{g}^{1+\gamma'-\gamma} \approx 2^{(\gamma-1-\gamma')(k+1)}$. Thus, we obtain 
		$$\bar{g}^{-2-\gamma}(J(q)J(q'))^{\epsilon}\lesssim 2^{(\gamma-1-\gamma')(k+1)} |p-p'|^{-3-\gamma'}(J(q))^{\epsilon'},
		$$
		where we also used $J(q')^{\epsilon'}\leq 1$.
	Hence, \begin{multline*}
	I_2\lesssim 2^{(\gamma-1-\gamma')(k+1)} \int_0^1d\vartheta \int_\rth \frac{dp}{p^0} \int_{\rth}\frac{dp'}{p'^0}  \int_\rth \frac{dq}{q^0}\int_\rth \frac{dq'}{q'^0}\   \frac{sg^{\singS+\gamma+2}}{|p-p'|^{3+\gamma'}}\chi_k(\bar{g})\\\times \delta^{(4)}(p'^\mu+q'^\mu-p^\mu-q^\mu) w^{2l}(p)(J(q))^{\epsilon'}\left||\nabla|\eta(\vartheta p+(1-\vartheta)p')\right|^2.
	\end{multline*}
	By Lemma \ref{lemma.carleman}, we can reduce this integral to the integral on the set $E^q_{p'-p}$ and obtain
		\begin{multline*}  
		I_2\lesssim 2^{(\gamma-1-\gamma')(k+1)}  \int_0^1d\vartheta \int_ \rth \frac{dp}{{p^0}} \int_ \rth \frac{dp'}{{p'^0}} \int_{E^{q}_{p'-p}} \frac{d\pi_{q}}{8\bar{g}{q^0}}\frac{sg^{\singS+\gamma+2}}{|p-p'|^{3+\gamma'}}\chi_k(\bar{g})\\\times w^{2l}(p)(J(q))^{\epsilon'}\left||\nabla|\eta(\vartheta p+(1-\vartheta)p')\right|^2
		\end{multline*}  where  
		$d\pi_{q}=dq\ u({p^0}+{q^0}-{p'^0})\delta\left(\frac{\bar{g}}{2}+\frac{q^\mu(p_\mu-p'_\mu)}{\bar{g}}\right)$.
		We use another $\bar{g}^{-1}\leq 2^{k+1}$ and reorder terms to obtain that 
				\begin{multline*}
		I_2\lesssim 2^{(\gamma-\gamma')(k+1)} \int_0^1d\vartheta \int_ \rth \frac{dp}{{p^0}} \int_ \rth \frac{dp'}{{p'^0}} \int_{E^{q}_{p'-p}} \frac{d\pi_{q}}{{q^0}}\frac{sg^{\singS+\gamma+2}}{|p-p'|^{3+\gamma'}}\chi_k(\bar{g})\\\times w^{2l}(p)(J(q))^{\epsilon'}\left||\nabla|\eta(\vartheta p+(1-\vartheta)p')\right|^2\\
		\approx 2^{(\gamma-\gamma')(k+1)} \int_0^1d\vartheta \int_ \rth \frac{dp}{{p^0}} \int_ \rth \frac{dp'}{{p'^0}}\frac{\left||\nabla|\eta(\vartheta p+(1-\vartheta)p')\right|^2}{|p-p'|^{3+\gamma'}}\chi_k(\bar{g})\\\times w^{2l}(p) \int_{E^{q}_{p'-p}} \frac{d\pi_{q}}{{q^0}}sg^{\singS+\gamma+2}(J(q))^{\epsilon'}.
		\end{multline*} 
			We use that $\singS+\gamma+2 \geq 0$ for both hard and soft-interactions \eqref{hard} and \eqref{soft} and \eqref{FREQ:s.le.pq}, \eqref{FREQ:g.le.sqrtpq}, and Lemma \ref{lemmaqq} to obtain  
		\begin{multline*}
		\int_{E^{q}_{p'-p}} \frac{d\pi_{q}}{{q^0}}sg^{\singS+\gamma+2} (J(q))^{\epsilon'}\lesssim (p^0)^{2+\frac{\singS+\gamma}{2}}\int_{E^{q}_{p'-p}}\frac{d\pi_q}{q^0} (q^0)^{2+\frac{\singS+\gamma}{2}}(J(q))^{\epsilon'}
		\\
		\lesssim (p^0p'^0)^{1+\frac{\singS+\gamma}{4}}\int_{E^{q}_{p'-p}}\frac{d\pi_q}{q^0} (J(q))^{\epsilon''},
\end{multline*}	
				where $\epsilon''$ is some uniform constant such that $0<\epsilon''<\epsilon'$.

Then we {\it claim} that
\begin{equation}
\label{dpiq1}
\int_{E^{q}_{p'-p}}\frac{d\pi_q}{q^0} (J(q))^{\epsilon''}
\lesssim 1.
\end{equation}
This can be seen by a direct computation.  
We can justify the {\it claim} as follows	
	$$	
		\int_{E^{q}_{p'-p}}\frac{d\pi_q}{q^0} (J(q))^{\epsilon''}
		=\int_{\rth}\frac{dq}{q^0}\ u({p^0}+{q^0}-{p'^0})\delta\left(\frac{\bar{g}}{2}+\frac{q^\mu(p_\mu-p'_\mu)}{\bar{g}}\right)(J(q))^{\epsilon''}.$$
	We take the change of variables on $q$ into angular coordinates as $q\in \rth \mapsto (r,\theta,\phi)$ and choose the z-axis parallel 
	to $p-p'$ such that the angle between $q$ and $p-p'$ is equal to $\phi.$   The terms in the delta function can be rewritten as
	\begin{multline*}
		\frac{\bar{g}}{2}+\frac{q^\mu(p_\mu-p'_\mu)}{\bar{g}}=\frac{1}{2\bar{g}}(\bar{g}^2+2q^\mu(p_\mu-p'_\mu))\\
		=\frac{1}{2\bar{g}}(\bar{g}^2-2q^0(p^0-p'^0)+2q\cdot(p-p'))\\
		=\frac{1}{2\bar{g}}(\bar{g}^2-2\sqrt{1+r^2}(p^0-p'^0)+2r|p-p'|\cos\phi).		
	\end{multline*}
		Thus, we obtain that 
\begin{multline*}	
\int_{E^{q}_{p'-p}}\frac{d\pi_q}{q^0} (J(q))^{\epsilon''}
=\int_{0}^\infty\frac{dr}{\sqrt{1+r^2}}\int_{0}^\pi d\phi \int_{0}^{2\pi}d\theta ~ r^2\sin\phi\ u({p^0}+{\sqrt{1+r^2}}-{p'^0})\\\times \delta\left(\frac{1}{2\bar{g}}(\bar{g}^2-2\sqrt{1+r^2}(p^0-p'^0)+2r|p-p'|\cos\phi)\right)e^{-\epsilon''\sqrt{1+r^2}}
\\
=\int_{0}^\infty\frac{dr}{\sqrt{1+r^2}}\int_{-1}^1 d(-\cos\phi) \int_{0}^{2\pi}d\theta \ r^2 \  u({p^0}+{\sqrt{1+r^2}}-{p'^0})
\\
\times \frac{\bar{g}}{r|p-p'|}\delta\left(\cos\phi+\frac{\bar{g}^2-2\sqrt{1+r^2}(p^0-p'^0)}{2r|p-p'|}\right)
e^{-\epsilon''\sqrt{1+r^2}}
\\
\leq \int_{0}^\infty\frac{rdr}{\sqrt{1+r^2}}\int_{0}^{2\pi}d\theta \ u({p^0}+{\sqrt{1+r^2}}-{p'^0}) \frac{\bar{g}}{|p-p'|}e^{-\epsilon''\sqrt{1+r^2}}\\
\lesssim \int_{0}^\infty\frac{rdr}{\sqrt{1+r^2}}e^{-\epsilon''\sqrt{1+r^2}}\lesssim 1,
\end{multline*}
where we have used $\bar{g}\leq |p-p'|$ and $u(x)\leq 1$. Thus we obtain
		\begin{multline*}
			I_2\lesssim 2^{(\gamma-\gamma')(k+1)}  \int_0^1d\vartheta \int_ \rth dp' \int_ \rth dp\ (p^0p'^0)^{\frac{\singS+\gamma}{4}}w^{2l}(p)\\\times \frac{\left||\nabla|\eta(\vartheta p+(1-\vartheta)p')\right|^2}{|p-p'|^{3+\gamma'}} \chi_k(\bar{g}).
		\end{multline*}
		Now we define $u=p-p'$ and consider the change of variables $p\mapsto u=p-p'$. Then we have $\vartheta p+(1-\vartheta)p'=p'+\vartheta u$. Note that the support condition $\chi_k(\bar{g})$ gives $p^0\approx p'^0$ and $w^l(p)\approx w^l(p')$.  Also, the support condition $\chi_k(\bar{g})$ indicates that $|u|=|p-p'|\geq \bar{g}\geq 2^{-k-1}$ and that $|u|\leq 2^{-k}5^{1/4}p'^0$. This is because we can use $p^0\leq \sqrt{5}p'^0$ from Lemma \ref{lemmaqq} to obtain that $$|u|=|p-p'|\leq 2^{-k}\sqrt{p^0p'^0}\leq 2^{-k}5^{1/4}p'^0.$$   	
				Then we have 
		\begin{multline*}
		I_2\lesssim 2^{(\gamma-\gamma')(k+1)}  \int_0^1d\vartheta \int_ \rth dp' (p'^0)^{\frac{\singS+\gamma}{2}}w^{2l}(p')\\\times\int_ {2^{-k-1}\leq |u|\leq 2^{-k}5^{1/4}p'^0 } du\  \frac{\left||\nabla|\eta(p'+\vartheta u)\right|^2}{|u|^{3+\gamma'}}.
		\end{multline*}
		We will consider this expression briefly and estimate some of the terms in the integrand. 
In a moment we will use the change of variables $p'\mapsto v=p'+\vartheta u$.

We first {\it claim} that $p'^0\approx v^0\eqdef \sqrt{1+|v|^2}$. This can be proved as follows. We show that $v^0\lesssim p'^0$ first.  Note that we have 	
		\begin{multline*}
	(v^0)^2=1+|v|^2=1+|p'+\vartheta u|^2= 1+|p'|^2 + \vartheta^2 |u|^2 +2\vartheta p'\cdot u \\
	\leq 2(p'^0)^2+ 2\vartheta^2 |u|^2\leq 2(p'^0)^2+ 2\vartheta^2 2^{-2k}5^{1/2}(p'^0)^2\leq \frac{7}{2}(p'^0)^2 
\end{multline*} as $\vartheta $ is in $(0,1)$ and $k$ is a positive integer. So we have $v^0\lesssim p'^0.$ On the other hand, in order to prove $p'^0\lesssim v^0$, we recall that $v=p'+\vartheta u$ and obtain 
		\begin{equation}\label{pv}
				(p'^0)^2=1+|p'|^2 = 1+|v-\vartheta u|^2 =(1+|v|^2)-2\vartheta v \cdot u+ \vartheta^2|u|^2.
		\end{equation}
		By Young's inequality, we have $2\vartheta\left| v \cdot u\right|\leq 4|v|^2 +\frac{1}{4}\vartheta^2 |u|^2$. We plug this back into (\ref{pv}) to obtain that  $$(p'^0)^2\leq 5(v^0)^2 +\frac{5}{4}\vartheta^2|u|^2\leq 5(v^0)^2 +\frac{5}{4}\vartheta^22^{-2k}5^{1/2}(p'^0)^2 \leq 5(v^0)^2 +\frac{12}{16} (p'^0)^2, 
		 $$ because $k$ is a positive integer and $\vartheta\in (0,1)$. Therefore, we obtain $(p'^0)^2\leq 20(v^0)^2$. We conclude that $p'^0\approx v^0.$

		We plug in these estimates, and then we use Fubini's theorem to change the order of integration to obtain
		\begin{multline*}
		I_2\lesssim 2^{(\gamma-\gamma')(k+1)}  \int_0^1d\vartheta \int_ {2^{-k-1}\leq |u|} du\  \frac{1}{|u|^{3+\gamma'}}\int_ \rth dp' (v^0)^{\frac{\singS+\gamma}{2}}w^{2l}(v)\left||\nabla|\eta(v)\right|^2.
			\end{multline*}
		Now we consider the change of variables $p'\mapsto v=p'+\vartheta u$.  We obtain 	
\begin{multline}\label{I2.int.est}
		I_2\lesssim 
		2^{(\gamma-\gamma')(k+1)}  \int_0^1d\vartheta \int_ {2^{-k-1}\leq |u| } du\  \frac{1}{|u|^{3+\gamma'}}\\\times\int_ \rth dv (v^0)^{\frac{\singS+\gamma}{2}}w^{2l}(v)\left||\nabla|\eta(v)\right|^2\\
		\lesssim 2^{(\gamma-\gamma')(k+1)} \left|w^l|\nabla|\eta\right|^2_{L^2_\frac{\singS+\gamma}{2}} \int_0^1d\vartheta \int_ {2^{-k-1}\leq |u| } du\  \frac{1}{|u|^{3+\gamma'}}
		\\
\lesssim 2^{(\gamma-\gamma')(k+1)} \left|w^l|\nabla|\eta\right|^2_{L^2_\frac{\singS+\gamma}{2}} \int_0^1d\vartheta \int_ {2^{-k-1}}^{\infty} d|u|\   \frac{|u|^2}{|u|^{3+\gamma'}}
		\\
\lesssim 2^{(\gamma-\gamma')(k+1)} \left|w^l|\nabla|\eta\right|^2_{L^2_\frac{\singS+\gamma}{2}}2^{(k+1)\gamma'}
\lesssim 
2^{(k+1)\gamma} \left|w^l|\nabla|\eta\right|^2_{L^2_\frac{\singS+\gamma}{2}}\\
\lesssim 
2^{k\gamma} \left|w^l|\nabla|\eta\right|^2_{L^2_\frac{\singS+\gamma}{2}}.
\end{multline}
		 From \eqref{page28I2} and \eqref{I2.int.est},  we obtain
\begin{multline*}
		|(T^{k,l}_{+,\text{I}}-T^{k,l}_{-,\text{I}})(f,h,\eta)|	\lesssim  2^{-k}I_1^{1/2}\times I_2^{1/2}\\
		 	\lesssim  \ 2^{(\gamma-1)k}|f|_{L^2_{-m}}|  w^lh|_{L^2_{\frac{\singS+\gamma}{2}}}|   w^l|\nabla|\eta |_{L^2_{\frac{\singS+\gamma}{2}}}.
		 \end{multline*}	
This completes the estimate for part I.

    For part II of \eqref{canceldecompp} in \eqref{eq.T+-T-}, we use the fundamental theorem of calculus and obtain that 
  $$
   \sqrt{J(q')}-\sqrt{J(q)}=\int_0^1d\vartheta\ (q'-q)\cdot(\nabla\sqrt{J})(\kappa_q(\vartheta)).
   $$ Thus, we have
   $$
   \left|\left( \sqrt{J(q')}-\sqrt{J(q)}\right)\eta(p)\right|\leq \int_0^1d\vartheta\  |q'-q||\nabla|\sqrt{J}(\kappa_q(\vartheta))|\eta(p)|.
   $$ Now we observe that $|q'-q|\leq g(q^\mu,q'^\mu)\sqrt{q^0q'^0}=\bar{g}\sqrt{q^0q'^0}\approx 2^{-k}\sqrt{q^0q'^0}$. For $|\nabla|\sqrt{J}$, we use that $$|\nabla|\sqrt{J}(\kappa_q(\vartheta))\lesssim \sqrt{J}(\kappa_q(\vartheta)).$$ Also, we have that 
   $
   (q^0q'^0)^{\frac{1}{2}}\sqrt{J(\kappa_q(\vartheta))}\lesssim (J(q)J(q'))^\epsilon
   $
 for any $\vartheta\in[0,1]$   for sufficiently small $\epsilon>0$.
    Thus, using $w^{2l}(p')\approx w^{2l}(p)$ by Lemma \ref{lemmaqq},  the difference $\left|T^{k,l}_{+,\text{II}}-T^{k,l}_{-,\text{II}}\right|$ of the part $\text{II}$ of \eqref{canceldecompp} in \eqref{eq.T+-T-} is bounded above as
    \begin{multline}\label{partII}
    \left|T^{k,l}_{+,\text{II}}-T^{k,l}_{-,\text{II}}\right|\\\lesssim 2^{-k} \int_\rth dp \int_\rth dq \int_{\mathbb{S}^2}d\omega \  v_{\text{\o}} \sigma_k(g,\omega) |f(q)||h(p)|(J(q)J(q'))^\epsilon| \eta(p)|w^{2l}(p).
    \end{multline}
    Now the rest of the proof follows exactly the same as in the estimate for $|T^{k,l}_-|$ as in  \eqref{Bk} and \eqref{T-}, and we obtain the upper bound in the right-hand side of the proposition because $|\eta|$ is less than or equal to $\left||\nabla| \eta\right|$ by the definition of the length of the gradient \eqref{nabla}. 
   Thus,
   $$\left|T^{k,l}_{+,\text{II}}-T^{k,l}_{-,\text{II}}\right|\lesssim 2^{(\gamma-1)k}|f|_{L^2_{-m}}|w^l h|_{L^2_{\frac{\singS+\gamma}{2}}}|w^l |\nabla|\eta|_{L^2_{\frac{\singS+\gamma}{2}}}.$$
   The term $\text{III}$ is then handled the same way as the term $\text{II}.$
	Together with the previous estimates, we obtain the proposition.
	\end{proof}

\subsection{Dual cancellation estimates}\label{sec.newcancel}
We will also derive the following cancellation estimates that have the momentum derivative acting on $h$ instead of $\eta$.

\begin{proposition}\label{prop:cancellation2}For any $k> 0$ and for $0<\gamma<1$ and any $m> 0$, we have the uniform estimate: 
\begin{equation}\notag
\left|(T^{k,l}_{+,d}-T^{k,l}_{-,d})(f,h,\eta)\right|
\lesssim   2^{k(\gamma-1)} |f|_{L^2_{-m}}|w^l|\nabla|h|_{L^2_{\frac{\singS+\gamma}{2}}}|w^l\eta|_{L^2_{\frac{\singS+\gamma}{2}}}.
\end{equation}

\end{proposition}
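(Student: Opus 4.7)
The plan is to derive the dual analogue of Proposition \ref{Cancellationproposition}, transferring the gradient from $\eta$ onto $h$ by exploiting the dual representation $\langle \Gamma(f,h),\eta\rangle = \langle \eta,\,T_f^{\ast} h\rangle$ introduced after Lemma \ref{transformation.Lemma.appendix}. Concretely, I would first rewrite $T^{k,l}_{+,d}-T^{k,l}_{-,d}$ using \eqref{third.eq.Ig} with the mapping $p\to p'+A$, $q'^0\to q^0+A^0$, so that the integrand contains the cancelled expression
$$
\eta(p')\,h(p'+A)\sqrt{J(q^0+A^0)}\;-\;\frac{\tilde s\,\Phi(\tilde g)\tilde g^4}{s_\Lambda\,\Phi(g_\Lambda)g_\Lambda^4}\,\eta(p')\,h(p')\sqrt{J(q)}
$$
integrated against $\tfrac{dp'}{p'^0}\tfrac{dq}{q^0}\tfrac{dz}{\sqrt{|z|^2+1}}\tfrac{\sqrt{\tilde s}}{\tilde g}\,s_\Lambda\sigma_k(g_\Lambda,\theta_\Lambda)$, with the dyadic localisation $\chi_k$ enforcing $\bar g\approx 2^{-k}$. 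For $k>0$ the relativistic analogue of Lemma \ref{lemmaqq} applies in the transformed variables, giving $|A|\lesssim 2^{-k}\sqrt{p'^0 q^0}$ and $p'^0\approx (p'+A)^0$, and the fundamental theorem of calculus along the segment from $p'$ to $p'+A$ produces a factor of $|A|$ multiplying a gradient of $h$.

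Second, following the strategy outlined in \S2.4, I would split the bracketed expression additively as $D_1+D_2$, where
$$
D_1 = \bigl[h(p'+A)-h(p')\bigr]\,\eta(p')\sqrt{J(q^0+A^0)},\qquad D_2 = h(p')\,\eta(p')\Bigl[\sqrt{J(q^0+A^0)}-\tfrac{\tilde s\,\Phi(\tilde g)\tilde g^4}{s_\Lambda\Phi(g_\Lambda)g_\Lambda^4}\sqrt{J(q)}\Bigr].
$$
For $D_1$ I would invert the transformation of Lemma \ref{transformation.Lemma.appendix} to return to the original $8$-fold representation, where the integrand becomes $v_{\text{\o}}\sigma_k(g,\theta)f(q)\eta(p)\sqrt{J(q')}\bigl(h(p')-h(p)\bigr)$, and then follow verbatim the three ingredients that worked for part $\mathrm{I}$ of Proposition \ref{Cancellationproposition}: Cauchy--Schwarz, the raising to a $12$-fold integral via Lemma \ref{lemma.reduction}, the reduction to the hypersurface $E^q_{p'-p}$ via Lemma \ref{lemma.carleman}, the kernel bound \eqref{dpiq1}, and the change of variables $u=p-p'$, $v=p'+\vartheta u$. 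This delivers the $D_1$ contribution bounded by $2^{k(\gamma-1)}|f|_{L^2_{-m}}\,|w^l|\nabla|h|_{L^2_{(\singS+\gamma)/2}}\,|w^l\eta|_{L^2_{(\singS+\gamma)/2}}$.

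Third, for $D_2$ I would stay in the dual representation and use \eqref{colfre4.2} together with $|\sqrt{J(q^0+A^0)}-\sqrt{J(q)}|\lesssim |A^0|\,J^\epsilon(q)$ for small $\epsilon>0$ (so $|A^0|\lesssim|A|\lesssim 2^{-k}\sqrt{p'^0q^0}$) to extract a factor $2^{-k}$ alongside the small factor $\tilde s(\sqrt{|z|^2+1}-1)/\tilde g_\Lambda^2$. The angular dyadic restriction $\chi_k$, combined with \eqref{ineq.gL.here} and the kernel asymptotics $\Phi(g_\Lambda)\approx g_\Lambda^\singS$, keeps the $dz$ integral finite and recovers the missing $2^{k\gamma}$ factor. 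To cover the full admissible range $\singS+\gamma\in(-3/2,\,2+\gamma)$ in a single step, I would insert the sharp algebraic identities $1=\bigl(\tfrac{\tilde s}{p'^0q^0}\bigr)^{7/4}\bigl(\tfrac{\tilde s}{p'^0q^0}\bigr)^{-7/4}$ in the Part $II$-type piece and $\bigl(\tfrac{\tilde g^2}{p'^0q^0}\bigr)^{3/4}\bigl(\tfrac{\tilde g^2}{p'^0q^0}\bigr)^{-3/4}$ in the Part $III$-type piece, then apply Cauchy--Schwarz to balance the positive powers against the exponential decay in $q$ coming from $\sqrt{J(q^0+A^0)}$ and to balance the negative powers against the weight $J^\epsilon(q)$ inherited from $|A^0|J^\epsilon(q)$.

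The principal obstacle will be the $D_2$ estimate in the soft-interaction regime $-3/2<\singS+\gamma<0$: negative powers of $\tilde g$ and $\tilde s$ that appear through $\Phi(g_\Lambda)^{-1}\sim g_\Lambda^{-\singS}$ combined with $\tilde g^{-1}$ in the measure threaten divergence in $|q|\to\infty$ and $|z|\to\infty$, and the exponents $7/4$ and $3/4$ appear to be the unique choice that equalises the resulting integrability constraints against the decay harvested from $\sqrt{J(q^0+A^0)}$ and from \eqref{colfre4.2}. Once this delicate weight balancing is in place, summing the $D_1$ and $D_2$ contributions yields the stated bound.
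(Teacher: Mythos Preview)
Your plan tracks the paper's three-part decomposition closely, and your handling of the piece $D_1$ corresponding to Part~$I$ is essentially correct (the paper Cauchy--Schwarzes first in the dual variables with an artificial $(q^0)^{\pm m}$ and only reverts the squared piece, whereas you revert first and then run the Part~$I$ argument of Proposition~\ref{Cancellationproposition}; either order works). The genuine gap is in your treatment of the Part~$II$-type term inside $D_2$.

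The estimate $\bigl|\sqrt{J(q^0+A^0)}-\sqrt{J(q)}\bigr|\lesssim |A^0|\,J^\epsilon(q)$ is false. Writing this difference as $\sqrt{J(q)}\bigl|e^{-A^0/2}-1\bigr|$ and recalling from \eqref{Azero.def.lambda} that $A^0/2=l(\sqrt{|z|^2+1}-1)-jz_1$, the term $-jz_1$ can make $A^0$ large and \emph{negative} (for $k>0$ one has $(\sqrt{|z|^2+1}-1)\approx|z|^2\ll|z|$, so $jz_1$ dominates). In that regime $e^{-A^0/2}$ is exponentially large; the Glassey--Strauss bound \eqref{GS.estimate.exponential}--\eqref{Jql2j2.bound} only gives $\sqrt{J(q)}e^{-A^0/2}\lesssim 1$, so the difference is merely $O(1)$, not $O\bigl(|A^0|J^\epsilon(q)\bigr)$. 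There is no $J^\epsilon(q)$ to ``inherit,'' and staying entirely in the dual gives you no $q$-decay with which to reach $|f|_{L^2_{-m}}$. The source you point to for exponential decay, $\sqrt{J(q^0+A^0)}$, is only bounded, not decaying.

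The paper's repair is precisely what you are missing: write $e^{-A^0/2}-1$ via the fundamental theorem of calculus as $\int_0^1 d\vartheta\,(-A^0/2)e^{-\vartheta A^0/2}$, then \emph{split the $\vartheta$-integral}. For $\vartheta\le\tfrac12$ one has $\sqrt{J(q)}e^{-\vartheta A^0/2}\lesssim e^{-(1-\vartheta)q^0/2}\le e^{-q^0/4}$, which supplies the $q$-decay and closes as in your sketch. For $\vartheta>\tfrac12$ this decay evaporates as $\vartheta\to1$, and here the paper does the same trick you used for $D_1$: after Cauchy--Schwarz with the $(q^0)^{\pm m}$ and $\bigl(\tilde s/p'^0q^0\bigr)^{\pm 7/4}$ weights, the piece $D_3$ carrying $(q^0)^{m}$ is \emph{reverted back to the original representation} via Lemma~\ref{transformation.Lemma.appendix}, where the genuine $\sqrt{J(q)}$ decay absorbs $(q^0)^m$ (see \eqref{D3.est.mid}--\eqref{D3.est}). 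The partner $D_4$ then carries $(q^0)^{-m}$ and stays in the dual. The Part~$III$ estimate requires the same reversion step for $D_5$. In short, the reversion-to-original-coordinates maneuver is needed not only for $D_1$ but also inside the Cauchy--Schwarz splits of Parts~$II$ and~$III$; without it your $D_2$ estimate cannot close.
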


For the proof of Proposition \ref{prop:cancellation2}, we will have to derive and use the Carleman dual representation of the trilinear form $\langle w^{2l}\Gamma(f,h),\eta\rangle$. The dual representation of the trilinear term is derived in \eqref{dual4} in  \secref{CarlemanAppendix}.  

If we consider the dyadic decomposition of the region $\bar{g}\le 1$ into $\bar{g}\approx  2^{-k}$ in the original representation as in \eqref{kernel.k.diadic.define}, then this decomposition corresponds to the dyadic decomposition of $g_{L}\approx 2^{-k}$ in the new representation \eqref{dual4} after we applied the Lorentz transformation.  As before \eqref{kernel.k.diadic.define}, we let $\{\chi_k\}^\infty_{k=-\infty}$ be the partition of unity on $(0,\infty)$ such that $|\chi_k|\leq 1$ and supp$(\chi_k) \subset [2^{-k-1},2^{-k}]$.  Then we define 
$$
\frac{c'}{2}\sigma(g_\Lambda, \theta_\Lambda)\chi_k(g_L)\eqdef  \sigma_k(g_\Lambda, \theta_\Lambda).
$$
Here $\frac{c'}{2}>0$ is the constant that comes from \eqref{dual4}, and $g_L$ is defined in \eqref{g2.eq.lambda}.  We can now write the decomposed pieces of the dual formulation as $T^{k,l}_{+,d}=T^{k,l}_{+,d}(f,h,\eta)$ and $T^{k,l}_{-,d}=T^{k,l}_{-,d}(f,h,\eta)$ from \eqref{dual4} as
 \begin{align}\notag
T^{k,l}_{+,d}\eqdef 
&\int_\rth\frac{dp'}{p'^0}\int_{\rth}\frac{dq}{q^0} \frac{\sqrt{\tilde{s}}}{\tilde{g}}
\int_{\mathbb{R}^2} \frac{dz}{\sqrt{|z|^2+1}}
s_\Lambda\sigma_k(g_\Lambda,\theta_\Lambda)
\sqrt{J(q)}w^{2l}(p') \eta(p')f(q)
\\ \notag&
\times h\left(A(p',q,z)+p'\right)\exp\left(-\frac{p'^0+q^0}{4}(\sqrt{|z|^2+1}-1)+\frac{|p'\times q|}{2\tilde{g}}z_1\right) 
 \\\notag
T^{k,l}_{-,d}
\eqdef 
&\int_\rth\frac{dp'}{p'^0}\int_{\rth}\frac{dq}{q^0} \frac{\sqrt{\tilde{s}}}{\tilde{g}}
\int_{\mathbb{R}^2} \frac{dz}{\sqrt{|z|^2+1}}
s_\Lambda\sigma_k(g_\Lambda,\theta_\Lambda)
\sqrt{J(q)}w^{2l}(p')\eta(p')f(q)
\\
 \label{T+++.dual} &
\times\frac{\tilde{s} \Phi(\tilde{g})\tilde{g}^4}{s_\Lambda \Phi(g_\Lambda)g^4_\Lambda}
h(p').
\end{align}
From the calculations in  \secref{CarlemanAppendix} we have that $$(T^{k,l}_{+,d}-T^{k,l}_{-,d})(f,h,\eta)
 =(T^{k,l}_{+}-T^{k,l}_{-})(f,h,\eta),$$ also using the definitions \eqref{T+++}.  The rest of the notation used above is defined in \eqref{g2y.variable} or \eqref{g2.eq.lambda}, \eqref{newcos.intro} or \eqref{theta.eq.lambda}, and  \eqref{A.def.lambda}.   This notation will also be defined the first time we use it in the proof below.    Thus, again, for $f,h,\eta\in S(\rth)$ Schwartz functions, we have:
 $$
 \langle  w^{2l}\Gamma(f,h),\eta\rangle=\sum^\infty_{k=-\infty}\{T^{k,l}_{+,d}(f,h,\eta)-T^{k,l}_{-,d}(f,h,\eta)\}.
 $$ 
Since this dual representation is written in $(p',q,z)$ variables as above, instead of the standard $(p,q,\omega)$ variables, we will first check what the condition $\bar{g}\approx 2^{-k}$ of the dyadic decomposition of the angular singularity would correspond to in the new variable $g_L$ which is defined in \eqref{g2y.variable}. The rest of this section is devoted to the proof of Proposition \ref{prop:cancellation2}. 

In order to estimate the trilinear terms in \eqref{T+++.dual} nearby the singularity, for the difference of \eqref{T+++.dual}, we split up the difference of the integrands as follows
\begin{multline*}
    h\left(A(p',q,z)+p'\right)
    \exp\left(-\frac{p'^0+q^0}{4}(\sqrt{|z|^2+1}-1)+\frac{|p'\times q|}{2\tilde{g}}z_1\right) 
    \\
    -\frac{\tilde{s} \Phi(\tilde{g})\tilde{g}^4}{s_\Lambda \Phi(g_\Lambda)g^4_\Lambda}h(p')
    \\
    =
\exp\left(-\frac{p'^0+q^0}{4}(\sqrt{|z|^2+1}-1)+\frac{|p'\times q|}{2\tilde{g}}z_1\right) 
\left(    h\left(A(p',q,z)+p'\right) - h(p') \right)
    \\
    +
    \frac{\tilde{s} \Phi(\tilde{g})\tilde{g}^4}{s_\Lambda \Phi(g_\Lambda)g^4_\Lambda}
    \left( \exp\left(-\frac{p'^0+q^0}{4}(\sqrt{|z|^2+1}-1)+\frac{|p'\times q|}{2\tilde{g}}z_1\right) -1 \right)
    h(p')
        \\
    +
    \exp\left(-\frac{p'^0+q^0}{4}(\sqrt{|z|^2+1}-1)+\frac{|p'\times q|}{2\tilde{g}}z_1\right) 
   \left(1 -  \frac{\tilde{s} \Phi(\tilde{g})\tilde{g}^4}{s_\Lambda \Phi(g_\Lambda)g^4_\Lambda} \right)
    h(p')
    \\
=    I+II+III.
\end{multline*}
In the rest of this section, we will make an upper-bound estimate for each part of the trilinear term $T^{k,l}_+-T^{k,l}_-$ which involves the parts $I,$ $II,$ and $III.$   Thus we define
 \begin{multline}\label{define.Tkl1}
   (T^{k,l}_{+,I}
   -T^{k,l}_{-,I})(f,h,\eta)
   \\
   \eqdef 
	\int_\rth\frac{dp'}{p'^0}\int_{\rth}\frac{dq}{q^0} \frac{\sqrt{\tilde{s}}}{\tilde{g}}\int_{\mathbb{R}^2} \frac{dz}{\sqrt{|z|^2+1}}s_\Lambda\sigma_k(g_\Lambda,\theta_\Lambda)\sqrt{J(q)}w^{2l}(p')\eta(p')f(q)\\
	\times\exp\left(-l(\sqrt{|z|^2+1}-1)+j z_1\right) \bigg[h\left(A(p',q,z)+p'\right)-h\left(
	p'\right)\bigg],
 \end{multline}
 where we recall the notation \eqref{lj}.  We also define
  \begin{multline}\label{define.Tkl2}
   (T^{k,l}_{+,II}
   -T^{k,l}_{-,II})(f,h,\eta)
   \\
   \eqdef 
\int_\rth\frac{dp'}{p'^0} h(p')w^{2l}(p')\eta(p')\int_{\rth}\frac{dq}{q^0} \sqrt{J(q)}f(q)\frac{\sqrt{\tilde{s}}}{\tilde{g}}
\int_{\mathbb{R}^2} \frac{dz}{\sqrt{|z|^2+1}}s_\Lambda\sigma_k(g_\Lambda,\theta_\Lambda)
\\\times
\frac{\tilde{s} \Phi(\tilde{g})\tilde{g}^4}{s_\Lambda \Phi(g_\Lambda)g^4_\Lambda}
            \left(\exp\left(-l(\sqrt{|z|^2+1}-1)+j z_1\right)-1\right),
 \end{multline}
 and lastly
  \begin{multline}\label{define.Tkl3}
   (T^{k,l}_{+,III}
   -T^{k,l}_{-,III})(f,h,\eta)
   \\
   \eqdef 
\int_\rth\frac{dp'}{p'^0} h(p')w^{2l}(p')\eta(p')\int_{\rth}\frac{dq}{q^0} \sqrt{J(q)}f(q)\frac{\sqrt{\tilde{s}}}{\tilde{g}}
\int_{\mathbb{R}^2} \frac{dz}{\sqrt{|z|^2+1}}s_\Lambda\sigma_k(g_\Lambda,\theta_\Lambda)
\\\times \exp\left(-l(\sqrt{|z|^2+1}-1)+jz_1\right) 
	\left(1 -  \frac{\tilde{s} \Phi(\tilde{g})\tilde{g}^4}{s_\Lambda \Phi(g_\Lambda)g^4_\Lambda} \right).
 \end{multline}
 Then note that 
 $
    T^{k,l}_{+,d}
   -T^{k,l}_{-,d}
   =
      T^{k,l}_{+,I}
   -T^{k,l}_{-,I}
   +
      T^{k,l}_{+,II}
   -T^{k,l}_{-,II}
   +
      T^{k,l}_{+,III}
   -T^{k,l}_{-,III}.
 $
For our estimates of each of these terms, we will make use of these dual representations that are written in the variables $(p',q,z)$. Therefore we will next study which conditions in the $z$ variable corresponds to the correct dyadic decomposition.

We first note that the condition $2^{-k-1}\le g_L\le2^{-k}$ is equivalent to
\begin{equation}\label{z.dyadic}2^{-2k-1}\tilde{s}^{-1}\le (\sqrt{|z|^2+1}-1)\le  2^{-2k+1}\tilde{s}^{-1}\end{equation} 
by \eqref{g2y.variable}. 
In the rest of this section, we will denote the condition \eqref{z.dyadic} simply as $$(\sqrt{|z|^2+1}-1)\approx 2^{-2k}\tilde{s}^{-1}.$$
We also note that \eqref{z.dyadic} implies that
\begin{equation}\label{z.dyadic2}0<2^{-k-1}\tilde{s}^{-1/2}\le |z|\le 2^{-k+1}\tilde{s}^{-1/2}\sqrt{1+2^{-2k}\tilde{s}^{-1}}\le 2^{-k+2}\tilde{s}^{-1/2}\le 1,\end{equation}
since $k>0$ and $\tilde{s}\ge 4$.

\begin{proof}[Proof of Proposition \ref{prop:cancellation2}]
We will split the proof into three parts $I$, $II$, and $III$.  In each part we estimate each of the terms in \eqref{define.Tkl1},  \eqref{define.Tkl2}, and  \eqref{define.Tkl3} respectively.

{\bf Estimates on part $I$}. For the estimate of the first part in \eqref{define.Tkl1},
we use the Cauchy-Schwarz inequality to obtain that 
\begin{multline}\label{estimate.key.term}
	|(T^{k,l}_{+,I}-T^{k,l}_{-,I})(f,h,\eta)|
	\\ 
	\lesssim 
\bigg(\int_\rth\frac{dp'}{p'^0}\int_{\rth}\frac{dq}{q^0} \left(\frac{\sqrt{\tilde{s}}}{\tilde{g}}\right)^2\int_{\mathbb{R}^2} \frac{dz}{\sqrt{|z|^2+1}}s_\Lambda\sigma_k(g_\Lambda,\theta_\Lambda)(J(q))^{1/2}w^{2l}(p')(q^0)^{m}\\
\times\exp\left(-l(\sqrt{|z|^2+1}-1)+jz_1\right) \bigg[h\left(A(p',q,z)+p'\right)-h\left(
	p'\right)\bigg]^2\bigg)^{1/2}\\
\times 	\bigg(\int_\rth\frac{dp'}{p'^0}\int_{\rth}\frac{dq}{q^0} \int_{\mathbb{R}^2} \frac{dz}{\sqrt{|z|^2+1}}s_\Lambda\sigma_k(g_\Lambda,\theta_\Lambda)(J(q))^{1/2}w^{2l}(p')|\eta(p')|^2|f(q)|^2\\
\times\exp\left(-l(\sqrt{|z|^2+1}-1)+j z_1\right)(q^0)^{-m} \bigg)^{1/2}
\eqdef D_1^{1/2}D_2^{1/2},
\end{multline}
for a sufficiently large $m>0$.   We note that the spltting with the term $(q^0)^{m}$ above is important because the term $D_2$ needs the extra $|q|$ decay and the term $D_1$ has plenty of exponential $|q|$ decay.

{\it The representation of $D_1$}.
We can recover the original representation of $D_1$ in the following way.  Specifically we notice that $D_1$ is of the form \eqref{second.eq.Ig} with $G$ in \eqref{second.eq.Ig} given by
\begin{multline*}
    G=\frac{c'}{2}(q^0)^{m}\frac{\sqrt{\tilde{s}}}{\tilde{g}}\chi_k(g_L)
    (J(q))^{1/2}w^{2l}(p')
    \exp\left(-l(\sqrt{|z|^2+1}-1)+jz_1\right)
    \\ \times
    \bigg[h\left(A(p',q,z)+p'\right)-h\left(
p'\right)\bigg]^2,
\end{multline*}
Thus since \eqref{second.eq.Ig} is also equal to \eqref{original.eq.Ig}, 
we obtain that
$D_1$ corresponds to
$$D_1\approx \int_\rth dp \int_\rth dq \int_{\mathbb{S}^2}d\omega\  \frac{s}{p^0q^0} \sigma(g,\theta)\chi_k(\bar{g})w^{2l}(p')J(q')^{1/2} \bigg[h(p')-h\left(
p\right)\bigg]^2(q^0)^{m},$$ 
where we used \eqref{gtildeg.equiv} such that $v_{\text{\o}}\frac{\sqrt{\tilde{s}}}{\tilde{g}}\approx \frac{s}{p^0q^0}$. We use the fundamental theorem of calculus to obtain
\begin{multline*}D_1\lesssim \int_\rth dp \int_\rth dq \int_{\mathbb{S}^2}d\omega \ \frac{s}{p^0q^0} \sigma(g,\theta)\chi_k(\bar{g})w^{2l}(p')J(q')^{1/2} \\\times |p'-p|^2\bigg(\int_0^1d\vartheta(|\nabla|h)\left(\kappa(\vartheta)\right)\bigg)^2(q^0)^{m}\\
	\lesssim  \int_0^1d\vartheta\int_\rth dp \int_\rth dq \int_{\mathbb{S}^2}d\omega \ \frac{s}{p^0q^0} \sigma(g,\theta)\chi_k(\bar{g})w^{2l}(p')J(q')^{1/2} \\\times |p'-p|^2\bigg((|\nabla|h)\left(\kappa(\vartheta)\right)\bigg)^2(q^0)^{m},
\end{multline*} where $\kappa(\vartheta)\eqdef \vartheta p+(1-\vartheta)p'$. 	Note that 
$|p'-p|^2=|q'-q|^2\leq g(q^\mu,q'^\mu)^2q^0q'^0=\bar{g}^2q^0q'^0\approx 2^{-2k}q^0q'^0$ by \eqref{FREQ:g.ge.lower}. 
And $w(p') \approx w(p)$.
Also, we have that 
$$
(q^0)^{m}(q^0q'^0)J(q')^{1/2} \lesssim (J(q)J(q'))^{\epsilon}
$$
for a sufficiently small $\epsilon>0$ by Lemma \ref{lemmaqq}. 
Then this term $D_1$ has an upper bound of $2^{-2k}I_2$ where $I_2$ is defined in \eqref{I2.eta.int}, but the notation $\eta$ is replaced by $h$; in other words in \eqref{I2.eta.int} we use $I_2 = I_2(h)$. Then by the same arguments leading to \eqref{I2.int.est} we obtain the upper bound of $D_1$ as
$$D_1\lesssim 2^{k(\gamma-2)} \left|w^l|\nabla|h\right|^2_{L^2_\frac{\singS+\gamma}{2}} .$$
This completes our estimate for $D_1$.

{\it The estimates for $D_2$}.
The term $D_2$ is given by
\begin{multline*}D_2=\int_\rth\frac{dp'}{p'^0}\int_{\rth}\frac{dq}{q^0} \int_{\mathbb{R}^2} \frac{dz}{\sqrt{|z|^2+1}}s_\Lambda\sigma_k(g_\Lambda,\theta_\Lambda)(J(q))^{1/2}w^{2l}(p')\\\times|\eta(p')|^2|f(q)|^2\exp\left(-l(\sqrt{|z|^2+1}-1)+jz_1\right)(q^0)^{-m},\end{multline*}
where $l=\frac{p'^0+q^0}{4}$ and $j=\frac{|p'\times q|}{2\tilde{g}}$ from \eqref{lj}. 
Note that $s_\Lambda=g_\Lambda^2+4,$ and from \eqref{g2.eq.lambda} we have
\begin{equation}\label{eq.gL}
g_\Lambda^2 = \tilde{g}^2+\frac{\tilde{s}}{2}(\sqrt{|z|^2+1}-1).
\end{equation} 
Above we recall \eqref{gtilde}.  
Also from \eqref{angassumption} and \eqref{theta.eq.lambda} we have 
\begin{equation}\label{eq.sigma0}\sigma_0(\theta_\Lambda)\approx \theta_\Lambda^{-2-\gamma}\approx \left(\frac{\tilde{s}(\sqrt{|z|^2+1}-1)}{g_\Lambda^2}\right)^{-1-\gamma/2}.\end{equation}
Therefore, we have
\begin{multline}\label{eq.ssigma}s_\Lambda\sigma(g_\Lambda,\theta_\Lambda)\approx s_\Lambda g_\Lambda^{\singS+\gamma+2} \tilde{s}^{-1-\gamma/2} \left(\sqrt{|z|^2+1}-1\right)^{-1-\gamma/2}\\\lesssim g_\Lambda^{\singS+\gamma+2}\tilde{s}^{-\gamma/2} \left(\sqrt{|z|^2+1}-1\right)^{-1-\gamma/2},
\end{multline} by \eqref{z.dyadic2}.
Then we further note using \eqref{z.dyadic} and \eqref{z.dyadic2} that
\begin{multline*}D_2\lesssim \int_\rth\frac{dp'}{p'^0}\int_{\rth}\frac{dq}{q^0} \int_{\left(\sqrt{|z|^2+1}-1\right)\approx 2^{-2k}\tilde{s}^{-1}} \frac{dz}{\sqrt{|z|^2+1}}\\
\times g_\Lambda^{\singS+\gamma+2}\tilde{s}^{-\gamma/2} \left(\sqrt{|z|^2+1}-1\right)^{-1-\gamma/2}(J(q))^{1/2}w^{2l}(p')\\\times|\eta(p')|^2|f(q)|^2\exp\left(-l(\sqrt{|z|^2+1}-1)+jz_1\right)(q^0)^{-m}\\
\approx 2^{k(\gamma+2)}\int_\rth\frac{dp'}{p'^0}\int_{\rth}\frac{dq}{q^0} \int_{\left(\sqrt{|z|^2+1}-1\right)\approx 2^{-2k}\tilde{s}^{-1}} \frac{d|z|}{\sqrt{|z|^2+1}}|z|(q^0)^{-m}\\
\times g_\Lambda^{\singS+\gamma+2}\tilde{s}(J(q))^{1/2}w^{2l}(p')|\eta(p')|^2|f(q)|^2\exp\left(-l(\sqrt{|z|^2+1}-1)\right)I_0(j|z|)\\
\lesssim 2^{k(\gamma+2)}\int_\rth\frac{dp'}{p'^0}\int_{\rth}\frac{dq}{q^0} e^{l} \max_{0\le |z|\le 1}\bigg\{\exp\left(-l\sqrt{|z|^2+1}\right)I_0(j|z|)\bigg\}(q^0)^{-m}
\\
\times\int_{\left(\sqrt{|z|^2+1}-1\right)\approx 2^{-2k}\tilde{s}^{-1}} \frac{d|z|}{\sqrt{|z|^2+1}}|z|g_\Lambda^{\singS+\gamma+2}\tilde{s}(J(q))^{1/2}w^{2l}(p')|\eta(p')|^2|f(q)|^2.
\end{multline*} 
In the following we further use the modified Bessel functions from \eqref{bessel0}.  In the inequality above we also used $I_0(j|z|)$ from \eqref{bessel0}.
Using \eqref{FREQ:max.bound} with \eqref{lj} we have
\begin{equation}\label{GS.estimate.exponential}
    \max_{0\le |z|\le 1}\exp\left(-l\sqrt{|z|^2+1}+j|z|\right)\lesssim \exp (-\sqrt{l^2-j^2}).
\end{equation}
The estimate \eqref{GS.estimate.exponential} is earlier proven in \cite{MR1211782}. 
 Then, by \eqref{l2j2size} we have
\begin{multline}\label{Jql2j2.bound}(J(q))^{1/2}e^{l}\exp (-\sqrt{l^2-j^2})
\le \exp\left(-\frac{q^0}{2}+\frac{p'^0+q^0}{4}-\frac{|p'-q|}{4}\right) \\
  \le  \exp\left( \frac{p'^0-q^0}{4}-\frac{|p'-q|}{4}\right)
  \le  1,
\end{multline}by \eqref{FREQ:p0q0.le.pq}.
Thus, 
\begin{multline*}D_2
\lesssim 2^{k(\gamma+2)}\int_\rth\frac{dp'}{p'^0}\int_{\rth}\frac{dq}{q^0}\tilde{s}w^{2l}(p')|\eta(p')|^2|f(q)|^2(q^0)^{-m} \\\times\int_{\left(\sqrt{|z|^2+1}-1\right)\approx 2^{-2k}\tilde{s}^{-1}} \frac{d|z|}{\sqrt{|z|^2+1}}|z|g_\Lambda^{\singS+\gamma+2}
,\end{multline*}
We make the change of variables $|z|\mapsto K\eqdef l(\sqrt{|z|^2+1}-1)$ with $$dK=\frac{l|z|d|z|}{\sqrt{|z|^2+1}}.$$ 
Since $\singS+\gamma+2\ge 0$, 
we use \eqref{FREQ:s.le.pq}, \eqref{z.dyadic2} and \eqref{eq.gL} to obtain
$$g_\Lambda^{\singS+\gamma+2}\lesssim \tilde{s}^{\singS/2+\gamma/2+1}\lesssim (p'^0q^0)^{\singS/2+\gamma/2+1}.$$ 
Then we have
\begin{multline*}D_2
\lesssim  2^{k(\gamma+2)}\int_\rth\frac{dp'}{p'^0}\int_{\rth}\frac{dq}{q^0} \int_{k\approx 2^{-2k}\tilde{s}^{-1}l} \frac{dK}{l}(p'^0q^0)^{\singS/2+\gamma/2+1}
\\
\times \tilde{s}w^{2l}(p')|\eta(p')|^2|f(q)|^2(q^0)^{-m}
\\
\lesssim 2^{k\gamma}\int_\rth dp'\int_{\rth}dq\  (p'^0q^0)^{\singS/2+\gamma/2}w^{2l}(p')|\eta(p')|^2|f(q)|^2(q^0)^{-m}
.\end{multline*}
Thus we obtain 
$$D_2 \lesssim 2^{k\gamma} |f|^2_{L^2_{\frac{\singS+\gamma}{2}-m}} |w^l\eta|^2_{L^2_{\frac{\singS+\gamma}{2}}} ,$$
for any $m>0.$

Therefore, altogether for any $m>0$ we have
\begin{equation}\label{estimate.TI}
    |(T^{k,l}_{+,I}-T^{k,l}_{-,I})(f,h,\eta)|\lesssim 2^{k(\gamma-1)} |f|_{L^2_{\frac{\singS+\gamma}{2}-m}}\left|w^l|\nabla|h\right|_{L^2_\frac{\singS+\gamma}{2}} |w^l\eta|_{L^2_{\frac{\singS+\gamma}{2}}}.
\end{equation}
This completes our estimate of $T^{k,l}_{+,I}-T^{k,l}_{-,I}$.

{\bf Estimates on Part $II$}.  
 We now estimate the upper-bound of the term \eqref{define.Tkl2}
 nearby singularity when $k> 0$.   Inside \eqref{define.Tkl2} we define
  \begin{multline}\label{define.Tkl2.k}
   K_{II}(q,p')
   \eqdef 
 \sqrt{J(q)}
\int_{\mathbb{R}^2} \frac{dz}{\sqrt{|z|^2+1}}s_\Lambda\sigma_k(g_\Lambda,\theta_\Lambda)
\\\times
\frac{\tilde{s} \Phi(\tilde{g})\tilde{g}^4}{s_\Lambda \Phi(g_\Lambda)g^4_\Lambda}
            \left(\exp\left(-l(\sqrt{|z|^2+1}-1)+j z_1\right)-1\right),
 \end{multline}
To estimate the differerence, by the fundamental theorem of calculus we have
\begin{multline}\label{eq.difference.exp}
\exp\left(-l(\sqrt{|z|^2+1}-1)+jz_1\right)-1\\
=(-l(\sqrt{|z|^2+1}-1)+jz_1)\int_0^1 d\vartheta \ \exp\left(-\vartheta l(\sqrt{|z|^2+1}-1)+\vartheta jz_1\right)\\
=\left(-\frac{l|z|^2}{\sqrt{|z|^2+1}+1}+j|z|\cos\phi\right)\int_0^1 d\vartheta \ \exp\left(-\vartheta l(\sqrt{|z|^2+1}-1)+\vartheta jz_1\right), 
\end{multline} 
where we use \eqref{lj}. 
Now we will use the angular variable $\phi\in [0,2\pi)$ for $z$, with $z_1 = |z| \cos\phi$ and we recall the modified Bessel functions \eqref{bessel0}.  We will use the known Bessel function inequality $I_1(x)\le xI_0(x)$ for $x \ge 0$, we have that 
$$I_1(\vartheta j|z|)\le  \vartheta j|z|I_0(\vartheta j|z|).$$
In order to estimate \eqref{define.Tkl2.k} using \eqref{eq.difference.exp} we will split the integral $\int_0^1 d\vartheta$ into two as $\vartheta\in (0,1/2]$ and $\vartheta \in (1/2,1)$.  In particular we define 
  \begin{multline}\label{define.Tkl2.k1}
   K_{II}^1(q,p')
   \eqdef 
 \sqrt{J(q)}
\int_{\mathbb{R}^2} \frac{dz}{\sqrt{|z|^2+1}}s_\Lambda\sigma_k(g_\Lambda,\theta_\Lambda)
\frac{\tilde{s} \Phi(\tilde{g})\tilde{g}^4}{s_\Lambda \Phi(g_\Lambda)g^4_\Lambda}
\\\times
\left(-\frac{l|z|^2}{\sqrt{|z|^2+1}+1}+j|z|\cos\phi\right)\int_0^{1/2} d\vartheta \ \exp\left(-\vartheta l(\sqrt{|z|^2+1}-1)+\vartheta jz_1\right), 
 \end{multline}
 and
   \begin{multline}\label{define.Tkl2.k2}
   K_{II}^2(q,p')
   \eqdef 
 \sqrt{J(q)}
\int_{\mathbb{R}^2} \frac{dz}{\sqrt{|z|^2+1}}s_\Lambda\sigma_k(g_\Lambda,\theta_\Lambda)
\frac{\tilde{s} \Phi(\tilde{g})\tilde{g}^4}{s_\Lambda \Phi(g_\Lambda)g^4_\Lambda}
\\\times
\bigg(-\frac{l|z|^2}{\sqrt{|z|^2+1}+1}+j|z|\cos\phi\bigg)\int_{1/2}^1 d\vartheta  \exp\bigg(-\vartheta l(\sqrt{|z|^2+1}-1)+\vartheta jz_1\bigg)
 \end{multline}
Then we have $K_{II}(q,p') = K_{II}^1(q,p') + K_{II}^2(q,p')$.

{\it The first case with $\vartheta\in (0,1/2]$ in \eqref{define.Tkl2.k1}}.
Using \eqref{g2y.variable}, \eqref{newcos.intro}, and \eqref{eq.sigma0} we have that the kernel satisfies 
\begin{multline*}s_\Lambda\sigma(g_\Lambda,\theta_\Lambda)\frac{\tilde{s} \Phi(\tilde{g})\tilde{g}^4}{s_\Lambda \Phi(g_\Lambda)g^4_\Lambda}
=\tilde{s} \Phi(\tilde{g})\sigma_0(\theta_\Lambda) \frac{\tilde{g}^4}{g^4_\Lambda}\approx\tilde{s} \Phi(\tilde{g})\left(\sin(\theta_\Lambda)\right)^{-2-\gamma} \frac{\tilde{g}^4}{g^4_\Lambda}\\
\approx\tilde{s} \Phi(\tilde{g})\left(\frac{\tilde{s}}{g^2_\Lambda}(\sqrt{|z|^2+1}-1)\right)^{-1-\gamma/2} \frac{\tilde{g}^4}{g^4_\Lambda}\\
\approx \tilde{s}^{-\gamma/2} \Phi(\tilde{g})\tilde{g}^4\left(\sqrt{|z|^2+1}-1\right)^{-1-\gamma/2} g_\Lambda^{\gamma-2}\\
\approx\tilde{s}^{-\gamma/2} \Phi(\tilde{g})\tilde{g}^4\left(\sqrt{|z|^2+1}-1\right)^{-1-\gamma/2} \left( \tilde{g}^2+\frac{1}{2}\tilde{s}(\sqrt{|z|^2+1}-1)\right)^{\gamma/2-1}.
\end{multline*}
Therefore, since $\gamma<2$ and $\tilde{s}\gtrsim \tilde{g}^2$, we have $$ \left( \tilde{g}^2+\frac{1}{2}\tilde{s}(\sqrt{|z|^2+1}-1)\right)^{\gamma/2-1}\lesssim  \tilde{g}^{\gamma-2}\left( 1+\frac{1}{2}(\sqrt{|z|^2+1}-1)\right)^{\gamma/2-1}\lesssim \tilde{g}^{\gamma-2},$$ and we have 
\begin{multline}\label{ssigmafrac.bound}\left|s_\Lambda\sigma(g_\Lambda,\theta_\Lambda)\frac{\tilde{s} \Phi(\tilde{g})\tilde{g}^4}{s_\Lambda \Phi(g_\Lambda)g^4_\Lambda}\right|
\lesssim \tilde{s}^{-\gamma/2} \Phi(\tilde{g})\tilde{g}^{2+\gamma}\left((\sqrt{|z|^2+1}-1)\right)^{-1-\gamma/2} \\
\lesssim \tilde{s}^{-\gamma/2} \Phi(\tilde{g})\tilde{g}^{2+\gamma}|z|^{-2-\gamma} (1+|z|^2)^{1/2+\gamma/4}.
\end{multline}
We will use these kernel estimates to further estimate $K_{II}^1(q,p')$ and $K_{II}^2(q,p')$.

 Therefore, for $K_{II}^1(q,p')$ using \eqref{ssigmafrac.bound} and $|z|\le 1$ from \eqref{z.dyadic2}, we have
\begin{multline*}
  J^{-1/2}(q) \bigg| K_{II}^1(q,p')\bigg|
\\
\lesssim
\int_{(\sqrt{|z|^2+1}-1)\approx 2^{-2k}\tilde{s}^{-1}} \frac{dz}{\sqrt{|z|^2+1}}\tilde{s}^{-\gamma/2} \Phi(\tilde{g})\tilde{g}^{2+\gamma}|z|^{-2-\gamma} (1+|z|^2)^{1/2+\gamma/4}
\\\times \int_0^{\frac{1}{2}} d\vartheta \ \left(\frac{l|z|^2}{\sqrt{|z|^2+1}+1}+ j^2|z|^2\right)\exp\left(-\vartheta l(\sqrt{|z|^2+1}-1)+\vartheta jz_1\right)
\\
\lesssim
(l+j^2)\tilde{s}^{-\gamma/2} \Phi(\tilde{g})\tilde{g}^{2+\gamma}\int_0^{\frac{1}{2}} d\vartheta\\\times \int_{(\sqrt{|z|^2+1}-1)\approx 2^{-2k}\tilde{s}^{-1}} dz\ |z|^{-\gamma}\ \exp\left(-\vartheta l(\sqrt{|z|^2+1}-1)+\vartheta jz_1\right).
\end{multline*}
So that
\begin{multline*}
  J^{-1/2}(q) \bigg| K_{II}^1(q,p')\bigg|
\lesssim
(l+j^2)\tilde{s}^{-\gamma/2} \Phi(\tilde{g})\tilde{g}^{2+\gamma}\int_0^{\frac{1}{2}} d\vartheta\ \exp\left(\vartheta l\right)
\\
\times \int_{(\sqrt{|z|^2+1}-1)\approx 2^{-2k}\tilde{s}^{-1}} d|z|\ |z|^{1-\gamma}\ \exp\left(-\vartheta l\sqrt{|z|^2+1}\right)I_0(\vartheta j|z|)
\end{multline*}
Then finally using \eqref{z.dyadic2} we have
\begin{multline*}
  J^{-1/2}(q) \bigg| K_{II}^1(q,p')\bigg|
\lesssim 2^{k(\gamma-1)} \tilde{s}^{\frac{\gamma-1}{2}}(l+j^2)\tilde{s}^{-\gamma/2} \Phi(\tilde{g})\tilde{g}^{2+\gamma}
\\
\times\int_0^{\frac{1}{2}} d\vartheta\ \exp\left(\vartheta l\right)\int_{|z|\approx 2^{-k}\tilde{s}^{-1/2}} d|z|\  \exp\left(-\vartheta l\sqrt{|z|^2+1}\right)I_0(\vartheta j|z|),
\end{multline*}
Now we recall \eqref{bessel0} and \eqref{GS.estimate.exponential}.  We use those with \eqref{l2j2size} to obtain
\begin{multline*}
  \sqrt{J(q)}\exp(\vartheta l)   \exp\left(-\vartheta l\sqrt{|z|^2+1}\right)I_0(\vartheta j|z|)\\\lesssim  \exp\left(-\frac{q^0}{2}+\vartheta \frac{p'^0+q^0}{4}-\vartheta\frac{|p'-q|}{4}\right) \\
  \lesssim  \exp\left(-(1-\vartheta)\frac{q^0}{2}+\vartheta\bigg( \frac{p'^0-q^0}{4}-\frac{|p'-q|}{4}\bigg)\right)
  \lesssim  \exp\left(-(1-\vartheta)\frac{q^0}{2}\right),
\end{multline*}
by \eqref{FREQ:p0q0.le.pq}. We further have 
$$\int_0^{\frac{1}{2}} d\vartheta \exp\left(-(1-\vartheta)\frac{q^0}{2}\right)
= \frac{1}{q^0} \exp\left(-\frac{q^0}{4}\right).$$  Also note that 
\begin{equation}\label{eq.radial est}\int_{|z|\approx 2^{-k}\tilde{s}^{-1/2}} d|z|\ |z|^{m}\approx (2^{-k}\tilde{s}^{-1/2})^{m+1}. \end{equation}
Thus, collecting all of these estimates for \eqref{define.Tkl2.k1} we have
\begin{equation*}
\bigg| K_{II}^1(q,p')\bigg|
\lesssim 2^{k(\gamma-2)} (l+j^2)\tilde{s}^{-1} \Phi(\tilde{g})\tilde{g}^{2+\gamma}
\frac{1}{q^0} \exp\left(-\frac{q^0}{4}\right).
\end{equation*}
Then we can plug the estimate above into \eqref{define.Tkl2} on the region where $0 \le \vartheta \le 1/2$ using the convention \eqref{convention}, also using 
 $l,j^2\lesssim p'^0q^0$ and $\tilde{g}\lesssim \sqrt{\tilde{s}}$ from \eqref{l.upper.ineq} and \eqref{FREQ:s.ge.g2},
to obtain
\begin{multline}\notag
\bigg|(T^{k,l}_{+,II}-T^{k,l}_{-,II})(f,h,\eta)\bigg|_{\vartheta\le1/2}
\lesssim
2^{k(\gamma-2)}
\int_\rth\frac{dp'}{p'^0}\int_{\rth}\frac{dq}{q^0} 
\left| h(p')w^{2l}(p')\eta(p') f(q)\right|
\\
\times  \frac{\sqrt{\tilde{s}}}{\tilde{g}}
\frac{1}{q^0}\exp\left(-\frac{q^0}{4}\right)
(l+j^2)\tilde{s}^{-1} \Phi(\tilde{g})\tilde{g}^{2+\gamma}
\\
\lesssim 
2^{k(\gamma-2)}\int_\rth dp'\int_{\rth}dq  \left| h(p')w^{2l}(p')\eta(p')f(q) \right|  \tilde{g}^{\singS+\gamma}\frac{1}{q^0}\exp\left(-\frac{q^0}{4}\right). 
\end{multline}
We next use the Cauchy-Schwarz inequality to obtain that
\begin{equation} \notag
    \bigg|(T^{k,l}_{+,II}-T^{k,l}_{-,II})(f,h,\eta)\bigg|_{\vartheta\le1/2}
\lesssim 2^{k(\gamma-2)}I_1^{1/2} I_2^{1/2},
\end{equation}
where
\begin{equation} \notag
I_1
\eqdef
\int_\rth dp' |h(p')|^2w^{2l}(p')\int_{\rth}dq\  |f(q)|^2\tilde{g}^{\singS+\gamma+3/2}\frac{1}{q^0}(p'^0q^0)^{-3/4}\exp\left(-\frac{q^0}{4}\right),
\end{equation}
and
\begin{equation} \notag
I_2
\eqdef
\int_\rth dp' |\eta(p')|^2w^{2l}(p')\int_{\rth} dq\  \tilde{g}^{\singS+\gamma-3/2}\frac{1}{q^0}(p'^0q^0)^{3/4}\exp\left(-\frac{q^0}{4}\right).
\end{equation}
We will see below that it was important to add the term such as 
$\tilde{g}^{-3/2}(p'^0q^0)^{3/4}$ above.  The choice of the power $3/4$ here is sharp in the sense that this is the only possible value that makes both $I_1$ and $I_2$ above can be controlled.
We will now estimate both $I_1$ and $I_2$. 

For the estimate of $I_1,$ note that we have $\singS+\gamma+3/2\ge 0$ from \eqref{angassumption}-\eqref{singS.defin} and $$\tilde{g}^{\singS+\gamma+3/2}\lesssim (p'^0q^0)^{\singS/2+\gamma/2+3/4},$$
where we used \eqref{FREQ:g.le.sqrtpq}.
Then we have
$$\notag
I_1
\lesssim  |f|_{L^2_{-m}}^2|w^l h|_{L^2_{\frac{\singS+\gamma}{2}}}^2, $$ for any $m>0$.

For the estimate of $I_2$, if $\singS+\gamma-3/2<0,$ from \eqref{jutter.integral.est} we have that
\begin{multline*}
\int_{\rth}dq\  (p'^0q^0)^{3/4}\tilde{g}^{\singS+\gamma-3/2}\exp\left(-\frac{q^0}{4}\right)\\ \le\int_{\rth}dq\  (p'^0q^0)^{3/4}|p'-q|^{\singS+\gamma-3/2}(p'^0q^0)^{-\singS/2-\gamma/2+3/4}\exp\left(-\frac{q^0}{4}\right)
\approx (p'^0)^{\singS/2+\gamma/2}.\end{multline*}On the other hand, if $\singS+\gamma-3/2\ge 0$, then we have\begin{multline*}
\int_{\rth}dq\  (p'^0q^0)^{3/4}\tilde{g}^{\singS+\gamma-3/2}\exp\left(-\frac{q^0}{4}\right)\\ \lesssim \int_{\rth}dq\  (p'^0q^0)^{3/4}(p'^0q^0)^{\singS/2+\gamma/2-3/4}\exp\left(-\frac{q^0}{4}\right)
\approx (p'^0)^{\singS/2+\gamma/2}.\end{multline*}
Therefore, in general we have
$$I_2\lesssim  |w^l \eta|_{L^2_{\frac{\singS+\gamma}{2}}}^2.$$
Altogether, we conclude that we have
\begin{equation}\label{previous.small.theta}
    \bigg|(T^{k,l}_{+,II}-T^{k,l}_{-,II})(f,h,\eta)\bigg|_{\vartheta\le1/2}\lesssim  2^{k(\gamma-2)}|f|_{L^2_{-m}}|w^l h|_{L^2_{\frac{\singS+\gamma}{2}}}|w^l \eta|_{L^2_{\frac{\singS+\gamma}{2}}}.
\end{equation}
This completes our estimate of $T^{k,l}_{+,II}-T^{k,l}_{-,II}$ when $\vartheta\le1/2$.

{\it The other case with $\vartheta>\frac{1}{2}$}.
 In this case, we recall \eqref{define.Tkl2.k2} and then we have
\begin{multline*}
   J^{-1/2}(q)\bigg| K_{II}^2(q,p')\bigg|
            \lesssim \int_{\mathbb{R}^2} \frac{dz}{\sqrt{|z|^2+1}}s_\Lambda\sigma_k(g_\Lambda,\theta_\Lambda)\frac{\tilde{s} \Phi(\tilde{g})\tilde{g}^4}{s_\Lambda \Phi(g_\Lambda)g^4_\Lambda}
          \\
          \times \int_{\frac{1}{2}}^1 d\vartheta \ \left(\frac{l|z|^2}{\sqrt{|z|^2+1}+1}+ j^2|z|^2\right)\exp\left(-\vartheta l(\sqrt{|z|^2+1}-1)+\vartheta jz_1\right)
          \\
          \lesssim 
          (l+j^2)\int_{\frac{1}{2}}^1 d\vartheta 
          \int_{\mathbb{R}^2}
          \frac{dz}{\sqrt{|z|^2+1}}|z|^2 s_\Lambda\sigma_k(g_\Lambda,\theta_\Lambda)\frac{\tilde{s} \Phi(\tilde{g})\tilde{g}^4}{s_\Lambda \Phi(g_\Lambda)g^4_\Lambda}
          \\\times \exp\left(-\vartheta l(\sqrt{|z|^2+1}-1)+\vartheta jz_1\right)\\
          \lesssim(l+j^2) 
          \int_{\mathbb{R}^2}
          \frac{dz}{\sqrt{|z|^2+1}}|z|^2 s_\Lambda\sigma_k(g_\Lambda,\theta_\Lambda)\frac{\tilde{s} \Phi(\tilde{g})\tilde{g}^4}{s_\Lambda \Phi(g_\Lambda)g^4_\Lambda}
          \\\times \exp\left(- l(\sqrt{|z|^2+1}-1)+ jz_1\right)
          \\\times \int_{\frac{1}{2}}^1 d\vartheta\exp\left(-(\vartheta-1) l(\sqrt{|z|^2+1}-1)+(\vartheta-1) jz_1\right).
\end{multline*}
Plugging this into \eqref{define.Tkl2} on $1/2 < \vartheta \le 1$ using the convention \eqref{convention} we have
\begin{multline}\label{II.theta.large}
\bigg|(T^{k,l}_{+,II}-T^{k,l}_{-,II})(f,h,\eta)\bigg|_{\vartheta>1/2}
\lesssim
\int_\rth\frac{dp'}{p'^0} h(p')w^{2l}(p')\eta(p')\int_{\rth}\frac{dq}{q^0} \sqrt{J(q)}f(q)\frac{\sqrt{\tilde{s}}}{\tilde{g}}
\\
\times(l+j^2) \int_{\mathbb{R}^2}
\frac{dz}{\sqrt{|z|^2+1}}|z|^2 s_\Lambda\sigma_k(g_\Lambda,\theta_\Lambda)\frac{\tilde{s} \Phi(\tilde{g})\tilde{g}^4}{s_\Lambda \Phi(g_\Lambda)g^4_\Lambda}
          \\\times \exp\left(- l(\sqrt{|z|^2+1}-1)+ jz_1\right)
          \\\times \int_{\frac{1}{2}}^1 d\vartheta\exp\left(-(\vartheta-1) l(\sqrt{|z|^2+1}-1)+(\vartheta-1) jz_1\right)
   .
\end{multline}
Then by the Cauchy-Schwarz inequality, we have
\begin{equation}\notag
    \bigg|(T^{k,l}_{+,II}-T^{k,l}_{-,II})(f,h,\eta)\bigg|_{\vartheta>1/2}
\lesssim D_3^{1/2}\times D_4^{1/2},
\end{equation}
where for any $m>0$ we define 
\begin{multline}\notag
D_3\eqdef
\int_{\frac{1}{2}}^1 d\vartheta \int_\rth\frac{dp'}{p'^0} |h(p')|^2 w^{2l}(p')\int_{\rth}\frac{dq}{q^0} \sqrt{J(q)}\left(\frac{\sqrt{\tilde{s}}}{\tilde{g}}\right)^2(q^0)^{m} \left(\frac{p'^0q^0}{\tilde{s}}\right)^{\frac{7}{4}}
\\
\times
\int_{\mathbb{R}^2}
\frac{dz}{\sqrt{|z|^2+1}} s_\Lambda\sigma_k(g_\Lambda,\theta_\Lambda)\exp\left(- l(\sqrt{|z|^2+1}-1)+ jz_1\right), 
\end{multline}
and 
\begin{multline}\notag
D_4\eqdef
\int_\rth\frac{dp'}{p'^0} |\eta(p')|^2 w^{2l}(p')\int_{\rth}\frac{dq}{q^0}|f(q)|^2 \sqrt{J(q)} 
\\
\times (l+j^2)^2 
\int_{\mathbb{R}^2}
\frac{dz}{\sqrt{|z|^2+1}}|z|^4 s_\Lambda\sigma_k(g_\Lambda,\theta_\Lambda)\left(\frac{\tilde{s} \Phi(\tilde{g})\tilde{g}^4}{s_\Lambda \Phi(g_\Lambda)g^4_\Lambda}\right)^2\left(\frac{\tilde{s}}{p'^0q^0}\right)^{\frac{7}{4}}
          \\\times \int_{\frac{1}{2}}^1 d\vartheta\exp\left(-(2\vartheta-1) l(\sqrt{|z|^2+1}-1)+(2\vartheta-1) jz_1\right)(q^0)^{-m}.
\end{multline}The choice of the power $7/4$ here is sharp in the sense that this is the only possible value that makes both $D_3$ and $D_4$ above can be controlled.
We will now estimate the terms $D_3$ and $D_4$.

For the estimates of $D_3$, we notice that $D_3$ is of the form \eqref{second.eq.Ig}   and so it can also be written in the form \eqref{original.eq.Ig} using Lemma \ref{transformation.Lemma.appendix}.  Thus we obtain that up to an unimportant constant 
$D_3$ corresponds to
$$
D_3\approx \int_\rth dp \int_\rth dq \int_{\mathbb{S}^2}d\omega\ v_{\text{\o}}\frac{\sqrt{\tilde{s}}}{\tilde{g}} \left(\frac{p'^0q^0}{\tilde{s}}\right)^{\frac{7}{4}} \sigma(g,\theta)\chi_k(\bar{g})w^{2l}(p')J(q')^{1/2} |h(p')|^2(q^0)^{m}.$$  
By taking the pre-post change of variables $(p,q)\mapsto (p',q')$ as in \eqref{prepost.change}, and using Lemma \ref{lemmaqq}, we have
$$D_3\approx\int_\rth dp \int_\rth dq \int_{\mathbb{S}^2}d\omega\  v_{\text{\o}}\frac{\sqrt{\tilde{s}}}{\tilde{g}} \left(\frac{p^0q'^0}{\tilde{s}}\right)^{\frac{7}{4}} \sigma(g,\theta)\chi_k(\bar{g})w^{2l}(p)J(q)^{1/2} |h(p)|^2(q^0)^{m}.$$
Then using \eqref{Moller.def}, Lemma \ref{lemmaqq} and \eqref{gtildeg.equiv}, we further have
$$D_3\approx\int_\rth dp \int_\rth dq \int_{\mathbb{S}^2}d\omega\  \left(\frac{p^0q^0}{s}\right)^{\frac{3}{4}} \sigma(g,\theta)\chi_k(\bar{g})w^{2l}(p)J(q)^{1/2} |h(p)|^2(q^0)^{m}.$$
Next using \eqref{FREQ:s.ge.g2} and \eqref{Bk}, we have
\begin{multline}\label{D3.est.mid}D_3\lesssim 2^{k\gamma}\int_\rth dp\  w^{2l}(p) |h(p)|^2\int_\rth dq  \left(\frac{p^0q^0}{s}\right)^{\frac{3}{4}} J(q)^{1/2} g^{\singS+\gamma} (q^0)^{m}\\
\lesssim 2^{k\gamma}\int_\rth dp\  w^{2l}(p) |h(p)|^2\int_\rth dq  \left(p^0q^0\right)^{\frac{3}{4}} J(q)^{1/2} g^{\singS+\gamma-\frac{3}{2}} (q^0)^{m}.
\end{multline}
Now if $\singS+\gamma-\frac{3}{2}\ge 0,$ then we use \eqref{FREQ:g.le.sqrtpq} and obtain 
$$2^{k\gamma}\int_\rth dp\  w^{2l}(p) |h(p)|^2\int_\rth dq  \left(p^0q^0\right)^{\frac{3}{4}} J(q)^{1/2} (p^0q^0)^{\frac{\singS+\gamma}{2}-\frac{3}{4}} (q^0)^{m}\lesssim 2^{k\gamma} \left|w^lh\right|^2_{L^2_\frac{\singS+\gamma}{2}}.$$
If $-3<\singS+\gamma-\frac{3}{2}<0,$ then we use \eqref{FREQ:g.ge.lower} and obtain  
\begin{multline*}2^{k\gamma}\int_\rth dp\  w^{2l}(p) |h(p)|^2\int_\rth dq  \left(p^0q^0\right)^{\frac{3}{4}} J(q)^{1/2} \left(\frac{|p-q|}{\sqrt{p^0q^0}}\right)^{\singS+\gamma-\frac{3}{2}} (q^0)^{m}\\
\lesssim 2^{k\gamma} \left|w^lh\right|^2_{L^2_\frac{\singS+\gamma}{2}}.\end{multline*}
Therefore in either case we have
\begin{equation}\label{D3.est}D_3\lesssim 2^{k\gamma} \left|w^lh\right|^2_{L^2_\frac{\singS+\gamma}{2}}.
\end{equation}
This completes our estimate for the term $D_3$.

On the other hand, for the estimates of $D_4$, we note that
\begin{multline*}
   D_4\lesssim  \int_\rth\frac{dp'}{p'^0} |\eta(p')|^2 w^{2l}(p')\int_{\rth}\frac{dq}{q^0}|f(q)|^2 \sqrt{J(q)} (p'^0q^0)^2\left(\frac{\tilde{s}}{p'^0q^0}\right)^{\frac{7}{4}}\\
\times  \int_{(\sqrt{|z|^2+1}-1)\approx2^{-2k}\tilde{s}^{-1}}\frac{dz}{\sqrt{|z|^2+1}}|z|^4 s_\Lambda\sigma(g_\Lambda,\theta_\Lambda)\left(\frac{\tilde{s} \Phi(\tilde{g})\tilde{g}^4}{s_\Lambda \Phi(g_\Lambda)g^4_\Lambda}\right)
          \\\times \int_{\frac{1}{2}}^1 d\vartheta\exp\left(-(2\vartheta-1) l(\sqrt{|z|^2+1}-1)+(2\vartheta-1) jz_1\right)(q^0)^{-m},
\end{multline*}where we used \eqref{l.upper.ineq} and \eqref{frac sg.est}. 
Also, we note that $2\vartheta -1 >0$ if $\vartheta >1/2$. Then using \eqref{z.dyadic2} and \eqref{Jql2j2.bound} with $l$ and $j$ in \eqref{Jql2j2.bound} replaced by $(2\vartheta -1)l$ and $(2\vartheta -1)j$ and \eqref{FREQ:p0q0.le.pq} we have
\begin{multline*}
\sqrt{J(q)}\max_{0\le |z|\le 1}\exp\left(-(2\vartheta-1) l(\sqrt{|z|^2+1}-1)+(2\vartheta-1) jz_1\right)\\
\lesssim    (J(q))^{1/2}e^{(2\vartheta -1)l}\exp (-(2\vartheta -1)\sqrt{l^2-j^2})\\
\lesssim (J(q))^{1-\vartheta}\exp\bigg((2\vartheta-1)\left(-\frac{q^0}{2}+\frac{p'^0+q^0}{4}-\frac{|p'-q|}{4}\right) \bigg)\\
  \lesssim (J(q))^{1-\vartheta}  \exp\left((2\vartheta-1)\left( \frac{p'^0-q^0}{4}-\frac{|p'-q|}{4}\right)\right)\lesssim (J(q))^{1-\vartheta}.
\end{multline*}
Therefore, we have
\begin{multline*}
   D_4\lesssim  \int_\rth\frac{dp'}{p'^0} |\eta(p')|^2 w^{2l}(p')\int_{\rth}\frac{dq}{q^0}|f(q)|^2  (p'^0q^0)^2(q^0)^{-m-1}\left(\frac{\tilde{s}}{p'^0q^0}\right)^{\frac{7}{4}}\\
\times  \int_{(\sqrt{|z|^2+1}-1)\approx2^{-2k}\tilde{s}^{-1}}\frac{dz}{\sqrt{|z|^2+1}}|z|^4 \tilde{s}^{-\gamma/2} \Phi(\tilde{g})\tilde{g}^{2+\gamma}|z|^{-2-\gamma} (1+|z|^2)^{1/2+\gamma/4},
\end{multline*}
where we used \eqref{ssigmafrac.bound} and 
$$\int_{\frac{1}{2}}^1d\vartheta\ (J(q))^{1-\vartheta}\lesssim \frac{1}{q^0}.$$
Then by \eqref{z.dyadic2}, using \eqref{FREQ:s.ge.g2} and \eqref{eq.radial est} we further have
\begin{multline*}
   D_4\lesssim  \int_\rth\frac{dp'}{p'^0} |\eta(p')|^2 w^{2l}(p')\int_{\rth}\frac{dq}{q^0}|f(q)|^2  (p'^0q^0)^2(q^0)^{-m-1}\left(\frac{\tilde{s}}{p'^0q^0}\right)^{\frac{7}{4}}\\
\times  \int_{|z|\approx2^{-k}\tilde{s}^{-1/2}}d|z|\  |z|^{3-\gamma} \tilde{s}^{-\gamma/2} \tilde{g}^{2+\singS+\gamma}\\
\lesssim  2^{k(\gamma-4)}\int_\rth dp' |\eta(p')|^2 w^{2l}(p')\int_{\rth}dq|f(q)|^2  (q^0)^{-m-1}
 \tilde{g}^{\frac{3}{2}+\singS+\gamma}\left(p'^0q^0\right)^{-\frac{3}{4}}.
\end{multline*}
Then since $\frac{3}{2}+\singS+\gamma>0$, we have
$$\tilde{g}^{\frac{3}{2}+\singS+\gamma}\lesssim (p'^0q^0)^{\frac{\singS+\gamma}{2}+\frac{3}{4}},$$ by \eqref{FREQ:g.le.sqrtpq}.
Therefore, we have
$$D_4\lesssim 2^{k(\gamma-4)}|f|_{L^2_{-m'}}^2|w^l \eta|_{L^2_{\frac{\singS+\gamma}{2}}}^2,$$ 
for any $m'\ge 0.$
Thus, together with \eqref{D3.est}, we have $$\bigg|(T^{k,l}_{+,II}-T^{k,l}_{-,II})(f,h,\eta)\bigg|_{\vartheta>1/2}\lesssim  2^{k(\gamma-2)}|f|_{L^2_{-m'}}|w^l h|_{L^2_{\frac{\singS+\gamma}{2}}}|w^l \eta|_{L^2_{\frac{\singS+\gamma}{2}}}.$$
Then altogether, combining the estimate for $\vartheta>1/2$ with the one for $\vartheta\le 1/2$ in \eqref{previous.small.theta} for any $m'\ge 0$ we have
\begin{equation}\label{estimate.T2I}
    \left|(T^{k,l}_{+,II}-T^{k,l}_{-,II})(f,h,\eta)\right|\lesssim  2^{k(\gamma-2)}|f|_{L^2_{-m'}}|w^l h|_{L^2_{\frac{\singS+\gamma}{2}}}|w^l \eta|_{L^2_{\frac{\singS+\gamma}{2}}}.
\end{equation}
This completes the estimate for Part $II$.

{\bf Estimates on Part $III$}.
\label{sec.newcancel.part3}
Finally, we estimate the last part from \eqref{define.Tkl3}.   Note that in \eqref{define.Tkl3} $T^{k,l}_{+,III}-T^{k,l}_{-,III}$ is non-negative since we have \eqref{frac sg.est}. 
Then,
using \eqref{colfre4.2} and \eqref{z.dyadic} we first note that 
$$\left|1 -  \frac{\tilde{s} \Phi(\tilde{g})\tilde{g}^4}{s_\Lambda \Phi(g_\Lambda)g^4_\Lambda} \right|\le \left|1 -  \frac{\tilde{s} \Phi(\tilde{g})\tilde{g}^4}{s_\Lambda \Phi(g_\Lambda)g^4_\Lambda} \right|^{\frac{1}{2}}\lesssim \left(\frac{\tilde{s}(\sqrt{|z|^2+1}-1)}{g^2_\Lambda}\right)^\frac{1}{2}\approx \frac{2^{-k}}{g_\Lambda}.$$ 
Therefore,  we obtain
\begin{multline}\notag
(T^{k,l}_{+,III}-T^{k,l}_{-,III})
\lesssim 2^{-k}	\int_\rth\frac{dp'}{p'^0} h(p')w^{2l}(p')\eta(p')\int_{\rth}\frac{dq}{q^0} \sqrt{J(q)}f(q)\frac{\sqrt{\tilde{s}}}{\tilde{g}}\\\times \int_{(\sqrt{|z|^2+1}-1)\approx 2^{-2k}\tilde{s}^{-1}} \frac{dz}{\sqrt{|z|^2+1}}s_\Lambda\sigma(g_\Lambda,\theta_\Lambda) \exp\left(-l(\sqrt{|z|^2+1}-1)+jz_1\right) 
	\frac{1}{g_\Lambda}.
\end{multline}
Then using the Cauchy-Schwarz inequality we have
\begin{multline}\notag
(T^{k,l}_{+,III}-T^{k,l}_{-,III})
\\
\lesssim 2^{-k}	\bigg( \int_\rth\frac{dp'}{p'^0} |h(p')|^2 w^{2l}(p')\int_{\rth}\frac{dq}{q^0} \sqrt{J(q)}\left(\frac{\sqrt{\tilde{s}}}{\tilde{g}}\right)^2(q^0)^{m} \left(\frac{p'^0q^0}{\tilde{g}^2}\right)^{\frac{3}{4}}
\\\times\int_{(\sqrt{|z|^2+1}-1)\approx2^{-2k}\tilde{s}^{-1}}\frac{dz}{\sqrt{|z|^2+1}} s_\Lambda\sigma(g_\Lambda,\theta_\Lambda)\exp\left(- l(\sqrt{|z|^2+1}-1)+ jz_1\right) \bigg)^{1/2}\\
\times\bigg(\int_\rth\frac{dp'}{p'^0} |\eta(p')|^2 w^{2l}(p')\int_{\rth}\frac{dq}{q^0}|f(q)|^2 \sqrt{J(q)} \\
\times  \int_{(\sqrt{|z|^2+1}-1)\approx2^{-2k}\tilde{s}^{-1}}\frac{dz}{\sqrt{|z|^2+1}} s_\Lambda\sigma(g_\Lambda,\theta_\Lambda)\left(\frac{\tilde{g}^2}{p'^0q^0}\right)^{\frac{3}{4}}\frac{1}{g^2_\Lambda}
          \\\times \exp\left(- l(\sqrt{|z|^2+1}-1)+ jz_1\right)(q^0)^{-m}\bigg)^{1/2}
          =: 2^{-k}D_5^{1/2}\times D_6^{1/2}.
\end{multline}
We estimate $D_5$ similarly to how we estimated for $D_3$ just below \eqref{II.theta.large}.  Note that $D_5$ and $2D_3$ are the same except for the term $\left(\frac{p'^0q^0}{\tilde{s}}\right)^{\frac{7}{4}}$ in $D_3$ replaced by $\left(\frac{p'^0q^0}{\tilde{g}^2}\right)^{\frac{3}{4}}$ in $D_5$, since $\int_{\frac{1}{2}}^1 d\vartheta=\frac{1}{2}.$ By the same argument, $D_5$ has exactly the same upper-bound in \eqref{D3.est.mid} by observing that the additional term $\frac{\tilde{s}}{p'^0q^0}$  in $D_5$ satisfies $\frac{\tilde{s}}{p'^0q^0} \lesssim 1$ (by \eqref{FREQ:s.le.pq}) and that the term $\left(\frac{1}{s}\right)^{\frac{3}{4}}$ in \eqref{D3.est.mid} was treated as $\left(\frac{1}{s}\right)^{\frac{3}{4}}\lesssim \left(\frac{1}{g^2}\right)^{\frac{3}{4}}$. Therefore, as in \eqref{D3.est}, we have\begin{equation}\label{D5.est}
D_5\lesssim 2^{k\gamma} \left|w^lh\right|^2_{L^2_\frac{\singS+\gamma}{2}} .\end{equation}
This completes our estimate for the term $D_5$.

For the estimates of $D_6$, we use \eqref{z.dyadic} and \eqref{eq.ssigma} to observe that 
$$ s_\Lambda\sigma(g_\Lambda,\theta_\Lambda)\frac{1}{g^2_\Lambda}
	\lesssim g_\Lambda^{\singS+\gamma} \tilde{s}^{-\gamma/2} (\sqrt{|z|^2+1}-1)^{-1-\gamma/2}
	\approx 2^{k(\gamma+2)} g_\Lambda^{\singS+\gamma} \tilde{s}.
$$
Thus using \eqref{g2y.variable} we have
\begin{multline*}
    D_6\lesssim 2^{k(\gamma+2)}\int_\rth\frac{dp'}{p'^0} |\eta(p')|^2 w^{2l}(p')\int_{\rth}\frac{dq}{q^0}|f(q)|^2 \sqrt{J(q)} \\
\times  \int_{(\sqrt{|z|^2+1}-1)\approx2^{-2k}\tilde{s}^{-1}}\frac{dz}{\sqrt{|z|^2+1}}  g_\Lambda^{\singS+\gamma+\frac{3}{2}} \tilde{s}\left(\frac{1}{p'^0q^0}\right)^{\frac{3}{4}}
          \\\times \exp\left(- l(\sqrt{|z|^2+1}-1)+ jz_1\right)(q^0)^{-m}.
\end{multline*}
Note that $\singS+\gamma+\frac{3}{2}>0$ and hence
$$g_\Lambda^{\singS+\gamma+\frac{3}{2}} \lesssim \tilde{s}^{\frac{\singS+\gamma}{2}+\frac{3}{4}} ,$$ by \eqref{g2y.variable} and \eqref{z.dyadic2}.  Next using
\eqref{GS.estimate.exponential} and
\eqref{Jql2j2.bound}, we have
$$ \sqrt{J(q)}\max_{0\le |z|\le 1}\exp\left(- l(\sqrt{|z|^2+1}-1)+ jz_1\right) \lesssim 1.$$ Thus, using \eqref{FREQ:s.le.pq}, we conclude that 
\begin{multline*}
    D_6\lesssim 2^{k(\gamma+2)}\int_\rth\frac{dp'}{p'^0} |\eta(p')|^2 w^{2l}(p')\int_{\rth}\frac{dq}{q^0}|f(q)|^2  \\
\times  \int_{(\sqrt{|z|^2+1}-1)\approx2^{-2k}\tilde{s}^{-1}}
\frac{|z|d|z|}{\sqrt{|z|^2+1}}   \tilde{s}^{\frac{\singS+\gamma}{2}+\frac{7}{4}}\left(\frac{1}{p'^0q^0}\right)^{\frac{3}{4}}
         (q^0)^{-m}.
\end{multline*} 
Thus using \eqref{eq.radial est} we have 
\begin{multline*}
    D_6\lesssim    2^{k(\gamma+2)}\int_\rth\frac{dp'}{p'^0} |\eta(p')|^2 w^{2l}(p')\int_{\rth}\frac{dq}{q^0}|f(q)|^2  \\
\times  \int_{(\sqrt{|z|^2+1}-1)\approx2^{-2k}\tilde{s}^{-1}}d(\sqrt{|z|^2+1}-1)\  \tilde{s}^{\frac{\singS+\gamma}{2}+\frac{7}{4}}\left(\frac{1}{p'^0q^0}\right)^{\frac{3}{4}}
         (q^0)^{-m}  \\
         \approx 2^{k\gamma}\int_\rth\frac{dp'}{p'^0} |\eta(p')|^2 w^{2l}(p')\int_{\rth}\frac{dq}{q^0}|f(q)|^2
\tilde{s}^{\frac{\singS+\gamma}{2}+\frac{3}{4}}\left(\frac{1}{p'^0q^0}\right)^{\frac{3}{4}}
         (q^0)^{-m}\\
        \lesssim 2^{k\gamma}\int_\rth\frac{dp'}{p'^0} |\eta(p')|^2 w^{2l}(p')\int_{\rth}\frac{dq}{q^0}|f(q)|^2
(p'^0q^0)^{\frac{\singS+\gamma}{2}}
         (q^0)^{-m}.
\end{multline*} 
Since $m>0$ can be any number arbitrarily large, we have
$$D_6\lesssim 2^{k\gamma}|f|_{L^2_{-m'}}^2
|w^l \eta|_{L^2_{\frac{\singS+\gamma}{2}-1}}^2,$$ for any $m'\ge 0.$
Thus, together with \eqref{D5.est},  for any $m'\ge 0$ we have 
$$
\left| T^{k,l}_{+,III}-T^{k,l}_{-,III}\right|\lesssim  2^{k(\gamma-1)}|f|_{L^2_{-m'}}|w^l h|_{L^2_{\frac{\singS+\gamma}{2}}}|w^l \eta|_{L^2_{\frac{\singS+\gamma}{2}-1}},
$$
This estimate combined with \eqref{estimate.TI} and \eqref{estimate.T2I} thus completes the proof of Proposition \ref{prop:cancellation2}. 
\end{proof}

This concludes our cancellation estimates for the differences involving three arbitrary smooth functions.

\subsection{Additional estimates}\label{subsec.additional}
  We will also need estimates when we have a more specific Schwartz function satisfying
the following uniform estimate
\begin{equation}
\label{derivESTa}
(|\nabla| \phi)(p) \le C_\phi e^{-c\pZ},
\quad
C_\phi \ge 0, \quad 
c> 0.
\end{equation}

With this in mind, we have the next estimates:
\begin{proposition}  
\label{compactCANCELe}
We assume \eqref{derivESTa}, then
for any $k \geq 0$ we have
\begin{gather}
\notag
 \left| (T^{k,l}_{+}-T^{k,l}_{-})(g,h,\phi) \right| 
   \lesssim 
   C_\phi ~ 2^{(\gamma-1)k} 
    | g|_{L^2_{-m}} | h|_{L^2_{-m}} .
\end{gather}
The above inequality holds uniformly for any $m\ge 0$ and $l \in \mathbb{R}$.
\end{proposition}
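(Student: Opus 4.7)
The strategy is to closely follow the proof of Proposition \ref{Cancellationproposition}, replacing $(f,h,\eta)$ by $(g,h,\phi)$, but using the exponential decay hypothesis \eqref{derivESTa} to eliminate the need for the weighted fractional Sobolev norms on the right-hand side. I would start from the representation \eqref{eq.T+-T-} and decompose
$$
w^{2l}(p')\sqrt{J(q')}\phi(p')-w^{2l}(p)\sqrt{J(q)}\phi(p)=\text{I}+\text{II}+\text{III}
$$
exactly as in \eqref{canceldecompp}. The only difference from the earlier proof is that every cancellation term will now be multiplied by a factor that decays exponentially in $p^0$, which allows us to drop all polynomial weights.

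For Part $\text{I}=w^{2l}(p')\sqrt{J(q')}(\phi(p')-\phi(p))$, apply the fundamental theorem to write $\phi(p')-\phi(p)=(p'-p)\cdot\int_0^1(\nabla\phi)(\kappa(\vartheta))\,d\vartheta$ with $\kappa$ as in \eqref{path.kappa}, and bound $|\nabla\phi(\kappa(\vartheta))|\le C_\phi e^{-c\kappa(\vartheta)^0}$ using \eqref{derivESTa}. Since $k>0$ forces $\bar g<1$, Lemma \ref{lemmaqq} gives $\kappa(\vartheta)^0\approx p^0\approx p'^0$, so the exponential factor dominates the polynomial weight $w^{2l}(p')$ for any $l\in\mathbb{R}$ (at the cost of an $l$-dependent constant). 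Using $|p'-p|=|q'-q|\le\bar g\sqrt{q^0q'^0}\lesssim 2^{-k}\sqrt{q^0q'^0}$ and $(q^0q'^0)^{1/2}\sqrt{J(q')}\lesssim (J(q)J(q'))^{\epsilon}$ as in \eqref{page28I2}, one pulls out $2^{-k}$, applies Cauchy–Schwarz on the remaining $(p,q,\omega)$ integral, and uses the kernel size estimate \eqref{Bk} to produce the additional factor $2^{k\gamma}$. The resulting $(p,q)$ integrals are then controlled exactly as in the proof of Proposition \ref{T-proposition}: the combined exponential decays in $p$ (from $\phi$) and $q$ (from the residual $(J(q)J(q'))^{\epsilon'}$) absorb any polynomial weight, yielding the bound $C_\phi\, 2^{(\gamma-1)k}|g|_{L^2_{-m}}|h|_{L^2_{-m}}$ for any $m\ge 0$.

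Parts $\text{II}$ and $\text{III}$ are handled by the same recipe. For Part $\text{II}$, write $\sqrt{J(q')}-\sqrt{J(q)}=\int_0^1(q'-q)\cdot(\nabla\sqrt{J})(\kappa_q(\vartheta))\,d\vartheta$ with $|\nabla|\sqrt{J}(\kappa_q(\vartheta))\lesssim\sqrt{J(\kappa_q(\vartheta))}$; together with $|q-q'|\lesssim 2^{-k}\sqrt{q^0q'^0}$, $w^{2l}(p')\approx w^{2l}(p)$, and the exponential decay of $\phi(p)$, the same Cauchy–Schwarz/\eqref{Bk} argument gives the desired bound (cf.\ \eqref{partII}). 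For Part $\text{III}$, the mean value theorem yields
$$
|w^{2l}(p')-w^{2l}(p)|\lesssim |l|\,|p-p'|\max\bigl((p^0)^{2l-1},(p'^0)^{2l-1}\bigr),
$$
and the gain $|p-p'|\lesssim 2^{-k}\sqrt{p^0p'^0}$ produces the $2^{-k}$ factor while the exponential decay $\phi(p)\le C_\phi e^{-cp^0}$ absorbs the polynomial growth uniformly for all $l\in\mathbb{R}$.

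The main obstacle, and the reason \eqref{derivESTa} is crucial, is the uniformity in $l\in\mathbb{R}$: Proposition \ref{Cancellationproposition} required $l\ge 0$ (so that $w^{2l}(p')\approx w^{2l}(p)$ for $\bar g<1$), but here Part $\text{III}$ produces polynomial growth in $p^0$ of arbitrarily large order when $|l|$ is large, and only the exponential factor inherited from $\phi$ (or its gradient) can dominate it. Since the implicit constant in $\lesssim$ is permitted to depend on $l$, $m$, and $c$, this creates no genuine obstruction; all remaining kernel-size and dyadic estimates mirror those of Propositions \ref{T-proposition} and \ref{Cancellationproposition} verbatim.
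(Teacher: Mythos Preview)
Your proposal is correct and follows essentially the same approach as the paper: both use the decomposition \eqref{canceldecompp} with $\eta$ replaced by $\phi$, and the key observation in each case is that the exponential decay of $\phi$ (and $|\nabla|\phi$) allows all three pieces $\text{I}$, $\text{II}$, $\text{III}$ to be estimated by the simple Part~$\text{II}$ argument of Proposition~\ref{Cancellationproposition} (pull out $2^{-k}$ from $|p'-p|$, absorb polynomial weights into the exponentials, then apply \eqref{Bk} and the $T^{k,l}_-$-type Cauchy--Schwarz). Your write-up supplies more detail on the $l\in\mathbb{R}$ uniformity than the paper does, but the underlying mechanism is identical.
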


\begin{proof}
We decompose the cancellation term as in \eqref{canceldecompp}, with terms $I$, $II$, and $III$, except that $\eta$ in \eqref{canceldecompp} is replaced by $\phi$.   Then $\phi$ satisfying \eqref{derivESTa} has rapid decay as does $J$ from \eqref{jutter.equilibrium}.    In particular, in this case all the terms $I$, $II$, and $III$ can now be estimated exactly as in
 the estimate from \eqref{partII}, also using the exponential decay as in \eqref{derivESTa} and \eqref{jutter.equilibrium}.  We thus obtain directly Proposition \ref{compactCANCELe}. 
\end{proof}

 \begin{proposition}\label{prop:GradRap}
Fix ${l} \in \mathbb{R}$.
Assume \eqref{derivESTa}.  We have the uniform estimates
\begin{equation}
 \left| T^{k,l}_{+}(f,\phi,\eta) \right| 
 \lesssim 
  C_\phi ~ 
 2^{\gamma k} ~ 
 | w^{l} f|_{L^2_{\frac{\singS+\gamma-1}{2}}}  | w^{l} \eta|_{L^2_{\frac{\singS+\gamma}{2}}},
 \label{tminushRAPplus}
\end{equation}
for any $k \leq 0$.  
Additionally for any $m\ge 0$ and any $k$ we obtain
\begin{align}
    |T^{k,l}_+(f,h,\phi)|+|T^{k,l}_-(f,h,\phi)|\lesssim
      C_\phi ~ 2^{k\gamma}|f|_{L^2_{-m}}|h|_{L^2_{-m}}.
 \label{tminushRAPplus2}
\end{align}

 \end{proposition}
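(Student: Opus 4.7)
The overall strategy for both estimates is to exploit the pointwise exponential decay implicit in \eqref{derivESTa}: since $|\phi(p)| \leq (|\nabla|\phi)(p) \lesssim C_\phi e^{-cp^0}$ (using $|\phi|=|\nabla|^0\phi\leq |\nabla|^1\phi$ from \eqref{nabla}), the product $w^{2l}(p)|\phi(p)|$ decays exponentially in $p^0$ for every $l\in\mathbb{R}$ (exponential decay dominates polynomial growth). The proofs then follow the templates of Propositions \ref{T-proposition} and \ref{T+proposition}, with the weighted $L^2$-norm of $\phi$ effectively replaced by the constant $C_\phi$.

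For \eqref{tminushRAPplus2}, the function $\phi$ sits in the $\eta$-slot of both $T^{k,l}_\pm$. For $T^{k,l}_-(f,h,\phi)$ from \eqref{T+++}, the integrand directly contains $w^{2l}(p)|\phi(p)|\sqrt{J(q)} \lesssim C_\phi e^{-c'(p^0+q^0)}$. Splitting this exponential symmetrically in Cauchy--Schwarz between the two factors carrying $|f(q)|$ and $|h(p)|$, and invoking \eqref{Bk}, \eqref{moller.upper.est}, and either \eqref{g.upper.est} (hard case) or \eqref{g.lower.est} combined with \eqref{jutter.integral.est} (soft case), the inner kernel integrals $\int \sigma_k v_{\text{\o}} e^{-c'(p^0+q^0)/2}\,dp\,d\omega$ are each controlled by $C 2^{k\gamma}$ since the exponential absorbs all polynomial growth of $g^{\singS+\gamma}$. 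This yields $|T^{k,l}_-(f,h,\phi)| \lesssim C_\phi 2^{k\gamma}|f|_{L^2_{-m}}|h|_{L^2_{-m}}$ for any $m \geq 0$. For $T^{k,l}_+(f,h,\phi)$, the pre-post change of variables \eqref{prepost.change} (as in the derivation of \eqref{T++}) converts the integrand so that $w^{2l}(p')|\phi(p')|\sqrt{J(q')}$ appears; conservation $p'^0+q'^0 = p^0+q^0$ then recovers exponential decay in $p^0+q^0$, and the same Cauchy--Schwarz argument concludes \eqref{tminushRAPplus2}.

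For \eqref{tminushRAPplus}, the restriction $k \leq 0$ places $\bar{g} \in [2^{-k-1},2^{-k}] \subset [1/2,\infty)$, so collisions are uniformly non-grazing and no cancellation machinery is required; the templates of Propositions \ref{T-proposition} and \ref{T+proposition} apply. Using the pre-post form \eqref{T++} so that $\phi$ appears as $\phi(p)$,
\begin{equation*}
T^{k,l}_+(f,\phi,\eta) = \int_{\rth}dp\int_{\rth}dq\int_{\mathbb{S}^2}d\omega\ \sigma_k v_{\text{\o}}\, f(q)\phi(p)\sqrt{J(q')}\eta(p')w^{2l}(p'),
\end{equation*}
we apply Cauchy--Schwarz with an asymmetric weight $W(q,p')$ designed to produce the target weights $(\singS+\gamma-1)/2$ on $f$ and $(\singS+\gamma)/2$ on $\eta$, obtaining $|T^{k,l}_+|^2 \leq I_1\cdot I_2$. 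The integral $I_1$ carries $|f(q)|^2 W$; using $|\phi(p)| \lesssim C_\phi e^{-cp^0}$, the inner integration $\int \sigma_k v_{\text{\o}} e^{-cp^0}\sqrt{J(q')}\,dp\,d\omega \lesssim 2^{k\gamma}(q^0)^{(\singS+\gamma)/2}$ is proven by splitting into the hard $\singS+\gamma\geq 0$ and soft $\singS+\gamma<0$ sub-cases (the latter further divided by the sign of $\singS+\gamma+1$) and applying \eqref{g.upper.est}, \eqref{g.lower.est}, \eqref{s.le.pq}, and \eqref{jutter.integral.est} analogously to \eqref{CauchyT+soft}, yielding $I_1 \lesssim C_\phi 2^{2k\gamma}|w^l f|^2_{L^2_{(\singS+\gamma-1)/2}}$ after the $-1/2$ shift is absorbed by $W$. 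The integral $I_2$ containing $|\eta(p')|^2$ uses $|\phi(p)|\lesssim C_\phi$, reverts via a second pre-post change of variables to the standard loss-term form, and is controlled by Proposition \ref{T-proposition}-type size estimates (see also Remark \ref{rem:hardI2est}, \eqref{I22}, and \eqref{I22.hard}) giving the $|w^l\eta|^2_{L^2_{(\singS+\gamma)/2}}$ factor. The hard part will be identifying the precise form of $W(q,p')$: the required $-1/2$ shift originates from the asymmetric distribution of the $\sqrt{s}\lesssim\sqrt{p^0q^0}$ factor contributed by $v_{\text{\o}}=g\sqrt{s}/(p^0q^0)$ in the inner kernel integration, and tracking this through both integrations (including the pre-post swap used for $I_2$) requires careful bookkeeping --- particularly in the soft regime where the sign change of $g^{\singS+\gamma+1}$ around $\singS+\gamma=-1$ forces a case-by-case use of the opposing bounds \eqref{g.upper.est} and \eqref{g.lower.est}.
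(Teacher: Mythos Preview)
Your treatment of \eqref{tminushRAPplus2} is correct and matches the paper's: the exponential decay of $\phi$ together with $\sqrt{J(q)}$ (or $\sqrt{J(q')}$ plus conservation of energy) produces joint exponential decay in $(p^0,q^0)$, after which Cauchy--Schwarz and \eqref{Bk} close the estimate.

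For \eqref{tminushRAPplus}, however, there is a genuine gap in your $I_1$ estimate. The crucial point of this inequality is the $-1/2$ \emph{gain} on the weight carried by $f$: the norm $|w^l f|_{L^2_{(\singS+\gamma-1)/2}}$ is strictly weaker than $|w^l f|_{L^2_{(\singS+\gamma)/2}}$, so you must extract half a power of decay in $q^0$ from somewhere. Your proposed mechanism --- an asymmetric placement of $\sqrt{s}\lesssim\sqrt{p^0q^0}$ in the Cauchy--Schwarz weight $W$ --- cannot do this. Any such redistribution merely shifts growth between $I_1$ and $I_2$; whatever $(q^0)^{-1/2}$ you insert into $I_1$ reappears as $(q^0)^{+1/2}$ in $I_2$, and after the pre-post change this becomes $(q'^0)^{1/2}$, which is \emph{not} absorbed by the remaining $\sqrt{J(q)}$ (it costs an extra $(p^0)^{1/2}$ on $\eta$). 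Running your template without $W$ on the dangerous region $p'^0\gtrsim p^0+q^0$ yields only $I_1\lesssim 2^{k\gamma}|w^l f|^2_{L^2_{(\singS+\gamma)/2}}$, the same weight as in Proposition~\ref{T+proposition}.

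The paper's argument for $I_1$ is genuinely different and uses the condition $k\le 0$ in an essential way. After raising to a 12-fold integral and swapping $(p,q,p',q')\mapsto(q,p,q',p')$ so that $f$ sits at $p$ and $\phi$ at $q$, one applies the Carleman representation (Lemma~\ref{lemma.carleman}) to reduce to an integral over $E^q_{p'-p}$. The $d\pi_q$ integral is bounded by \eqref{dpiq1}, leaving
\[
I_1^2 \lesssim 2^{k\gamma}\int_{\rth} dp\, |f(p)|^2 (p^0)^{\frac{\singS+\gamma}{2}+2l+1}\int_{\rth}\frac{dp'}{p'^0}\frac{(J(p'))^{\epsilon}}{\bar g^{3}}.
\]
The key step, which has no analogue in your sketch, is that $k\le 0$ forces $\bar g\gtrsim 1$, so $\bar g^{-3}\approx(\bar g^2+1)^{-3/2}\lesssim\big(|p-p'|^2/(p^0p'^0)+1\big)^{-3/2}$; splitting into $|p'|\le|p|/2$ and $|p'|>|p|/2$ then gives $\int\frac{dp'}{p'^0}\bar g^{-3}(J(p'))^\epsilon\lesssim (p^0)^{-3/2}$. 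The combination $+1-\tfrac32=-\tfrac12$ is precisely the shift you need. Without the Carleman step (or an equivalent device exploiting $\bar g\gtrsim 1$), the $-1/2$ improvement is not available.
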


 \begin{proof}  We first explain the proof of \eqref{tminushRAPplus2}.  
     If $\phi$ is as in \eqref{derivESTa}, then both $T^{k,l}_+(f,h,\phi)$ and $T^{k,l}_-(f,h,\phi)$ have rapid decay in both $p$ and $q$ variables in \eqref{T++} and \eqref{T+++}. By applying the Cauchy-Schwarz inequality to \eqref{T+++} and \eqref{T++} and using \eqref{Bk}, we obtain \eqref{tminushRAPplus2}.

We will now prove the upper bound of $\left|T^{k,l}_+(f,\phi,\eta)\right|$ as in \eqref{tminushRAPplus}.    We consider $I$ as in \eqref{T++I} with $\phi=h$.  By the Cauchy-Schwartz inequality we have
\begin{multline}\label{CauchyT+COMPACT}
I 
\lesssim
\left(
\int_{\rth}dp\int_{\rth}dq\int_{\mathbb{S}^2}d\omega\ v_{\text{\o}}\frac{g^{\singS}\sigma_0\chi_k(\bar{g})}{g^{\singS+\gamma}}
	|f(q)|^2|\phi(p)|^2\sqrt{J(q')}({p'^0})^{\frac{\singS+\gamma}{2}}w^{2l}(p')
	\right)^{1/2}
	\\
	 \times \left(\int_{\rth}dp\int_{\rth}dq\int_{\mathbb{S}^2}d\omega\ v_{\text{\o}} g^{\singS}\sigma_0\chi_k(\bar{g})g^{\singS+\gamma}|w^l\eta(p')|^2\sqrt{J(q')}({p'^0})^{\frac{-\singS-\gamma}{2}}\right)^{\frac{1}{2}}\\
	 =I_{1}\cdot I_{2}.
\end{multline}
First we will estimate $I_2$ in \eqref{CauchyT+COMPACT}. We have the following uniform estimate for $I_2$:
$$I_2\lesssim 2^{\frac{k\gamma}{2}}|w^l \eta|_{L^2_{\frac{\singS+\gamma}{2}}}.$$
This estimate is given in \eqref{I22} for the soft potential case \eqref{soft} and in \eqref{I22.hard} for the hard potential case \eqref{hard}.

For the estimate of $I_1$ in \eqref{CauchyT+COMPACT}, it suffices to show the following {\it claim}:
\begin{multline}
\label{claimc2}
\int_{\rth}dp\int_{\rth}dq\int_{\mathbb{S}^2}d\omega\ v_{\text{\o}}\frac{g^{\singS}\sigma_0\chi_k(\bar{g})}{g^{\singS+\gamma}}
	|f(q)|^2|\phi(p)|^2\sqrt{J(q')}({p'^0})^{\frac{\singS+\gamma}{2}}w^{2l}(p')\\
	\lesssim 2^{k\gamma}|w^lf|^2_{L^2_{\frac{\singS+\gamma-1}{2}}}.  
\end{multline}
Then together with \eqref{I22}, we obtain \eqref{C2} after summation of $S_1$ and $S_3$, since  $0<\gamma<1$.

The {\it claim} \eqref{claimc2} can be proved as follows. Let the left-hand side of \eqref{claimc2} be $I_L$. By recovering $dp'$ and $dq'$ measures from $d\omega$, we obtain that 
\begin{multline*}
I_L=\int_{\rth}\frac{dp}{p^0}\int_{\rth}\frac{dq}{q^0}\int_{\rth}\frac{dp'}{p'^0}\int_{\rth}\frac{dq'}{q'^0}\ s\frac{g^{\singS}\sigma_0\chi_k(\bar{g})}{g^{\singS+\gamma}}
	|f(q)|^2|\phi(p)|^2\\\times \sqrt{J(q')}({p'^0})^{\frac{\singS+\gamma}{2}}w^{2l}(p')\delta^{(4)}\left(p'^\mu+q'^\mu-p^\mu-q^\mu\right).
\end{multline*}
We make the change of variables $(p,q,p',q')\mapsto (q,p,q',p').$ Then we obtain
\begin{multline*}
I_L=\int_{\rth}\frac{dp}{p^0}\int_{\rth}\frac{dq}{q^0}\int_{\rth}\frac{dp'}{p'^0}\int_{\rth}\frac{dq'}{q'^0}\ s\frac{g^{\singS}\sigma_0\chi_k(\bar{g})}{g^{\singS+\gamma}}
	|f(p)|^2|\phi(q)|^2\\\times \sqrt{J(p')}({q'^0})^{\frac{\singS+\gamma}{2}}w^{2l}(q')\delta^{(4)}\left(p'^\mu+q'^\mu-p^\mu-q^\mu\right).
\end{multline*} Using the energy conservation law from the delta function, we obtain that 
$$
({q'^0})^{\frac{\singS+\gamma}{2}}w^{2l}(q')=({q'^0})^{\frac{\singS+\gamma}{2}+2l}=(p^0+q^0-p'^0)^{\frac{\singS+\gamma}{2}+2l}.
$$Now we use the relativistic Carleman representation, in Lemma \ref{lemma.carleman}, to reduce the integral as
\begin{multline*}
I_L\approx \int_{\rth}\frac{dp}{p^0}\int_{\rth}\frac{dp'}{p'^0}\int_{E^q_{p'-p}}\frac{d\pi_q}{q^0}\ \frac{s}{\bar{g}}\frac{g^{\singS}\sigma_0\chi_k(\bar{g})}{g^{\singS+\gamma}}
	|f(p)|^2|\phi(q)|^2\\\times \sqrt{J(p')}(p^0+q^0-p'^0)^{\frac{\singS+\gamma}{2}+2l},
\end{multline*} 
where
 $d\pi_{q}$ 
 is defined in \eqref{def.pi.meas}.  Using \eqref{angassumption}, \eqref{bargoverg}, \eqref{FREQ:s.le.pq}, and \eqref{FREQ:g.le.sqrtpq}, we have$$
\frac{s}{\bar{g}}\frac{g^{\singS}\sigma_0\chi_k(\bar{g})}{g^{\singS+\gamma}}\lesssim 2^{k\gamma}\frac{(p^0q^0)^2}{\bar{g}^{3}}\chi_k(\bar{g}).$$
We further have that $$(p^0+q^0-p'^0)^{\frac{\singS+\gamma}{2}+2l}\lesssim (p^0q^0p'^0)^{\frac{\singS+\gamma}{2}+2l}$$ and that 
$$|\phi(q)|^2\sqrt{J(p')}(q^0)^{\frac{\singS+\gamma}{2}+2l+2}(p'^0)^{\frac{\singS+\gamma}{2}+2l}\lesssim (J(p')J(q))^{\epsilon},$$ for some sufficiently small $\epsilon>0$ because $\phi(q)$ is the product of a polynomial in $q$ and $\sqrt{J(q)}$. 
Altogether we obtain
$$
I_L\lesssim 2^{k\gamma} \int_{\rth}dp\ |f(p)|^2(p^0)^{\frac{\singS+\gamma}{2}+2l+1}\int_{\rth}\frac{dp'}{p'^0}\frac{1}{\bar{g}^3}(J(p'))^{\epsilon}\int_{E^q_{p'-p}}\frac{d\pi_q}{q^0}\ (J(q))^{\epsilon}.
$$
Now we use \eqref{dpiq1} to obtain
$$
I_L\lesssim 2^{k\gamma} \int_{\rth}dp\ |f(p)|^2(p^0)^{\frac{\singS+\gamma}{2}+2l+1}\int_{\rth}\frac{dp'}{p'^0}\frac{1}{\bar{g}^3}(J(p'))^{\epsilon}.
$$Here we note that, if $k \le 0,$ we have $\bar{g}\approx 2^{-k} \gtrsim 1.$  Thus, $\bar{g}^{-3}\approx (\bar{g}^2+1)^{-3/2}.$ 
We further use that $\frac{1}{\bar{g}^3}\leq \frac{(p^0p'^0)^{3/2}}{|p-p'|^3}$ to obtain that 
$$\frac{1}{\bar{g}^3}\lesssim \left(\frac{|p-p'|^2}{p^0p'^0}+1\right)^{-3/2}.$$ Then if $|p|\le 1$ we have
$$(p^0)^{\frac{\singS+\gamma}{2}+2l+1}\int_{\rth}\frac{dp'}{p'^0}\frac{1}{\bar{g}^3}(J(p'))^{\epsilon}\lesssim (p^0)^{\frac{\singS+\gamma}{2}+2l-m}\int_{\rth}\frac{dp'}{p'^0}(J(p'))^{\epsilon}\lesssim (p^0)^{\frac{\singS+\gamma}{2}+2l-m}, $$ for any $m> 0.$ On the other hand, if $|p|>1,$ then we further split the region $p'\in \rth $ into $|p'|\le \frac{|p|}{2}$ and $|p'|> \frac{|p|}{2}$. If $|p'|\le \frac{|p|}{2}$, then we have
$$J(p')^{\epsilon}\lesssim (J(p')J(p))^{\epsilon'},$$ for some $0<\epsilon' <\epsilon.$ Thus, we have
$$(p^0)^{\frac{\singS+\gamma}{2}+2l+1}\int_{\rth}\frac{dp'}{p'^0}\frac{1}{\bar{g}^3}(J(p'))^{\epsilon}\lesssim (p^0)^{\frac{\singS+\gamma}{2}+2l+1}J(p)^{\epsilon'}\int_{\rth}\frac{dp'}{p'^0}(J(p'))^{\epsilon'}\lesssim (p^0)^{-m},$$ for any $m>0.$ Lastly, if $|p'|> \frac{|p|}{2}$, we have
$$|p-p'|\ge |p|-|p'|\ge \frac{|p|}{2}\gtrsim p^0,$$ which leads us to
$$\int_{\rth}\frac{dp'}{p'^0}\left(\frac{|p-p'|^2}{p^0p'^0}+1\right)^{-3/2}(J(p'))^{\epsilon}\lesssim (p^0)^{-3/2}\int_{\rth} (p'^0)^{1/2}(J(p'))^{\epsilon}\lesssim (p^0)^{-3/2}.$$
Therefore, we can conclude that 
$$
I_L\lesssim 2^{k\gamma} \int_{\rth}dp\ |f(p)|^2(p^0)^{\frac{\singS+\gamma}{2}+2l-\frac{1}{2}}=2^{k\gamma}\left|w^lf\right|^2_{L^2_{\frac{\singS+\gamma-1}{2}}}.
$$
This finishes the proof for the {\it claim} \eqref{claimc2}.
 \end{proof}

This concludes our discussion of the cancellation estimates nearby the angular singularities.  In the next sub-section we briefly introduce the standard 3-dimensional Littlewood-Paley theory which allows us to make sharp estimates of the linearized Boltzmann collision operator.

\subsection{Littlewood-Paley decompositions}
\label{LP decomp}
We now introduce the 3-dimensional Littlewood-Paley theory.  We will see in \eqref{LP} that the sum of weighted $L^2$-norm of the Littlewood-Paley pieces is bounded above by our fractional derivative norm from \eqref{fractional}. We  further bound the sum of the weighted $L^2$-norms of the derivatives of the Littlewood-Paley pieces above by the fractional derivative norm in \eqref{LPd1}.

We choose a real valued function  $\phi(p)\in C_c^\infty(\mathbb{R}^3_p)$ such that it satisfies $\phi(p)=1$ if $|p|\leq 1/2$ and $\phi(p)=0$ if $|p|\geq 1$. Also define $\psi(p) = \phi(p) - 2^{-3} \phi(p/2)$. Using the standard scaling, we further define
\begin{equation}\notag
\begin{split}
&\phi_j(p)=2^{3j}\phi(2^jp),  ~ j \ge 0,\\
&\psi_j(p)=2^{3j}\psi(2^jp), ~ j \ge 1.
\end{split}
\end{equation}
Now define the partial sum operator
$$
S_j(f)=f*\phi_j=\int_\rth 2^{3j}\phi(2^j(p-q))f(q)dq, \quad j \ge 0,
$$
and the difference operator
$$
\Delta_j(f)=f*\psi_j=\int_\rth 2^{3j}\psi(2^j(p-q))f(q)dq, \quad j \ge 0.
$$
For $j=0$ we define $\Delta_0(f)=S_0(f)$.  We suppose that $\int_\rth \phi(p) dp =1$, so that
\begin{equation}
\label{lpcan}
\begin{split}
\Delta_j(1)(p)=(1*\psi_j)(p)=\int_{\rth}\psi_j(q)dq=0.
\end{split}
\end{equation} 
Throughout this sub-section, the variables $p$ and $p'$ are independent vectors in $\rth$ and we will not assume the variables $p$ and $p'$ are related by the collision geometry as in \eqref{p'} and \eqref{q'}.

We further have for all sufficiently smooth $f$ as $l\rightarrow\infty$ that
$$
\sum_{j=0}^l \Delta_j(f)(p)
=
S_l(f)(p)\rightarrow f(p),
$$
and that 
$$
\Big(\int_\rth dp\ |S_l(f)(p)|^{r}(\pZ)^\rho\Big)^\frac{1}{r}\lesssim \Big(\int_\rth dp\ |f(p)|^{r}(\pZ)^\rho\Big)^\frac{1}{r},
$$
uniformly in $j\geq 0$ for any fixed $\rho\in \mathbb{R}$ and any $r\in[1,\infty].$ This $L^{r}$-boundedness property also holds for the operators $\Delta_j$.

We are now interested in estimating the upper bound for 
$$
\sum_{j=0}^\infty 2^{\gamma j}\int_\rth dp\ |\Delta_jf|^2(\pZ)^\rho,
$$
when $0<\gamma<1$ and $\rho\in\mathbb{R}$.  To this end, for any $j\geq1$, we have 
	\begin{align}
	\notag
	&\frac{1}{2}\int_\rth dp\int_\rth dp' \int_\rth dz\  (f(p)-f(p'))^2\psi_j(z-p)\psi_j(z-p')(z^0)^\rho
	\\
	\notag
	&=-\int_\rth dp\ (\Delta_jf(p))^2(\pZ)^\rho + \int_\rth dp\int_\rth dz\ (f(p))^2\psi_j(z-p)\Delta_j(1)(z)(z^0)^\rho 
		\\
	&=-\int_\rth dp\ (\Delta_jf(p))^2(\pZ)^\rho,
	\label{LP.reduction}
	\end{align}
which follows from $\Delta_j(1)(z)=0$ in \eqref{lpcan}.   Also $z^0 = \sqrt{1+|z|^2}$ etc.

	On the other hand, from the support condition for $\psi_j(z-p)\psi_j(z-p')$ on $z$, we have $\pZ\approx p'^0 \approx z^0$. Also notice that $|\psi_j(z-p')|\lesssim 2^{3j}$.  
	Thus, we obtain that
	\begin{multline*}
	\int_\rth dz\ |\psi_j(z-p')||\psi_j(z-p)|(z^0)^\rho
	\lesssim 
		(\pZ p'^0)^\frac{\rho}{2}\int_\rth dz\ |\psi_j(z-p')||\psi_j(z-p)|
		\\
	\lesssim 
	2^{3j} (\pZ p'^0)^\frac{\rho}{2}\int_\rth dz\ |\psi_j(z-p)|
	\lesssim  2^{3j}(\pZ p'^0)^\frac{\rho}{2}.
	\end{multline*}
Now $|\psi_j(z-p')||\psi_j(z-p)|$ is supported only when $|p-p'|\leq 2^{-j+1}$.
Hence
	$$
	2^{\gamma j}\int_\rth dz\ |\psi_j(z-p')||\psi_j(z-p)|(z^0)^\rho\lesssim 2^{(3+\gamma)j}(\pZ p'^0)^\frac{\rho}{2}1_{|p-p'|\leq 2^{-j+1}}.
	$$
	Since there exists $j_0>0$ such that $2^{-j_0}<|p-p'|\leq 2^{-j_0+1},$ we have 
	\begin{align*}
	\sum_{j=1}^\infty  2^{(3+\gamma)j}1_{|p-p'|\leq 2^{-j+1}}
	&= \sum_{j=1}^{j_0}  2^{(3+\gamma)j}1_{|p-p'|\leq 2^{-j+1}}
	\lesssim 2^{(3+\gamma)j_0}1_{|p-p'|\leq 1}\\
	&\lesssim \frac{1_{|p-p'|\leq 1}}{|p-p'|^{\gamma+3}}.
	\end{align*}
	When $j=0$, the term $\int_{\rth}dp\ |\Delta_0 f|^2(\pZ)^\rho$ is bounded above by $|f|^2_{L^2_\rho}$.
Combining these estimates we obtain that
\begin{multline*}
\left|
\frac{1}{2}	\sum_{j=0}^\infty 2^{\gamma j}\int_\rth dp\int_\rth dp' \int_\rth dz\  (f(p)-f(p'))^2\psi_j(z-p)\psi_j(z-p')(z^0)^\rho 
\right|
\\
\lesssim \int_\rth dp\int_\rth dp'\  (\pZ p'^0)^\frac{\rho}{2}\frac{(f(p)-f(p'))^2}{|p-p'|^{\gamma+3}}1_{|p-p'|\leq 1}.
\end{multline*}	
	Therefore using \eqref{LP.reduction} we have shown, for any $\gamma\in(0,1)$ and any $\rho\in\mathbb{R}$, that the following inequality holds:
	\begin{equation}
	\label{LP}
	\begin{split}
	\sum_{j=0}^\infty 2^{\gamma j}&\int_\rth dp\ |\Delta_jf|^2(\pZ)^\rho\\
	&\lesssim |f|^2_{L_\rho^2}+\int_\rth dp\int_\rth dp'\  (\pZ p'^0)^{\frac{\rho}{2}}\frac{(f(p)-f(p'))^2}{|p-p'|^{3+\gamma}}1_{|p-p'|\leq 1}
		\lesssim |f|^2_{I^{\rho,\gamma}  }.
	\end{split}
	\end{equation}
	This holds uniformly for any smooth function $f$.

To use the cancellation estimates obtained in \secref{Cancellation}, it is also necessary to obtain an analogous inequality to \eqref{LP} for the derivatives of the Littlewood-Paley pieces.  We need to establish a similar inequality when $\Delta_j$'s are replaced by $2^{-kj}\nabla \Delta_j$ where $\nabla$ is the standard 3-dimensional gradient.
We denote a derivative by
$
\nabla^\alpha =(\partial^{\alpha^1}_{p_1},\partial^{\alpha^2}_{p_2},\partial^{\alpha^3}_{p_3}). 
$
For any partial derivative $\frac{\partial}{\partial p_i}\Delta_j f$, it holds that $\frac{\partial}{\partial p_i}\Delta_j f= 2^j\tilde{\Delta}_j
f$ where $\tilde{\Delta}_j$
is the $j^{\text{th}}$-Littlewood-Paley cut-off operator associated to a new cut-off
function $\tilde{\psi}$ which also satisfies the cancellation property  (\ref{lpcan}) that $\tilde{\Delta}_j(1)(p)=0$. 
Thus, for a multi-index $\alpha$, we can write 
$$
2^{-|\alpha|j}\nabla^\alpha \Delta_j f(p)= \Delta^\alpha_j(f)(p)
$$ 
where $\Delta^{\alpha}_j$ is the cut-off operator associated to some $\psi^\alpha$.

Then, we can repeat the same proof as for \eqref{LP} by considering the following integral instead and make an upper-bound estimate on the weighted $L^2$-norm of the derivatives of each Littlewood-Paley decomposed piece:
\begin{multline*}
\frac{1}{2}\int_\rth dp\int_\rth dp'\ \int_\rth dz (f(p)-f(p'))^2\psi^\alpha_j(z-p)\psi^\alpha_j(z-p')(z^0)^\rho
\\
=
- \int_{\rth} dp |\Delta^\alpha_j(f)(p)|^2(p^0)^{\rho}.
\end{multline*}
This follows from  the same condition as in  (\ref{lpcan}) that $\Delta^\alpha_j(1)(p)=0$.  Similarly the analogous estimates can be multiplied by $2^{\gamma j}$ and summed over $j$ to get 
\begin{align*}
\sum_{j=0}^\infty 2^{\gamma j}&\int_\rth dp\ |\Delta^\alpha_j(f)(p)|^2({p^0})^\rho\\
&\lesssim |f|^2_{L_\rho^2}+\int_\rth dp\int_\rth dp'\  ({p^0}{p'^0})^{\frac{\rho}{2}}\frac{(f(p)-f(p'))^2}{|p-p'|^{3+\gamma}}1_{|p-p'|\leq 1}.
\end{align*}
Therefore, for any multi-index $\alpha$, for any fixed $\rho, l \in \mathbb{R}$, it follows that we have
\begin{equation}
\label{LPd1}
\sum_{j=0}^\infty 2^{(\gamma-|\alpha|)j}\int_{\rth}dp\ |\nabla^\alpha \Delta_j f(p)|^2({p'^0})^{\frac{\rho+\gamma}{2}}w^{2l}(p)\lesssim |f|^2_{I^{\rho,\gamma}_l  }.
\end{equation}
This holds  uniformly since $p^0\approx p'^0$ if $|p-p'|\le 1$ by Lemma \ref{lemmaqq}.

This concludes our introduction to the standard Littlewood-Paley theory. In
the next sub-section we will make our final upper-bound estimates of the linearized Boltzmann operator by utilizing the previous propositions.

\subsection{Upper bound estimates}\label{subsec.upperbd}
We begin with the proof of Theorem {\ref{thm1}}.
We first decompose
$$
h=\Delta_0h+\sum^\infty_{i=1}\Delta_ih\eqdef\sum_{i=0}^\infty h_i\text{ and }\eta=\Delta_0\eta+\sum^\infty_{j=1}\Delta_j\eta\eqdef\sum_{j=0}^\infty \eta_j.
$$
Also we consider the dyadic decomposition of the gain and the loss terms and write the trilinear product as
\begin{equation}\notag
\begin{split}
\langle w^{2l}\Gamma(f,h),\eta\rangle
&= \sum_{i=0}^\infty\sum_{j=0}^\infty \langle w^{2l}\Gamma(f,h_i),\eta_j\rangle\\
&=\sum_{i=1}^\infty\sum_{j=0}^\infty \langle w^{2l}\Gamma(f,h_{i+j}),\eta_j\rangle+\sum_{i=0}^\infty\sum_{j=0}^\infty \langle w^{2l}\Gamma(f,h_{j}),\eta_{i+j}\rangle.
\end{split}
\end{equation}
We further split this up using \eqref{T+++} as
\begin{equation}
\label{trilinearsum}
\begin{split}
\langle w^{2l}\Gamma(f,h),\eta\rangle
&=\sum_{k=-\infty}^\infty\sum_{i=1}^\infty\sum_{j=0}^\infty\{T_+^{k,l}(f,h_{i+j},\eta_j)-T_-^{k,l}(f,h_{i+j},\eta_j)\}\\&\ \quad +\sum_{k=-\infty}^\infty\sum_{i=0}^\infty\sum_{j=0}^\infty\{T_+^{k,l}(f,h_j,\eta_{i+j})-T_-^{k,l}(f,h_j,\eta_{i+j})\}.
\end{split}
\end{equation}
We first consider the sum over $k$ for fixed $i$ as the following
\begin{multline}
\label{196}
\sum_{k=-\infty}^\infty\sum_{j=0}^\infty\{T_+^{k,l}(f,h_{i+j} ,\eta_{j})-T_-^{k,l}(f,h_{i+j}  ,\eta_{j})\}\\
=\sum_{j=0}^\infty\sum_{k=-\infty}^{j}\{T_+^{k,l}(f,h_{i+j}  ,\eta_{j})-T_-^{k,l}(f,h_{i+j}  ,\eta_{j})\}\\
+\sum_{j=0}^\infty\sum_{k=j+1}^\infty\{T_+^{k,l}(f,h_{i+j}  ,\eta_{j})-T_-^{k,l}(f,h_{i+j}  ,\eta_{j})\}
\eqdef S_1+S_2.
\end{multline}
When $f, h, \eta$ are Schwartz functions, the order of summation may be rearranged because the sum will be seen to be absolutely convergent.
Then by $(\ref{T+})$ and $(\ref{T-})$, we obtain
\begin{align}\notag
|S_1|\lesssim& \sum_{j=0}^\infty 2^{\gamma j}\left|w^{l}f\right|_{L^2}\left|w^{l}h_{i+j}  \right|_{L^2_{\frac{\singS+\gamma}{2}}}\left|w^{l}\eta_{j}\right|_{L^2_{\frac{\singS+\gamma}{2}}}\\
\notag
\lesssim& \ 2^{-\frac{\gamma i}{2}} \left|w^{l}f\right|_{L^2}\left(\sum_{j=0}^\infty 2^{\gamma (i+j)} |w^{l}h_{i+j}|^2_{L^2_{\frac{\singS+\gamma}{2}}}\right)^{\frac{1}{2}}\left(\sum_{j=0}^\infty 2^{\gamma j} |w^{l}\eta_{j}|^2_{L^2_{\frac{\singS+\gamma}{2}}}\right)^{\frac{1}{2}}\\
\lesssim& \ 2^{-\frac{\gamma i}{2}}\left|w^{l}f\right|_{L^2}\left|h \right|_{I^{\singS,\gamma}_{l}}
|\eta|_{I^{\singS,\gamma}_{l}},
\label{S1.ineq}
\end{align}
where the second inequality is by the Cauchy-Schwarz inequality and the last inequality is by  $(\ref{LP})$.
Regarding the sum $S_2$, we use the cancellation estimates from \secref{Cancellation}. By (\ref{can+-}), we can  sum in $k$ from $k=j+1$ to $\infty$, since $\gamma-1<0$. Then we obtain
\begin{multline}\label{S2cancellationsummation}
|S_2|\lesssim \sum_{j=0}^{\infty}2^{(\gamma-1)j}|f|_{L^2_{-m}}\left|w^lh_{i+j} \right|_{L^2_{\frac{\singS+\gamma}{2}}}\left|w^l |\nabla| \eta_j \right|_{L^2_{\frac{\singS+\gamma}{2}}}\\
\lesssim 2^{\frac{-\gamma i}{2}} \left|f\right|_{L^2_{-m}}\left(\sum_{j=0}^\infty 2^{\gamma (i+j)} |w^{l}h_{i+j}|^2_{L^2_{\frac{\singS+\gamma}{2}}}\right)^{\frac{1}{2}}\left(\sum_{j=0}^\infty 2^{(\gamma-2) j} \left|w^{l}|\nabla|\eta_{j}\right|^2_{L^2_{\frac{\singS+\gamma}{2}}}\right)^{\frac{1}{2}}
\\
 \lesssim  2^{\frac{-\gamma i}{2}}\left|f\right|_{L^2_{-m}}\left|h \right|_{I^{\singS,\gamma}_{l}}
|\eta|_{I^{\singS,\gamma}_{l}},
\end{multline}
where the third inequality is by (\ref{LPd1}).  Finally, we take the sum in $i$ from $i=1$ to $\infty$ on both $S_1$ and $S_2$ to obtain that
\begin{equation}
	\label{mainup}
	\sum_{i=1}^\infty\sum_{j=0}^\infty \langle w^{2l}\Gamma(f,h_{i+j}),\eta_j\rangle\lesssim \left|w^{l}f\right|_{L^2}\left|h \right|_{I^{\singS,\gamma}_{l}}
	|\eta|_{I^{\singS,\gamma}_{l}}.
\end{equation}
This completes the estimate for the first term in \eqref{trilinearsum}.

Now we move onto estimating the second part of \eqref{trilinearsum}.  We consider the sum over $k$ of the terms $\langle w^{2l}\Gamma(f,h_j),\eta_{i+j}\rangle$ for fixed $i$ as 
\begin{multline}
	\label{1962}
	\sum_{k=-\infty}^\infty\sum_{j=0}^\infty\{T_+^{k,l}(f,h_{j} ,\eta_{i+j})-T_-^{k,l}(f,h_{j} ,\eta_{i+j})\}\\
	=\sum_{j=0}^\infty\sum_{k=-\infty}^{j}\{T_+^{k,l}(f,h_{j} ,\eta_{i+j})-T_-^{k,l}(f,h_{j} ,\eta_{i+j})\}\\
	+\sum_{j=0}^\infty\sum_{k=j+1}^\infty\{T_+^{k,l}(f,h_{j} ,\eta_{i+j})-T_-^{k,l}(f,h_{j} ,\eta_{i+j})\}
	\eqdef S_3+S_4.
\end{multline}
Then by $(\ref{T+})$ and $(\ref{T-})$, we obtain
\begin{align}\notag
	|S_3|\lesssim& \sum_{j=0}^\infty 2^{\gamma j}\left|w^{l}f\right|_{L^2}\left|w^{l}h_{j}  \right|_{L^2_{\frac{\singS+\gamma}{2}}}\left|w^{l}\eta_{i+j}\right|_{L^2_{\frac{\singS+\gamma}{2}}}\\
	\notag
	\lesssim& \ 2^{-\frac{\gamma i}{2}} \left|w^{l}f\right|_{L^2}\left(\sum_{j=0}^\infty 2^{\gamma j} |w^{l}h_{j}|^2_{L^2_{\frac{\singS+\gamma}{2}}}\right)^{\frac{1}{2}}\left(\sum_{j=0}^\infty 2^{\gamma (i+j)} |w^{l}\eta_{i+j}|^2_{L^2_{\frac{\singS+\gamma}{2}}}\right)^{\frac{1}{2}},
\notag
\end{align}
where the second inequality is just the Cauchy-Schwarz inequality.  Then we use   \eqref{LP} to obtain
\begin{equation}
    	|S_3|
	\lesssim \ 2^{-\frac{\gamma i}{2}}\left|w^{l}f\right|_{L^2}\left|h \right|_{I^{\singS,\gamma}_{l}}
	|\eta|_{I^{\singS,\gamma}_{l}}.
	\label{S3.ineq}
\end{equation}
Regarding the sum $S_4$, we use an alternative cancellation estimate from \secref{Cancellation}. By Proposition \ref{prop:cancellation2}, we can conduct the sum in $k$ from $k=j+1$ to $\infty$, since $\gamma-1<0$. Then we obtain
\begin{multline}\label{S2cancellationsummation2}
	|S_4|\lesssim \sum_{j=0}^{\infty}2^{(\gamma-1)j}|f|_{L^2_{-m}}\left|w^l |\nabla| h_j \right|_{L^2_{\frac{\singS+\gamma}{2}}}\left|w^l\eta_{i+j} \right|_{L^2_{\frac{\singS+\gamma}{2}}}\\
	\lesssim 2^{\frac{-\gamma i}{2}} \left|f\right|_{L^2_{-m}}\left(\sum_{j=0}^\infty 2^{(\gamma-2) j} \left|w^{l}|\nabla|h_{j}\right|^2_{L^2_{\frac{\singS+\gamma}{2}}}\right)^{\frac{1}{2}}
	\left(\sum_{j=0}^\infty 2^{\gamma (i+j)} |w^{l}\eta_{i+j}|^2_{L^2_{\frac{\singS+\gamma}{2}}}\right)^{\frac{1}{2}}
\\
	\lesssim  2^{\frac{-\gamma i}{2}}\left|f\right|_{L^2_{-m}}\left|h \right|_{I^{\singS,\gamma}_{l}}
	|\eta|_{I^{\singS,\gamma}_{l}},
\end{multline}
where the third inequality is by (\ref{LPd1}).  Finally, we take the sum in $i$ from $i=0$ to $\infty$ on both $S_3$ and $S_4$ and obtain that
\begin{equation}
	\label{mainup2}
\sum_{i=0}^\infty\sum_{j=0}^\infty \langle w^{2l}\Gamma(f,h_{j}),\eta_{i+j}\rangle\lesssim \left|w^{l}f\right|_{L^2}\left|h \right|_{I^{\singS,\gamma}_{l}}
	|\eta|_{I^{\singS,\gamma}_{l}}.
\end{equation}
Then \eqref{mainup} and \eqref{mainup2} with \eqref{trilinearsum} completes the proof of Theorem \ref{thm1} as a special case when $l=0$.

Lemma \ref{Lemma1} can also be proven as follows.  We first take a spatial derivative $\partial^\alpha$ on the non-linear collision operator $\Gamma$ with $|\alpha|\leq N$ for some $N\geq 2$ to observe \eqref{gamma.deriv}.  We multiply $\partial^\alpha\eta$ on both sides of \eqref{gamma.deriv} and integrate with respect to $x$ and $p$. Then by \eqref{mainup} and \eqref{mainup2}, we obtain 
$$
\left|\left( w^{2l}\Gamma(\partial^{\alpha-\alpha'}f,\partial^{\alpha'}h),\partial^\alpha\eta\right)\right|\lesssim \|w^lf\|_{H^N_xL^2_v}\|h\|_{H^N_xI_l^{\singS,\gamma}}\|\partial^\alpha\eta\|_{I_l^{\singS,\gamma}}.
$$   This proves Lemma \ref{Lemma1} for the both hard \eqref{hard} and soft-interactions \eqref{soft}.

Furthermore, we would like to mention a proposition that contains other useful compact estimates.

\begin{proposition}\label{prop:basisEst}
	Let $\phi$ be a function satisfying \eqref{derivESTa}.
	Then we have that
	\begin{equation}
	\label{C1}
	|\langle w^{2l} \Gamma(\phi,h),\eta\rangle |\lesssim |h|_{I^{\singS,\gamma}_l}|\eta|_{I^{\singS,\gamma}_l},
	\end{equation}
	Further, for any fixed $\epsilon>0$ small enough, we have that 
	\begin{equation}
	\label{C2}
	|\langle  w^{2l}\Gamma(f, \phi),h\rangle |\lesssim |w^lf|_{L^2_{\frac{\singS}{2}-\epsilon}}|h|_{I^{\singS,\gamma}_l}.
	\end{equation}
	Additionally, for any $m\geq 0$, we have
	\begin{equation}
	\label{C3}
	|\langle w^{2l} \Gamma(f, h),\phi\rangle |\lesssim |f|_{L^2_{-m}}|h|_{L^2_{-m}}.
	\end{equation}
\end{proposition}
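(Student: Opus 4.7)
The plan is to prove each of the three estimates by decomposing the trilinear form into its dyadic pieces $\sum_k(T^{k,l}_+-T^{k,l}_-)$ and then invoking the appropriate single-piece bounds and cancellation estimates already established in \secref{main estimates}, \secref{Cancellation}, \secref{sec.newcancel}, and \secref{subsec.additional}, with the exponential decay of $\phi$ and $|\nabla|\phi$ from \eqref{derivESTa} providing the key smallness. I expect no new cancellation arguments to be needed; everything reduces to choosing the correct off-the-shelf estimate in each regime of $k$ and then carrying out the summation.

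For \eqref{C3}, where $\phi$ sits in the outer $\eta$ position, I will use Proposition \ref{prop:GradRap}, specifically \eqref{tminushRAPplus2}, on the regime $k\le 0$ to sum $\sum_{k\le 0} 2^{k\gamma}$, and Proposition \ref{compactCANCELe} on the regime $k>0$ to sum $\sum_{k>0} 2^{(\gamma-1)k}$; both series converge since $0<\gamma<1$, and both single-piece bounds already carry the factor $C_\phi|f|_{L^2_{-m}}|h|_{L^2_{-m}}$. For \eqref{C1}, where $\phi$ plays the role of the first argument, I will simply invoke the weighted version of Theorem \ref{thm1} that was established through \eqref{mainup} and \eqref{mainup2}, namely $|\langle w^{2l}\Gamma(f,h),\eta\rangle| \lesssim |w^l f|_{L^2}|h|_{I^{\singS,\gamma}_l}|\eta|_{I^{\singS,\gamma}_l}$, and observe that $|w^l\phi|_{L^2}\lesssim C_\phi$ by the pointwise bound $|\phi|\le |\nabla|\phi \le C_\phi e^{-c p^0}$ from \eqref{derivESTa}.

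The subtlest part is \eqref{C2}, where $\phi$ occupies the middle slot and the target weight exponent on the $f$-norm is the sharp $\singS/2-\epsilon$. Here I split the dyadic sum into $k\le 0$ and $k>0$. In the regime $k\le 0$ I apply Proposition \ref{T-proposition} to $T^{k,l}_-(f,\phi,h)$ with $\phi$ playing the role of $h$ (absorbing $|w^l\phi|_{L^2_{(\singS+\gamma)/2}}\lesssim C_\phi$), and Proposition \ref{prop:GradRap} \eqref{tminushRAPplus} to $T^{k,l}_+(f,\phi,h)$ directly, since $\phi$ is already in its middle-argument slot; summing the factor $2^{k\gamma}$ yields the intermediate bound
\[ C_\phi\bigl(|w^l f|_{L^2_{(\singS+\gamma-1)/2}} + |f|_{L^2_{-m}}\bigr)\,|w^l h|_{L^2_{(\singS+\gamma)/2}}. \]
In the regime $k>0$ I apply the dual cancellation estimate Proposition \ref{prop:cancellation2} with $h=\phi$, so that the momentum gradient falls on $\phi$ and we can absorb $|w^l|\nabla|\phi|_{L^2_{(\singS+\gamma)/2}}\lesssim C_\phi$; the resulting factor $2^{k(\gamma-1)}$ is summable since $\gamma<1$. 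To close the estimate I choose $\epsilon\in(0,(1-\gamma)/2]$ so that $(\singS+\gamma-1)/2\le \singS/2-\epsilon$, and $m$ large enough so that $|f|_{L^2_{-m}}\lesssim|w^l f|_{L^2_{\singS/2-\epsilon}}$, and then use $|w^l h|_{L^2_{(\singS+\gamma)/2}}\le|h|_{I^{\singS,\gamma}_l}$.

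The main obstacle is purely the bookkeeping of weights in \eqref{C2}. It is essential to use Proposition \ref{prop:cancellation2} rather than Proposition \ref{Cancellationproposition} in the near-singularity regime, precisely so that the momentum gradient lands on $\phi$, where we have pointwise exponential decay, rather than on $h$, where we only have the fractional Sobolev norm $|h|_{I^{\singS,\gamma}_l}$ available.
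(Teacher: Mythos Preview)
Your proposal is correct. For \eqref{C1} and \eqref{C3} your argument coincides with the paper's. For \eqref{C2} you take a more direct route: the paper invokes the full Littlewood-Paley decomposition of both $\phi$ and $h$ and runs the triple-sum machinery of \eqref{trilinearsum}--\eqref{1962}, treating the pieces $S_1,\dots,S_4$ separately and using $|\phi|_{I^{\singS,\gamma}_l}\lesssim 1$ together with \eqref{LP}--\eqref{LPd1} to close. You instead bypass Littlewood-Paley entirely and split the single dyadic sum over $k$ at $k=0$, handling $k\le 0$ by \eqref{T-} and \eqref{tminushRAPplus} term by term and $k>0$ by Proposition~\ref{prop:cancellation2} in one shot. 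This works precisely because the exponential decay in \eqref{derivESTa} lets you absorb the factor $|w^l|\nabla|\phi|_{L^2_{(\singS+\gamma)/2}}$ coming out of Proposition~\ref{prop:cancellation2} directly into $C_\phi$, so no redistribution of derivatives across frequency scales is needed. Your remark that Proposition~\ref{prop:cancellation2} (gradient on the second argument) rather than Proposition~\ref{Cancellationproposition} (gradient on the third) is the correct tool here is exactly right and is the only genuinely delicate choice in the argument. The paper's approach has the advantage of reusing verbatim the template from the proof of Theorem~\ref{thm1}; yours is shorter and makes more transparent that the middle-slot decay \eqref{derivESTa} is what replaces the Littlewood-Paley summation.
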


\begin{proof}
	First of all, we note that \eqref{trilinearsum} with \eqref{mainup} and \eqref{mainup2} immediately imply (\ref{C1}) because $|w^l \phi|_{L^2}$ is bounded.

We now prove \eqref{C2}. In this case we also use the splitting \eqref{trilinearsum} with \eqref{196} and \eqref{1962}.  For the terms $S_2$ and $S_4$, we follow \eqref{S2cancellationsummation} and \eqref{S2cancellationsummation2} to obtain $$|S_2|,|S_4|\lesssim 2^{\frac{-\gamma i}{2}}|f|_{L^2_{-m}}|h|_{I^{\singS,\gamma}_l}.$$ 
Here we used that $|\phi|_{I^{\singS,\gamma}_l}\lesssim 1$.

For the upper-bound estimate of $|S_1|$ and $|S_3|$, we use the upper bounds of $\left|T^{k,l}_+\right|(f, \phi,h)$ and $\left|T^{k,l}_-\right|(f, \phi,h)$. 
We further split $S_1$  as 
$S_1 = S_1^+ - S_1^{-}$
where from \eqref{196} we have 
\begin{equation}\notag
    S_1^{\pm}
    \eqdef
    \sum_{j=0}^\infty\sum_{k=-\infty}^{j}T_{\pm}^{k,l}(f,\phi_{i+j}  ,h_{j}).
\end{equation}
We similarly split $S_3 = S_3^+ - S_3^{-}$
where from \eqref{1962} we have 
\begin{equation}\notag
    S_3^{\pm}
    \eqdef
    \sum_{j=0}^\infty\sum_{k=-\infty}^{j}T_{\pm}^{k,l}(f,\phi_{j}  ,h_{i+j}).
\end{equation}
For the upper bound of $\left|T^{k,l}_-(f, \phi,h)\right|$ we use \eqref{T-} and obtain that, similar to \eqref{S1.ineq} and \eqref{S3.ineq}, $S_1^{-}$ and $S_3^{-}$ are bounded as
\begin{equation}\notag
    \left| S_1^{-} \right|+\left| S_3^{-} \right|
    \lesssim
2^{-\frac{\gamma i}{2}} |f|_{L^2_{-m}}|w^l h|_{L^2_{\frac{\singS+\gamma}{2}}}.
\end{equation}
Note that $|w^l\phi|_{L^2_{\frac{\singS+\gamma}{2}}} \lesssim 1$  as before. Now we further use \eqref{tminushRAPplus} to obtain that
\begin{equation}\notag
    \left| S_1^{+} \right|+\left| S_3^{+} \right|
    \lesssim
2^{-\frac{\gamma i}{2}}  | w^{l} f|_{L^2_{\frac{\singS+\gamma-1}{2}}} 
|w^l h|_{L^2_{\frac{\singS+\gamma}{2}}}.
\end{equation}
This yields \eqref{C2} since $\epsilon>0$ can be chosen sufficiently small such that $\gamma -1 < -2\epsilon$.

	For (\ref{C3}), we write the trilinear form as the sum
    \begin{align*}
\langle  w^{2l}\Gamma(f,h),\phi\rangle
&=\sum_{k=0}^\infty (T^{k,l}_+-T^{k,l}_-)(f,h,\phi)+\sum_{k=-\infty}^{-1} (T^{k,l}_+-T^{k,l}_-)(f,h,\phi)\\
&\eqdef S_5+S_6.
\end{align*}
Note that here $\phi$ has rapid decay using (\ref{derivESTa}).
We obtain the upper bound for $S_6$ from \eqref{tminushRAPplus2} and summing over $k\le -1$ since $\gamma>0$.  Our estimate for $S_5$ follows from Proposition \ref{compactCANCELe} because $\gamma-1<0$. This completes the proof.  \end{proof}

Note that (\ref{C1}) implies Lemma \ref{Nfupperboundlemma} together with the frequency multiplier asymptotics \eqref{Paos}. Also, Proposition \ref{prop:basisEst} further implies the following lemma:

\begin{lemma} We have the uniform estimate
	\begin{equation}
    \label{compactest}
	|\langle w^{2l} \mathcal{K}f,h\rangle | \lesssim |w^{l}f|_{L^2_{\frac{\singS}{2}}}|  h|_{I^{\singS,\gamma}_l}.
	\end{equation}
	\end{lemma}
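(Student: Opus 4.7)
My plan is to split $\mathcal{K}$ via its definition \eqref{defK} as $\mathcal{K}f = \zeta_\mathcal{K}(p)f - \Gamma(f,\sqrt{J})$ and estimate the two pieces separately. Since $\sqrt{J}$ trivially satisfies the exponential decay hypothesis \eqref{derivESTa}, applying estimate \eqref{C2} of Proposition~\ref{prop:basisEst} with $\phi=\sqrt{J}$ directly yields
$$|\langle w^{2l}\Gamma(f,\sqrt{J}),h\rangle| \lesssim |w^l f|_{L^2_{\singS/2-\varepsilon}}|h|_{I^{\singS,\gamma}_l} \lesssim |w^l f|_{L^2_{\singS/2}}|h|_{I^{\singS,\gamma}_l},$$
the last step using that the weight $\singS/2-\varepsilon$ is weaker than $\singS/2$ on $\{p^0\ge 1\}$.

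For the multiplier piece $\langle w^{2l}\zeta_\mathcal{K}f,h\rangle$ I invoke the pointwise asymptotic \eqref{Paos}, which provides $|\zeta_\mathcal{K}(p)|\lesssim (p^0)^{\singS/2+\varepsilon}$ for any $\varepsilon\in(0,\gamma/2)$ (with implicit constant depending on $\varepsilon$). Factoring the weight as $(p^0)^{\singS/2}\cdot(p^0)^\varepsilon$ and applying Cauchy--Schwarz gives
$$|\langle w^{2l}\zeta_\mathcal{K}f,h\rangle|\lesssim |w^l f|_{L^2_{\singS/2}}|w^l h|_{L^2_\varepsilon},$$
so the remaining task is to dominate $|w^l h|_{L^2_\varepsilon}$ by $|h|_{I^{\singS,\gamma}_l}$ for some admissible $\varepsilon$. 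In the hard-interaction regime \eqref{hard} with $\singS+\gamma>0$ this is immediate: pick $\varepsilon\le(\singS+\gamma)/2$ and use $|w^l h|_{L^2_\varepsilon}\le|w^l h|_{L^2_{(\singS+\gamma)/2}}\le|h|_{I^{\singS,\gamma}_l}$ by the monotonicity of weighted $L^2$ norms on $\{p^0\ge 1\}$.

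The main obstacle is the complementary regime $\singS+\gamma\le 0$, in particular the full soft-interaction case \eqref{soft}, where no positive $\varepsilon$ satisfies the above weight-matching inequality. To overcome this I plan to extract the missing growth $(p^0)^\varepsilon$ from the genuinely fractional (nonlocal) content of $|h|_{I^{\singS,\gamma}_l}$: the weighted fractional seminorm in \eqref{fractionalw}, via the Sobolev embedding $H^{\gamma/2}(\mathbb{R}^3)\hookrightarrow L^{6/(3-\gamma)}$, produces $L^q$-control of $w^l h$ for some $q>2$, and a H\"older interpolation against the weighted $L^2_{(\singS+\gamma)/2}$ part, after splitting into $\{|p|\le R\}$ and $\{|p|>R\}$, supplies the small growth $(p^0)^\varepsilon$; the freedom to take $\varepsilon<\gamma/2$ arbitrarily close to zero in \eqref{Paos} provides precisely the needed slack.
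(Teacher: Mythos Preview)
Your decomposition $\mathcal{K}f=\zeta_\mathcal{K}(p)f-\Gamma(f,\sqrt{J})$ and the treatment of the $\Gamma$ piece via \eqref{C2} are exactly what the paper does. The problem is in your handling of the multiplier piece, where the Cauchy--Schwarz step is miscomputed and this creates an artificial obstacle.

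Recall that $|g|_{L^2_m}^2=\int (p^0)^m |g|^2\,dp$. Starting from
\[
|\langle w^{2l}\zeta_\mathcal{K}f,h\rangle|\lesssim \int w^{2l}(p)\,(p^0)^{\frac{\singS}{2}+\varepsilon}\,|f(p)|\,|h(p)|\,dp,
\]
if you insist that the $f$-factor carry exactly the norm $|w^l f|_{L^2_{\singS/2}}$, then Cauchy--Schwarz forces the \emph{remaining} weight $(p^0)^{\singS/2+2\varepsilon}$ onto $h$, not $(p^0)^{\varepsilon}$. That is,
\[
|\langle w^{2l}\zeta_\mathcal{K}f,h\rangle|\lesssim |w^l f|_{L^2_{\singS/2}}\,|w^l h|_{L^2_{\singS/2+2\varepsilon}}.
\]
Your claimed output $|w^l h|_{L^2_{\varepsilon}}$ would only arise if the integrand had weight $(p^0)^{\singS/4+\varepsilon/2}$, which it does not.

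With the correct split the soft-interaction ``obstacle'' vanishes. Since \eqref{Paos} allows any $\varepsilon\in(0,\gamma/2)$, simply choose $\varepsilon\in(0,\gamma/4)$ (this is precisely the paper's choice). Then $\singS/2+2\varepsilon\le(\singS+\gamma)/2$, and because $p^0\ge 1$ we have
\[
|w^l h|_{L^2_{\singS/2+2\varepsilon}}\le |w^l h|_{L^2_{(\singS+\gamma)/2}}\le |h|_{I^{\singS,\gamma}_l},
\]
uniformly in the sign of $\singS+\gamma$. No case distinction between hard and soft interactions is needed, and the Sobolev-embedding/H\"older interpolation plan you outline is unnecessary (and would in any case be delicate, since the fractional part of \eqref{fractionalw} is localized to $|p-p'|\le 1$ and carries a \emph{decaying} weight $(p^0p'^0)^{(\singS+\gamma)/4}$ in the soft regime, so it does not directly yield a global $H^{\gamma/2}$ bound).
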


From \eqref{defK},
this lemma is a direct consequence of \eqref{C2} and the estimate on $|\zeta_\mathcal{K}(p)|$ by choosing $\varepsilon \in (0, \gamma/4)$ in \eqref{Paos}.  Note that \eqref{compactest} implies Lemma \ref{Lemma2} by letting $h=f$. More precisely, we see that for any small $\epsilon>0$, the upper bound of \eqref{compactest} is bounded above by
 $$
| w^{l} f|_{L^2_{\frac{\singS}{2}}}|  f|_{I^{\singS,\gamma}_l} \leq \frac{\epsilon}{2}|  f|^2_{I^{\singS,\gamma}_l}+C_\epsilon| w^{l} f|^2_{L^2_{\frac{\singS}{2}}}.
$$
For the term $C_\epsilon|w^{l}f|^2_{L^2_{\frac{\singS}{2}}}$, we split the region into $|p|\leq R$ and $ |p|\geq R$. When $|p|\leq R$, the term is bounded above by $C_\epsilon|f|^2_{L^2({B_R})}$. Outside the ball, we choose $R>0$ sufficiently large enough so that $C_\epsilon R^{-\frac{\gamma}{2}}\leq \frac{\epsilon}{2}$.
Then we obtain Lemma \ref{Lemma2}.

This concludes our discussion of upper-bound estimates for the linearized Boltzmann operator. In
the next section we will make coercive lower-bound estimates.

\section{Main coercive estimates}
\label{main coercive estimates}
In this section, for any Schwartz function $f$, we consider the quadratic difference arising in the inner product of the norm part $\mathcal{N}f$ with $f$. Our goal is to prove Lemma \ref{Nfcoercivitylemma}.  The key point  is to estimate the norm  $|f|^2_{\mathtt{B}}$ which arises in the inner product $\langle \mathcal{N}f,f\rangle$ from \eqref{mainPartNorm.Nf} 
and will be defined as follows:
\begin{align*}
|f|^2_{\mathtt{B}}&\eqdef\frac{1}{2}\int_\rth dp\int_\rth dq\ \int_{\mathbb{S}^2} d\omega ~ v_{\text{\o}} \sigma(g,\theta)(f(p')-f(p))^2\sqrt{J(q)J(q')}.
\end{align*}
\subsection{Pointwise estimates}
The norm $|f|^2_{\mathtt{B}}$ can be further estimated using Lemma \ref{lemma.reduction} as
\begin{align*}
|f|^2_{\mathtt{B}} =&\  \frac{1}{2}\int_{\mathbb{R}^3}\frac{dp}{{p^0}}\int_{\mathbb{R}^3}\frac{dp'\ }{p'^0}(f(p')-f(p))^2 \\
& \hspace{5mm}\times\int_{\rth}\frac{dq}{q^0}\int_{\rth}\frac{dq'}{q'^0}\  \delta^{(4)}(p'^\mu+q'^\mu-p^\mu-q^\mu) s\sigma(g,\theta)e^{-\frac{q^0+q'^0}{2}}\\
  \approx &\  \frac{1}{2}\int_{\mathbb{R}^3}\frac{dp}{{p^0}}\int_{\mathbb{R}^3}\frac{dp'\ }{p'^0}(f(p')-f(p))^2 \\
 &\hspace{5mm}\times\int_{\rth}\frac{dq}{q^0}\int_{\rth}\frac{dq'}{q'^0}\  \delta^{(4)}(p'^\mu+q'^\mu-p^\mu-q^\mu) sg^{\singS}\left(\frac{\bar{g}}{g}\right)^{-2-\gamma}e^{-\frac{q^0+q'^0}{2}},
\end{align*} 
where we used \eqref{soft} and \eqref{angassumption} and \eqref{bargoverg}.  We conclude that
\begin{align*}
|f|^2_{\mathtt{B}} 
 \approx&\ \frac{1}{2}\int_{\mathbb{R}^3}\frac{dp}{{p^0}}\int_{\mathbb{R}^3}\frac{dp'\ }{p'^0}\frac{(f(p')-f(p))^2}{\bar{g}^{3+\gamma}} \\
 &\hspace{5mm}\times\int_{\rth}\frac{dq}{q^0}\int_{\rth}\frac{dq'}{q'^0}\  \delta^{(4)}(p'^\mu+q'^\mu-p^\mu-q^\mu) sg^{\singS+\gamma+2}\bar{g}e^{-\frac{q^0+q'^0}{2}}.
\end{align*} 
 Then we define a kernel $K(p,p')$ as follows:
 \begin{equation}\notag
 	K(p,p')\eqdef \int_{\rth}\frac{dq}{q^0}\int_{\rth}\frac{dq'}{q'^0}\  \delta^{(4)}(p'^\mu+q'^\mu-p^\mu-q^\mu) sg^{\singS+\gamma+2}\bar{g}e^{-\frac{q^0+q'^0}{2}}.
 \end{equation}
 Thus we have 
 \begin{equation}\label{BK}
 	|f|^2_{\mathtt{B}}\approx \frac{1}{2}\int_{\mathbb{R}^3}\frac{dp}{{p^0}}\int_{\mathbb{R}^3}\frac{dp'\ }{p'^0}\frac{(f(p')-f(p))^2}{\bar{g}^{3+\gamma}}K(p,p').
 \end{equation}
We will prove a pointwise lower bound for the kernel $K(p,p')$ as follows.
\begin{proposition}
	\label{coer}
	The kernel $K(p,p')$ is bounded from below as 
	 $$
	K(p,p')\gtrsim1_{\bar{g}\leq 1}1_{|p'^0-p^0|\leq \bar{g}}(p^0p'^0)^{\frac{\singS+\gamma}{4}+1}.
	$$
\end{proposition}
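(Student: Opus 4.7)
The plan is to evaluate $K(p,p')$ via a Carleman-type reduction followed by a Lorentz transformation into the centre-of-momentum frame of $p+p'$, and then identify a region of integration on which the integrand is uniformly bounded below by the correct power of $p^0p'^0$.

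First I would perform the $dq'/q'^0$ integration using $\frac{dq'}{q'^0}=2\theta(q'^0)\delta(q'^\mu q'_\mu+1)\,d^4q'$ and the four-dimensional delta to set $q'^\mu=p^\mu+q^\mu-p'^\mu$. Expanding $(p+q-p')^\mu(p+q-p')_\mu+1$ with the mass-shell identities and the definitions \eqref{g}, \eqref{gbar}, \eqref{gtilde} gives the algebraic collapse $\bar g^2+\tilde g^2-g^2$, so that
\begin{equation*}
K(p,p')=2\int_\rth\frac{dq}{q^0}\,\theta(p^0+q^0-p'^0)\,\delta(\bar g^2+\tilde g^2-g^2)\,sg^{\singS+\gamma+2}\bar g\,e^{-(q^0+q'^0)/2}.
\end{equation*}

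Next I would apply the Lorentz transformation $\Lambda=\Lambda(p,p')$ from \eqref{eq.LT} (with the variable $q$ there replaced by $p'$), so that $\Lambda(p+p')=(\sqrt{\bar s},0,0,0)$ and $-\Lambda(p-p')=(0,0,0,\bar g)$, and use the Lorentz invariance of $dq/q^0$ and of $s,g,\bar g,\tilde g$. A direct computation with $\Lambda p=(\sqrt{\bar s}/2,0,0,-\bar g/2)$ and $\Lambda p'=(\sqrt{\bar s}/2,0,0,\bar g/2)$ gives $g^2=\sqrt{\bar s}\,\tilde q^0+\bar g\tilde q^3-2$ and $\tilde g^2=\sqrt{\bar s}\,\tilde q^0-\bar g\tilde q^3-2$, so $\bar g^2+\tilde g^2-g^2=\bar g(\bar g-2\tilde q^3)$ and the delta collapses to $\tilde q^3=\bar g/2$. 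Inverting $\Lambda$ on the vector $q+q'$ and using the cancellation $\tilde q'^3=-\tilde q^3$ on the constraint produces the identity
\begin{equation*}
q^0+q'^0=\frac{2(p^0+p'^0)}{\sqrt{\bar s}}\,\tilde q^0-\frac{4|p\times p'|}{\bar g\sqrt{\bar s}}\,\tilde q^1,
\end{equation*}
so that $K(p,p')$ reduces to a two-dimensional integral over $(\tilde q^1,\tilde q^2)\in\mathbb{R}^2$ with $\tilde q^0=\sqrt{1+|\tilde q_\perp|^2+\bar g^2/4}$ and $g^2=\sqrt{\bar s}\,\tilde q^0+\bar g^2/2-2$.

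Under the indicator conditions $\bar g\le 1$ and $|p^0-p'^0|\le\bar g$, Lemma \ref{lemmaqq} yields $p^0\approx p'^0$ and $\bar s\approx 4$, while the identity $|p\times p'|^2=\bar g^2p^0p'^0-\bar g^4/4-|p-p'|^2$ following from \eqref{gbar}, combined with $|p-p'|^2=\bar g^2+(p^0-p'^0)^2\le 2\bar g^2$, yields $|p\times p'|\approx\bar g\sqrt{p^0p'^0}$; in particular both leading coefficients in the identity above are of size $p^0$. I would then restrict integration to $\mathcal R=\{\,c_1p^0\le\tilde q^1\le c_2p^0,\ |\tilde q^2|\le 1\,\}$ for suitable small constants $0<c_1<c_2$. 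On $\mathcal R$ one has $\tilde q^0\approx\tilde q^1\approx p^0$, and the expansion $\tilde q^0-\tilde q^1=(1+(\tilde q^2)^2+\bar g^2/4)/(\tilde q^0+\tilde q^1)\approx 1/p^0$ gives $q^0+q'^0=O(1)$, so the exponential is bounded below by a positive constant; moreover $g^2\approx 2p^0$ and $s\approx p^0$, hence $sg^{\singS+\gamma+2}\approx(p^0)^{(\singS+\gamma+4)/2}\approx(p^0p'^0)^{(\singS+\gamma)/4+1}$. Since $\mathcal R$ has Lebesgue measure $\approx p^0$ while $\tilde q^0\approx p^0$ on $\mathcal R$, we obtain $\int_{\mathcal R}d\tilde q_\perp/\tilde q^0\approx 1$, yielding the claimed lower bound.

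The main obstacle is the cancellation in $q^0+q'^0$: both leading coefficients of $\tilde q^0$ and $\tilde q^1$ scale like $p^0$, so keeping the exponential factor of order one forces the integration region to \emph{scale linearly with} $p^0$ rather than being compact, and the signs in \eqref{eq.LT} together with the mass-shell geometry must be tracked carefully to verify that this region is non-empty with the expected two-dimensional Lebesgue measure. The argument is uniform in $p^0,p'^0\ge 1$; the measure-zero degeneracy $|p\times p'|=0$ at which $\Lambda$ is undefined can be treated by continuity.
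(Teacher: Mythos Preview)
Your approach is correct and takes a genuinely different route from the paper's proof. The paper begins by discarding most of the integrand structure via the pointwise inequality
\[
sg^{\singS+\gamma+2}\ \gtrsim\ \frac{|2p-(q+q')|^{\singS+\gamma+4}}{(p^0(q^0+q'^0))^{(\singS+\gamma+4)/2}},
\]
which reduces the dependence on $(q,q')$ to dependence on $q+q'$ and $q^0+q'^0$ only; it then lifts to an eight-fold integral via Lemma~\ref{7.5ofCMP}, passes to the sum-and-difference variables $\bar p^\mu=q^\mu+q'^\mu$, $\bar q^\mu=q^\mu-q'^\mu$, and reduces to a one-dimensional radial integral in polar coordinates aligned with $p'-p$, where the hypothesis $|p^0-p'^0|\le\bar g$ bounds the lower endpoint of that radial integral. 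You instead keep the full integrand, evaluate the $q'$-delta directly, and boost to the centre-of-momentum frame of $p^\mu+p'^\mu$ via $\Lambda(p,p')$; the constraint collapses to $\tilde q^3=\bar g/2$ and you are left with a two-dimensional integral whose exponential weight has the explicit form $A\tilde q^0-B\tilde q^1$ with $A^2-B^2=\tfrac{4(p^0-p'^0)^2+4\bar g^2+16|p-p'|^2/\bar g^2}{\bar s}$. The hypothesis $|p^0-p'^0|\le\bar g$ forces $A^2-B^2\in[16/5,10]$, hence $A-B\approx 1/A\approx 1/p^0$, and this is exactly what makes $q^0+q'^0=O(1)$ on your slab $\mathcal R$. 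Your argument is more structural---the cancellation in the exponent is read off directly from the Lorentz invariant $A^2-B^2$---while the paper's argument is more elementary (no boost, only polar coordinates in the physical frame) but requires the preliminary pointwise bound on $sg^{\singS+\gamma+2}$ to decouple the integrand. Both arguments give the same answer and both genuinely need the indicator $1_{|p^0-p'^0|\le\bar g}$; note also that your Lorentz frame breaks down on the measure-zero set $p\parallel p'$, which you correctly flag as harmless for the application to $|f|_{\mathtt B}^2$.
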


\begin{proof} We use \eqref{FREQ:s.ge.g2}, \eqref{FREQ:g.ge.lower}, and $\singS+\gamma+4>0$ for both hard- and soft-interactions to obtain that 
 $$
 sg^{\singS+\gamma+2}\geq g^{\singS+\gamma+4}\geq \frac{|p-q|^{\singS+\gamma+4}}{(p^0q^0)^{\frac{\singS+\gamma+4}{2}}}.
 $$
 Also, note that $g^2\ge  \tilde{g}^2$ by \eqref{triangle.id}. Therefore, using \eqref{gtilde} and  \eqref{eq.pqp'q'} we obtain
 $$
sg^{\singS+\gamma+2}\geq g^{\singS+\gamma+4} \geq \tilde{g}^{\singS+\gamma+4}\geq \frac{|p-q'|^{\singS+\gamma+4}}{(p^0q'^0)^{\frac{\singS+\gamma+4}{2}}}.
 $$
 With the extra assumption that $\bar{g}\leq 1$, using Lemma \ref{lemmaqq}, we obtain $q^0\approx q'^0\approx q^0+q'^0.$ Thus, we have
\begin{equation}
\label{q+q'}
sg^{\singS+\gamma+2}\gtrsim \frac{|p-q|^{\singS+\gamma+4}}{(p^0q^0)^{\frac{\singS+\gamma+4}{2}}}+\frac{|p-q'|^{\singS+\gamma+4}}{(p^0q'^0)^{\frac{\singS+\gamma+4}{2}}}\gtrsim \frac{|2p-(q+q')|^{\singS+\gamma+4}}{(p^0(q^0+q'^0))^{\frac{\singS+\gamma+4}{2}}} .
\end{equation}
 We then raise the kernel $K(p,p')$ to the 8-dimensional integral in $q^\mu$ and $q'^\mu$ using Lemma \ref{7.5ofCMP} with $dq^\mu$ and $dq'^\mu$ as below:
 \begin{multline*}
 	K(p,p')=\frac{1}{16}\int_{\rfo}dq^\mu\int_{\rfo}dq'^\mu\  \delta^{(4)}(p'^\mu+q'^\mu-p^\mu-q^\mu)sg^{\singS+\gamma+2}\bar{g}e^{-\frac{q^0+q'^0}{2}}\\\times u(q^0+q'^0)u(\dbar{s}-4)\delta(\dbar{s}-\dbar{g}^2-4)\delta((q^\mu+q'^\mu)(q_\mu-q'_\mu)),
 \end{multline*}
 where $\dbar{s}\eqdef s(q^\mu,q'^\mu)$ and $\dbar{g}\eqdef g(q^\mu,q'^\mu)$. 
 By \eqref{q+q'}, we have
 \begin{multline*}
 	K(p,p')\gtrsim \int_{\rfo}dq^\mu\int_{\rfo}dq'^\mu\  \delta^{(4)}(p'^\mu+q'^\mu-p^\mu-q^\mu)\frac{|2p-(q+q')|^{\singS+\gamma+4}}{(p^0(q^0+q'^0))^{\frac{\singS+\gamma+4}{2}}} \bar{g}e^{-\frac{q^0+q'^0}{2}}\\\times u(q^0+q'^0)u(\dbar{s}-4)\delta(\dbar{s}-\dbar{g}^2-4)\delta((q^\mu+q'^\mu)(q_\mu-q'_\mu))1_{\{\bar{g}\leq 1 \}}.
 \end{multline*}
 Now apply the change of variables
 $$\bar{p}^\mu=q^\mu+q'^\mu,\ \bar{q}^\mu=q^\mu-q'^\mu.$$
 This transformation has Jacobian equal to 16.
 With this change of variable the integral becomes
  \begin{multline*}
 K(p,p')\gtrsim\int_{\rfo}d\bar{q}^\mu\int_{\rfo}d\bar{p}^\mu\  \delta^{(4)}(p'^\mu-p^\mu-\bar{q}^\mu)  \frac{|2p-\bar{p}|^{\singS+\gamma+4}}{(p^0\bar{p}^0)^{\frac{\singS+\gamma+4}{2}}}\bar{g}e^{-\frac{\bar{p}^0}{2}}\\\times u(\bar{p}^0)u(-\bar{p}^\mu \bar{p}_\mu-4)\delta(-\bar{p}^\mu \bar{p}_\mu-\bar{q}^\mu \bar{q}_\mu-4)\delta(\bar{p}^\mu \bar{q}_\mu)1_{\{\bar{g}\leq 1 \}}.
 \end{multline*} 
 Note that $\bar{g}$ did not change under this change of variables.
 We next carry out the delta function argument for $\delta^{(4)}(p'^\mu-p^\mu-\bar{q}^\mu)$ to obtain
 \begin{multline*}
 K(p,p')\gtrsim \int_{\rfo}d\bar{p}^\mu\  \frac{|2p-\bar{p}|^{\singS+\gamma+4}}{(p^0\bar{p}^0)^{\frac{\singS+\gamma+4}{2}}}1_{\{\bar{g}\leq 1 \}}\bar{g}e^{-\frac{\bar{p}^0}{2}}\\\times u(\bar{p}^0)u(-\bar{p}^\mu \bar{p}_\mu-4)\delta(-\bar{p}^\mu \bar{p}_\mu-\bar{g}^2-4)\delta(\bar{p}^\mu (p'_\mu-p_\mu)).
 \end{multline*}
 Since $\bar{s}=\bar{g}^2+4,$ using \eqref{tilde.calc.LT} with $\tilde{g}$ replaced by $\bar{g}$ we have 
 \begin{equation*}
 u(\bar{p}^0)\delta(-\bar{p}^\mu \bar{p}_\mu-\bar{g}^2-4)=u(\bar{p}^0)\delta(-\bar{p}^\mu \bar{p}_\mu-\bar{s})
 =\frac{\delta(\bar{p}^0-\sqrt{|\bar{p}|^2+\bar{s}})}{2\sqrt{|\bar{p}|^2+\bar{s}}}.
 \end{equation*}
Again using $\bar{s}=\bar{g}^2+4$, we have $-\bar{p}^\mu\bar{p}_\mu-4=\bar{s}-4=\bar{g}^2\geq 0$ to guarantee that $u(-\bar{p}^\mu\bar{p}_\mu-4)=1$ by \eqref{def.u}.  We evaluate one integral using the delta function:
\begin{equation}\notag
K(p,p')\gtrsim\int_{\rth}\frac{d\bar{p}}{\bar{p}^0}\  \frac{|2p-\bar{p}|^{\singS+\gamma+4}}{(p^0\bar{p}^0)^{\frac{\singS+\gamma+4}{2}}}1_{\{\bar{g}\leq 1 \}}\bar{g}\delta(\bar{p}^\mu (p'_\mu-p_\mu))e^{-\frac{\bar{p}^0}{2}},
\end{equation}
where $\bar{p}^0=\sqrt{|\bar{p}|^2+\bar{s}}$.

Now express $\bar{p}$ using polar coordinates $\bar{p}\mapsto (r,\theta,\phi)$. We further choose the $z$-axis parallel to $p'-p$ such that the angle between $\bar{p}$ and $p'-p$ is equal to $\phi$. Note that the Jacobian that we get from this change of variables is equal to $r^2 \sin \phi$. Then the terms in the delta function can be written as
$$
\bar{p}^\mu (p'_\mu-p_\mu)=-\bar{p}^0(p'^0-p^0)+\bar{p}\cdot(p'-p)=-\sqrt{r^2+\bar{s}}(p'^0-p^0)+r|p'-p|\cos\phi.
$$
By using $|2p-\bar{p}|\geq ||2p|-|\bar{p}||$, we have
\begin{multline*}
K(p,p')\gtrsim\int_{0}^\infty \frac{dr}{\sqrt{r^2+\bar{s}}} \int_0^{2\pi} d\theta \int_0^{\pi} d\phi \ r^2\sin\phi
\ \delta\left(\cos\phi -\frac{\sqrt{r^2+\bar{s}}(p'^0-p^0)}{r|p'-p|}\right)\\
\times \frac{\bar{g}}{r|p'-p|}  \frac{|2|p|-r|^{\singS+\gamma+4}}{(p^0\sqrt{r^2+\bar{s}})^{\frac{\singS+\gamma+4}{2}}}1_{\{\bar{g}\leq 1 \}}e^{-\frac{\sqrt{r^2+\bar{s}}}{2}},
\end{multline*}
We define the set $S_{p,p'}$ as
 $$S_{p,p'}\eqdef \left\{r\in [0,\infty): \left|\frac{\sqrt{r^2+\bar{s}}(p'^0-p^0)}{r|p'-p|}\right|\leq 1\right\}.
 $$ 
 By a simple calculation, we can see that this set is equivalent to 
 $$S_{p,p'}=\left\{r\in [0,\infty): r^2\geq \frac{\bar{s}|p'^0-p^0|^2}{\bar{g}^2}\right\}.$$ 
Then by carrying out one integration with the delta function, and the change of variables $u=\cos\phi$, we get
$$
K(p,p')\gtrsim\int_{S_{p,p'}} \frac{rdr}{\sqrt{r^2+\bar{s}}} 
 \frac{\bar{g}}{|p'-p|}  \frac{|2|p|-r|^{\singS+\gamma+4}}{(p^0\sqrt{r^2+\bar{s}})^{\frac{\singS+\gamma+4}{2}}}e^{-\frac{\sqrt{r^2+\bar{s}}}{2}}1_{\{\bar{g}\leq 1 \}}.
$$
Now we will make the extra assumption that 
\begin{equation}\label{extra.assumption}
    |p'^0-p^0|\leq \bar{g}.
\end{equation}
We point out that this assumption \eqref{extra.assumption} is necessary in the following sense:  There is a region in $(p,p')$ space where $\bar{g}\le 1$ and $|p'^0-p^0|$ can be arbitrarily large, which then makes $\frac{\bar{s}|p'^0-p^0|^2}{\bar{g}^2}$ become similarly large.  This would prevent the lower bound in this proposition due to the exponential decay in $e^{-\frac{\sqrt{r^2+\bar{s}}}{2}}$.

This assumption, $|p'^0-p^0|\leq \bar{g}$, further implies that 
$$
|p'-p|^2=\bar{g}^2+|p'^0-p^0|^2\leq 2\bar{g}^2,
$$
and
$$
\frac{\bar{s}|p'^0-p^0|^2}{\bar{g}^2}\leq \bar{s}=\bar{g}^2+4\leq 5.$$
Therefore, we have
$$
K(p,p')\gtrsim\int_{5}^{\infty} \frac{rdr}{\sqrt{r^2+\bar{s}}} 
  \frac{|2|p|-r|^{\singS+\gamma+4}}{(p^0\sqrt{r^2+\bar{s}})^{\frac{\singS+\gamma+4}{2}}}e^{-\frac{\sqrt{r^2+\bar{s}}}{2}}1_{\{\bar{g}\leq 1 \}}1_{\{|p'^0-p^0|\leq \bar{g} \}}.
$$
Using $\bar{s}\leq 5,$ $\frac{r}{\sqrt{r^2+5}}>\frac{1}{2},$ and $p^0\approx p'^0$, we can further have that 
\begin{multline*}
K(p,p')\gtrsim1_{\{\bar{g}\leq 1 \}}1_{\{|p'^0-p^0|\leq \bar{g} \}}\int_{5}^{\infty} dr\ 
  \frac{|2|p|-r|^{\singS+\gamma+4}}{(p^0\sqrt{r^2+5})^{\frac{\singS+\gamma+4}{2}}}e^{-\frac{\sqrt{r^2+5}}{2}}\\
  \approx 1_{\{\bar{g}\leq 1 \}}1_{\{|p'^0-p^0|\leq \bar{g} \}} (p^0)^{\frac{\singS+\gamma+4}{2}}
  \approx 1_{\{\bar{g}\leq 1 \}}1_{\{|p'^0-p^0|\leq \bar{g} \}} (p^0p'^0)^{\frac{\singS+\gamma}{4}+1}.
\end{multline*}
This completes the proof.
\end{proof}

Together with \eqref{BK}, Proposition \ref{coer} implies that 
\begin{align*}
|f|^2_{\mathtt{B}}&\gtrsim  \int_\rth dp\int_\rth dp'\ \frac{(f(p')-f(p))^2}{\bar{g}^{3+\gamma}}({p'^0}{p^0})^{\frac{\singS+\gamma}{4}} 1_{\bar{g}\leq 1 }1_{|p'^0-p^0|\leq \bar{g} }.
\end{align*}
Since $\bar{g}^2=-|p'^0-p^0|^2+|p'-p|^2,$ the cutoff function $1_{|p'^0-p^0|\leq \bar{g} }$ is equal to $1_{|p'^0-p^0|\leq \frac{1}{\sqrt{2}}|p'-p| }$. Since $\bar{g}\leq |p'-p|$,  we can conclude that 
\begin{multline}
\label{coercivelowerbound}
|f|^2_{\mathtt{B}}\gtrsim  
\int_\rth dp\int_\rth dp'\ \frac{(f(p')-f(p))^2}{|p'-p|^{3+\gamma}}({p'^0}{p^0})^{\frac{\singS+\gamma}{4}} 1_{|p'-p|\leq 1 }1_{|p'^0-p^0|\leq \frac{1}{\sqrt{2}}|p'-p| }.
\end{multline}
However, this pointwise lower bound is not a sufficient coercivity estimate because of the extra cut-off restricting $1_{|p'^0-p^0|\leq \frac{1}{\sqrt{2}}|p'-p| }.$ Recall that we want to show 
 \begin{equation}
 \label{wanttoshowcoer}
|f|^2_{L^2_{\frac{\singS+\gamma}{2}}}+|f|^2_{\mathtt{B}}\gtrsim |f|^2_{I^{\singS,\gamma}},
 \end{equation}
in order to obtain Lemma \ref{Nfcoercivitylemma} for the case $l=0$ using also \eqref{Paos}. 

A direct pointwise comparison is not possible for this desirable coercivity because of the extra cut-off restricting $1_{|p'^0-p^0|\leq \frac{1}{\sqrt{2}}|p'-p| }$. In the next section, we will use the technique called ``Fourier redistribution'' from \cite{MR2784329} to get around this obstruction. 

\subsection{Fourier redistribution}\label{9.2}Essentially the key idea is to take an advantage from the Fourier transform in the situation where the pointwise bound is not available.
More precisely, we use the following proposition of \cite{MR2784329}:
\begin{proposition}[Proposition 7.1 of \cite{MR2784329}]\label{7.1}
Suppose $K_1$ and $K_2$ are even, nonnegative, measurable functions on $\rth$ satisfying
$$\int_\rth du \ K_l(u)|u|^2<\infty, \ \ \ l=1,2.$$
Suppose $\phi$ is any smooth, non-negative function on $\rth$ and that there is some constant $C_\phi$ such that $|\nabla^2\phi(u)|\leq C_\phi$ for all $u$. For $l=1,2$, consider the following quadratic forms (defined for arbitrary real-valued Schwartz functions $f$):$$|f|^2_{K_l}\eqdef \int_\rth dp \int_\rth dp'\  \phi(p)\phi(p') K_l(p-p')(f(p)-f(p'))^2.$$
If there exists a finite, non-negative constant $C$ such that, for all $\zeta\in \rth$, 
$$\int_\rth du \ K_1(u)|e^{2\pi i \langle \zeta,u\rangle}-1|^2\leq C+\int_\rth du \ K_2(u)|e^{2\pi i \langle \zeta,u\rangle}-1|^2,$$ then for all Schwartz functions $f$, 
$$|f|^2_{K_1}\leq |f|^2_{K_2}+C'C_\phi\int_\rth dp \ \phi(p)(f(p))^2,$$
where the constant $C'$ satisfies $C'\lesssim 1+C+\int_\rth du\ (K_1(u)+K_2(u))|u|^2$ uniformly in $K_1,$ $K_2$, $\phi$ and C.
\end{proposition}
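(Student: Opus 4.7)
The plan is to transfer the pointwise Fourier-side hypothesis into the desired real-space comparison via Parseval, with the multiplier $\phi$ handled through an explicit algebraic identity and the Hessian bound. Setting $F := \phi f$, a direct expansion produces the backbone identity
\begin{equation*}
(F(p+u) - F(p))^2 - \phi(p)\phi(p+u)(f(p+u) - f(p))^2 = [\phi(p) - \phi(p+u)]\bigl[\phi(p) f(p)^2 - \phi(p+u) f(p+u)^2\bigr].
\end{equation*}
Integrating both sides against $K_l(u)\,dp\,du$ will express $\|F\|_{K_l,*}^2 = |f|^2_{K_l} + R_l(f)$, where $\|F\|_{K_l,*}^2 := \int_\rth\!\int_\rth K_l(u)|F(p+u)-F(p)|^2\,dp\,du$ is the standard Dirichlet form naturally diagonalized by Parseval, and $R_l(f)$ is the integrated right-hand side.

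I would next estimate $R_l$ using only the Hessian bound. Exploiting the evenness of $K_l$ together with the substitutions $u\to -u$ and $p\to p\pm u$, the remainder collapses to
\begin{equation*}
R_l(f) = \int_\rth \phi(p) f(p)^2 \biggl(\int_\rth K_l(u)\bigl[2\phi(p) - \phi(p+u) - \phi(p-u)\bigr]\,du\biggr)\,dp,
\end{equation*}
and a second-order Taylor expansion yields the crucial pointwise bound $|2\phi(p) - \phi(p+u) - \phi(p-u)| \le C_\phi|u|^2$. Hence $|R_l(f)| \le C_\phi A_l \int \phi f^2\,dp$ with $A_l := \int K_l(u)|u|^2\,du$ finite by hypothesis; this step uses no integrability of either $\phi$ or $K_l$ themselves.

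Parseval then gives $\|F\|_{K_l,*}^2 = \int |\hat F(\zeta)|^2 m_l(\zeta)\,d\zeta$ with $m_l(\zeta) := \int K_l(u)|e^{2\pi i\langle \zeta,u\rangle}-1|^2\,du$, so the hypothesis $m_1 \le C + m_2$ converts immediately to $\|F\|_{K_1,*}^2 \le C\|F\|_{L^2}^2 + \|F\|_{K_2,*}^2$. Substituting the identity $\|F\|_{K_l,*}^2 = |f|^2_{K_l} + R_l(f)$ and using $\|F\|_{L^2}^2 = \int \phi^2 f^2\,dp$ then produces the preliminary bound
\begin{equation*}
|f|^2_{K_1} \le |f|^2_{K_2} + C \int_\rth \phi(p)^2 f(p)^2\,dp + C_\phi(A_1 + A_2)\int_\rth \phi(p) f(p)^2\,dp.
\end{equation*}

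The hard part will be recasting the Plancherel error $C\int\phi^2 f^2$ into the advertised form $C'C_\phi\int \phi f^2$. The essential new input is the elementary consequence of $\phi \ge 0$ and $|\nabla^2\phi|\le C_\phi$, namely $|\nabla\phi|^2 \le 2 C_\phi \phi$ (obtained by viewing $\phi(p+tv)\ge 0$ as a non-negative quadratic in $t$ and requiring nonpositive discriminant). Combined with a localization/commutator argument that trades an excess factor of $\phi$ against a multiple of $C_\phi$, modulo remainder terms already controlled at the level of $R_l$, this will close the estimate. Tracking constants throughout the reduction then yields the stated budget $C' \lesssim 1 + C + A_1 + A_2$, uniform in $K_1$, $K_2$, $\phi$, and $C$.
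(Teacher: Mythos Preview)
The paper does not prove this proposition; it is quoted from Gressman--Strain \cite{MR2784329} and used as a black box, so there is no paper-proof to compare against. Your argument up to the preliminary bound
\[
|f|^2_{K_1} \le |f|^2_{K_2} + C \int_{\rth} \phi(p)^2 f(p)^2\,dp + C_\phi(A_1+A_2)\int_{\rth}\phi(p) f(p)^2\,dp
\]
is correct and clean: the algebraic identity for $(F(p+u)-F(p))^2-\phi(p)\phi(p+u)(f(p+u)-f(p))^2$, the symmetrization collapsing $R_l$ to a second-difference of $\phi$, the Taylor bound $|2\phi(p)-\phi(p+u)-\phi(p-u)|\le C_\phi|u|^2$, and the Parseval step are all valid.

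The gap is the final paragraph. You cannot convert $C\int\phi^2 f^2$ into $C' C_\phi\int\phi f^2$ with $C'$ uniform in $\phi$, and no ``localization/commutator argument'' will accomplish it. Take $\phi\equiv M$ for a large constant $M$; then $\nabla^2\phi=0$, so any $C_\phi\ge 0$ is admissible, yet the claimed bound would force $C M^2\int f^2 \le C' C_\phi M\int f^2$, i.e.\ $CM\le C'C_\phi$, which fails as $M\to\infty$. The gradient inequality $|\nabla\phi|^2\le 2C_\phi\phi$ that you correctly derive controls only variations of $\phi$, not its magnitude, and cannot close this step. In the paper's actual application $\phi$ is a partition-of-unity element times a polynomial weight, hence compactly supported and bounded, so $\int\phi^2 f^2\lesssim\int\phi f^2$ holds for free; your preliminary bound already suffices there. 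The proposition as transcribed is likely missing a normalization on $\phi$ (or the error term should read $\int\phi^2 f^2$), and your instinct that this last step is ``the hard part'' was right---it is in fact impossible under the stated hypotheses alone.
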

In order to obtain the favorable coercivity estimates, we fix functions $K_1$ and $K_2$ on $\rfo$ given by $$K_1(u^0,u)=|u|^{-3-\gamma}1_{|u|\leq 1}$$ 
and 
$$K_2(u^0,u)=|u|^{-3-\gamma}1_{|u|\leq 1}1_{|u^{0}|\leq \epsilon|u|},$$ 
with $u\eqdef(u_1,u_2,u_3)\in\rth$, $u^0 \in \mathbb{R}$
and  
$|u|^2 = u_1^2+u_2^2+u_3^2$.
Note that the particular choice of $u^0=p'^0-p^0$, $u=p'-p$ and $\epsilon=\frac{1}{\sqrt{2}}$ corresponds to the coercive lower bound \eqref{coercivelowerbound} with $\phi(p)\eqdef (\sqrt{1+|p|^2})^{\frac{\singS+\gamma}{4}}\tilde{\phi}(p^\mu)$ where $\{\tilde{\phi}(p^\mu)\}$ is a smooth partition of unity in $\rfo$ which will be defined just below.

In order to use Proposition \ref{7.1}, we need to establish that the estimates of the derivatives of $\phi$ and the Fourier condition on $K_1$ and $K_2$. Let $\{\tilde{\phi}\}$ be a smooth partition of unity of $\rfo$ which is locally finite and such that their zeroth, first, and second derivatives are uniformly bounded. Then the estimates on the derivatives of $\phi(p)\eqdef (\sqrt{1+|p|^2})^{\frac{\singS+\gamma}{4}}\tilde{\phi}(p^\mu)$ are straight forward.

In order to prove the Fourier condition, we suppose further that each $\tilde{\phi}$ is supported only on a Euclidean ball of radius $\frac{\epsilon}{16}$ for a small $\epsilon>0$. Also, we consider a fixed $v\in \rth$ such that $\tilde{\phi}(v^\mu)\neq 0$ with $v^\mu\eqdef (\sqrt{1+|v|^2},v)$ for some fixed $\tilde{\phi}$. We then write $p=v+u$ and $p'=v+u'$. If $\tilde{\phi}(p)$ and $\tilde{\phi}(p')$ are both not equal to zero, then both $|u|,|u'|\leq \frac{\epsilon}{8}$ by the support condition of $\tilde{\phi}$. Then, we have
\begin{multline}\label{p'pu'u}
	|p'^0-p^0|=|\sqrt{1+|p'|^2}-\sqrt{1+|p'|^2}|=\frac{||p'|^2-|p|^2|}{\sqrt{1+|p|^2}+\sqrt{1+|p'|^2}}\\=\frac{|(p'-p)\cdot(p+p')|}{\sqrt{1+|p|^2}+\sqrt{1+|p'|^2}}=\frac{|(u'-u)\cdot(2v+u+u')|}{\sqrt{1+|v+u|^2}+\sqrt{1+|v+u'|^2}}\\
\leq 	\frac{|2v\cdot(u'-u)|}{\sqrt{1+|v+u|^2}+\sqrt{1+|v+u'|^2}}+\frac{|(u'-u)\cdot(u+u')|}{\sqrt{1+|v+u|^2}+\sqrt{1+|v+u'|^2}}\\
\leq	\frac{|2v\cdot(u'-u)|}{\sqrt{1+|v+u|^2}+\sqrt{1+|v+u'|^2}}+\frac{|u+u'|}{2}|u-u'|.
\end{multline}
For the upper-bound estimate of the first fraction, we split into two cases: $|v|\leq \frac{\epsilon}{4}$ and $|v|>\frac{\epsilon}{4}$. In the former case, we have
$$\frac{|2v\cdot(u'-u)|}{\sqrt{1+|v+u|^2}+\sqrt{1+|v+u'|^2}}\leq |v||u'-u|\leq \frac{\epsilon}{4}|u'-u|.$$ In the latter case, we have both $|v+u'|, |v+u|\geq \frac{|v|}{2}$ because $|u'|,|u|\leq \frac{\epsilon}{8}<\frac{|v|}{2}.$ Thus, we have
$$\frac{|2v\cdot(u'-u)|}{\sqrt{1+|v+u|^2}+\sqrt{1+|v+u'|^2}}\leq \frac{2|v\cdot(u'-u)|}{\sqrt{1+|v|^2}}.$$ In \eqref{p'pu'u}, we also recall that $\frac{|u+u'|}{2}\leq \frac{\epsilon}{4}$. 
Then in \eqref{p'pu'u}, the additional condition that $\frac{|v\cdot(u'-u)|}{\sqrt{1+|v|^2}}\leq \frac{\epsilon}{8}|u'-u|$ further guarantees that 
$$|p'^0-p^0|\leq \frac{\epsilon}{2}|u'-u|.$$
Now the inequality above holds for any $|v|$.  
Therefore, we also have that 
$$
|u-u'|^{-3-\gamma}1_{|u-u'|\leq 1}1_{\frac{|v\cdot(u'-u)|}{\sqrt{1+|v|^2}}\leq \frac{\epsilon}{8}|u'-u|}\tilde{\phi}(p'^\mu)\tilde{\phi}(p^\mu)\lesssim K_2(p'^\mu-p^\mu)\tilde{\phi}(p'^\mu)\tilde{\phi}(p^\mu),
$$
where $p'^\mu=(\sqrt{1+|v+u'|^2},v+u')$ and $p^\mu=(\sqrt{1+|v+u|^2},v+u)$.  Then for fixed $v\ne 0$,  we define
$
E_2 = \{u: |u|\leq \frac{1}{2}, \quad 
\frac{|v\cdot u|}{\sqrt{1+|v|^2}}\leq \frac{\epsilon}{8}|u|\}
$
and 
we can choose a coordinate system such that this is the set $E_2$ in  Proposition \ref{prop.compare} below.

\begin{proposition}[Proposition 7.2 of \cite{MR2784329}]\label{prop.compare}
Fix any $\varepsilon>0$, and let $E_1$ and $E_2$ be the sets in $\rth$ given by $E_1\eqdef\{u\in \rth : |u|\leq 2\}$ and $E_2\eqdef \{u\in \rth : |u|\leq \frac{1}{2} \ \text{and}\ |u_3|\leq \varepsilon|u|\} $. Then $$\int_{E_1}du|e^{2\pi i\langle \zeta,u\rangle}-1|^2|u|^{-3-\gamma}\lesssim 1+\int_{E_2}du|e^{2\pi i\langle \zeta,u\rangle}-1|^2|u|^{-3-\gamma},$$ uniformly for all $\zeta\in \rth$. 
\end{proposition}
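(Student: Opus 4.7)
The plan is to use a scaling argument together with a compactness-continuity argument on the sphere $S^2$.

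First I would handle the regime where $|\zeta|$ is bounded. Using $|e^{2\pi i \langle \zeta, u\rangle}-1|^2 \le \min(4,4\pi^2|\zeta|^2|u|^2)$, one checks by splitting the $u$-integral at $|u| = 1/(\pi|\zeta|)$ that the integral on $E_1$ is uniformly bounded for $|\zeta|$ in any compact set. Hence the additive constant $1$ on the right-hand side absorbs everything in this regime, and from now on I assume $|\zeta| \ge 1$.

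Next, I would rescale via $u = v/|\zeta|$. Writing $\hat\zeta = \zeta/|\zeta|$, this gives
\begin{equation*}
\int_{E_1} |e^{2\pi i \langle \zeta, u\rangle}-1|^2 |u|^{-3-\gamma}\,du = |\zeta|^{\gamma}\!\!\int_{|v| \le 2|\zeta|}\!\! |e^{2\pi i \langle \hat\zeta, v\rangle}-1|^2 |v|^{-3-\gamma}\,dv,
\end{equation*}
and likewise the right-hand integral becomes $|\zeta|^{\gamma}$ times an integral over the truncated ``pancake'' region $F(R) \eqdef \{v:\, |v|\le R/2,\ |v_3|\le \varepsilon|v|\}$ with $R = |\zeta|$. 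So everything reduces to comparing the two integrals, for the common integrand $\Phi_{\hat\zeta}(v) \eqdef |e^{2\pi i \langle \hat\zeta, v\rangle}-1|^2 |v|^{-3-\gamma}$, over $B(0,2|\zeta|)$ and $F(|\zeta|)$. Note $\Phi_{\hat\zeta}$ is integrable on all of $\R^3$ and on the full cone $F_\infty \eqdef \{v:\, |v_3|\le \varepsilon|v|\}$, because the singularity at $v=0$ is controlled by $\Phi_{\hat\zeta}(v)\lesssim |v|^{-1-\gamma}$ while at infinity $|v|^{-3-\gamma}$ is integrable (using $\gamma > 0$).

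Define
\begin{equation*}
C_0 \eqdef \int_{\R^3} \Phi_{\hat\zeta}(v)\,dv, \qquad K(\hat\zeta)\eqdef \int_{F_\infty}\Phi_{\hat\zeta}(v)\,dv.
\end{equation*}
The constant $C_0$ is independent of $\hat\zeta$ by rotational invariance of $\R^3$, so the upper bound $\int_{B(0,2|\zeta|)}\Phi_{\hat\zeta}(v)\,dv \le C_0$ is automatic. The crux is to prove the uniform lower bound $K(\hat\zeta)\ge \delta > 0$ for all $\hat\zeta \in S^2$. I would argue as follows: the map $\hat\zeta\mapsto K(\hat\zeta)$ is continuous on $S^2$ by dominated convergence (the integrand is dominated by the fixed integrable function $4|v|^{-3-\gamma}\mathbf{1}_{|v|\ge 1}+4\pi^2|v|^{-1-\gamma}\mathbf{1}_{|v|\le 1}$), and for each fixed $\hat\zeta$ one has $K(\hat\zeta) > 0$ since $|e^{2\pi i \langle\hat\zeta, v\rangle}-1|^2$ vanishes only on the measure-zero set $\{\langle\hat\zeta,v\rangle \in \mathbb{Z}\}$ and $F_\infty$ has positive Lebesgue measure. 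By compactness of $S^2$ the minimum $\delta \eqdef \min_{\hat\zeta\in S^2}K(\hat\zeta)$ is strictly positive.

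Finally, by monotone convergence $\int_{F(R)}\Phi_{\hat\zeta}\to K(\hat\zeta)$ as $R\to\infty$, and this convergence is uniform in $\hat\zeta$ because the tail $\int_{F_\infty\cap \{|v|> R/2\}}\Phi_{\hat\zeta}\le 4\int_{|v|>R/2}|v|^{-3-\gamma}dv$ does not depend on $\hat\zeta$. Hence for $|\zeta|$ larger than some absolute constant $R_0$, $\int_{F(|\zeta|)}\Phi_{\hat\zeta}\ge \delta/2$. Combining the bounds, the left-hand side is $\le C_0 |\zeta|^\gamma \le (2C_0/\delta)\,|\zeta|^\gamma \int_{F(|\zeta|)}\Phi_{\hat\zeta}\,dv \cdot |\zeta|^{-\gamma}\cdot|\zeta|^\gamma$, which is exactly the required right-hand side up to a constant, while for $|\zeta|\le R_0$ the ``$1$'' on the right absorbs everything. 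The main obstacle is the uniform-in-$\hat\zeta$ positivity of $K(\hat\zeta)$ when $\hat\zeta = \pm e_3$, since then the phase $\langle \hat\zeta,v\rangle = v_3$ is small on the thin region $F_\infty$; the continuity-compactness argument bypasses any quantitative estimate of this worst direction.
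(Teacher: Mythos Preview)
Your proof is correct. The paper does not give its own proof of this proposition; it is quoted verbatim as Proposition~7.2 of \cite{MR2784329} and used as a black box in the Fourier redistribution argument. Your argument --- rescaling $u\mapsto u/|\zeta|$ so that the problem reduces to comparing two $\hat\zeta$-dependent integrals of the fixed integrable function $\Phi_{\hat\zeta}(v)=|e^{2\pi i\langle\hat\zeta,v\rangle}-1|^2|v|^{-3-\gamma}$, then showing the $E_2$-side integral is bounded below uniformly in $\hat\zeta\in S^2$ by a continuity--compactness argument --- is a clean, self-contained proof. One cosmetic point: the final displayed chain of inequalities is garbled (the trailing $|\zeta|^{-\gamma}\cdot|\zeta|^\gamma$ is a typo); the intended step is simply that for $|\zeta|\ge R_0$ one has $C_0|\zeta|^\gamma\le (2C_0/\delta)\,|\zeta|^\gamma\!\int_{F(|\zeta|)}\Phi_{\hat\zeta}\,dv$, and the right-hand side is exactly the rescaled $E_2$-integral.
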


Thus Proposition \ref{prop.compare} verifies the Fourier condition that is required in Proposition \ref{7.1}. Thus we have
\begin{multline*}
\int_\rth dp \int_\rth dp' \ \tilde{\phi}(p)\tilde{\phi}(p')(p^0p'^0)^{\frac{\singS+\gamma}{4}} K_1(p^\mu-p'^\mu)(f(p)-f(p'))^2\\
\lesssim \int_\rth dp \int_\rth dp'\  \tilde{\phi}(p)\tilde{\phi}(p')(p^0p'^0)^{\frac{\singS+\gamma}{4}} K_2(p^\mu-p'^\mu)(f(p)-f(p'))^2\\+\int_\rth dp\  (p^0)^{\frac{\singS+\gamma}{4}}\tilde{\phi}(p)(f(p))^2.
\end{multline*} 
Then by summing over the partition and using Proposition \ref{coer} we obtain 
\begin{multline*}
\int_\rth dp \int_\rth dp' \
(p^0p'^0)^{\frac{\singS+\gamma}{4}} K_1(p^\mu-p'^\mu)(f(p)-f(p'))^2
\sum_{\tilde{\phi}} \tilde{\phi}(p)\tilde{\phi}(p')
\\
\lesssim 
|f|^2_{\mathtt{B}}
+
|f|^2_{L^2_{\frac{\singS+\gamma}{2}}}.
\end{multline*} 
The above holds since $\tilde{\phi}(p)$ is chosen to have compact support in a small ball so the integral $$\int_\rth dp \ (p^0)^{\frac{\singS+\gamma}{4}}\tilde{\phi}(p)(f(p))^2$$ is not higher order than $|f|^2_{L^2_{\frac{\singS+\gamma}{2}}}$.

Now let $0<M<\infty$ be the maximal number of partition elements that can be non-zero at any specific point.  Then for any $p$, there must be an element of the partition such that we have $\tilde{\phi}(p) \ge \frac{1}{M}$.  Then since the partition of unity was chosen in such a way that we have uniform bounds on the first derivatives, then we can choose a radius $\delta>0$ such that $\tilde{\phi}(q) \ge \frac{1}{2M}$ for any $q$ in the ball of center $p$ and radius $\delta$.  If $0<\epsilon< \frac{1}{2}\delta$ then we then have that
\begin{equation}\notag
  \int_\rth dp \int_\rth dp' \
(p^0p'^0)^{\frac{\singS+\gamma}{4}} \frac{(f(p)-f(p'))^2}{|p-p'|^{3+\gamma}}
1_{|p-p'|\leq \epsilon}
\lesssim 
|f|^2_{\mathtt{B}}
+
|f|^2_{L^2_{\frac{\singS+\gamma}{2}}}.  
\end{equation}
Note that the integral of the lower bound above over $\epsilon< |p-p'|\leq 1$ is clearly bounded by the upper bound above.  Thus we conclude
$$
|f|^2_{L^2_{\frac{\singS+\gamma}{2}}}+|f|^2_{\mathtt{B}}\gtrsim |f|^2_{I^{\singS,\gamma}},
$$  This completes the proof for our main coercive inequality stated in  Lemma \ref{Nfcoercivitylemma} for the case $l=0$ using the Fourier redistribution.

\subsection{Coercivity with extra weights $w^{l}$ for $l>0$.}
Notice that Lemma \ref{Nfcoercivitylemma} for the case $l=0$ has been proven above. Now we will prove Lemma \ref{Nfcoercivitylemma} when $l>0$. 

\begin{proof}[Proof of Lemma \ref{Nfcoercivitylemma}]
	Suppose $l\neq 0$.
	Because of the presence of the weight $w^{2l}(p)$ in the integration, the change of variables $(p,q)\mapsto (p',q')$ creates another difference of $w^{2l}(p)-w^{2l}(p')$ inside the inner product of \eqref{defN} as seen below:
	\begin{align*}
&	-\int_\rth dp\int_{\rth}dq\int_{\mathbb{S}^2}d\omega\  w^{2l}(p)
v_{\text{\o}} \sigma(g,\theta)
(f(p')-f(p))f(p)\sqrt{J(q')}\sqrt{J(q)}\\
	=&-\frac{1}{2}\int_\rth dp\int_{\rth}dq\int_{\mathbb{S}^2}d\omega\  w^{2l}(p)
v_{\text{\o}} \sigma(g,\theta)
	(f(p')-f(p))f(p)\sqrt{J(q')}\sqrt{J(q)}\\
	&-\frac{1}{2}\int_\rth dp\int_{\rth}dq\int_{\mathbb{S}^2}d\omega\  w^{2l}(p')v_{\text{\o}} \sigma(g,\theta)
	(f(p)-f(p'))f(p')\sqrt{J(q)}\sqrt{J(q')}\\
	=&\ \frac{1}{2}\int_\rth dp\int_{\rth}dq\int_{\mathbb{S}^2}d\omega\   w^{2l}(p)
	v_{\text{\o}} \sigma
	(f(p')-f(p))^2\sqrt{J(q')}\sqrt{J(q)}\\
	& +\frac{1}{2}\int_\rth dp\int_{\rth}dq\int_{\mathbb{S}^2}d\omega\  (w^{2l}(p)-w^{2l}(p'))
v_{\text{\o}} \sigma 
	(f(p')-f(p))f(p')\sqrt{J(q)}\sqrt{J(q')}\\
&	\eqdef |f|^2_{\mathtt{B}_{2l}}+I'.
	\end{align*}
Here we are also using \eqref{weighted.non.local.fractional.norm}.	
	We express the inner product of $\langle w^{2l}\mathcal{N}f,f\rangle$ from \eqref{defN} as
$$
	\langle w^{2l}\mathcal{N}f,f\rangle=\int_ \rth dp\  \zeta(p)w^{2l}(p)|f(p)|^2+|f|^2_{\mathtt{B}_{2l}}+I'\approx|w^lf|^2_{L^2_{\frac{\singS+\gamma}{2}}}+|f|^2_{\mathtt{B}_{2l}}+I',
$$by \eqref{Paos}.
	We notice that we already obtain the following coercivity by using the methods in \secref{9.2}: 
$$
	|w^lf|^2_{L^2_{\frac{\singS+\gamma}{2}}}+|f|^2_{\mathtt{B}_{2l}}\gtrsim |f|^2_{I^{\singS,\gamma}_l},
$$as we can let $\frac{\singS+\gamma}{2}\mapsto \frac{\singS+\gamma}{2}+2l$ in \secref{9.2}.

We now estimate the upper-bound for the term $I'$.
	We first take the change of variables $(p',q')\mapsto (p,q)$ from \eqref{prepost.change} again and use the Cauchy-Schwarz inequality to obtain
	\begin{multline}
	\label{original}
		|I'|
		\lesssim |f|_{\mathtt{B}_{2l}}\Big(\int_\rth dp\int_{\rth}dq\int_{\mathbb{S}^2}d\omega\ \frac{ (w^{2l}(p)-w^{2l}(p'))^2}{w^{2l}(p)}\\
	\times  v_{\text{\o}} \sigma(g,\theta)
	|f(p)|^2\sqrt{J(q)}\sqrt{J(q')}\Big)^{\frac{1}{2}}.
		\end{multline}
	Now, by the fundamental theorem of calculus, we have
	$$
	w^{2l}(p')-w^{2l}(p)=\int_0^1d\tau\ (p'-p)\cdot (\nabla w^{2l})(\zeta(\tau))
	$$
	where $\zeta(\tau)=p+\tau(p'-p)$. Since $p'-p=q-q'$ from \eqref{conservation}, we have that 
	$$
	\sqrt{1+|\zeta(\tau)|^2}\lesssim p^0\sqrt{1+|q-q'|^2}.
	$$
	Thus, 
	we obtain
	$$
	\frac{(w^{2l}(p)-w^{2l}(p'))^2}{w^{2l}(p)}\sqrt{J(q)J(q')}\lesssim |p-p'|^2w^{2l}(p)(p^0)^{-2\delta}(J(q)J(q'))^\frac{1}{4}.
	$$
	Above $\delta\in (0,1]$ is a small constant that satisfies $\delta < 2l$ which is possible since $l>0$; if $2l-1>0$ then we can take $\delta=1$.  	We write $|p-p'|^2=|p-p'|^{\gamma+\epsilon}|q'-q|^{2-\gamma-\epsilon}$ for some small $\epsilon\in(0,2-\gamma)$. We recall from \eqref{bargoverg} that $\frac{\bar{g}}{g}=\sin\frac{\theta}{2}$. We also use that $|p-p'|\leq \bar{g}\sqrt{q^0q'^0}$ by \eqref{FREQ:g.ge.lower} and $p-p'=q'-q$ from \eqref{conservation}. Then, we have
\begin{multline}\notag
	|p-p'|^2(J(q)J(q'))^\frac{1}{4}\leq \bar{g}^{\gamma+\epsilon}(q^0q'^0)^{\frac{\gamma+\epsilon}{2}}|q'-q|^{2-\gamma-\epsilon}(J(q)J(q'))^\frac{1}{4}
	\\
	\lesssim g^{\gamma+\epsilon}\left(\sin\frac{\theta}{2}\right)^{\gamma+\epsilon}(J(q))^{\epsilon'},
\end{multline}
	where $\epsilon'>0$ is sufficiently small and we also use $|q'-q|^{2-\gamma-\epsilon}(J(q)J(q'))^{\frac{1}{4}-\tilde{\delta}}\lesssim 1$ for any $\tilde{\delta}\in(0,\frac{1}{8})$. Then the extra powers on $\sin\frac{\theta}{2}$ will control the angular singularity and we obtain that
	\begin{multline}\notag
	\int_{\mathbb{S}^2}d\omega
	~ v_{\text{\o}} \sigma(g,\theta)
	g^{\gamma+\epsilon}\left(\sin\frac{\theta}{2}\right)^{\gamma+\epsilon}(J(q))^{\epsilon'}
	\\
	\lesssim g^{\singS+\gamma+\epsilon}(J(q))^{\epsilon'} \lesssim (p^0q^0)^{\frac{\singS+\gamma+\epsilon}{2}}(J(q))^{\epsilon'}\lesssim (p^0)^{\frac{\singS+\gamma+\epsilon}{2}}(J(q))^{\epsilon''},
	\end{multline}
	if $\singS+\gamma+\epsilon\ge 0$ by \eqref{FREQ:g.le.sqrtpq}. If $\singS+\gamma+\epsilon<0$, then we use \eqref{FREQ:g.ge.lower} to obtain
		\begin{multline}\notag
	\int_{\mathbb{S}^2}d\omega
	~ v_{\text{\o}} \sigma(g,\theta)
	g^{\gamma+\epsilon}\left(\sin\frac{\theta}{2}\right)^{\gamma+\epsilon}(J(q))^{\epsilon'}
	\\
	\lesssim g^{\singS+\gamma+\epsilon}(J(q))^{\epsilon'} \lesssim |p-q|^{\singS+\gamma+\epsilon}(p^0q^0)^{-\frac{\singS+\gamma+\epsilon}{2}}(J(q))^{\epsilon'}.
	\end{multline}
	We put these back into (\ref{original}) to get for any small $\eta_1>0$ and any small $\eta_2>0$ that
$$
	|I'|\lesssim |f|_{\mathtt{B}_{2l}}|w^l f|_{L^2_{\frac{\singS+\gamma-\delta'}{2}}}\leq \eta_1 |f|^2_{\mathtt{B}_{2l}}+\eta_2|w^l f|^2_{L^2_{\frac{\singS+\gamma}{2}}}+C|w^l f|^2_{L^2(B_C)},
$$
	where $\delta'=4\delta-\epsilon>0$ and $C= C(\eta_1, \eta_2)>0$. This proves Lemma \ref{Nfcoercivitylemma}.
\end{proof}

This concludes our discussion of the main coercive estimates and Lemma \ref{Nfcoercivitylemma}. In
the next section we will prove the global existence and uniqueness of the solutions to the Boltzmann equation by using the non-linear energy method.

\section{Local and global existence}
\label{globalExist}

In this last section, we will establish global existence based on the modern method of separating the needed space-time estimates. This methodology goes back to the cut-off Boltzmann theory \cite{MR2000470}. We will show that the sharp estimates proved in the previous sections can be utilized for the method. The system of space-time relativistic macroscopic equations (\ref{macroscopic}) and the system of relativistic conservation laws (\ref{Conservation Laws}) will be derived and used to prove the coercive lower bound for the linear Boltzmann operator $L$ in our isotropic fractional derivative norm (\ref{fractional}).
In \secref{sec:local} we will explain the local existence argument, and then in 
\secref{sec:global} we establish the global existence, uniqueness and asymptotic decay rates to equilibrium.

\subsection{Local existence}\label{sec:local}
We now use the estimates that we made in the previous sections to sketch the local existence proof for small initial data.  Full details of a similar local existence argument can be found in \cite{1904.12086}. We will use the standard iteration method and uniform energy estimates for the iterated sequence of approximate solutions. 
The iteration starts at $f^0(t,x,p)=0$. We solve for $f^{m+1}(t,x,p)$ such that 
\begin{equation}
\label{8.1}
(\partial_t+\hat{p}\cdot\nabla_x+\mathcal{N})f^{m+1}+\mathcal{K}f^{m}=\Gamma(f^m,f^{m+1}), \hspace{5mm} f^{m+1}(0,x,p)=f_0(x,p).
\end{equation}
It can be proven with our estimates as the main tool that the linear equation (\ref{8.1}) admits smooth solutions with the same regularity in $H^N_l$ as a given smooth small initial data and that the solution has a gain of $L^2((0,T);I^{\singS,\gamma}_{l,N})$.  We omit these standard details.  
We will set up some estimates which are necessary to find a local classical solution as $m\rightarrow \infty$. 
As mentioned before, we will use the norm $\|\cdot \|_H$ for $\|\cdot\|_{H^N_l }$ for convenience and also use the norm $\|\cdot\|_I$ for the norm $\|\cdot\|_{I^{\singS,\gamma}_{l,N}}$. 
Define the total norm as 
 $$
M(f(t))=\|f(t)\|^2_H+\int_0^td\tau\ \|f(\tau)\|^2_I.
$$
In this section we will also use the notation $|f|_{I^{\singS,\gamma}}$ to denote $\langle \mathcal{N}f,f\rangle$ to simplify the notation below (this is justified by Lemmas \ref{Nfupperboundlemma} and \ref{Nfcoercivitylemma}). 

Here we state our main energy estimate:
\begin{lemma}
	\label{8.2}
Let $\{f^m\} $ be the sequence of iterated approximate solutions. There exists a short time $T^*=T^*(\|f_0\|^2_{H})>0$ such that for $\|f_0\|^2_H$ sufficiently small, there is a uniform constant $C_0>0$ such that 
	 $$
	\sup_{m\geq 0}\sup_{0\leq \tau \leq T^*} M(f^m(\tau))\leq 2 C_0\|f_0\|^2_H.
$$
\end{lemma}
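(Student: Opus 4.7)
The plan is to proceed by induction on $m$, with the base case $f^0 \equiv 0$ being trivial.  Assuming $\sup_{\tau \leq T^*} M(f^m(\tau)) \leq 2C_0 \|f_0\|^2_H$, I would apply $\partial^\alpha$ for each $|\alpha| \leq N$ to \eqref{8.1}, take the $L^2(\mathbb{T}^3\times \mathbb{R}^3)$ inner product with $w^{2l}\partial^\alpha f^{m+1}$, and sum over $|\alpha|\le N$.  The transport term $\hat{p}\cdot\nabla_x f^{m+1}$ integrates to zero on the torus.  The main linear contribution comes from $\mathcal{N}$, and by Lemma \ref{Nfcoercivitylemma} integrated in $x$ (applied to each $\partial^\alpha f^{m+1}$) we get the coercive lower bound
\begin{equation*}
(w^{2l}\partial^\alpha \mathcal{N} f^{m+1},\partial^\alpha f^{m+1}) \gtrsim \|\partial^\alpha f^{m+1}\|^2_{I^{\singS,\gamma}_l} - C\|\partial^\alpha f^{m+1}\|^2_{L^2(\mathbb{T}^3\times B_C)},
\end{equation*}
with the local $L^2$ error absorbed by $\|f^{m+1}\|^2_H$.

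Next I would handle the three forcing terms.  For the compact piece $\mathcal{K}f^m$, integrating \eqref{compactest} in $x$ and then applying Cauchy--Schwarz with weight $\eta$ yields
\begin{equation*}
|(w^{2l}\partial^\alpha \mathcal{K}f^m,\partial^\alpha f^{m+1})| \leq \eta\|\partial^\alpha f^{m+1}\|^2_{I^{\singS,\gamma}_l} + C_\eta \|f^m\|^2_H,
\end{equation*}
where the weight $\rho/2$ in \eqref{compactest} is absorbed against either the decaying factor (soft case) or the available derivatives/weights in $H^N_{l+m}$ (hard case).  For the quadratic term, I apply Lemma \ref{Lemma1} to $\partial^\alpha \Gamma(f^m,f^{m+1})$, obtaining after summing
\begin{equation*}
\sum_{|\alpha|\leq N}|(w^{2l}\partial^\alpha \Gamma(f^m,f^{m+1}),\partial^\alpha f^{m+1})| \lesssim \|f^m\|_H \|f^{m+1}\|^2_I.
\end{equation*}
Combining everything and choosing $\eta$ small, and using the inductive bound $\|f^m\|_H \leq \sqrt{2C_0}\|f_0\|_H$ with $\|f_0\|_H$ small enough that $C\sqrt{2C_0}\|f_0\|_H \leq c_0/4$, I can absorb the nonlinear and $\eta$ contributions into the coercive side and arrive at
\begin{equation*}
\frac{d}{dt}\|f^{m+1}\|^2_H + \frac{c_0}{2}\|f^{m+1}\|^2_I \leq C_\eta \|f^m\|^2_H.
\end{equation*}

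Integrating from $0$ to $t\leq T^*$ and adding the time integral to the top-line energy gives
\begin{equation*}
M(f^{m+1}(t)) \leq C_1 \|f_0\|^2_H + C_1 \int_0^t \|f^m(\tau)\|^2_H d\tau \leq C_1 \|f_0\|^2_H + C_1 T^*\cdot 2C_0\|f_0\|^2_H,
\end{equation*}
where $C_1 = \max(1,2/c_0)\cdot \max(1,C_\eta)$.  Choosing $C_0 = 2C_1$ and then $T^* = T^*(\|f_0\|^2_H)$ so small that $1 + 2 C_1 T^* C_0 \leq 2 C_0 / C_1$ closes the induction and yields $M(f^{m+1}(t))\leq 2 C_0 \|f_0\|^2_H$ for all $0\leq t\leq T^*$.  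The main obstacle is the nonlinear term, since Lemma \ref{Lemma1} produces $\|f^m\|_H\|f^{m+1}\|^2_I$ whose dissipation factor $\|f^{m+1}\|^2_I$ cannot be treated by smallness of $T^*$; this is why we must simultaneously require $\|f_0\|_H$ small, so that $\|f^m\|_H$ is small by the induction hypothesis and the nonlinear term can be absorbed into the $\mathcal{N}$-coercivity.  The remaining forcing $\|f^m\|^2_H$ from $\mathcal{K}$ is subcritical and provides the small factor $T^*$ that closes the loop.
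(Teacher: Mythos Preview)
Your approach is essentially the same as the paper's: induction on $m$, energy estimate via $\partial^\alpha$ derivatives, coercivity of $\mathcal{N}$, Lemma~\ref{Lemma1} for $\Gamma$, and \eqref{compactest} for $\mathcal{K}$, closing by smallness of $\|f_0\|_H$ (to absorb the nonlinear term) together with smallness of $T^*$ (to absorb the $\mathcal{K}$ forcing). Two points are handled differently in the paper and are loose in your write-up.

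First, your treatment of the $\mathcal{K}$ term in the hard case is not right: invoking ``available weights in $H^N_{l+m}$'' does not apply here, since the local existence is carried out in $H^N_l$, not $H^N_{l+m}$. After \eqref{compactest} and Cauchy--Schwarz you are left with $C_\eta\|w^l\partial^\alpha f^m\|^2_{L^2_{\singS/2}}$, which for $\singS>0$ is not controlled by $\|f^m\|^2_H$. The paper instead interpolates
\[
\|w^{\singS/2}\partial^\alpha f^m\|^2_{L^2}\le C_{\varepsilon'}\|\partial^\alpha f^m\|^2_{L^2}+\varepsilon'\|\partial^\alpha f^m\|^2_{I},
\]
using that $\singS/2<(\singS+\gamma)/2$; after time-integration the first piece contributes $C_{\varepsilon'}C_\eta T^*\sup M(f^m)$ and the second contributes $\varepsilon' C_\eta\sup M(f^m)$, both of which are made small by choosing $T^*$ and $\varepsilon'$. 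This interpolation step is the correct replacement for your $H^N_{l+m}$ claim.

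Second, the $L^2(B_C)$ error from Lemma~\ref{Nfcoercivitylemma} that you say is ``absorbed by $\|f^{m+1}\|^2_H$'' does not appear in your final differential inequality. In the paper's $l=0$ argument this error is absent since $C=0$ there; if you keep $l>0$ you must carry a term $+C\|f^{m+1}\|^2_H$ on the right, which after integration gives $CT^*\sup M(f^{m+1})$ and is absorbed on the left by choosing $T^*$ small. With these two fixes your argument matches the paper's.
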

\begin{proof} We will only write down the proof in the case $l=0$. For $l>0$ the proof is analogous, using the weighted norm as in (\ref{weightE}).	We prove this lemma by induction over $k$. If $k=0$, the lemma is trivially true. Suppose that the lemma holds for $k=m$. Let $f^{m+1}$ be the solution to the linear equation (\ref{8.1}) with given $f^m$. We take the spatial derivative $\partial^\alpha$ on the linear equation (\ref{8.1}) and obtain
	 $$
	(\partial_t+\hat{p}\cdot\nabla_x)\partial^\alpha f^{m+1}+\mathcal{N}(\partial^\alpha f^{m+1})+\mathcal{K}(\partial^\alpha f^{m})=\partial^\alpha\Gamma(f^m,f^{m+1}).
$$
	Then, we take a inner product with $\partial^\alpha f^{m+1}$. The trilinear estimate of Lemma \ref{Lemma1}  and \eqref{NL2equiv} implies that 
	\begin{multline*}
	\frac{1}{2}\frac{d}{dt}\|\partial^\alpha f^{m+1}\|^2_{L^2_pL^2_x}+\|\partial^\alpha f^{m+1}\|^2_{I^{\singS,\gamma}}+(\mathcal{K}(\partial^\alpha f^{m}),\partial^\alpha f^{m+1})
	\\
	\approx(\partial^\alpha \Gamma(f^{m},f^{m+1}),\partial^\alpha f^{m+1})\lesssim \|f^m\|_H \|f^{m+1}\|^2_I.
	\end{multline*}
	We integrate over time to obtain that
	\begin{multline}
	\label{8.5}
	\frac{1}{2}\|\partial^\alpha f^{m+1}(t)\|^2_{L^2_pL^2_x}+\int_0^t d\tau \|\partial^\alpha f^{m+1}(\tau)\|^2_{I^{\singS,\gamma}}+\int_0^t d\tau (\mathcal{K}(\partial^\alpha f^{m}),\partial^\alpha f^{m+1})\\
	\leq \frac{1}{2}\|\partial^\alpha f_0\|^2_{L^2_pL^2_x}+C\int_0^td\tau\|f^m\|_H\|f^{m+1}\|^2_I.
	\end{multline}
	From the compact estimate \eqref{compactest}, 
	for any small $\epsilon>0$ we have
	\begin{align*}
	&\left|\int_0^t d\tau (\mathcal{K}(\partial^\alpha f^{m}),\partial^\alpha f^{m+1})\right|\\ &\leq C_{\epsilon} \int_0^t d\tau\  \|w^{\singS/2}\partial^\alpha f^{m}(\tau)\|^2_{L^2}+\epsilon \int_0^t d\tau\  \|\partial^\alpha f^{m+1}(\tau)\|^2_{I^{\singS,\gamma}}.
	\end{align*}
We can interpolate for any small $\varepsilon'>0$ there is a large $C_{\varepsilon'}>0$ such that
\begin{equation*}
\|w^{\singS/2}\partial^\alpha f^{m}(\tau)\|^2_{L^2}
\leq C_{\varepsilon'}  
\|\partial^\alpha f^{m}(\tau)\|^2_{L^2}
+
\varepsilon' \|\partial^\alpha f^{m}(\tau)\|^2_{I}.
\end{equation*}
We use these estimates in (\ref{8.5}) and take a sum over all the derivatives such that $|\alpha|\leq N$ to obtain
\begin{multline}
	\label{continuity1}
M(f^{m+1}(t))\leq \|f_0\|^2_H+2C\sup_{0\leq\tau\leq t} M(f^{m+1}(\tau))\sup_{0\leq\tau\leq t} M^{1/2}(f^{m}(\tau))
	\\
+2C_{\epsilon}C_{\varepsilon'}   \int_0^t d\tau \|f^{m}(\tau)\|^2_{H}+2\epsilon \int_0^t d\tau \| f^{m+1}(\tau)\|^2_{I}
    +2C_{\epsilon}\varepsilon' \int_0^t d\tau \| f^{m}(\tau)\|^2_{I}
    \\
\leq \|f_0\|^2_H+2C\sup_{0\leq\tau\leq t} M(f^{m+1}(\tau))\sup_{0\leq\tau\leq t} M^{1/2}(f^{m}(\tau))
+2C_{\epsilon}C_{\varepsilon'}  t\sup_{0\leq\tau\leq t} M(f^{m}(\tau))
\\
+2\epsilon\sup_{0\leq\tau\leq t} M(f^{m+1}(\tau))
    +2C_{\epsilon}\varepsilon'\sup_{0\leq\tau\leq t} M(f^{m}(\tau)).
\end{multline}
	Then by the induction hypothesis on $M(f^m(\tau))$, we obtain that
\begin{multline*}
    M(f^{m+1}(t))\leq   \|f_0\|^2_H+2C\sqrt{2C_0}\|f_0\|_H\sup_{0\leq\tau\leq t} M(f^{m+1}(\tau))\\
    +4C_0C_{\epsilon}C_{\varepsilon'}  t \|f_0\|^2_H
    +4C_0C_{\epsilon}\varepsilon' \|f_0\|^2_H
    +2\epsilon\sup_{0\leq\tau\leq t} M(f^{m+1}(\tau)).
\end{multline*}
Thus we obtain
\begin{multline*}
    (1-2\epsilon-2C\sqrt{2C_0}\|f_0\|_H)\sup_{0\leq\tau\leq T^*} M(f^{m+1}(t))
    \\
\leq 
(1+4C_0C_{\epsilon}C_{\varepsilon'}  T^* +4C_0C_{\epsilon}\varepsilon' )\|f_0\|^2_H.
\end{multline*}
Thus we choose $\|f_0\|_H$, $\epsilon>0$, $\varepsilon'>0$, and $T^*>0$ sufficiently small (in that order) to obtain that 
	 $$
	\sup_{0\leq\tau\leq t} M(f^{m+1}(t))\leq 2C_0\|f_0\|^2_H.
	$$
	This proves the lemma by induction.
\end{proof}

Now, we prove the local existence result with the uniform control on each iteration.

\begin{theorem}
	\label{local existence}
	For any sufficiently small $M_0>0$, there exists a time $T^*=T^*(M_0)>0$ and $M_1>0$ such that if $\|f_0\|^2_H\leq M_1,$ then there exists a unique solution $f(t,x,p)$ to the linearized relativistic Boltzmann equation (\ref{Linearized B}) on $[0,T^*)\times \mathbb{T}^3\times \rth $ such that 
	 $$
	\sup_{0\leq t \leq T^* } M(f(t))\leq M_0.
$$
	Also, $M(f(t))$ is continuous on $[0,T^*)$. 
\end{theorem}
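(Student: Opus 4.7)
The plan is to build the local solution as the limit of the iterated sequence $\{f^m\}$ constructed in \eqref{8.1}, for which Lemma \ref{8.2} already supplies the uniform a priori bound $\sup_{0\le \tau\le T^*}M(f^m(\tau))\le 2C_0\|f_0\|^2_H$.  First I would fix $M_0>0$ sufficiently small and then choose $M_1>0$ and $T^*=T^*(M_0)>0$ so that $2C_0 M_1\le M_0$ and so that the contraction constant derived below is strictly less than one.  With these choices in hand, the uniform bound of Lemma \ref{8.2} yields $\sup_{0\le \tau\le T^*}M(f^m(\tau))\le M_0$ for every $m\ge 0$.

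Next I would establish that $\{f^m\}$ is Cauchy in the norm $M(\cdot)$ on $[0,T^*]$.  Subtracting the iteration identities for $f^{m+1}$ and $f^{m}$ gives
\begin{equation*}
(\partial_t+\hat{p}\cdot\nabla_x+\mathcal{N})(f^{m+1}-f^{m})+\mathcal{K}(f^{m}-f^{m-1})=\Gamma(f^{m},f^{m+1}-f^{m})+\Gamma(f^{m}-f^{m-1},f^{m}),
\end{equation*}
with zero initial data.  Applying $\partial^\alpha$ for $|\alpha|\le N$, taking the $L^2(\mathbb{T}^3\times\rth)$ inner product with $\partial^\alpha(f^{m+1}-f^{m})$, and using Lemma \ref{Nfcoercivitylemma}, Lemma \ref{Lemma1}, Lemma \ref{Lemma2} together with the bilinearity of $\Gamma$ in exactly the same fashion as in \eqref{continuity1}, one obtains
\begin{equation*}
M(f^{m+1}-f^{m})(t)\ \le\ \left(C_*(\sqrt{M_0}+\sqrt{M_0})+C_{\epsilon}\varepsilon'+C_{\epsilon}C_{\varepsilon'}t\right) M(f^{m}-f^{m-1})(t),
\end{equation*}
for small parameters $\epsilon,\varepsilon'>0$ absorbed as in the proof of Lemma \ref{8.2}.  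By further shrinking $M_0$, $\varepsilon'$, and $T^*$, the bracketed constant can be made strictly less than $1/2$, which yields the contraction $M(f^{m+1}-f^{m})(t)\le \tfrac{1}{2}M(f^{m}-f^{m-1})(t)$ on $[0,T^*]$.  Hence $\{f^m\}$ is Cauchy in the complete space $L^\infty_t([0,T^*];H^N_l)\cap L^2_t([0,T^*];I^{\singS,\gamma}_{l,N})$ and converges to a limit $f$ which, after passing to the limit in \eqref{8.1} (the trilinear term is passed to the limit via Lemma \ref{Lemma1} and weak/strong convergence), solves the linearized equation \eqref{Linearized B} with initial data $f_0$ and satisfies $\sup_{0\le t\le T^*}M(f(t))\le M_0$.

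Uniqueness of $f$ in this class follows by the same difference estimate applied to two solutions $f$ and $\tilde f$ with the same initial datum: one gets $M(f-\tilde f)(t)\le \tfrac{1}{2}M(f-\tilde f)(t)$ on $[0,T^*]$, forcing $f=\tilde f$.  Continuity of $t\mapsto M(f(t))$ on $[0,T^*)$ reduces to continuity of $t\mapsto \|f(t)\|_{H^N_l}^2$, since the dissipation term $\int_0^t \|f(\tau)\|_{I^{\singS,\gamma}_{l,N}}^2 d\tau$ is continuous in $t$ by absolute continuity of the Lebesgue integral and the fact that $f\in L^2_t([0,T^*];I^{\singS,\gamma}_{l,N})$.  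The continuity in $H^N_l$ is obtained from the energy identity
\begin{equation*}
\tfrac{1}{2}\|\partial^\alpha f(t)\|_{L^2_{x,p}}^2-\tfrac{1}{2}\|\partial^\alpha f(s)\|_{L^2_{x,p}}^2=\int_s^t\Big[(\partial^\alpha\Gamma(f,f),\partial^\alpha f)-\langle \mathcal{N}\partial^\alpha f,\partial^\alpha f\rangle-(\mathcal{K}\partial^\alpha f,\partial^\alpha f)\Big]d\tau,
\end{equation*}
whose right side is absolutely continuous in $t$ thanks to Theorem \ref{thm1}, Lemma \ref{Nfupperboundlemma}, and Lemma \ref{Lemma2}.

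The main technical obstacle is verifying the contraction constant:  because $\Gamma$ is genuinely nonlocal and fractionally diffusive, one must carefully place the derivatives in the trilinear estimate of Lemma \ref{Lemma1} so that both factors $f^m$ and $f^{m+1}-f^m$ (or $f^m-f^{m-1}$) can be controlled by $M(\cdot)^{1/2}$; this is precisely what Lemma \ref{Lemma1} enables provided $N\ge 2$.  The residual dissipative terms from $\mathcal{N}$ are absorbed into the left-hand side via Lemma \ref{Nfcoercivitylemma}, and the compact term from $\mathcal{K}$ is handled by the interpolation step used in the derivation of \eqref{continuity1}.  Once the contraction constant is made small by shrinking $M_0$ and $T^*$, the remaining arguments are standard.
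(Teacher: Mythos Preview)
Your argument is correct, but you take a genuinely different route from the paper on two of the three points.

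For \emph{existence}, the paper simply invokes the uniform bound of Lemma~\ref{8.2} and passes to the limit $m\to\infty$ by a compactness argument, without establishing any Cauchy or contraction property of the iterates. You instead derive a contraction inequality $M(f^{m+1}-f^m)\le \tfrac12 M(f^m-f^{m-1})$ directly. Your route is more constructive and yields strong convergence in the full norm $M(\cdot)$ for free, whereas the paper's compactness step gives a weak limit that one then checks solves the equation.

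For \emph{uniqueness}, the paper works only at the $L^2_{x,p}$ level (no spatial derivatives): it writes \eqref{8.6}, pairs with $f-h$, uses Theorem~\ref{thm1} together with the Sobolev embedding $H^2(\mathbb{T}^3)\hookrightarrow L^\infty(\mathbb{T}^3)$ to place the $H^N$ regularity on the known solution, controls $L$ via Lemma~\ref{2.10}, and closes with Gronwall. You instead run the contraction argument in the full $M(\cdot)$ norm. Both work here; the paper's $L^2$ argument gives uniqueness in a slightly larger class (it does not require the second solution $h$ to be bounded in $I^{\singS,\gamma}_{l,N}$ a priori, only in $H^N_l$), while yours is a direct corollary of the Cauchy estimate you already proved. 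Note one small structural difference when you specialize to two full solutions: the compact term becomes $\mathcal{K}(f-\tilde f)$ acting on the \emph{same} difference (not on the previous iterate), so Lemma~\ref{Lemma2} applies directly and the resulting $C_\epsilon\|f-\tilde f\|_{L^2(B_{C_\epsilon})}^2$ is handled by shrinking $T^*$ rather than by the interpolation split you used for the iteration.

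For \emph{continuity}, your argument is essentially the same as the paper's: both integrate the energy identity over $[s,t]$ and use that the right side is absolutely continuous because $\|f\|^2_I$ is integrable in time.
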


\begin{proof}
	\textit{Existence and Uniqueness.}
	By letting $m\rightarrow \infty$ in the previous lemma, we obtain sufficient compactness to obtain the local existence of a strong solution $f(t,x,p)$ to (\ref{Linearized B}). 
	For the uniqueness, suppose there exists another solution $h$ to   (\ref{Linearized B}) with the same initial data satisfying $\sup_{0\leq t \leq T^* } M(h(t))\leq \epsilon$ for a sufficiently small $\epsilon>0$. Then, by the equation, we have 
	\begin{equation}
	\label{8.6}
	\{\partial_t +\hat{p}\cdot \nabla_x\}(f-h)+L(f-h)=\Gamma(f-h,f)+\Gamma(h,f-h).
	\end{equation} 
	Then, by the Sobolev embedding $H^2(\mathbb{T}^3)\subset L^\infty (\mathbb{T}^3)$, Theorem \ref{thm1}, and the Cauchy-Schwarz inequality, we have
	\begin{multline*}
	    	|(\{\Gamma(h,f-h)+\Gamma(f-h,f)\},f-h)|
	    	\\
    \lesssim \|h\|_{L^2_pH_x^2}\|f-h\|^2_{I^{\singS,\gamma}} +\|f-h\|_{L^2_{p,x}}\|f\|_{H_x^2I^{\singS,\gamma}}\|f-h\|_{I^{\singS,\gamma}}
	= T_1+T_2.
	\end{multline*}
	For $T_1$, we have 
	 $$
	\int_0^t d\tau \hspace*{1mm} T_1(\tau) \lesssim \sqrt{\epsilon} \int_0^t d\tau \|f(\tau)-h(\tau)\|^2_{I^{\singS,\gamma}}
	$$
	because we have $\sup_{0\leq t \leq T^* } M(h(t)) \leq \epsilon.$
	For $T_2,$ we use the Cauchy-Schwarz inequality and obtain
	\begin{align*}
	\int_0^t d\tau \hspace*{1mm}T_2(\tau)\leq & \sqrt{\epsilon} \left(\sup_{0\leq \tau\leq t}\|f(\tau)-h(\tau)\|^2_{L^2_{p,x}}\int_0^t d\tau \|f(\tau)-h(\tau)\|^2_{I^{\singS,\gamma}}\right)^{1/2}\\
	\lesssim & \sqrt{\epsilon} \left(\sup_{0\leq \tau\leq t}\|f(\tau)-h(\tau)\|^2_{L^2_{p,x}}+\int_0^t d\tau \|f(\tau)-h(\tau)\|^2_{I^{\singS,\gamma}}\right)\\
	\end{align*}
	because $f$ also satisfies $\sup_{0\leq t \leq T^* } M(f(t)) \lesssim \epsilon.$
	For the linearized Boltzmann operator $L$ on the left-hand side of  (\ref{8.6}), we use Lemma \ref{2.10} to obtain 
	 $$
	(L(f-h),f-h)\ge c\|f-h\|^2_{I^{\singS,\gamma}}-C\|f-h\|^2_{L^2(\mathbb{T}^3\times B_R )}
$$
	for some small $c>0$ and some $R>0$. 
	We finally take the inner product of   (\ref{8.6}) with $(f-h)$ and integrate over $[0,t]\times \mathbb{T}^3\times \rth$ and use the estimates above to obtain
	\begin{align*}
	\frac{1}{2}\|f(t)-h(t&)\|^2_{L^2(\mathbb{T}^3\times \rth)}+c\int_0^t d\tau \  \|f(\tau)-h(\tau)\|^2_{I^{\singS,\gamma}}\\
	\lesssim &\sqrt{\epsilon}\left( \sup_{0\leq \tau \leq t}\|f(\tau)-h(\tau)\|^2_{L^2(\mathbb{T}^3\times \rth)}+\int_0^t d\tau \|f(\tau)-h(\tau)\|^2_{I^{\singS,\gamma}}   \right)\\
	&+\int_0^t d\tau \|f(\tau)-h(\tau)\|^2_{L^2(\mathbb{T}^3\times \rth)}.
	\end{align*}
	By Gronwall's inequality, we obtain that $f=h$ because $f$ and $h$ satisfy the same initial conditions. This proves the uniqueness of the solution.

	\textit{Continuity.} 
	Let $[s,t]$ be a time interval. We follow the simliar argument as in  (\ref{8.5}) and   (\ref{continuity1}) with the time interval $[s,t]$ instead of $[0,t]$ and let $f^m=f^{m+1}=f$ and obtain that
	\begin{align*}
	|M(f(t))-M(f(s))|&=\left|\frac{1}{2}\|f(t)\|^2_H-\frac{1}{2}\|f(s)\|^2_H+\int_s^t d\tau\   \|f(\tau)\|^2_I\right|\\
	&\lesssim \left(\int_s^t d\tau\   \|f(\tau)\|^2_I\right)\left(1+\sup_{s\leq\tau\leq t}M^{1/2}(f(\tau)) \right).
	\end{align*}
	As $s\rightarrow t$, we obtain that $|M(f(t))-M(f(s))|\rightarrow 0$  because $\|f\|^2_I$ is integrable in time. This proves the continuity of $M$. 
\end{proof}

This concludes the proof of the local existence.

\begin{remark}  In this remark we will briefly outline two different approaches to proving the positivity of a solution $F(t,x,p) = J(p) + \sqrt{J(p)} f(t,x,p) \ge 0$ when we initially have that $F_0 = J + \sqrt{J} f_0 \ge 0$.  

One approach, which was used in particular in \cite{MR2784329}, is to use the approximation of a solution to the cut-off relativistic Boltzmann equation where positivity is already known.  In this approach (a) one has a solution to the relativistic Boltzmann equation with a cut-off angular kernel singularity that can be shown to be non-negative.  Then (b) one proves that you can choose an approximate cut-off kernel $\sigma_n$ which converges to the non-cutoff kernel $\sigma$ satisfying \eqref{angassumption}-\eqref{soft}.   Then (c) prove that the solutions to the equation with the approximate kernel $\sigma_n$ that are known to be positive converge to the solutions to the equation with kernel $\sigma$ in a strong enough sense to conclude that the solutions remain positive in the limit.   This was done in the non-relativistic situation for instance in \cite{MR2679369}.  However all of the steps (a)-(c) contain substantial lengthy details that would need to be worked out carefully and the existing literature does not contain precisely what we would need for the relativistic Boltzmann equation.   

In order to handle lower regularity solutions one may  choose $F_0 = J + \sqrt{J} f_0 \ge 0$ and still need to approximate by $F_{0, \epsilon} = J + \sqrt{J} f_{0, \epsilon} \ge 0$ such that $F_{0, \epsilon} \to F_0$ in a suitable space.    One elementary way to make this choice is as follows.  Given $F_0 = J + \sqrt{J} f_0 \ge 0$, then  let $\phi_\epsilon \ge 0$ be a standard mollifier.  Then further define $f_{0, \epsilon} \eqdef \phi_\epsilon *(f_0 + \sqrt{J})- \sqrt{J}$.   Then $f_{0, \epsilon}$ is smooth and we will have $f_{0, \epsilon} \to f_{0}$ and $F_{0, \epsilon} \to F_{0}$ as $\epsilon \to 0$ in suitable spaces.  Further
$
F_{0, \epsilon} = J + \sqrt{J} f_{0, \epsilon} = \phi_\epsilon *(f_0 + \sqrt{J}) \ge 0,
$
since $f_0 + \sqrt{J} \ge 0$ by assumption.

Another approach was used in \cite{MR2847536}.  They used a maximum principle style argument to prove the positivity without using approximation.  We believe this method may also work to to prove the positivity for the the relativistic Boltzmann equation without angular cut-off.  

However both methods would need to be worked out in full detail, and both approaches require significant additional lengthy calculations.  Due to the current length of the present paper, we leave these developments for future work.  
\end{remark}

In the next subsection, we will show global existence using the nonlinear energy method from \cite{MR2000470}.  We point out that this approach is substantially more difficult in the special relativistic case as we will observe in the developments below.

\subsection{Global existence}\label{sec:global}
In order to prove the global-in-time existence of solutions, we will have to prove the global energy inequality. The main point for this is to obtain the uniform lower bound estimate on the Dirichlet form, $\langle L f, f \rangle$, of the linear operator $L$. Note that the linear Boltzmann operator $L$ has a very large null space $Pf$ as it will be introduced below in \eqref{Pfbasis}.  To study this null space we will derive the system of macroscopic equations \eqref{macroscopic} and balance laws \eqref{Conservation Laws} with respect to the coefficients appearing in the expression for the hydrodynamic part $Pf$.  Then we prove a coercive inequality for the microscopic part $\{I-P\}f$.   Using these coercivity estimates for the non-linear local solutions to the relativistic Boltzmann system, we will show that these solutions must be global in time by proving energy inequalities and using the standard continuity argument. We will also prove the rapid time decay of the solutions to equilibrium in the later part of this section.     

Now we use the relativistic Maxwellian solution $J(p)$ from \eqref{jutter.equilibrium}, and recall that  $\int_{\rth}J(p)dp=1$. We introduce the following notations for the integrals:
\begin{equation}\notag
    \lambda^0=\int_\rth {p^0}Jdp, 
    \quad 
    \lambda^{00}=\int_\rth ({p^0})^2Jdp, 
     \quad 
    \lambda^{11}=\int_\rth (p_1)^2Jdp,
     \quad 
\lambda^{11}_0=\int_\rth \frac{p_1^2}{{p^0}}Jdp.
\end{equation}
The 5-dimensional null space of the linearized Boltzmann operator $L$ is given by
 $$
nl(L)=\text{span}\{\sqrt{J},p_1\sqrt{J},p_2\sqrt{J},p_3\sqrt{J},{p^0}\sqrt{J}\}.
$$
Then we define the orthogonal projection from $L^2(\rth)$ onto $nl(L)$ by $P$. Then we can write $Pf$ as a linear combination of the basis as
\begin{equation}
\label{Pfbasis}
Pf=\left( \mathcal{A}^f(t,x)+\sum_{i=1}^{3}\mathcal{B}^f_i(t,x)p_i+\mathcal{C}^f(t,x){p^0}\right) \sqrt{J},
\end{equation}
where the coefficients are given by
$$\mathcal{A}^f=\int_\rth f\sqrt{J}dp - \lambda^0\mathcal{C}^f, \hspace{3mm} \mathcal{B}_i^f= \frac{\int_\rth fp_i\sqrt{J}dp}{\lambda^{11}}, \hspace{3mm} \mathcal{C}^f=\frac{\int_{\rth}f({p^0}\sqrt{J}-\lambda^0\sqrt{J}) dp}{\lambda^{00}-(\lambda^0)^2}.
$$
This choice of the basis was given in \cite{MR2911100}. We now observe the null space (\ref{Pfbasis}) and the positivity of the linear operator, as proven in \cite{MR1211782}.

\begin{lemma}
\label{llinear}
$\langle Lg,h\rangle =\langle Lh,g\rangle ,\;\langle
Lg,g\rangle \ge 0.$ And $Lg=0$ if and only if 
$
g=P g.
$
\end{lemma}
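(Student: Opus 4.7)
The plan is to derive a symmetrized quadratic-form expression for $\langle Lg, h\rangle$ and read all three assertions from it. Starting from the explicit integral representation of $L$ displayed just after \eqref{L.def}, I would apply the two standard symmetries of the collision integral: the swap $(p,q)\mapsto(q,p)$, which leaves $v_{\text{\o}}\sigma(g,\theta)$ invariant, and the pre-post change of variables $(p,q)\mapsto(p',q')$, whose Jacobian in the invariant measure $dp/p^0\,dq/q^0$ is recorded in \eqref{prepost.change}. Combined with the identity $J(p)J(q) = J(p')J(q')$, which is immediate from the energy conservation portion of \eqref{conservation}, a four-term symmetrization yields
\begin{equation*}
\langle Lg, h\rangle = \tfrac{1}{4}\int_{\rth}dp\int_{\rth}dq\int_{\mathbb{S}^2}d\omega \, v_{\text{\o}} \sigma(g,\theta) \, \sqrt{J(p)J(q)} \, \Phi[g]\,\Phi[h],
\end{equation*}
where
\begin{equation*}
\Phi[g](p,q,\omega) \eqdef \frac{g(p)}{\sqrt{J(p)}} + \frac{g(q)}{\sqrt{J(q)}} - \frac{g(p')}{\sqrt{J(p')}} - \frac{g(q')}{\sqrt{J(q')}}.
\end{equation*}
From this representation the symmetry $\langle Lg,h\rangle = \langle Lh,g\rangle$ is immediate, and taking $h=g$ yields the non-negativity $\langle Lg,g\rangle \ge 0$.

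For the kernel characterization, the reverse direction is the easier one: if $g = Pg$ with the expansion \eqref{Pfbasis}, then each summand is a collision invariant times $\sqrt{J}$, so $\Phi[g] \equiv 0$ pointwise on the collisional manifold by the full conservation law \eqref{conservation}. Hence $L(Pg) = 0$ follows directly by inserting into the displayed formula with arbitrary $h$. For the forward direction, $Lg = 0$ gives $\langle Lg, g\rangle = 0$, and the strict positivity of $v_{\text{\o}}\sigma\sqrt{J(p)J(q)}$ on the physical collision set then forces $\Phi[g] = 0$ a.e. Writing $\phi \eqdef g/\sqrt{J}$, this is exactly the relativistic additive invariance condition $\phi(p) + \phi(q) = \phi(p') + \phi(q')$ whenever $p^\mu + q^\mu = p'^\mu + q'^\mu$.

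The principal obstacle, and the only part of the argument that is not bookkeeping, is the classification of relativistic collision invariants: one must show that any measurable $\phi$ satisfying the above Cauchy-type functional equation on the mass shell lies in $\mathrm{span}\{1,\,p_1,\,p_2,\,p_3,\,p^0\}$. I would invoke the classical result, which can be found in \cite{MR1379589} and is used in this perturbative context in \cite{MR1211782}; the proof proceeds by fixing the total four-momentum $p^\mu + q^\mu$, parametrizing the admissible outgoing pairs by $\omega \in \mathbb{S}^2$ via \eqref{p'}--\eqref{q'}, and reducing to the standard Lie-algebraic characterization of additive invariants under Lorentz-compatible binary exchanges. Granted this identification, $\phi \in \mathrm{span}\{1, p_1, p_2, p_3, p^0\}$ translates through \eqref{Pfbasis} to the statement $g = Pg$, completing the lemma.
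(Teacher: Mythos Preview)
The paper does not supply a proof of this lemma; it simply cites \cite{MR1211782} (Glassey--Strauss), and your four-term symmetrization is precisely the classical argument found there. One small algebraic slip: the weight in your symmetrized quadratic form should be $J(p)J(q)$ rather than $\sqrt{J(p)J(q)}$, since factoring $\sqrt{J(p)J(q)}$ out of each of the two bracketed differences (via $J(p)J(q)=J(p')J(q')$) contributes one copy per factor; this does not affect the positivity or kernel arguments. Your identification of the collision-invariant classification as the only nontrivial step, together with the citation to \cite{MR1379589} and \cite{MR1211782} for it, is exactly what the paper does.
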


Then we can decompose $f(t,x,p)$ as
\begin{equation}
\label{decomp}
f=Pf+\{I-P\}f.
\end{equation}
We start from plugging the expression  (\ref{decomp}) into  (\ref{Linearized B}). Then we obtain
\begin{equation}
\label{hydro}
\{\partial_t+\hat{p}\cdot \nabla_x\}Pf=-\partial_t\{I-P\}f-(\hat{p}\cdot \nabla_x+L)\{I-P\}f+\Gamma(f,f).
\end{equation}
Note that we have expressed the hydrodynamic part $Pf$ in terms of the microscopic part $\{I-P\}f$ and the higher-order term $\Gamma$. We define the operator $$l(\{I-P\}f)=-(\hat{p}\cdot \nabla_x+L)(\{I-P\}f).$$ 
Using the expression  \eqref{Pfbasis} of $Pf$ with respect to the basis elements, we obtain that the left-hand side of  \eqref{hydro} can be written as
\begin{multline*}
\partial_t \mathcal{A}\sqrt{J}+\sum_{i=1}^3\partial_i(\mathcal{A}+\mathcal{C}{p^0})\frac{p_i}{{p^0}}\sqrt{J} +\partial_t \mathcal{C} {p^0}\sqrt{J}+\sum_{i=1}^{3}\partial_t\mathcal{B}_ip_i\sqrt{J}\\
 +\sum_{i=1}^3\partial_i\mathcal{B}_i\frac{p_i^2}{{p^0}}\sqrt{J}+\sum_{i=1}^{3} \sum_{i\neq j}\partial_j \mathcal{B}_i\frac{p_ip_j}{{p^0}}\sqrt{J},
\end{multline*}
where $\mathcal{A}=\mathcal{A}^f$, $\mathcal{B}=\mathcal{B}^f$, $\mathcal{C}=\mathcal{C}^f$ and $\partial_i=\partial_{x_i}$. 
For fixed $(t,x)$ we can write the left-hand side with respect to the following basis, $\{e_k\}_{k=1}^{14}$, which consists of 
\begin{equation}
\label{basissis}
\sqrt{J}, \hspace{3mm}\left(\frac{p_i}{{p^0}}\sqrt{J}\right)_{1\leq i \leq 3}, \hspace{3mm}{p^0}\sqrt{J}, \hspace{3mm}\left(p_i\sqrt{J}\right)_{1\leq i \leq 3},\hspace{3mm} \left(\frac{p_ip_j}{{p^0}}\sqrt{J}\right)_{1\leq i\leq j\leq 3}.
\end{equation} 
Then we can rewrite the left-hand side of \eqref{hydro} as
\begin{multline*}
\partial_t \mathcal{A}\sqrt{J}+\sum_{i=1}^3\partial_i \mathcal{A}\frac{p_i}{{p^0}} \sqrt{J}+\partial_t \mathcal{C} {p^0}\sqrt{J}+\sum_{i=1}^3 (\partial_i\mathcal{C}+\partial_t\mathcal{B}_i)p_i\sqrt{J}\\
 +\sum_{i=1}^{3} \sum_{j=1}^3((1-\delta_{ij})\partial_i\mathcal{B}_j+\partial_j\mathcal{B}_i)\frac{p_ip_j}{{p^0}}\sqrt{J}.
\end{multline*}
By the comparison of coefficients, we can obtain a system of macroscopic equations
\begin{equation}
\label{macroscopic}
\begin{split}
\partial_t \mathcal{A}&=-\partial_tm_a+l_a+G_a,\\
\partial_i \mathcal{A}&=-\partial_tm_{ia}+l_{ia}+G_{ia},\\
\partial_t \mathcal{C}&= -\partial_tm_{c}+l_{c}+G_{c},\\
\partial_i \mathcal{C}+\partial_t\mathcal{B}_i&= -\partial_tm_{ic}+l_{ic}+G_{ic},\\
(1-\delta_{ij})\partial_i\mathcal{B}_j+\partial_j\mathcal{B}_i&=-\partial_tm_{ij}+l_{ij}+G_{ij},
\end{split}
\end{equation}
where the indices are from the index set defined as $D=\{a,ia,c,ic,ij\ |\  1\leq i\leq j\leq 3\}$ and $m_\mu$, $l_\mu$, and $G_\mu$ for $\mu \in D$ are the coefficients of $\{I-P\}f$, $l(\{I-P\}f)$, and $\Gamma(f,f)$ with respect to the basis $\{e_k\}_{k=1}^{14}$ respectively.

Also, we derive the local conservation laws. The derivation of the local conservation laws for the relativistic Boltzmann equation has already been introduced in \cite{MR2911100} and we introduce the full details here for the sake of completeness. We first multiply the linearized Boltzmann equation by $\sqrt{J}, p_i\sqrt{J},{p^0}\sqrt{J}$ and integrate them over $\rth$ to obtain that 
\begin{equation}
\label{c1}
\begin{split}
\partial_t\int_{\rth}f\sqrt{J} dp+\int_{\rth}\hat{p}\cdot\nabla_xf\sqrt{J}dp&=0,\\
\partial_t\int_{\rth}f\sqrt{J}p_i dp+\int_{\rth}\hat{p}\cdot\nabla_xf\sqrt{J}p_idp&=0,\\
\partial_t\int_{\rth}f\sqrt{J}{p^0} dp+\int_{\rth}\hat{p}\cdot\nabla_xf\sqrt{J}{p^0}dp&=0.
\end{split}
\end{equation}
These hold because $1, p_i, {p^0}$ are collisional invariants using \eqref{eq.colop.property}.  We will plug the decomposition $f=Pf+\{I-P\}f$ into  (\ref{c1}). We first consider the microscopic part. 
Note that 
\begin{multline}
    \label{c2}
\int_ \rth \hat{p}\cdot \nabla_x \{I-P\}f\sqrt{J}\left(\begin{array}{c}
1\\
p_i\\
{p^0}\end{array}\right)dp=\sum_{j=1}^3 \int_ \rth \frac{p_j}{{p^0}}\partial_j \{I-P\}f\sqrt{J}\left(\begin{array}{c}
1\\
p_i\\
{p^0}\end{array}\right)dp
\\
=\sum_{j=1}^3 \partial_j \left\langle \{I-P\}f, \sqrt{J}\left(\begin{array}{c}
\frac{p_j}{{p^0}}\\
\frac{p_ip_j}{{p^0}}\\
p_j\end{array}\right)\right\rangle. 
\end{multline}
We also notice that 
$\int_{\rth}\{I-P\}f\sqrt{J}(p)
p_j dp = 0$.  Also, we have that
\begin{equation}
\label{c3}
\begin{split}
\partial_t\int_ \rth \{I-P\}f\sqrt{J}\left(\begin{array}{c} 1\\p_i\\{p^0}\end{array}\right)=\partial_t \left\langle \{I-P\}f,\sqrt{J}\left(\begin{array}{c} 1\\p_i\\{p^0}\end{array}\right)\right\rangle=0.
\end{split}
\end{equation}
On the other hand, the hydrodynamic part $Pf=(\mathcal{A}+\mathcal{B}\cdot p+\mathcal{C}{p^0})\sqrt{J}$ satisfies
\begin{equation}\notag
    \partial_t \int_{\rth} \left(\begin{array}{c} 1\\p_i\\{p^0}\end{array}\right) Pf\sqrt{J}dp
=\partial_t \int_{\rth} \left(\begin{array}{c} \mathcal{A}+\mathcal{B}\cdot p+\mathcal{C}{p^0}\\\mathcal{A}p_i+\mathcal{B}\cdot pp_i+\mathcal{C}{p^0}p_i
\\\mathcal{A}{p^0}+\mathcal{B}\cdot p{p^0}+\mathcal{C}{(p^0)}^2\end{array}\right) \sqrt{J}(p)dp,
\end{equation}
and
\begin{equation}\notag
\int_ \rth \hat{p}\cdot \nabla_x Pf\sqrt{J}\left(\begin{array}{c} 1\\p_i\\{p^0}\end{array}\right) dp 
=
\sum_{j=1}^3\int_ \rth \partial_j \left(\begin{array}{c} \frac{p_j}{{p^0}}(\mathcal{A}+\mathcal{B}\cdot p+\mathcal{C}{p^0})\\\frac{p_ip_j}{{p^0}}(\mathcal{A}+\mathcal{B}\cdot p+\mathcal{C}{p^0})\\p_j\mathcal{A}+\mathcal{B}\cdot pp_j+\mathcal{C}{p^0}p_j\end{array}\right)\sqrt{J}(p) dp. 
\end{equation}
Thus we obtain
\begin{multline}\label{c4}
\partial_t \int_{\rth} \left(\begin{array}{c} 1\\p_i\\{p^0}\end{array}\right) Pf\sqrt{J}dp
+
\int_ \rth \hat{p}\cdot \nabla_x Pf\sqrt{J}\left(\begin{array}{c} 1\\p_i\\{p^0}\end{array}\right) dp 
\\ 
=\left(\begin{array}{c} \partial_t \mathcal{A}+\lambda^0\partial_t \mathcal{C}\\ \lambda^{11}\partial_t\mathcal{B}_i\\ \lambda^0 \partial_t \mathcal{A}+\lambda^{00}\partial_t \mathcal{C} \end{array}\right)+\left(\begin{array}{c} \lambda^{11}_0\nabla_x\cdot \mathcal{B}\\ \lambda^{11}_0\partial_i \mathcal{A}+\lambda^{11}\partial_i \mathcal{C}\\ \lambda^{11}\nabla_x \cdot \mathcal{B}\end{array}\right).
\end{multline}
Also, we have that $L(f)=L\{I-P\}f$. Together with  (\ref{c1}), (\ref{c2}), (\ref{c3}), and (\ref{c4}), we finally obtain the local conservation laws satisfied by $(\mathcal{A},\mathcal{B},\mathcal{C})$:
\begin{align*}
\partial_t \mathcal{A}+\lambda^0\partial_t \mathcal{C}+\lambda^{11}_0\nabla_x\cdot \mathcal{B} &=-\nabla_x \cdot \left\langle \{I-P\}f, \sqrt{J}\frac{p}{{p^0}}\right\rangle,\\
\lambda^{11}\partial_t \mathcal{B} +\lambda^{11}_0\nabla_x \mathcal{A}+\lambda^{11}\nabla_x \mathcal{C} &=-\nabla_x \cdot \left\langle \{I-P\}f, \sqrt{J}\frac{p\otimes p}{{p^0}}\right\rangle,\\\lambda^0\partial_t \mathcal{A}+\lambda^{00}\partial_t \mathcal{C}+\lambda^{11} \nabla_x \cdot \mathcal{B} &= 0.
\end{align*} 
Comparing the first and the third conservation laws, we obtain the following local conservation laws:
\begin{equation}
\label{Conservation Laws}
\begin{split}
\mu_1 \partial_t \mathcal{A}+\mu_2 \nabla_x\cdot \mathcal{B}&=-\nabla_x \cdot \left\langle \{I-P\}f, \sqrt{J}\frac{p}{{p^0}}\right\rangle,\\
\lambda^{11}\partial_t \mathcal{B} +\lambda^{11}_0\nabla_x \mathcal{A}+\lambda^{11}\nabla_x \mathcal{C}&=-\nabla_x \cdot \left\langle \{I-P\}f, \sqrt{J}\frac{p\otimes p}{{p^0}}\right\rangle,\\
\mu_3 \partial_t \mathcal{C}+\mu_4 \nabla_x \cdot \mathcal{B}&= -\nabla_x \cdot \left\langle \{I-P\}f, \sqrt{J}\frac{p}{{p^0}}\right\rangle.\\
\end{split}
\end{equation}  
Above the constants are given by $\mu_1 = \left(1-\frac{(\lambda^0)^2}{\lambda^{00}}\right)>0$, $\mu_2 = \left(\lambda^{11}_0-\frac{\lambda^0\lambda^{11}}{\lambda^{00}}\right)$, $\mu_3 = \left(\lambda^0-\frac{\lambda^{00}}{\lambda^0}\right)<0$ and $\mu_4 = \left(\lambda^{11}_0-\frac{\lambda^{11}}{\lambda^0}\right)$.

We also mention that we have the following lemma on the coefficients $\mathcal{A},\mathcal{B},\mathcal{C}$ directly from the conservation of mass, momentum, and energy:
\begin{lemma}
	\label{L8.5}
	Let $f(t,x,p)$ be the local solution to the linearized relativistic Boltzmann equation (\ref{Linearized B}) which is shown to exist in Theorem \ref{local existence} which satisfies the mass, momentum, and energy conservation laws (\ref{zero}). Then we have
	 $$
	\int_{\mathbb{T}^3} \mathcal{A}(t,x)dx=\int_{\mathbb{T}^3} \mathcal{B}_i(t,x)dx=\int_{\mathbb{T}^3} \mathcal{C}(t,x)dx=0,
	$$
	where $i\in\{1,2,3\}$.
\end{lemma}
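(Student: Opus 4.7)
The plan is to directly compute each spatial integral using the explicit formulas for $\mathcal{A}$, $\mathcal{B}_i$, and $\mathcal{C}$ in terms of momentum integrals of $f$, and then invoke the conservation laws \eqref{zero} in order to conclude that each resulting expression vanishes. Because the definitions of $\mathcal{A}$, $\mathcal{B}_i$, $\mathcal{C}$ are linear in $f$ with coefficients depending only on $p$, the $x$-integration commutes with the $p$-integration by Fubini's theorem (which is justified by the regularity provided by Theorem \ref{local existence}).

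The easiest step is the momentum coefficient. By definition $\mathcal{B}_i^f(t,x)=(\lambda^{11})^{-1}\int_{\rth} f(t,x,p)\, p_i \sqrt{J(p)}\,dp$, so that after interchanging the order of integration
\[
\int_{\mathbb{T}^3} \mathcal{B}_i(t,x)\,dx = \frac{1}{\lambda^{11}}\int_{\rth}\int_{\mathbb{T}^3} f(t,x,p)\,p_i\sqrt{J(p)}\,dx\,dp = 0
\]
by the momentum conservation component of \eqref{zero}. Next I would handle $\mathcal{C}$: the formula $\mathcal{C}^f=(\lambda^{00}-(\lambda^0)^2)^{-1}\int_{\rth} f\,(p^0\sqrt{J}-\lambda^0\sqrt{J})\,dp$ splits, upon integration in $x$, into an energy-weighted integral and a mass-weighted integral of $f$; both vanish by the energy and mass components of \eqref{zero} respectively, giving $\int_{\mathbb{T}^3}\mathcal{C}\,dx=0$.

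Finally, for $\mathcal{A}$ I would use the identity $\mathcal{A}^f=\int_{\rth} f\sqrt{J}\,dp - \lambda^0\,\mathcal{C}^f$ and integrate in $x$:
\[
\int_{\mathbb{T}^3}\mathcal{A}(t,x)\,dx = \int_{\mathbb{T}^3}\int_{\rth} f(t,x,p)\sqrt{J(p)}\,dp\,dx - \lambda^0\int_{\mathbb{T}^3}\mathcal{C}(t,x)\,dx.
\]
The first term vanishes by mass conservation in \eqref{zero}, and the second vanishes by the step just completed for $\mathcal{C}$.

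There is no real obstacle here: the lemma is essentially a bookkeeping statement that the conservation laws \eqref{zero}, when expressed through the orthogonal projection $P$ onto $nl(L)$, force the spatial averages of the five hydrodynamic coefficients to be zero. The only small subtlety is the use of Fubini, which requires checking integrability of $f\sqrt{J}$, $fp_i\sqrt{J}$, and $fp^0\sqrt{J}$ on $\mathbb{T}^3\times \rth$; this is immediate from $f\in H^N_l$ with $N\ge 2$ and the Gaussian decay of $\sqrt{J}$.
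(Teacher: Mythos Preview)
Your proof is correct and is precisely the argument implicit in the paper, which simply states that the lemma follows ``directly from the conservation of mass, momentum, and energy'' without writing out a proof. The only content is unpacking the definitions of $\mathcal{A}$, $\mathcal{B}_i$, $\mathcal{C}$ and applying \eqref{zero}, exactly as you have done.
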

We also list two lemmas that help us to control the coefficients in the linear microscopic term $l$ and the non-linear higher-order term $\Gamma$. 
\begin{lemma}
	\label{L8.6}
	For any coefficient $l_\mu$ for the microscopic term $l$, and for any $m\ge 0$ we have
	 $$
	 \|l_\mu\|_{H_x^{N-1}}\lesssim \sum_{|\alpha|\leq N}\|\{I-P\}\partial^\alpha f\|_{L^2_{-m}(\mathbb{T}^3\times\rth)},
	 \quad \forall \mu \in D.
	$$
\end{lemma}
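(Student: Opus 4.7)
The plan is to exploit the facts that (i) the coefficients $l_\mu$ are recovered as inner products of $l(\{I-P\}f)$ against fixed Schwartz test functions, and (ii) spatial derivatives commute with the projection $\{I-P\}$ and with any $p$-integration. Concretely, because $\{e_k\}_{k=1}^{14}$ in \eqref{basissis} is linearly independent with a constant (invertible) Gram matrix in $L^2_p$, each $l_\mu$ is a fixed finite linear combination of pairings $\langle l(\{I-P\}f),e_k\rangle_p$. Every basis element $e_k(p)$ is a polynomial in $(p,p^0,1/p^0)$ times $\sqrt{J(p)}$, hence a Schwartz function with exponential decay in $p$, and the same is true of $\hat p\,e_k$.

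Using $l(\{I-P\}f)=-(\hat p\cdot\nabla_x+L)\{I-P\}f$, I would split the pairing into a transport contribution and a linearized collision contribution. For the transport term, integration by parts in the $p$-inner product is not needed; instead one brings the spatial divergence outside:
\begin{equation*}
\partial^\beta_x\langle \hat p\cdot\nabla_x\{I-P\}f,\,e_k\rangle_p
=\nabla_x\cdot\langle \hat p\, e_k,\,\{I-P\}\partial^\beta f\rangle_p,\qquad |\beta|\le N-1.
\end{equation*}
Since $\hat p\,e_k$ decays faster than any weight $w^m$, Cauchy--Schwarz in $p$ yields a bound of $\|\{I-P\}\partial^\alpha f\|_{L^2_{-m}(\mathbb{T}^3\times\rth)}$ with $|\alpha|=|\beta|+1\le N$, which is the desired form. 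Summing over the finitely many spatial multi-indices $\beta$ with $|\beta|\le N-1$ produces the $H^{N-1}_x$ norm on the left.

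For the collision term I would invoke the self-adjointness of $L$ from Lemma \ref{llinear} to transfer $L$ onto the test function:
\begin{equation*}
\langle L\{I-P\}\partial^\beta f,\,e_k\rangle_p
=\langle \{I-P\}\partial^\beta f,\,L e_k\rangle_p.
\end{equation*}
Cauchy--Schwarz then gives an upper bound of $\|\{I-P\}\partial^\beta f\|_{L^2_{-m}}\|w^m L e_k\|_{L^2}$, which combined with the transport estimate yields the claim once the factor $\|w^m L e_k\|_{L^2}$ is shown to be finite for every $m\ge 0$.

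The main obstacle is precisely this last step: verifying that $L e_k$ retains rapid decay in $p$. I would use $L=\mathcal{N}+\mathcal{K}$ from \eqref{defK}--\eqref{defN}. The multiplicative parts $\zeta(p)e_k$ and $\zeta_{\mathcal{K}}(p)e_k$ are harmless because by \eqref{Paos} the weights $\zeta,\zeta_{\mathcal{K}}$ grow only polynomially while $e_k$ has exponential decay. For the integral parts, the kernels carry the factors $\sqrt{J(q)J(q')}$ (and, in the case of $\mathcal{K}$, an additional $\sqrt{J(p')}$). Using the energy conservation $p^0+q^0=p'^0+q'^0$ together with the standard splitting $\sqrt{J(q)J(q')}\le e^{-\varepsilon p^0}(J(q)J(q'))^{1/2-\varepsilon}$ one transfers part of the exponential decay from $(q,q')$ to $p$; this is the same mechanism already used in Propositions \ref{compactCANCELe} and \ref{prop:GradRap}. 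The resulting pointwise bound $|Le_k(p)|\lesssim e^{-c\,p^0}$ for some $c>0$ makes $\|w^m Le_k\|_{L^2}$ finite for every $m$, closing the argument.
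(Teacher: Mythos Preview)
Your transport estimate matches the paper's. For the collision piece, however, the paper does not go through self-adjointness and a pointwise bound on $Le_k$. Instead it writes $L=-\Gamma(\cdot,\sqrt J)-\Gamma(\sqrt J,\cdot)$ from \eqref{L.def} and invokes the already-established estimate \eqref{C3}, which for any rapidly decaying $\phi$ gives $|\langle \Gamma(f,h),\phi\rangle|\lesssim |f|_{L^2_{-m}}|h|_{L^2_{-m}}$. Applied with $\phi=e_k$ and $f$ or $h$ equal to $\sqrt J$, this yields $|\langle L(\{I-P\}\partial^\alpha f),e_k\rangle|\lesssim |\{I-P\}\partial^\alpha f|_{L^2_{-m}}$ in one line, without having to revisit any cancellation analysis.

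Your route through $\langle \{I-P\}\partial^\beta f, Le_k\rangle$ and $\|w^m Le_k\|_{L^2}<\infty$ is also workable, but the specific inequality you wrote is false: $\sqrt{J(q)J(q')}\le e^{-\varepsilon p^0}(J(q)J(q'))^{1/2-\varepsilon}$ would require $q^0+q'^0\ge p^0$, whereas energy conservation gives $q^0+q'^0=p^0+2q^0-p'^0$, which is strictly smaller than $p^0$ whenever $p'^0>2q^0$. The kernel factor $\sqrt{J(q)J(q')}$ alone cannot produce decay in $p^0$ for the $\mathcal N$-integral; you must also use the exponential decay of the input $e_k(p')$ (respectively $e_k(p)$). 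Combining $e_k(p')\lesssim e^{-p'^0/2}$ with $\sqrt{J(q)J(q')}$ and $p'^0+q'^0=p^0+q^0$ then gives $e^{-(p'^0+q^0+q'^0)/2}=e^{-(p^0+q^0)/2}e^{-q^0/2}$, which is the correct mechanism. With that fix (and the usual dyadic cancellation argument to absorb the angular singularity), the pointwise bound $|Le_k(p)|\lesssim e^{-cp^0}$ does hold and your argument closes. The paper's route via \eqref{C3} simply packages all of this into a lemma that has already been proven.
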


\begin{proof}	In order to estimate the size for the $H^{N-1}$ norm, we use any $e_k$ from \eqref{basissis} to observe that
	 $$
	\langle \partial^\alpha l(\{I-P\}f),e_k\rangle=-\langle\hat{p}\cdot\nabla_x(\{I-P\}\partial^\alpha f),e_k\rangle-\langle L(\{I-P\}\partial^\alpha f),e_k\rangle.
	$$
	For any $|\alpha|\leq N-1$, the $L^2$-norm of the first part of the right-hand side is
	\begin{align*}  
	\|\langle\hat{p}\cdot\nabla_x(\{I-P\}\partial^\alpha f),e_k\rangle\|^2_{L^2_x}&\lesssim \int_ {\mathbb{T}^3\times\rth} dxdp \ |e_k||\{I-P\}\nabla_x\partial^\alpha f|^2\\
	&\lesssim \|\{I-P\}\nabla_x\partial^\alpha f\|^2_{L^2_{-m}(\mathbb{T}^3\times\rth)}.
	\end{align*}
	On the other hand, by \eqref{L.def} and \eqref{C3} we have
	\begin{align*}
	\|\langle L(\{I-P\}\partial^\alpha f),e_k\rangle\|^2_{L^2_x}&\lesssim \Big\||\{I-P\}\partial^\alpha f|_{L^2_{-m}}|J^{1/4}|_{L^2_{-m}} \Big\|^2_{L^2_x}\\
	&\lesssim \|\{I-P\}\partial^\alpha f\|^2_{L^2_{-m}(\mathbb{T}^3\times\rth)},
	\end{align*}for any $m\ge 0.$
	This completes the proof.
\end{proof}

\begin{lemma}
	\label{L8.7}
Let $\|f\|^2_H\leq M_0$ for some $M_0>0$. Then, for any $m\ge 0$, we have 
$$
\|G_\mu\|_{H_x^{N-1}}\lesssim \sqrt{M_0}\sum_{|\alpha|\leq N-1}\|\partial^\alpha f\|_{L^2_{-m}(\mathbb{T}^3\times\rth)},
\quad \forall \mu \in D.
$$
\end{lemma}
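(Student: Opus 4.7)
The plan is to realize each $G_\mu(t,x)$ as an $L^2_p$ inner product of $\Gamma(f,f)$ against a Schwartz function with Gaussian decay, and then to apply the trilinear estimate \eqref{C3} pointwise in $x$, together with the Sobolev embedding $H^2(\mathbb{T}^3)\subset L^\infty(\mathbb{T}^3)$.

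First, the basis $\{e_k\}_{k=1}^{14}$ in \eqref{basissis} consists of polynomials in $p$ times $\sqrt{J}$, and therefore its (finite dimensional) dual basis $\{\tilde{e}_k\}$ is again a collection of polynomials in $p$ times $\sqrt{J}$. Each $\tilde{e}_k$ thus satisfies the rapid decay condition \eqref{derivESTa}. Since $G_\mu$ is by definition the $e_\mu$-coefficient of $\Gamma(f,f)$ in this basis, I may write
\begin{equation}\notag
G_\mu(t,x) = \langle \Gamma(f(t,x,\cdot), f(t,x,\cdot)), \tilde{e}_\mu \rangle.
\end{equation}
Applying the spatial derivative $\partial^\alpha$ for $|\alpha|\le N-1$ and the Leibniz rule \eqref{gamma.deriv} produces
\begin{equation}\notag
\partial^\alpha G_\mu(t,x) = \sum_{\alpha'\le\alpha} C_{\alpha,\alpha'}\,\langle \Gamma(\partial^{\alpha-\alpha'}f,\,\partial^{\alpha'}f),\,\tilde{e}_\mu\rangle.
\end{equation}

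Next, for each fixed $(t,x)$, I apply Proposition \ref{prop:basisEst}, specifically \eqref{C3} with $l=0$ and $\phi=\tilde{e}_\mu$, to obtain for any $m\ge 0$
\begin{equation}\notag
\left|\langle \Gamma(\partial^{\alpha-\alpha'}f,\,\partial^{\alpha'}f),\,\tilde{e}_\mu\rangle\right| \lesssim |\partial^{\alpha-\alpha'}f|_{L^2_{-m}}\,|\partial^{\alpha'}f|_{L^2_{-m}}.
\end{equation}
Taking the $L^2_x$ norm and splitting the two factors between $L^2_x$ and $L^\infty_x$ by H\"older, I place $L^\infty_x$ on whichever of $|\partial^{\alpha-\alpha'}f|_{L^2_{-m}}$ or $|\partial^{\alpha'}f|_{L^2_{-m}}$ has the smaller order of spatial derivatives. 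Since $|\alpha|\le N-1$ and $N\ge 2$, the smaller order is at most $\lfloor(N-1)/2\rfloor\le N-2$. By the Sobolev embedding $H^2(\mathbb{T}^3)\subset L^\infty(\mathbb{T}^3)$, the $L^\infty_x$ factor is then controlled by adding two more spatial derivatives, giving at most $N$ spatial derivatives in total. Hence this factor is bounded by $\|f\|_{H^N_l(\mathbb{T}^3\times\rth)}\le \sqrt{M_0}$, using $\|\cdot\|_{L^2_{-m}}\le \|\cdot\|_{L^2}\le \|w^l\,\cdot\|_{L^2}$ for $m,l\ge 0$. The remaining $L^2_x$ factor is of the form $\|\partial^\beta f\|_{L^2_{-m}(\mathbb{T}^3\times\rth)}$ for some $|\beta|\le N-1$.

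Summing over multi-indices $\alpha'\le\alpha$ and $|\alpha|\le N-1$ then yields the claimed inequality. The argument is essentially routine once \eqref{C3} is in hand; the only points requiring attention are the identification of the dual basis vectors as Schwartz functions with Gaussian decay (so that \eqref{C3} applies, rather than the more restrictive \eqref{C1}--\eqref{C2}), and the verification that placing $L^\infty_x$ on the lower-order factor always keeps the total derivative count within $H^N_x$. The hypothesis $N\ge 2$ enters precisely through this Sobolev embedding step.
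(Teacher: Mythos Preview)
Your proof is correct and follows essentially the same approach as the paper: both apply the trilinear estimate \eqref{C3} to $\langle \Gamma(\partial^{\alpha-\alpha'}f,\partial^{\alpha'}f), e_k\rangle$ (the paper uses the $e_k$ directly rather than the dual basis, which is equivalent in this finite-dimensional setting), and then control the resulting product of $L^2_{-m}$-norms via Sobolev embedding and the hypothesis $\|f\|_H\le\sqrt{M_0}$. Your write-up is in fact more explicit about the Sobolev step and the derivative counting than the paper's terse argument.
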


\begin{proof}	In order to estimate the size the for $H^{N-1}$ norm, we consider $\langle \Gamma(f,f), e_k\rangle$. By  (\ref{C3}), for any $m\geq 0$, we have
	\begin{align*}
	\|\langle \Gamma(f,f),e_k\rangle\|_{H^{N-1}_x}&\lesssim \sum_{|\alpha|\leq N-1}\sum_{\alpha'\leq\alpha}\Big\||\partial^{\alpha-\alpha'}f|_{L^2_{-m}}|\partial^{\alpha'}f|_{L^2_{-m}}\Big\|_{L^2_x}\\
	&\lesssim \|f\|_{L^2_{-m}H^{N-1}_x}\sum_{|\alpha|\leq N-1}\|\partial^\alpha f\|_{L^2_{-m}}\\
	&\lesssim \sqrt{M_0}\sum_{|\alpha|\leq N-1}\|\partial^\alpha f\|_{L^2_{-m}(\mathbb{T}^3\times\rth)}.
	\end{align*}
	This completes the proof.
\end{proof}

These two lemmas above, the macroscopic equations, and the local conservation laws will together prove the following theorem on the coercivity estimate for the microscopic term $\{I-P\}f$ which is crucial for the coercivity of the linearized operator $L$ in the energy form and hence is crucial for the energy inequality which will imply the global existence of the solution with the continuity argument. 
\begin{theorem}
	\label{8.4}
	Given the initial condition $f_0\in H$ which satisfies the mass, momentum, and energy conservation laws (\ref{zero}) and the assumptions in Theorem \ref{local existence}, we can consider the local solution $f(t,x,p)$ to the linearized relativistic Boltzmann equation (\ref{Linearized B}). Then, there is a constant $M_0>0$ such that if 
	$$
	\|f(t)\|^2_H\leq M_0,
	$$
	then there are universal constants $\delta>0$ and $C>0$ such that 
	 $$
	\sum_{|\alpha|\leq N} \|\{I-P\}\partial^\alpha f\|^2_{I^{\singS,\gamma}}(t)\geq \delta \sum_ {|\alpha|\leq N}\|P\partial^\alpha f\|^2_{I^{\singS,\gamma}}(t)-C\frac{dI(t)}{dt},
	$$
	where $I(t)$ is an interaction potential defined as 
	 $$
	I(t)=\sum_ {|\alpha|\leq N-1} \{I^\alpha_a(t)+I^\alpha_b(t)+I^\alpha_c(t)\} ,
	$$
Above, each of the sub-potentials $I^\alpha_a(t)$, $I^\alpha_b(t)$, and $I^\alpha_c(t)$ are defined during the proof in \eqref{interaction.a}, \eqref{interaction.c} and \eqref{interaction.b}.
\end{theorem}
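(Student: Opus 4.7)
The plan is to use the standard macroscopic energy method adapted to the relativistic fractional setting. Because $P\partial^\alpha f$ is a finite sum of the basis elements \eqref{Pfbasis} with coefficients $\partial^\alpha \mathcal{A}$, $\partial^\alpha \mathcal{B}_i$, $\partial^\alpha \mathcal{C}$ times Schwartz functions of $p$, the weighted fractional norm $|P\partial^\alpha f|^2_{I^{\singS,\gamma}}$ is comparable to $|\partial^\alpha \mathcal{A}|^2 + \sum_i |\partial^\alpha \mathcal{B}_i|^2 + |\partial^\alpha \mathcal{C}|^2$ pointwise in $x$. Thus the theorem reduces to the $x$-integrated estimate
\[
\sum_{|\alpha|\le N}\!\big\|\partial^\alpha(\mathcal{A},\mathcal{B},\mathcal{C})\big\|_{L^2_x}^2 \lesssim \sum_{|\alpha|\le N}\|\{I-P\}\partial^\alpha f\|^2_{I^{\singS,\gamma}} \;+\; \frac{dI(t)}{dt}.
\]

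For $|\alpha|\ge 1$, the key observation is that the macroscopic equations \eqref{macroscopic} express each pure spatial derivative $\partial_i\mathcal{A}$, $\partial_i\mathcal{C}$, and the symmetrized combination of $\partial_i\mathcal{B}_j$ as $-\partial_t m_\mu + l_\mu + G_\mu$. Squaring and integrating in $x$ would produce the bad term $\|\partial_t m_\mu\|_{L^2_x}^2$. Instead I would introduce interaction functionals designed so that their time derivative cancels the bad $\partial_t$ against a true derivative on $(\mathcal{A},\mathcal{B},\mathcal{C})$. Concretely, for each $|\alpha'|\le N-1$ define
\begin{gather*}
I^{\alpha'}_a \eqdef -\sum_i \int_{\mathbb{T}^3}\partial^{\alpha'} m_{ia}\cdot \partial^{\alpha'}\partial_i\mathcal{A}\,dx,
\qquad
I^{\alpha'}_c \eqdef -\sum_i \int_{\mathbb{T}^3}\partial^{\alpha'} m_{ic}\cdot \partial^{\alpha'}(\partial_i\mathcal{C}+\partial_t\mathcal{B}_i)\,dx,
\\
I^{\alpha'}_b \eqdef -\sum_{i,j}\int_{\mathbb{T}^3}\partial^{\alpha'} m_{ij}\cdot \partial^{\alpha'}\bigl((1-\delta_{ij})\partial_i\mathcal{B}_j+\partial_j\mathcal{B}_i\bigr)\,dx.
\end{gather*}
Differentiating $I^{\alpha'}_a$ in $t$ and substituting $\partial_t m_{ia} = -\partial_i\mathcal{A}+l_{ia}+G_{ia}$ (from the second equation of \eqref{macroscopic}) yields $\frac{d}{dt}I^{\alpha'}_a = \|\partial^{\alpha'}\partial_i\mathcal{A}\|_{L^2_x}^2 + (\text{remainder})$, where the remainder involves inner products of $\partial^{\alpha'}(l_{ia},G_{ia})$ with $\partial^{\alpha'}\partial_i\mathcal{A}$ and, after one integration by parts in $x$, of $\partial^{\alpha'}\partial_i m_{ia}$ with $\partial_t\partial^{\alpha'}\mathcal{A}$ -- the latter controlled via the first and third macroscopic equations. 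Analogous manipulations produce $\|\partial^{\alpha'}\partial_i\mathcal{C}\|^2_{L^2_x}$ and, together with a standard symmetrization trick on the $i\!=\!j$ versus $i\ne j$ entries, the full $\|\partial^{\alpha'}\nabla_x\mathcal{B}\|^2_{L^2_x}$.

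The remainder terms are handled by Lemmas \ref{L8.6} and \ref{L8.7}, together with Cauchy--Schwarz with a small parameter $\epsilon>0$: the linear piece $\|l_\mu\|_{H^{N-1}}$ is absorbed into $\|\{I-P\}f\|^2_{I^{\singS,\gamma}_N}$ (using $\|\cdot\|_{L^2_{-m}}\le \|\cdot\|_{I^{\singS,\gamma}}$), while the nonlinear piece $\|G_\mu\|_{H^{N-1}}\lesssim \sqrt{M_0}\,\|\partial f\|$ is absorbed after choosing $M_0$ small enough. The local conservation laws \eqref{Conservation Laws} are what allow us to rewrite $\partial_t\mathcal{A},\partial_t\mathcal{C},\partial_t\mathcal{B}$ in terms of $\nabla_x$-derivatives of coefficients of $\{I-P\}f$ during these integrations by parts, which is essential because otherwise the time derivatives would reappear uncontrollably. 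Finally, for the pure $L^2_x$ control of $(\mathcal{A},\mathcal{B},\mathcal{C})$ (the $\alpha=0$ contribution on the right-hand side), the Poincar\'e inequality on $\mathbb{T}^3$ combined with Lemma \ref{L8.5} (zero spatial mean) upgrades $\|\nabla_x(\mathcal{A},\mathcal{B},\mathcal{C})\|_{L^2_x}$ to $\|(\mathcal{A},\mathcal{B},\mathcal{C})\|_{L^2_x}$.

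The principal obstacle I anticipate is the fifth macroscopic equation, which controls only the symmetrized combination $(1-\delta_{ij})\partial_i\mathcal{B}_j+\partial_j\mathcal{B}_i$ rather than individual first derivatives of $\mathcal{B}$. Recovering the full gradient $\nabla_x\mathcal{B}$ requires combining the $i=j$ diagonal estimate with off-diagonal symmetrization in the same spirit as Korn's inequality, and keeping careful track of which terms come with favorable signs versus which must be treated as errors and absorbed via the smallness of $M_0$ and the microscopic dissipation. A secondary subtlety is that the $I^{\singS,\gamma}$ norm is weighted and anisotropic in $p$, so verifying that $|P\partial^\alpha f|_{I^{\singS,\gamma}}\approx \sum|\partial^\alpha(\mathcal{A},\mathcal{B},\mathcal{C})|$ requires checking that the five basis functions in \eqref{Pfbasis} each lie in $I^{\singS,\gamma}$ with finite, bounded-below semi-norm, which follows from their rapid decay in $p$ via $\sqrt{J(p)}$.
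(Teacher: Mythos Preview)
Your overall architecture is right --- reduce to estimating $\|\partial^\alpha(\mathcal{A},\mathcal{B},\mathcal{C})\|_{L^2_x}$, build interaction functionals whose time derivative extracts the coercive squares, control remainders by Lemmas~\ref{L8.6}--\ref{L8.7} and the conservation laws~\eqref{Conservation Laws}, and close with Poincar\'e via Lemma~\ref{L8.5}. The $\mathcal{A}$ and $\mathcal{B}$ functionals you wrote down are (after an integration by parts in~$x$) essentially the paper's, and the Korn-type recovery of $\|\nabla\mathcal{B}\|$ from the symmetrized derivatives is exactly what the paper does.

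The gap is in your $I^{\alpha'}_c$. The positive term its time derivative produces is $\sum_i\|\partial^{\alpha'}(\partial_i\mathcal{C}+\partial_t\mathcal{B}_i)\|_{L^2_x}^2$, not $\|\partial^{\alpha'}\nabla\mathcal{C}\|_{L^2_x}^2$. But the \emph{momentum} conservation law (the second line of~\eqref{Conservation Laws}) says precisely
\[
\partial_i\mathcal{C}+\partial_t\mathcal{B}_i \;=\; -\tfrac{\lambda^{11}_0}{\lambda^{11}}\,\partial_i\mathcal{A}\;-\;\tfrac{1}{\lambda^{11}}\sum_j\partial_j\Big\langle\{I-P\}f,\sqrt{J}\,\tfrac{p_ip_j}{p^0}\Big\rangle,
\]
so the $\mathcal{C}$ contribution \emph{cancels exactly}: the quantity you extract is a linear combination of $\partial_i\mathcal{A}$ and microscopic data, already controlled by $I^{\alpha'}_a$ and the dissipation. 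Your $I^{\alpha'}_c$ therefore yields no independent control of $\nabla\mathcal{C}$, and the argument cannot close. A secondary symptom of the same defect is that your $I^{\alpha'}_c$ contains $\partial_t\mathcal{B}_i$ explicitly; taking $d/dt$ then produces $\partial_t^2\mathcal{B}_i$, which forces an extra layer of substitutions you have not accounted for.

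The paper avoids both issues by first applying $\partial_i$ to the fourth macroscopic equation and summing in~$i$, obtaining $-\Delta\partial^\alpha\mathcal{C}=\partial_t(\nabla\!\cdot\!\partial^\alpha\mathcal{B})+\sum_i\big(\partial_t\partial_i\partial^\alpha m_{ic}-\partial_i\partial^\alpha(l_{ic}+G_{ic})\big)$, and then pairing with $\partial^\alpha\mathcal{C}$. The interaction functional is
\[
I^\alpha_c(t)=\int_{\mathbb{T}^3}(\nabla\!\cdot\!\partial^\alpha\mathcal{B})\,\partial^\alpha\mathcal{C}\,dx+\sum_i\int_{\mathbb{T}^3}\partial_i\partial^\alpha m_{ic}\,\partial^\alpha\mathcal{C}\,dx,
\]
which contains only spatial quantities. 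The coercive term is genuinely $\|\nabla\partial^\alpha\mathcal{C}\|_{L^2_x}^2$, and the remainders involve only $\partial_t\partial^\alpha\mathcal{C}$ (controlled by the third line of~\eqref{Conservation Laws}) paired with $\nabla\!\cdot\!\partial^\alpha\mathcal{B}$ and $\partial_i\partial^\alpha m_{ic}$; the resulting $\|\nabla\!\cdot\!\partial^\alpha\mathcal{B}\|^2$ error is then absorbed by multiplying the $\mathcal{C}$ estimate by a small constant before adding it to the $\mathcal{B}$ estimate.
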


\begin{proof}  Since we have the expression $Pf=\left(\mathcal{A}+\mathcal{B}\cdot p+\mathcal{C}{p^0}\right) \sqrt{J}$ as in \eqref{Pfbasis}, we have that
	 $$
	\|P\partial^\alpha f(t)\|^2_{I^{\singS,\gamma}}\lesssim \|\partial^\alpha \mathcal{A}(t)\|^2_{L^2_x}+\|\partial^\alpha \mathcal{B}(t)\|^2_{L^2_x}+\|\partial^\alpha \mathcal{C}(t)\|^2_{L^2_x}.
	$$
	Thus, it suffices to prove the following estimate:
	\begin{multline}
	\label{8.19}
	\| \mathcal{A}(t)\|^2_{H^N_x}+\| \mathcal{B}(t)\|^2_{H^N_x}+\| \mathcal{C}(t)\|^2_{H^N_x}\\
	\lesssim \sum_{|\alpha|\leq N}\|\{I-P\}\partial^\alpha f(t)\|^2_{L^2_{\frac{\singS+\gamma}{2}}}+ M_0\sum_{|\alpha|\leq N}\|\partial^\alpha f(t)\|^2_{L^2_{\frac{\singS+\gamma}{2}}}+\frac{dI(t)}{dt}.
	\end{multline}
	Then note that the term $ M_0\sum_{|\alpha|\leq N}\|\partial^\alpha f(t)\|^2_{L^2_{\frac{\singS+\gamma}{2}}}$ on (RHS) of \eqref{8.19} can be treated by using
	\begin{align*}
	&\sum_{|\alpha|\leq N}\|\partial^\alpha f(t)\|^2_{L^2_{\frac{\singS+\gamma}{2}}}\lesssim  \sum_{|\alpha|\leq N}\|P\partial^\alpha f(t)\|^2_{L^2_{\frac{\singS+\gamma}{2}}}+ \sum_{|\alpha|\leq N}\|\{I-P\}\partial^\alpha f(t)\|^2_{L^2_{\frac{\singS+\gamma}{2}}}\\
	&\lesssim \| \mathcal{A}(t)\|^2_{H^N_x}+\| \mathcal{B}(t)\|^2_{H^N_x}+\| \mathcal{C}(t)\|^2_{H^N_x}+\sum_{|\alpha|\leq N}\|\{I-P\}\partial^\alpha f(t)\|^2_{L^2_{\frac{\singS+\gamma}{2}}},
	\end{align*}
	and the terms in $ \mathcal{A}$, $\mathcal{B}$, and $\mathcal{C}$ can be absorbed by the (LHS) of \eqref{8.19} for a sufficiently small $M_0>0$. Therefore, we obtain Theorem \ref{8.4} from \eqref{8.19}.
	
	In order to prove  (\ref{8.19}), we will estimate each of the $\partial^\alpha$ derivatives of $\mathcal{A}, \mathcal{B}, \mathcal{C}$ for $0<|\alpha|\leq N$ separately. Later,  we will use Poincar\'e inequality to estimate the $L^2$-norm of $\mathcal{A}$, $\mathcal{B}$ and $\mathcal{C}$ to finish the proof. 
	
	For the estimate for $\mathcal{A}$, we use the second equation in the system of macroscopic equations \eqref{macroscopic}$_2$ which tells $\partial_i \mathcal{A}=-\partial_tm_{ia}+l_{ia}+G_{ia}$. 
	We take $\partial_i\partial^\alpha$ onto this equation for $|\alpha|\leq N-1$ and sum over $i$ and obtain that
	 $$
	-\Delta\partial^\alpha \mathcal{A}= \sum_{i=1}^3 (\partial_t\partial_i \partial^\alpha m_{ia} -\partial_i\partial^\alpha(l_{ia}+G_{ia})). 
	$$
	We now multiply $\partial^\alpha \mathcal{A}$ and integrate over $\mathbb{T}^3$ to obtain
	\begin{align*}
	\|\nabla\partial^\alpha \mathcal{A}\|^2_{L^2_x}&\leq \|\partial^\alpha (l_{ia}+G_{ia})\|_{L^2_x}\|\nabla\partial^\alpha \mathcal{A}\|_{L^2_x}+\frac{d}{dt}\sum_{i=1}^3\int_ {\mathbb{T}^3} \partial_i\partial^\alpha m_{ia}\partial^\alpha \mathcal{A}(t,x) dx\\
	& \qquad\qquad\qquad\qquad\qquad\qquad-\sum_{i=1}^3\int_ {\mathbb{T}^3} \partial_i\partial^\alpha m_{ia}\partial_t\partial^\alpha \mathcal{A}(t,x) dx. 
	\end{align*} 
	We define the interaction functional 
\begin{equation}\label{interaction.a}
I^\alpha_a(t)=\sum_{i=1}^3\int_ {\mathbb{T}^3} \partial_i\partial^\alpha m_{ia}\partial^\alpha \mathcal{A}(t,x) dx.
\end{equation}
	For the last term, we use the first equation of the local conservation laws \eqref{Conservation Laws}$_1$ to obtain that
	 $$
	\int_ {\mathbb{T}^3} \sum_{i=1}^3|\partial_i\partial^\alpha m_{ia}\partial_t\partial^\alpha \mathcal{A}(t,x)| dx\leq \zeta \|\nabla\cdot \partial^\alpha \mathcal{B}\|^2_{L^2_x}+C_\zeta \|\{I-P\}\nabla\partial^\alpha f\|^2_{L^2_{\frac{\singS+\gamma}{2}}},
	$$
	for any $\zeta>0$, since $m_\mu$ are the coefficients of $\{I-P\}f$ with respect to the basis $\{e_k\}. $ Together with Lemma \ref{L8.6} and Lemma \ref{L8.7}, we obtain that 
	\begin{equation}
	\label{maina}
    \begin{split}
    &\sum_{|\alpha|\le N-1}	\left(\|\nabla \partial^\alpha \mathcal{A}\|^2_{L^2_x}-\zeta \|\nabla\cdot \partial^\alpha \mathcal{B}\|^2_{L^2_x}\right)\\
    &\lesssim C_\zeta \sum_{|\alpha'|\leq N} \|\{I-P\}\partial^{\alpha'} f\|^2_{L^2_{\frac{\singS+\gamma}{2}}}+ \sum_{|\alpha|\le N-1}\frac{dI^\alpha_a}{dt}+M_0\sum_{|\alpha'|\leq N}\|\partial^{\alpha'} f\|^2_{L^2_{\frac{\singS+\gamma}{2}}}.
	\end{split}
    \end{equation}
	This completes our main estimate for $\mathcal{A}$.
	
We now explain the estimate for $\mathcal{C}$, we use the fourth equation in the system of macroscopic equations \eqref{macroscopic}$_4$ which tells $\partial_i \mathcal{C}+\partial_t\mathcal{B}_i=-\partial_tm_{ic}+l_{ic}+G_{ic}$. 
	We take $\partial_i\partial^\alpha$ onto this equation for $|\alpha|\leq N-1$ and sum over $i$ and obtain that
	 $$
	-\Delta\partial^\alpha \mathcal{C}= \frac{d}{dt}(\nabla\cdot \partial^\alpha \mathcal{B})+\sum_{i=1}^3 (\partial_t\partial_i \partial^\alpha m_{ic} -\partial_i\partial^\alpha(l_{ic}+G_{ic})). 
	$$
	We now multiply $\partial^\alpha \mathcal{C}$ and integrate over $\mathbb{T}^3$ 
	to obtain
	\begin{align*}
	\|\nabla\partial^\alpha \mathcal{C}\|^2_{L^2_x}&
	\leq \frac{d}{dt}\int_{\mathbb{T}^3}(\nabla\cdot \partial^\alpha \mathcal{B})\partial^\alpha \mathcal{C}(t,x)dx
	-\int_{\mathbb{T}^3}(\nabla\cdot \partial^\alpha \mathcal{B})\partial_t\partial^\alpha \mathcal{C}(t,x)dx
	\\
	&+
	\sum_{i=1}^3\|\partial^\alpha (l_{ic}+G_{ic})\|_{L^2_x}\|\nabla\partial^\alpha \mathcal{C}\|_{L^2_x}+\frac{d}{dt}\sum_{i=1}^3\int_ {\mathbb{T}^3} \partial_i\partial^\alpha m_{ic}\partial^\alpha \mathcal{C}(t,x) dx
	\\
	& -\sum_{i=1}^3\int_ {\mathbb{T}^3} \partial_i\partial^\alpha m_{ic}\partial_t\partial^\alpha \mathcal{C}(t,x) dx. 
	\end{align*} 
	We define the interaction functional 
\begin{equation}\label{interaction.c}
I^\alpha_c(t)=\int_{\mathbb{T}^3}(\nabla\cdot \partial^\alpha \mathcal{B})\partial^\alpha \mathcal{C}(t,x)dx+\sum_{i=1}^3\int_ {\mathbb{T}^3} \partial_i\partial^\alpha m_{ic}\partial^\alpha \mathcal{C}(t,x) dx.
\end{equation}
	We also use the third equation of the local conservation laws  \eqref{Conservation Laws}$_3$ to obtain that
	$$
	\int_ {\mathbb{T}^3} \sum_{i=1}^3|\partial_i\partial^\alpha m_{ic}\partial_t\partial^\alpha \mathcal{C}(t,x)| dx
    \lesssim 
	\|\nabla\cdot \partial^\alpha \mathcal{B}\|^2_{L^2_x}+ \|\{I-P\}\nabla\partial^\alpha f\|^2_{L^2_{\frac{\singS+\gamma}{2}}}.
	$$
We use \eqref{Conservation Laws}$_3$ again to estimate 
\begin{equation}
    \int_{\mathbb{T}^3}
    \left| (\nabla\cdot \partial^\alpha \mathcal{B})\partial_t\partial^\alpha \mathcal{C}(t,x) \right| dx
    \lesssim 
    \|\nabla\cdot \partial^\alpha \mathcal{B}\|^2_{L^2_x}
    +
    \|\{I-P\}\nabla\partial^\alpha f\|^2_{L^2_{\frac{\singS+\gamma}{2}}}.
\end{equation}	
	Together with Lemma \ref{L8.6} and Lemma \ref{L8.7}, we obtain that 
	\begin{equation}
	\label{mainc}
    \begin{split}
	&\sum_{|\alpha|\le N-1}	\left(\|\nabla \partial^\alpha \mathcal{C}\|^2_{L^2_x}-\lambda \|\nabla\cdot \partial^\alpha \mathcal{B}\|^2_{L^2_x}\right)\\&\lesssim C_\zeta \sum_{|\alpha'|\leq N} \|\{I-P\}\partial^{\alpha'} f\|^2_{L^2_{\frac{\singS+\gamma}{2}}}+\sum_{|\alpha|\le N-1}	\frac{dI^\alpha_c}{dt}+M_0\sum_{|\alpha'|\leq N}\|\partial^{\alpha'} f\|^2_{L^2_{\frac{\singS+\gamma}{2}}},
	\end{split}
    \end{equation}
where above $\lambda>0$.
This is our main estimate for the $\mathcal{C}$ terms.
	
Next we estimate the terms involving $\mathcal{B}$, we use the last equation in the system of macroscopic equations \eqref{macroscopic}$_5$ which tells $(1-\delta_{ij})\partial_i\mathcal{B}_j+\partial_j\mathcal{B}_i=-\partial_tm_{ij}+l_{ij}+G_{ij}$. 
	Note that when $i=j$, we have 
	$$
	\partial_j\mathcal{B}_j=-\partial_tm_{jj}+l_{jj}+G_{jj}.
	$$
We take $\partial_j\partial^\alpha$ on \eqref{macroscopic}$_5$ for $|\alpha|\leq N-1$ and sum on $j$ to obtain
\begin{multline*}
\Delta \partial^\alpha\mathcal{B}_i
	    	=\sum_{j=1}^3\left(-\partial_j\partial^\alpha (1-\delta_{ij})\partial_i\mathcal{B}_j-\partial_j\partial^\alpha\partial_tm_{ij}+\partial_j\partial^\alpha l_{ij}+\partial_j\partial^\alpha G_{ij}\right)
	    	\\
	    	=-
	    	\sum_{j\neq i}\partial_j\partial^\alpha\partial_i\mathcal{B}_j
	    	+\sum_{j=1}^3\bigg(-\partial_j\partial^\alpha\partial_tm_{ij}+\partial_j\partial^\alpha l_{ij}+\partial_j\partial^\alpha G_{ij}\bigg).
\end{multline*}
	Then by using 
$\partial_i\partial_j\mathcal{B}_j=-\partial_i\partial_tm_{jj}+\partial_il_{jj}+\partial_iG_{jj},$ we have
\begin{multline*}
	    	\Delta \partial^\alpha\mathcal{B}_i
	=\sum_{j\neq i}\bigg(\partial^\alpha\partial_i\partial_tm_{jj}-\partial^\alpha\partial_il_{jj}-\partial^\alpha\partial_iG_{jj}\bigg)
	\\+\sum_{j=1}^3\bigg(-\partial_j\partial^\alpha\partial_tm_{ij}+\partial_j\partial^\alpha l_{ij}+\partial_j\partial^\alpha G_{ij}\bigg).
\end{multline*}
We now multiply $\partial^\alpha \mathcal{B}_i$ and integrate over $\mathbb{T}^3$ to obtain
\begin{multline*}
	\|\nabla\partial^\alpha \mathcal{B}_i\|^2_{L^2_x}
\lesssim 
\frac{d}{dt}\sum_{j=1}^3\int_{\mathbb{T}^3}\partial^\alpha(\partial_j m_{ij}-\partial_i m_{jj}(1-\delta_{ij}))\partial^\alpha \mathcal{B}_idx\\
	-\sum_{j=1}^3\int_{\mathbb{T}^3}\partial^\alpha( \partial_jm_{ij}-\partial_i m_{jj} (1-\delta_{ij}))\partial_t\partial^\alpha \mathcal{B}_i dx+
	\sum_{\mu\in D}\|\partial^\alpha (l_{\mu}+G_{\mu})\|_{L^2_x}. 
\end{multline*}
We define the interaction functional 
\begin{equation}\label{interaction.b}
I^\alpha_b(t)=\sum_{i=1}^{3}\sum_{j=1}^3\int_{\mathbb{T}^3}\partial^\alpha(\partial_j m_{ij}-\partial_i m_{jj}(1-\delta_{ij}))\partial^\alpha \mathcal{B}_idx.
\end{equation}
We also use the second equation of \eqref{Conservation Laws}$_2$ to obtain  for any $\zeta>0$ that
	\begin{align*}
	\sum_{i=1}^3\sum_{j=1}^3\int_ {\mathbb{T}^3}&|\partial^\alpha( \partial_jm_{ij}-\partial_i m_{jj}(1-\delta_{ij}))\partial_t\partial^\alpha \mathcal{B}_i(t,x)| dx\\
	&\leq \zeta( \|\nabla\cdot \partial^\alpha \mathcal{A}\|^2_{L^2_x}+\|\nabla\cdot \partial^\alpha \mathcal{C}\|^2_{L^2_x})+C_\zeta \|\{I-P\}\nabla\partial^\alpha f\|^2_{L^2_{\frac{\singS+\gamma}{2}}}.
	\end{align*}
	Together with Lemma \ref{L8.6} and Lemma \ref{L8.7}, we obtain for $\zeta'>0$ sufficiently small that 
	\begin{equation}
	\label{mainb}
	\begin{split}
&	\sum_{|\alpha|\le N-1}	\left(\|\nabla \partial^\alpha \mathcal{B}\|^2_{L^2_x}-\zeta' (\|\nabla\cdot \partial^\alpha \mathcal{A}\|^2_{L^2_x}+\|\nabla\cdot \partial^\alpha \mathcal{C}\|^2_{L^2_x})\right)
\\
    \lesssim& \ C_{\zeta'} \sum_{|\alpha'|\leq N} \|\{I-P\}\partial^{\alpha'} f\|^2_{L^2_{\frac{\singS+\gamma}{2}}}+\sum_{|\alpha|\le N-1}	\frac{dI^\alpha_b}{dt}+M_0\sum_{|\alpha'|\leq N}\|\partial^{\alpha'} f\|^2_{L^2_{\frac{\singS+\gamma}{2}}}.
	\end{split}
	\end{equation}
This is our main estimate for the 	$\mathcal{B}$ terms.
	
Now we first consider the lower bounds in  the estimates of \eqref{maina}, \eqref{mainc}, and \eqref{mainb}. 
We multiply the lower bound in \eqref{mainc} by a small constant $\epsilon>0$ and then add to it the lower bounds in \eqref{maina} and \eqref{mainb} to obtain
\begin{multline*}
    \sum_{|\alpha|\le N-1}	
    \left(\|\nabla \partial^\alpha \mathcal{A}\|^2_{L^2_x}
    +
    \|\nabla \partial^\alpha \mathcal{B}\|^2_{L^2_x}
    +
    \epsilon \|\nabla \partial^\alpha \mathcal{C}\|^2_{L^2_x}
    \right)
\\
    	-\sum_{|\alpha|\le N-1}	\left(\zeta \|\nabla\cdot \partial^\alpha \mathcal{B}\|^2_{L^2_x}+\zeta' (\|\nabla\cdot \partial^\alpha \mathcal{A}\|^2_{L^2_x}+\|\nabla\cdot \partial^\alpha \mathcal{C}\|^2_{L^2_x})
    	+
    	\epsilon\lambda \|\nabla\cdot \partial^\alpha \mathcal{B}\|^2_{L^2_x}\right)
    	\\
\approx 
    \sum_{|\alpha|\le N-1}	
    \left(\|\nabla \partial^\alpha \mathcal{A}\|^2_{L^2_x}
    +
    \|\nabla \partial^\alpha \mathcal{B}\|^2_{L^2_x}
    +
    \epsilon \|\nabla \partial^\alpha \mathcal{C}\|^2_{L^2_x}
    \right).
\end{multline*}	
The last line above is obtained by first choosing $\epsilon>0$ small, and second choosing $\zeta>0$ sufficiently small, and lastly choosing  $\zeta'>0$ sufficiently small.  Then  (\ref{maina}), (\ref{mainc}), and (\ref{mainb}) imply that 
	\begin{equation}
	\label{mainabc}
	\begin{split}
	\|\nabla \mathcal{A}\|^2_{H^{N-1}_x}+&\|\nabla \mathcal{B}\|^2_{H^{N-1}_x}+\|\nabla \mathcal{C}\|^2_{H^{N-1}_x}\\\lesssim &\sum_{|\alpha|\leq N} \|\{I-P\}\partial^\alpha f\|^2_{L^2_{\frac{\singS+\gamma}{2}}}+\frac{dI}{dt}
	+M_0\sum_{|\alpha|\leq N}\|\partial^\alpha f\|^2_{L^2_{\frac{\singS+\gamma}{2}}}.
	\end{split}	
	\end{equation}
On the other hand, with the Poincar\'e inequality and Lemma \ref{L8.5}, we obtain that 
	$$
	\|\mathcal{A}\|^2\lesssim \left(\|\nabla \mathcal{A}\|+\left|\int_{\mathbb{T}^3}\mathcal{A}(t,x) dx\right|\right)^2\approx \|\nabla \mathcal{A}\|^2.
	$$
	This same estimate holds for $\mathcal{B}$ and $\mathcal{C}$. Therefore, the inequality (\ref{8.19}) holds and this completes the proof for the theorem.
\end{proof}

We now use this coercive estimate to prove that the local solutions from Theorem \ref{local existence} will be global-in-time solutions by the standard continuity argument. 
We will also prove that the solutions have exponential decay in time for hard interactions \eqref{hard} and polynomial decay in time for soft interactions \eqref{soft}.

Before we go into the proof for the global existence, we would like to mention a coercive lower bound for the linearized Boltzmann collision operator $L$ which also gives the positivity of the operator:
\begin{lemma}
	\label{coercive L} Assume \eqref{angassumption}-\eqref{singS.defin} hold. 
	Then there is a constant $\delta>0$ such that 
	$$
	\langle Lf,f\rangle \geq \delta |\{I-P\}f|^2_{I^{\singS,\gamma}}.
	$$
\end{lemma}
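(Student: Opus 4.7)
The plan is to reduce the Dirichlet form to the microscopic component and then upgrade the ``coercivity up to a compact error'' in Lemma \ref{2.10} to a clean coercive bound via a standard contradiction/compactness argument exploiting the explicit null space $nl(L)$.

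First I would observe that by Lemma \ref{llinear}, $L$ is self-adjoint on $L^2(\mathbb{R}^3_p)$ and $L(Pf)=0$; hence
\begin{equation*}
\langle Lf,f\rangle = \langle L\{I-P\}f,\{I-P\}f\rangle.
\end{equation*}
Setting $g\eqdef \{I-P\}f$, it therefore suffices to show that there exists $\delta>0$ such that
\begin{equation*}
\langle Lg,g\rangle \;\geq\; \delta\, |g|^2_{I^{\singS,\gamma}}, \qquad \text{for all } g\in (nl(L))^{\perp}\cap I^{\singS,\gamma}.
\end{equation*}
By Lemma \ref{2.10} applied with $l=0$, we already have the weaker inequality
\begin{equation*}
\langle Lg,g\rangle \;\geq\; c_0\, |g|^2_{I^{\singS,\gamma}} - C\, |g|^2_{L^2(B_C)},
\end{equation*}
for some constants $c_0,C>0$, so the only remaining task is to absorb the compact error $|g|^2_{L^2(B_C)}$ whenever $g\perp nl(L)$.

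I would argue by contradiction. Suppose no such $\delta$ exists. Then there is a sequence $\{g_n\}\subset (nl(L))^{\perp}\cap I^{\singS,\gamma}$ with $|g_n|_{I^{\singS,\gamma}}=1$ and $\langle Lg_n,g_n\rangle\to 0$. The weaker bound above then forces $|g_n|^2_{L^2(B_C)}\gtrsim c_0/C$ for all large $n$. Because $I^{\singS,\gamma}$ controls both $L^2_{(\singS+\gamma)/2}$ and an isotropic fractional seminorm of order $\gamma/2$ (see \eqref{fractional}), the unit ball of $I^{\singS,\gamma}$ embeds compactly into $L^2(B_C)$ by the classical Rellich--Kondrachov theorem for fractional Sobolev spaces. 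Passing to a subsequence, $g_n\rightharpoonup g_\infty$ weakly in $I^{\singS,\gamma}$ and strongly in $L^2(B_C)$; in particular $|g_\infty|^2_{L^2(B_C)}>0$, so $g_\infty\not\equiv 0$. The constraint $g_n\perp nl(L)$ in $L^2(\mathbb{R}^3_p)$ is preserved by this weak limit (test against the explicit Schwartz basis of $nl(L)$), so $g_\infty\in (nl(L))^{\perp}$. Finally, $\langle L\cdot,\cdot\rangle$ is a nonnegative quadratic form (Lemma \ref{llinear}) and by Lemma \ref{Nfupperboundlemma} together with Lemma \ref{Lemma2} it is continuous with respect to weak convergence in $I^{\singS,\gamma}$ in the sense that it is weakly lower semicontinuous; therefore
\begin{equation*}
0 \;\leq\; \langle Lg_\infty,g_\infty\rangle \;\leq\; \liminf_{n\to\infty} \langle Lg_n,g_n\rangle \;=\; 0.
\end{equation*}
By Lemma \ref{llinear} this forces $g_\infty\in nl(L)$; combined with $g_\infty\in (nl(L))^{\perp}$ we obtain $g_\infty=0$, contradicting $|g_\infty|^2_{L^2(B_C)}>0$.

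The main obstacle is the compactness step: one must verify that the unit ball of $I^{\singS,\gamma}$ really is precompact in $L^2(B_C)$ and that the Dirichlet form $\langle L\cdot,\cdot\rangle$ is weakly lower semicontinuous along such a sequence. The first follows from the fractional derivative contribution in \eqref{fractional} together with the standard relativistic weight $(p^0)^{(\singS+\gamma)/2}$ preventing loss of mass at infinity; the second is handled by splitting $L=\mathcal{N}+\mathcal{K}$, using the nonnegativity $\langle \mathcal{N}g,g\rangle\geq 0$ with weak lower semicontinuity of the seminorm $|\cdot|_{\mathtt{B}}$, and using strong $L^2(B_C)$ convergence together with the compactness estimate of Lemma \ref{Lemma2} to pass to the limit in the $\mathcal{K}$ contribution.
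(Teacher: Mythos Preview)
Your proposal is correct and follows essentially the same contradiction/compactness argument as the paper: normalize a sequence with vanishing Dirichlet form, use the fractional Rellich--Kondrachov compactness to extract a limit, split $L=\mathcal{N}+\mathcal{K}$ to pass to the limit (weak lower semicontinuity of $\langle \mathcal{N}\cdot,\cdot\rangle$, convergence of $\langle \mathcal{K}\cdot,\cdot\rangle$ via Lemma \ref{Lemma2} and strong $L^2(B_C)$ convergence), and obtain a nonzero element of $nl(L)\cap nl(L)^{\perp}$. The only cosmetic difference is that the paper normalizes in the equivalent $\mathcal{N}$-norm and derives the contradiction from $|g^0|_{\mathcal{N}}=1$ rather than from a lower bound on $|g_\infty|_{L^2(B_C)}$ coming from Lemma \ref{2.10}.
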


\begin{proof}	
We give the standard proof of Lemma \ref{coercive L} using the method of contradiction. 
In this proof we denote $|g|_{\mathcal{N}}^2 \eqdef \langle \mathcal{N}g,g\rangle$ and $\langle g,g\rangle_{\mathcal{N}} \eqdef \langle \mathcal{N}g,g\rangle$ recalling \eqref{mainPartNorm.Nf}.
Assuming the lemma is false, we obtain a sequence of normalized functions $\{g^n(p)\}_{n \ge 1}$ satisfying that 
$|g^n|_{\mathcal{N}} =1$ for all $n\ge 1$.  By Lemma \ref{llinear} we also have
\begin{equation}\label{zero.lemma}
    \int_{\mathbb{R}^3}g^nJ^{1/2}(p)dp =\int_{\mathbb{R}^3} g^n p_i J^{1/2}(p)dp=\int_{
\mathbb{R}^3}g^n p^0 J^{1/2}(p)dp=0, 
\end{equation}
and for some uniform constant $C>0$ and $\forall n\ge 1$ we have
\begin{equation}\label{l1n}
\langle Lg^n,g^n\rangle =\langle \mathcal{N} g^n,g^n\rangle +\langle \mathcal{K} g^n,g^n\rangle
\le C/n.  
\end{equation}
We denote the weak limit, with respect to the inner product $\langle \cdot
,\cdot \rangle_{\mathcal{N}}$, of $g^n$ (up to a subsequence) by $g^{0}$. Lower semi-continuity of the weak limit implies
$
|g^0|_{\mathcal{N}} \le 1. 
$

From \eqref{defK} and \eqref{defN} we have 
\begin{equation*}
    \langle Lg^n,g^n\rangle 
=
|g^n|_{\mathcal{N}}^2
+\langle \mathcal{K}g^n,g^n\rangle. 
\end{equation*}
By Lemma \ref{Lemma2} and \eqref{compactest}, for any small $\epsilon>0$, we have
\begin{equation}\notag
|\langle \mathcal{K}g^n,g^n\rangle | 
\leq 
\epsilon |g^n|^2_{\mathcal{N}}
+
C_\epsilon |g^n|^2_{L^2(B_{C_\epsilon})}.
\end{equation}
Here we use Lemma \ref{Nfupperboundlemma} and Lemma \ref{Nfcoercivitylemma} to bound 
$\langle \mathcal{N}g,g\rangle \approx |g |^2_{I^{\singS,\gamma}}$.  Now by the fractional-order Rellich–Kondrachov Theorem we have (up to taking a sub-sequence) that $|g^n-g^0|^2_{L^2(B_{C_\epsilon})} \to 0$ as $n\to\infty$.  By first choosing $\epsilon>0$ small and then letting $n\rightarrow \infty$, we
conclude that $\langle \mathcal{K} g^n,g^n\rangle \rightarrow \langle \mathcal{K} g^0,g^0\rangle$.

Letting $n\rightarrow \infty $ in (\ref{l1n}), we have shown that
$$
0=1+\langle \mathcal{K} g^0,g^0\rangle.
$$
Equivalently 
\[
0=\left(1-|g^0|_{\mathcal{N}}^2\right)+\langle Lg^0,g^0\rangle . 
\]
Now both terms are non-negative by Lemma \ref{llinear}.  Hence $|g^0|_{\mathcal{N}}^2=1$ and $\langle
Lg^0,g^0\rangle =0.$  
Again using Lemma \ref{llinear} we have $g^0=P g^0$.  Alternatively, letting $n\rightarrow \infty$ in (\ref{zero.lemma}) we deduce that $g^0=\left( I-P\right)g^0$ or $g^0\equiv 0$;  this contradicts $|g^0|_{\mathcal{N}}^2=1$.  
\end{proof}

Now, we define the dissipation rate $\mathcal{D}_l  $ as
$$
\mathcal{D}_l  =\sum_{|\alpha|\leq N} \|\partial^\alpha f(t)\|^2_{I^{\singS,\gamma}_l  }. 
$$
We will use the energy functional $\mathcal{E}_l  (t)$ to be a high-order norm which satisfies 
\begin{equation}
\label{energy}
\mathcal{E}_l  (t)\approx \sum_{|\alpha|\leq N} \| w^l \partial^\alpha f(t)\|^2_{L^2(\mathbb{T}^3\times \rth)}.
\end{equation}
This functional will be precisely defined during the proof.
Then, we would like to set up the following energy inequality:
\begin{equation}
	\label{weightE}
\frac{d}{dt}\mathcal{E}_l  (t)+\mathcal{D}_l  (t)\leq C\sqrt{\mathcal{E}_l  (t)}\mathcal{D}_l  (t).
\end{equation}  
We will prove this energy inequality and use this to show the global existence. 

\begin{proof}[Proof of Theorem \ref{MAIN}] 
We first prove the energy inequality for the $l=0$ case. We denote $\mathcal{D}\eqdef \mathcal{D}_0$ and $\mathcal{E}\eqdef \mathcal{E}_0$. By the definitions of the interaction functionals defined in Theorem \ref{8.4}, for any $C'>0$ there exists $C''=C''(C')>0$ sufficiently large such that 
$$
\|f(t)\|^2_{L^2_pH^N_x}\leq (C''+1)\|f(t)\|^2_{L^2_pH^N_x}-C'I(t)\lesssim \|f(t)\|^2_{L^2_pH^N_x}.
$$
We remark that $C''$ doesn't depend on $f(t,x,p)$ but only on $C'$ and the structure of $I$. Here we define the energy functional $\mathcal{E}(t)$ as
$$
\mathcal{E}(t)=(C''+1)\|f(t)\|^2_{L^2_pH^N_x}-C'I(t).
$$
Then, the above inequalities show that the definition of $\mathcal{E}$ satisfies  (\ref{energy}).
	
Recall the local existence Theorem \ref{local existence}, and Theorem \ref{8.4}, and choose $M_0\leq 1$ so that both theorems hold. We choose $0<M_1\leq \frac{M_0}{2}$ and consider initial data $\mathcal{E}(0)$ so that
$$
\mathcal{E}(0)\leq M_1<M_0.
$$
From the local existence theorem, we define $T>0$ so that
$$
T=\sup\{t\geq 0| \mathcal{E}(t)\leq 2M_1\}.
$$
	By taking the spatial derivative $\partial^\alpha$ onto the linearized relativistic Boltzmann equation (\ref{Linearized B}), multiplying by $\partial^\alpha f$ and integrating over $(x,p)$, and summing over $\alpha$, we obtain
	\begin{equation}
	\label{last}
	\frac{1}{2}\frac{d}{dt}\|f(t)\|^2_{L^2_pH^N_x}+\sum_{|\alpha|\leq N}(L\partial^\alpha f,\partial^\alpha f)=\sum_{|\alpha|\leq N}(\partial^\alpha\Gamma(f,f),\partial^\alpha f).
	\end{equation} 
	By the estimates from Lemma \ref{Lemma1}, we have
	$$
	\sum_{|\alpha|\leq N}(\partial^\alpha\Gamma(f,f),\partial^\alpha f)\lesssim \sqrt{\mathcal{E}}\mathcal{D}.
$$
	Since our choice of $M_1$ satisfies $\mathcal{E}(t)\leq 2M_1\leq M_0,$ we see that the assumption for Theorem \ref{8.4} is satisfied.
	Then, Theorem \ref{8.4} and Lemma \ref{coercive L} tell us that
	\begin{align*}
	\sum_{|\alpha|\leq N}(L\partial^\alpha f,\partial^\alpha f)&\geq \delta \|\{I-P\}f\|^2_{I^{\singS,\gamma}}\\
	&\geq \frac{\delta}{2}\|\{I-P\}f\|^2_{I^{\singS,\gamma}}+\frac{\delta\delta'}{2}\sum_ {|\alpha|\leq N}\|P\partial^\alpha f\|^2_{I^{\singS,\gamma}}(t)-\frac{\delta C}{2}\frac{dI(t)}{dt}.
	\end{align*}
	Let $\delta''=\min\{\frac{\delta}{2},\frac{\delta\delta'}{2}\}$ and let $C'=\delta C$. Then, we have
	$$
	\frac{1}{2}\frac{d}{dt}\left( \|f(t)\|^2_{L^2_pH^N_x}-C'I(t)\right)+\delta''\mathcal{D}\lesssim \sqrt{\mathcal{E}}\mathcal{D}.
	$$ 
	We multiply  (\ref{last}) by $C''$ 
	and add this onto the last inequality above using the positivity of $L$ to conclude that
	$$
	\frac{d\mathcal{E}(t)}{dt}+\delta''\mathcal{D}(t)\leq C\sqrt{\mathcal{E}(t)}\mathcal{D}(t),
$$
	for some $C>0$. 
	Suppose $M_1=\min\{\frac{\delta''^2}{8C^2},\frac{M_0}{2}\}.$ Then, we have
	\begin{equation}\notag
	\frac{d\mathcal{E}(t)}{dt}+\delta''\mathcal{D}(t)\leq C\sqrt{\mathcal{E}(t)}\mathcal{D}(t)\leq C\sqrt{2M_1}\mathcal{D}(t)\leq \frac{\delta''}{2}\mathcal{D}(t).
	\end{equation}
	Now, we integrate over $t$ for $0\leq t\leq \tau<T$ and obtain
	$$
	\mathcal{E}(\tau)+\frac{\delta''}{2}\int_{0}^{\tau}\mathcal{D}(t)dt\leq \mathcal{E}(0)\leq M_1< 2M_1.
$$	
	Since $\mathcal{E}(\tau)$ is continuous in $\tau$, $\mathcal{E}(\tau)\leq M_1$ if $T<\infty$. This contradicts the definition of $T$ and hence $T=\infty$. This proves the global existence.
	
	If we have $l>0$, we recall Lemma \ref{2.10} and deduce that for some $C>0$ that
	\begin{equation}
	\label{lastlower}
	\left(w^{2l}L\partial^\alpha f,\partial^\alpha f\right)\gtrsim \frac{1}{2}\|\partial^\alpha f\|^2_{I^{\singS,\gamma}_l}-C\|\partial^\alpha f\|^2_{L^2(B_C)}.
	\end{equation}
	We also take the $\partial^\alpha $ derivative on the linearized Boltzmann equation (\ref{Linearized B}), take the inner product with $w^{2l}\partial^\alpha f$, integrate both sides, and use Lemma \ref{Lemma1} to obtain that
	$$
	\sum_{|\alpha|\leq N}\left(\frac{1}{2}\frac{d}{dt}\|w^l \partial^\alpha f(t)\|^2_{L^2}+(w^{2l}L\partial^\alpha f,\partial^\alpha f)\right)\lesssim \sqrt{\mathcal{E}_l(t)}\mathcal{D}_l(t).
$$
	Then we apply the lower bound estimate (\ref{lastlower}). Finally, we add the energy inequality for $l=0$ case multiplied by sufficiently large constant $C'>0$ to obtain
\begin{equation}\label{lastlast}
	\frac{d}{dt}\mathcal{E}_l(t)+\mathcal{D}_l(t)\lesssim \sqrt{\mathcal{E}_l(t)}\mathcal{D}_l(t),
\end{equation}	where we define the energy functional $\mathcal{E}_l$ as $$\mathcal{E}_l(t)\eqdef \frac{1}{2}\sum_{|\alpha|\leq N}\|w^{l}\partial^\alpha f(t)\|^2_{L^2}+C'\mathcal{E}(t).$$ Thus, we obtain the energy inequality for the case $l>0$. 	In the hard-interaction case with \eqref{hard}, note that $\mathcal{E}_l(t)\lesssim \mathcal{D}_l(t)$. This and the equation (\ref{lastlast}) show the exponential time decay for $\mathcal{E}_l(t) \lesssim \mathcal{E}_l(0)$ sufficiently small.
	
On the other hand, in the soft-interaction case \eqref{soft} we do not have $\mathcal{E}_l(t)\lesssim \mathcal{D}_l(t)$ because $\singS+\gamma<0$. Instead, to obtain the rapid polynomial decay we use the interplation technique from \cite{SG-CPDE}.  The inequality that we do have is
$\mathcal{E}_{l+(\singS+\gamma)/4}(t)\lesssim \mathcal{D}_l(t)$.  Using that inequality, we perform the following interpolation for fixed $l\geq |\singS+\gamma|/4$ and $m>0$
\begin{multline} \label{key soft}
\mathcal{E}_l(t)
\lesssim 
\mathcal{E}^{\frac{2m}{2m+|\singS+\gamma|}}_{l+(\singS+\gamma)/4}(t)
\mathcal{E}^{\frac{|\singS+\gamma|}{2m+|\singS+\gamma|}}_{l+m}(t)
\lesssim 
\mathcal{D}^{\frac{2m}{2m+|\singS+\gamma|}}_{l}(t)
\mathcal{E}^{\frac{|\singS+\gamma|}{2m+|\singS+\gamma|}}_{l+m}(t)
\\
\lesssim 
\mathcal{D}^{\frac{2m}{2m+|\singS+\gamma|}}_{l}(t)
\mathcal{E}^{\frac{|\singS+\gamma|}{2m+|\singS+\gamma|}}_{l+m}(0).
\end{multline}
Thus from \eqref{lastlast} we have 
$$ 
\frac{d}{dt} \mathcal{E}_l(t)
+
C_{l,m}
\mathcal{E}^{-\frac{|\singS+\gamma|}{2m}}_{l+m}(0)
\mathcal{E}^{\frac{2m+|\singS+\gamma|}{2m}}_l(t)
\leq 0
$$
for some $C_{l,m}>0$. Thus we have 
$$
\frac{d\left(\mathcal{E}^{-\frac{|\singS+\gamma|}{2m}}_l(t)\right)}{dt}
\geq 
C_{l,m}
\frac{|\singS+\gamma|}{2m}
\mathcal{E}^{-\frac{|\singS+\gamma|}{2m}}_{l+m}(0).
$$	 
By integrating over $[0,t]$ we obtain  
$$
\mathcal{E}^{-\frac{|\singS+\gamma|}{2m}}_l(t)
\geq 
\mathcal{E}^{-\frac{|\singS+\gamma|}{2m}}_l(0)
+
t C_{l,m}
\frac{|\singS+\gamma|}{2m}
\mathcal{E}^{-\frac{|\singS+\gamma|}{2m}}_{l+m}(0).
$$ 
Now we use that $\mathcal{E}_l(0)\lesssim \mathcal{E}_{l+m}(0)$ to conclude the polynomial decay for the soft potentials as in Theorem \ref{MAIN}.  This concludes the proof of our main theorem.
\end{proof}

  In the following section we will establish the relativistic Carleman representation.

\section{Relativistic Carleman representation}\label{CarlemanAppendix}

In this Section we will introduce the relativistic Carleman representation 
for the gain and loss terms which have arisen many times throughout this paper.    We will introduce two Carleman representations in \secref{CarlemanAppendix.Carleman} and \secref{app.dual.rep} that are not the same. The one in \secref{CarlemanAppendix.Carleman} is based on the reduction of the space $\mathbb{R}^3_q\times \mathbb{R}^3_{q'}$ onto the hyperplane $$E^q_{p'-p}=\{q\in\mathbb{R}^3:(p'^\mu-p^\mu)(p_\mu+q_\mu)=0\}$$ via the evaluation of the delta function of the energy-momentum conservation laws.   On the other hand, the Carleman representation that we introduce in \secref{app.dual.rep} is based on the derivation of Hilbert-Schmidt operator via taking the specific choice of the Lorentz transformation \eqref{eq.LT} and this derivation is similar to those introduced in \cite[Appendix A]{MR2728733}. Each derivation has their own advantage; the former one in  \secref{CarlemanAppendix.Carleman} is more appropriate for the case when the unknowns are written in the variables of $p$ and $p'$ only (c.f., \eqref{I2.eta.int} and \eqref{claimc2}), whereas the latter one in \secref{app.dual.rep} is more explicit and is powerful for a general situation especially when we need a dual cancellation estimate (c.f. Proposition \ref{prop:cancellation2}).  
In  \secref{CarlemanAppendix.Carleman} we will prove Lemma \ref{lemma.reduction} which allows us to integrate over the surface of the collisional geometry \eqref{conservation}.  Then in \secref{app.dual.rep} we will prove Lemma \ref{transformation.Lemma.appendix} which allows us to present the dual representation \eqref{dual4} for the trilinear form \mbox{$\langle w^{2l} \Gamma(f,h),\eta\rangle$}  from \eqref{eqn.trilinear.form} and more generally for \eqref{original eq}.

\subsection{Carleman dual representation}\label{CarlemanAppendix.Carleman}

We consider the collision integral from \eqref{omegaint}.
The purpose of this subsection will be to prove Lemma \ref{lemma.carleman}.   By Lemma \ref{lemma.reduction} the integral   \eqref{omegaint} can be written in the following form:
\begin{equation}
\label{original eq}
\int_{\mathbb{R}^3}\frac{dp}{{p^0}}\int_{\mathbb{R}^3}\frac{dq}{{q^0}}\int_{\mathbb{R}^3}\frac{dq'\ }{{q'^0}}\int_{\mathbb{R}^3}
\frac{dp'\ }{{p'^0}}s\sigma(g,\theta)\delta^{(4)}(p'^\mu+q'^\mu-p^\mu-q^\mu)G(p,q,p'),
\end{equation}
where $G$ can be defined to suitably represent \eqref{omegaint}.  More generally we will assume that the function $G$ has a sufficient vanishing condition so that the integral in \eqref{original eq} is well-defined.  We will prove that we can write the integral \eqref{original eq} as one on the set $\mathbb{R}^3\times\mathbb{R}^3\times E^{q}_{p'-p}$ where $E^{q}_{p'-p}$ is the hyperplane 
$$
E^{q}_{p'-p}=\{q\in\mathbb{R}^3:(p'^\mu-p^\mu)(p_\mu+q_\mu)=0\}.
$$
This will be the main result of this subsection.

To this end, we now use Lemma \ref{7.5ofCMP} to rewrite  (\ref{original eq}) as 
$$
\int_{\mathbb{R}^3}\frac{dp}{{p^0}}\int_{\mathbb{R}^3}\frac{dp'}{{p'^0}}B(p,q,p'),
$$
where $B=B(p,q,p')$ is defined as
\begin{multline*}
B =\int_{\mathbb{R}^3}\frac{dq\ }{{q^0}}\int_{\mathbb{R}^3}\frac{dq'\ }{{q'^0}}s\sigma(g,\theta)\delta^{(4)}(p'^\mu+q'^\mu-p^\mu-q^\mu)G(p,q,p')\\
 =\int_{\mathbb{R}^4\times\mathbb{R}^4}d\Theta(q^\mu,q'^\mu)s\sigma(g,\theta)\delta^{(4)}(p'^\mu+q'^\mu-p^\mu-q^\mu)G(p^\mu,q^\mu,p'^\mu)  ,
\end{multline*}
with as in 
$$d\Theta(q^\mu,q'^\mu)\eqdef dq^\mu dq'^\mu u({q'^0})u({q^0})\delta(s-g^2-4)\delta((q^\mu-q'^\mu)(q^\mu+q'^\mu)),$$ and $u(x)$ is defined in \eqref{def.u}. 

Next we apply the following change of variable
$$
\bar{q}^\mu=q'^\mu-q^\mu.
$$
Then with this change of variable the integral becomes
$$
B=\int_{\mathbb{R}^4\times\mathbb{R}^4}d\Theta(\bar{q}^\mu,q^\mu)s\sigma(g,\theta)\delta^{(4)}(p'^\mu+\bar{q}^\mu-p^\mu)G(p^\mu,q^\mu,p'^\mu) , 
$$
where $$d\Theta(\bar{q}^\mu,q^\mu)\eqdef dq^\mu d\bar{q}^\mu u(\bar{q}^0+{q^0})u({q^0})\delta(s-g^2-4)\delta(\bar{q}^\mu(2q^\mu+\bar{q}^\mu)).$$
This change of variables gives us the Jacobian $= 1$. 
Finally we evaluate the delta function, $\delta^{(4)}$, to obtain
$$
B=\int_{\mathbb{R}^4}d\Theta(q^\mu)s\sigma(g,\theta)G(p^\mu,q^\mu,p'^\mu)  ,
$$
where we are now integrating over the four vector $q^\mu$ and
$$d\Theta(q^\mu)=dq^\mu u({p^0}-{p'^0}+{q^0})u({q^0})\delta(s-g^2-4)\delta((p^\mu-p'^\mu)(2q_\mu+p_\mu-p'_\mu)).$$
Here, we note that 
\begin{multline*} \int_{\mathbb{R}} dq^0 u(q^0) \delta(s-g^2 -4)\\
=\int_{\mathbb{R}} dq^0 u(q^0) \delta(-(p^\mu+q^\mu)(p_\mu+q_\mu)-(p^\mu-q^\mu)(p_\mu-q_\mu) -4)\\
=\int_{\mathbb{R}} dq^0 u(q^0) \delta(-(p^\mu+q^\mu)(p_\mu+q_\mu)-(p^\mu-q^\mu)(p_\mu-q_\mu) -4)\\
=\int_{\mathbb{R}} dq^0 u(q^0) \delta(2-2q^\mu q_\mu -4)\\
=\int_{\mathbb{R}} dq^0 u(q^0) \delta(2(q^0-\sqrt{1+|q|^2})(q^0+\sqrt{1+|q|^2}))\\=\frac{1}{4}\int_{\mathbb{R}} \frac{dq^0}{\sqrt{1+|q|^2}} u(q^0) \left(\delta(q^0-\sqrt{1+|q|^2})+\delta(q^0+\sqrt{1+|q|^2})\right)\\
=\frac{1}{4}\int_{\mathbb{R}} \frac{dq^0}{\sqrt{1+|q|^2}} u(q^0) \delta(q^0-\sqrt{1+|q|^2}).
\end{multline*}
We thus conclude that the integral is given by 
\begin{equation}
\label{E2}
B=\int_{E^{q}_{p'-p}} \frac{d\pi_{q}}{8\bar{g}{q^0}}s\sigma(g,\theta)G(p,q,p'),
\end{equation}
where  
$d\pi_{q}=dq\ u({p^0}+q^0-{p'^0})\delta\left(\frac{\bar{g}}{2}+\frac{q^\mu(p_\mu-p'_\mu)}{\bar{g}}\right)$ with $q^0\eqdef \sqrt{1+|q|^2}$. 
This is a $2$-dimensional surface measure on the hypersurface $E^{q}_{p'-p}$ in $\mathbb{R}^3$.

\subsection{Dual representation for a trilinear term}\label{app.dual.rep}
In this section we will prove Lemma \ref{transformation.Lemma.appendix}.  
For concreteness, we focus on the derivation of the Carleman dual representation of the trilinear term 
\mbox{$\langle w^{2l} \Gamma(f,h),\eta\rangle$}.  We explain how to generalize to the full proof of Lemma \ref{transformation.Lemma.appendix} at the end.  

After applying the pre-post change of variables, as in 
\eqref{prepost.change},
to the $T^l_+$ part of 
\eqref{eqn.trilinear.form},
then this term is given by
\begin{multline}\label{eq.Igainloss1}
I=\langle w^{2l}\Gamma(f,h),\eta\rangle
=\int_\rth dp \int_\rth dq \int_{\mathbb{S}^2}d\omega \ v_{\text{\o}} \sigma(g,\theta)f(q)h(p)\\\times\left(w^{2l}(p')\eta(p')\sqrt{J(q')}-w^{2l}(p)\eta(p)\sqrt{J(q)}\right) \eqdef I_{gain}-I_{loss}.
\end{multline}
We initially suppose that $\int_{\mathbb{S}^2} d\omega\   |\sigma_0(\cos\theta)| <\infty$ and that
$$
\int_{\mathbb{S}^2} d\omega\   \sigma_0(\cos\theta)=0.
$$
Then, under that condition, the loss term vanishes $I_{loss}=0$ and we obtain
\begin{equation}\notag
I=I_{gain}=\int_\rth dp \int_\rth dq \int_{\mathbb{S}^2}d\omega \ v_{\text{\o}} \sigma(g,\theta)f(q)h(p)w^{2l}(p')\eta(p')\sqrt{J(q')}.
\end{equation}
By applying Lemma \ref{lemma.reduction}
we obtain another representation of $I$:
\begin{multline}\label{Igainrepresentation}
I= \int_{\rth}\frac{dp}{p^0}\int_{\rth}\frac{dq}{q^0}\int_{\rth}\frac{dp'}{p'^0}\int_{\rth}\frac{dq'}{q'^0}s\sigma(g,\theta)\delta^{(4)}(p'^\mu+q'^\mu-p^\mu-q^\mu)\\\times f(q)h(p)w^{2l}(p')\eta(p')\sqrt{J(q')}.
\end{multline}
Here from \eqref{g}, \eqref{FREQ:s.ge.g2}, \eqref{gbar}, and \eqref{gtilde} we have $g=g(p^\mu,q^\mu)$, $s=g^2+4$, $\bar{g}\eqdef g(p^\mu,p'^\mu)=g(q^\mu,q'^\mu)$, and $\tilde{g} = g(p^\mu,q'^\mu)$.   Also from \eqref{triangle.id} and \eqref{bargoverg} we have
$$
\cos\theta=2\frac{\tilde{g}^2}{g^2}-1.
$$
We further {\it claim} that 
\begin{equation}\label{gg}
g^2=\tilde{g}^2-\frac{1}{2}(p^\mu+q'^\mu)(p'_\mu+q_\mu-p_\mu-q'_\mu).
\end{equation} 
Recall that $\tilde{s}\eqdef \tilde{g}^2+4$. Then, using \eqref{g} and \eqref{s}, (\ref{gg}) is  equivalent to
\begin{multline*}
g^2=\tilde{g}^2-\frac{1}{2}\tilde{s}-\frac{1}{2}(p^\mu+q'^\mu)(p'_\mu+q_\mu)
=\frac{1}{2}\tilde{g}^2-2-\frac{1}{2}(p^\mu+q'^\mu)(p'_\mu+q_\mu)\\
=\frac{1}{2}\tilde{g}^2+g^2+2p^\mu q_\mu-\frac{1}{2}(p^\mu+q'^\mu)(p'_\mu+q_\mu).
\end{multline*}
Thus we prove (\ref{gg}) by showing that
$$\frac{1}{2}\tilde{g}^2+2p^\mu q_\mu-\frac{1}{2}(p^\mu+q'^\mu)(p'_\mu+q_\mu)=0.$$
Expanding the left-hand side of this equation, we obtain
$$
-p^\mu q'_\mu-1+2p^\mu q_\mu-\frac{1}{2}p^\mu p'_\mu-\frac{1}{2}q'^\mu p'_\mu-\frac{1}{2}p^\mu q_\mu-\frac{1}{2}q'^\mu q_\mu.$$
 Therefore, using \eqref{eq.pqp'q'} we obtain $$
-1+p^\mu q_\mu-\frac{1}{2}p^\mu p'_\mu-\frac{1}{2}p^\mu q'_\mu-\frac{1}{2}p'^\mu q_\mu-\frac{1}{2}q'^\mu q_\mu,$$ 
which by \eqref{conservation} is equal to
$$-1+p^\mu q_\mu-\frac{1}{2}(p^\mu+q^\mu)(p'_\mu+q'_\mu)=-1+p^\mu q_\mu+\frac{1}{2}s=0.$$ This finishes the proof of the  {\it claim} \eqref{gg}.

\begin{remark}\label{comparison.gg}
Combining \eqref{gg} and \eqref{triangle.id}, we see that  $\bar{g}^2$ can be represented as
$$\bar{g}^2=-\frac{1}{2}(p^\mu+q'^\mu)(p'_\mu+q_\mu-p_\mu-q'_\mu).$$ 
In the rest of this section we will use this representation and follow the formula for $\bar{g}^2$ as we perform the changes of variables below.
\end{remark}

Then exchanging $p$ and $p'$ in \eqref{Igainrepresentation}, we have
\begin{multline*}I= \int_{\rth}\frac{dp'}{p'^0}\int_{\rth}\frac{dq}{q^0}\int_{\rth}\frac{dp}{p^0}\int_{\rth}\frac{dq'}{q'^0}\tilde{s}\sigma(\tilde{g},\theta')\delta^{(4)}(p^\mu+q'^\mu-p'^\mu-q^\mu)\\\times f(q)h(p')w^{2l}(p)\eta(p)\sqrt{J(q')},
\end{multline*}
where the angle $\theta'$ is now redefined as
\begin{equation}\label{angle.theta.prime}
    \cos\theta'\eqdef 2\frac{g^2}{\tilde{g}^2}-1,
\end{equation}
and from Remark \ref{comparison.gg} and \eqref{gg} 
we have
\begin{equation}
\notag	\tilde{g}^2=g^2+\bar{g}^2,
\quad 
\bar{g}^2=-\frac{1}{2}(p'^\mu+q'^\mu)(p_\mu+q_\mu-p'_\mu-q'_\mu).
\end{equation} 
And we further use $\tilde{s}\eqdef \tilde{g}^2+4$.  As we change variables below we will refer to the transformed $\bar{g}$ as $g_L$.

We now define the functional $i(p,q)$ as
\begin{equation}\label{i}
i(p,q)\eqdef \frac{1}{p^0q^0} \int_{\rth}\frac{dp'}{p'^0}\int_{\rth}\frac{dq'}{q'^0}\tilde{s}\sigma(\tilde{g},\theta')\delta^{(4)}(p^\mu+q'^\mu-p'^\mu-q^\mu)h(p')\sqrt{J(q')},
\end{equation} so that we have
\begin{equation}\label{Ipq}
I=\int_\rth\int_\rth i(p,q)f(q)w^{2l}(p)\eta(p)dqdp.\end{equation}
We first translate (\ref{i}) into an expression involving the total and relative momentum
variables, $p'^\mu+q'^\mu$ and $p'^\mu-q'^\mu$ respectively. Define $u$ as in \eqref{def.u}. Let $\underline{g}\eqdef g(p'^\mu,q'^\mu)$ and $\underline{s}\eqdef s(p'^\mu,q'^\mu).$ Then by Lemma \ref{7.5ofCMP} we have
$$i(p,q)=\frac{1}{16p^0q^0}\int_{\rfo\times\rfo}d\Theta(p'^\mu,q'^\mu)h(p')\sqrt{J(q')}\tilde{s}\sigma(\tilde{g},\theta')\delta^{(4)}(p'^\mu+q^\mu-p^\mu-q'^\mu),$$
where
$$d\Theta(p'^\mu,q'^\mu)\eqdef dp'^\mu dq'^\mu u(p'^0+q'^0)u(\underline{s}-4)\delta(\underline{s}-\underline{g}^2-4)\delta((p'^\mu+q'^\mu)(p'_\mu-q'_\mu)).$$
Thus we have lifted to an integral over $\rfo\times\rfo$ from one over $\rth\times\rth$.

Now we apply the change of variables $\bar{p}^\mu=p'^\mu+q'^\mu$ and $\bar{q}^\mu=p'^\mu-q'^\mu$. Then the Jacobian is 16. Since $q'=\frac{\bar{p}-\bar{q}}{2}$ and $p'=\frac{\bar{p}+\bar{q}}{2}$, we have
\begin{equation}\notag
i(p,q)= \frac{c'}{p^0q^0}\int_{\rfo\times\rfo}d\Theta(\bar{p}^\mu,\bar{q}^\mu)\tilde{s}\sigma(\tilde{g},\theta')\delta^{(4)}(q^\mu-p^\mu+\bar{q}^\mu)h\left(\frac{\bar{p}+\bar{q}}{2}\right)e^{\frac{-\bar{p}^0+\bar{q}^0}{4}},
\end{equation} for some constant $c'>0$ (whose value below can change from line to line), where
$$d\Theta(\bar{p}^\mu,\bar{q}^\mu)\eqdef d\bar{p}^\mu d\bar{q}^\mu u(\bar{p}^0)u(-\bar{p}^\mu\bar{p}_\mu-4)\delta(-\bar{p}^\mu\bar{p}_\mu-\bar{q}^\mu\bar{q}_\mu-4)\delta(\bar{p}^\mu\bar{q}_\mu).$$
We now carry out $\delta^{(4)}(q^\mu-p^\mu+\bar{q}^\mu)$ to obtain
\begin{equation}\notag
i(p,q)= \frac{c'}{p^0q^0}\int_{\rfo}d\Theta(\bar{p}^\mu)\tilde{s}\sigma(\tilde{g},\theta')h\left(\frac{\bar{p}+p-q}{2}\right)\exp\left(\frac{-\bar{p}^0+p^0-q^0}{4}\right),
\end{equation}
where the measure $d\Theta(\bar{p}^\mu)$ is now equal to
$$d\Theta(\bar{p}^\mu)\eqdef d\bar{p}^\mu  u(\bar{p}^0)u(-\bar{p}^\mu\bar{p}_\mu-4)\delta(-\bar{p}^\mu\bar{p}_\mu-g^2-4)\delta(\bar{p}^\mu(p_\mu-q_\mu)).$$
Since $s=g^2+4$ from \eqref{s}, we have
\begin{multline*}
u(\bar{p}^0)\delta(-\bar{p}^\mu\bar{p}_\mu-g^2-4)=u(\bar{p}^0)\delta(-\bar{p}^\mu\bar{p}_\mu-s)=u(\bar{p}^0)\delta((\bar{p}^0)^2-|\bar{p}|^2-s)\\=\frac{\delta(\bar{p}^0-\sqrt{|\bar{p}|^2+s})}{2\sqrt{|\bar{p}|^2+s}}.
\end{multline*}
Then we carry out one integration using this delta function to obtain
\begin{multline}\notag
i(p,q)= \frac{c'}{2p^0q^0}\int_{\rth}\frac{d\bar{p}}{\bar{p}^0}u(-\bar{p}^\mu\bar{p}_\mu-4)\delta(\bar{p}^\mu(p_\mu-q_\mu))\tilde{s}\sigma(\tilde{g},\theta')\\\times h\left(\frac{\bar{p}+p-q}{2}\right)\exp\left(\frac{-\sqrt{|\bar{p}|^2+s}+p^0-q^0}{4}\right),
\end{multline}
where $\bar{p}^0=\sqrt{|\bar{p}|^2+s}$. Using $s=g^2+4$ again, we have $$-\bar{p}^\mu\bar{p}_\mu-4=s-4=g^2\geq 0$$ to guarantee that $u(-\bar{p}^\mu\bar{p}_\mu-4)=1$. Thus
\begin{multline}\notag
i(p,q)= \frac{c'}{2p^0q^0}\exp\left(\frac{p^0-q^0}{4}\right)\int_{\rth}\frac{d\bar{p}}{\bar{p}^0}\delta(\bar{p}^\mu(p_\mu-q_\mu))\tilde{s}\sigma(\tilde{g},\theta')\\\times h\left(\frac{\bar{p}+p-q}{2}\right)\exp\left(\frac{-\sqrt{|\bar{p}|^2+s}}{4}\right),
\end{multline}
where $\bar{p}^0=\sqrt{|\bar{p}|^2+s}$. In this representation we have that the angle $\theta'$ is still given by \eqref{angle.theta.prime} but now we have
\begin{equation} \notag	
\tilde{g}^2=g^2+g_L^2,
\quad 
g_L^2=-\frac{1}{2}\bar{p}^\mu (p_\mu+q_\mu-\bar{p}_\mu)
=-\frac{1}{2}\bar{p}^\mu (p_\mu+q_\mu) -\frac{1}{2} s.
\end{equation} 
And again $\tilde{s}\eqdef \tilde{g}^2+4$.

We finish off our reduction by moving to a new Lorentz frame.  We choose the Lorentz transformation from \eqref{eq.LT} which importantly satisfies the condition \eqref{center.momentum.condition}. Then, using the change of variables \eqref{eq.LT}, with $U^\mu=(1,0,0,0)^\top$, we have
\begin{multline*}
\int_{\rth}\frac{d\bar{p}}{\bar{p}^0}\delta(\bar{p}^\mu(p_\mu-q_\mu))\tilde{s}\sigma(\tilde{g},\theta')h\left(\frac{\bar{p}+p-q}{2}\right)e^{\left(\frac{\bar{p}^\mu U_\mu}{4}\right)}\\
=\int_{\rth}\frac{d\bar{p}}{\bar{p}^0}\delta(\bar{p}^\mu B_\mu)s_\Lambda\sigma(g_\Lambda,\theta_\Lambda)h\left(\frac{((\Lambda^{-1})^\nu{}_\mu\bar{p}^\mu)_{1\le \nu \le 3}+p-q}{2}\right)e^{\left(\frac{\bar{p}^\mu \bar{U}_\mu}{4}\right)}.
\end{multline*}
We used that $\frac{d\bar{p}}{\bar{p}^0}$ is Lorentz invariant.
Here $\bar{p}^0=\sqrt{|\bar{p}|^2+s}$ and $s_\Lambda$, $g_\Lambda\geq0$ are
$$
g^2_\Lambda\eqdef g^2 + g_L^2, \quad g_L^2 = -\frac{1}{2}\bar{p}^\mu A_\mu-\frac{1}{2} s= \frac{1}{2}\sqrt{s}(\bar{p}^0-\sqrt{s}),
$$
where
\begin{equation}\label{slambda.def}
s_\Lambda\eqdef g^2_\Lambda+4,
\end{equation}
and
\begin{equation}\label{coslam}
\cos\theta_\Lambda\eqdef 2\frac{g^2}{g^2_\Lambda}-1.
\end{equation}
Also, $\bar{U}^\mu$ is given by
$\bar{U}^\mu = \Lambda^{\mu}{}_\nu U^\mu
= \left(\frac{p^0+q^0}{\sqrt{s}},\frac{2|p\times q|}{g\sqrt{s}},0,\frac{p^0-q^0}{g}\right)$.

We now switch to polar coordinates in the form $$d\bar{p}=r^2 dr \sin\psi d\psi d\phi,\hspace{5mm} \bar{p}\eqdef r(\sin \psi \cos \phi, \sin \psi \sin \phi, \cos \psi).$$
Then we obtain $$\bar{p}^\mu B_\mu = gr\cos\psi.$$
Then the integral $i(p,q)$ is now equal to
\begin{multline}\notag
i(p,q)= \frac{c'}{2p^0q^0}\exp\left(\frac{p^0-q^0}{4}\right)\int_{0}^{2\pi}d\phi \int_{0}^{\pi}d\psi \sin\psi \\\times\int_{0}^{\infty} \frac{r^2dr}{\sqrt{r^2+s}}\delta(gr\cos\psi)s_\Lambda\sigma(g_\Lambda,\theta_\Lambda)h\left(\frac{((\Lambda^{-1})^\nu{}_\mu\bar{p}^\mu)_{1\le \nu \le 3}+p-q}{2}\right) e^{\left(\frac{\bar{p}^\mu \bar{U}_\mu}{4}\right)}.
\end{multline}
We evaluate the last delta function at $\psi = \pi/2$ to write $i(p, q)$ as
\begin{multline}\notag
i(p,q)= \frac{c'}{2gp^0q^0}\exp\left(\frac{p^0-q^0}{4}\right) \int_{0}^{2\pi}d\phi \int_{0}^{\infty} \frac{rdr}{\sqrt{r^2+s}}s_\Lambda\sigma(g_\Lambda,\theta_\Lambda) \\\times h\left(\frac{((\Lambda^{-1})^\nu{}_\mu\bar{p}^\mu)_{1\le \nu \le 3}+p-q}{2}\right)\exp\left(-\bar{p}^0\frac{p^0+q^0}{4\sqrt{s}}+\frac{|p\times q|}{2g\sqrt{s}}r\cos\phi\right),
\end{multline}
where
$$\bar{p}^\mu = (\sqrt{r^2+s},r\cos\phi,r\sin\phi,0).$$
Then we have for $(i=1,2,3)$ that
\begin{multline*}
    ((\Lambda^{-1})^\nu{}_\mu\bar{p}^\mu)_{\nu=i}= (\Lambda_\mu{}^\nu\bar{p}^\mu)_{\nu=i} =\sum_{\mu = 0}^3 \Lambda_\mu{}^i\bar{p}^\mu \\
    =\frac{p_i+q_i}{\sqrt{s}}\sqrt{r^2+s} +\Lambda^1{}_i ~r\cos\phi+\frac{(p\times q)_i}{|p\times q|}r\sin\phi\eqdef a_i,
\end{multline*}and we have
\begin{multline*}
    (\Lambda^{-1})^0{}_\mu\bar{p}^\mu= \Lambda_\mu{}^0\bar{p}^\mu =\sum_{\mu = 0}^3 \Lambda_\mu{}^0\bar{p}^\mu 
    =-\frac{p^0+q^0}{\sqrt{s}}\sqrt{r^2+s} +\Lambda^1{}_0 ~r\cos\phi\\=-\frac{p^0+q^0}{\sqrt{s}}\sqrt{r^2+s} +\frac{2|p\times q|}{g\sqrt{s}} ~r\cos\phi\eqdef a^0.
\end{multline*}
Define $x=(x_1,x_2)\eqdef(r\cos\phi,r\sin\phi),$ and denote \begin{equation}\label{azero.notation.def}
    a^0(p,q,x)=a^0(p,q,x)=a^0,
\end{equation}and
\begin{equation}\label{a.notation.def}
    a(p,q,x)=(a_1(p,q,x),a_2(p,q,x),a_3(p,q,x))=(a_1,a_2,a_3).
\end{equation} Then we note that $$ a^\mu a_\mu= -(a^0)^2+|a|^2 =-s ,$$ since $\Lambda$ is a Lorentz transformation. 
Also note that $$a(p,q,0)=(p+q)\text{ and }a^0(p,q,0)=-(p^0+q^0).$$
Then we have
\begin{multline}\notag
i(p,q)= \frac{c'}{2gp^0q^0}\exp\left(\frac{p^0-q^0}{4}\right) \int_{\mathbb{R}^2} \frac{dx}{\sqrt{|x|^2+s}}s_\Lambda\sigma(g_\Lambda,\theta_\Lambda) \\\times h\left(\frac{a(p,q,x)+p-q}{2}\right)\exp\left(-\frac{p^0+q^0}{4\sqrt{s}}\sqrt{|x|^2+s}+\frac{|p\times q|}{2g\sqrt{s}}x_1\right).
\end{multline}
Now, with \eqref{slambda.def} and \eqref{coslam}, $g_\Lambda\geq0$ is given by
\begin{equation}\label{g2}
g^2_\Lambda = g^2+g_L^2, \quad g_L^2 = \frac{1}{2}\sqrt{s}(\sqrt{|x|^2+s}-\sqrt{s}).
\end{equation}
So by \eqref{Ipq} we obtain a new representation of our gain term
\begin{multline}\label{final gain}
I=I_{gain}=\frac{c'}{2}\iint_{\mathbb{R}^6}\frac{dp}{p^0} \frac{dq}{q^0}\exp\left(\frac{p^0-q^0}{4}\right) \int_{\mathbb{R}^2} \frac{dx}{g\sqrt{|x|^2+s}}s_\Lambda\sigma(g_\Lambda,\theta_\Lambda) w^{2l}(p)\eta(p)\\  \times f(q) h\left(\frac{a(p,q,x)+p-q}{2}\right)\exp\left(-\frac{p^0+q^0}{4\sqrt{s}}\sqrt{|x|^2+s}+\frac{|p\times q|}{2g\sqrt{s}}x_1\right).
\end{multline}
This completes our transformation of the gain term $I_{gain}$.

We now return to the loss term $I_{loss}$.  We recall that $I_{loss}=0$ under the assumptions that $\int_{\mathbb{S}^2} d\omega |\sigma_0(\cos\theta)| <\infty$ and $\int_{\mathbb{S}^2} d\omega \sigma_0(\cos\theta)=0$.  We will now find a different expression than $I_{loss}$ which also integrates to zero under the same conditions, that will provide suitable cancellation for the term in (\ref{final gain}) even when we no longer assume that $\int_{\mathbb{S}^2} d\omega |\sigma_0(\cos\theta)| <\infty$ and $\int_{\mathbb{S}^2} d\omega \sigma_0(\cos\theta)=0$.

To this end, we recall the definition \eqref{coslam}.  This using \eqref{g2} we have
\begin{equation}\label{newcos}
\cos\theta_\Lambda=\frac{2g^2}{g^2_\Lambda}-1=\frac{g^2-\frac{1}{2}\sqrt{s}(\sqrt{r^2+s}-\sqrt{s})}{g^2+\frac{1}{2}\sqrt{s}(\sqrt{r^2+s}-\sqrt{s})}.
\end{equation}
Differentiating $\cos\theta_\Lambda$ with respect to $r$, we have
$$
\frac{d(\cos\theta_\Lambda)}{dr}=-\frac{g^2\sqrt{s}r}{g^4_\Lambda\sqrt{r^2+s}}.
$$  Since we have assumed that $\int_{-1}^1d(\cos\theta_\Lambda) \sigma_0(\cos\theta_\Lambda)=0,$ then we further have
$$
\int_{0}^{\infty}dr \frac{g^2\sqrt{s}r}{g^4_\Lambda\sqrt{r^2+s}}\sigma_0(\cos\theta_\Lambda)=0.
$$ 
This follows since $\cos\theta_\Lambda = 1\text{ and }-1$ correspond to $r=0$ and $r=\infty$ respectively, using \eqref{newcos}.  

Thus we obtain
\begin{multline}
\notag
\frac{c'}{2}\int_\rth\frac{dp}{p^0}\int_{\rth}\frac{dq}{q^0}\exp\left(\frac{p^0-q^0}{4}\right)\int_{0}^{2\pi}d\phi \int_{0}^\infty \frac{rdr}{g\sqrt{r^2+s}}s\Phi(g)\sigma_0(\cos \theta_\Lambda)\frac{g^4}{g^4_\Lambda}\\\times w^{2l}(p)\eta(p)f(q)h(p)\exp\left(-\frac{p^0+q^0}{4}\right)=0.
\end{multline}
The above is the same as 
\begin{multline}
\notag
\frac{c'}{2}\int_\rth\frac{dp}{p^0}\int_{\rth}\frac{dq}{q^0}\exp\left(\frac{p^0-q^0}{4}\right)
\int_{\mathbb{R}^2} \frac{dx}{g\sqrt{|x|^2+s}}s\Phi(g)\sigma_0(\cos \theta_\Lambda)\frac{g^4}{g^4_\Lambda}\\\times w^{2l}(p)\eta(p)f(q)h(p)\exp\left(-\frac{p^0+q^0}{4}\right)=0.
\end{multline}
Subtracting this zero integral from (\ref{final gain}), we further obtain
\begin{multline}\label{dual}
I=\frac{c'}{2}\int_\rth\frac{dp}{p^0}\int_{\rth}\frac{dq}{q^0}\exp\left(\frac{p^0-q^0}{4}\right) \int_{\mathbb{R}^2} \frac{dx}{g\sqrt{|x|^2+s}}s_\Lambda\sigma(g_\Lambda,\theta_\Lambda)w^{2l}(p)\eta(p)f(q)\\\times\Big[h\left(\frac{a(p,q,x)+p-q}{2}\right)\exp\left(-\frac{p^0+q^0}{4\sqrt{s}}\sqrt{|x|^2+s}+\frac{|p\times q|}{2g\sqrt{s}}x_1\right) \\-\frac{s \Phi(g)g^4}{s_\Lambda \Phi(g_\Lambda)g^4_\Lambda}h(p)\exp\left(-\frac{p^0+q^0}{4}\right)\Big].
\end{multline}
This is equal to the original integral $I$ when the mean value of $\sigma_0$ is zero. 

Since we are working with the Schwartz functions, by a standard approximation argument we can directly prove that \eqref{dual} also holds even when the mean value of $\sigma_0$ is not zero and $\sigma_0$ is not integrable such as in \eqref{define.kernel} with \eqref{angassumption} and \eqref{hard} or \eqref{soft}.  We refer to \cite[Appendix A]{MR2784329} and \eqref{FREQ:dual} for full details of analogous approximation arguments.
 
 Now by making the change of variables $x\mapsto z=\frac{x}{\sqrt{s}}$ with $dz=s^{-1}dx,$ we have
 \begin{multline}\notag
I=\frac{c'}{2}\int_\rth\frac{dp}{p^0}\int_{\rth}\frac{dq}{q^0} \frac{\sqrt{s}}{g}\int_{\mathbb{R}^2} \frac{dz}{\sqrt{|z|^2+1}}s_\Lambda\sigma(g_\Lambda,\theta_\Lambda)\sqrt{J(q)}w^{2l}(p)\eta(p)f(q)\\\times\bigg[h\left(\frac{a(p,q,\sqrt{s}z)+p-q}{2}\right)\exp\left(-\frac{p^0+q^0}{4}(\sqrt{|z|^2+1}-1)+\frac{|p\times q|}{2g}z_1\right)\\
-\frac{s \Phi(g)g^4}{s_\Lambda \Phi(g_\Lambda)g^4_\Lambda}h(p)\bigg].
\end{multline}
Next we recover the original variables by relabelling $p$ and $p'$ above, we then have 
\begin{multline}\notag
\langle w^{2l}\Gamma(f,h),\eta\rangle=I\\
=
\frac{c'}{2}\int_\rth\frac{dp'}{p'^0}\int_{\rth}\frac{dq}{q^0} \frac{\sqrt{\tilde{s}}}{\tilde{g}}\int_{\mathbb{R}^2} \frac{dz}{\sqrt{|z|^2+1}}s_\Lambda\sigma(g_\Lambda,\theta_\Lambda)\sqrt{J(q)}w^{2l}(p')\eta(p')f(q)\\\times\bigg[h\left(\frac{a(p',q,\sqrt{\tilde{s}}z)+p'-q}{2}\right)\exp\left(-\frac{p'^0+q^0}{4}(\sqrt{|z|^2+1}-1)+\frac{|p'\times q|}{2\tilde{g}}z_1\right) \\-\frac{\tilde{s} \Phi(\tilde{g})\tilde{g}^4}{s_\Lambda \Phi(g_\Lambda)g^4_\Lambda}h\left(
p'\right)\bigg],
\end{multline}
where $g_\Lambda,\ s_\Lambda,$ and $\theta_\Lambda$ are defined as\begin{equation}\label{g2.eq.lambda}
g^2_\Lambda = \tilde{g}^2+g_L^2, \quad g_L^2 = \frac{1}{2}\tilde{s}(\sqrt{|z|^2+1}-1),
\end{equation}
\begin{equation}\label{theta.eq.lambda}
\cos\theta_\Lambda\eqdef 2\frac{\tilde{g}^2}{g^2_\Lambda}-1,\text{ and }s_\Lambda=g_\Lambda^2+4.
\end{equation}
Here we also have from \eqref{azero.notation.def} and \eqref{a.notation.def} that$$a^0(p',q,\sqrt{\tilde{s}}z)=-(p'^0+q^0)\sqrt{|z|^2+1} +\Lambda^{1}{}_0\sqrt{\tilde{s}}z_1,$$ and
$$a(p',q,\sqrt{\tilde{s}}z)=(p'+q)\sqrt{|z|^2+1} +\Lambda^{1}\sqrt{\tilde{s}}z_1+\frac{p'\times q}{|p'\times q|}\sqrt{\tilde{s}}z_2,$$
where 
$\Lambda^{1}=\Lambda^1(p',q)=(\Lambda^{1}{}_{1}(p',q),\Lambda^{1}{}_{2}(p',q),\Lambda^{1}{}_{3}(p',q))$ from \eqref{eq.LT}.
Further define 
\begin{multline}\label{Azero.def.lambda}
-A^0=-A^0(p',q,z)=\frac{a^0(p',q,\sqrt{\tilde{s}}z)+(p'^0+q^0)}{2}\\
=-\frac{(p'^0+q^0)}{2}(\sqrt{|z|^2+1}-1) +
\frac{|p'\times q|}{\tilde{g}}
z_1
=-2l(\sqrt{|z|^2+1}-1) +
2j
z_1,\end{multline}and
\begin{multline}\label{A.def.lambda}
A=A(p',q,z)=\frac{a(p',q,\sqrt{\tilde{s}}z)-(p'+q)}{2}\\
=\frac{(p'+q)}{2}(\sqrt{|z|^2+1}-1) +\frac{1}{2}\Lambda^{1}\sqrt{\tilde{s}}z_1
+
\frac{1}{2}
\frac{p'\times q}{|p'\times q|}\sqrt{\tilde{s}}z_2.\end{multline}
Then we have
\begin{multline}\label{dual4}
\langle w^{2l}\Gamma(f,h),\eta\rangle\\
=\frac{c'}{2}\int_\rth\frac{dp'}{p'^0}\int_{\rth}\frac{dq}{q^0} \frac{\sqrt{\tilde{s}}}{\tilde{g}}
\int_{\mathbb{R}^2} \frac{dz}{\sqrt{|z|^2+1}}
s_\Lambda\sigma(g_\Lambda,\theta_\Lambda)
\sqrt{J(q)}w^{2l}(p')\eta(p')f(q)\\\times\bigg[h\left(A(p',q,z)+p'\right)\exp\left(-\frac{A^0}{2}\right) -\frac{\tilde{s} \Phi(\tilde{g})\tilde{g}^4}{s_\Lambda \Phi(g_\Lambda)g^4_\Lambda}h(p')\bigg].
\end{multline}
This is the main expression for the dual representation that we will use to prove our cancellation estimates which land on the function $h$.

We have actually proven a more general integral formula. Now we consider \eqref{original.eq.Ig}.  The transformation from \eqref{original.eq.Ig} to \eqref{second.eq.Ig} or \eqref{third.eq.Ig} incorporates the series of changes of variables discussed previously in this section.  The transformation from \eqref{original.eq.Ig} to \eqref{second.eq.Ig} follows exactly from the arguments between \eqref{eq.Igainloss1}  to \eqref{final gain}.   Then to additionally derive \eqref{third.eq.Ig}  we further follow the arguments between \eqref{final gain} and \eqref{dual}.  This proves Lemma \ref{transformation.Lemma.appendix}.

 \section{Collision frequency multiplier derivation}\label{FREQ:sec:derivation}

We now explain a derivation of an alternative form of $\tilde{\zeta}(p)$ from \eqref{FREQ:tildezeta}, and give the new decomposition of  $\tilde{\zeta}(p)$ that has been explained in \secref{FREQ:sec:main.decomp}.

\subsection{Derivation of a new representation of $\tilde{\zeta}(p)$}
For a fixed $p\in\rth$, recalling \eqref{FREQ:tildezeta}, we would like to have an alternative representation of the following integral:
\begin{equation*}
I\eqdef -\tilde{\zeta}(p)=	\int_{\rth\times\mathbb{S}^2} v_{\text{\o}} \sigma(g,\theta) \sqrt{J(q)}\left(\sqrt{J(q')}-\sqrt{J(q)}\right)dqd\omega
\eqdef I_{gain}-I_{loss}.
\end{equation*}
Initially, suppose that $\int_{\mathbb{S}^2} d\omega\   |\sigma_0(\cos\theta)| <\infty$ and that
$$
\int_{\mathbb{S}^2} d\omega\   \sigma_0(\cos\theta)=0.
$$
Then, under that condition, the loss term vanishes $I_{loss}=0$ and we obtain
\begin{equation}\label{FREQ:Igainrepresentation}
I=I_{gain}=\int_{\rth}dq\int_{\mathbb{S}^2}d\omega\  v_{\text{\o}} \sigma(g,\theta)\sqrt{J(q)}\sqrt{J(q')}.
\end{equation}
By recovering the delta function involving the energy-momentum convervation laws, we obtain another representation of $I$:
$$I= \frac{1}{p^0}\int_{\rth}\frac{dq}{q^0}\int_{\rth}\frac{dp'}{p'^0}\int_{\rth}\frac{dq'}{q'^0}s\sigma(g,\theta)\delta^{(4)}(p'^\mu+q'^\mu-p^\mu-q^\mu)\sqrt{J(q)}\sqrt{J(q')}.$$
Here $g=g(p^\mu,q^\mu)$, $s=g^2+4$, $\bar{g}\eqdef g(p^\mu,p'^\mu)=g(q^\mu,q'^\mu)$, $\tilde{g} = g(p'^\mu,q^\mu)$, and
$$
\cos\theta=2\frac{\tilde{g}^2}{g^2}-1,
$$
by \eqref{FREQ:cos}.
We further {\it claim} that
\begin{equation}\label{FREQ:gg}
g^2=\tilde{g}^2-\frac{1}{2}(p^\mu+q'^\mu)(p'_\mu+q_\mu-p_\mu-q'_\mu).
\end{equation} Let $\tilde{s}\eqdef \tilde{g}^2+4$. Then \eqref{FREQ:gg} is equivalent to
\begin{multline*}
g^2=\tilde{g}^2-\frac{1}{2}\tilde{s}-\frac{1}{2}(p^\mu+q'^\mu)(p'_\mu+q_\mu)\\
=\frac{1}{2}\tilde{g}^2-2-\frac{1}{2}(p^\mu+q'^\mu)(p'_\mu+q_\mu)\\
=\frac{1}{2}\tilde{g}^2+g^2+2p^\mu q_\mu-\frac{1}{2}(p^\mu+q'^\mu)(p'_\mu+q_\mu).
\end{multline*}
Thus we prove \eqref{FREQ:gg} by showing that
$$\frac{1}{2}\tilde{g}^2+2p^\mu q_\mu-\frac{1}{2}(p^\mu+q'^\mu)(p'_\mu+q_\mu)=0.$$
Expanding the left-hand side of this equation, we obtain$$
-p^\mu q'_\mu-1+2p^\mu q_\mu-\frac{1}{2}p^\mu p'_\mu-\frac{1}{2}q'^\mu p'_\mu-\frac{1}{2}p^\mu q_\mu-\frac{1}{2}q'^\mu q_\mu.$$
By the result of the conservation laws $p^\mu+q^\mu=p'^\mu+q'^\mu$, we have $p^\mu q_\mu=p'^\mu q'_\mu$ and $p'^\mu q_\mu=p^\mu q'_\mu$. Therefore, we obtain $$
-1+p^\mu q_\mu-\frac{1}{2}p^\mu p'_\mu-\frac{1}{2}p^\mu q'_\mu-\frac{1}{2}p'^\mu q_\mu-\frac{1}{2}q'^\mu q_\mu,$$ which is equal to
$$-1+p^\mu q_\mu-\frac{1}{2}(p^\mu+q^\mu)(p'_\mu+q'_\mu)=-1+p^\mu q_\mu+\frac{1}{2}s=0.$$ This finishes the proof of the  {\it claim} \eqref{FREQ:gg}.

By exchanging $q$ and $q'$, we have
$$I= \frac{1}{p^0}\int_{\rth}\frac{dq}{q^0}\sqrt{J(q)}\int_{\rth}\frac{dp'}{p'^0}\int_{\rth}\frac{dq'}{q'^0}\sqrt{J(q')}\tilde{s}\sigma(\tilde{g},\theta')\delta^{(4)}(p'^\mu+q^\mu-p^\mu-q'^\mu),$$
where the angle $\theta'$ is now defined as
$$\cos\theta'\eqdef 2\frac{g^2}{\tilde{g}^2}-1,$$ 
and
\begin{equation}\notag	\tilde{g}^2=g^2-\frac{1}{2}(p^\mu+q^\mu)(p'_\mu+q'_\mu-p_\mu-q_\mu).\end{equation} 
We have the new argument in the delta function and $\tilde{s}\eqdef \tilde{g}^2+4$.

We now define the functional $i(p,q)$ as
\begin{equation}\label{FREQ:i}
i(p,q)\eqdef \frac{1}{p^0q^0} \int_{\rth}\frac{dp'}{p'^0}\int_{\rth}\frac{dq'}{q'^0}\sqrt{J(q')}\tilde{s}\sigma(\tilde{g},\theta')\delta^{(4)}(p'^\mu+q^\mu-p^\mu-q'^\mu),
\end{equation} so that we have
\begin{equation}\label{FREQ:Ipq}
I=\int_\rth i(p,q)\sqrt{J(q)}dq.\end{equation}
We first translate \eqref{FREQ:i} into an expression involving the total and relative momentum
variables, $p'^\mu+q'^\mu$ and $p'^\mu-q'^\mu$ respectively. Define $u$ by $u(x) = 0$ if $x < 0$ and
$u(x) = 1$ if $x\geq 0$. Let $g'\eqdef g(p'^\mu,q'^\mu)$ and $s'\eqdef s(p'^\mu,q'^\mu).$ Then by the claim (7.5) of \cite{MR2728733}, we have
$$i(p,q)=\frac{1}{16p^0q^0}\int_{\rfo\times\rfo}d\Theta(p'^\mu,q'^\mu)\frac{e^{-q'^0/2}}{4\pi}\tilde{s}\sigma(\tilde{g},\theta')\delta^{(4)}(p'^\mu+q^\mu-p^\mu-q'^\mu),$$
where
$$d\Theta(p'^\mu,q'^\mu)\eqdef dp'^\mu dq'^\mu u(p'^0+q'^0)u(s'-4)\delta(s'-g'^2-4)\delta((p'^\mu+q'^\mu)(p'_\mu-q'_\mu)).$$
Thus we have lifted to an integral over $\rfo\times\rfo$ from one over $\rth\times\rth$.

Now we apply the change of variables $\bar{p}^\mu=p'^\mu+q'^\mu$ and $\bar{q}^\mu=p'^\mu-q'^\mu$. Then the Jacobian is 16. Since $q'^0=\frac{\bar{p}^0-\bar{q}^0}{2}$, we have
\begin{equation}\notag
i(p,q)= \frac{c'}{p^0q^0}\int_{\rfo\times\rfo}d\Theta(\bar{p}^\mu,\bar{q}^\mu)\tilde{s}\sigma(\tilde{g},\theta')\delta^{(4)}(q^\mu-p^\mu+\bar{q}^\mu)\exp\left(\frac{-\bar{p}^0+\bar{q}^0}{4}\right)
\end{equation} for some constant $c'>0$ (whose value can change from line to line), where
$$d\Theta(\bar{p}^\mu,\bar{q}^\mu)\eqdef d\bar{p}^\mu d\bar{q}^\mu u(\bar{p}^0)u(-\bar{p}^\mu\bar{p}_\mu-4)\delta(-\bar{p}^\mu\bar{p}_\mu-\bar{q}^\mu\bar{q}_\mu-4)\delta(\bar{p}^\mu\bar{q}_\mu).$$

We now carry out $\delta^{(4)}(q^\mu-p^\mu+\bar{q}^\mu)$ to obtain
\begin{equation}\notag
i(p,q)= \frac{c'}{p^0q^0}\int_{\rfo}d\Theta(\bar{p}^\mu)\tilde{s}\sigma(\tilde{g},\theta')\exp\left(\frac{-\bar{p}^0+p^0-q^0}{4}\right),
\end{equation}
where the measure $d\Theta(\bar{p}^\mu)$ is now equal to
$$d\Theta(\bar{p}^\mu)\eqdef d\bar{p}^\mu  u(\bar{p}^0)u(-\bar{p}^\mu\bar{p}_\mu-4)\delta(-\bar{p}^\mu\bar{p}_\mu-g^2-4)\delta(\bar{p}^\mu(p_\mu-q_\mu)).$$
Since $s=g^2+4,$ we have
\begin{multline*}
u(\bar{p}^0)\delta(-\bar{p}^\mu\bar{p}_\mu-g^2-4)=u(\bar{p}^0)\delta(-\bar{p}^\mu\bar{p}_\mu-s)\\=u(\bar{p}^0)\delta((\bar{p}^0)^2-|\bar{p}|^2-s)=\frac{\delta(\bar{p}^0-\sqrt{|\bar{p}|^2+s})}{2\sqrt{|\bar{p}|^2+s}}.
\end{multline*}
Then we carry out one integration using this delta function to obtain
\begin{multline}\notag
i(p,q)= \frac{c'}{2p^0q^0}\int_{\rth}\frac{d\bar{p}}{\bar{p}^0}u(-\bar{p}^\mu\bar{p}_\mu-4)\delta(\bar{p}^\mu(p_\mu-q_\mu))\tilde{s}\sigma(\tilde{g},\theta')\\\times\exp\left(\frac{-\sqrt{|\bar{p}|^2+s}+p^0-q^0}{4}\right),
\end{multline}
where $\bar{p}^0=\sqrt{|\bar{p}|^2+s}$. Using $s=g^2+4$ again, we have $$-\bar{p}^\mu\bar{p}_\mu-4=s-4=g^2\geq 0$$ to guarantee that $u(-\bar{p}^\mu\bar{p}_\mu-4)=1$. Thus
\begin{equation}\notag
i(p,q)= \frac{c'}{2p^0q^0}\exp\left(\frac{p^0-q^0}{4}\right)\int_{\rth}\frac{d\bar{p}}{\bar{p}^0}\delta(\bar{p}^\mu(p_\mu-q_\mu))\tilde{s}\sigma(\tilde{g},\theta')e^{\left(\frac{\bar{p}^\mu U_\mu}{4}\right)},
\end{equation}
where $\bar{p}^0=\sqrt{|\bar{p}|^2+s}$ and $U^\mu=(1,0,0,0)$.
We finish off our reduction by moving to a new Lorentz frame. We consider a Lorentz transformation $\Lambda$ which maps into the center-of-momentum system as 
$$
A_\nu\eqdef \Lambda^{\mu}{}_{\nu}(p_\mu+q_\mu) =(\sqrt{s},0,0,0),\hspace{10mm} B_\nu\eqdef -\Lambda^{\mu}{}_{\nu} (p_\mu-q_\mu)=(0,0,0,g).
$$
The explicit form of the matrix $\Lambda$ was given p. 593 of \cite{MR2728733}, and also in \cite{MR2707256}.  More precisely, we consider
\begin{equation}\label{FREQ:eq.LT}
\Lambda=(\Lambda^{\mu}{}_\nu)=\left(\begin{array}{cccc}\frac{p^0+q^0}{\sqrt{s}}& -\frac{p_1+q_1}{\sqrt{s}} &-\frac{p_2+q_2}{\sqrt{s}} &-\frac{p_3+q_3}{\sqrt{s}}\\ \Lambda^1{}_0 &\Lambda^1{}_1&\Lambda^1{}_2&\Lambda^1{}_3\\ 0 & \frac{(p\times q)_1}{|p\times q|} &\frac{(p\times q)_2}{|p\times q|}&\frac{(p\times q)_3}{|p\times q|}\\ \frac{p^0-q^0}{g} &-\frac{p_1-q_1}{g}&-\frac{p_2-q_2}{g}&-\frac{p_3-q_3}{g}\end{array}\right),
\end{equation}
with the second row given by
$$\Lambda^1{}_0=\Lambda^1{}_0(p,q)=\frac{2|p\times q|}{g\sqrt{s}},$$ 
and for $i=1,2,3$ we have 
$$\Lambda^1{}_i=\Lambda^1{}_i(p,q)=\frac{2\left(p_i\{p^0+q^0p^\mu q_\mu\}+q_i\{q^0+p^0p^\mu q_\mu\}\right)}{g\sqrt{s}|p\times q|}.$$
Then, using this change of variables, we have
\begin{equation*}
\int_{\rth}\frac{d\bar{p}}{\bar{p}^0}\delta(\bar{p}^\mu(p_\mu-q_\mu))\tilde{s}\sigma(\tilde{g},\theta')e^{\left(\frac{\bar{p}^\mu U_\mu}{4}\right)}
=\int_{\rth}\frac{d\bar{p}}{\bar{p}^0}\delta(\bar{p}^\mu B_\mu)s_\Lambda\sigma(g_\Lambda,\theta_\Lambda)e^{\left(\frac{\bar{p}^\mu \bar{U}_\mu}{4}\right)}.
\end{equation*}
Note that $\frac{d\bar{p}}{\bar{p}^0}$ is Lorentz invariant.
Here $\bar{p}^0=\sqrt{|\bar{p}|^2+s}$ and $s_\Lambda$, $g_\Lambda\geq0$ are
$$
g^2_\Lambda\eqdef g^2-\frac{1}{2}A^\mu(\bar{p}_\mu-A_\mu)= g^2+\frac{1}{2}\sqrt{s}(\bar{p}^0-\sqrt{s}),
$$
where
\begin{equation}\label{FREQ:slambda.def}
s_\Lambda\eqdef g^2_\Lambda+4,
\end{equation}
and
\begin{equation}\label{FREQ:coslam}
\cos\theta_\Lambda\eqdef 2\frac{g^2}{g^2_\Lambda}-1.
\end{equation}
Also, $\bar{U}^\mu$ is defined as $\bar{U}^\mu= \left(\frac{p^0+q^0}{\sqrt{s}},\frac{2|p\times q|}{g\sqrt{s}},0,\frac{p^0-q^0}{g}\right)$.
We switch to polar coordinates in the form $$d\bar{p}=r^2 dr \sin\psi d\psi d\phi,\hspace{5mm} \bar{p}\eqdef r(\sin \psi \cos \phi, \sin \psi \sin \phi, \cos \psi).$$
Then we obtain $$\bar{p}^\mu B_\mu = gr\cos\psi.$$
Then the integral $i(p,q)$ is now equal to
\begin{multline}\notag
i(p,q)= \frac{c'}{2p^0q^0}\exp\left(\frac{p^0-q^0}{4}\right)\int_{0}^{2\pi}d\phi \int_{0}^{\pi}d\psi \sin\psi \\\times\int_{0}^{\infty} \frac{r^2dr}{\sqrt{r^2+s}}\delta(gr\cos\psi)s_\Lambda\sigma(g_\Lambda,\theta_\Lambda) e^{\left(\frac{\bar{p}^\mu \bar{U}_\mu}{4}\right)}.
\end{multline}
We evaluate the last delta function at $\psi = \pi/2$ to write $i(p, q)$ as
\begin{multline}\notag
i(p,q)= \frac{c'}{2gp^0q^0}\exp\left(\frac{p^0-q^0}{4}\right)  \\\times\int_{0}^{2\pi}d\phi \int_{0}^{\infty} \frac{rdr}{\sqrt{r^2+s}}s_\Lambda\sigma(g_\Lambda,\theta_\Lambda)\exp\left(-\bar{p}^0\frac{p^0+q^0}{4\sqrt{s}}+\frac{|p\times q|}{2g\sqrt{s}}r\cos\phi\right).
\end{multline}
By using the modified Bessel function of index zero given by \eqref{bessel0} we have
\begin{multline}\label{FREQ:finali}
i(p,q)= \frac{c'}{2gp^0q^0}\exp\left(\frac{p^0-q^0}{4}\right)  \\\times \int_{0}^{\infty} \frac{rdr}{\sqrt{r^2+s}}s_\Lambda\sigma(g_\Lambda,\theta_\Lambda)\exp\left(-\frac{p^0+q^0}{4\sqrt{s}}\sqrt{r^2+s}\right)I_0\left(\frac{|p\times q|}{2g\sqrt{s}}r\right).
\end{multline}
Now $g_\Lambda\geq0$ is given by
\begin{equation}\label{FREQ:g2}
g^2_\Lambda = g^2+\frac{1}{2}\sqrt{s}(\sqrt{r^2+s}-\sqrt{s}),
\end{equation}
with \eqref{FREQ:slambda.def} and \eqref{FREQ:coslam}.
So by \eqref{FREQ:Ipq} we obtain a new representation of our gain term
\begin{multline}\label{FREQ:final gain}
I=I_{gain}=\frac{c'}{p^0}e^{\frac{p^0}{4}}\int_{\rth}\frac{dq}{q^0}\frac{e^{-\frac{3}{4}q^0}}{g} \int_{0}^{\infty} \frac{rdr}{\sqrt{r^2+s}}s_\Lambda\sigma(g_\Lambda,\theta_\Lambda)\\\times \exp\left(-\frac{p^0+q^0}{4\sqrt{s}}\sqrt{r^2+s}\right)I_0\left(\frac{|p\times q|}{2g\sqrt{s}}r\right),
\end{multline}
We recall that $I_{loss}=0$. We will now find a different expression than $I_{loss}$ which is also equal to zero, that will provide suitable cancellation for the term in \eqref{FREQ:final gain} when we no longer assume that $\int_{\mathbb{S}^2} d\omega\  |\sigma_0(\cos\theta)| <\infty$ and $\int_{\mathbb{S}^2} d\omega\  \sigma_0(\cos\theta)=0$.

To this end, we recall the definitions \eqref{FREQ:g2} and \eqref{FREQ:coslam}.  This yields
\begin{equation}\label{FREQ:newcos}
\cos\theta_\Lambda=\frac{2g^2}{g^2_\Lambda}-1=\frac{g^2-\frac{1}{2}\sqrt{s}(\sqrt{r^2+s}-\sqrt{s})}{g^2+\frac{1}{2}\sqrt{s}(\sqrt{r^2+s}-\sqrt{s})},
\end{equation}
using \eqref{FREQ:g2} above.  Then we have
$$\frac{dg^2_\Lambda}{dr}= \frac{\sqrt{s}r}{2\sqrt{r^2+s}}.$$
By differentiating $\cos\theta_\Lambda$ with respect to $r$, we have
\begin{multline*}
\frac{d(\cos\theta_\Lambda)}{dr}
=\frac{d}{dr}\frac{g^2-\frac{1}{2}\sqrt{s}(\sqrt{r^2+s}-\sqrt{s})}{g^2_\Lambda}\\=\frac{-\frac{1}{2}\sqrt{s}\frac{2r}{2\sqrt{r^2+s}}g_\Lambda^2-(g^2-\frac{1}{2}\sqrt{s}(\sqrt{r^2+s}-\sqrt{s}))\frac{d g^2_\Lambda}{dr}  }{g^4_\Lambda}\\
=-\frac{1}{2}\sqrt{s}\frac{r}{\sqrt{r^2+s}g^2_\Lambda}-\frac{(g^2-\frac{1}{2}\sqrt{s}(\sqrt{r^2+s}-\sqrt{s})) }{g^4_\Lambda} \frac{\sqrt{s}r}{2\sqrt{r^2+s}}\\
=\frac{\sqrt{s}r}{2g^4_\Lambda\sqrt{r^2+s}}\left(-g^2_\Lambda -(g^2-\frac{1}{2}\sqrt{s}(\sqrt{r^2+s}-\sqrt{s}))\right)\\
=\frac{\sqrt{s}r}{2g^4_\Lambda\sqrt{r^2+s}}\left(-2g^2\right)=-\frac{g^2\sqrt{s}r}{g^4_\Lambda\sqrt{r^2+s}}.
\end{multline*}Therefore,
$$\frac{d(\cos\theta_\Lambda)}{dr}=-\frac{g^2\sqrt{s}r}{g^4_\Lambda\sqrt{r^2+s}}.$$
Since we have assumed that $$\int_{-1}^1d(\cos\theta_\Lambda) \sigma_0(\cos\theta_\Lambda)=0,$$ we further have
$$
\int_{0}^{\infty}dr \frac{g^2\sqrt{s}r}{g^4_\Lambda\sqrt{r^2+s}}\sigma_0(\cos\theta_\Lambda)=0,
$$ as $\cos\theta_\Lambda = 1\text{ and }-1$ correspond to $r=0$ and $r=\infty$ respectively, by \eqref{FREQ:newcos}.  Thus we obtain
\begin{equation}
\notag
\frac{c'}{p^0}e^{\frac{p^0}{4}}\int_{\rth}\frac{dq}{q^0}\frac{e^{-\frac{3}{4}q^0}}{g} \int_{0}^\infty \frac{rdr}{\sqrt{r^2+s}}s\Phi(g)\sigma_0(\cos \theta_\Lambda)\frac{g^4}{g^4_\Lambda}\exp\left(-\frac{p^0+q^0}{4\sqrt{s}}\sqrt{s}\right)=0.
\end{equation}
Subtracting this zero integral from \eqref{FREQ:final gain}, we obtain
\begin{multline}\label{FREQ:dual}
I=\frac{c'}{p^0}e^{\frac{p^0}{4}}\int_{\rth}\frac{dq}{q^0}\frac{e^{-\frac{3}{4}q^0}}{g}\int_{0}^\infty \frac{rdr}{\sqrt{r^2+s}}s_\Lambda\sigma(g_\Lambda,\theta_\Lambda)\\\times\Big[\exp\left(-\frac{p^0+q^0}{4\sqrt{s}}\sqrt{r^2+s}\right)I_0\left(\frac{|p\times q|}{2g\sqrt{s}}r\right)\\ -\exp\left(-\frac{p^0+q^0}{4}\right)\frac{s\Phi(g)g^4}{s_\Lambda  \Phi(g_\Lambda)g^4_\Lambda}\Big].
\end{multline}
This is equal to the original integral $I=-\tilde{\zeta}(p)$ when the mean value of $\sigma_0$ is zero.

 We also note that \eqref{FREQ:dual} also holds for \eqref{FREQ:tildezeta} even when the mean value of $\sigma_0$ is not zero.  
	Suppose that $\int_{\mathbb{S}^2} d w \hspace{1mm} |\sigma_0(\theta)| <\infty$ and that $\int_{\mathbb{S}^2} d w \hspace{1mm} \sigma_0(\theta) = 2\pi c_0\neq0$.
	Define 
	$$
	\sigma_0^\epsilon(\theta) = \sigma_0(\theta)-1_{[1-\epsilon,1]}(\cos\theta)\int_{-1}^1dt'\frac{\sigma_0(t')}{\epsilon}. 
	$$
	Then, we have $\int_{-1}^1\sigma_0^\epsilon(\theta)d(\cos\theta)=0$ vanishing on $\omega\in \mathbb{S}^2$.
	Now, define 
	\begin{align*}
	&\tilde{\zeta}^\epsilon(p)
	=\int_{\rth\times\mathbb{S}^2} v_{\text{\o}} \Phi(g)\sigma^\epsilon_0(\theta) \sqrt{J(q)}\left(\sqrt{J(q)}-\sqrt{J(q')}\right)dqd\omega.
	\end{align*} Then, also using  \eqref{FREQ:tildezeta}, we have
	\begin{multline}
	\label{FREQ:notmeanzero}
	\left|\tilde{\zeta}(p) -\tilde{\zeta}^\epsilon(p) \right|
	\\
	=\bigg|c_0 \int_{\rth\times\mathbb{S}^2} v_{\text{\o}} \Phi(g) \sqrt{J(q)}\left(\sqrt{J(q)}-\sqrt{J(q')}\right) \frac{1_{[1-\epsilon,1]}(\cos\theta)}{\epsilon} dqd\omega\bigg|.
	\end{multline}
 If $\cos\theta=1$, by Remark \ref{FREQ:angle.remark} and e.g. \eqref{FREQ:cosine.angle.formula}, \eqref{gbar} and \eqref{conservation}  we have $p'^\mu=p^\mu$ and $q'^\mu=q^\mu$. 
	Thus, as $\epsilon \rightarrow 0$, the difference term in \eqref{FREQ:notmeanzero} $\rightarrow 0$ as $\sqrt{J(q)}-\sqrt{J(q')}$ has a higher order cancellation and hence the integrand vanishes on the set $\cos\theta=1$. 
 By the higher-order cancellation, an additional cutoff argument shows that the identity \eqref{FREQ:dual} holds for the noncutoff kernel $\sigma_0$ from \eqref{angassumption}.

\subsection{First representation of $\tilde{\zeta}$}
We will now further split $\tilde{\zeta}=\zeta_0+\zetaL$. From \eqref{FREQ:dual} for simplicity we write
$$
-I=\frac{c'}{p^0}e^{\frac{p^0}{4}}\int_{\rth}\frac{dq}{q^0}\frac{e^{-\frac{3}{4}q^0}}{g}\newK(p,q),
$$
where
\begin{multline}\notag
\newK(p,q) \eqdef \int_{0}^\infty \frac{rdr}{\sqrt{r^2+s}}s_\Lambda\sigma(g_\Lambda,\theta_\Lambda)
\\
\times
\Big[\exp\left(-\frac{p^0+q^0}{4}\right)\frac{s\Phi(g)g^4}{s_\Lambda  \Phi(g_\Lambda)g^4_\Lambda} - \exp\left(-\frac{p^0+q^0}{4\sqrt{s}}\sqrt{r^2+s}\right)I_0\left(\frac{|p\times q|}{2g\sqrt{s}}r\right) \Big].
\end{multline}
Notice that both terms of the integral converge for large $r\ge 1$.  We further split
\begin{multline}\notag
\exp\left(-\frac{p^0+q^0}{4}\right)\frac{s\Phi(g)g^4}{s_\Lambda  \Phi(g_\Lambda)g^4_\Lambda}-\exp\left(-\frac{p^0+q^0}{4\sqrt{s}}\sqrt{r^2+s}\right)I_0\left(\frac{|p\times q|}{2g\sqrt{s}}r\right)
\\
=\left(\exp\left(-\frac{p^0+q^0}{4}\right) - \exp\left(-\frac{p^0+q^0}{4\sqrt{s}}\sqrt{r^2+s}\right)I_0\left(\frac{|p\times q|}{2g\sqrt{s}}r\right)\right) \frac{s\Phi(g)g^4}{s_\Lambda  \Phi(g_\Lambda)g^4_\Lambda}\\
+\exp\left(-\frac{p^0+q^0}{4\sqrt{s}}\sqrt{r^2+s}\right) I_0\left(\frac{|p\times q|}{2g\sqrt{s}}r\right)
\left( \frac{s\Phi(g)g^4}{s_\Lambda  \Phi(g_\Lambda)g^4_\Lambda} -1\right).
\end{multline}
This motivates the following splitting of $\tilde{\zeta}=\zeta_0+\zetaL$ with
\begin{multline}\label{FREQ:zeta0B.appendix}
\zeta_0 \eqdef \frac{c'}{p^0}e^{\frac{p^0}{4}}\int_{\rth}\frac{dq}{q^0}\frac{e^{-\frac{3}{4}q^0}}{g}\int_{0}^\infty \frac{rdr}{\sqrt{r^2+s}}s_\Lambda\sigma(g_\Lambda,\theta_\Lambda) \frac{s\Phi(g)g^4}{s_\Lambda  \Phi(g_\Lambda)g^4_\Lambda}
\\
\times
\Big[\exp\left(-\frac{p^0+q^0}{4}\right) - \exp\left(-\frac{p^0+q^0}{4\sqrt{s}}\sqrt{r^2+s}\right)I_0\left(\frac{|p\times q|}{2g\sqrt{s}}r\right) \Big],
\end{multline}and
\begin{multline}\label{FREQ:zetaLB}
\zetaL \eqdef \frac{c'}{p^0}e^{\frac{p^0}{4}}\int_{\rth}\frac{dq}{q^0}\frac{e^{-\frac{3}{4}q^0}}{g}\int_{0}^\infty \frac{rdr}{\sqrt{r^2+s}}s_\Lambda\sigma(g_\Lambda,\theta_\Lambda)   \\
\times
\exp\left(-\frac{p^0+q^0}{4\sqrt{s}}\sqrt{r^2+s}\right) I_0\left(\frac{|p\times q|}{2g\sqrt{s}}r\right)
\left( \frac{s\Phi(g)g^4}{s_\Lambda  \Phi(g_\Lambda)g^4_\Lambda} -1\right).
\end{multline}
This completes the derivation of our first representation of $\tilde{\zeta}(p)$.

\subsection{Derivation of an alternative representation of $\tilde{\zeta}(p)$}\label{FREQ:sec:alternative.deriv}
For a fixed $p\in\rth$, we would like to have an alternative representation of \eqref{FREQ:tildezeta}:
\begin{equation}\label{FREQ:Irepresentation}
 -\tilde{\zeta}(p)=	\int_{\rth\times\mathbb{S}^2} v_{\text{\o}} \sigma(g,\theta) \sqrt{J(q)}\left(\sqrt{J(q')}-\sqrt{J(q)}\right)dqd\omega\eqdef I_{gain}-I_{loss}.
\end{equation}
Then exactly as previously we can derive \eqref{FREQ:final gain} for the term $I_{gain}$.  We will now find an alternative expression for $I_{loss}$ that will provide suitable cancellation for the term in \eqref{FREQ:final gain}. 

To this end, using the definition of $I_{loss}$ from \eqref{FREQ:Irepresentation} yields
\begin{equation}\label{FREQ:Ilossrepresentation}
I_{loss}=\int_{\rth}dq\int_{\mathbb{S}^2}d\omega\  v_{\text{\o}} \sigma(g,\theta)J(q).
\end{equation}Since the gain term representation \eqref{FREQ:Igainrepresentation} results in \eqref{FREQ:i} and \eqref{FREQ:Ipq}, the loss term \eqref{FREQ:Ilossrepresentation} would yield
\begin{equation}\notag
I_{loss}=\int_{\rth}i_{loss}(p,q)dq,
\end{equation}where 
\begin{equation}\label{FREQ:iloss}
i_{loss}(p,q)\eqdef \frac{1}{p^0q^0} \int_{\rth}\frac{dp'}{p'^0}\int_{\rth}\frac{dq'}{q'^0}J(q')\tilde{s}\sigma(\tilde{g},\theta')\delta^{(4)}(p'^\mu+q^\mu-p^\mu-q'^\mu),
\end{equation} 
by following the same argument between \eqref{FREQ:Igainrepresentation} and \eqref{FREQ:Ipq}. Note that we have exchanged $q$ and $q'$ variables in the procedure. Then we can easily see that the only difference between \eqref{FREQ:i} and \eqref{FREQ:iloss} is the power on the term $J(q')$; i.e., the power on $J(q')$ in $i_{loss}(p,q)$ is twice of that in $i(p,q)$. Therefore, the same derivation results in the new representation of the loss term similar to  \eqref{FREQ:finali}:
\begin{multline}\label{FREQ:finaliloss}
i_{loss}(p,q)= \frac{c'}{gp^0q^0}\exp\left(\frac{p^0-q^0}{2}\right)  \\\times \int_{0}^{\infty} \frac{rdr}{\sqrt{r^2+s}}s_\Lambda\sigma(g_\Lambda,\theta_\Lambda)\exp\left(-\frac{p^0+q^0}{2\sqrt{s}}\sqrt{r^2+s}\right)I_0\left(\frac{|p\times q|}{g\sqrt{s}}r\right).
\end{multline}
In particular we have
\begin{multline}\label{FREQ:final loss}
I_{loss}=\frac{c'}{p^0}e^{\frac{p^0}{2}}\int_{\rth}\frac{dq}{q^0}\frac{e^{-\frac{1}{2}q^0}}{g} \int_{0}^{\infty} \frac{rdr}{\sqrt{r^2+s}}s_\Lambda\sigma(g_\Lambda,\theta_\Lambda)\\\times \exp\left(-\frac{p^0+q^0}{2\sqrt{s}}\sqrt{r^2+s}\right)I_0\left(\frac{|p\times q|}{g\sqrt{s}}r\right).
\end{multline}
Subtracting this integral from \eqref{FREQ:final gain}, we obtain
\begin{multline}\notag
I=\frac{c'}{p^0}e^{\frac{p^0}{4}}\int_{\rth}\frac{dq}{q^0}\frac{e^{-\frac{3}{4}q^0}}{g}\int_{0}^\infty \frac{rdr}{\sqrt{r^2+s}}s_\Lambda\sigma(g_\Lambda,\theta_\Lambda)\\\times\Big[\exp\left(-\frac{p^0+q^0}{4\sqrt{s}}\sqrt{r^2+s}\right)I_0\left(\frac{|p\times q|}{2g\sqrt{s}}r\right)\\ -\exp\left(\frac{p^0+q^0}{4}\right)\exp\left(-\frac{p^0+q^0}{2\sqrt{s}}\sqrt{r^2+s}\right)I_0\left(\frac{|p\times q|}{g\sqrt{s}}r\right)\Big].
\end{multline}
This is equal to the original integral $I=-\tilde{\zeta}(p)$. 
The representation above also holds when $\sigma_0$ does not have mean zero or does not have a bounded integral, as we discussed in \eqref{FREQ:notmeanzero}.

As in \eqref{FREQ:zeta0By} and \eqref{FREQ:zetaLBy}, we take the change of variables $r\mapsto y=\frac{r}{\sqrt{s}}$ in $I$ above. 
Then, we can write $\tilde{\zeta}$ as follows
\begin{multline}\label{FREQ:eq:tildezetanew1.appendix}
\tilde{\zeta}(p)=\frac{c'}{\pi p^0}\int_{\rth}\frac{dq}{q^0}\frac{e^{-q^0}\sqrt{s}}{g}\int_{0}^\infty \frac{ydy}{\sqrt{y^2+1}}s_\Lambda\sigma(g_\Lambda,\theta_\Lambda)\int_0^{\pi} d\phi\\\times\Big[\exp (2l-2l\sqrt{y^2+1}+2jy\cos\phi)-\exp(l-l\sqrt{y^2+1} +jy\cos\phi)\Big],
\end{multline}
where we use the notations \eqref{FREQ:lj} and \eqref{FREQ:g2y.variable}. This completes the derivation.

\section{Proofs of the pointwise estimates}\label{FREQ:sec:pointwiseProof}

In this subsection we give the proofs of Lemma \ref{FREQ:lem:useful.ests}, Lemma \ref{FREQ:lem:integral.ests}, and Lemma \ref{pointwise.lemma2}.

\begin{proof}[Proof of Lemma \ref{FREQ:lem:useful.ests}]
The proof of \eqref{FREQ:s.ge.g2} is direct, and \eqref{FREQ:s.le.pq} follows from \eqref{s} and the Cauchy-Schwartz inequality.  Then \eqref{FREQ:g.ge.lower}, \eqref{FREQ:g.ge.2lower} and \eqref{FREQ:g.le.upper} follow from \eqref{g.ineq.sharp}.  For \eqref{FREQ:p0q0.le.pq} notice that that
$$
p^0 - q^0 = \frac{|p|^2 - |q|^2}{p^0 + q^0} = \frac{(p - q) \cdot (p+q)}{p^0 + q^0} 
\le |p - q|.
$$
Then \eqref{FREQ:p0.plus.q0.le.p0q0} is automatic.

Now using \eqref{FREQ:lj} then equation \eqref{FREQ:j.le.l} follows from \eqref{FREQ:g.ge.2lower}.  And \eqref{FREQ:l.upper.ineq} is automatic.  The proof of \eqref{FREQ:l2j2} requires some development and is from \cite{MR1211782}.  Now using \eqref{FREQ:lj} we have
$$
    l^2-j^2 = \left(\frac{p^0+q^0}{4}\right)^2-\left(\frac{|p\times q|}{2g}\right)^2
    =\frac{(p^0+q^0)^2g^2-4|p\times q|^2}{16g^2}.$$
    By the definition of $g$ in \eqref{g}, we have
\begin{multline*}
    (p^0+q^0)^2g^2-4|p\times q|^2
    =(p^0+q^0)^2(-2-2p^\mu q_\mu)-4|p\times q|^2\\
    =(2+|p|^2+|q|^2+2p^0q^0)(-2+2p^0q^0-2p\cdot q)-4|p\times q|^2\\
    =(2+|p|^2+|q|^2+2p^0q^0)(-2-|p|^2-|q|^2+2p^0q^0+|p-q|^2)-4|p\times q|^2\\
    =(2p^0q^0)^2-(2+|p|^2+|q|^2)^2 +(2+|p|^2+|q|^2+2p^0q^0)|p-q|^2-4|p\times q|^2\\
    =(2p^0q^0)^2-(2+|p|^2+|q|^2)^2 +(p^0+q^0)^2|p-q|^2-4|p\times q|^2.
\end{multline*}
We calculate that
\begin{multline*}
    (2p^0q^0)^2-4|p\times q|^2=4+4|p|^2|q|^2+4|p|^2+4|q|^2-4|p\times q|^2\\
    =4+4(p\cdot q)^2+4|p|^2+4|q|^2,
\end{multline*}
and$$
    (2+|p|^2+|q|^2)^2=4+|p|^4+|q|^4+4|p|^2+4|q|^2+2|p|^2|q|^2.$$
Thus, using also \eqref{s}, we have
\begin{multline*}
    (2p^0q^0)^2-(2+|p|^2+|q|^2)^2 +(p^0+q^0)^2|p-q|^2-4|p\times q|^2\\
    =(p^0+q^0)^2|p-q|^2-|p|^4-|q|^4+4(p\cdot q)^2-2|p|^2|q|^2\\
    =(p^0+q^0)^2|p-q|^2-(|p|^2+|q|^2)^2 +4(p\cdot q)^2\\
    =(p^0+q^0)^2|p-q|^2 -(|p|^2+|q|^2+2p\cdot q)(|p|^2+|q|^2-2p\cdot q)\\
    =(p^0+q^0)^2|p-q|^2-|p+q|^2|p-q|^2
    =s|p-q|^2.
\end{multline*}
Therefore, we have \eqref{FREQ:l2j2}.   Then \eqref{FREQ:l2j2size} follows from \eqref{FREQ:l2j2} and \eqref{FREQ:s.ge.g2}.

We will now prove \eqref{FREQ:ineq.gL.here}.  The upper bound of $g_\Lambda^2$ in \eqref{FREQ:ineq.gL.here} follows from \eqref{FREQ:g2y.variable} with \eqref{FREQ:s.ge.g2}.  The lower bound of $g_\Lambda^2$ in \eqref{FREQ:ineq.gL.here} follows from \eqref{FREQ:glambda.calc} and \eqref{FREQ:s.ge.g2}.  
\end{proof}

\begin{proof}[Proof of Lemma \ref{FREQ:lem:integral.ests}]
We will start with \eqref{FREQ:smally.lemma}.  From \eqref{bessel0} and \eqref{FREQ:int.Kgamma} we have 
 $$
|\bar{K}_\gamma(l,j)|  \lesssim
\max_{0\leq x\leq 1}\exp(-l\sqrt{x^2+1}+jx).
$$ 
The maximum of the function $h(x)\eqdef -l\sqrt{x^2+1}+jx$ occurs at $x=0$, $x=1$, or $x=x_0=\frac{j}{\sqrt{l^2-j^2}}$ where $h'(x_0)=0$. Note that $ h(x_0)=-\sqrt{l^2-j^2}.$ When $x=1$, we have $$h(1)=-\sqrt{2}l+j\leq -\sqrt{l^2-j^2}.$$ Thus, we conclude \eqref{FREQ:max.bound} and \eqref{FREQ:smally.lemma}.
	
Then \eqref{FREQ:J2.special} is a known integral that can be calculated exactly \cite{Gradshteyn:1702455} as  \eqref{FREQ:J2.lemma}.  Further  \eqref{FREQ:k2lj.lemma} is calculated during the proof of Corollary 2  in \cite[Corollary 2, pp.~323]{MR1211782}.  In particular we can obtain \eqref{FREQ:k2lj.lemma} from $\tilde{K}_2(l,j)= \partial_l^2 J_2(l,j)$.
\end{proof}

\begin{proof}[Proof of Lemma \ref{pointwise.lemma2}]
We remark that the proof of  \eqref{jutter.integral.est} follows from well known pointwise estimates.
    Then \eqref{moller.upper.est} is a direct consequence of \eqref{FREQ:s.le.pq} and \eqref{FREQ:g.le.sqrtpq}. 
    
    For \eqref{gtildeg.equiv}, note that \eqref{FREQ:cosine.angle.formula}, \cite[Proposition 2.7]{MR4156121}, or \cite[pp.12 or pp.58]{MR3542337} 
    implies 
   \begin{equation}\label{triangle.id}g^2=\tilde{g}^2+\bar{g}^2,\end{equation} and \begin{equation}\label{bargoverg}\sin\frac{\theta}{2}=\frac{\bar{g}}{g}.\end{equation} Then, since $\theta\in \bigg(0,\frac{\pi}{2}\bigg] $, we have
    $\bar{g}\le \tilde{g}.$ Therefore, 
    $$\tilde{g}^2\le g^2\le 2\tilde{g}^2.$$ This implies \eqref{gtildeg.equiv}.
    
    For \eqref{frac sg.est}, we first mention that $\frac{\tilde{s} \Phi(\tilde{g})\tilde{g}^4}{s_\Lambda \Phi(g_\Lambda)g^4_\Lambda}$ is clearly non-negative. 
    On the other hand, using \eqref{g2y.variable} we have $$\tilde{g}\le g_\Lambda,\text{ and }\tilde{s}\le s_\Lambda.$$ Since 
    $\Phi(g)= C_\Phi g^{\singS}$ and $\singS\in (-2.5,2) $ for both \eqref{hard} and \eqref{soft}, we have $4-\singS>0$ and
    $$\frac{\tilde{s} \Phi(\tilde{g})\tilde{g}^4}{s_\Lambda \Phi(g_\Lambda)g^4_\Lambda}= \frac{\tilde{s} \tilde{g}^{4-\singS}}{s_\Lambda g^{4-\singS}_\Lambda}\le 1.$$  
    The proof for \eqref{colfre4.2} follows by \eqref{FREQ:difference.estimate.here} and \eqref{FREQ:upperbound.allcases.zeta1}. 
    
    For \eqref{eq.pqp'q'}, we note that using \eqref{conservation} we have
  $$
        -2+2p'^\mu q'_\mu=(p'^\mu+q'^\mu)(p'_\mu+q'_\mu)=(p^\mu+q^\mu)(p_\mu+q_\mu)= -2+2p^\mu q_\mu.
   $$Similarly, also using \eqref{conservation} we have
   $$
        -2-2p'^\mu q
        ^\mu=(p'^\mu-q^\mu)(p'_\mu-q_\mu)=(p^\mu-q'^\mu)(p_\mu-q'_\mu)= -2-2p^\mu q'_\mu.
 $$
 The proof that $p'^\mu p_\mu=q'^\mu q_\mu$ is the same.    This completes these proofs.
\end{proof}

\providecommand{\bysame}{\leavevmode\hbox to3em{\hrulefill}\thinspace}
\providecommand{\href}[2]{#2}


\begin{thebibliography}{10}
\expandafter\ifx\csname urlpdf\endcsname\relax
  \def\urlpdf#1{\url{#1}}\fi
\expandafter\ifx\csname arxiv\endcsname\relax
  \def\arxiv#1{\burlalt{arXiv:#1}{http://arxiv.org/abs/#1}}\fi
\expandafter\ifx\csname doi\endcsname\relax
  \def\doi#1{\burlalt{doi:#1}{http://dx.doi.org/#1}}\fi
\expandafter\ifx\csname href\endcsname\relax
  \def\href#1#2{#2}\fi
\expandafter\ifx\csname burlalt\endcsname\relax
  \def\burlalt#1#2{\href{#2}{#1}}\fi

\bibitem{ADVW}
R.~Alexandre, L.~Desvillettes, C.~Villani, and B.~Wennberg, \emph{Entropy
  dissipation and long-range interactions}, Arch. Ration. Mech. Anal.
  \textbf{152} (2000), no.~4, 327--355, \doi{10.1007/s002050000083}.

\bibitem{MR2793203}
R.~Alexandre, Y.~Morimoto, S.~Ukai, C.-J. Xu, and T.~Yang, \emph{The
  {B}oltzmann equation without angular cutoff in the whole space: {II},
  {G}lobal existence for hard potential}, Anal. Appl. (Singap.) \textbf{9}
  (2011), no.~2, 113--134, \doi{10.1142/S0219530511001777}.

\bibitem{MR2847536}
\bysame, \emph{The {B}oltzmann equation without angular cutoff in the whole
  space: qualitative properties of solutions}, Arch. Ration. Mech. Anal.
  \textbf{202} (2011), no.~2, 599--661, \doi{10.1007/s00205-011-0432-0}.

\bibitem{MR2795331}
\bysame, \emph{Global existence and full regularity of the {B}oltzmann equation
  without angular cutoff}, Comm. Math. Phys. \textbf{304} (2011), no.~2,
  513--581, \doi{10.1007/s00220-011-1242-9}.

\bibitem{MR2863853}
\bysame, \emph{The {B}oltzmann equation without angular cutoff in the whole
  space: {I}, {G}lobal existence for soft potential}, J. Funct. Anal.
  \textbf{262} (2012), no.~3, 915--1010, \doi{10.1016/j.jfa.2011.10.007}.

\bibitem{MR1857879}
R.~Alexandre and C.~Villani, \emph{On the {B}oltzmann equation for long-range
  interactions}, Comm. Pure Appl. Math. \textbf{55} (2002), no.~1, 30--70,
  \doi{10.1002/cpa.10012}.

\bibitem{MR3950012}
Radjesvarane Alexandre, Fr\'{e}d\'{e}ric H\'{e}rau, and Wei-Xi Li, \emph{Global
  hypoelliptic and symbolic estimates for the linearized {B}oltzmann operator
  without angular cutoff}, J. Math. Pures Appl. (9) \textbf{126} (2019), 1--71,
  \doi{10.1016/j.matpur.2019.04.013}.

\bibitem{MR2679369}
Radjesvarane Alexandre, Yoshinori Morimoto, Seiji Ukai, Chao-Jiang Xu, and Tong
  Yang, \emph{Regularizing effect and local existence for the non-cutoff
  {B}oltzmann equation}, Arch. Ration. Mech. Anal. \textbf{198} (2010), no.~1,
  39--123, \doi{10.1007/s00205-010-0290-1}.

\bibitem{MR3177640}
\bysame, \emph{Local existence with mild regularity for the {B}oltzmann
  equation}, Kinet. Relat. Models \textbf{6} (2013), no.~4, 1011--1041,
  \doi{10.3934/krm.2013.6.1011}.

\bibitem{2010.10065}
R.~Alonso, Y.~Morimoto, W.~Sun, and T.~Yang, \emph{{De Giorgi argument for
  weighted $L^2 \cap L^\infty$ solutions to the non-cutoff Boltzmann
  equation}},  (2020), \arxiv{2010.10065}.

\bibitem{MR1402446}
H{\aa}kan Andr\'{e}asson, \emph{Regularity of the gain term and strong {$L^1$}
  convergence to equilibrium for the relativistic {B}oltzmann equation}, SIAM
  J. Math. Anal. \textbf{27} (1996), no.~5, 1386--1405, \doi{10.1137/0527076}.

\bibitem{MR2102321}
H{\aa}kan Andr\'{e}asson, Simone Calogero, and Reinhard Illner, \emph{On blowup
  for gain-term-only classical and relativistic {B}oltzmann equations}, Math.
  Methods Appl. Sci. \textbf{27} (2004), no.~18, 2231--2240,
  \doi{10.1002/mma.555}.

\bibitem{MR4264953}
Gi-Chan Bae, Jin~Woo Jang, and Seok-Bae Yun, \emph{The relativistic quantum
  {B}oltzmann equation near equilibrium}, Arch. Ration. Mech. Anal.
  \textbf{240} (2021), no.~3, 1593--1644, \doi{10.1007/s00205-021-01643-6}.

\bibitem{MR356790}
Daniel Bancel and Yvonne Choquet-Bruhat, \emph{Existence, uniqueness, and local
  stability for the {E}instein-{M}axwell-{B}oltzman system}, Comm. Math. Phys.
  \textbf{33} (1973), 83--96.

\bibitem{MR3665667}
Jean-Marie Barbaroux, Dirk Hundertmark, Tobias Ried, and Semjon Vugalter,
  \emph{Gevrey smoothing for weak solutions of the fully nonlinear homogeneous
  {B}oltzmann and {K}ac equations without cutoff for {M}axwellian molecules},
  Arch. Ration. Mech. Anal. \textbf{225} (2017), no.~2, 601--661,
  \doi{10.1007/s00205-017-1101-8}.

\bibitem{MR0213137}
Klaus Bichteler, \emph{On the {C}auchy problem of the relativistic {B}oltzmann
  equation}, Comm. Math. Phys. \textbf{4} (1967), 352--364.

\bibitem{MR0158708}
Ludwig Boltzmann, \emph{Lectures on gas theory}, Translated by Stephen G.
  Brush, University of California Press, Berkeley-Los Angeles, Calif., 1964.

\bibitem{MR1798557}
Laurent Boudin and Laurent Desvillettes, \emph{On the singularities of the
  global small solutions of the full {B}oltzmann equation}, Monatsh. Math.
  \textbf{131} (2000), no.~2, 91--108, \doi{10.1007/s006050070015}.

\bibitem{MR2098116}
Simone Calogero, \emph{The {N}ewtonian limit of the relativistic {B}oltzmann
  equation}, J. Math. Phys. \textbf{45} (2004), no.~11, 4042--4052,
  \doi{10.1063/1.1793328}.

\bibitem{MR1898707}
Carlo Cercignani and Gilberto~Medeiros Kremer, \emph{The relativistic
  {B}oltzmann equation: theory and applications}, Progress in Mathematical
  Physics, vol.~22, Birkh\"auser Verlag, Basel, 2002,
  \doi{10.1007/978-3-0348-8165-4}.

\bibitem{ChapmanJangStrain2020}
James Chapman, Jin~Woo Jang, and Robert~M. Strain, \emph{{On the Determinant
  Problem for the Relativistic Boltzmann Equation}}, Comm. Math. Phys.
  \textbf{384} (2021), 1913--1943, \arxiv{2006.02540},
  \doi{10.1007/s00220-021-04101-2}.

\bibitem{MR4270852}
Sanchit Chaturvedi, \emph{Stability of vacuum for the {B}oltzmann equation with
  moderately soft potentials}, Ann. PDE \textbf{7} (2021), no.~2, Paper No. 15,
  104, \doi{10.1007/s40818-021-00103-4}.

\bibitem{MR635279}
S.~R. de~Groot, W.~A. van Leeuwen, and Ch.~G. van Weert, \emph{Relativistic
  {K}inetic {T}heory. principles and applications.}, North-Holland Publishing
  Co., Amsterdam-New York, 1980.

\bibitem{MR1750040}
L.~Desvillettes and F.~Golse, \emph{On a model {B}oltzmann equation without
  angular cutoff}, Differential Integral Equations \textbf{13} (2000), no.~4-6,
  567--594.

\bibitem{MR471665}
J.~J. Dijkstra and W.~A. van Leeuwen, \emph{Mathematical aspects of
  relativistic kinetic theory}, Phys. A \textbf{90} (1978), no.~3-4, 450--486,
  \doi{10.1016/0378-4371(78)90004-3}.

\bibitem{MR1014927}
R.~J. DiPerna and P.-L. Lions, \emph{On the {C}auchy problem for {B}oltzmann
  equations: global existence and weak stability}, Ann. of Math. (2)
  \textbf{130} (1989), no.~2, 321--366, \doi{10.2307/1971423}.

\bibitem{MR2435186}
Renjun Duan, Meng-Rong Li, and Tong Yang, \emph{Propagation of singularities in
  the solutions to the {B}oltzmann equation near equilibrium}, Math. Models
  Methods Appl. Sci. \textbf{18} (2008), no.~7, 1093--1114,
  \doi{10.1142/S0218202508002966}.

\bibitem{1904.12086}
Renjun Duan, Shuangqian Liu, Shota Sakamoto, and Robert~M. Strain, \emph{Global
  mild solutions of the {L}andau and non-cutoff {B}oltzmann equations}, Comm.
  Pure Appl. Math. \textbf{74} (2021), no.~5, 932--1020, \arxiv{1904.12086},
  \doi{10.1002/cpa.21920}.

\bibitem{MR3635814}
Renjun Duan and Hongjun Yu, \emph{The relativistic {B}oltzmann equation for
  soft potentials}, Adv. Math. \textbf{312} (2017), 315--373,
  \doi{10.1016/j.aim.2017.03.018}.

\bibitem{MR1031410}
Marek Dudy\'nski, \emph{On the linearized relativistic {B}oltzmann equation.
  {II}. {E}xistence of hydrodynamics}, J. Statist. Phys. \textbf{57} (1989),
  no.~1-2, 199--245, \doi{10.1007/BF01023641}.

\bibitem{MR818441}
Marek Dudy\'nski and Maria~L. Ekiel-Je\.zewska, \emph{Causality of the
  linearized relativistic {B}oltzmann equation}, Phys. Rev. Lett. \textbf{55}
  (1985), no.~26, 2831--2834, \doi{10.1103/PhysRevLett.55.2831}.

\bibitem{MR933458}
\bysame, \emph{On the linearized relativistic {B}oltzmann equation. {I}.
  {E}xistence of solutions}, Comm. Math. Phys. \textbf{115} (1988), no.~4,
  607--629.

\bibitem{MR1151987}
\bysame, \emph{Global existence proof for relativistic {B}oltzmann equation},
  J. Statist. Phys. \textbf{66} (1992), no.~3-4, 991--1001,
  \doi{10.1007/BF01055712}.

\bibitem{Dudynski2}
\bysame, \emph{The relativistic {B}oltzmann equation - mathematical and
  physical aspects}, J. Tech. Phys. \textbf{48} (2007), 39--47.

\bibitem{MR1958975}
Miguel Escobedo, St\'{e}phane Mischler, and Manuel~A. Valle, \emph{Homogeneous
  {B}oltzmann equation in quantum relativistic kinetic theory}, Electronic
  Journal of Differential Equations. Monograph, vol.~4, Southwest Texas State
  University, San Marcos, TX, 2003,
  \urlpdf{https://ejde.math.txstate.edu/Monographs/04/escobedo.pdf}.

\bibitem{MR4014786}
Irene~M. Gamba, Nata\v{s}a Pavlovi\'{c}, and Maja Taskovi\'{c}, \emph{On
  pointwise exponentially weighted estimates for the {B}oltzmann equation},
  SIAM J. Math. Anal. \textbf{51} (2019), no.~5, 3921--3955,
  \doi{10.1137/18M1213191}.

\bibitem{MR1321370}
R.~T. Glassey and W.~A. Strauss, \emph{Asymptotic stability of the relativistic
  {M}axwellian via fourteen moments}, Transport Theory Statist. Phys.
  \textbf{24} (1995), no.~4-5, 657--678, \doi{10.1080/00411459508206020}.

\bibitem{MR1379589}
Robert~T. Glassey, \emph{The {C}auchy problem in kinetic theory}, Society for
  Industrial and Applied Mathematics (SIAM), Philadelphia, PA, 1996,
  \doi{10.1137/1.9781611971477}.

\bibitem{MR1105532}
Robert~T. Glassey and Walter~A. Strauss, \emph{On the derivatives of the
  collision map of relativistic particles}, Transport Theory Statist. Phys.
  \textbf{20} (1991), no.~1, 55--68, \doi{10.1080/00411459108204708}.

\bibitem{MR1211782}
\bysame, \emph{Asymptotic stability of the relativistic {M}axwellian}, Publ.
  Res. Inst. Math. Sci. \textbf{29} (1993), no.~2, 301--347,
  \doi{10.2977/prims/1195167275}.

\bibitem{MR0156656}
Harold Grad, \emph{Asymptotic theory of the {B}oltzmann equation. {II}},
  Rarefied {G}as {D}ynamics ({P}roc. 3rd {I}nternat. {S}ympos., {P}alais de
  l'{UNESCO}, {P}aris, 1962), {V}ol. {I}, Academic Press, New York, 1963,
  pp.~26--59.

\bibitem{Gradshteyn:1702455}
Izrail~Solomonovich Gradshteyn, Iosif~Moiseevich Ryzhik, Daniel Zwillinger, and
  Victor Moll, \emph{{Table of integrals, series, and products; 8th ed.}},
  Academic Press, Amsterdam, Sep 2014.

\bibitem{GressmanStrain2010g}
Philip~T. Gressman and Robert~M. Strain, \emph{Global classical solutions of
  the {B}oltzmann equation with long-range interactions}, Proc. Natl. Acad.
  Sci. USA \textbf{107} (2010), no.~13, 5744--5749,
  \doi{10.1073/pnas.1001185107}.

\bibitem{MR2784329}
\bysame, \emph{Global classical solutions of the {B}oltzmann equation without
  angular cut-off}, J. Amer. Math. Soc. \textbf{24} (2011), no.~3, 771--847,
  \arxiv{1011.5441}, \doi{10.1090/S0894-0347-2011-00697-8}.

\bibitem{MR2807092}
\bysame, \emph{Sharp anisotropic estimates for the {B}oltzmann collision
  operator and its entropy production}, Adv. Math. \textbf{227} (2011), no.~6,
  2349--2384, \arxiv{1007.1276}, \doi{10.1016/j.aim.2011.05.005}.

\bibitem{MR2000470}
Yan Guo, \emph{The {V}lasov-{M}axwell-{B}oltzmann system near {M}axwellians},
  Invent. Math. \textbf{153} (2003), no.~3, 593--630,
  \doi{10.1007/s00222-003-0301-z}.

\bibitem{MR2891870}
Yan Guo and Robert~M. Strain, \emph{Momentum regularity and stability of the
  relativistic {V}lasov-{M}axwell-{B}oltzmann system}, Comm. Math. Phys.
  \textbf{310} (2012), no.~3, 649--673, \arxiv{1012.1158},
  \doi{10.1007/s00220-012-1417-z}.

\bibitem{MR2543323}
Seung-Yeal Ha, Ho~Lee, Xiongfeng Yang, and Seok-Bae Yun, \emph{Uniform
  {$L^2$}-stability estimates for the relativistic {B}oltzmann equation}, J.
  Hyperbolic Differ. Equ. \textbf{6} (2009), no.~2, 295--312,
  \doi{10.1142/S0219891609001848}.

\bibitem{2017arXiv171000315H}
L.-B. {He} and J.-C. {Jiang}, \emph{{On the global dynamics of the
  inhomogeneous Boltzmann equations without angular cutoff: Hard potentials and
  Maxwellian molecules}}, ArXiv e-prints (2017), \arxiv{1710.00315}.

\bibitem{MR4112183}
Christopher Henderson, Stanley Snelson, and Andrei Tarfulea, \emph{Local
  well-posedness of the {B}oltzmann equation with polynomially decaying initial
  data}, Kinet. Relat. Models \textbf{13} (2020), no.~4, 837--867,
  \doi{10.3934/krm.2020029}.

\bibitem{MR4107942}
Fr\'{e}d\'{e}ric H\'{e}rau, Daniela Tonon, and Isabelle Tristani,
  \emph{Regularization estimates and {C}auchy theory for inhomogeneous
  {B}oltzmann equation for hard potentials without cut-off}, Comm. Math. Phys.
  \textbf{377} (2020), no.~1, 697--771, \doi{10.1007/s00220-020-03682-8}.

\bibitem{MR2249574}
Ling Hsiao and Hongjun Yu, \emph{Asymptotic stability of the relativistic
  {M}axwellian}, Math. Methods Appl. Sci. \textbf{29} (2006), no.~13,
  1481--1499, \doi{10.1002/mma.736}.

\bibitem{MR4195746}
Cyril Imbert and Luis Silvestre, \emph{Regularity for the {B}oltzmann equation
  conditional to macroscopic bounds}, EMS Surv. Math. Sci. \textbf{7} (2020),
  no.~1, 117--172, \doi{10.4171/emss/37}.

\bibitem{MR4049224}
\bysame, \emph{The weak {H}arnack inequality for the {B}oltzmann equation
  without cut-off}, J. Eur. Math. Soc. (JEMS) \textbf{22} (2020), no.~2,
  507--592, \doi{10.4171/jems/928}.

\bibitem{1909.12729}
\bysame, \emph{Global regularity estimates for the {B}oltzmann equation without
  cut-off}, J. Amer. Math. Soc. \textbf{35} (2022), no.~3, 625--703,
  \arxiv{1909.12729}, \doi{10.1090/jams/986}.

\bibitem{MR165921}
Werner Israel, \emph{Relativistic kinetic theory of a simple gas}, J.
  Mathematical Phys. \textbf{4} (1963), 1163--1181, \doi{10.1063/1.1704047}.

\bibitem{MR3542337}
Jin~Woo Jang, \emph{Global classical solutions to the relativistic {B}oltzmann
  equation without angular cut-off}, Ph.D. thesis, University of Pennsylvania,
  2016, (ProQuest Document ID 1802787346), pp.~1--135,
  \urlpdf{https://www.math.upenn.edu/~strain/preprints/2016PHDJang.pdf}.

\bibitem{1907.05784}
Jin~Woo Jang, Robert~M. Strain, and Seok-Bae Yun, \emph{{Propagation of uniform
  upper bounds for the spatially homogeneous relativistic Boltzmann equation}},
  Arch. Ration. Mech. Anal. \textbf{241} (2021), 149--186, \arxiv{1907.05784},
  \doi{10.1007/s00205-021-01649-0}, published online April 15, 2021.

\bibitem{MR3880739}
Jin~Woo Jang and Seok-Bae Yun, \emph{Gain of regularity for the relativistic
  collision operator}, Appl. Math. Lett. \textbf{90} (2019), 162--169,
  \doi{10.1016/j.aml.2018.11.001}.

\bibitem{MR4156121}
\bysame, \emph{Propagation of {$L^p$} estimates for the spatially homogeneous
  relativistic {B}oltzmann equation}, J. Differential Equations \textbf{272}
  (2021), 105--126, \doi{10.1016/j.jde.2020.09.027}.

\bibitem{MR3189734}
Gilberto~M. Kremer, \emph{Theory and applications of the relativistic
  {B}oltzmann equation}, Int. J. Geom. Methods Mod. Phys. \textbf{11} (2014),
  no.~2, 1460005, 16, \doi{10.1142/S0219887814600056}.

\bibitem{MR0004796}
Andr\'e Lichnerowicz and Raymond Marrot, \emph{Propri\'et\'es statistiques des
  ensembles de particules en relativit\'e restreinte}, C. R. Acad. Sci. Paris
  \textbf{210} (1940), 759--761.

\bibitem{MR1284432}
P.-L. Lions, \emph{Compactness in {B}oltzmann's equation via {F}ourier integral
  operators and applications. {I}, {II}}, J. Math. Kyoto Univ. \textbf{34}
  (1994), no.~2, 391--427, 429--461, \doi{10.1215/kjm/1250519017}.

\bibitem{MR1295942}
\bysame, \emph{Compactness in {B}oltzmann's equation via {F}ourier integral
  operators and applications. {III}}, J. Math. Kyoto Univ. \textbf{34} (1994),
  no.~3, 539--584, \doi{10.1215/kjm/1250518932}.

\bibitem{Li94}
\bysame, \emph{On {B}oltzmann and {L}andau equations}, Philos. Trans. Roy. Soc.
  London Ser. A \textbf{346} (1994), no.~1679, 191--204,
  \doi{10.1098/rsta.1994.0018}.

\bibitem{MR3948345}
Jonathan Luk, \emph{Stability of {V}acuum for the {L}andau {E}quation with
  {M}oderately {S}oft {P}otentials}, Ann. {PDE} \textbf{5} (2019), no.~1, 5:11,
  \arxiv{1807.07551}, \doi{10.1007/s40818-019-0067-2}.

\bibitem{MR3532066}
Yoshinori Morimoto and Shota Sakamoto, \emph{Global solutions in the critical
  {B}esov space for the non-cutoff {B}oltzmann equation}, J. Differential
  Equations \textbf{261} (2016), no.~7, 4073--4134,
  \doi{10.1016/j.jde.2016.06.017}.

\bibitem{MR3572500}
Yoshinori Morimoto, Shuaikun Wang, and Tong Yang, \emph{Measure valued
  solutions to the spatially homogeneous {B}oltzmann equation without angular
  cutoff}, J. Stat. Phys. \textbf{165} (2016), no.~5, 866--906,
  \doi{10.1007/s10955-016-1655-0}.

\bibitem{MR2322149}
Cl{\'{e}}ment Mouhot and Robert~M. Strain, \emph{Spectral gap and coercivity
  estimates for linearized {B}oltzmann collision operators without angular
  cutoff}, J. Math. Pures Appl. (9) \textbf{87} (2007), no.~5, 515--535,
  \arxiv{math/0607495}, \doi{10.1016/j.matpur.2007.03.003}.

\bibitem{Pao}
Young~Ping Pao, \emph{Boltzmann collision operator with inverse-power
  intermolecular potentials. {I}, {II}}, Comm. Pure Appl. Math. \textbf{27}
  (1974), 407--428; ibid. 27 (1974), 559--581, \doi{10.1002/cpa.3160270402}.

\bibitem{MR3810839}
Milana Pavi\'{c}-\v{C}oli\'{c} and Maja Taskovi\'{c}, \emph{Propagation of
  stretched exponential moments for the {K}ac equation and {B}oltzmann equation
  with {M}axwell molecules}, Kinet. Relat. Models \textbf{11} (2018), no.~3,
  597--613, \doi{10.3934/krm.2018025}.

\bibitem{MR1402248}
Michael~E. Peskin and Daniel~V. Schroeder, \emph{An introduction to quantum
  field theory}, Addison-Wesley Publishing Company, Advanced Book Program,
  Reading, MA, 1995, Edited and with a foreword by David Pines.

\bibitem{Polak_1973}
P.H. Polak, W.A. van Leeuwen, and S.R. de~Groot, \emph{On relativistic kinetic
  gas theory}, Physica \textbf{66} (1973), no.~3, 455--473,
  \doi{10.1016/0031-8914(73)90294-2}.

\bibitem{MR3186493}
Hans Ringstr\"{o}m, \emph{On the topology and future stability of the
  universe}, Oxford Mathematical Monographs, Oxford University Press, Oxford,
  2013, \doi{10.1093/acprof:oso/9780199680290.001.0001}.

\bibitem{MR3551261}
Luis Silvestre, \emph{A new regularization mechanism for the {B}oltzmann
  equation without cut-off}, Comm. Math. Phys. \textbf{348} (2016), no.~1,
  69--100, \doi{10.1007/s00220-016-2757-x}.

\bibitem{MR3213301}
Vedran Sohinger and Robert~M. Strain, \emph{The {B}oltzmann equation, {B}esov
  spaces, and optimal time decay rates in {$\mathbb{R}_x^n$}}, Adv. Math.
  \textbf{261} (2014), 274--332, \arxiv{1206.0027},
  \doi{10.1016/j.aim.2014.04.012}.

\bibitem{MR2793935}
Jared Speck and Robert~M. Strain, \emph{Hilbert expansion from the {B}oltzmann
  equation to relativistic fluids}, Comm. Math. Phys. \textbf{304} (2011),
  no.~1, 229--280, \arxiv{1009.5033}, \doi{10.1007/s00220-011-1207-z}.

\bibitem{Stewart}
John~M. Stewart, \emph{Non-equilibrium relativistic kinetic theory}, Springer,
  Berlin, Heidelberg, 1971, \doi{10.1007/BFb0025374}.

\bibitem{MR2707256}
Robert~M. Strain, \emph{Some applications of an energy method in collisional
  {K}inetic theory}, Ph.D. thesis, Brown University, 2005, (ProQuest Document
  ID 305028444), pp.~1--200,
  \urlpdf{https://www.math.upenn.edu/~strain/preprints/2005PHDthesisS.pdf}.

\bibitem{MR2728733}
\bysame, \emph{Asymptotic stability of the relativistic {B}oltzmann equation
  for the soft potentials}, Comm. Math. Phys. \textbf{300} (2010), no.~2,
  529--597, \arxiv{1003.4893}, \doi{10.1007/s00220-010-1129-1}.

\bibitem{MR2679588}
\bysame, \emph{Global {N}ewtonian limit for the relativistic {B}oltzmann
  equation near vacuum}, SIAM J. Math. Anal. \textbf{42} (2010), no.~4,
  1568--1601, \arxiv{1004.5407}, \doi{10.1137/090762695}.

\bibitem{MR2765751}
\bysame, \emph{Coordinates in the relativistic {B}oltzmann theory}, Kinet.
  Relat. Models \textbf{4} (2011), no.~1, 345--359, \arxiv{1011.5093},
  \doi{10.3934/krm.2011.4.345}.

\bibitem{MR2972454}
\bysame, \emph{Optimal time decay of the non cut-off {B}oltzmann equation in
  the whole space}, Kinet. Relat. Models \textbf{5} (2012), no.~3, 583--613,
  \arxiv{1011.5561}, \doi{10.3934/krm.2012.5.583}.

\bibitem{SG-CPDE}
Robert~M. Strain and Yan Guo, \emph{Almost exponential decay near
  {M}axwellian}, Comm. Partial Differential Equations \textbf{31} (2006),
  no.~1-3, 417--429, \doi{10.1080/03605300500361545}.

\bibitem{StrainYun2014s}
Robert~M. Strain and Seok-Bae Yun, \emph{Spatially homogeneous {B}oltzmann
  equation for relativistic particles}, SIAM J. Math. Anal. \textbf{46} (2014),
  no.~1, 917--938, \doi{10.1137/130923531}.

\bibitem{MR2911100}
Robert~M. Strain and Keya Zhu, \emph{Large-time decay of the soft potential
  relativistic {B}oltzmann equation in {$\mathbb R^3_x$}}, Kinet. Relat. Models
  \textbf{5} (2012), no.~2, 383--415, \arxiv{1106.1579},
  \doi{10.3934/krm.2012.5.383}.

\bibitem{MR0088362}
J.~L. Synge, \emph{The relativistic gas}, North-Holland Publishing Company,
  Amsterdam; Interscience Publishers Inc., New York, 1957.

\bibitem{MR3759871}
Maja Taskovi\'c, Ricardo~J. Alonso, Irene~M. Gamba, and Nata{\v{s}}a
  Pavlovi{\'c}, \emph{On {M}ittag-{L}effler moments for the {B}oltzmann
  equation for hard potentials without cutoff}, SIAM J. Math. Anal. \textbf{50}
  (2018), no.~1, 834--869, \arxiv{1512.06769}, \doi{10.1137/17M1117926}.

\bibitem{MR3868725}
Yong Wang, \emph{Global well-posedness of the relativistic {B}oltzmann
  equation}, SIAM J. Math. Anal. \textbf{50} (2018), no.~5, 5637--5694,
  \doi{10.1137/17M112600X}.

\bibitem{Weinberg1972}
Steven Weinberg, \emph{{Gravitation and Cosmology: Principles and Applications
  of the General Theory of Relativity}}, Wiley, New York, NY, 1972.

\bibitem{MR1480243}
Bernt Wennberg, \emph{The geometry of binary collisions and generalized {R}adon
  transforms}, Arch. Rational Mech. Anal. \textbf{139} (1997), no.~3, 291--302,
  \doi{10.1007/s002050050054}.

\bibitem{MR2593052}
Tong Yang and Hongjun Yu, \emph{Hypocoercivity of the relativistic {B}oltzmann
  and {L}andau equations in the whole space}, J. Differential Equations
  \textbf{248} (2010), no.~6, 1518--1560, \doi{10.1016/j.jde.2009.11.027}.

\end{thebibliography}
\end{document}